\documentclass[11pt]{article}
\usepackage[a4paper,margin=0.9in]{geometry}
\usepackage[T1]{fontenc}
\usepackage{lmodern}
\usepackage{amsmath,amssymb,amsthm,mathtools,esint,stmaryrd}
\usepackage{booktabs,array,enumitem,microtype,xcolor}

\usepackage[markup=underlined]{changes} 
\definechangesauthor[name={Z. C.}, color=orange]{Z.C}

\usepackage[colorlinks=true,allcolors=black]{hyperref}
\allowdisplaybreaks[2]
\numberwithin{equation}{section}
\newtheorem{theorem}{Theorem}[section]
\newtheorem{proposition}[theorem]{Proposition}
\newtheorem{lemma}[theorem]{Lemma}

\theoremstyle{remark}
\newtheorem{remark}[theorem]{Remark}
\newcommand{\R}{\mathbb R}
\newcommand{\Sph}{S}

\newcommand{\cP}{\mathcal P}

\newcommand{\pa}{\partial}
\newcommand{\eps}{\varepsilon}
\newcommand{\norm}[1]{\lVert #1\rVert}
\newcommand{\dd}{\mathop{}\!\mathrm{d}}
\DeclareMathOperator{\tr}{tr}
\DeclareMathOperator{\diver}{div}
\title{\bfseries Isolated singularities of solutions to the Yamabe equation in higher dimensions I}
\author{Zheng-Chao Han,\quad Qinfeng Jiang,\quad Hua-Yang Wang\thanks{H.-Y. Wang was partially supported by NSFC grant 12601183.}\\[2mm]
Jingang Xiong\thanks{J. Xiong was partially supported by NSFC grant 12325104.},\quad Lei Zhang\thanks{Lei Zhang was partially supported by Simons Foundation Grant SFI-MPS-TSM-00013752 }}
\date{}

\begin{document}
\maketitle
\begin{abstract}
We complete, in the Riemannian setting, the classification of isolated
singularities of positive solutions to the Yamabe equation for
\(3\le n\le29\), initiated in the flat case by Caffarelli, Gidas, and
Spruck in 1989. Precisely, every isolated singularity is either removable
or asymptotic to a Fowler solution. The sharpness of this range is
established by counterexamples for \(n\ge30\) in our subsequent paper. 
\end{abstract}
\setcounter{tocdepth}{1}
\tableofcontents
\clearpage

\section{Introduction}

Let \(g\) be a smooth Riemannian metric on the unit ball
\(B_1\subset\R^n\), where \(n\ge3\), and let
\(u\in C^2(B_1\setminus\{0\})\) be positive. Set
\[
 p=\frac{n+2}{n-2},\qquad
 c(n)=\frac{n-2}{4(n-1)},\qquad
 L_g=\Delta_g-c(n)\operatorname{Scal}_g,
\]
where \(\Delta_g\) and \(\operatorname{Scal}_g\) are the Laplace--Beltrami operator and
the scalar curvature. Write \(d_g\) for the Riemannian distance.
We consider smooth solutions $u$ to the Yamabe equation
\begin{equation}\label{eq:yamabe}
 -L_gu=n(n-2)u^p, \quad u>0,
 \quad\text{in }B_1\setminus\{0\}.
\end{equation}
The singularity is said to be removable
if \(u\) extends to a positive smooth solution of \eqref{eq:yamabe}
across the origin.

A positive Fowler solution is a radial solution of the flat equation
\[
 -\Delta u=n(n-2)u^p
 \quad\text{in }\R^n\setminus\{0\},
\]
with a nonremovable singularity at the origin. It has the form
\[
 u_F(r)=r^{-(n-2)/2}v_F(-\log r),
\]
where \(v_F\) is a positive constant or a positive nonconstant periodic
solution of
\begin{equation}\label{eq:fowler-ode}
 v_F''-\frac{(n-2)^2}{4}v_F+n(n-2)v_F^p=0.
\end{equation}
This description goes back to Fowler  \cite{F}. 
A classical theorem of  Caffarelli--Gidas--Spruck~\cite{CGS} asserts that every positive solution of the flat equation in $\R^n\setminus\{0\}$ with a nonremovable singularity at the origin is radial, and hence is a Fowler solution.  Moreover, for the local equation \eqref{eq:yamabe} with flat background metric, they proved that every solution with a nonremovable singularity at the origin is asymptotically radial, and in fact, converges to a Fowler solution as $x\to 0$. A key component of their proof is measure theoretic moving planes. Later, Chen--Lin \cite{ChenLin1995} and C. Li \cite{Lc} provided complementary partial generalizations of the Caffarelli--Gidas--Spruck asymptotic-symmetry results. A different proof using spectrum analysis and blow up iteration, together with a refinement of the results in \cite{CGS}, was given by Korevaar--Mazzeo--Pacard--Schoen \cite{KMPS}. 

In this paper, we consider the case where the background metric is not
necessarily flat. The convergence to a Fowler solution has been proved by
Marques~\cite{f-mar} in dimensions $3\le n\le 5$, by Xiong--Zhang~\cite{x-z-1}
in dimension $n=6$, and by Han--Xiong--Zhang~\cite{HanXiongZhang} for metrics
whose flatness order at $0$ is at least $(n-2)/2$ or for solutions satisfying $\limsup_{x\to 0} |x|^{n-2}u(x)<\infty$. In the current paper, we prove that the convergence holds for all metrics in the remaining dimensions
$7\le n\le 29$. This range is sharp: in a companion
paper, we construct counterexamples showing that the general convergence statement fails in dimensions $n\ge30$.

\begin{theorem}\label{thm:main}
	Let $7\le n\le29$, and let $u$ be a positive solution of \eqref{eq:yamabe}.
	Then either $u$ extends smoothly across the origin, or, in any geodesic normal coordinate $x$ of $g$
	centered at the origin,
	\begin{equation}\label{eq:main-fowler-asymptotic}
		 u(x)=u_F(|x|)\bigl(1+O(|x|^\alpha)\bigr)
		\quad\text{as } x\to0,
	\end{equation}
	for a Fowler solution $u_F$ and some $\alpha>0$.
    Furthermore, there exist $a\in \mathbb{R}^n$ and
some $\beta>1$ such that
\begin{equation}\label{eq:main-fowler-asymptotic-order1}
 u(x)=u_F(|x|)
+
(a\cdot x)\Bigl[|x|\,u_F'(|x|)+(n-2)u_F(|x|)\Bigr]
+
O(|x|^\beta)u_F(|x|)
 \quad\text{as }x\to0.
\end{equation}
\end{theorem}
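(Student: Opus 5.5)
We work in geodesic normal coordinates and cylindrical variables. Set $t=-\log|x|$, $\theta=x/|x|$, and $v(t,\theta)=|x|^{(n-2)/2}u(x)$.

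\textbf{Step 1: cylindrical equation.} Equation \eqref{eq:yamabe} becomes
\[
 v_{tt}+\Delta_{S^{n-1}}v-\tfrac{(n-2)^2}{4}v+n(n-2)v^p=E(v),
\]
where the perturbation $E(v)$ is built from $g_{ij}-\delta_{ij}=O(|x|^2)$ and $\operatorname{Scal}_g$. Schematically,
\[
 E(v)=O(e^{-2t})\bigl(|\nabla^2 v|+|\nabla v|+|v|\bigr).
\]

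\textbf{Step 2: a priori upper bound.} First we prove $u(x)\le C|x|^{-(n-2)/2}$. This follows from the usual blow-up argument combined with the Liouville theorem of Caffarelli--Gidas--Spruck. It gives uniform $C^{2,\gamma}$ bounds on $v$ on $[T,\infty)\times S^{n-1}$.

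\textbf{Step 3: Harnack and Pohozaev.} Next we establish a spherical Harnack inequality, $\max_{|x|=r}u\le C\min_{|x|=r}u$. The hard part is ruling out blow-up sequences where $\liminf|x|^{(n-2)/2}u(x)=0$ while $\limsup>0$. Here we consider the Pohozaev-type invariant
\[
 P(t)=\int_{S^{n-1}}\Bigl(\tfrac12 v_t^2-\tfrac12|\nabla_\theta v|^2-\tfrac{(n-2)^2}{8}v^2+\tfrac{(n-2)^2}{2}v^{p+1}\Bigr)
\]
together with its derivative. The derivative $P'(t)$ is controlled by an integral of $E(v)v_t$.

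\textbf{Main obstacle: bubble accumulation.} The Pohozaev correction from the metric must be controlled at points where $v$ degenerates into isolated bubbles. For $n\ge7$ the naive bound $e^{-2t}$ is insufficient. We need the refined expansion of blow-up profiles, in the spirit of the compactness theory of Khuri--Marques--Schoen, showing that the sign-definite leading Weyl-tensor term in the local Pohozaev identity forces the relevant Weyl quantities to vanish to order $\lfloor (n-6)/2\rfloor$. The dimension restriction $n\le29$ enters exactly here. It is the range in which a positivity statement for a quadratic form in the Weyl tensor derivatives (the analogue of the $n\le24$ compactness range, adapted to the singular setting) holds. Under this positivity, the accumulated Pohozaev error is summable in $t$.

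We would first try to reduce to the earlier result of Han--Xiong--Zhang: either show $\limsup|x|^{n-2}u<\infty$ (the removable/slow-decay alternative), or show that the effective flatness order is at least $(n-2)/2$ after the conformal normal-coordinate change. Carrying this out will yield that $P(t)\to P_\infty$ with an exponential rate, and that $P_\infty=0$ exactly when the singularity is removable.

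\textbf{Step 4: convergence to Fowler.} With Harnack and $P_\infty\ne0$, subsequential limits of $v(\cdot+t_k)$ are Fowler solutions by CGS uniqueness, all having the same Pohozaev value. We then linearize $v=v_F+w$ around a Fowler solution on the cylinder. The linearized operator has spectrum on $S^{n-1}$ given by the eigenvalues $k(k+n-2)$. For modes $k\ge2$ it is uniformly invertible with exponential decay, via Jacobi fields of the periodic ODE \eqref{eq:fowler-ode}. The kernel modes are $k=0$ (translation/period) and $k=1$ (translations of the singular point). A Simon-type \L{}ojasiewicz argument, or the KMPS iteration, then gives $|w|\le Ce^{-\alpha t}$. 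This yields \eqref{eq:main-fowler-asymptotic}, with the $E(v)$ term contributing $O(e^{-2t})$.

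\textbf{Step 5: first-order expansion.} Iterating once more, the $k=1$ Jacobi fields generated by translations $u_F(|x-b|)$ produce the term
\[
 (a\cdot x)\bigl[|x|u_F'+(n-2)u_F\bigr].
\]
This is the derivative, along translations composed with inversion, of the Fowler family. The remainder is the decay of the $k\ge2$ modes (rate $>1$) plus the metric error $O(|x|^2)$. This gives \eqref{eq:main-fowler-asymptotic-order1} with $\beta>1$.
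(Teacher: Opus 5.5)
Your proposal has a genuine gap at Step~2. That step is not a preliminary: it is the main new result of the paper, Theorem~\ref{thm:upper-bound}, and the only place where $n\le29$ is used.

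If the critical bound fails, recentring produces local maxima $x_k\to0$ with $\varepsilon_k=u(x_k)^{-2/(n-2)}$ and $\varepsilon_k/|x_k|\to0$. Rescaling by $\varepsilon_k$ pushes the puncture to infinity, so the blow-up limit is a positive entire solution of the flat equation. By the Caffarelli--Gidas--Spruck classification it is a standard bubble. Bubbles exist, so there is no contradiction. Earlier proofs of the upper bound all relied on moving planes or moving spheres, which break down for a general metric. Moreover, no argument that ignores the dimension can work, because the bound can fail for $n\ge30$ (companion paper).

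What is missing is a quantitative obstruction to a bubble forming at distance $\gg\varepsilon_k$ from the singularity. The paper supplies it as follows.
\begin{enumerate}
\item It builds the Khuri--Marques--Schoen corrected profile $\varphi_k$ at $x_k$.
\item It tests $\mathcal L_kw_k=N_k-E_k$ against the dilation Jacobi field $Z_k$, extended outside a fixed ball by a negative solution $\widetilde Z_k$ of the linearized equation. This gives the identity \eqref{eq:global-four-term-identity}.
\item Since $u$ is a fixed positive function on a fixed physical annulus about $x_k$, one has $v_k\ge c\varepsilon_k^{(n-2)/2}$ on $\partial B_{\delta/\varepsilon_k}$. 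Hence the outer flux is positive of order $\varepsilon_k^{(n-2)/2}$.
\item The volume term is bounded below by the KMS form, which is positive on jets of degree at most $6$ when $n\le29$. Jets of degree $\ge7$ cost $O(\varepsilon_k^{14}|\log\varepsilon_k|)$, which is $o(\varepsilon_k^{(n-2)/2})$ exactly when $n\le29$.
\item A Green iteration gives $\|w_k\|_{L^1(B_\rho)}\le C\mathcal I_k^+$. This contradicts $\mathcal I_k^+\le C\int_{B_{R_0}}|w_k|^2=o(\|w_k\|_{L^1(B_\rho)})$.
\end{enumerate}
None of this appears in your plan.

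Accordingly, you have put the dimensional restriction in the wrong step. Once the upper bound is known, the spherical Harnack inequality and the convergence $\cP(r,u)\to\cP_\infty$ at rate $O(r^2)$ are immediate. Ruling out $\liminf r^{(n-2)/2}\bar u=0<\limsup r^{(n-2)/2}\bar u$ (equivalently, $\cP_\infty=0$ with a nonremovable singularity) is Theorem~\ref{thm:lower-bound}, which holds for every $n\ge7$. Its proof uses a different positivity: the second variation of total scalar curvature at the round metric, applied to off-centre bubbles and jets of degree $m<(n-2)/2$. It combines this with the backward recurrence $\mu_{j-1}\le(1+C\eta_j)\mu_j+C\rho_j^{n-2}(1+|\log\rho_j|)$ for the Pohozaev values at consecutive minimum radii, which contradicts $\mu_j\ge c(\rho_{j+1}/\rho_j)^{(n-2)/2}$. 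No Weyl-vanishing statement is involved.

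Your Step~3 asserts a summable Pohozaev error without giving a mechanism. Your fallback reduction to Han--Xiong--Zhang is also unavailable. Conformal normal coordinates cannot raise the flatness order: $H^{(2)}_{ij}$ is a nonzero multiple of $W_{ikjl}(0)x^kx^l$, so it is nonzero whenever $W_g(0)\neq0$. And nothing gives $\limsup|x|^{n-2}u<\infty$ a priori.

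Steps~4--5 are acceptable in outline; the paper simply cites Marques, Taliaferro--Zhang and Korevaar--Mazzeo--Pacard--Schoen for them. One correction: the first-order term comes from translations conjugated by Kelvin inversion. The derivative of $u_F(|x-b|)$ in direction $a$ is $-\frac{a\cdot x}{|x|}u_F'(|x|)$, which is not that term.
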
 

  The proof of Theorem~\ref{thm:main} relies crucially on the following
critical upper bound on the growth of solutions near the singularity.

\begin{theorem}\label{thm:upper-bound}
	Let $7\le n\le29$, and let $u$ be a positive solution of \eqref{eq:yamabe}.
	There exist $C_0>0$ and $r_0\in(0,1)$, depending only on $g$ and $u$, such
	that
	\begin{equation}\label{eq:critical-upper}
		u(x)\le C_0\,d_g(x,0)^{-(n-2)/2}
		\quad\text{whenever }0<d_g(x,0)<r_0.
	\end{equation}
\end{theorem}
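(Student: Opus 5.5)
We argue by contradiction, combining a blow-up analysis with a local Pohozaev-type obstruction; this is the approach of Marques, Xiong--Zhang and Han--Xiong--Zhang, and the new input is a sharper control of the blow-up profile for $7\le n\le 29$.

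\textbf{Setting up the blow-up.} Suppose \eqref{eq:critical-upper} fails. Then there are points $x_k\to0$ with $|x_k|^{(n-2)/2}u(x_k)\to\infty$, working in normal coordinates. We apply a Schoen-type selection on the ball $B_{|x_k|/2}(x_k)$ and obtain local maxima $\bar x_k$ satisfying $M_k:=u(\bar x_k)\to\infty$. Rescaling,
\[
v_k(y)=M_k^{-1}u(\bar x_k+M_k^{-2/(n-2)}y),
\]
the $v_k$ converge to a standard bubble on $\R^n$ by the Caffarelli--Gidas--Spruck classification. We take $\bar x_k$ to be an isolated blow-up point at scale $\sigma_k\sim|\bar x_k|$. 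By the Harnack inequality near an isolated singularity, together with the usual spherical-average argument of Schoen, this isolated blow-up point is isolated simple.

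\textbf{Local estimates.} The standard estimate is
\[
u(x)\le C M_k^{-1}|x-\bar x_k|^{2-n}\quad\text{on } B_{\sigma_k}(\bar x_k).
\]
Beyond this we need refined error estimates $|v_k-U-\phi_k|$, where $\phi_k$ is the correction solving the linearized equation with the curvature terms $\sum_{|\alpha|\ge2}$ of the metric Taylor expansion. This is the Khuri--Marques--Schoen machinery. Since the singularity is far away compared with the bubble scale, the rescaled distance satisfies $M_k^{2/(n-2)}|\bar x_k|\to\infty$. Hence the region containing the singular point contributes only a boundary term, which we control by $u\le C\,d^{2-n}M_k^{-1}$ on $\partial B_{\sigma_k}$. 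Harnack on annuli around the origin relates this term to $u$ at scale $|\bar x_k|$.

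\textbf{Pohozaev obstruction.} We apply the Pohozaev identity on $B_{\sigma_k}(\bar x_k)$. The boundary term has a sign determined by the limit of $M_k|\bar x_k-\cdot|^{n-2}u$, namely $a|z|^{2-n}+h$ with $h$ harmonic and $h(0)\ge0$. Because of the singular point, the limiting function can carry an extra positive singular part. The interior term is the Weyl-tensor/positive-mass quantity from the compactness problem. The point where $n\le 29$ is needed is that, for $n\le 24$, the Khuri--Marques--Schoen quadratic form in the Weyl coefficients is positive definite. Here, however, the scale $\sigma_k$ is tied to $|\bar x_k|$ rather than fixed. So we must prove that the interior term either has the right sign or is dominated by the boundary term, using a weighted comparison between $|\bar x_k|^{\,\cdot}$ and $M_k^{-\cdot}$. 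This yields a quadratic-form positivity condition of a different type, which holds exactly up to $n=29$. The contradiction then comes from the positive boundary term against a nonnegative (or smaller) interior term.

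\textbf{Main obstacle.} The hardest step is the sharp estimate of the interior Pohozaev term at the intermediate scale together with the positivity of the resulting quadratic form for $7\le n\le29$. I would first try to compute the form explicitly in terms of the traceless Taylor coefficients $h_{ij,\alpha}$ of $g$ and reduce it to a one-parameter family of Gegenbauer-type integrals. I would then check positivity dimension by dimension, since failure at $n=30$ suggests a sharp numerical threshold. If the form is not definite, I would use the freedom to remove a linearized conformal-Killing component of the Taylor coefficients by a choice of coordinates, analogous to conformal normal coordinates, before evaluating it.
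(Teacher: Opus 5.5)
Your proposal has genuine gaps, and the decisive mechanism is not identified.

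\textbf{First gap: the isolated-simple step is circular.} You assert that the selected point is isolated simple ``by the Harnack inequality near an isolated singularity together with Schoen's spherical-average argument.'' You then use $u\le CM_k^{-1}|x-\bar x_k|^{2-n}$ on $B_{\sigma_k}(\bar x_k)$, with $\sigma_k\sim|\bar x_k|$, and Harnack on annuli about the origin to control the boundary term. None of this is available under the contradiction hypothesis.
\begin{enumerate}
\item A Harnack inequality on $A_{r/2,2r}$ about the puncture needs $u^{p-1}\le Cr^{-2}$ there, which is exactly \eqref{eq:critical-upper}. Compare Lemma~\ref{lem:radial-harnack}, which assumes it. On the annulus through $\bar x_k$ it fails outright, because the bubble sits there.
\item In the compactness theory, isolated simplicity is not a soft consequence of Harnack. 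It is itself proved by the Pohozaev sign argument and needs control of $u$ on the whole ball. Here that ball contains the completely unknown behaviour of $u$ between the bubble and the puncture at scale $|\bar x_k|$.
\item Without the decay estimate you have no pointwise bounds on $v_k-U-\phi_k$ with which to evaluate the interior Pohozaev term.
\item You also have no convergent normalized limit $a|z|^{2-n}+h$ to fix the sign of the boundary term. Moreover, $h(0)\ge0$ alone gives no strict contradiction; what matters is the size of the positive term, which you never quantify.
\end{enumerate}

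\textbf{Second gap: the reason for $n\le29$ is left as ``a quadratic-form positivity condition of a different type.''} Your fallback of removing linearized conformal-Killing components cannot help. On $\mathcal V_m$ there is no such component left (see the Claim in the proof of Proposition~\ref{prop:euclidean-strict-positivity}), and the Khuri--Marques--Schoen form is indefinite for every $n\ge25$. The paper's mechanism has three parts.
\begin{enumerate}
\item \emph{Fixed physical radius.} It works on a ball of fixed physical radius $\delta$ about $x_k$ that contains the puncture; the equation holds across it by Lemma~\ref{lem:no-dirac}.
\item \emph{Large positive flux.} On the outer sphere, $v_k\asymp\varepsilon_k^{(n-2)/2}$, because $u_0$ is bounded above and below on a fixed annulus. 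So the positive flux is of order $\varepsilon_k^{(n-2)/2}$, not $\varepsilon_k^{n-2}$ as in the compactness problem.
\item \emph{Where $29$ comes from.} Consequently $\int Z_kE_k$ need only be bounded below up to $o(\varepsilon_k^{(n-2)/2})$. Jets of degree $\ge7$ contribute $O(\varepsilon_k^{14}|\log\varepsilon_k|)$, which is $o(\varepsilon_k^{(n-2)/2})$ exactly when $n\le29$. The same Khuri--Marques--Schoen form restricted to $\mathcal V_{\le6}$ is positive for $25\le n\le29$, and the pivot $p_3(D_2)$ changes sign at $n=30$.
\end{enumerate}

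\textbf{How the paper avoids controlling $u$ near the puncture.} It does not use a local Pohozaev identity at all. It tests the error equation for $w_k=v_k-\varphi_k$ against an extension $\widetilde Z_k$ of the dilation field of the corrected profile, which is negative outside a fixed ball. The uncontrolled region therefore enters only through $\mathcal I_k^-=\int(\widetilde Z_k)_-N_k$, with $N_k\ge0$, a favorable sign. Green representation, together with $w_k(0)=0$, $\nabla w_k(0)=0$ and the kernel classification, then gives $\|w_k\|_{L^1(B_\rho)}\le C\mathcal I_k^+$. This contradicts $\mathcal I_k^+\le C\int_{B_{R_0}}|w_k|^2=o(\|w_k\|_{L^1})$.
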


As we show in the companion paper, the critical upper bound
\eqref{eq:critical-upper} may fail in dimensions $n\ge30$, which explains why the dimensional range in Theorem~\ref{thm:main} cannot be extended. 

All the upper-bound arguments in the works mentioned above rely on the
method of moving planes or moving spheres. When the metric is not flat,
$u-u_{\mathrm{reflection}}$ satisfies an inhomogeneous linear elliptic
equation, and thus a key ingredient of the reflection argument---the
maximum principle---is no longer directly applicable. Chen--Lin~\cite{ChenLin1997} introduced an idea of constructing suitable auxiliary
functions to adapt the moving planes/spheres method. This idea was adapted and extended in later works. The auxiliary functions in Marques~\cite{f-mar} and
Xiong--Zhang~\cite{x-z-1} were radially symmetric, while
Han--Xiong--Zhang~\cite{HanXiongZhang}, inspired by the proofs of the
compactness of the Yamabe equation in higher dimensions (see
Li--Zhang~\cite{LZII, LZIII} and Khuri--Marques--Schoen~\cite{KMS}), constructed
non-radial auxiliary functions and proved the quantitative estimates needed for their argument; their construction, however, requires a flatness condition and is restricted to dimensions $n\le24$.

In this paper, we develop a new method that does not use moving planes or
moving spheres. In particular, our argument 
exploits the balance of terms in  the adjoint integral identity
\eqref{eq:global-four-term-identity} satisfied by the error between the rescaled solution and a standard bubble solution with corrections as given in \cite{KMS}, but only the positivity
of the Khuri--Marques--Schoen  quadratic form in \cite{KMS} on the space of metric jets of degree up to six is required; this allows us to cover the
additional dimensions $25\le n\le29$. It is worth noting that the
compactness of the Yamabe equation fails in dimensions $n\ge25$
(Brendle~\cite{Bre} for $n\ge52$, and Brendle--Marques~\cite{Bre-Mar} for
$25\le n\le51$); in contrast, the upper bound \eqref{eq:critical-upper}
remains valid up to dimension $n=29$. An overview of our method is given at the end of this section.

Under the critical upper bound assumption, the following theorem provides a matching lower bound with no restriction on the dimension.

\begin{theorem}\label{thm:lower-bound}
Let \(n\ge7\), $u$ be a solution of \eqref{eq:yamabe}, and suppose that the origin is nonremovable and that
\eqref{eq:critical-upper} holds for some \(C_0,r_0>0\). Then there exist
\(c>0\) and \(r_1\in(0,r_0)\) such that
\begin{equation}\label{eq:lower-bound}
 u(x)\ge c\,d_g(x,0)^{-(n-2)/2}
 \quad\text{whenever }0<d_g(x,0)<r_1.
\end{equation}
\end{theorem}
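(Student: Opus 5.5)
We argue by contradiction, following the scheme of Caffarelli--Gidas--Spruck and Korevaar--Mazzeo--Pacard--Schoen but in cylindrical variables adapted to $g$.

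\textbf{Step 1: cylindrical variables and Harnack.} Work in geodesic normal coordinates and set $t=-\log|x|$, $\theta=x/|x|$, and $v(t,\theta)=|x|^{(n-2)/2}u(x)$. The upper bound \eqref{eq:critical-upper} gives $0<v\le C_0$ for $t>-\log r_0$. The equation \eqref{eq:yamabe} becomes
\[
 v_{tt}+\Delta_{\Sph^{n-1}}v-\tfrac{(n-2)^2}{4}v+n(n-2)v^p=E(v),
\]
where $E$ is a linear second-order operator whose coefficients are $O(e^{-t})$ in $C^k$; the scalar curvature term contributes $O(e^{-2t})v$. Since $v$ is bounded, the coefficient $V=n(n-2)v^{p-1}$ is bounded. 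Interior elliptic estimates together with the Harnack inequality on unit cylinders $[T-2,T+2]\times\Sph^{n-1}$ then give
\[
 \max_{[T-1,T+1]\times\Sph^{n-1}}v\le C\min_{[T-1,T+1]\times\Sph^{n-1}}v,
\]
and $|\nabla v|+|\nabla^2 v|\le Cv$, with $C$ independent of $T$. It therefore suffices to show that $m(T):=\min_{\{T\}\times\Sph^{n-1}}v$ is bounded below.

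\textbf{Step 2: contradiction by blow-down.} Suppose there are $T_k\to\infty$ with $m(T_k)\to0$. By Harnack, $v\to0$ uniformly on $[T_k-L,T_k+L]\times\Sph^{n-1}$ for each fixed $L$, because the Harnack constant on $[T_k-L,T_k+L]$ depends only on $L$. We want to upgrade this to the statement that $v$ is small on an entire half-cylinder $[T_k,\infty)$ or $(-\infty,T_k]$, in a way that forces removability.

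The tool is a Pohozaev-type quantity. In the flat case this is the Fowler Hamiltonian
\[
 H(t)=\int_{\Sph^{n-1}}\Bigl(\tfrac12 v_t^2-\tfrac12|\nabla_\theta v|^2-\tfrac{(n-2)^2}{8}v^2+\tfrac{(n-2)^2}{2}v^{2n/(n-2)}\Bigr)\dd\theta .
\]
On the curved background we use the corresponding Pohozaev integral over $\partial B_r$ for $g$. Its derivative is $H'(t)=O(e^{-t})\int(v^2+|\nabla v|^2)=O(e^{-t})$ because $v$ is bounded. Hence $H(t)\to H_\infty$ as $t\to\infty$. Along $T_k$ we have $H(T_k)\to0$, using the gradient bound from Step 1, so $H_\infty=0$.

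\textbf{Step 3: excluding $H_\infty=0$ with $v\not\to$ Fowler.} Fix $\delta$ small. Take a sequence of windows where $\min v$ is comparable to $\delta$, which exist by continuity unless $v\to0$ everywhere. Translate these windows and pass to the limit. The $O(e^{-t})$ terms vanish, so the limit $\bar v$ solves the flat cylindrical equation on all of $\R\times\Sph^{n-1}$ with $0<\bar v\le C_0$.

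By Caffarelli--Gidas--Spruck, $\bar v$ is then either a Fowler solution or a translated bubble $\bigl(\cosh(t-t_0)\bigr)^{-(n-2)/2}$; the latter corresponds to the Hamiltonian level zero. Bubbles have $H=0$, while nonconstant Fowler solutions have $H<0$. So $H_\infty=0$ forces every such limit to be a bubble.

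\textbf{Step 4: ruling out a bubble tower.} We are left with $v$ behaving like a sequence of bubbles separated by longer and longer necks where $v\to0$. The key fact is that the necks grow: on a neck, $v$ satisfies an almost linear equation $v_{tt}+\Delta_\theta v-\tfrac{(n-2)^2}{4}v\approx0$. Its spherical-mean solutions are $e^{\pm(n-2)t/2}$, and these are exactly the profiles that glue bubbles.

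Comparing the mean of $v$ with $e^{-(n-2)t/2}$ on such necks, and using a maximum principle on $[T,\infty)$, shows that $u$ is bounded near $0$ once some neck is long enough. Precisely, $\bar u=\int_{\Sph^{n-1}}u$ satisfies $-\Delta\bar u\ge -Cr^{-1}|\nabla u|\ldots$, which yields $\bar u\le C$. Then $u\in L^{p}$ near $0$, and the singularity is removable by standard results, contradicting nonremovability.

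\textbf{Main obstacle.} The hardest step is the precise control of the non-flat error $E(v)$ on the long necks. There $v$ is tiny, so the $O(e^{-t})$ perturbation must be compared with $v$ itself rather than with $1$. We would first try to show $|E(v)|\le Ce^{-t}(v+|\nabla v|)\le C'e^{-t}v$ using the gradient Harnack bound. This keeps the perturbation relatively small, and an ODE comparison for the spherical mean then closes the argument. This is where $n\ge7$ and the exact structure of $L_g$ in normal coordinates (vanishing first-order metric terms) may enter.
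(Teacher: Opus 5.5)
Your Steps 1--3 are essentially sound and match the paper's preliminary analysis: Harnack and derivative bounds, existence of $\cP_\infty$, and $\cP_\infty=0$ when $\liminf r^{(n-2)/2}\bar u=0$. They also give bubble limits at the peaks, with one caveat: those limits are $U_{\lambda,b}$ with $b$ possibly nonzero, not only functions of $t$.

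The gap is Step 4, which is where the whole difficulty lies. A maximum principle on $[T,\infty)\times\Sph^{n-1}$ cannot give $\bar u\le C$, for two reasons. First, the linear neck equation holds only on the necks. On the half-cylinder the source $n(n-2)v^p$ is of order one at every later peak, so assuming no later bubble appears is circular. Second, an inequality of the form $-\Delta\bar u\ge\cdots$ is a supersolution inequality and yields lower, not upper, bounds. Nothing in your argument excludes bubbles at radii $\rho_j\downarrow0$ separated by ever longer necks. In that configuration $u$ is unbounded, so ``$u$ is bounded once some neck is long enough'' is exactly what must be proved. The bound $|E(v)|\le Ce^{-t}v$ on the necks, which you call the main obstacle, follows easily from Harnack and the derivative bounds and does not address the real problem.

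The missing idea is a \emph{signed} Pohozaev balance across each bubble, and you never use the Pohozaev quantity after Step 3. In the flat case the tower is excluded immediately: $\cP$ is constant, yet at each neck minimum $\bar\rho_j$ the harmonic limit $\frac12(1+|y|^{2-n})$ forces $\cP(\bar\rho_j,u)=-\mu_j$. Here $\mu_j\asymp a_j^2\asymp(\rho_{j+1}/\rho_j)^{(n-2)/2}>0$.

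For a general metric, $\mu_j-\mu_{j-1}$ equals the metric-error integral over the annulus between consecutive minima. After the linear part is removed (the metric source is orthogonal to the Jacobi fields), this drift is still of order $\rho_j^{2m_*}$, where $m_*\ge2$ is the first nonvanishing degree of $\log g$. In high dimensions an absolute bound of that size is not dominated by $\mu_j$ when $\rho_{j+1}\ll\rho_j$. So no size estimate, relative or absolute, closes the argument.

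The paper instead obtains the sign of the drift in three steps:
\begin{enumerate}
\item It subtracts the corrected profile $\Phi_j=U_{\lambda_j,b_j}+\sum_{m<(n-2)/2}\rho_j^mZ_{\lambda_j,b_j}[H^{(m)}]$.
\item It identifies the leading quadratic coefficient with the transverse--traceless second variation $\mathcal Q_{\lambda,b}$ on the round sphere, which is positive on $\mathcal V_m$.
\item It controls $v_j-\Phi_j$ through modulation and boundary corrections.
\end{enumerate}
This yields $\mu_{j-1}\le(1+C\eta_j)\mu_j+C\rho_j^{n-2}(1+|\log\rho_j|)$. Iterating backward gives $\mu_j\le C\rho_{j+1}^{n-2}(1+|\log\rho_{j+1}|)$, which contradicts $\mu_j\ge c(\rho_{j+1}/\rho_j)^{(n-2)/2}$. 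Without an ingredient of this kind, your proposal does not rule out the bubble tower.
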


Our proof of the lower bound is based on the Pohozaev identity on
successive annuli, continuing arguments developed by Chen--Lin
\cite{ChenLin1999} for scalar curvature equations with variable
coefficients, used by Marques~\cite{f-mar} for metrics that need not be
conformally flat, and refined by Han--Xiong--Zhang~\cite[Proposition~5.3 and
Section~6]{HanXiongZhang} through a sharper analysis of the spherical
average. These arguments estimate the Pohozaev bulk term by means of upper
bounds near a local minimum of the weighted spherical average.  In the present proof, we first derive a refined expansion of the solution
by adding the corrected profile, and then perform the comparison on the
full annulus. This leads to a quadratic annular form, whose analysis again
relies on the linearized Yamabe operator at a bubble~\cite{KMS}. See  the overview of our method at the end of this section for more details.

Theorems~\ref{thm:main} and \ref{thm:upper-bound} are formulated for $7\le n\le29$ and  Theorem~\ref{thm:lower-bound} is formulated for $n\ge 7$, but their conclusions hold for $3\le n\le 6$ based on earlier results of Marques \cite{f-mar} and 
Xiong--Zhang~\cite{x-z-1}.

Whenever the critical upper bound and the matching lower bound hold for a solution, its convergence to a Fowler solution reduces to the problem of phase
selection, namely, a unique choice of $u_F$ from the two-parameter family of Fowler solutions for \eqref{eq:main-fowler-asymptotic} to hold. This is an important issue but has been established in the
previous works of Caffarellli-Gidas-Spruck \cite{CGS}, Korevaar-Mazzeo-Pacard-Schoen \cite{KMPS},  and Taliaferro--Zhang \cite{TZ} for the flat metric cases,  and Marques \cite{f-mar} for the general metric cases; the arguments in \cite{TZ}, supplemented by \mbox{\cite[Theorem  3]{HanLiTeixeira}}, apply to the general metric cases as well. Thus the conclusion of Theorem~\ref{thm:main} holds for such a solution without the dimensional restrictions.

The results of \cite{CGS} and \cite{KMPS} have been extended to some
fully nonlinear Yamabe equations and higher order conformally invariant
equations; see, for example, Li--Li~\cite{LiLi2003},
Li~\cite{Li2006}, Chang--Han--Yang~\cite{CHY2005,CHY2020},  
Han--Li--Teixeira~\cite{HanLiTeixeira}, 
Caffarelli--Jin--Sire--Xiong~\cite{CJSX2014}, Frank-König~\cite{FK2019},
Han--Li--Li \cite{HLL}, and Jin--Xiong~\cite{JX2021}.

\subsection*{Overview of the proofs}

\medskip\noindent\textbf{Notation.}
We write \(B_r=\{y\in\R^n:|y|<r\}\) and
\(A_{r,R}=B_R\setminus\overline{B_r}\), where \(0<r<R\).
The Euclidean metric is \(\delta\), and differential operators without a
metric subscript are Euclidean. We use \(\dd y\), \(\dd S\), and
\(\dd\theta\) for Euclidean volume, Euclidean surface measure, and round
measure on \(\Sph^{n-1}\), respectively. Unless stated otherwise,
\(C>0\) may change from line to line and may depend on
the dimension, the fixed metric and solution, and parameters fixed before
the estimate, but not on the rescaling index. For nonnegative quantities
\(A\) and \(B\), we write \(A\asymp B\) if
\(C^{-1}B\le A\le CB\), with the stated dependence of \(C\).
In coordinate estimates, \(D^a\) denotes the collection of Euclidean
partial derivatives of order \(a\). Derivatives of metric functionals,
such as \(D\!\operatorname{Scal}_\gamma[k]\) and
\(D^2\mathcal E_\gamma[k,\ell]\), denote variations with respect to the
metric. In conformal normal coordinates, we write

\[
 h=\log g,\qquad g=e^h,\qquad g^{-1}=e^{-h}.
\]

 For \(m\ge2\), let \(\mathcal V_m\) be the space of homogeneous symmetric
matrix polynomials \(H\) of degree \(m\) satisfying
\begin{equation}\label{eq:jet-gauge}
 \sum_{i=1}^nH_{ii}(y)=0,\qquad
 \sum_{j=1}^nH_{ij}(y)y^j=0\quad(1\le i\le n).
\end{equation}
For each integer \(\ell\ge2\), let \(\mathcal V_{\le\ell}:=\bigoplus_{m=2}^{\ell}\mathcal V_m\).
For\(H=(H_{ij})\in\mathcal V_m\) with
\(H_{ij}(y)=\sum_{|\alpha|=m}H_{ij,\alpha}y^\alpha\), define the coefficient norm by
\begin{equation}\label{eq:coefficient-norm}
|H|^2=\sum_{i,j=1}^n\sum_{|\alpha|=m}|H_{ij,\alpha}|^2.
\end{equation}
Since \(\mathcal V_m\) is finite dimensional, this norm is equivalent to
every fixed norm on \(\mathcal V_m\), including the
\(L^2(\Sph^{n-1})\) norm.

Set \(d=\left\lfloor\frac{n-2}{2}\right\rfloor\)
throughout the paper.
For the standard bubbles, we use the scale parameter:
\begin{equation}\label{eq:standard-bubble}
 U_{\lambda,b}(y)
 =\left(\frac{\lambda}{\lambda^2+|y-b|^2}\right)^{(n-2)/2},
 \qquad U_\lambda=U_{\lambda,0},\qquad U=U_1.
\end{equation}
In the fixed conformal normal coordinates \(x\), for a differentiable function \(f\), define its dilation derivative by
\begin{equation}\label{eq:dilation-operator}
 D_{\mathrm{di}}f=\frac{n-2}{2}f
 +\sum_{i=1}^nx^i\partial_i f,
\end{equation}
and define its
Pohozaev boundary integral by
\begin{equation}\label{eq:pohozaev}
 \cP(r,f)=\int_{\partial B_r}
 \left(\frac{n-2}{2}f\partial_r f-\frac r2|\nabla f|^2
       +r(\partial_r f)^2+\frac{(n-2)^2}{2}r|f|^{p+1}\right)\,\dd S.
\end{equation}
Here \(\partial_r\) is the outward Euclidean radial derivative, \(\nabla=\nabla_\delta\) and \(dS=dS_\delta\)
are Euclidean quantities in these coordinates. If the functional is subsequently applied after $x=\rho_j y$, we use the same Euclidean boundary functional in the rescaled
coordinates \(y\).

\subsection*{Part I: Overview of the upper bound}

Let \((g_0,u_0)\) denote the original metric and solution of \eqref{eq:yamabe}.
Suppose that the critical upper bound fails. The argument in
Section  \ref{sec:automatic-bound} produces local maxima \(x_k\to0\) and
scales
\[
 \varepsilon_k=u_0(x_k)^{-2/(n-2)},\qquad
 \frac{\varepsilon_k}{d_{g_0}(x_k,0)}\to0.
\]
After conformal normalization at \(x_k\) and rescaling by
\(\varepsilon_k\), the metric \(g_k\) and solution \(v_k\) are defined
on \(B_{R_k}\), where \(R_k\to\infty\), and
\[
 v_k(0)=1,\qquad \nabla v_k(0)=0,\qquad
 v_k\to U
 \quad\text{in }C^2_{\mathrm{loc}}(\R^n).
\]
Let \(H_k^{(m)}\) be the homogeneous term of degree \(m\) in the Taylor
expansion of \(\log\widehat g^{(k)}\) in the unrescaled normal coordinates
centered at \(x_k\). Thus the corresponding term in \(\log g_k(y)\) is
\(\varepsilon_k^mH_k^{(m)}(y)\). The corrections of
Khuri--Marques--Schoen  \cite{KMS} satisfy
\[
 \bigl(\Delta+n(n+2)U^{4/(n-2)}\bigr)
 \psi_{m,1}(H_k^{(m)})
 =c(n)\sum_{i,j=1}^n\partial_i\partial_jH_{k,ij}^{(m)}U,
\]
with value and gradient zero at the origin. Their scaled versions define
the family
\[
 \varphi_{k,\lambda}
 =U_\lambda+\sum_{m=4}^{n-4}\varepsilon_k^m
   \psi_{m,\lambda}(H_k^{(m)}),\qquad
 Z_k=-\lambda\partial_\lambda\varphi_{k,\lambda}
       \big|_{\lambda=1}.
\]
Thus \(Z_k\) is the negative logarithmic scale derivative of
\(\varphi_{k,\lambda}\) at \(\lambda=1\), with \(y\),
\(\varepsilon_k\), and the tensors \(H_k^{(m)}\) held fixed.
For \(\varphi_k=\varphi_{k,1}\), define 
\(E_k\) by

\[
 E_k=-L_{g_k}\varphi_k-n(n-2)\varphi_k^p,\quad w_k=v_k-\varphi_k.
\]
From this the error equation for $w_k$ \eqref{eq:short-error-equation} is written as
\[
 \mathcal L_k: =-L_{g_k}-pn(n-2)\varphi_k^{p-1}, \quad L_kw_k=N_k-E_k,
 \]
 where \(N_k=n(n-2)\bigl(v_k^p-\varphi_k^p
        -p\varphi_k^{p-1}w_k\bigr)\ge0\)  is the nonlinear remainder.
        
 Extend
\(Z_k\) from a fixed ball by a solution of the homogeneous linearized
equation on the exterior annulus. The extension \(\widetilde Z_k\), defined in \eqref{eq:short-global-adjoint}, is
negative there. Testing the error equation against \(\widetilde Z_k\) and integrating by parts gives the four-term identity
\eqref{eq:global-four-term-identity}. Its outer-boundary term supplies a positive contribution of order
\(\varepsilon_k^{(n-2)/2}\)(Lemma~\ref{lem:exterior-corrected-adjoint}); its principal geometric volume term \(\int_{B_{R_k}} \widetilde Z_kE_k\)
  is first compared with
\(\int_{B_{R_k}} Z_kE_k\) (Lemma~\ref{lem:adjoint-replacement-interface}), and the latter is compared with the finite-radius quadratic form
\(I_{\varepsilon_k,R_k}^{(n)}(H_k,H_k)\) defined in
\eqref{eq:truncated-pohozaev-form}--see Lemma~\ref{lem:signed-residual}, and hence with the whole-space Khuri--Marques--Schoen form
\(I_{\varepsilon_k}^{(n)}\) in
\eqref{eq:kms-pohozaev-form}.

Let
\[
 \mathcal I_k^\pm
 =\int_{B_{R_k}}(\widetilde Z_k)_\pm N_k\,\dd y,
\]
then four-term identity
\eqref{eq:global-four-term-identity} and the outer boundary flux (Lemmas~\ref{lem:exterior-corrected-adjoint} and  \ref{lem:global-balance}) give
\[
 \mathcal I_k^+-\mathcal I_k^-
 \ge c\varepsilon_k^{(n-2)/2}.
\]
This is the signed lower bound used in the contradiction. On the fixed ball
that contains the positive part of \(\widetilde Z_k\), Green
representation (Lemma  \ref{lem:fixed-core}) gives \(\|w_k\|_{L^1}\le C\mathcal I_k^+\). Since
\(N_k\le C|w_k|^2\) there and \(w_k\to0\) uniformly on the fixed ball,
\[
 0<\mathcal I_k^+\le C\int_{B_{R_0}}|w_k|^2\dd y
 \le o(1)\,\|w_k\|_{L^1}
 \le o(\mathcal I_k^+),
\]
which is impossible. This proves Theorem  \ref{thm:upper-bound}. The threshold dimension $29$ comes from estimating \(I_{\varepsilon_k,R_k}^{(n)}(H_k,H_k)\)
by splitting \(H_k\) into its part of degree $\le 6$ and the higher degree part and showing that, precisely in the range $n\le 29$, the resulting remainder terms are of higher order than  \(\varepsilon_k^{(n-2)/2}\) of the outer boundary flux.

\subsection*{Part II: Overview of the lower bound}

Use Günther's conformal normal coordinates at the fixed singular point \cite[Section  3]{Gunther}. Suppose
that the singularity is nonremovable, the critical upper bound holds,
and the lower bound fails. Define
\[
 \bar u(r)=\fint_{\Sph^{n-1}}u(r\theta)\,\dd\theta.
\]
By Lemma  \ref{lem:peak-minimum-radii}, the weighted spherical average
\(r^{(n-2)/2}\bar u(r)\) has alternating local maxima and minima. The
estimate from above and below in \eqref{eq:multiplicative-two-sided} and the geometric
mean estimate \eqref{eq:multiplicative-midpoint} locate each minimum
between consecutive maxima. They lead to radii
\[
 \bar\rho_j<\rho_j<\bar\rho_{j-1},\qquad
 \sigma_j=\frac{\bar\rho_j}{\rho_j},\qquad
 R_j=\frac{\bar\rho_{j-1}}{\rho_j}.
\]

On \(A_{\sigma_j,R_j}\), set
\[
 v_j(y)=\rho_j^{(n-2)/2}u(\rho_jy),\qquad
 g_j=\rho_j^{-2}(y\mapsto\rho_jy)^*g,
 \qquad \mu_j=-\cP(\bar\rho_j,u).
\]
The exact Pohozaev identity becomes
\begin{equation}\label{eq:introduction-pohozaev-increment}
 Q_j:=\mu_j-\mu_{j-1}
 =\int_{A_{\sigma_j,R_j}}
 (D_{\mathrm{di}}v_j)(\Delta-L_{g_j})v_j\,\dd y.
\end{equation}
Proposition  \ref{prop:radial-scales} also identifies the two boundary
scales:
\[
 C^{-1}\sigma_j^{n-2}\le\mu_j\le C\sigma_j^{n-2},\qquad
 C^{-1}R_j^{2-n}\le\mu_{j-1}\le CR_j^{2-n}.
\]
It remains to estimate the single volume integral in
\eqref{eq:introduction-pohozaev-increment}.

Compactness first produces a bubble \(U_{\lambda,b}\) whose center may
differ from the puncture. For each homogeneous term \(H^{(m)}\) in
\(h=\log g\), where \(2\le m<(n-2)/2\), define
\(Z_{\lambda,b}[H^{(m)}]\) by

\begin{equation}\label{eq:introduction-correction-equation}
 \bigl(\Delta+n(n+2)U_{\lambda,b}^{4/(n-2)}\bigr)
 Z_{\lambda,b}[H^{(m)}]
 =\sum_{i,j=1}^n
 \partial_i\bigl(H_{ij}^{(m)}\partial_jU_{\lambda,b}\bigr)
 +c(n)\left(\sum_{i,j=1}^n
 \partial_i\partial_jH_{ij}^{(m)}\right)U_{\lambda,b}.
\end{equation}
The linearized operator has the dilation and translation Jacobi fields
in its kernel. We therefore impose the normalization

\[
 \int_{\R^n}U_{\lambda,b}^{4/(n-2)}
 Z_{\lambda,b}[H^{(m)}]J_{\lambda,b,l}\,\dd y=0,
 \qquad 0\le l\le n,
\]
where \(J_{\lambda,b,0}=\lambda\partial_\lambda U_{\lambda,b}\) and
\(J_{\lambda,b,l}=\partial_{b^l}U_{\lambda,b}\) for \(1\le l\le n\).
The correction satisfying \eqref{eq:introduction-correction-equation} cancels the error that is linear in the metric term
\(H^{(m)}\). Section  \ref{sec:euclidean-response} constructs this
normalized solution and proves the estimates needed below.

After the corrections have been constructed, the parameters
\((\lambda_j,b_j)\) are chosen by
Proposition  \ref{prop:final-modulation}. Set

\[
 \Phi_j=U_{\lambda_j,b_j}
 +\sum_{2\le m<(n-2)/2}\rho_j^m
 Z_{\lambda_j,b_j}[H^{(m)}],\qquad
 e_j=v_j-\Phi_j.
\]
The function \(\Phi_j\) satisfies
\[
 -L_{g_j}\Phi_j=n(n-2)\Phi_j^p+\mathcal R_j^\Phi,
\]
where \(\mathcal R_j^\Phi\) is defined in
\eqref{eq:reference-equation-error} and estimated in
\eqref{eq:reference-equation-error-estimate}. The equation for \(\Phi_j\) therefore records
precisely the metric and nonlinear errors that remain after the
corrections have removed the leading jet contributions.

Section  \ref{sec:reference-lower-bound} estimates
\(\widetilde Q_j=B_j(\Phi_j,\Phi_j)\). For every
\(m<(n-2)/2\), Proposition  \ref{prop:finite-annulus-self-pairing}
compares the homogeneous quadratic coefficient with the spherical form
\(\mathcal Q_{\lambda_j,b_j}\). If the first nonzero degree \(m\) is
below \((n-2)/2\), its coefficient gives a positive term of order
\(\rho_j^{2m}\).
Section  \ref{sec:exact-reference-comparison} estimates
\[
 Q_j-\widetilde Q_j
 =B_j(v_j,v_j)-B_j(\Phi_j,\Phi_j)
\]
using \(e_j=v_j-\Phi_j\) and the two boundary corrections of \(e_j\). Combining
these estimates with \eqref{eq:exact-Qj-increment} gives
\[
 \mu_{j-1}\le(1+C\eta_j)\mu_j
 +C\rho_j^{n-2}(1+|\log\rho_j|),
 \qquad \eta_j=\bar\rho_{j-1}^2.
\]
The summability of \(\eta_j\) permits backward iteration. The resulting
upper bound for \(\mu_j\), together with the lower bound supplied by the
next peak, contradicts the existence of infinitely many successive
minima. This proves Theorem  \ref{thm:lower-bound}.

\subsection*{Organization of the rest of the paper}

Section  \ref{sec:preliminaries} records the common normalization and
analytic preliminaries, and Section  \ref{sec:automatic-bound} proves the
critical upper bound. Section  \ref{sec:pohozaev-dichotomy} constructs the
successive maximum and minimum radii, while
Section  \ref{sec:euclidean-annuli} derives the exact Pohozaev increment on
the rescaled annuli. Section  \ref{sec:euclidean-response} constructs the
linear corrections and proves positivity of the associated quadratic
form. Sections  \ref{sec:reference-lower-bound} and
\ref{sec:exact-reference-comparison} estimate the Pohozaev contribution
of \(\Phi_j\) and compare it with that of the actual solution.
Section  \ref{sec:recurrence} proves the conditional lower bound and the
Fowler asymptotics. Appendix  \ref{app:quadratic-estimate} contains the
calculation on a finite ball used for the upper bound, and
Appendix  \ref{app:proofs} contains the coefficient calculations for the
annular estimates.

\section{Preliminaries}\label{sec:preliminaries}
\label{sec:common-preliminaries}

{
This section records the exact volume coordinate normalization and some of its properties, Euclidean bubbles, and distributional extension used in the two
estimates.

For a positive function \(\kappa\), conformal covariance takes the form
\[
 L_{\kappa^{4/(n-2)}g}(\kappa^{-1}v)=\kappa^{-p}L_gv.
\]
}

 Let \(K\) be a compact subset of the coordinate neighborhood,
 let \(0<\omega<1\). By G\"unther's
conformal normal coordinate construction \cite[Section  3]{Gunther}, there
exist \(R,C>0\) such that the following holds for every \(a\in K\). There
are a positive factor \(\kappa_a\) and a normal coordinate map \(\Theta_a\),
based on a frame at \(a\) that is orthonormal with respect to \(g_0\), for which
\(g_a=\Theta_a^*(\kappa_a^{4/(n-2)}g_0)\) is defined on \(B_R\) and
\begin{equation}\label{eq:conformal-normal-gauge}
\begin{aligned}
 &\kappa_a(a)=1,\qquad d\kappa_a(a)=0,
   \qquad C^{-1}\le\kappa_a\le C,\\
 &\det [(g_a)_{ij}] =1,\qquad g_a(0)=I,\qquad
   \sum_{i=1}^n(g_a)_{ij}(y)y^i=y^j\quad(1\le j\le n),\\
 &\|g_a\|_{C^{n+3,\omega}(B_R)}
   +\|g_a^{-1}\|_{C^{n+3,\omega}(B_R)}\le C,\qquad
 C^{-1}|y|\le d_{g_0}(a,\Theta_a(y))\le C|y|.
\end{aligned}
\end{equation}
The factors and coordinate maps have uniform \(C^{n+4,\omega}\) bounds in
the original charts. The construction uses the method of
Lee--Parker \cite[Theorem  5.1 and its proof]{LeeParker}, followed by
G\"unther's correction. This construction involves only finitely many derivatives of
\(g_0\), so the constants and bounds are uniform in \(a\), with dependence
only on \(K,\omega,n\) and the indicated metric bounds.

{We next record some consequences of this conformal normalization.}
In a normalized chart, write \(h=\log g\). The Gauss lemma and the normalization \(\eqref{eq:conformal-normal-gauge}\) give
\begin{equation}\label{eq:normal-gauge}
 g=e^h,\qquad 
 \sum_{i=1}^nh_{ii}(x)=0,\qquad
 \sum_{j=1}^nh_{ij}(x)x^j=0, \quad 1\le i\le n.
\end{equation}
The expansion in normal coordinates and volume normalization give
\begin{equation}\label{eq:normal-orders}
 h(0)=0,\qquad \partial h(0)=0,\qquad
 |h(x)|\le C|x|^2,\qquad |\operatorname{Scal}_g(x)|\le C|x|^2;
\end{equation}
see Lee--Parker \cite[Theorem  5.1]{LeeParker}. Since \(g^{-1}=e^{-h}\),
we also have
\[
 \sum_{i=1}^ng^{ij}(x)x_i=x^j,\qquad
 \Delta_g f(|x|)=f''(|x|)+\frac{n-1}{|x|}f'(|x|).
\]
For any $H\in \mathcal V_m$, differentiating the radial
identity in \eqref{eq:jet-gauge} and integrating by parts on \(B_1\) give
the following moment cancellations:
{
\[
 \int_{\Sph^{n-1}}\sum_{i,j=1}^n\partial_i\partial_jH_{ij}\,\dd\theta=0,
 \qquad
 \int_{\Sph^{n-1}}\theta_a
       \sum_{i,j=1}^n\partial_i\partial_jH_{ij}\,\dd\theta=0,
 \quad 1\le a\le n.
\]
}

\begin{lemma}\label{lem:radial-cancellation}
Let \(0<r<R\), and suppose that \(\overline{B_R}\) is contained in the
normalized coordinate neighborhood. For every
\(f\in C^2(\overline{A_{r,R}})\) and \(\zeta\in C^1([r,R])\),
\[
 \int_{\pa B_s}(\Delta_g-\Delta)f\,\dd S=0
 \quad\text{for every }s\in(r,R),\qquad
 \int_{A_{r,R}}\zeta(|x|)(\Delta_g-\Delta)f\,\dd x=0.
\]
\end{lemma}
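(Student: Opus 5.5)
The plan is to write $\Delta_g-\Delta$ in divergence form, using $\det g=1$ from \eqref{eq:conformal-normal-gauge}. Since $\sqrt{\det g}=1$, we have
\[
 \Delta_g f=\sum_{i,j}\partial_i\bigl(g^{ij}\partial_j f\bigr),\qquad\text{so}\qquad
 (\Delta_g-\Delta)f=\sum_{i,j}\partial_i\bigl((g^{ij}-\delta^{ij})\partial_j f\bigr)=\diver X,
\]
where $X^i=\sum_j(g^{ij}-\delta^{ij})\partial_jf$. The key pointwise fact is that $X$ is tangential to spheres centered at the origin. By the gauge identity $\sum_i g^{ij}(x)x_i=x^j$, recorded after \eqref{eq:normal-orders}, we get
\[
 \sum_i X^ix_i=\sum_j\Bigl(\sum_i g^{ij}x_i-x^j\Bigr)\partial_jf=0 .
\]
Hence $X\cdot\nu=0$ on every sphere $\partial B_s$.

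For the sphere identity, integrate $\diver X$ over the annulus $A_{s,t}$ with $r<s<t<R$. The divergence theorem gives $\int_{A_{s,t}}(\Delta_g-\Delta)f\,\dd x=0$ for all such $s<t$, because $X\cdot\nu=0$ on both boundary spheres. The function $F(s)=\int_{A_{r_*,s}}(\Delta_g-\Delta)f\,\dd x$ is then constant, for any fixed $r_*\in(r,R)$. By the coarea formula, $F'(s)=\int_{\partial B_s}(\Delta_g-\Delta)f\,\dd S$, and this derivative exists and is continuous in $s$ since $f\in C^2$. It follows that $\int_{\partial B_s}(\Delta_g-\Delta)f\,\dd S=0$ for every $s\in(r,R)$.

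An alternative that avoids differentiating in $s$ is to write $\diver X=\partial_\rho(X\cdot\hat x)+\frac{n-1}{\rho}(X\cdot\hat x)+\diver_{\partial B_\rho}X^{\top}$. The radial part vanishes identically, and the tangential divergence integrates to zero on the closed sphere.

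For the weighted identity, use Fubini in polar coordinates:
\[
 \int_{A_{r,R}}\zeta(|x|)(\Delta_g-\Delta)f\,\dd x=\int_r^R\zeta(s)\int_{\partial B_s}(\Delta_g-\Delta)f\,\dd S\,\dd s=0.
\]
One can also integrate by parts directly. The boundary terms vanish as before, and the interior term is $-\int\zeta'(|x|)\,X\cdot\hat x\,\dd x=0$.

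The only delicate point is justifying the divergence form, which is exactly where $\det g=1$ is used: without it, a term $\frac{1}{\sqrt{\det g}}\partial_i\sqrt{\det g}$ would appear. The remaining hypotheses are regularity issues. Since $f$ is only $C^2$ up to the boundary, $X$ is $C^1$ on $\overline{A_{r,R}}$, which is enough for the divergence theorem. The statement is for open $s\in(r,R)$, so boundary regularity is harmless.
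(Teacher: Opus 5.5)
Your proof is correct and rests on the same two facts as the paper's proof. The first is the divergence form $(\Delta_g-\Delta)f=\sum_{i,j}\partial_i\bigl((g^{ij}-\delta^{ij})\partial_jf\bigr)$, which follows from $\det g=1$; the second is the radial gauge $(g^{-1}-I)x=0$, which kills both the boundary flux and the interior term $\zeta'(|x|)\,\tfrac{x}{|x|}\cdot X$. The only difference is the order of deduction: the paper proves the weighted identity first, by exactly the direct integration by parts you list as an alternative, and then recovers the sphere identity from the coarea formula with $\zeta\in C^1_c((r,R))$ and continuity, whereas you prove the sphere identity first and then integrate in $s$.
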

\begin{proof}
Because \((g^{-1}-I)x=0\), the normal flux vanishes on both boundary
spheres. Integration by parts gives
\[
 \int_{A_{r,R}}\zeta(|x|)(\Delta_g-\Delta)f\,\dd x
 =-\sum_{i,j=1}^n\int_{A_{r,R}}
 \zeta'(|x|)\frac{x_i}{|x|}(g^{ij}-\delta^{ij})\partial_jf\,\dd x=0.
\]
Applying the coarea formula to the second identity with
\(\zeta\in C_c^1((r,R))\) gives the first identity by continuity.
\end{proof}

The standard bubbles in \eqref{eq:standard-bubble} satisfy
\(-\Delta U_{\lambda,b}=n(n-2)U_{\lambda,b}^p\).
The dilation kernel function is
{
\[
 Z_0=\frac{n-2}{2}U+\sum_{i=1}^ny^i\partial_iU
 =\frac{n-2}{2}\frac{1-|y|^2}{1+|y|^2}U.
\]
}
Thus \(Z_0>0\) in \(B_1\), \(Z_0=0\) on \(\partial B_1\), and
\(Z_0<0\) outside \(\overline{B_1}\).
For the kernel statement,  let \(\dot H^1(\mathbb R^n)\) be the homogeneous Sobolev completion of \(C_c^\infty(\mathbb R^n)\) under \(L^2\) convergence of gradients.
By Bianchi--Egnell \cite[Lemma  A1]{BianchiEgnell},
{
\[
 \ker_{\dot H^1(\R^n)}
 \bigl(-\Delta-n(n+2)U^{4/(n-2)}\bigr)
 =\operatorname{span}\{Z_0,\partial_1U,\ldots,\partial_nU\}.
\]
}
Every function in this kernel is uniquely determined by its value and
gradient at the origin, since \(Z_0(0)=(n-2)/2\),
\(\nabla Z_0(0)=0\), and
\(\partial_i\partial_jU(0)=-(n-2)\delta_{ij}\).

{We close with the distributional extension used for Green
representation on a rescaled ball containing the original puncture.}
\begin{lemma}\label{lem:no-dirac}
Let \(u>0\) solve \eqref{eq:yamabe} on \(B_1\setminus\{0\}\).
There exists \(\rho_0=\rho_0(g,u)\in(0,1)\) such that, for every
\(0<\rho<\rho_0\),
{
\[
 u,u^p\in L^1_{\mathrm{loc}}(B_\rho),\qquad
 -L_gu=n(n-2)u^p\quad\text{in }\mathcal D'(B_\rho).
\]
}
\end{lemma}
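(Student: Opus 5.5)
The plan is the classical cutoff argument (as in Caffarelli--Gidas--Spruck and Schoen), adapted to \(L_g\). The key input is that a positive supersolution cannot concentrate a Dirac mass at an isolated point. Fix \(\rho_0\) so small that \(\overline{B_{\rho_0}}\) lies in the coordinate chart and the operator \(-L_g\) on \(B_{\rho_0}\) has a positive Green function \(G\) with Dirichlet boundary conditions. For instance, take \(\rho_0\) so small that the first Dirichlet eigenvalue of \(-L_g\) is positive, which holds because \(\operatorname{Scal}_g\) is bounded. Then \(G(x,y)\asymp|x-y|^{2-n}\) near the diagonal.

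\textbf{Step 1 (integrability of \(u^p\)).} Let \(\eta_\eps(x)=\eta(|x|/\eps)\), where \(\eta\) is smooth, equals \(0\) on \([0,1]\) and \(1\) on \([2,\infty)\). Let \(\phi\ge0\) be a fixed cutoff supported in \(B_\rho\). Set \(\xi_\eps=\phi\eta_\eps^q\) with a large power \(q\), for instance \(q=2p'=2(n+2)/4\); any \(q\) with \(q(p-1)/p\ge2\) suffices.

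Test the equation against \(\xi_\eps\) and integrate by parts twice, which is legitimate since \(\xi_\eps\) vanishes near the origin:
\[
n(n-2)\int u^p\xi_\eps=\int u\,(-L_g^*\xi_\eps)\le C\int_{B_\rho}u\,\phi\,\eta_\eps^{q}+\int u\,|\Delta_g(\phi\eta_\eps^q)|.
\]
Since \(L_g\) is formally self-adjoint with respect to \(\dd v_g=\dd x\) (recall \(\det g=1\)), the adjoint is harmless. The terms involving derivatives of \(\eta_\eps\) are supported in \(A_{\eps,2\eps}\) and are bounded by
\[
C\eps^{-2}\int_{A_{\eps,2\eps}} u\,\eta_\eps^{q-2}.
\]
By Hölder,
\[
C\eps^{-2}\int_{A_{\eps,2\eps}} u\,\eta_\eps^{q-2}\le C\eps^{-2}\Bigl(\int u^p\eta_\eps^{q}\Bigr)^{1/p}\eps^{n(1-1/p)},
\]
using \((q-2)p\ge q\). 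Since \(n(1-1/p)-2=n\cdot\frac{4}{n+2}-2=\frac{2(n-2)}{n+2}>0\), this is \(o(1)\bigl(\int u^p\xi_\eps\bigr)^{1/p}\) plus harmless terms.

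The remaining obstacle, and the main point, is the term \(\int u\,\phi\eta^q_\eps\). We need local \(L^1\) control of \(u\) near \(0\) to absorb it. I would first get this from superharmonicity. Pick \(\rho_0\) so that there is a positive function \(\kappa\) on \(B_{\rho_0}\) with \(L_g\kappa=0\), close to \(1\). For example, solve \(L_g\kappa=0\) with \(\kappa=1\) on \(\partial B_{\rho_0}\); this is possible for small \(\rho_0\). Then \(\tilde u=u/\kappa\) satisfies
\[
-\operatorname{div}(\kappa^2 g\nabla\tilde u)\ge0
\]
in the punctured ball. This is a positive supersolution of a uniformly elliptic divergence-form operator with an isolated singularity. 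By the standard Brezis--Lions / Serrin-type result, positive supersolutions are in \(L^1_{\mathrm{loc}}\), indeed in weak \(L^{n/(n-2)}\).

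Concretely, one can avoid quoting this. Compare \(\tilde u\) on \(A_{\eps,\rho}\) with \(m_\eps(\rho)\), the minimum on the outer sphere. The weak Harnack inequality on annuli then gives
\[
\int_{\partial B_s}\tilde u\le C
\]
uniformly in \(s\), via the standard monotonicity of spherical means of supersolutions (for the Euclidean-type principal part, Lemma~\ref{lem:radial-cancellation} makes the spherical mean of \((\Delta_g-\Delta)u\) vanish). This yields \(u\in L^1(B_\rho)\). Inserting this into the inequality of Step 1 and letting \(\eps\to0\) gives \(\int u^p\phi<\infty\) by monotone convergence.

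\textbf{Step 2 (no Dirac mass).} Now \(u,u^p\in L^1_{\mathrm{loc}}(B_\rho)\). By the Riesz/Brezis--Lions decomposition, the distribution \(-L_gu-n(n-2)u^p\) is supported at \(0\) and has order at most the order allowed by \(u\in L^1\). Hence it equals \(\alpha\delta_0\) for some \(\alpha\in\mathbb R\), and \(\alpha\ge0\) by positivity of \(u\). To show \(\alpha=0\), redo the test with \(\xi=\phi\eta_\eps^q\) replaced by \(\phi(1-\eta_\eps)\), which has value \(1\) at \(0\). This gives
\[
\alpha=\lim_{\eps\to0}\int u\,(-L_g)[\phi(1-\eta_\eps)]-n(n-2)\int u^p\phi(1-\eta_\eps).
\]
The second integral tends to \(0\). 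The first is bounded by \(C\eps^{-2}\int_{A_{\eps,2\eps}}u\). If \(\alpha>0\), then \(u\ge c\,G(\cdot,0)\ge c|x|^{2-n}\) near \(0\) by the representation formula. But then \(u^p\ge c|x|^{-(n+2)}\notin L^1\), contradicting Step 1. So \(\alpha=0\), and the equation holds in \(\mathcal D'(B_\rho)\).

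I expect the main obstacle to be the \(L^1\) bound on \(u\) in Step 1, that is, controlling \(u\) itself near \(0\) before integrability of \(u^p\) is known. The supersolution spherical-mean argument should handle it.
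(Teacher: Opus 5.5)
Your argument is correct, and its final step is the paper's own: once $u,u^p\in L^1_{\mathrm{loc}}(B_\rho)$ and $-L_gu=n(n-2)u^p+\alpha\delta_0$ with $\alpha\ge0$, the Green lower bound $u\ge c\alpha|x|^{2-n}$ (the paper cites Gr\"uter--Widman) contradicts $u^p\in L^1$.

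The difference lies in how you reach that point. The paper invokes Dupaigne--Ponce to extend the supersolution inequality $(-\Delta_g+A)u\ge0$ across $\{0\}$, a set of zero $H^1$ capacity. This produces a nonnegative measure, and hence the integrability and the form $\alpha\delta_0$, in one stroke. You instead redo the Caffarelli--Gidas--Spruck cutoff computation by hand, which is more elementary.

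It is also simpler than you make it. The term you call the main obstacle is harmless. Write $X_\eps=\int u^p\phi\eta_\eps^q$; then H\"older gives $\int u\,\phi\eta_\eps^q\le X_\eps^{1/p}|B_\rho|^{1/p'}$. So $X_\eps\le C+CX_\eps^{1/p}$, and $p>1$ yields a bound uniform in $\eps$. No a priori $L^1$ bound on $u$, no $\kappa$-transform, and no spherical-mean argument is needed. That sketch is unreliable as written anyway: $\kappa$ is not radial, so Lemma~\ref{lem:radial-cancellation} does not apply to $\operatorname{div}(\kappa^2g^{-1}\nabla\,\cdot)$.

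The same H\"older bound settles Step 2. Once $u^p\in L^1$, it gives $\eps^{-2}\int_{B_{2\eps}}u\le C\eps^{2(n-2)/(n+2)}\|u^p\|_{L^1(B_{2\eps})}^{1/p}\to0$. Testing against $x^\beta\chi(x/\eps)$ then kills every coefficient of the point-supported distribution, including $\alpha$, so you can drop the Green-function comparison. This is also what your remark about ``the order allowed by $u\in L^1$'' actually needs: $u\in L^1$ alone does not give $\int_{B_\eps}u=o(\eps)$, which is required to exclude $\partial_i\delta_0$.

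In short, the paper buys brevity by citing a capacity theorem that needs only the supersolution inequality. Your route, trimmed as above, gives a fully self-contained proof using nothing beyond H\"older's inequality.
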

The classical argument of Caffarelli--Gidas--Spruck
\cite[Lemma  2.1]{CGS} applies to the present metric through
Dupaigne--Ponce \cite[Theorem  4]{DupaignePonce}. For
\(A>\|c(n)\operatorname{Scal}_g\|_{L^\infty(B_\rho)}\), the supersolution inequality
\((-\Delta_g+A)u\ge0\) extends across \(\{0\}\), a set of zero
\(H^1\) capacity. The resulting nonnegative measure decomposition gives
\(u,u^p\in L^1_{\mathrm{loc}}(B_\rho)\) and
\(-L_gu=n(n-2)u^p+\alpha\delta_0\) with \(\alpha\ge0\).
If \(\alpha>0\), the local Green lower estimate of Gr\"uter--Widman
\cite{GruterWidman} gives
\(u(x)\ge c\alpha d_g(x,0)^{2-n}\), contrary to
\(u^p\in L^1_{\mathrm{loc}}(B_\rho)\). Thus \(\alpha=0\).

\section{The critical upper bound}
\label{sec:automatic-bound}
\label{sec:upper-green-identity}
\label{sec:upper-local-estimate}

{
We prove the critical upper bound in
Theorem  \ref{thm:upper-bound} by contradiction. The corrections of
Khuri--Marques--Schoen give the lower bound for
\(\int_{B_{R_k}}Z_kE_k\,\dd y\) in
Lemma  \ref{lem:signed-residual}. Green's identity and the exterior
extension of \(Z_k\) then give the positive lower bound in
Lemma  \ref{lem:global-balance}. Iteration of the Green representation
gives an estimate in \(L^1\) for the \(w_k\) on a fixed ball, which is
incompatible with the quadratic estimate for the nonlinear remainder.

Suppose that Theorem  \ref{thm:upper-bound} fails.  By
Han--Xiong--Zhang \cite[Lemma  2.2]{HanXiongZhang} and the usual recentring
argument, there are local maximum points \(x_k\to0\) such that
\[
 \varepsilon_k:=u_0(x_k)^{-2/(n-2)}\to0,
 \qquad
 \frac{\varepsilon_k}{d_{g_0}(x_k,0)}\to0.
\]
At \(x_k\), apply the conformal normal coordinate construction from
Section  \ref{sec:preliminaries}.  Let \(C_{\mathrm g}\)
denote the resulting uniform metric bound.  Choose \(\kappa_k\) so that
\[
 \kappa_k(x_k)=1,\qquad \nabla\kappa_k(x_k)=0,
\]
and write
\[
 \widehat g^{(k)}=\kappa_k^{4/(n-2)}g_0,\qquad
 \widehat u_k=\kappa_k^{-1}u_0.
\]
Use conformal normal coordinates $y$ centered at \(x_k\) for
\(\widehat g^{(k)}\), so that \(\det(\widehat g^{(k)}_{ij})=1\).
For now, let \(0<\delta<1\), and set
\[
 R_k=\frac{\delta}{\varepsilon_k},\qquad
 (g_k)_{ij}(y)=(\widehat g^{(k)})_{ij}(\varepsilon_ky),\qquad
 v_k(y)=\varepsilon_k^{(n-2)/2}\widehat u_k(\varepsilon_ky).
\]
The value of \(\delta\) will be fixed in
Lemma  \ref{lem:adjoint-replacement-interface}.
Throughout this section, every \(o(\,\cdot\,)\) term is
taken as \(k\to\infty\), with the auxiliary parameters fixed in the order
specified below.
Then
\begin{equation}\label{eq:short-bubble-limit}
\begin{gathered}
 v_k(0)=1,\qquad \nabla v_k(0)=0,\\
 -L_{g_k}v_k=n(n-2)v_k^p
 \quad\text{in }\mathcal D'(B_{R_k}),\\
 v_k\to U=(1+|y|^2)^{-(n-2)/2}
 \quad\text{in }C^2_{\mathrm{loc}}(\R^n).
\end{gathered}
\end{equation}

In the normal coordinates $y$ centered at \(x_k\), put
\((h_k(y))_{ij}=\log[(\widehat g^{(k)})_{ij}(y)]\) and define
\[
 H_{k,ij}^{(m)}(y)
 =\sum_{|\alpha|=m}
 \frac{\partial^\alpha h_{k,ij}(0)}{\alpha!}y^\alpha.
\]
Then \(\log [(g_k)_{ij}(y)]=(h_k)_{ij}(\varepsilon_k y)\). We use the two truncations
\[
 H_k=\sum_{m=2}^dH_k^{(m)},\qquad
 \widehat H_k=\sum_{m=2}^{n-4}H_k^{(m)}.
\]
$H_k$ will be used to construct 
the Khuri--Marques--Schoen quadratic form \eqref{eq:truncated-pohozaev-form} and its weighted norm, while
$\widehat H_k$ will be used to construct the correction $z_{k,\lambda}$ below.

For \(4\le m\le n-4\), let
\(\psi_{m,1}(H_k^{(m)})\) be the normalized corrector in
\cite[Proposition  4.1]{KMS}. It satisfies
\begin{equation}\label{eq:upper-polynomial-correction}
 \bigl(\Delta+n(n+2)U^{4/(n-2)}\bigr)\psi_{m,1}(H_k^{(m)})
 =c(n)\sum_{i,j=1}^n\partial_i\partial_jH_{k,ij}^{(m)}U,
\end{equation}
with value and gradient zero at the origin. 
Define
\[
 \psi_{m,\lambda}(H_k^{(m)})(y)
 =\lambda^{m-(n-2)/2}\psi_{m,1}(H_k^{(m)})(y/\lambda),
 \qquad
 z_{k,\lambda}=\sum_{m=4}^{n-4}\varepsilon_k^m
 \psi_{m,\lambda}(H_k^{(m)}).
\]
For the normalized metric at \(x_k\), the function \(z_{k,1}\)
corresponds to the rescaled correction
\(\widetilde z_{\varepsilon_k}\) in \cite[Section  4]{KMS}.
Thus
\[
 \bigl(\Delta+n(n+2)U_\lambda^{4/(n-2)}\bigr)z_{k,\lambda}
 =c(n)\sum_{m=4}^{n-4}\sum_{i,j=1}^n
 \varepsilon_k^m\partial_i\partial_jH_{k,ij}^{(m)}U_\lambda.
\]

Set
\[
 \varphi_{k,\lambda}=U_\lambda+z_{k,\lambda},\qquad
 Z_k=-\lambda\partial_\lambda\varphi_{k,\lambda}\big|_{\lambda=1}
 =Z_0-\lambda\partial_\lambda z_{k,\lambda}\big|_{\lambda=1}.
\]
In the definition of \(Z_k\), the derivative with respect to
\(\lambda\) is taken with \(y\), \(\varepsilon_k\), and the tensors
\(H_k^{(m)}\) fixed. Define \(E_{k,\lambda}\) by
\begin{equation}\label{eq:short-profile-residual}
 E_{k,\lambda}=-L_{g_k}\varphi_{k,\lambda}
       -n(n-2)\varphi_{k,\lambda}^{p}.
\end{equation}

Write \(\varphi_k=\varphi_{k,1}\), \(E_k=E_{k,1}\), and set
\[
 V_k=p\,n(n-2)\varphi_k^{p-1},\qquad
 \mathcal L_k=-L_{g_k}-V_k,\qquad
 w_k=v_k-\varphi_k.
\]
Then
\begin{equation}\label{eq:short-error-equation}
 \mathcal L_kw_k=N_k-E_k,
\end{equation}
where
\begin{equation}\label{eq:short-nonlinear-remainder}
 N_k=n(n-2)\bigl(v_k^p-\varphi_k^p
        -p\varphi_k^{p-1}w_k\bigr).
\end{equation}
}

 \begin{lemma}\label{lem:profile-boundary-estimates}
There exists $\delta_1=\delta_1(n,C_{\mathrm g})>0$ such that, for every
$0<\delta\le\delta_1$ and all sufficiently large $k$,
{
\[
 \frac12U\le\varphi_k\le2U\quad\text{in }B_{R_k},
 \qquad N_k\ge0.
\]
}
 On $\partial B_{R_k}$,
{
\[
 c\varepsilon_k^{(n-2)/2}\le v_k\le
 C\varepsilon_k^{(n-2)/2},
 \qquad
 |\varphi_k|+|Z_k|\le C\varepsilon_k^{n-2}.
\]
}
 Here $c$ and $C$ depend only on $n$, $C_{\mathrm g}$, $\delta$,
$\|u_0\|_{C^0(A_{\delta/2,2\delta})}$, and
 $(\inf_{A_{\delta/2,2\delta}}u_0)^{-1}$.
\end{lemma}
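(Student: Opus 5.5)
The plan is to treat the three assertions separately. All of them rest on pointwise bounds for the KMS correctors $\psi_{m,1}(H_k^{(m)})$ and on the fact that $\partial B_{R_k}$ corresponds, in the original coordinates, to a sphere of radius $\delta$ around $x_k$.

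\emph{Corrector bounds.} From \cite[Proposition~4.1]{KMS}, the normalized corrector satisfies
\[
 |\psi_{m,1}(H)(y)|+(1+|y|)|\nabla\psi_{m,1}(H)(y)|\le C|H|(1+|y|)^{m+2-n}.
\]
After scaling, the same bound holds for $\lambda\partial_\lambda\psi_{m,\lambda}|_{\lambda=1}$, since the $\lambda$-derivative is $\bigl(m-\tfrac{n-2}{2}\bigr)\psi_{m,1}-y\cdot\nabla\psi_{m,1}$. Since $|H_k^{(m)}|\le C_{\mathrm g}$, on $B_{R_k}$ we get
\[
 \frac{|z_{k,1}|}{U}\le C\sum_{m=4}^{n-4}\varepsilon_k^m(1+|y|)^m\le C\sum_m(\varepsilon_kR_k)^m\le C\delta^4 .
\]
Choosing $\delta_1$ with $C\delta_1^4\le\tfrac12$ gives $\tfrac12U\le\varphi_k\le2U$.

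\emph{Sign of $N_k$.} The inequality $N_k\ge0$ then follows from convexity of $t\mapsto t^p$ on $[0,\infty)$, because $\varphi_k>0$ and $v_k>0$. We write
\[
 N_k=n(n-2)\bigl(f(v_k)-f(\varphi_k)-f'(\varphi_k)(v_k-\varphi_k)\bigr),\qquad f(t)=t^p,
\]
which is nonnegative by convexity.

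\emph{Bounds on $\partial B_{R_k}$.} For $v_k$, a point $y$ with $|y|=R_k$ corresponds to original normal coordinate $\varepsilon_k y$, at distance comparable to $\delta$ from $x_k$ by \eqref{eq:conformal-normal-gauge}. Hence
\[
 v_k(y)=\varepsilon_k^{(n-2)/2}\kappa_k^{-1}u_0 .
\]
Since $d_{g_0}(x_k,0)\to0$, the point $\Theta_{x_k}(\varepsilon_ky)$ lies in an annulus around the puncture of radii comparable to $\delta$. Adjusting constants, or using the distortion bound $C^{-1}|y|\le d\le C|y|$ together with a Harnack chain, we may take this annulus to be $A_{\delta/2,2\delta}$. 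There $u_0$ is bounded above and below, and $C^{-1}\le\kappa_k\le C$, which gives $c\varepsilon_k^{(n-2)/2}\le v_k\le C\varepsilon_k^{(n-2)/2}$. For $\varphi_k$ and $Z_k$, we have $U(y)\le R_k^{2-n}=\delta^{2-n}\varepsilon_k^{n-2}$ and $|Z_0|\le CU$. The corrector parts are bounded by
\[
 C\varepsilon_k^m R_k^{m+2-n}=C\delta^{m+2-n}\varepsilon_k^{n-2},
\]
which yields $|\varphi_k|+|Z_k|\le C\varepsilon_k^{n-2}$ with $C$ depending on $\delta$.

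\emph{Main obstacle.} The delicate step is the corrector decay estimate $(1+|y|)^{m+2-n}$, including its $\lambda$-derivative. I would extract it from the structure in \cite{KMS}: the corrector is a sum of terms of the form $\text{polynomial}\cdot(1+|y|^2)^{-k}$ with the correct homogeneity. The other point needing care is making the annulus containing the image of $\partial B_{R_k}$ match $A_{\delta/2,2\delta}$. I would handle this by shrinking $\delta_1$ and using the uniform distortion constants.
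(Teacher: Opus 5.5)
Your proposal is correct and follows essentially the same route as the paper. The KMS decay $(1+|y|)^{m+2-n}$ for the correctors and their scale derivatives gives $|\varphi_k-U|+|Z_k-Z_0|\le C\delta^4U$ on $B_{R_k}$. Convexity of $t\mapsto t^p$ then gives $N_k\ge0$, the bound $U\le C\varepsilon_k^{n-2}$ on $\partial B_{R_k}$ controls $\varphi_k$ and $Z_k$, and Harnack on the corresponding physical annulus gives the two-sided bound for $v_k$. The only cosmetic slip is that on $B_{R_k}$ one has $\varepsilon_k(1+|y|)\le\varepsilon_k+\delta$ rather than $\varepsilon_kR_k$, which is harmless for large $k$.
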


\begin{proof}
 Fix $y\in B_{R_k}$ and write $r=|y|$. By
\cite[Proposition  4.1 and (4.4)]{KMS}, we have
{
\[
\begin{aligned}
 |\varphi_k(y)-U(y)|
 +\bigg|Z_k(y)-\frac{n-2}{2}\frac{1-r^2}{1+r^2}U(y)\bigg|
 \le{}C\sum_{m=4}^{n-4}\varepsilon_k^m
 |H_k^{(m)}|(1+r)^{m+2-n}
 \le C\delta^4U(y).
\end{aligned}
\]
}
 Decrease $\delta_1$, if necessary, so that
$C\delta_1^4\le1/2$.  Since $\delta\le\delta_1$, the first estimate in the
lemma follows.  The convexity of $t\mapsto t^p$ then gives $N_k\ge0$.
On $\partial B_{R_k}$, we have
$U\le C\varepsilon_k^{n-2}$; hence the displayed corrector estimate
gives $|\varphi_k|+|Z_k|\le C\varepsilon_k^{n-2}$.  On the corresponding
fixed physical annulus, the Harnack inequality gives
{
\[
 c\le\widehat u_k(\varepsilon_k y)\le C.
\]
}
 Multiplying this inequality by $\varepsilon_k^{(n-2)/2}$ proves the
boundary estimate for $v_k$.
\end{proof}

For $H=\sum_{m=2}^dH^{(m)}\in\mathcal V_{\le d}$, set
{
\[
 h_{\varepsilon,t}
 =t\sum_{m=2}^d\varepsilon^mH^{(m)},
 \qquad
 g_{\varepsilon,t}=\exp h_{\varepsilon,t},
 \qquad
 \varphi_{\lambda,\varepsilon,t}
 =U_\lambda+t\sum_{m=4}^d
 \varepsilon^m\psi_{m,\lambda}(H^{(m)}).
\]
}
 Define
{
\[
 Z_{\varepsilon,t}
 =-\lambda\partial_\lambda
 \varphi_{\lambda,\varepsilon,t}\big|_{\lambda=1},
 \qquad
 E_{\varepsilon,t}
 =-L_{g_{\varepsilon,t}}\varphi_{1,\varepsilon,t}
 -n(n-2)\varphi_{1,\varepsilon,t}^p.
\]
}
 The \emph{truncated Pohozaev quadratic form} is
{
\begin{equation}\label{eq:truncated-pohozaev-form}
 I_{\varepsilon,R}^{(n)}(H,H)
 :=\frac{1}{2c(n)}\frac{\dd^2}{\dd t^2}
 \bigg(\int_{B_R}Z_{\varepsilon,t}E_{\varepsilon,t}\dd y\bigg)
 \bigg|_{t=0}.
\end{equation}
}
Khuri--Marques--Schoen define the Pohozaev quadratic form on Euclidean space
$I_\varepsilon^{(n)}$ in \cite[Appendix  A]{KMS}.  The truncated form
\eqref{eq:truncated-pohozaev-form} is defined directly by the second
variation on $B_R$, with the boundary contributions retained.

 For every integer $m$, let $\theta_m=1$ if $m=(n-2)/2$, and let
$\theta_m=0$ otherwise.  For $\varepsilon>0$, $R\ge1$, and a finite
sum $K=\sum_mK^{(m)}$, write
$K_{\le r}=\sum_{m\le r}K^{(m)}$,
 where absent homogeneous components are understood to be zero,
and $K_{>r}=K-K_{\le r}$, and set
{
\begin{equation}\label{eq:weighted-jet-norm}
 \|K\|_{\varepsilon,R}^2
 =\sum_m\varepsilon^{2m}(1+\log R)^{\theta_m}|K^{(m)}|^2,
\end{equation}
}
 where the sum is over the degrees present in $K$.

\begin{lemma} \label{lem:signed-residual}
Let $7\le n\le29$. For the sufficiently small $\delta>0$ fixed in
Lemma  \ref{lem:adjoint-replacement-interface}, there is
$C_1=C_1(n)>0$ such that, for all sufficiently large $k$,
{
\[
 \int_{B_{R_k}}Z_kE_k\dd y
 \ge C_1\|H_{k,\le6}\|_{\varepsilon_k,R_k}^2
 -o\bigl(\varepsilon_k^{(n-2)/2}\bigr).
\]
}
\end{lemma}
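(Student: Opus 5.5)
The plan is to expand $\int_{B_{R_k}}Z_kE_k\,\dd y$ in powers of the metric jets and compare it with the truncated Pohozaev form \eqref{eq:truncated-pohozaev-form} evaluated at $H=H_k$, $\varepsilon=\varepsilon_k$, $R=R_k$.

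\emph{Step 1: reduce to the second variation.} Write $\log g_k = h_{\varepsilon_k,1}+ r_k$, where $h_{\varepsilon_k,1}$ is built from $H_k=\sum_{m\le d}H_k^{(m)}$ and $r_k$ collects the Taylor terms of degree $>d$ (in $y$, with weights $\varepsilon_k^m$), including the part $\widehat H_k-H_k$ that also enters the corrector $z_{k,1}$. Consider $F(t)=\int_{B_{R_k}}Z_{\varepsilon_k,t}E_{\varepsilon_k,t}\,\dd y$. Then $F(0)=0$, since $E=0$ for the bubble with the flat metric. Also $F'(0)=0$: the linear part of $E$ in $H$ is exactly cancelled by the KMS corrector, by \eqref{eq:upper-polynomial-correction}, together with the trace-free and radial gauge \eqref{eq:jet-gauge}. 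The degree-$2$ and degree-$3$ terms produce no linear error against $U$, because $\partial_i\partial_jH_{ij}U$ vanishes there by the Weyl-type structure in conformal normal coordinates, as in \cite{KMS}.

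Hence $\int Z_kE_k = c(n)\,I^{(n)}_{\varepsilon_k,R_k}(H_k,H_k)+\mathcal R_k$. The remainder $\mathcal R_k$ consists of cubic terms in the jets, cross terms between $H_k$ and $r_k$, and the $r_k$ terms themselves. Each term is bounded using the KMS pointwise decay $|\psi_{m,1}(H^{(m)})|\le C|H^{(m)}|(1+|y|)^{m+2-n}$ and integrated over $B_{R_k}$, with $R_k=\delta/\varepsilon_k$. Terms of total degree $\ge n-2$ are $O(\varepsilon_k^{n-2}R_k^{\,\cdot})$, hence $O(\delta^{\cdot}\varepsilon_k^{(n-2)/2})$ after the scaling. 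The delicate count is to show that everything not of the form $\varepsilon_k^{m+l}|H^{(m)}||H^{(l)}|$ with $m,l\le d$ is $o(\varepsilon_k^{(n-2)/2})$. For terms that are only $O(\delta\,\varepsilon_k^{(n-2)/2})$, one uses that $\delta$ is fixed small in Lemma~\ref{lem:adjoint-replacement-interface}, and absorbs them there. I expect this to be the reason the statement carries the $o(\cdot)$ with $\delta$ fixed.

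\emph{Step 2: positivity on low degrees.} Split $H_k=H_{k,\le6}+H_{k,>6}$. The KMS form is block-structured in $m$ up to boundary terms. By \cite[Appendix A]{KMS}, $I^{(n)}_\varepsilon$ restricted to $\mathcal V_{\le 6}$ is positive definite with weights $\varepsilon^{2m}$. The truncated form differs from $I^{(n)}_\varepsilon$ by the tail integrals outside $B_R$, which I will estimate by Appendix~\ref{app:quadratic-estimate}. For $m=(n-2)/2$ the diagonal coefficient is a log-divergent integral, which gives the $(1+\log R)$ factor in \eqref{eq:weighted-jet-norm}. This yields $c(n)I(H_{k,\le6},H_{k,\le6})\ge 2C_1\|H_{k,\le6}\|^2_{\varepsilon_k,R_k}-o(\varepsilon_k^{(n-2)/2})$.

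\emph{Step 3: the high-degree part and cross terms (main obstacle).} Here I drop positivity and only bound $|I(H_{>6},H_{>6})|+|I(H_{\le6},H_{>6})|$. With $7\le m\le d$, a term of order $\varepsilon_k^{2m}|H^{(m)}|^2$ with $2m\ge14$ must be compared with $\varepsilon_k^{(n-2)/2}$. Since $\varepsilon_k\to0$ and the jets are bounded, $\varepsilon_k^{2m}=o(\varepsilon_k^{(n-2)/2})$ as long as $2m>(n-2)/2$, that is, for all $m\ge7$ when $n\le29$ (since $14>27/2$). This is precisely where $n\le 29$ enters.

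Cross terms $\varepsilon_k^{m+l}|H^{(m)}||H^{(l)}|$ with $l\ge7$ are handled by Cauchy--Schwarz: they are at most $\eta\|H_{\le6}\|^2+C_\eta\varepsilon_k^{2l}|H^{(l)}|^2$, possibly with a $\log R_k$ loss when degrees reach $(n-2)/2$. The small $\eta$ part is absorbed into $C_1$, and the rest is $o(\varepsilon_k^{(n-2)/2})$ by the same count. The hard bookkeeping is the logarithms from the borderline degree $(n-2)/2$ near $n=29$ ($d=13$). I would first check that these appear only on diagonal terms carrying the weight $(1+\log R)$, or in off-diagonal terms where the power count already has slack $\varepsilon_k^{1/2}$, which beats $\log(1/\varepsilon_k)$.
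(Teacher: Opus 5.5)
There is a genuine gap in Step~2. You attribute to \cite[Appendix~A]{KMS} the statement that $I^{(n)}_\varepsilon$ restricted to $\mathcal V_{\le 6}$ is positive definite with weights $\varepsilon^{2m}$. Khuri--Marques--Schoen prove positivity of the full form only for $n\le 24$ (their Proposition~A.4). They show the form is indefinite for every $n\ge 25$ (their Proposition~A.8). They do not prove positivity of the restriction to degrees at most six when $25\le n\le 29$.

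For $7\le n\le 24$ your Step~2 is fine, since the restriction of a positive form is positive. For $25\le n\le 29$, however, the estimate $I_\varepsilon^{(n)}(H,H)\ge b_n^{(6)}\sum_{m=2}^6\varepsilon^{2m}|H^{(m)}|^2$ on $\mathcal V_{\le6}$ is exactly the new input the lemma needs. It is Proposition~\ref{prop:kms-algebraic-positivity} of the paper. There it is proved by decomposing $\mathcal V_{\le 6}$ into the KMS Case~1, 2, 3 blocks, computing every leading pivot in closed form, and checking signs via Sylvester's criterion.

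These blocks couple different degrees, for instance degrees $3$ and $5$, or $a$, $a+2$, $a+4$. So the form is not block diagonal in $m$, contrary to what you assert, and the positivity is a statement about the coupled matrices. The range is sharp: the factor $n^2-34n+144$ in the pivot $p_3(D_2)$ changes sign between $n=29$ and $n=30$, and at $n=30$ the restricted form is already indefinite.

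Your remark that $n\le 29$ enters ``precisely'' through $14>(n-2)/2$ is therefore only half the story. The threshold comes from the coincidence of this algebraic positivity range with the requirement $2\cdot 7>(n-2)/2$. Without the pivot computation, your argument proves the lemma only for $n\le 24$.

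There is also a smaller but real error in Step~1. It is not true that everything except the quadratic terms in the jets of degree at most $d$ is $o(\varepsilon_k^{(n-2)/2})$. The cubic Taylor remainder of $t\mapsto\int_{B_{R_k}}Z_tE_t\,\dd y$ is only bounded by sums of $\varepsilon_k^{s+t+u}|H_k^{(s)}||H_k^{(t)}||H_k^{(u)}|$ times radial integrals. When $s+t+u<(n-2)/2$, for example $s=t=u=2$ with $n\ge 15$, these bounds are not $o(\varepsilon_k^{(n-2)/2})$. Such terms must instead be absorbed into the positive quadratic term.

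In the paper this absorption uses $\varepsilon_k(1+|y|)\le C\delta$ on $B_{R_k}$. That gives a cubic remainder of at most $C\delta^2\|H_k\|_{\varepsilon_k,R_k}^2+C\varepsilon_k^{\min\{n-2,14\}}(1+|\log\varepsilon_k|)$, and $\delta$ is fixed so that $C\delta^2$ is a small fraction of the positivity constant (Lemma~\ref{lem:actual-finite-comparison}). That is the actual role of the smallness of $\delta$.

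Conversely, terms of total degree at least $n-2$ are $O(\varepsilon_k^{n-2}(1+|\log\varepsilon_k|))$, not merely $O(\delta\,\varepsilon_k^{(n-2)/2})$, so there is nothing of the latter kind to absorb. In any case, an error of size $O(\delta\,\varepsilon_k^{(n-2)/2})$ with $\delta$ fixed would not give the $o(\varepsilon_k^{(n-2)/2})$ in the statement.
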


 The proof will be given in
Appendix  \ref{app:quadratic-estimate} using
three estimates established there.  Proposition  \ref{prop:kms-algebraic-positivity}
gives positivity of $I_\varepsilon^{(n)}$.  For $7\le n\le24$, this is the positivity theorem of
Khuri--Marques--Schoen.  When $25\le n\le29$, their full form is
indefinite, and we use the restriction to degrees at most six
established by the explicit calculation in
Proposition  \ref{prop:kms-algebraic-positivity}.
Lemma  \ref{lem:finite-kms-bridge} compares
$I_{\varepsilon,R}^{(n)}$ with $I_\varepsilon^{(n)}$ when
$R=\delta/\varepsilon$, and Lemma  \ref{lem:actual-finite-comparison}
compares \(c(n)^{-1}\int_{B_{R_k}}Z_kE_k\dd y\) with
$I_{\varepsilon_k,R_k}^{(n)}(H_k,H_k)$.

We next establish the Green function estimates that will be applied to an extension of \(Z_k\)
outside a fixed ball and to control its Green representation on that ball.
For a domain $\Omega\subset B_{R_k}$, let $G_\Omega(x,y)$ be the
Dirichlet Green  function of the conformal Laplacian $-L_{g_k}$  on $\Omega$, and write
{
\[
 G_\Omega[f](x)=\int_\Omega G_\Omega(x,y)f(y)\dd y.
\]
}
\begin{lemma} \label{lem:global-green}
There exist $\delta_2=\delta_2(n,C_{\mathrm g})>0$ and $C=C(n,C_{\mathrm g})>1$ such that, for every
$0<\delta\le\delta_2$, all sufficiently large $k$,  and every $1\le R_0<R_k$,

\begin{enumerate}[label=\emph{(\roman*)}]
\item
$0<G_\Omega(x,y)\le C|x-y|^{2-n}$ for every
$\Omega\in\{B_{R_k},A_{R_0,R_k}\}$ and $x,y\in\Omega$ with $x\ne y$;
\item
$G_{B_{R_k}}(0,y)\ge C^{-1}\bigl(|y|^{2-n}-R_k^{2-n}\bigr)$ for every
$y\in B_{R_k}\setminus\{0\}$;
\item The boundary fluxes satisfy
\[
C^{-1}\le-\int_{\partial B_{R_k}}\partial_{r_\xi}G_{B_{R_k}}(0,\xi)\dd S(\xi)
\le\sup_{y\in B_{R_k}}\int_{\partial B_{R_k}}-\partial_{r_\xi}G_{B_{R_k}}(y,\xi)\dd S(\xi)\le C;
\]
\item For every distinct $x,y,z\in B_{R_k}$,
\[
\frac{G_{B_{R_k}}(z,x)G_{B_{R_k}}(x,y)}{G_{B_{R_k}}(z,y)}
\le C\bigl(|z-x|^{2-n}+|x-y|^{2-n}\bigr).
\]
\end{enumerate}
 \end{lemma}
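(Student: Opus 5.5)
The plan is to treat $-L_{g_k}$ on $B_{R_k}$ as a small perturbation of $-\Delta$, uniformly in $k$, and then import the classical Green function estimates for uniformly elliptic divergence-form operators with a small zeroth-order term. In the rescaled chart, $g_k(y)=\widehat g^{(k)}(\varepsilon_k y)$ with $|y|<\delta/\varepsilon_k$. By \eqref{eq:normal-orders} and \eqref{eq:conformal-normal-gauge} we then have
\[
 |g_k-I|\le C\delta^2,\qquad |\nabla g_k|\le C\delta\varepsilon_k,\qquad |\operatorname{Scal}_{g_k}(y)|=\varepsilon_k^2|\operatorname{Scal}_{\widehat g^{(k)}}(\varepsilon_ky)|\le C\varepsilon_k^4|y|^2\le C\delta^2\varepsilon_k^2 .
\]
Since $\det g_k=1$, we can write $-L_{g_k}=-\partial_i(g_k^{ij}\partial_j)+c(n)\operatorname{Scal}_{g_k}$. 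The principal part is in divergence form with coefficients $C\delta^2$-close to $\delta^{ij}$, and the potential $q_k=c(n)\operatorname{Scal}_{g_k}$ satisfies $\|q_k\|_{L^{n/2}(B_{R_k})}\le C\delta^2\varepsilon_k^2R_k^2\le C\delta^4$. After fixing $\delta_2$ small, the operator is coercive on $H^1_0(\Omega)$ for any $\Omega\subset B_{R_k}$, because Sobolev's inequality absorbs $q_k$. The Dirichlet Green function therefore exists and is positive, by the maximum principle for coercive operators.

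For (i), I would use Grüter--Widman. For the pure divergence part they give $0<G^0_\Omega\le C|x-y|^{2-n}$ with $C$ depending only on $n$ and the ellipticity constants. To include $q_k$, I would argue in one of two ways. If $q_k\ge0$ somewhere, the comparison $G_\Omega\le G^0_{\Omega}$ holds there. In general, I would run a Neumann series $G=G^0-G^0q_kG^0+\cdots$, whose convergence follows from
\[
\int |x-z|^{2-n}|q_k(z)||z-y|^{2-n}\,dz\le C\|q_k\|_{L^{n/2}}|x-y|^{2-n}\le C\delta^4|x-y|^{2-n}.
\]
That last bound is the standard 3G-type convolution estimate with an $L^{n/2}$ weight, which needs a Lorentz-space version or simply the pointwise bound $|q_k|\le C\delta^4 R_k^{-2}$ on the ball. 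Domain monotonicity then gives the bound for both $B_{R_k}$ and $A_{R_0,R_k}$.

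For (ii)--(iv), I would rescale $B_{R_k}$ to the unit ball via $y=R_k\xi$. The rescaled operator has coefficients $C^{1}$-close to the identity with small Lipschitz constant, since $|\nabla_\xi g|\le C\delta^2$ by the $C^{n+3,\omega}$ bound. Its potential is bounded by $C\delta^4$. Green functions on the unit ball for such operators satisfy the two-sided Widman bound
\[
G(\xi,\eta)\asymp |\xi-\eta|^{2-n}\min\Bigl\{1,\frac{d(\xi)d(\eta)}{|\xi-\eta|^2}\Bigr\},
\]
with constants depending only on $n$ and $C_{\mathrm g}$, valid for $C^{1,\omega}$ coefficients on smooth domains. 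Scaling back gives (ii), taking $\xi=0$. It gives (iii) as well, since the boundary flux is the Poisson kernel and $-\partial_\nu G(y,\cdot)\asymp d(y)|y-\cdot|^{-n}$ up to scaling; its total integral is at most the value at $y$ of the solution with boundary data $1$, which is bounded by a constant through the maximum principle, and at least a positive constant at the center. Finally, (iv) is the 3G inequality, which follows from the two-sided Widman bound by the standard elementary argument of Cranston--Fabes--Zhao on $C^{1,1}$ domains.

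The main obstacle is making the constants uniform in $k$, that is, ensuring that the boundary Hopf-type lower bounds behind (ii)--(iv) depend only on $n$ and $C_{\mathrm g}$. I would handle this by checking that the unit-ball rescaled coefficients lie in a fixed compact set of $C^{1,\omega}$ near-identity matrices with potentials $C\delta^4$-small. The Widman constants depend only on these norms, so no compactness in $k$ is needed beyond that.
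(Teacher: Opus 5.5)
Your proposal is sound, but it reaches (ii)--(iv) by a heavier route than the paper.

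\textbf{The paper's route.} It never uses boundary-sensitive two-sided (Widman--Zhao type) bounds for the full operator. It takes the Green function $\Gamma_k$ of the potential-free operator $-\Delta_{g_k}$ and uses only the Newtonian upper bound and the 3G inequality for $\Gamma_k$, after rescaling to the unit ball. It then proves $\int_{B_{R_k}}\Gamma_k(x,z)|c(n)\operatorname{Scal}_{g_k}(z)|\Gamma_k(z,y)\,dz\le C\delta^4\Gamma_k(x,y)$ and sums the Neumann series to get $C^{-1}\Gamma_k\le G_{B_{R_k}}\le C\Gamma_k$. This gives (i) and (iv) at once.

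For (ii) and (iii) the paper uses two exact facts coming from $\det g_k=1$ and the radial gauge, which you do not exploit. First, $\Gamma_k(0,\cdot)$ is exactly the Euclidean Green function $(|y|^{2-n}-R_k^{2-n})/((n-2)|\Sph^{n-1}|)$. Second, the total boundary flux of $\Gamma_k(y,\cdot)$ equals $1$. The normal derivatives of $G$ and $\Gamma_k$ are then compared by dividing the two-sided bound by the distance to $\partial B_{R_k}$.

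\textbf{Comparison.} Your route avoids the gauge in (ii) and is therefore more general. However, the black box it rests on is uniform two-sided global Green bounds on the unit ball \emph{with a sign-indefinite potential}. In practice that result is proved by exactly the paper's step: 3G for the potential-free operator, then the 3G-weighted Neumann series giving $G\asymp G^0$. Your own Neumann series is controlled only by the Newtonian kernel. It gives the upper bound in (i), but it cannot preserve lower bounds near $\partial B_{R_k}$, where $G^0(x,y)\ll|x-y|^{2-n}$. You should therefore state the comparison $G\asymp G^0$ explicitly rather than attribute it to Widman.

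\textbf{Two small points.} First, drop the remark that $G_\Omega\le G^0_\Omega$ holds ``where $q_k\ge0$''; Green function comparison is a global statement, not a pointwise one. Second, (iii) needs no Poisson-kernel asymptotics at all. The gauge makes the conormal derivative on $\partial B_{R_k}$ equal to $\partial_r$. Hence the flux integral at $y$ equals $w(y)$, where $w=1-G_{B_{R_k}}[c(n)\operatorname{Scal}_{g_k}]$. Then (i) together with $|\operatorname{Scal}_{g_k}|\le C\delta^4R_k^{-2}$ gives $|w-1|\le C\delta^4$ for every $y$.
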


\begin{proof}
 Let $\Gamma_k$ be the Dirichlet Green function of $-\Delta_{g_k}$ on
 $B_{R_k}$.   After rescaling $B_{R_k}$ to the unit ball, the coefficient
matrices remain uniformly elliptic.  By the Green function estimate of
Littman--Stampacchia--Weinberger \cite{LittmanStampacchiaWeinberger} and
Cranston--Fabes--Zhao \cite[Theorem  3.1]{CranstonFabesZhao}, we have
{
\[
\begin{aligned}
 0<\Gamma_k(x,y)&\le C|x-y|^{2-n},\qquad x\ne y,\\
 \frac{\Gamma_k(z,x)\Gamma_k(x,y)}{\Gamma_k(z,y)}
 &\le C\bigl(|z-x|^{2-n}+|x-y|^{2-n}\bigr),
 \qquad x,y,z\ \text{distinct}.
\end{aligned}
\]
}
 The constants are independent of $k$ because the rescaled domain is the unit
ball and the operators remain uniformly elliptic.

The conformal normal  expansion and
$R_k=\delta/\varepsilon_k$ give
{
\[
 \bigl|c(n)\operatorname{Scal}_{g_k}(y)\bigr|
 \le C\varepsilon_k^4|y|^2,
 \qquad
 R_k^2\bigl\|c(n)\operatorname{Scal}_{g_k}
 \bigr\|_{L^\infty(B_{R_k})}\le C\delta^4.
\]
}
Using these Green function estimates and the preceding
scalar curvature bound, we obtain
{
\[
\begin{aligned}
 \sup_{x\in B_{R_k}}\int_{B_{R_k}}
 \Gamma_k(x,z)\bigl|c(n)\operatorname{Scal}_{g_k}(z)\bigr|\dd z
 &\le C\delta^4,\\
 \sup_{x\ne y}\frac{1}{\Gamma_k(x,y)}
 \int_{B_{R_k}}\Gamma_k(x,z)
 \bigl|c(n)\operatorname{Scal}_{g_k}(z)\bigr|
 \Gamma_k(z,y)\dd z
 &\le C\delta^4.
\end{aligned}
\]
}
For  $m\ge0$, define
{
\[
 \Gamma_{0,k}=\Gamma_k,
 \qquad \Gamma_{m+1,k}(x,y)=\int_{B_{R_k}}\Gamma_{m,k}(x,z)
 c(n)\operatorname{Scal}_{g_k}(z)
 \Gamma_k(z,y)\dd z.
\]
}
 The second estimate gives, by induction,
{
\[
 |\Gamma_{m,k}(x,y)|\le(C\delta^4)^m\Gamma_k(x,y).
\]
}
 After decreasing $\delta_2$, assume that $C\delta_2^4<1/3$.  The series
{
\[
 \sum_{m=0}^{\infty}(-1)^m\Gamma_{m,k}(x,y)
\]
}
 then converges absolutely off the diagonal.  Moreover, the scaled
Poincar\'e inequality gives
{
\[
 \bigg|\int_{B_{R_k}}c(n)
 \operatorname{Scal}_{g_k}\eta^2\dd y\bigg|
 \le C\delta^4\int_{B_{R_k}}|\nabla\eta|_{g_k}^2\dd y,
 \qquad \eta\in H_0^1(B_{R_k}).
\]
}
Thus the Dirichlet realization of
$-L_{g_k}=-\Delta_{g_k}+c(n)\operatorname{Scal}_{g_k}$ is
coercive.  The recursion for $\Gamma_{m,k}$ gives
{
\[
{\left\{\begin{aligned}
 (-L_{g_k})_x\sum_{m=0}^{\infty}(-1)^m\Gamma_{m,k}(x,y)
 &=\delta_y&&\quad\text{in }B_{R_k},\\
 \sum_{m=0}^{\infty}(-1)^m\Gamma_{m,k}(x,y)
 &=0&&\quad\text{on }\partial B_{R_k}.
\end{aligned}\right.}
\]
}
Uniqueness identifies the series with $G_{B_{R_k}}$, and the geometric
bound gives
{
\[
 C^{-1}\Gamma_k(x,y)
 \le G_{B_{R_k}}(x,y)
 \le C\Gamma_k(x,y).
\]
}
This proves positivity and the Newtonian upper bound on $B_{R_k}$.
The same coercivity holds on every $A_{R_0,R_k}$; domain monotonicity
therefore gives
{
\[
 0<G_{A_{R_0,R_k}}(x,y)\le G_{B_{R_k}}(x,y),
\]
}
which completes \emph{(i)}.  The upper and lower kernel comparisons,
together with the second estimate above for $\Gamma_k$, prove \emph{(iv)}.

 By the Gauss lemma and $\det g_k=1$, the function
$\Gamma_k(0,\cdot)$ is radial and satisfies
{
\[
 \Gamma_k(0,y)=\frac{|y|^{2-n}-R_k^{2-n}}
 {(n-2)|\Sph^{n-1}|}.
\]
}
The lower kernel comparison proves \emph{(ii)}.  Both kernels vanish on
$\partial B_{R_k}$ and are $C^1$ up to the boundary away from the pole.
Divide the upper and lower comparison estimates by the distance to the boundary and take
the inward normal limit.  This gives
{
\[
 c\bigl(-\partial_{r_\xi}\Gamma_k(y,\xi)\bigr)
 \le-\partial_{r_\xi}G_{B_{R_k}}(y,\xi)
 \le C\bigl(-\partial_{r_\xi}\Gamma_k(y,\xi)\bigr)
\]
}
for $y\in B_{R_k}$ and $\xi\in\partial B_{R_k}$.  Green's identity gives
{
\[
 -\int_{\partial B_{R_k}}\partial_{r_\xi}
 \Gamma_k(y,\xi)\dd S(\xi)=1.
\]
}
Integration of the preceding comparison proves \emph{(iii)}.
\end{proof}

For $R_0\ge2$, define the exterior extension
$\widetilde Z_k$ of $Z_k$ by setting
{
\begin{equation}\label{eq:short-global-adjoint}
 {\left\{\begin{aligned}
  \widetilde Z_k&=Z_k&&\text{in }B_{R_0},\\
  \mathcal L_k\widetilde Z_k&=0&&\text{in }A_{R_0,R_k},\\
  \widetilde Z_k&=0&&\text{on }\partial B_{R_k}.
 \end{aligned}\right.}
\end{equation}
}
 \begin{lemma} \label{lem:exterior-corrected-adjoint}
There exists $R_*=R_*(n,C_{\mathrm g})>1$ such that,
for every $R_0\ge R_*$, there are
$\delta_3=\delta_3(n,R_0,C_{\mathrm g})>0$ and
$C=C(n,R_0,C_{\mathrm g})>1$ such that, for every
$0<\delta\le\delta_3$ and all sufficiently large $k$, the boundary value
problem  \eqref{eq:short-global-adjoint} has a unique solution satisfying
\begin{enumerate}
\item[\emph{(i)}]
$\widetilde Z_k<0$ in $A_{R_0,R_k}$, and
{
\[
 C^{-1}G_{B_{R_k}}(0,y)\le-\widetilde Z_k(y)
 \le CG_{B_{R_k}}(0,y),\qquad 2R_0\le |y|<R_k.
\]
}
 \item[\emph{(ii)}]
The outer flux satisfies
{
\[
 C^{-1}\le\int_{\partial B_{R_k}}
 \partial_r\widetilde Z_k\dd S\le C.
\]
}
\end{enumerate}
\end{lemma}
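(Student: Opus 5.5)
We reduce \eqref{eq:short-global-adjoint} to a Dirichlet problem with homogeneous equation on the annulus and compare its solution with the Green function from Lemma~\ref{lem:global-green}. Write $\mathcal L_k=-L_{g_k}-V_k$. On $A_{R_0,R_k}$ the potential satisfies $0\le V_k\le C U^{p-1}\le C(1+|y|)^{-4}$ by Lemma~\ref{lem:profile-boundary-estimates}. On the inner sphere, the data are $Z_k|_{\partial B_{R_0}}$. By the corrector estimate in the proof of Lemma~\ref{lem:profile-boundary-estimates},
\[
Z_k=\tfrac{n-2}{2}\tfrac{1-R_0^2}{1+R_0^2}U+O(\delta^4)U
\quad\text{on }\partial B_{R_0},
\]
so for $R_0\ge2$ and $\delta$ small we get $-Z_k\asymp R_0^{2-n}$ there. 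The outer data vanish.

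\emph{Existence and uniqueness.} Take $R_*$ so large that $pn(n-2)\sup_{|y|\ge R_*}U^{p-1}|y|^2$ is small. Hardy's inequality $\int|\nabla\eta|^2\ge\frac{(n-2)^2}{4}\int|y|^{-2}\eta^2$ on $H^1_0(A_{R_0,R_k})$ then absorbs $\int V_k\eta^2$. The scaled Poincaré bound from the proof of Lemma~\ref{lem:global-green} absorbs the scalar curvature term, since $|c(n)\operatorname{Scal}_{g_k}|\le C\delta^4\,|y|^{-2}$ on $B_{R_k}$ because $|y|\le R_k$. Hence $\mathcal L_k$ is coercive on $H^1_0(A_{R_0,R_k})$. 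Lax–Milgram, applied after subtracting a smooth extension of the inner data, gives a unique solution, and the weak maximum principle holds for $\mathcal L_k$ on the annulus.

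\emph{Sign and two-sided bound.} Let $G=G_{B_{R_k}}(0,\cdot)$. By Lemma~\ref{lem:global-green}(i),(ii) and its proof, $G\asymp |y|^{2-n}-R_k^{2-n}$, with $-L_{g_k}G=0$ away from $0$. Then $\mathcal L_kG=-V_kG\le0$, so $G$ is a subsolution, and a multiple of $G$ is too weak as a barrier by itself. I would instead build barriers of the form
\[
\Psi_\pm=G\,(1\pm A|y|^{-2}) \quad\text{(for $n\ge 7$)}.
\]
Since $-\Delta_{g_k}$ acting on radial functions is the Euclidean radial Laplacian by Section~\ref{sec:preliminaries}, a direct computation gives $\mathcal L_k(G|y|^{-2})\approx c\,|y|^{-n-2}$ up to $O(\delta^4+R_0^{-2})$ relative errors, and $V_kG\le C|y|^{-2-n}$. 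Choosing $A$ large and $R_0\ge R_*$ makes $\Psi_+$ a supersolution and $\Psi_-$ a subsolution, with $\Psi_\pm\asymp G$. The radial factor must be handled carefully near $\partial B_{R_k}$, where $G$ degenerates; there one uses $G\asymp R_k^{2-n}(1-|y|/R_k)$ and keeps the $|y|^{-2}$ correction, which still vanishes on the outer sphere. Comparing $-\widetilde Z_k$ with $c_1\Psi_-$ and $C_1\Psi_+$ through the maximum principle gives $\widetilde Z_k<0$ and the bounds in (i) on all of $A_{R_0,R_k}$, a fortiori for $|y|\ge2R_0$. The constants depend on $R_0$ through the inner data $\asymp R_0^{2-n}$. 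An alternative route is to write $\widetilde Z_k=\zeta+G_{A_{R_0,R_k}}[V_k\widetilde Z_k]$, with $\zeta$ the $-L_{g_k}$-harmonic extension, and use the 3G inequality of Lemma~\ref{lem:global-green}(iv) to treat the perturbation series, which is summable because $\int |x-z|^{2-n}|z|^{-4}\dots$ is small for $R_0$ large.

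\emph{Flux.} Since both $\widetilde Z_k$ and $\Psi_\pm$ vanish on $\partial B_{R_k}$, the two-sided comparison transfers to normal derivatives, exactly as in the proof of Lemma~\ref{lem:global-green}(iii). This gives $\partial_r\widetilde Z_k\asymp-\partial_rG$ pointwise on $\partial B_{R_k}$, and (ii) follows from Lemma~\ref{lem:global-green}(iii). The main obstacle I expect is making the barrier comparison uniform up to the outer boundary, that is, controlling $V_kG$ relative to $\mathcal L_k$ of the correction in the boundary layer; this is where the potential-perturbation/3G approach may be cleaner.
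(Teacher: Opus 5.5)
Your existence/coercivity step and your flux step agree with the paper. The gap is in the barrier step for (i): the sub- and supersolution roles of $\Psi_\pm$ are reversed. Away from the pole $G\approx c_n(|y|^{2-n}-R_k^{2-n})$, so $G|y|^{-2}\approx c_n(|y|^{-n}-R_k^{2-n}|y|^{-2})$. Since $-\Delta|y|^{-n}=-2n|y|^{-n-2}$ and $-\Delta|y|^{-2}=2(n-4)|y|^{-4}$, you get $\mathcal L_k(G|y|^{-2})\approx-c_n\bigl(2n|y|^{-n-2}+2(n-4)R_k^{2-n}|y|^{-4}\bigr)<0$; that is, $|y|^{-n}$ is strictly subharmonic. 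Hence $\Psi_+=G(1+A|y|^{-2})$ is a \emph{subsolution}, while $\Psi_-=G(1-A|y|^{-2})$ is, for large $A$, a \emph{supersolution}. This is the opposite of what you use, so neither comparison in (i), nor the flux bounds you derive from them, is established as written.

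The computation also treats $G$ as radial, which it is not. $G$ is the Green function of $-L_{g_k}=-\Delta_{g_k}+c(n)\operatorname{Scal}_{g_k}$, and $\operatorname{Scal}_{g_k}$ is not radial; only the Green function $\Gamma_k$ of $-\Delta_{g_k}$ is radial. For radial $f$ the gauge gives $\mathcal L_k(Gf)=f\,\mathcal L_kG-G\Delta f-2f'(|y|)\,\partial_rG$. So even the sign-corrected barrier $G(1-A|y|^{-2})$, with $R_*^2\ge2A$, needs a quantitative bound such as $-|y|\partial_rG\ge(\tfrac{n-4}{2}+c)G$ all the way to $\partial B_{R_k}$. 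That is a $C^1$ comparison of $G$ with $\Gamma_k$, which Lemma~\ref{lem:global-green} does not provide.

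No barrier is needed for the lower bound. Since $\mathcal L_k(cG)=-cV_kG\le0$ and $-Z_k\ge cG$ on $\partial B_{R_0}$, the maximum principle gives $-\widetilde Z_k\ge cG>0$ on all of $A_{R_0,R_k}$, which also yields the sign statement. For the upper bound, promote your ``alternative route'' to the main argument; it is exactly the paper's. Put $\mathcal T_kF:=G_{A_{R_0,R_k}}[V_kF]$. Domain monotonicity, the 3G estimate of Lemma~\ref{lem:global-green}(iv), and $V_k\le C(1+|z|)^{-4}$ give $\mathcal T_kG\le CR_0^{-2}G\le\tfrac12G$ once $R_0\ge R_*$. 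Then $\mathcal L_k(G+2\mathcal T_kG)=V_k(G-2\mathcal T_kG)\ge0$, and comparison on the annulus yields $-\widetilde Z_k\le C(G+2\mathcal T_kG)\le2CG$. Equivalently, $-\widetilde Z_k=\sum_{j\ge0}\mathcal T_k^j(-\zeta)$, where $-\zeta\asymp G$ is the $-L_{g_k}$-harmonic extension; positivity of $\mathcal T_k$ and $\mathcal T_kG\le\tfrac12G$ give both bounds at once. With the two-sided bound valid up to $\partial B_{R_k}$, your boundary-point argument for (ii) goes through as you describe.
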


\begin{proof}
 By the corrector estimate in the proof of
Lemma  \ref{lem:profile-boundary-estimates}, the correction in $Z_k$
converges uniformly to zero on $\partial B_{R_0}$.  Hence
{
\[
 -Z_k(y)\to
 \frac{n-2}{2}\frac{|y|^2-1}{1+|y|^2}U(y),
 \qquad y\in\partial B_{R_0}.
\]
}
 For every fixed $R_0\ge2$, the limit is comparable to $R_0^{2-n}$.
Thus, for all sufficiently large $k$,
{
\[
 cR_0^{2-n}\le -Z_k\le CR_0^{2-n}
 \qquad\text{on }\partial B_{R_0}.
\]
}
 The conformal normal expansion and $R_k=\delta/\varepsilon_k$ give
{
\[
 R_k^2\Big\|c(n)\varepsilon_k^2
 \operatorname{Scal}_{\widehat g^{(k)}}(\varepsilon_k\,\cdot)
 \Big\|_{L^\infty(B_{R_k})}
 \le C\delta^4.
\]
}
For any $\eta\in H_0^1(A_{R_0,R_k})$, the Hardy inequality and
Lemma  \ref{lem:profile-boundary-estimates} give
{
\[
\begin{aligned}
 \int_{A_{R_0,R_k}}\eta\mathcal L_k\eta\dd y
 &\ge \int_{A_{R_0,R_k}}|\nabla\eta|_{g_k}^2\dd y
 -\int_{A_{R_0,R_k}}V_k\eta^2\dd y
 -\Big|\int_{A_{R_0,R_k}}c(n)\varepsilon_k^2
 \operatorname{Scal}_{\widehat g^{(k)}}(\varepsilon_k y)
 \eta^2\dd y\Big|\\
 &\ge\bigl\{1-C(R_0^{-2}+\delta^4)\bigr\}
 \int_{A_{R_0,R_k}}|\nabla\eta|_{g_k}^2\dd y\\
 &\ge\frac12\int_{A_{R_0,R_k}}|\nabla\eta|_{g_k}^2\dd y,
\end{aligned}
\]
}
Choose $R_*$ so large that the $R_0^{-2}$ contributions
in the coercivity estimate above and the estimate for the Green potential below are at
most $1/4$ whenever $R_0\ge R_*$.  Then choose $\delta_3$ so that
$C\delta_3^4\le1/4$.
{Coercivity gives unique solvability of
\eqref{eq:short-global-adjoint}, and the comparison principle for
\(\mathcal L_k\) holds on \(A_{R_0,R_k}\).}  Hence
{
\[
 \widetilde Z_k<0\qquad\text{in }A_{R_0,R_k}.
\]
}

 For all sufficiently large $k$, Lemma  \ref{lem:global-green}\emph{(i),
(ii)} and the inner boundary estimate give
{
\[
 cG_{B_{R_k}}(0,\cdot)
 \le -Z_k
 \le CG_{B_{R_k}}(0,\cdot)
 \qquad\text{on }\partial B_{R_0}.
\]
}
Since $\widetilde Z_k=Z_k$ on $\partial B_{R_0}$ and
both $\widetilde Z_k$ and $G_{B_{R_k}}(0,\cdot)$ vanish on
$\partial B_{R_k}$, we have
{
\[
\begin{aligned}
  \mathcal L_k\bigl(-\widetilde Z_k-cG_{B_{R_k}}(0,\cdot)\bigr)
 &=cV_kG_{B_{R_k}}(0,\cdot)\ge0
 &&\text{in }A_{R_0,R_k}, \\
 -\widetilde Z_k-cG_{B_{R_k}}(0,\cdot)&\ge0
 &&\text{on }\partial A_{R_0,R_k}.
\end{aligned}
\]
}
The comparison principle gives
{
\[
 -\widetilde Z_k(y)\ge cG_{B_{R_k}}(0,y)
 \qquad\text{for }y\in A_{R_0,R_k}.
\]
}
For the upper bound, domain monotonicity, Lemma  \ref{lem:global-green}\emph{(iv)},
and $V_k(z)\le C(1+|z|)^{-4}$ imply
{
\[
 \frac{G_{A_{R_0,R_k}}[V_kG_{B_{R_k}}(0,\cdot)](y)}
 {G_{B_{R_k}}(0,y)}
 \le C\int_{\R^n\setminus B_{R_0}}
 \bigl(|y-z|^{2-n}+|z|^{2-n}\bigr)(1+|z|)^{-4}\dd z
 \le CR_0^{-2}\le\frac12.
\]
}
It follows that
{
\[
 \mathcal L_k\Bigl(G_{B_{R_k}}(0,\cdot)
 +2G_{A_{R_0,R_k}}[V_kG_{B_{R_k}}(0,\cdot)]\Bigr)
 =V_k\Bigl(G_{B_{R_k}}(0,\cdot)
 -2G_{A_{R_0,R_k}}[V_kG_{B_{R_k}}(0,\cdot)]\Bigr)\ge0.
\]
}
After increasing $C$ if necessary, the boundary comparison gives
{
\[
 C\Bigl(G_{B_{R_k}}(0,\cdot)
 +2G_{A_{R_0,R_k}}[V_kG_{B_{R_k}}(0,\cdot)]\Bigr)
 +\widetilde Z_k\ge0
 \qquad\text{on }\partial A_{R_0,R_k}.
\]
}
The comparison principle gives
{
\[
 -\widetilde Z_k(y)
 \le C\Bigl(G_{B_{R_k}}(0,y)
 +2G_{A_{R_0,R_k}}[V_kG_{B_{R_k}}(0,\cdot)](y)\Bigr)
 \le CG_{B_{R_k}}(0,y).
\]
}
This proves \emph{(i)}.
The functions $-\widetilde Z_k$ and $G_{B_{R_k}}(0,\cdot)$ vanish on
$\partial B_{R_k}$.  By the upper and lower comparison estimates and the  boundary point
lemma,
{
\[
 c\bigl(-\partial_rG_{B_{R_k}}(0,\xi)\bigr)
 \le\partial_r\widetilde Z_k(\xi)
 \le C\bigl(-\partial_rG_{B_{R_k}}(0,\xi)\bigr).
\]
}
 By Lemma  \ref{lem:global-green}\emph{(iii)}, integration over
$\partial B_{R_k}$ proves \emph{(ii)}.
\end{proof}

 We next estimate $E_k$ and $\mathcal L_kZ_k$.  These bounds will be used to
compare $Z_k$ with $\widetilde Z_k$ and in the argument on a fixed ball.
\begin{lemma} \label{lem:profile-error-estimate}
There are $\delta_4=\delta_4(n,C_{\mathrm g})>0$ and
$C=C(n,C_{\mathrm g})>0$ such that, for every $0<\delta\le\delta_4$ and all
sufficiently large $k$,
 {
\begin{equation}\label{eq:pointwise-profile-error}
\begin{aligned}
 |E_k(y)|+|\mathcal L_kZ_k(y)|
 &\le C\sum_{m=2}^d\varepsilon_k^{2m}|H_k^{(m)}|^2
       (1+|y|)^{2m-n}\\
 &\quad+C\varepsilon_k^{n-3}(1+|y|)^{-3}
  +C\varepsilon_k^{n-2}(1+|y|)^{-2},
 \qquad y\in B_{R_k}.
\end{aligned}
\end{equation}
}
 \end{lemma}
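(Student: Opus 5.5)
We expand $E_k$ in powers of the metric perturbation, using the exact cancellation built into the Khuri--Marques--Schoen correctors. The starting point is the conformal normal gauge. Write $g_k=\exp(h_k(\varepsilon_k\cdot))$ with $\operatorname{tr}h_k=0$ and $h_{k,ij}y^j=0$. Then
\[
 -L_{g_k}\varphi_k=-\Delta\varphi_k-\partial_i\bigl((g_k^{ij}-\delta^{ij})\partial_j\varphi_k\bigr)+c(n)\operatorname{Scal}_{g_k}\varphi_k,
\]
using $\det g_k=1$.

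\emph{Step 1: splitting $h_k$ and $\varphi_k$.} Split $h_k(\varepsilon_ky)=\widehat H_k(\varepsilon_k y)+\text{rem}$, where the Taylor remainder satisfies $|D^a\text{rem}|\le C\varepsilon_k^{n-3}|y|^{n-3-a}$ by the $C^{n+3,\omega}$ bounds in \eqref{eq:conformal-normal-gauge}. Similarly split $\varphi_k=U+z_{k,1}$.

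\emph{Step 2: first-order terms.} These are the terms linear in $\varepsilon_k^m H^{(m)}_k$, $m\le n-4$. The gradient term $\partial_i(H_{ij}\partial_jU)$ vanishes identically, since $U$ is radial and $H_{ij}y^j=0$. The scalar-curvature linearization is $-\partial_i\partial_jH_{ij}+O(|H|^2)$. For $m\ge4$ it is cancelled exactly by $z_{k,1}$ through \eqref{eq:upper-polynomial-correction} and the linearization $-\Delta-pn(n-2)U^{p-1}$ of the nonlinearity. For $m=2,3$, the first-order scalar curvature is $\partial_i\partial_jH^{(m)}_{ij}$, and this vanishes in conformal normal coordinates since $\operatorname{Scal}=O(|x|^2)$ at the base point together with the symmetrization identities of Lee--Parker. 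I would check this via the standard fact that $\partial_i\partial_jH^{(2)}_{ij}$ and $\partial_i\partial_jH^{(3)}_{ij}$ are proportional to $\operatorname{Scal}(0)$ and $\nabla\operatorname{Scal}(0)$, both of which vanish.

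\emph{Step 3: quadratic and higher terms.} These consist of the quadratic-and-higher parts of $\operatorname{Scal}$ and of $g^{-1}-I=e^{-h}-I$. Their contribution from the pair $(m,m')$ is bounded by $\varepsilon_k^{m+m'}|H^{(m)}||H^{(m')}|(1+|y|)^{m+m'-n}$, because $U$ and its derivatives decay like $(1+|y|)^{2-n-a}$. By Cauchy--Schwarz this is dominated by the diagonal terms. Cross terms involving $z_{k,1}$ obey the same bound, by the KMS estimate $|D^a\psi_m|\le C|H^{(m)}|(1+|y|)^{m+2-n-a}$. The nonlinear remainder $\varphi_k^p-U^p-pU^{p-1}z$ is bounded by $U^{p-2}z^2$, which gives the same form.

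Degrees $m$ between $d+1$ and $n-4$ appear in the square terms, but they need special handling. Using $|y|\le R_k=\delta/\varepsilon_k$, we have
\[
 \varepsilon_k^{2m}(1+|y|)^{2m-n}\le C\varepsilon_k^{n-2}(1+|y|)^{-2}
\]
for $2m\ge n-2$. Thus degrees above $d$ are absorbed into the last term of \eqref{eq:pointwise-profile-error}. The Taylor remainder of order $\ge n-3$, multiplied by $U$, gives $\varepsilon_k^{n-3}|y|^{n-5}(1+|y|)^{-n}\le\varepsilon_k^{n-3}(1+|y|)^{-5}$. Its product with $\nabla^2U$ gives $\varepsilon_k^{n-3}(1+|y|)^{-3}$; the gradient part is zero modulo the gauge, and otherwise it is still bounded by $\varepsilon_k^{n-3}(1+|y|)^{-3}$.

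\emph{Step 4: $\mathcal L_kZ_k$.} Differentiate the identity defining $E_{k,\lambda}$ in $\lambda$ at $\lambda=1$. This gives
\[
 \mathcal L_kZ_k=\lambda\partial_\lambda E_{k,\lambda}\big|_{\lambda=1},
\]
up to sign. Since the bounds in Steps 2--3 hold uniformly for $\lambda$ near $1$ and are stable under $\lambda\partial_\lambda$, which acts like $y\cdot\nabla$ plus multiplication and so preserves the decay rates, the same estimate follows.

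The main obstacle is the exact first-order cancellation for $m=2,3$. The remaining bookkeeping, which places each piece into one of the three terms of \eqref{eq:pointwise-profile-error}, is routine once the decay of the correctors is known.
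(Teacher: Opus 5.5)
Your proposal is correct and is essentially the paper's proof: the same exact expansion of $E_{k,\lambda}$ (with $\Delta_{g_k}U_\lambda=\Delta U_\lambda$), cancellation of the linear terms of degrees $4\le m\le n-4$ by the correctors, Young's inequality together with $\varepsilon_k^{2m}(1+|y|)^{2m-n}\le C\varepsilon_k^{n-2}(1+|y|)^{-2}$ for $m>d$, and $\mathcal L_kZ_k=-\lambda\partial_\lambda E_{k,\lambda}\big|_{\lambda=1}$ applied term by term. The only local difference is how you remove $\sum_{i,j}\partial_i\partial_jH^{(m)}_{k,ij}$ for $m=2,3$: you use $\operatorname{Scal}=O(|x|^2)$, which is valid because the parts of $\operatorname{Scal}$ that are quadratic in $h$ are already $O(|x|^2)$, while the paper reads it off the moment identities \eqref{eq:appendix-moments} implied by $\operatorname{tr}H=0$ and $H(y)y=0$; either argument is immediate, so this is not a real obstacle, but correct two slips: the linearization is $+\sum_{i,j}\partial_i\partial_jH_{ij}$ (a minus sign would break the cancellation with the corrector), and since $U\le C(1+|y|)^{2-n}$ the remainder's scalar-curvature term is $O(\varepsilon_k^{n-3}(1+|y|)^{-3})$ rather than $O(\varepsilon_k^{n-3}(1+|y|)^{-5})$, which is still within the lemma's bound.
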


\begin{proof}
 Fix $\lambda$ in a compact neighborhood of $1$. With $h_k$ and
$\widehat H_k$ defined above, the Taylor expansion at $x=0$ has the form
{
\[
 h_{k,ij}=\widehat H_{k,ij}+H_{k,ij}^{(n-3)}+T_{k,ij},
 \qquad
 |D^aT_{k,ij}(x)|\le C|x|^{n-2-a},
 \quad 0\le a\le2.
\]
}
 Since $\det g_k=1$, we have
{
\[
 \Delta_{g_k}f
 =\sum_{i,j=1}^ng_k^{ij}\partial_i\partial_jf
  +\sum_{i,j=1}^n(\partial_i g_k^{ij})\partial_jf.
\]
}
 The radial identity $\Delta_{g_k}U_\lambda=\Delta U_\lambda$ and the
corrector equations therefore give the exact decomposition
{
\begin{equation*}
\begin{aligned}
 E_{k,\lambda}
={}&c(n)\Bigg\{\operatorname{Scal}_{g_k}
 -\sum_{m=4}^{n-4}\sum_{i,j=1}^n
   \varepsilon_k^m\partial_i\partial_jH_{k,ij}^{(m)}\Bigg\}U_\lambda\\
&+\sum_{i,j=1}^n(\delta_{ij}-g_k^{ij})
       \partial_i\partial_jz_{k,\lambda}
 -\sum_{i,j=1}^n(\partial_i g_k^{ij})
       \partial_jz_{k,\lambda}
 +c(n)\operatorname{Scal}_{g_k}z_{k,\lambda}\\
&-n(n-2)\Big\{(U_\lambda+z_{k,\lambda})^p-U_\lambda^p
       -pU_\lambda^{p-1}z_{k,\lambda}\Big\}.
\end{aligned}
\end{equation*}
}
By \eqref{eq:appendix-moments}, the terms of degrees two and three vanish:
{
\[
 \sum_{i,j=1}^n\partial_i\partial_jH_{k,ij}^{(m)}=0,
 \qquad m=2,3,
\]
}
 Thus the expression in braces in the
first line equals
{
\[
 \varepsilon_k^2\Bigg\{\operatorname{Scal}_{\widehat g^{(k)}}(\varepsilon_ky)
 -\sum_{m=2}^{n-4}\sum_{i,j=1}^n
  \partial_i\partial_jH_{k,ij}^{(m)}(\varepsilon_ky)\Bigg\}.
\]
}
 By Proposition  4.3 of Khuri--Marques--Schoen \cite{KMS} and Taylor's
formula,
{
\begin{equation*}
\begin{aligned}
&\Bigg|c(n)\Bigg\{\operatorname{Scal}_{g_k}
 -\sum_{m=4}^{n-4}\sum_{i,j=1}^n
 \varepsilon_k^m\partial_i\partial_jH_{k,ij}^{(m)}\Bigg\}
 U_\lambda\Bigg|\\
\le{}&C\sum_{s,t=2}^{n-4}
 \varepsilon_k^{s+t}|H_k^{(s)}|\,|H_k^{(t)}|
 (1+|y|)^{s+t-n}
 +C\varepsilon_k^{n-3}(1+|y|)^{-3}
 +C\varepsilon_k^{n-2}(1+|y|)^{-2}.
\end{aligned}
\end{equation*}
}
In this estimate, the first term is the quadratic part of the scalar
curvature expansion.  The last two terms come from
$H_k^{(n-3)}$ and $T_k$, respectively.  We have also used
$\varepsilon_k(1+|y|)\le1+\delta$ on $B_{R_k}$.
By the polynomial representation in Proposition  4.1 of
Khuri--Marques--Schoen \cite{KMS}  and its estimate  (4.4),
{
\[
 |D^az_{k,\lambda}(y)|
 \le C\sum_{t=4}^{n-4}\varepsilon_k^t|H_k^{(t)}|
 (1+|y|)^{t+2-n-a},
 \qquad 0\le a\le2.
\]
}
The Taylor expansion of $g_k^{-1}$ and the preceding curvature estimate
now give
{
\begin{equation*}
\begin{aligned}
&\Bigg|
 \sum_{i,j=1}^n(\delta_{ij}-g_k^{ij})
       \partial_i\partial_jz_{k,\lambda}
 -\sum_{i,j=1}^n(\partial_i g_k^{ij})
       \partial_jz_{k,\lambda}
 +c(n)\operatorname{Scal}_{g_k}z_{k,\lambda}\Bigg|\\
\le{}&C\sum_{s=2}^{n-4}\sum_{t=4}^{n-4}
 \varepsilon_k^{s+t}|H_k^{(s)}|\,|H_k^{(t)}|
 (1+|y|)^{s+t-n}
+C\varepsilon_k^{n-2}(1+|y|)^{-2}.
\end{aligned}
\end{equation*}
}
After decreasing $\delta_4$, if necessary,
$\frac{1}{2}U_\lambda\le U_\lambda+z_{k,\lambda}\le2U_\lambda$.
Taylor's formula for the power nonlinearity
and the corrector estimate yield
{
\begin{equation*}
\big|(U_\lambda+z_{k,\lambda})^p-U_\lambda^p
 -pU_\lambda^{p-1}z_{k,\lambda}\big|
\le{}C\sum_{s,t=4}^{n-4}
 \varepsilon_k^{s+t}|H_k^{(s)}|\,|H_k^{(t)}|
 (1+|y|)^{s+t-n}.
\end{equation*}
}
Combining the last three estimates, we obtain
{
\begin{equation*}
 |E_{k,\lambda}(y)|
\le{}C\sum_{s,t=2}^{n-4}
 \varepsilon_k^{s+t}|H_k^{(s)}|\,|H_k^{(t)}|
 (1+|y|)^{s+t-n}
 +C\varepsilon_k^{n-3}(1+|y|)^{-3}
 +C\varepsilon_k^{n-2}(1+|y|)^{-2}.
\end{equation*}
}
For  $s,t\le d$, Young's inequality gives
{
\[
\varepsilon_k^{s+t}|H_k^{(s)}|\,|H_k^{(t)}|
 (1+|y|)^{s+t-n}
\le{}\frac12\varepsilon_k^{2s}|H_k^{(s)}|^2
 (1+|y|)^{2s-n}
 +\frac12\varepsilon_k^{2t}|H_k^{(t)}|^2
 (1+|y|)^{2t-n}.
\]
}
 If $t>d$, then
{
\[
 \varepsilon_k^{2t}(1+|y|)^{2t-n}
 =\varepsilon_k^{n-2}(1+|y|)^{-2}
  \{\varepsilon_k(1+|y|)\}^{2t-n+2}
 \le C\varepsilon_k^{n-2}(1+|y|)^{-2}.
\]
}
If $\max\{s,t\}>d$, Young's inequality and the preceding estimate give the
required bound.  We have therefore proved the asserted bound for
$E_{k,\lambda}$, uniformly for $\lambda$ near $1$.

 Finally, $g_k$ is independent of $\lambda$, and direct differentiation of
\eqref{eq:short-profile-residual} gives
{
\[
 \mathcal L_kZ_k
 =-\lambda\partial_\lambda E_{k,\lambda}
  \big|_{\lambda=1}.
\]
}
 Differentiation with respect to $\lambda$ changes only the constants in the
preceding pointwise bounds; the powers $(1+|y|)^{m+2-n-a}$ remain unchanged.
Applying $-\lambda\partial_\lambda$ to the exact decomposition of
$E_{k,\lambda}$ proves the bound for $\mathcal L_kZ_k$.
\end{proof}

 We define the \emph{jump of the normal derivative} $\partial_\nu\widetilde Z_k$ across the interface $\partial B_{R_0}$ by
{
\[
 \llbracket\partial_\nu\widetilde Z_k\rrbracket_{\partial B_{R_0}}
 :=\partial_rZ_k-
 \partial_r\widetilde Z_k\big|_{A_{R_0,R_k}}.
\]
}
 We next compare the pairings defined by $Z_k$ and $\widetilde Z_k$ and
estimate the two terms supported on $B_{R_0}$ and $\partial B_{R_0}$.
\begin{lemma}\label{lem:adjoint-replacement-interface}
For every $s>n$ and $\vartheta>0$, there exist
$R_0=R_0(n,C_{\mathrm g})\ge R_*$,
$\delta_0=\delta_0(n,R_0,C_{\mathrm g})>0$, and
$C=C(n,s,R_0,C_{\mathrm g})>0$ such that, for every
$0<\delta\le\delta_0$ and all sufficiently large $k$,
{
\[
\begin{aligned}
 \Big|\int_{B_{R_k}}(\widetilde Z_k-Z_k)E_k\dd y\Big|
 &\le\vartheta\|H_{k,\le6}\|_{\varepsilon_k,R_k}^2
 +o\bigl(\varepsilon_k^{(n-2)/2}\bigr),\\
 \|\mathcal L_kZ_k\|_{L^s(B_{R_0})}
 +\Big\|\llbracket\partial_\nu\widetilde Z_k\rrbracket_{\partial B_{R_0}}
  \Big\|_{C^0(\partial B_{R_0})}
 &\le C\|H_{k,\le6}\|_{\varepsilon_k,R_k}^2
 +o\bigl(\varepsilon_k^{(n-2)/2}\bigr).
\end{aligned}
\]
}
 \end{lemma}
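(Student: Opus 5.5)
The plan is to treat $D_k:=\widetilde Z_k-Z_k$ on $A_{R_0,R_k}$ as the solution of a linear problem with a small source. Both bounds then follow from the pointwise estimate \eqref{eq:pointwise-profile-error} together with the Green function bounds of Lemma~\ref{lem:global-green}.

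\textbf{Step 1: absorbing the high-degree terms.} First I rewrite the right-hand side of \eqref{eq:pointwise-profile-error} in terms of $\|H_{k,\le6}\|_{\varepsilon_k,R_k}$. For $7\le m\le d$ we have $\varepsilon_k^{2m}|H_k^{(m)}|^2\le C\varepsilon_k^{14}$. Since $n\le29$ gives $(n-2)/2\le 27/2<14$, this is $o(\varepsilon_k^{(n-2)/2})$; this is the place where the dimension restriction enters. The same weights carry over, because $(1+|y|)^{2m-n}\varepsilon_k^{2m-14}\le C(1+|y|)^{14-n}$ when $\varepsilon_k(1+|y|)\le 1+\delta$. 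Likewise $\varepsilon_k^{n-3}$ and $\varepsilon_k^{n-2}$ are $o(\varepsilon_k^{(n-2)/2})$ for $n\ge7$. Evaluating on the fixed ball $B_{R_0}$, where all weights are bounded by constants depending on $R_0$, immediately gives the bound for $\|\mathcal L_kZ_k\|_{L^s(B_{R_0})}$, for any $s>n$.

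\textbf{Step 2: bounding $D_k$.} By \eqref{eq:short-global-adjoint}, $D_k$ satisfies
\[
\mathcal L_kD_k=-\mathcal L_kZ_k \ \text{in } A_{R_0,R_k},\qquad D_k=0 \ \text{on } \partial B_{R_0},\qquad D_k=-Z_k \ \text{on }\partial B_{R_k},
\]
and $|Z_k|\le C\varepsilon_k^{n-2}$ on $\partial B_{R_k}$ by Lemma~\ref{lem:profile-boundary-estimates}. I would invoke the coercivity and comparison principle from the proof of Lemma~\ref{lem:exterior-corrected-adjoint}, with the same perturbation of $G_{A_{R_0,R_k}}$ by the potential $V_k$ that was used there. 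Combined with Lemma~\ref{lem:global-green}(i), this gives
\[
|D_k|\le C\,G_{A_{R_0,R_k}}\bigl[|\mathcal L_kZ_k|\bigr]+C\varepsilon_k^{n-2}.
\]
The Newtonian potential of each weight in \eqref{eq:pointwise-profile-error} is then estimated explicitly. The result is $|D_k(y)|\le C\,\mathcal Q_k(1+|y|)^{-\gamma}$ for some $\gamma>0$, up to a logarithm in the borderline degree $2m=n-2$, where $\mathcal Q_k:=\|H_{k,\le6}\|_{\varepsilon_k,R_k}^2+o(\varepsilon_k^{(n-2)/2})$.

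\textbf{Step 3: the pairing.} Since $D_k=0$ in $B_{R_0}$, I would write $\int_{B_{R_k}}(\widetilde Z_k-Z_k)E_k=\int_{A_{R_0,R_k}}D_kE_k$ and insert the bounds for $D_k$ and $E_k$. The product is quartic in the jets $\varepsilon_k^m|H_k^{(m)}|$, and each such factor is at most $C\delta^m$. Hence the quartic contribution is at most $C\delta^{4}\|H_{k,\le6}\|_{\varepsilon_k,R_k}^2$ times a radial integral, and choosing $\delta_0$ small makes this at most $\vartheta\|H_{k,\le6}\|_{\varepsilon_k,R_k}^2$. The cross terms involving $\varepsilon_k^{n-2}$ or $\varepsilon_k^{n-3}$ are $o(\varepsilon_k^{(n-2)/2})$, even after accounting for an extra $\log R_k$.

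\textbf{Step 4: the normal-derivative jump.} Since $D_k=0$ on $\partial B_{R_0}$, we have $\llbracket\partial_\nu\widetilde Z_k\rrbracket=-\partial_rD_k|_{\partial B_{R_0}}$. Boundary $W^{2,s}$ estimates on $A_{R_0,2R_0}$, with $s>n$ and coefficients uniformly bounded in $C^{1}$, give
\[
\|\nabla D_k\|_{C^0(\partial B_{R_0})}\le C\bigl(\|\mathcal L_kZ_k\|_{L^s(A_{R_0,2R_0})}+\sup_{A_{R_0,2R_0}}|D_k|\bigr).
\]
Both quantities on the right are already controlled by Steps 1 and 2.

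\textbf{Main obstacle.} I expect the delicate part to be Step 3 in the borderline degree $m=(n-2)/2\le6$. There the weight $(1+|y|)^{2m-n}$ produces logarithms, both in $G[|\mathcal L_kZ_k|]$ and in the pairing integral. I would first verify that these are absorbed by the factor $(1+\log R_k)^{\theta_m}$ in the norm \eqref{eq:weighted-jet-norm} together with the extra smallness $\delta^{m}$ from the quartic structure. If not, I would use $\log R_k\le C|\log\varepsilon_k|$ and note that $\varepsilon_k^{2m}|\log\varepsilon_k|^2$ is still lower order relative to $\varepsilon_k^{m}\cdot\varepsilon_k^{m}$ once one of the two factors is bounded by $\delta^m$.
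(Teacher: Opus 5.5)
Your proposal is correct and follows essentially the paper's argument. In both, $\widetilde Z_k-Z_k$ solves the linear exterior problem with source $-\mathcal L_kZ_k$ and boundary data $O(\varepsilon_k^{n-2})$; it is controlled by Green potentials through the 3G estimate of Lemma~\ref{lem:global-green}(iv) with $R_0$ large (the paper iterates the Green representation where you build a barrier); it is then paired against \eqref{eq:pointwise-profile-error}; and the jump comes from a boundary $W^{2,s}$ estimate plus Morrey's inequality.

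The one difference worth noting is how the quartic term is absorbed. You use the factor $\delta^4$, which makes $\delta_0$ depend on $\vartheta$. The paper instead uses $\|H_{k,\le6}\|_{\varepsilon_k,R_k}\to0$, which keeps $\delta_0=\delta_0(n,R_0,C_{\mathrm g})$ as stated. Also, the borderline logarithms you flag as the main obstacle are harmless: they always carry a factor $\varepsilon_k^{n-2}$, and $\varepsilon_k^{n-2}$ times any power of $|\log\varepsilon_k|$ is $o(\varepsilon_k^{(n-2)/2})$.
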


\begin{proof}
 Put $\Theta_k=\widetilde Z_k-Z_k$ on  $A_{R_0,R_k}$.  Since
$\widetilde Z_k=Z_k$ in $B_{R_0}$,
{
\[
 \int_{B_{R_k}}(\widetilde Z_k-Z_k)E_k\dd y
 =\int_{A_{R_0,R_k}}\Theta_kE_k\dd y.
\]
}
 Differentiating \eqref{eq:short-profile-residual} in the scale parameter
and subtracting the equations for $\widetilde Z_k$ and $Z_k$, we obtain
{
\[
{\left\{\begin{aligned}
 -L_{g_k}\Theta_k
 &=p\,n(n-2)\varphi_k^{p-1}\Theta_k-\mathcal L_kZ_k
 &&\quad\text{in }A_{R_0,R_k},\\
 \Theta_k&=0&&\quad\text{on }\partial B_{R_0},\\
 \Theta_k&=-Z_k&&\quad\text{on }\partial B_{R_k}.
\end{aligned}\right.}
\]
}
 For this proof, set
{
\[
 \mathcal T_kF:=G_{A_{R_0,R_k}}[V_kF].
\]
}
 Green representation for the preceding boundary value problem gives,
for $x\in A_{R_0,R_k}$,
{
\[
 \Theta_k(x)
 =\int_{\partial B_{R_k}}Z_k(\xi)
   \partial_{\nu_\xi}G_{A_{R_0,R_k}}(x,\xi)\dd S(\xi)
   -G_{A_{R_0,R_k}}[\mathcal L_kZ_k](x)
 +\mathcal T_k\Theta_k(x).
\]
}
 Iterating the exact Green representation $N+1$ times gives
{
\[
 \Theta_k
 =\sum_{j=0}^{N}
 \mathcal T_k^j
 \Bigl(
   \int_{\partial B_{R_k}}Z_k(\xi)
   \partial_{\nu_\xi}G_{A_{R_0,R_k}}(\,\cdot\,,\xi)\dd S(\xi)
   -G_{A_{R_0,R_k}}[\mathcal L_kZ_k]
 \Bigr)
 +
 \mathcal T_k^{N+1}\Theta_k.
\]
}
By Lemma  \ref{lem:profile-boundary-estimates},
$V_k(z)\le C(1+|z|)^{-4}$. Lemma  \ref{lem:global-green}\emph{(iv)}
therefore gives, for \(x,y\in A_{R_0,R_k}\),
{
\[
\begin{aligned}
 &\int_{A_{R_0,R_k}}
 G_{B_{R_k}}(x,z)V_k(z)G_{B_{R_k}}(z,y)\,\dd z\\
 \le&\,
 CG_{B_{R_k}}(x,y)
 \int_{\R^n\setminus B_{R_0}}
 \bigl(|x-z|^{2-n}+|z-y|^{2-n}\bigr)
 (1+|z|)^{-4}\,\dd z\\
 \le&\, CR_0^{-2}G_{B_{R_k}}(x,y).
\end{aligned}
\]
}
Choose \(R_0\ge R_*\) so that \(CR_0^{-2}\le1/2\). Then choose
\(\delta_0>0\) so that
\[
 \delta_0\le\min\{\delta_1,\delta_2,\delta_3,\delta_4\}
\]
and Lemma  \ref{lem:actual-finite-comparison} applies for every
\(0<\delta\le\delta_0\), with \(\vartheta=b_n/8\) when
\(7\le n\le24\) and with \(\vartheta=b_n^{(6)}/8\) when
\(25\le n\le29\). The constants \(b_n\) and \(b_n^{(6)}\) are given in
Proposition  \ref{prop:kms-algebraic-positivity}. Fix such a \(\delta\)
for the remainder of this section.

By domain monotonicity, for every \(F\ge0\),
\[
 \begin{aligned}
 G_{A_{R_0,R_k}}[F]
 &\le G_{B_{R_k}}[F\mathbf 1_{A_{R_0,R_k}}],\\
 \mathcal T_k\bigl(G_{B_{R_k}}
 [F\mathbf 1_{A_{R_0,R_k}}]\bigr)
 &\le\frac12G_{B_{R_k}}
 [F\mathbf 1_{A_{R_0,R_k}}].
 \end{aligned}
\]
The Newtonian bound gives
\[
 \mathcal T_k1(x)
 \le C\int_{\R^n\setminus B_{R_0}}
 |x-z|^{2-n}(1+|z|)^{-4}\,\dd z
 \le CR_0^{-2}\le\frac12.
\]
Hence \(\|\mathcal T_k\|_{L^\infty\to L^\infty}\le1/2\).

The first term in the Green representation solves the homogeneous
equation for \(-L_{g_k}\), with boundary values \(0\) on
$\partial B_{R_0}$ and  $-Z_k$ on $\partial B_{R_k}$.  Lemmas
\ref{lem:global-green}\emph{(iii)} and
\ref{lem:profile-boundary-estimates} give the bound
$C\varepsilon_k^{n-2}$. Positivity of the Green kernels and the preceding
two estimates give
{
\[
 \begin{aligned}
 |\Theta_k(x)|
 \le C\varepsilon_k^{n-2}
 +C G_{B_{R_k}}
 [|\mathcal L_kZ_k|\mathbf 1_{A_{R_0,R_k}}](x)+2^{-N-1}
 \|\Theta_k\|_{L^\infty(A_{R_0,R_k})}.
 \end{aligned}
\]
}
For fixed \(k\), let \(N\to\infty\). Multiplication by \(|E_k|\) and
integration give
{
\[
 \int_{A_{R_0,R_k}}|\Theta_kE_k|\dd x
 \le C\varepsilon_k^{n-2}\int_{A_{R_0,R_k}}|E_k|\dd x
 +C\int_{A_{R_0,R_k}}|E_k|
 G_{B_{R_k}}
 [|\mathcal L_kZ_k|\mathbf 1_{A_{R_0,R_k}}]\dd x.
\]
}
 By \eqref{eq:pointwise-profile-error}, the first term on the right is
$O(\varepsilon_k^{n-2})$.  To estimate the second term, the Newtonian
bound and a shell decomposition give, for  $0<a,b\le n-2$,
{
\[
\begin{aligned}
 \varepsilon_k^{a+b}\iint_{A_{R_0,R_k}^2}
 \frac{|x-y|^{2-n}\dd x\dd y}
 {(1+|x|)^{n-a}(1+|y|)^{n-b}}
 \le C\varepsilon_k^{a+b}
 \begin{cases}
  1,&a+b<n-2,\\
  1+\log R_k,&a+b=n-2,\\
  R_k^{a+b+2-n},&a+b>n-2.
 \end{cases}
\end{aligned}
\]
}
 Insert \eqref{eq:pointwise-profile-error}, split the  jet sum at degree
six, and apply Young's inequality.  It follows that
{
\[
\begin{aligned}
 \int_{A_{R_0,R_k}}|\Theta_kE_k|\dd x
 &\le C\|H_{k,\le6}\|_{\varepsilon_k,R_k}^4
 +\frac{\vartheta}{2}\|H_{k,\le6}\|_{\varepsilon_k,R_k}^2\\
 &\quad+C_\vartheta\|H_{k,>6}\|_{\varepsilon_k,R_k}^2
 +C\varepsilon_k^{\min\{n-3,14\}}
 (1+|\log\varepsilon_k|).
\end{aligned}
\]
}
The logarithmic weight occurs precisely when $a+b=n-2$.  The Taylor
coefficient bounds and Young's inequality give
{
\[
 \|H_{k,>6}\|_{\varepsilon_k,R_k}^2
 \le C\varepsilon_k^{14}(1+|\log\varepsilon_k|)
 =o\bigl(\varepsilon_k^{(n-2)/2}\bigr),
\]
}
where the left side is understood as zero when $d\le6$.
$\|H_{k,\le6}\|_{\varepsilon_k,R_k}\to0$ and
$\min\{n-3,14\}>(n-2)/2$ for  $7\le n\le29$.  Thus, for all sufficiently
large $k$, the preceding two displays give the first estimate.

It remains to estimate the interface terms.  By \eqref{eq:pointwise-profile-error},
{
\[
 \|\mathcal L_kZ_k\|_{L^s(B_{R_0+2})}
 \le C\|H_{k,\le6}\|_{\varepsilon_k,R_k}^2
 +o\bigl(\varepsilon_k^{(n-2)/2}\bigr).
\]
}
 For the exterior source, Lemma  \ref{lem:global-green}\emph{(i)} and
\eqref{eq:pointwise-profile-error} give
{
\[
\begin{aligned}
&G_{B_{R_k}}
 [|\mathcal L_kZ_k|\mathbf1_{A_{R_0+2,R_k}}](0)\\
 \le &\,C\sum_{m=2}^d\varepsilon_k^{2m}|H_k^{(m)}|^2
 \bigg(1+\int_{R_0+2}^{R_k}r^{2m+1-n}\dd r\bigg)
 +C\varepsilon_k^{n-3}
 +C\varepsilon_k^{n-2}(1+\log R_k)\\
 \le &\,C\|H_{k,\le6}\|_{\varepsilon_k,R_k}^2
 +C\varepsilon_k^{14}(1+|\log\varepsilon_k|)
 +C\varepsilon_k^{n-3}
 +C\varepsilon_k^{n-2}(1+|\log\varepsilon_k|)\\
 \le &\,C\|H_{k,\le6}\|_{\varepsilon_k,R_k}^2
 +o\bigl(\varepsilon_k^{(n-2)/2}\bigr).
\end{aligned}
\]
}
 Fix $y\in A_{R_0,R_0+1}$ and regard both kernels below as functions of
$z\in A_{R_0+2,R_k}$.  Symmetry, domain monotonicity, and the maximum
principle give
{
\[
 G_{A_{R_0,R_k}}(y,z)\le CG_{B_{R_k}}(0,z).
\]
}
 On $\partial B_{R_0+2}$, the required boundary inequality follows from
Lemma  \ref{lem:global-green}\emph{(i), (ii)}; both sides vanish on
$\partial B_{R_k}$.  The finite Green iteration, the preceding comparison,
and the contraction estimate therefore yield
{
\[
\begin{aligned}
 \|\Theta_k\|_{L^\infty(A_{R_0,R_0+1})}
 &\le C\varepsilon_k^{n-2}
 +C\|\mathcal L_kZ_k\|_{L^s(B_{R_0+2})}+C\,G_{B_{R_k}}
 [|\mathcal L_kZ_k|\mathbf1_{A_{R_0+2,R_k}}](0)\\
 &\le C\|H_{k,\le6}\|_{\varepsilon_k,R_k}^2
 +o\bigl(\varepsilon_k^{(n-2)/2}\bigr).
\end{aligned}
\]
}
 The constants in these boundary estimates are uniform
in $k$.  On the enlarged collar $A_{R_0,R_0+1}$, $\Theta_k$ satisfies
{
\[
 -L_{g_k}\Theta_k=V_k\Theta_k-\mathcal L_kZ_k
 \quad\text{in }A_{R_0,R_0+1},
 \qquad
 \Theta_k=0\quad\text{on }\partial B_{R_0},
\]
}
 The boundary $W^{2,s}$ estimate
on $A_{R_0,R_0+1/2}$ and Morrey's inequality for $s>n$ give
{
\[
\begin{aligned}
 \|\partial_r\Theta_k\|_{C^0(\partial B_{R_0})}
 &\le C\bigl(
 \|\Theta_k\|_{L^\infty(A_{R_0,R_0+1})}
 +\|\mathcal L_kZ_k\|_{L^s(B_{R_0+2})}\bigr)\\
 &\le C\|H_{k,\le6}\|_{\varepsilon_k,R_k}^2
 +o\bigl(\varepsilon_k^{(n-2)/2}\bigr).
\end{aligned}
\]
}
 Because
$\llbracket\partial_\nu\widetilde Z_k\rrbracket_{\partial B_{R_0}}
=-\partial_r\Theta_k$, the preceding boundary derivative and
$L^s$ estimates imply
{
\[
 \|\mathcal L_kZ_k\|_{L^s(B_{R_0})}
 +\big\|\llbracket\partial_\nu\widetilde Z_k\rrbracket_{\partial B_{R_0}}
 \big\|_{C^0(\partial B_{R_0})}
 \le C\|H_{k,\le6}\|_{\varepsilon_k,R_k}^2
 +o\bigl(\varepsilon_k^{(n-2)/2}\bigr).
\]
}
 \end{proof}

 With
$f_+=\max\{f,0\}$ and  $f_-=\max\{-f,0\}$, define
{
\[
 \mathcal I_k^+=\int_{B_{R_k}}(\widetilde Z_k)_+N_k\dd y,
 \qquad
 \mathcal I_k^-=\int_{B_{R_k}}(\widetilde Z_k)_-N_k\dd y,
\]
}
where $\widetilde Z_k$ is defined in \eqref{eq:short-global-adjoint} and $N_k$ is the nonlinear remainder in \eqref{eq:short-nonlinear-remainder}.
We now derive the signed lower bound needed for the argument on a fixed ball.
\begin{lemma}\label{lem:global-balance}
There is $C_2=C_2(n,g_0,u_0)>0$ such that, for all sufficiently large $k$,
{
\[
 \mathcal I_k^+-\mathcal I_k^-
 \ge C_2\Bigl(
 \varepsilon_k^{(n-2)/2}
 +\|H_{k,\le6}\|_{\varepsilon_k,R_k}^2\Bigr).
\]
}
\end{lemma}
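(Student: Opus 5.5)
We will test the error equation \eqref{eq:short-error-equation} against \(\widetilde Z_k\) on \(B_{R_k}\) and integrate by parts separately on \(B_{R_0}\) and \(A_{R_0,R_k}\). Since \(\mathcal L_k\) is formally self-adjoint (\(\det g_k=1\), divergence form), Green's identity produces the four-term identity
\[
\int_{B_{R_k}}\widetilde Z_kN_k\,\dd y
=\int_{B_{R_k}}\widetilde Z_kE_k\,\dd y
+\int_{B_{R_0}}w_k\,\mathcal L_kZ_k\,\dd y
-\int_{\partial B_{R_0}}w_k\llbracket\partial_\nu\widetilde Z_k\rrbracket\,\dd S
+\int_{\partial B_{R_k}}v_k\,\partial_r\widetilde Z_k\,\dd S .
\]
The last term uses \(\widetilde Z_k=0\) and \(w_k=v_k-\varphi_k\) on \(\partial B_{R_k}\), together with \(\mathcal L_k\widetilde Z_k=0\) on the annulus and the fact that the conormal for \(g_k\) equals \(\partial_r\) on spheres by the Gauss lemma. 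The distributional form of the equation in \eqref{eq:short-bubble-limit} justifies this even if the rescaled puncture lies in \(B_{R_k}\), since \(N_k\) and \(v_k\) are \(L^1\) there. Signs must be checked carefully.

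Next we bound each term. First, by Lemma \ref{lem:profile-boundary-estimates} and Lemma \ref{lem:exterior-corrected-adjoint}(ii), the outer flux term is at least \(c\varepsilon_k^{(n-2)/2}\). The small contribution \(-\int\varphi_k\partial_r\widetilde Z_k=O(\varepsilon_k^{n-2})\) has already been absorbed because we use \(v_k\) rather than \(w_k\). Second, by Lemma \ref{lem:signed-residual} combined with Lemma \ref{lem:adjoint-replacement-interface} (with \(\vartheta\le C_1/2\)),
\[
\int\widetilde Z_kE_k\ge \tfrac{C_1}{2}\|H_{k,\le6}\|^2_{\varepsilon_k,R_k}-o(\varepsilon_k^{(n-2)/2}).
\]
Third, the two interface terms on \(B_{R_0}\) and \(\partial B_{R_0}\) are bounded by
\[
\bigl(\|w_k\|_{L^{s'}(B_{R_0})}+\|w_k\|_{C^0(\partial B_{R_0})}\bigr)\Bigl(C\|H_{k,\le6}\|^2+o(\varepsilon_k^{(n-2)/2})\Bigr),
\]
using Lemma \ref{lem:adjoint-replacement-interface}. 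Since \(w_k\to0\) uniformly on \(B_{R_0+1}\), because \(v_k\to U\) and \(\varphi_k\to U\) in \(C^2_{\mathrm{loc}}\), this is \(o(1)\cdot(\|H_{k,\le6}\|^2+\varepsilon_k^{(n-2)/2})\).

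Combining these bounds gives
\[
\mathcal I_k^+-\mathcal I_k^-=\int\widetilde Z_kN_k\ge c\varepsilon_k^{(n-2)/2}+\tfrac{C_1}{4}\|H_{k,\le6}\|^2
\]
for \(k\) large, which is the lemma.

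The main obstacle is making the integration by parts rigorous and correctly signed. Three points need care: the possible presence of the original singularity inside \(B_{R_k}\), which is handled via Lemma \ref{lem:no-dirac}; the jump of \(\partial_\nu\widetilde Z_k\) across \(\partial B_{R_0}\); and confirming that the outer boundary term enters with a positive sign. Here \(\widetilde Z_k<0\) inside and vanishes on \(\partial B_{R_k}\), so \(\partial_r\widetilde Z_k>0\), and the term is \(+\int v_k\partial_r\widetilde Z_k\,\dd S\), as needed.
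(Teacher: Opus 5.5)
Your proposal is correct and follows essentially the same route as the paper. It uses the four-term identity from Green's second identity (with the distributional form of $\mathcal L_k\widetilde Z_k$ across $\partial B_{R_0}$ and Lemma~\ref{lem:no-dirac} near the rescaled puncture), the positive outer flux from Lemmas~\ref{lem:profile-boundary-estimates} and~\ref{lem:exterior-corrected-adjoint}, Lemmas~\ref{lem:signed-residual} and~\ref{lem:adjoint-replacement-interface} for $\int_{B_{R_k}}\widetilde Z_kE_k\,\dd y$, and uniform smallness of $w_k$ near $\overline{B_{R_0}}$ for the two interface terms. There are two harmless slips: with the paper's convention $\llbracket\partial_\nu\widetilde Z_k\rrbracket_{\partial B_{R_0}}=\partial_rZ_k-\partial_r\widetilde Z_k|_{A_{R_0,R_k}}$ the jump term enters with a plus sign (irrelevant, since you bound it in absolute value), and the exact outer term is $\int_{\partial B_{R_k}}w_k\,\partial_r\widetilde Z_k\,\dd S$, which differs from your $v_k$ version only by the $O(\varepsilon_k^{n-2})$ quantity you already identify.
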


 \begin{proof}
The function $\widetilde Z_k$ is continuous across
$\partial B_{R_0}$.  By integration by parts on the two sides of this
interface, we have
{
\[
 \mathcal L_k\widetilde Z_k
 =(\mathcal L_kZ_k)\mathbf1_{B_{R_0}}
 +\llbracket\partial_\nu\widetilde Z_k\rrbracket_{\partial B_{R_0}}
  \delta_{\partial B_{R_0}}
  \quad\text{in }\mathcal D'(B_{R_k}).
\]
}
Combining this identity with \eqref{eq:short-error-equation} and
Green's second identity, we obtain
{
\begin{equation}\label{eq:global-four-term-identity}
\begin{aligned}
 \mathcal I_k^+-\mathcal I_k^-&=
 \int_{\partial B_{R_k}}w_k\partial_r\widetilde Z_k\dd S
 +\int_{B_{R_k}}\widetilde Z_kE_k\dd y\\
 &\quad+\int_{B_{R_0}}(\mathcal L_kZ_k)w_k\dd y
 +\int_{\partial B_{R_0}}
 \llbracket\partial_\nu\widetilde Z_k\rrbracket_{\partial B_{R_0}}
 w_k\dd S.
\end{aligned}
\end{equation}
}
By Lemmas  \ref{lem:profile-boundary-estimates} and \ref{lem:exterior-corrected-adjoint}, we have
{
\[
 \int_{\partial B_{R_k}}w_k
 \partial_r\widetilde Z_k\dd S
 \ge c\varepsilon_k^{(n-2)/2}-C\varepsilon_k^{n-2}.
\]
}
Choose $\vartheta<C_1/4$.  By
Lemmas  \ref{lem:signed-residual} and
\ref{lem:adjoint-replacement-interface}, we have
{
\begin{align*}
 \int_{B_{R_k}}\widetilde Z_kE_k\dd y
 \ge\int_{B_{R_k}}Z_kE_k\dd y
 -\Big|\int_{B_{R_k}}(\widetilde Z_k-Z_k)E_k\dd y\Big|
 \ge\frac{C_1}{2}
 \|H_{k,\le6}\|_{\varepsilon_k,R_k}^2
 -o\Bigl(\varepsilon_k^{(n-2)/2}\Bigr).
\end{align*}
}
By \eqref{eq:short-bubble-limit} and the corrector
estimate in the proof of Lemma  \ref{lem:profile-boundary-estimates},
$w_k=v_k-U-z_{k,1}\to0$ uniformly on $B_{R_0}$ and in a fixed neighborhood
of $\partial B_{R_0}$.  By
Lemma  \ref{lem:adjoint-replacement-interface},
{
\[
 \Big|\int_{B_{R_0}}(\mathcal L_kZ_k)w_k\dd y\Big|
 +\Big|\int_{\partial B_{R_0}}
 \llbracket\partial_\nu\widetilde Z_k\rrbracket_{\partial B_{R_0}}w_k\dd S\Big|
 =o\Bigl(
 \|H_{k,\le6}\|_{\varepsilon_k,R_k}^2
 +\varepsilon_k^{(n-2)/2}\Bigr).
\]
}
Substituting these estimates into
\eqref{eq:global-four-term-identity}, we obtain
{
\[
 \mathcal I_k^+-\mathcal I_k^-
 \ge c\varepsilon_k^{(n-2)/2}
 +\frac{C_1}{2}\|H_{k,\le6}\|_{\varepsilon_k,R_k}^2
 -o\Bigl(\varepsilon_k^{(n-2)/2}
 +\|H_{k,\le6}\|_{\varepsilon_k,R_k}^2\Bigr).
\]
}
For all sufficiently large $k$,  absorption of the last term gives the
asserted lower bound.
\end{proof}

 To control the $L^1$ mass of $w_k$ on one fixed ball, we first remove
the exterior potential by repeated Green representation.

 \begin{lemma} \label{lem:green-iteration}
There exist $\rho=\rho(n,R_0,C_{\mathrm g})>4R_0$ and
$C_\rho=C(n,\rho,g_0,u_0)>0$ such that, for all sufficiently large $k$,
there are an operator $K_k:L^1(B_\rho)\to L^1(B_\rho)$ and a function
$f_k\in L^1(B_\rho)$ satisfying
{
\[
\begin{aligned}
 w_k&=K_kw_k+f_k \quad \text{in }L^1(B_\rho),\\
 \|f_k\|_{L^1(B_\rho)}
 &\le C_\rho\big\{
 \varepsilon_k^{(n-2)/2}+\mathcal I_k^-
 +\|H_{k,\le6}\|_{\varepsilon_k,R_k}^2\big\}.
\end{aligned}
\]
}
 \end{lemma}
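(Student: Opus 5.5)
We will obtain the representation from the Green formula for the error equation \eqref{eq:short-error-equation} on $B_{R_k}$. Writing $-L_{g_k}w_k=V_kw_k+N_k-E_k$ in $\mathcal D'(B_{R_k})$, which is legitimate by Lemma~\ref{lem:no-dirac} once the puncture is inside $B_{R_k}$ (in rescaled coordinates it sits at distance $d_{g_0}(x_k,0)/\varepsilon_k\to\infty$), we get for $x\in B_\rho$
\[
 w_k(x)=\int_{B_{R_k}}G_{B_{R_k}}(x,z)\bigl(V_kw_k+N_k-E_k\bigr)(z)\,\dd z
 -\int_{\partial B_{R_k}}w_k\,\partial_{r_\xi}G_{B_{R_k}}(x,\xi)\,\dd S(\xi).
\]
We set $K_kw:=G_{B_{R_k}}[V_k w\,\mathbf 1_{B_\rho}]$ restricted to $B_\rho$, and put everything else into $f_k$. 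Lemma~\ref{lem:global-green}(i) gives that $K_k$ is bounded on $L^1(B_\rho)$, since $V_k\le C$ and $|x-z|^{2-n}$ is integrable. The boundary term is bounded by $C\varepsilon_k^{(n-2)/2}$ using Lemma~\ref{lem:global-green}(iii) together with $|w_k|\le|v_k|+|\varphi_k|\le C\varepsilon_k^{(n-2)/2}$ on $\partial B_{R_k}$ (Lemma~\ref{lem:profile-boundary-estimates}). The $E_k$ term has $L^1(B_\rho)$ norm at most $C\rho^2\int(1+|z|)^{2-n}|E_k|$, and by \eqref{eq:pointwise-profile-error} this is $C_\rho(\|H_{k,\le6}\|^2_{\varepsilon_k,R_k}+o(\varepsilon_k^{(n-2)/2}))$ after splitting at degree six as in Lemma~\ref{lem:adjoint-replacement-interface}.

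The two remaining pieces are the exterior potential $G[V_kw_k\mathbf 1_{B_{R_k}\setminus B_\rho}]$ and the nonlinear term $G[N_k]$.

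\emph{Nonlinear term.} We split $N_k\ge0$ into its parts on $B_\rho$ and outside. Inside $B_\rho$ we use $N_k\le C|w_k|^2$. The cleanest way to keep this term is to place it into $K_k$ as well, that is, $K_kw=G[(V_k+N_k/w_k)w\mathbf 1_{B_\rho}]$; the coefficient $N_k/w_k$ is bounded and $o(1)$, so the $L^1$ boundedness is preserved. Outside $B_\rho$, where $\widetilde Z_k<0$ (take $\rho\ge2R_0$), Lemma~\ref{lem:exterior-corrected-adjoint}(i) gives $-\widetilde Z_k(z)\ge cG_{B_{R_k}}(0,z)\ge c|z|^{2-n}$ up to the factor $R_k^{2-n}$. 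For $x\in B_\rho$ and $|z|\ge2\rho$ we have $G(x,z)\le C|z|^{2-n}$, so $\int_{B_\rho}G[N_k\mathbf 1_{|z|\ge2\rho}]\le C_\rho\mathcal I_k^-$. Near $|z|=R_k$ the lower bound in (ii) of Lemma~\ref{lem:global-green} degenerates. There we instead use the boundary-Harnack comparison $G(x,z)\le CG(0,z)\le C(-\widetilde Z_k(z))$ for $x\in B_\rho$, $z\notin B_{2\rho}$, obtained via the maximum principle exactly as in the proof of Lemma~\ref{lem:adjoint-replacement-interface}. The shell $A_{\rho,2\rho}$ is handled by enlarging $\rho$ in the definition of $K_k$, or by the same comparison with $\rho$ replaced by $\rho/2$.

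\emph{Exterior potential.} This is the main obstacle, because $w_k$ is not small in $L^1$ far out and only its sign structure is controlled. We will remove it by repeating the Green representation outside: for $z\in A_{\rho,R_k}$, write $w_k$ via the Green function of the annulus $A_{\rho,R_k}$ (or $A_{R_0,R_k}$). This produces contributions from the inner boundary values on $\partial B_\rho$, from $N_k$, from $E_k$, from the outer boundary, and $\mathcal T_kw_k$, where $\mathcal T_k$ is the contraction with norm at most $1/2$ on weighted spaces from the proof of Lemma~\ref{lem:adjoint-replacement-interface} (the product bound $\int G V G\le CR_0^{-2}G$). Summing the Neumann series gives
\[
 |w_k|\le C\bigl(G[N_k+|E_k|]+\varepsilon_k^{(n-2)/2}+\text{harmonic extension of } w_k|_{\partial B_\rho}\bigr)
\]
on the exterior. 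Each term is then estimated as above, with the inner boundary trace controlled by interior $L^1$-to-trace estimates. To avoid trace regularity, I would average over $\rho'\in(\rho,2\rho)$, which turns it into $\|w_k\|_{L^1(A_{\rho,2\rho})}$. I would absorb this into $K_kw_k$ by defining $K_k$ to include the map $w\mapsto G[V_k\,\mathcal H(w)\mathbf 1_{B_{R_k}\setminus B_\rho}]$, where $\mathcal H$ is the averaged annular extension; this is a bounded operator on $L^1(B_\rho)$ because its kernel is controlled by $|x-z|^{2-n}(1+|z|)^{-4}|z|^{2-n}$. Checking that the constants close uniformly in $k$, and keeping $\rho$ fixed independent of $k$ (only $R_0$ and the contraction constant enter), is where I expect the real work to lie.
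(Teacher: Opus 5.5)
Your treatment of the boundary term, of the $E_k$ potential, and of $N_k$ is sound. On $B_\rho$ you absorb $N_k$ through the bounded coefficient $a_k=N_k/w_k$, and outside $B_\rho$ you use $\int_{B_\rho}G_{B_{R_k}}(x,z)\,\dd x\le C_\rho(-\widetilde Z_k(z))$.

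The gap is the exterior potential $G_{B_{R_k}}[V_kw_k\mathbf 1_{B_{R_k}\setminus B_\rho}]$. You correctly single it out as the main obstacle, but the mechanism you propose does not remove it, for three reasons.

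(a) Averaging the annular representation over $\rho'\in(\rho,2\rho)$ makes the operator you add to $K_k$ depend on $w_k|_{A_{\rho,2\rho}}$, i.e.\ on values outside $B_\rho$. So it is not an operator on $L^1(B_\rho)$. Moreover, the representation on $A_{\rho',R_k}$ does not cover $A_{\rho,\rho'}$, over which the exterior potential also integrates.

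(b) A pointwise bound $|w_k|\le C(\cdots)$ cannot be inserted into a term that must appear exactly in $w_k=K_kw_k+f_k$. Even with an exact signed series, you must show that the $N_k$ contribution, after passing through the series and then the exterior potential, is at most $C_\rho\mathcal I_k^-$. With $\omega_k:=\mathbf 1_{B_\rho}+(-\widetilde Z_k)\mathbf 1_{B_{R_k}\setminus B_\rho}$, this requires that the exterior potential reproduce the weight:
\[
 \int_{B_{R_k}\setminus B_\rho}G_{B_{R_k}}(z,x)V_k(x)\,\omega_k(x)\,\dd x\le C\rho^{-2}\,\omega_k(z),\qquad z\in B_{R_k}.
\]
This follows from Lemma~\ref{lem:global-green}(iv) and Lemma~\ref{lem:exterior-corrected-adjoint}(i), but it is absent from your sketch.

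(c) The rescaled puncture lies in $A_{\rho,R_k}$, so $w_k\notin L^\infty$ there. The $L^\infty$ contraction from Lemma~\ref{lem:adjoint-replacement-interface} therefore does not by itself make the tail of your series vanish; only $w_k\in L^1_{\mathrm{loc}}$ is available, from Lemma~\ref{lem:no-dirac}.

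The missing idea is that you need neither a second domain nor traces: iterate the exterior potential operator itself. Set $S_kF=G_{B_{R_k}}[V_kF\mathbf 1_{B_{R_k}\setminus B_\rho}]$ and $\widetilde S_kF=G_{B_{R_k}}[(V_k+a_k)F\mathbf 1_{B_\rho}]$, and let $b_k$ collect the boundary, $E_k$, and exterior $N_k$ terms. The identity $w_k=S_kw_k+\widetilde S_kw_k+b_k$ holds on all of $B_{R_k}$, so substituting it into $S_kw_k$ repeatedly gives
\[
 w_k=S_k^{\ell+1}w_k+\sum_{j=0}^{\ell}S_k^j\widetilde S_kw_k+\sum_{j=0}^{\ell}S_k^jb_k .
\]
Then $K_k:=\sum_{j\ge0}S_k^j\widetilde S_k$ automatically sees only $w_k|_{B_\rho}$. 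Fix $\rho$ with $C\rho^{-2}\le\frac12$. Your Green mass bound $\int_{B_\rho}G_{B_{R_k}}(\cdot,y)\,\dd y\le C_\rho\,\omega_k$, the weighted contraction above, induction, symmetry of the iterated kernels, and Fubini give
\[
 \bigl\|S_k^jG_{B_{R_k}}[F]\bigr\|_{L^1(B_\rho)}\le C_\rho2^{-j}\int_{B_{R_k}}\omega_k|F|\,\dd z .
\]
This single estimate does all the remaining work:
\begin{itemize}
\item for $F=N_k\mathbf 1_{B_{R_k}\setminus B_\rho}$ it gives $C_\rho\mathcal I_k^-$;
\item for $F=E_k$ it gives your jet bound;
\item for the tail it gives $\|S_k^{\ell+1}w_k\|_{L^1(B_\rho)}\le C_\rho2^{-\ell}\int_{B_{R_k}\setminus B_\rho}(-\widetilde Z_k)V_k|w_k|\,\dd z\to0$, using only $w_k\in L^1_{\mathrm{loc}}$.
\end{itemize}
The boundary term is handled by $\|S_k\|_{L^\infty\to L^\infty}\le\frac12$.

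Taken literally, the lemma is satisfied by $K_k=\mathrm{Id}$ and $f_k=0$. What Lemma~\ref{lem:fixed-core} actually uses is this specific Green series, and that is what your construction has to reproduce.
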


\begin{proof}
By  Lemmas  \ref{lem:global-green}\emph{(i)} and
\ref{lem:profile-boundary-estimates}, for every $\rho>4R_0$ and all
sufficiently large $k$,
{
\[
 \sup_{y\in B_{R_k}}
 \int_{B_{R_k}\setminus B_\rho}
 G_{B_{R_k}}(y,z)V_k(z)\dd z
 \le C\rho^{-2}.
\]
}
The constants in the preceding tail estimate, the Green kernel estimate
below, and \eqref{eq:exterior-potential-contraction} are independent of
\(k\) and \(\rho\). Choose \(C_*\) larger than these constants and fix
$\rho=\rho(n,R_0,C_{\mathrm g})>4R_0$ so that
$C_*\rho^{-2}\le1/2$.
 On $B_\rho$, \eqref{eq:short-bubble-limit} and Taylor's formula give
{
\[
 N_k=a_kw_k,\qquad
 a_k=p\,n(n-2)\int_0^1
 \big\{(\varphi_k+tw_k)^{p-1}-\varphi_k^{p-1}\big\}\dd t,
 \qquad \|a_k\|_{L^\infty(B_\rho)}\to0.
\]
}
 By Lemma  \ref{lem:no-dirac}, the equation holds across the
rescaled puncture. Thus, in the distributional sense,
{
\[
 -L_{g_k}w_k=
 \begin{cases}
  (V_k+a_k)w_k-E_k,&\text{in }B_\rho,\\
  V_kw_k+N_k-E_k,&\text{in }B_{R_k}\setminus B_\rho,
 \end{cases}.
\]
}
For $y\in B_{R_k}$, Green representation gives
{
\begin{equation}\label{eq:basic-green-decomposition}
 w_k(y)=S_kw_k(y)+\widetilde S_kw_k(y)+b_k(y),
\end{equation}
}
where
{
\[
 \begin{aligned}
 S_kF(y)&:=\int_{B_{R_k}\setminus B_\rho}
 G_{B_{R_k}}(y,z)V_k(z)F(z)\dd z,\\
 \widetilde S_kF(y)&:=\int_{B_\rho}
 G_{B_{R_k}}(y,z)(V_k+a_k)(z)F(z)\dd z.
 \end{aligned}
\]
}
For \(y\in B_\rho\) and \(z\in B_{R_k}\setminus\{y\}\),
Lemma  \ref{lem:global-green}\emph{(iv)} gives
\[
\begin{aligned}
 S_k\bigl(G_{B_{R_k}}(\,\cdot\,,y)\bigr)(z)
 =\int_{B_{R_k}\setminus B_\rho}
 G_{B_{R_k}}(z,x)V_k(x)G_{B_{R_k}}(x,y)\,\dd x
 \le C\rho^{-2}G_{B_{R_k}}(z,y)
 \le\frac12G_{B_{R_k}}(z,y).
\end{aligned}
\]
Positivity and induction yield
\[
 S_k^j\bigl(G_{B_{R_k}}(\,\cdot\,,y)\bigr)(z)
 \le2^{-j}G_{B_{R_k}}(z,y),\qquad j\ge0.
\]
The remaining term is
{
\begin{equation}\label{eq:bk-green-forcing}
 \begin{aligned}
 b_k(y)&:=\int_{B_{R_k}\setminus B_\rho}
 G_{B_{R_k}}(y,z)(N_k-E_k)(z)\dd z
 -\int_{B_\rho}G_{B_{R_k}}(y,z)E_k(z)\dd z\\
 &\quad-\int_{\partial B_{R_k}}\partial_{r_\xi}G_{B_{R_k}}(y,\xi)
                 w_k(\xi)\dd S(\xi).
 \end{aligned}
\end{equation}
}
For every integer $\ell\ge0$, finite iteration gives
{
\begin{equation}\label{eq:finite-green-iteration}
 w_k=S_k^{\ell+1}w_k
     +\sum_{j=0}^{\ell}S_k^j\widetilde S_kw_k
     +\sum_{j=0}^{\ell}S_k^jb_k.
\end{equation}
}
\textbf{Claim.}  For every integer $j\ge0$ and every
$F\in L^1(B_{R_k})$,
{
\begin{equation}\label{eq:iterated-green-estimate}
 \|S_k^jG_{B_{R_k}}[F]\|_{L^1(B_\rho)}
 \le C_\rho 2^{-j}\bigg\{
   \int_{B_\rho}|F|\dd z
   +\int_{B_{R_k}\setminus B_\rho}(-\widetilde Z_k)|F|\dd z\bigg\}.
\end{equation}
}

 We first estimate the integral of the Green function over $B_\rho$:
{
\begin{equation}\label{eq:green-mass-estimate}
 \int_{B_\rho}G_{B_{R_k}}(z,y)\dd y
 \le C_\rho
 \begin{cases}
  1,&z\in B_\rho,\\
  -\widetilde Z_k(z),&z\in B_{R_k}\setminus B_\rho.
 \end{cases}
\end{equation}
}
 Suppose that $z\in B_\rho$.  By Lemma  \ref{lem:global-green}\emph{(i)},
{
\[
 \int_{B_\rho}G_{B_{R_k}}(z,y)\dd y
 \le C\int_{B_\rho}|z-y|^{2-n}\dd y\le C_\rho.
\]
}
 For $z\in B_{R_k}\setminus B_\rho$, we have
{
\[
 -L_{g_k}\bigg(
 \int_{B_\rho}G_{B_{R_k}}(z,y)\dd y
 +C_\rho\widetilde Z_k(z)\bigg)
 =C_\rho V_k(z)\widetilde Z_k(z)\le0.
\]
}
The term in braces vanishes on $\partial B_{R_k}$.  Choose $C_\rho$
sufficiently large.  By Lemma  \ref{lem:global-green}\emph{(i),(ii)} and
Lemma  \ref{lem:exterior-corrected-adjoint}\emph{(i)}, this term is then
nonpositive on $\partial B_\rho$.
The maximum principle gives
{
\[
 \int_{B_\rho}G_{B_{R_k}}(z,y)\dd y
 \le C_\rho(-\widetilde Z_k(z)),
 \qquad z\in B_{R_k}\setminus B_\rho.
\]
}
We next establish the estimate for the exterior potential
{
\begin{equation}\label{eq:exterior-potential-contraction}
 \int_{B_{R_k}\setminus B_\rho}
 G_{B_{R_k}}(z,x)V_k(x)(-\widetilde Z_k(x))\dd x
 \le C\rho^{-2}
 \begin{cases}
  1,&z\in B_\rho,\\
  -\widetilde Z_k(z),&z\in B_{R_k}\setminus B_\rho.
 \end{cases}
\end{equation}
}
Suppose that $z\in B_\rho$.  By Lemma  \ref{lem:global-green}\emph{(i)} and
Lemma  \ref{lem:exterior-corrected-adjoint}\emph{(i)},
$-\widetilde Z_k\le C_\rho$ and
{
\[
 \int_{B_{R_k}\setminus B_\rho}
 G_{B_{R_k}}(z,x)V_k(x)(-\widetilde Z_k(x))\dd x
 \le C\int_{B_{R_k}\setminus B_\rho}
 G_{B_{R_k}}(z,x)V_k(x)\dd x\le C\rho^{-2}.
\]
}
Suppose that $z\in B_{R_k}\setminus B_\rho$.  By
Lemma  \ref{lem:profile-boundary-estimates},
$V_k(x)\le C(1+|x|)^{-4}$.  By
Lemma  \ref{lem:global-green}\emph{(iv)} and
Lemma  \ref{lem:exterior-corrected-adjoint}\emph{(i)}, we have
{
\[
\begin{aligned}
 &\frac{1}{-\widetilde Z_k(z)}
 \int_{B_{R_k}\setminus B_\rho}
 G_{B_{R_k}}(z,x)V_k(x)(-\widetilde Z_k(x))\dd x \\
 {}\le{}&C\int_{B_{R_k}\setminus B_\rho}
 \frac{G_{B_{R_k}}(z,x)G_{B_{R_k}}(0,x)}
      {G_{B_{R_k}}(0,z)}V_k(x)\dd x\\
 {}\le{}&C\int_{\R^n\setminus B_\rho}
 \big\{|z-x|^{2-n}+|x|^{2-n}\big\}(1+|x|)^{-4}\dd x
 \\[-2mm]
 {}\le{}&C\sup_z
 \int_{\R^n\setminus B_\rho}|z-x|^{2-n}(1+|x|)^{-4}\dd x
 +C\int_\rho^\infty r^{-3}\dd r\\
 {}\le{}&C\rho^{-2}.
\end{aligned}
\]
}
From \eqref{eq:green-mass-estimate} and
\eqref{eq:exterior-potential-contraction}, induction gives
{
\[
 S_k^j\bigg(\int_{B_\rho}G_{B_{R_k}}(\,\cdot\,,y)\dd y\bigg)(z)
 \le C_\rho 2^{-j}
 \begin{cases}
  1,&z\in B_\rho,\\
 -\widetilde Z_k(z),&z\in B_{R_k}\setminus B_\rho.
 \end{cases}
\]
}
The case $j=0$ is \eqref{eq:green-mass-estimate}.  If the estimate holds
for $j$, the nonnegativity of the Green function and $V_k$ 
give
 {
\[
\begin{aligned}
 &S_k^{j+1}\bigg(\int_{B_\rho}
 G_{B_{R_k}}(\,\cdot\,,y)\dd y\bigg)(z)\\
 {}={}&\int_{B_{R_k}\setminus B_\rho}
 G_{B_{R_k}}(z,x)V_k(x)
 S_k^j\bigg(\int_{B_\rho}
 G_{B_{R_k}}(\,\cdot\,,y)\dd y\bigg)(x)\dd x\\
 {}\le{}&C_\rho 2^{-j}
 \int_{B_{R_k}\setminus B_\rho}
 G_{B_{R_k}}(z,x)V_k(x)(-\widetilde Z_k(x))\dd x\\
 {}\le{}&C_\rho 2^{-j-1}
 \begin{cases}
  1,&z\in B_\rho,\\
  -\widetilde Z_k(z),&z\in B_{R_k}\setminus B_\rho.
 \end{cases}
\end{aligned}
\]
}
For \(j\ge1\), the integral kernel of
\(S_k^jG_{B_{R_k}}\) is
\[
 \int_{(B_{R_k}\setminus B_\rho)^j}
 G_{B_{R_k}}(y,x_1)V_k(x_1)\cdots
 V_k(x_j)G_{B_{R_k}}(x_j,z)\,
 \dd x_1\cdots\dd x_j.
\]
Reversing \(x_1,\ldots,x_j\) and using the symmetry of
\(G_{B_{R_k}}\) shows that this kernel is symmetric in \(y,z\).
The same conclusion for \(j=0\) is the symmetry of \(G_{B_{R_k}}\).
Thus Fubini's theorem gives, for
\(F\in L^1(B_{R_k})\) and \(j\ge0\),
{
\[
\begin{aligned}
 \|S_k^jG_{B_{R_k}}[F]\|_{L^1(B_\rho)}
 &\le\int_{B_{R_k}}|F(z)|
 S_k^j\Bigl(\int_{B_\rho}G_{B_{R_k}}(\,\cdot\,,y)\dd y\Bigr)(z)\dd z\\
 &\le C_\rho 2^{-j}\Big(
   \int_{B_\rho}|F|\dd z
   +\int_{B_{R_k}\setminus B_\rho}(-\widetilde Z_k)|F|\dd z\Big).
\end{aligned}
\]
}
 Thus \eqref{eq:iterated-green-estimate} holds.  We now justify the passage to the limit
$\ell\to\infty$ in \eqref{eq:finite-green-iteration}.
Fix $k$.  Apply \eqref{eq:iterated-green-estimate} to the function that
equals $V_kw_k$ on $B_{R_k}\setminus B_\rho$ and vanishes on $B_\rho$.
Then
{
\[
 \|S_k^{\ell+1}w_k\|_{L^1(B_\rho)}
 \le C_\rho 2^{-\ell}
 \int_{B_{R_k}\setminus B_\rho}
 (-\widetilde Z_k)V_k|w_k|\dd z\to0
 \quad\text{as }\ell\to\infty.
\]
}
 Fix $\eta\in L^1(B_\rho)$.  By the
definition of $\widetilde S_k$ and
\eqref{eq:iterated-green-estimate}, we have
{
\[
\begin{aligned}
 \sum_{j=0}^{\ell}
 \|S_k^j\widetilde S_k\eta\|_{L^1(B_\rho)}
 &=\sum_{j=0}^{\ell}
 \Bigl\|S_k^jG_{B_{R_k}}
 \big[(V_k+a_k)\eta\,\mathbf 1_{B_\rho}\big]
 \Bigr\|_{L^1(B_\rho)}\\
 &\le C_\rho\sum_{j=0}^{\ell}2^{-j}
 \int_{B_\rho}|V_k+a_k|\,|\eta|\dd z\\
 &\le C_\rho\|V_k+a_k\|_{L^\infty(B_\rho)}
 \sum_{j=0}^{\ell}2^{-j}
 \|\eta\|_{L^1(B_\rho)}\\
 &\le C_\rho\|\eta\|_{L^1(B_\rho)}.
\end{aligned}
\]
}
{Letting \(\ell\to\infty\), we obtain absolute convergence in operator
norm on \(L^1(B_\rho)\). The same integral estimate gives absolute
convergence on every ball compactly contained in \(B_{R_k}\). Hence the
Green series on the full ball}
{
\begin{equation}\label{eq:fixed-core-green-series}
 K_k\eta:=\sum_{j=0}^{\infty}S_k^j\widetilde S_k\eta
 \qquad\text{in }L^1_{\mathrm{loc}}(B_{R_k})
\end{equation}
}
is well defined, and its restriction to $B_\rho$ converges in operator
norm on $L^1(B_\rho)$.

 By \eqref{eq:bk-green-forcing} and the triangle inequality, for every
integer $\ell\ge0$,
{
\[
 \Bigl\|\sum_{j=0}^{\ell}S_k^jb_k\Bigr\|_{L^1(B_\rho)}
 \le J_{1,k}^{(\ell)}+J_{2,k}^{(\ell)}+J_{3,k}^{(\ell)},
\]
}
where
{
\[
\begin{aligned}
 J_{1,k}^{(\ell)}
 &:=\sum_{j=0}^{\ell}
 \Bigl\|S_k^j\Bigl[
 \int_{\partial B_{R_k}}
 \partial_{r_\xi}G_{B_{R_k}}(\,\cdot\,,\xi)
 w_k(\xi)\dd S(\xi)\Bigr]\Bigr\|_{L^1(B_\rho)},\\
 J_{2,k}^{(\ell)}
 &:=\sum_{j=0}^{\ell}
 \Bigl\|S_k^jG_{B_{R_k}}
 [N_k\mathbf 1_{B_{R_k}\setminus B_\rho}]
 \Bigr\|_{L^1(B_\rho)},\\
 J_{3,k}^{(\ell)}
 &:=\sum_{j=0}^{\ell}
 \Bigl\|S_k^jG_{B_{R_k}}[E_k]
 \Bigr\|_{L^1(B_\rho)}.
\end{aligned}
\]
}
 We first estimate $J_{1,k}^{(\ell)}$.  By
Lemma  \ref{lem:global-green}\emph{(iii)} and
Lemma  \ref{lem:profile-boundary-estimates},
{
\[
\begin{aligned}
 &\bigg\|\int_{\partial B_{R_k}}
 \partial_{r_\xi}G_{B_{R_k}}(\,\cdot\,,\xi)w_k(\xi)\dd S(\xi)
 \bigg\|_{L^\infty(B_{R_k})}\\
 {}\le{}&
 \|w_k\|_{L^\infty(\partial B_{R_k})}
 \sup_{y\in B_{R_k}}
 \int_{\partial B_{R_k}}
 \bigl(-\partial_{r_\xi}G_{B_{R_k}}(y,\xi)\bigr)\dd S(\xi)\\
 {}\le{}&C\bigl(
 \|v_k\|_{L^\infty(\partial B_{R_k})}
 +\|\varphi_k\|_{L^\infty(\partial B_{R_k})}\bigr)
 \le C\varepsilon_k^{(n-2)/2}.
\end{aligned}
\]
}
 Moreover,
{
\[
 \|S_k\|_{L^\infty\to L^\infty}
 \le\sup_{y\in B_{R_k}}
 \int_{B_{R_k}\setminus B_\rho}
 G_{B_{R_k}}(y,z)V_k(z)\dd z
 \le\frac12.
\]
}
 For all sufficiently large $k$, we therefore have
{
\[
 J_{1,k}^{(\ell)}
 \le |B_\rho|\sum_{j=0}^{\ell}
 \|S_k\|_{L^\infty\to L^\infty}^j
 C\varepsilon_k^{(n-2)/2}
 \le C_\rho\varepsilon_k^{(n-2)/2}
 \sum_{j=0}^{\ell}2^{-j}\le C_\rho\varepsilon_k^{(n-2)/2}.
\]
}
For  $J_{2,k}^{(\ell)}$, use \eqref{eq:iterated-green-estimate},
$N_k\ge0$, and $\widetilde Z_k<0$ on
$B_{R_k}\setminus B_\rho$.  Then
{
\[
\begin{aligned}
 J_{2,k}^{(\ell)}
 &=\sum_{j=0}^{\ell}
 \Big\|S_k^jG_{B_{R_k}}
 [N_k\mathbf 1_{B_{R_k}\setminus B_\rho}]
 \Big\|_{L^1(B_\rho)}\\
 &\le C_\rho\sum_{j=0}^{\ell}2^{-j}
 \int_{B_{R_k}\setminus B_\rho}(-\widetilde Z_k)N_k\dd y\\
 &\le C_\rho
 \int_{B_{R_k}\setminus B_\rho}(-\widetilde Z_k)N_k\dd y
 \le C_\rho\,\mathcal I_k^-.
\end{aligned}
\]
}
 For $J_{3,k}^{(\ell)}$, fix $0\le j\le\ell$.  By
\eqref{eq:iterated-green-estimate},
\eqref{eq:pointwise-profile-error}, and
Lemma  \ref{lem:exterior-corrected-adjoint}\emph{(i)}, we have
{
\[
\begin{aligned}
 \Big\|S_k^jG_{B_{R_k}}[E_k]
 \Big\|_{L^1(B_\rho)}
 &\le C_\rho 2^{-j}
 \Big\{
 \int_{B_\rho}|E_k|\dd y
 +\int_{B_{R_k}\setminus B_\rho}(-\widetilde Z_k)|E_k|\dd y
 \Big\}\\
 &\le C_\rho 2^{-j}
 \Big\{
 \sum_{m=2}^d\varepsilon_k^{2m}|H_k^{(m)}|^2
 \bigg(1+\int_\rho^{R_k}r^{2m+1-n}\dd r\bigg)
 +\varepsilon_k^{n-3}
 +\varepsilon_k^{n-2}(1+\log R_k)
 \Big\}\\
 &\le C_\rho 2^{-j}\big\{
 \|H_{k,\le6}\|_{\varepsilon_k,R_k}^2
 +\varepsilon_k^{14}(1+|\log\varepsilon_k|)
 +o\bigl(\varepsilon_k^{(n-2)/2}\bigr)\big\}\\
 &\le C_\rho 2^{-j}\big\{
 \|H_{k,\le6}\|_{\varepsilon_k,R_k}^2
 +o\bigl(\varepsilon_k^{(n-2)/2}\bigr)\big\}.
\end{aligned}
\]
}
 Summing this estimate over $j$ gives
{
\[
 J_{3,k}^{(\ell)}
 \le C_\rho\sum_{j=0}^{\ell}2^{-j}
 \big\{\|H_{k,\le6}\|_{\varepsilon_k,R_k}^2
 +o\bigl(\varepsilon_k^{(n-2)/2}\bigr)\big\}
 \le C_\rho\big\{\|H_{k,\le6}\|_{\varepsilon_k,R_k}^2
 +o\bigl(\varepsilon_k^{(n-2)/2}\bigr)\big\}.
\]
}
For all sufficiently large $k$, adding the three estimates yields
{
\[
 \Bigl\|\sum_{j=0}^{\ell}S_k^jb_k\Bigr\|_{L^1(B_\rho)}
 \le C_\rho\big\{\varepsilon_k^{(n-2)/2}+\mathcal I_k^-
 +\|H_{k,\le6}\|_{\varepsilon_k,R_k}^2\big\},
\]
}
uniformly in $\ell$.  Hence the series converges absolutely in
$L^1(B_\rho)$, and we may define
{
\[
 f_k:=\sum_{j=0}^{\infty}S_k^jb_k.
\]
}
{The uniform estimate for the partial sums passes to the limit and gives the asserted
bound for \(f_k\).  Letting \(\ell\to\infty\) in
\eqref{eq:finite-green-iteration}, we obtain}
 $w_k=K_kw_k+f_k$ in $L^1(B_\rho)$.
\end{proof}

 Fix $\rho$ and $K_k$ as in
Lemma  \ref{lem:green-iteration}.  We next control $w_k$ on $B_\rho$ by the
positive part of the nonlinear pairing.

\begin{lemma}\label{lem:fixed-core}
There is $C_\rho=C(n,\rho,g_0,u_0)>0$ such that, for
all sufficiently large $k$,
{
\[
 \|w_k\|_{L^1(B_\rho)}\le C_\rho\mathcal I_k^+.
\]
}
 \end{lemma}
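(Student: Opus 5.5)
Starting from Lemma~\ref{lem:green-iteration}, we have \(w_k=K_kw_k+f_k\) in \(L^1(B_\rho)\). Here \(\|K_k\|_{L^1(B_\rho)\to L^1(B_\rho)}\le C_\rho\|V_k+a_k\|_{L^\infty(B_\rho)}\), which is bounded but not small, because \(V_k\) converges to \(pn(n-2)U^{p-1}\). So \(I-K_k\) cannot be inverted by a Neumann series. The plan is a compactness argument: \(I-K_k\) is inverted with uniform bounds after passing to the limit operator. This gives
\[
\|w_k\|_{L^1(B_\rho)}\le C_\rho\|f_k\|_{L^1(B_\rho)}
\le C_\rho\bigl\{\varepsilon_k^{(n-2)/2}+\mathcal I_k^-+\|H_{k,\le6}\|_{\varepsilon_k,R_k}^2\bigr\}.
\]
Lemma~\ref{lem:global-balance} then bounds each of the three terms on the right by \(C\mathcal I_k^+\). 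For the first and third this is immediate from \(\mathcal I_k^+\ge\mathcal I_k^+-\mathcal I_k^-\ge C_2(\varepsilon_k^{(n-2)/2}+\|H_{k,\le6}\|^2)\). For \(\mathcal I_k^-\) it follows from \(\mathcal I_k^-\le\mathcal I_k^+\), which is again a consequence of that lemma.

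The core step is the uniform invertibility. Suppose, for contradiction, that \(\eta_k\in L^1(B_\rho)\) with \(\|\eta_k\|_{L^1}=1\) and \(\|(I-K_k)\eta_k\|_{L^1}\to0\). Set \(\zeta_k:=K_k\eta_k\) on the full ball \(B_{R_k}\), using the Green series \eqref{eq:fixed-core-green-series}. Then \(\zeta_k\) solves
\[
-L_{g_k}\zeta_k=V_k\zeta_k\mathbf 1_{B_{R_k}\setminus B_\rho}+(V_k+a_k)\eta_k\mathbf 1_{B_\rho},\qquad \zeta_k=0 \text{ on }\partial B_{R_k}.
\]
The kernels of \(S_k^j\widetilde S_k\) are dominated by \(C2^{-j}|x-y|^{2-n}\) on \(B_\rho\), so \(K_k\) maps \(L^1\) bounded sets into sets bounded in \(W^{1,q}(B_\rho)\) for \(q<n/(n-1)\). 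Hence \(\zeta_k\) is precompact in \(L^1(B_\rho)\), and so is \(\eta_k=\zeta_k+o(1)\). Pass to a limit \(\eta_k\to\eta\) in \(L^1(B_\rho)\). Using the weighted bound \(\int_{B_{R_k}\setminus B_\rho}(-\widetilde Z_k)\) from \eqref{eq:iterated-green-estimate} and \(-\widetilde Z_k\asymp|y|^{2-n}\), the functions \(\zeta_k\) converge locally to a \(\zeta\) on \(\R^n\). This limit satisfies \(\zeta=\eta\) on \(B_\rho\) and
\[
-\Delta\zeta=n(n+2)U^{4/(n-2)}\zeta\quad\text{on }\R^n,\qquad |\zeta(y)|\le C(1+|y|)^{2-n}.
\]
Here we use \(g_k\to\delta\), \(a_k\to0\), \(V_k\to pn(n-2)U^{p-1}\) and the decay inherited from the comparison with \(\widetilde Z_k\). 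So \(\zeta\in\dot H^1\), and Bianchi--Egnell puts \(\zeta\) in \(\operatorname{span}\{Z_0,\partial_iU\}\). Since \(\|\eta\|_{L^1(B_\rho)}=1\), \(\zeta\neq0\).

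The main obstacle is that the kernel is nontrivial, so this contradiction does not close on its own. A normalization must be added that rules out kernel elements. My first attempt is to exploit the actual \(w_k\) rather than arbitrary \(\eta_k\). Argue by contradiction directly: let \(\eta_k=w_k/\|w_k\|_{L^1(B_\rho)}\) with \(\|f_k\|/\|w_k\|\to0\). Then \(w_k(0)=0\) and \(\nabla w_k(0)=0\), because \(v_k(0)=1\), \(\nabla v_k(0)=0\) and \(\varphi_k\) has the same value and gradient there (the KMS correctors vanish to first order). Elliptic estimates on a fixed ball upgrade the \(L^1\) convergence of \(\eta_k\) to \(C^1\) convergence near the origin, since the \(f_k\) forcing is also small in the appropriate norm. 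This is the delicate point: it needs \(f_k\) small in \(C^1(B_1)\), not merely in \(L^1\). I would obtain that by splitting \(f_k\) into its boundary part and its source parts, and using interior regularity with the \(L^\infty\) bound \(C\varepsilon_k^{(n-2)/2}\) on the boundary term. The limit \(\zeta\) then has \(\zeta(0)=0\) and \(\nabla\zeta(0)=0\), so \(\zeta\equiv0\) by the uniqueness remark after the Bianchi--Egnell statement. This contradicts \(\|\eta\|_{L^1(B_\rho)}=1\) and establishes \(\|w_k\|_{L^1(B_\rho)}\le C\|f_k\|_{L^1(B_\rho)}\), which combined with the first paragraph gives the lemma.
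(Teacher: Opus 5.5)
Your outline matches the paper's proof. You normalize \(\eta_k=w_k/M_k\) with \(M_k=\|w_k\|_{L^1(B_\rho)}\) and pass \(K_k\eta_k\) to a limit in \(L^1_{\mathrm{loc}}(\R^n)\). The decay inherited from \(\widetilde Z_k\) then yields a finite-energy element of the kernel of \(-\Delta-n(n+2)U^{4/(n-2)}\). You eliminate it using \(w_k(0)=0\) and \(\nabla w_k(0)=0\), and finish with Lemma~\ref{lem:global-balance}, exactly as the paper does.

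One minor point concerns compactness. A pointwise bound \(C2^{-j}|x-y|^{2-n}\) on the kernels gives only \(L^q\) bounds, not \(W^{1,q}\) bounds. The paper gets \(W^{1,s_0}\) compactness instead from the equation for \(K_k\eta_k\) on a larger ball \(B_{2\rho_1}\), via Brezis--Strauss.

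The step that needs repair is how you normalize the contradiction. You assume only \(\|f_k\|_{L^1(B_\rho)}/M_k\to0\), aiming at \(\|w_k\|_{L^1(B_\rho)}\le C\|f_k\|_{L^1(B_\rho)}\). This does not give the \(C^1\) smallness near the origin that you correctly identify as necessary.

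In \(B_\rho\), every piece of \(f_k\) except \(-G_{B_{R_k}}[E_k\mathbf 1_{B_\rho}]\) is annihilated by \(L_{g_k}\), so \(-L_{g_k}f_k=-E_k\) there. Hence \(\|f_k\|_{C^1(B_1)}\) is governed by \(\|f_k\|_{L^1(B_\rho)}+\|E_k\|_{L^s(B_\rho)}\). The hypothesis \(\|f_k\|_{L^1}=o(M_k)\) says nothing about \(\|E_k\|_{L^s(B_\rho)}/M_k\). Likewise, your piecewise bounds \(C\varepsilon_k^{(n-2)/2}\), \(C\mathcal I_k^-\) and \(C\|H_{k,\le6}\|_{\varepsilon_k,R_k}^2\) are small compared with \(M_k\) only if those three quantities are themselves \(o(M_k)\).

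The fix is to argue by contradiction against the inequality you actually need. Assume \(M_k^{-1}\{\varepsilon_k^{(n-2)/2}+\mathcal I_k^-+\|H_{k,\le6}\|_{\varepsilon_k,R_k}^2\}\to0\), as the paper does. Then no estimate on \(f_k\) beyond \(L^1\) is needed. By Lemma~\ref{lem:no-dirac}, \(\eta_k\) satisfies \((-L_{g_k}-V_k-a_k)\eta_k=-M_k^{-1}E_k\) in \(B_\rho\). The right side is \(o(1)\) in \(L^s\) by \eqref{eq:pointwise-profile-error}. Interior Calder\'on--Zygmund estimates and Morrey's inequality then give \(\eta_k\to\zeta\) in \(C^1\) near the origin, which is what your kernel-elimination step requires.
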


 \begin{proof}
We first prove
{
\[
 \|w_k\|_{L^1(B_\rho)}
 \le C_\rho\big\{
 \varepsilon_k^{(n-2)/2}+\mathcal I_k^-
 +\|H_{k,\le6}\|_{\varepsilon_k,R_k}^2\big\}.
\]
}
 Set $M_k=\|w_k\|_{L^1(B_\rho)}$ and $\eta_k=M_k^{-1}w_k$ on $B_\rho$.
If the asserted estimate fails, then, up to a subsequence,
 {
\[
 M_k^{-1}\big\{
 \varepsilon_k^{(n-2)/2}+\mathcal I_k^-
 +\|H_{k,\le6}\|_{\varepsilon_k,R_k}^2\big\}
 \to0.
\]
}
 Put $u_k=K_k\eta_k$, where $K_k$ is defined in \eqref{eq:fixed-core-green-series}.  
By Lemma  \ref{lem:green-iteration}, we have
{
\[
 \eta_k-u_k = f_k/M_k\to0\quad\text{in }L^1(B_\rho),
 \qquad \|\eta_k\|_{L^1(B_\rho)}=1.
\]
}

 We first obtain compactness on every fixed ball.  Fix $\rho_1>\rho$.
Since $R_k\to\infty$, we have $B_{2\rho_1}\subset B_{R_k}$ for all
sufficiently large $k$.  Repeating the preceding integral estimate with
$B_{2\rho_1}$ as the integration region gives

{
\[
 \int_{B_{2\rho_1}}G_{B_{R_k}}(z,y)\dd y
 \le C_{\rho_1}
 \begin{cases}
  1,&z\in B_{4\rho_1},\\
  -\widetilde Z_k(z),&z\in B_{R_k}\setminus B_{4\rho_1}.
\end{cases}
\]
}
\nopagebreak[4]
The Newtonian bound applies when $z\in B_{4\rho_1}$, and the
comparison with $-\widetilde Z_k$ applies when
$z\in B_{R_k}\setminus B_{4\rho_1}$.  The same induction therefore yields,
for all sufficiently large $k$,
{
\[
 \|S_k^j\widetilde S_k\eta_k\|_{L^1(B_{2\rho_1})}
 \le C_{\rho_1} 2^{-j}\|\eta_k\|_{L^1(B_\rho)}.
\]
}
 Consequently,
{
\[
 \|u_k\|_{L^1(B_{2\rho_1})}
 \le\sum_{j=0}^{\infty}
 \|S_k^j\widetilde S_k\eta_k\|_{L^1(B_{2\rho_1})}
 \le C_{\rho_1}\sum_{j=0}^{\infty}2^{-j}
 \le C_{\rho_1}.
\]
}
The operators $S_k$ and $\widetilde S_k$ were introduced in
\eqref{eq:basic-green-decomposition}.  For every integer $N\ge1$, their
definitions and the Green identity give
{
\[
 -L_{g_k}\sum_{j=0}^{N}S_k^j\widetilde S_k\eta_k
 =(V_k+a_k)\mathbf1_{B_\rho}\eta_k
 +V_k\mathbf1_{B_{R_k}\setminus B_\rho}
   \sum_{j=0}^{N-1}S_k^j\widetilde S_k\eta_k \qquad \text{in } \mathcal D'(B_{R_k}).
\]
}
Both sums converge in $L^1(B_{2\rho_1})$ by the preceding geometric
estimate.  Passing to the limit in the  distributional sense, we obtain
{
\begin{equation}\label{eq:normalized-core-equation}
 -L_{g_k}u_k
 =V_k\mathbf1_{B_{R_k}\setminus B_\rho}u_k
  +(V_k+a_k)\mathbf1_{B_\rho}\eta_k
 \qquad\text{in }\mathcal D'(B_{2\rho_1}).
\end{equation}
}
 The functions $V_k$ and $a_k$ are uniformly bounded on $B_{2\rho_1}$
and $B_\rho$, respectively.  By \eqref{eq:normalized-core-equation},
{
\[
 \|\!-\!L_{g_k}u_k\|_{L^1(B_{2\rho_1})}
 \le C_{\rho_1}\bigl(
 \|u_k\|_{L^1(B_{2\rho_1})}
 +\|\eta_k\|_{L^1(B_\rho)}\bigr)
 \le C_{\rho_1}.
\]
}
 Fix $s_0$ with $1<s_0<n/(n-1)$.
{Let $\zeta_k$ solve
\[
 -\Delta_{g_k}\zeta_k=-\Delta_{g_k}u_k
 \quad\text{in }B_{2\rho_1},
 \qquad \zeta_k=0\quad\text{on }\partial B_{2\rho_1}.
\]
Since
\[
 \|-\Delta_{g_k}u_k\|_{L^1(B_{2\rho_1})}
 \le \|-L_{g_k}u_k\|_{L^1(B_{2\rho_1})}
 {}+C\|u_k\|_{L^1(B_{2\rho_1})},
\]
the Brezis--Strauss estimate
\cite[Lemma  9]{BrezisStrauss}, applied to \(-\Delta_{g_k}\) and obtained
for \(L^1\) data by approximation, gives
\[
 \|\zeta_k\|_{W^{1,s_0}(B_{2\rho_1})}
 \le C_{\rho_1,s_0}
 \|{-\Delta_{g_k}u_k}\|_{L^1(B_{2\rho_1})}.
\]
The function \(u_k-\zeta_k\) satisfies
\(\Delta_{g_k}(u_k-\zeta_k)=0\) in \(B_{2\rho_1}\).
The interior gradient estimate applied on $B_{\rho_1}$ then gives}
{
\[
 \|u_k-\zeta_k\|_{W^{1,s_0}(B_{\rho_1})}
 \le C_{\rho_1,s_0}
 \|u_k-\zeta_k\|_{L^1(B_{2\rho_1})}.
\]
}
Combining these two estimates, we obtain
{
\[
 \|u_k\|_{W^{1,s_0}(B_{\rho_1})}
 \le C_{\rho_1,s_0}\bigl(
 \|u_k\|_{L^1(B_{2\rho_1})}
 +\|\!-\!L_{g_k}u_k\|_{L^1(B_{2\rho_1})}\bigr)
 \le C_{\rho_1,s_0}.
\]
}
 Apply this estimate with $\rho_1=\rho+m$ for $m \in \mathbb{N}$.
For every $m$, the embedding
$W^{1,s_0}(B_{\rho+m})\Subset L^1(B_{\rho+m})$ is compact.  Up to a
diagonal subsequence, there is
$\widetilde\eta\in L^1_{\mathrm{loc}}(\R^n)$ such that
{
\[
 u_k\to\widetilde\eta
 \quad\text{in }L^1_{\mathrm{loc}}(\R^n).
\]
}
 The convergence on $B_\rho$ and $\eta_k-u_k\to0$ in $L^1(B_\rho)$ imply
{
\[
 \eta_k\to\widetilde\eta
 \quad\text{in }L^1(B_\rho),
 \qquad
 \|\widetilde\eta\|_{L^1(B_\rho)}=1.
\]
}
Set $V=p\,n(n-2)U^{p-1}$.  The coefficient and potential convergences are
{
\begin{equation}\label{eq:coefficient-potential-convergence}
 g_{k,ij}\to\delta_{ij}\text{ in }C^2_{\mathrm{loc}}(\R^n),
 \quad V_k\to V\text{ in }C^0_{\mathrm{loc}}(\R^n),
 \quad a_k\to0\text{ in }L^\infty(B_\rho).
\end{equation}
}
Together with the preceding $L^1$ convergences, these limits give
{
\[
\begin{aligned}
 V_k\mathbf1_{B_{R_k}\setminus B_\rho}u_k
 &\to V\mathbf1_{\R^n\setminus B_\rho}\widetilde\eta
 &&\quad\text{in }L^1_{\mathrm{loc}}(\R^n),\\
 (V_k+a_k)\mathbf1_{B_\rho}\eta_k
 &\to V\mathbf1_{B_\rho}\widetilde\eta
 &&\quad\text{in }L^1(\R^n).
\end{aligned}
\]
}
Testing \eqref{eq:normalized-core-equation} and passing to the limit, we
obtain
{
\begin{equation}\label{eq:whole-space-limit-equation}
 -\Delta\widetilde\eta
 =V\mathbf1_{\R^n\setminus B_\rho}\widetilde\eta
   +V\mathbf1_{B_\rho}\widetilde\eta
  =V\widetilde\eta
 \qquad\text{in }\mathcal D'(\R^n).
\end{equation}
}
We next prove that $\widetilde\eta$ has finite Dirichlet energy, which permits
us to apply the kernel classification.
The pointwise Green kernel estimate in
Lemma  \ref{lem:green-iteration} and positivity give, for
\(y\in B_\rho\), \(z\in B_{R_k}\setminus\{y\}\), and \(j\ge0\),
\[
 S_k^j\bigl(G_{B_{R_k}}(\,\cdot\,,y)\bigr)(z)
 \le2^{-j}G_{B_{R_k}}(z,y).
\]
On $B_\rho$, the functions $V_k+a_k$ are uniformly bounded and
$\|\eta_k\|_{L^1(B_\rho)}=1$.  By Fubini's theorem, the Harnack inequality
in the second Green variable, and
Lemma  \ref{lem:exterior-corrected-adjoint}\emph{(i)},
{
\[
\begin{aligned}
 |u_k(z)|
 &\le\sum_{j=0}^\infty 2^{-j}
 \int_{B_\rho}G_{B_{R_k}}(z,y)
 |V_k(y)+a_k(y)|\,|\eta_k(y)|\dd y\\
 &\le C_\rho G_{B_{R_k}}(z,0)
 \sum_{j=0}^\infty 2^{-j}\le C_\rho(-\widetilde Z_k(z))
 \le C_\rho(1+|z|)^{2-n},
 \qquad 2\rho\le|z|\le R_k/2.
\end{aligned}
\]
}
For $4\rho\le|y|\le R_k/3$, the interior gradient estimate on
 $B_{|y|/4}(y)$ gives
{
\[
 |\nabla u_k(y)|\le C_\rho(1+|y|)^{1-n}.
\]
}
On compact subsets of $\R^n\setminus\overline{B_{4\rho}}$, local elliptic
compactness and the preceding estimates give
{
\[
 |\widetilde\eta(y)|+(1+|y|)|\nabla\widetilde\eta(y)|
 \le C_\rho(1+|y|)^{2-n},
 \qquad |y|\ge4\rho.
\]
}
By local elliptic regularity on $B_{4\rho}$ and the preceding decay,
{
\[
 \int_{\R^n}|\nabla\widetilde\eta|^2\dd y
 \le \int_{B_{4\rho}}|\nabla\widetilde\eta|^2\dd y
 +C_\rho\int_{4\rho}^{\infty}r^{1-n}\dd r<\infty.
\]
}
Thus $\widetilde\eta$ has finite Dirichlet energy.  By
Bianchi--Egnell \cite[Lemma  A1]{BianchiEgnell},
{
\[
 \widetilde\eta
 =c_0\frac{n-2}{2}\frac{1-|y|^2}{1+|y|^2}U
  +\sum_{i=1}^nc_i\partial_iU.
\]
}

It remains to eliminate these Jacobi fields.  Since
$\eta_k=M_k^{-1}w_k$ on $B_\rho$,
$\eta_k(0)=0$ and  $\nabla\eta_k(0)=0$.
Moreover, in $B_\rho$,
{
\[
 \big(-L_{g_k}-V_k-a_k\big)\eta_k=-M_k^{-1}E_k.
\]
}
 Fix $r<\rho/2$ and $s>n$.  By
\eqref{eq:whole-space-limit-equation} and local elliptic regularity,
$\widetilde\eta\in C^\infty_{\mathrm{loc}}(\R^n)$.  On $B_{2r}$,
{
\[
 \big(-L_{g_k}-V_k-a_k\big)(\eta_k-\widetilde\eta)
 =-M_k^{-1}E_k+(L_{g_k}-\Delta)\widetilde\eta
 +(V_k+a_k-V)\widetilde\eta.
\]
}
 By \eqref{eq:pointwise-profile-error},
\eqref{eq:coefficient-potential-convergence},
the contradiction normalization
$M_k^{-1}\{\varepsilon_k^{(n-2)/2}+\mathcal I_k^-
+\|H_{k,\le6}\|_{\varepsilon_k,R_k}^2\}\to0$, and the interior
Calder\'on--Zygmund estimate,
{
\[
 \|\eta_k-\widetilde\eta\|_{W^{2,s}(B_r)}
 \le C_r\Big(
 \|\eta_k-\widetilde\eta\|_{L^1(B_{2r})}
 +\|\big(-L_{g_k}-V_k-a_k\big)
       (\eta_k-\widetilde\eta)\|_{L^s(B_{2r})}\Big)
 \to0.
\]
}
Since $s>n$, Morrey's inequality gives convergence in $C^1(B_r)$.
Therefore
{
\[
 \widetilde\eta(0)=0,\qquad \nabla\widetilde\eta(0)=0.
\]
}
The  dilation field has value $(n-2)/2$ at the origin, while
$\nabla\partial_iU(0)=-(n-2)e_i$.  The expansion in Jacobi fields therefore
gives $c_0=c_1=\cdots=c_n=0$, contrary to
$\|\widetilde\eta\|_{L^1(B_\rho)}=1$.
Hence
{
\[
 \|w_k\|_{L^1(B_\rho)}
 \le C_\rho\big\{
 \varepsilon_k^{(n-2)/2}+\mathcal I_k^-
 +\|H_{k,\le6}\|_{\varepsilon_k,R_k}^2\big\}.
\]
}
 By Lemma  \ref{lem:global-balance},
{
\[
 \varepsilon_k^{(n-2)/2}+\mathcal I_k^-
 +\|H_{k,\le6}\|_{\varepsilon_k,R_k}^2
 \le C\mathcal I_k^+.
\]
}
The asserted estimate follows.
\end{proof}

 \begin{proof}[Proof of Theorem  \ref{thm:upper-bound}]
{Under the standing contradiction hypothesis and the construction above,
Lemmas  \ref{lem:signed-residual}  through  \ref{lem:fixed-core} apply. By
Lemma  \ref{lem:global-balance},}
{
\[
 \mathcal I_k^+\ge \mathcal I_k^-+C_2\varepsilon_k^{(n-2)/2}.
\]
}
 By Lemma  \ref{lem:fixed-core},
{
\[
 \|w_k\|_{L^1(B_\rho)}\le C\mathcal I_k^+.
\]
}
 Moreover, $\{\widetilde Z_k>0\}\subset B_{R_0}$, and
\eqref{eq:short-bubble-limit} and Taylor's formula  give
{
\[
 0\le N_k\le C|w_k|^2,
 \qquad
 \|w_k\|_{L^\infty(B_{R_0})}\to0.
\]
}
 Therefore,
{
\[
 \mathcal I_k^+
 \le C\int_{B_{R_0}}|w_k|^2\dd y
 \le C\|w_k\|_{L^\infty(B_{R_0})}
             \|w_k\|_{L^1(B_\rho)}
 =o(\mathcal I_k^+),
\]
}
 {which is impossible because \(\mathcal I_k^+>0\).  This contradiction
proves Theorem  \ref{thm:upper-bound}.}
\end{proof}
\section{The Pohozaev limit and annular scales}
\label{sec:pohozaev-dichotomy}

{
Throughout Sections  \ref{sec:pohozaev-dichotomy}  through  \ref{sec:fowler}, we
assume the critical upper bound \eqref{eq:critical-upper}; the argument
applies in every dimension.  This section first identifies the
limiting Pohozaev quantity and the removable decay regime.  In the remaining
nonremovable case with \(\cP_\infty=0\), it constructs the alternating peak
and minimum radii and proves the endpoint estimates used on the rescaled
annuli.
}

Starting from the original smooth metric and solution \((g_0,u_0)\),
choose the fixed conformal representative from
Section  \ref{sec:common-preliminaries} at the origin:
\[
 \widehat g=\kappa_0^{4/(n-2)}g_0,
 \qquad \widehat u=\kappa_0^{-1}u_0.
\]
Fix \(\omega\in(0,1)\) and choose the representative with coefficients
in \(C^{d+4,\omega}\). After reducing the normal coordinate
radius if necessary, apply one fixed dilation so that the coordinate domain
is \(B_1\). We henceforth write
\(g,u\) for this metric and solution. Conformal covariance and local
distance comparability preserve \eqref{eq:critical-upper} and
nonremovability, with changed constants; in these coordinates
\(d_g(x,0)=|x|\). Thus \eqref{eq:normal-gauge} and
\eqref{eq:normal-orders} hold, with \(h=\log g\), and the coefficients
are fixed independently of all later rescalings.

\subsection{The Pohozaev identity and radial estimates}

{
The Pohozaev identity expresses the change between two radii in terms of the
metric error.  Uniform estimates on fixed annuli then provide compactness
under rescaling and a convexity estimate for the weighted spherical average.
}

Recall that the normalized metric and solution satisfy
\eqref{eq:normal-gauge}, \eqref{eq:normal-orders}, and
\eqref{eq:critical-upper}. In Euclidean polar coordinates \(x=r\theta\),
define the spherical average for \(0<r<r_0\) by
\begin{equation}\label{eq:spherical-average}
 \bar u(r)=\fint_{\Sph^{n-1}}u(r\theta)\,\dd\theta.
\end{equation}
Use the Pohozaev boundary integral \(\cP(r,u)\) defined in
\eqref{eq:pohozaev}.
{
Equation  \eqref{eq:yamabe} is equivalently written as
\[
 -\Delta u=n(n-2)u^p-(\Delta-L_g)u.
\]
}

\par
\begin{lemma}\label{lem:exact-pohozaev}
For \(0<s<r<r_0\),
\begin{equation}\label{eq:exact-pohozaev}
 \cP(r,u)-\cP(s,u)
 =
 \int_{A_{s,r}}
 \bigl(\frac{n-2}{2}u+x\cdot\nabla_xu\bigr)(\Delta-L_g)u\,\dd x .
\end{equation}
\end{lemma}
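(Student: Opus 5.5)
This is the classical Euclidean Pohozaev identity. We treat the non-Euclidean part of the operator as a forcing term: set \(f:=-\Delta u=n(n-2)u^p-(\Delta-L_g)u\) on \(A_{s,r}\), which is legitimate because \(u\in C^2\) away from the origin. We then multiply by the dilation field \(D_{\mathrm{di}}u=\frac{n-2}{2}u+x\cdot\nabla u\) and integrate over the annulus \(A_{s,r}\). Since \(\overline{A_{s,r}}\) avoids the puncture, every integration by parts is classical and no distributional extension (Lemma~\ref{lem:no-dirac}) is needed.

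The left side \(-\int_{A_{s,r}}D_{\mathrm{di}}u\,\Delta u\,\dd x\) is handled with the pointwise identities
\[
\operatorname{div}\Bigl((x\cdot\nabla u)\nabla u-\tfrac12|\nabla u|^2x\Bigr)=(x\cdot\nabla u)\Delta u+\tfrac{n-2}{2}|\nabla u|^2
\]
and
\[
\operatorname{div}(u\nabla u)=u\Delta u+|\nabla u|^2 .
\]
Combining them, the interior \(|\nabla u|^2\) terms cancel, leaving \(D_{\mathrm{di}}u\,\Delta u\) as a divergence. On \(\partial B_t\) we have \(x=t\,\partial_r\), so the boundary flux at radius \(t\) is
\[
\int_{\partial B_t}\Bigl(\tfrac{n-2}{2}u\partial_ru-\tfrac t2|\nabla u|^2+t(\partial_ru)^2\Bigr)\dd S .
\]
For the nonlinear part \(n(n-2)\int u^p D_{\mathrm{di}}u\), we write \(u^p\,x\cdot\nabla u=\frac1{p+1}x\cdot\nabla u^{p+1}\) and integrate by parts. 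The interior term is \(-\frac{n}{p+1}\int u^{p+1}\), and \(\frac{n}{p+1}=\frac{n-2}{2}\) exactly cancels \(\frac{n-2}{2}\int u^{p+1}\); this is the criticality of \(p\). What remains is a pure boundary term, \(\frac{n(n-2)}{p+1}\,t\int_{\partial B_t}u^{p+1}\), and since \(\frac{n(n-2)}{p+1}=\frac{(n-2)^2}{2}\) this is the last term of \(\cP\).

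Collecting everything gives \(\cP(r,u)-\cP(s,u)\) as a sum of outer-minus-inner boundary integrals, with the sign convention fixed by the outward radial normal \(\partial_r\) on both spheres. It remains to check that the remaining volume term is \(+\int_{A_{s,r}}D_{\mathrm{di}}u\,(\Delta-L_g)u\). From \(-\Delta u-n(n-2)u^p=-(\Delta-L_g)u\), multiplying by \(D_{\mathrm{di}}u\) and integrating gives \(-\bigl(\cP(r)-\cP(s)\bigr)=-\int D_{\mathrm{di}}u\,(\Delta-L_g)u\), which is \eqref{eq:exact-Pohozaev}.

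The only real obstacle is bookkeeping: the signs and the constants \(\frac{n-2}{2}\) and \(\frac{(n-2)^2}{2}\) have to match the definition \eqref{eq:pohozaev} exactly. I would verify this on the bubble \(U\), for which \(\cP(r,U)\) is independent of \(r\).
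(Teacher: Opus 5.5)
Your argument is correct and is essentially the paper's proof, which packages your three divergence identities into the single field $X=(x\cdot\nabla_xu)\nabla_xu-\frac12|\nabla_xu|^2x+\frac{n-2}{2}u\nabla_xu+\frac{(n-2)^2}{2}u^{p+1}x$. This field satisfies $\operatorname{div}X=D_{\mathrm{di}}u\,(\Delta u+n(n-2)u^p)=D_{\mathrm{di}}u\,(\Delta-L_g)u$, and the paper then applies the divergence theorem on $A_{s,r}$. One sign slip: your first identity should read $\operatorname{div}\bigl((x\cdot\nabla u)\nabla u-\frac12|\nabla u|^2x\bigr)=(x\cdot\nabla u)\Delta u-\frac{n-2}{2}|\nabla u|^2$, with a minus sign. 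That minus sign is exactly what makes the $|\nabla u|^2$ terms cancel against $\frac{n-2}{2}\operatorname{div}(u\nabla u)$, as you claim, and the boundary flux you wrote is already the correct one.
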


\par
\begin{proof}
Consider the Euclidean vector field
\[
 X=(x\cdot\nabla_xu)\nabla_xu-\frac12|\nabla_xu|^2x
   +\frac{n-2}{2}u\nabla_xu+\frac{(n-2)^2}{2}u^{p+1}x.
\]
{Using the preceding identity, we compute}
\[
 \operatorname{div}X
 =
 \bigl(\frac{n-2}{2}u+x\cdot\nabla_xu\bigr)
 \bigl(\Delta u+n(n-2)u^p\bigr)
 =\bigl(\frac{n-2}{2}u+x\cdot\nabla_xu\bigr)(\Delta-L_g)u.
\]
{On \(\partial B_r\),}
\[
 X\cdot\frac{x}{r}
 =
 \frac{n-2}{2}u\,\pa_r u-\frac r2|\nabla_xu|^2+r(\pa_r u)^2
 +\frac{(n-2)^2}{2}r u^{p+1}.
\]
{Applying the divergence theorem on \(A_{s,r}\), we obtain
\eqref{eq:exact-pohozaev}.}
\end{proof}

\par
Let \(\nabla_\theta\) denote the Levi--Civita connection of the round
metric on \(\Sph^{n-1}\).

\par
\begin{lemma}\label{lem:radial-harnack}
{
There exist \(C_{\rm H}\ge1\) and \(r_{\rm ann}\in(0,r_0)\) such that,
for every integer \(0\le k\le2\), there exists \(C_k>0\) for which the
following estimates hold whenever \(0<r<r_{\rm ann}\):
}
\begin{equation}\label{eq:annular-harnack}
 \sup_{\theta\in\Sph^{n-1}}u(r\theta)
 \le C_{\rm H}\inf_{\theta\in\Sph^{n-1}}u(r\theta),
\end{equation}
\begin{equation}\label{eq:radial-derivatives}
 {\sum_{\ell+m\le k}
 \left|
 (r\pa_r)^\ell\nabla_\theta^m
 \bigl(r^{\frac{n-2}{2}}u(r\theta)\bigr)
 \right|
 \le C_k r^{\frac{n-2}{2}}\bar u(r).}
\end{equation}
{The radius \(r_{\rm ann}\) and the constants depend only on
\(n\), \(C_0\), and the fixed normalized metric on \(B_1\), with the
additional dependence on \(k\) for \(C_k\).}
\end{lemma}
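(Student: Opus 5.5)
The plan is to rescale each sphere to unit size and apply uniform elliptic estimates to a bounded rescaled solution. Fix \(0<r<r_{\rm ann}\), where \(r_{\rm ann}\le r_0/4\) will be chosen below, and set
\[
 w_r(y)=r^{(n-2)/2}u(ry),\qquad \tfrac14\le|y|\le4 .
\]
By conformal covariance under dilation, \(w_r\) solves \(-L_{g_r}w_r=n(n-2)w_r^p\), with \(g_r(y)=g(ry)\) in these coordinates. The normalized metric on \(B_1\) has \(C^{d+4,\omega}\) bounds, so \(g_r\) is uniformly elliptic with uniformly bounded \(C^{2,\omega}\) coefficients on \(A_{1/4,4}\), and \(\operatorname{Scal}_{g_r}=r^2\operatorname{Scal}_g(r\,\cdot)\) is uniformly small there. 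The critical upper bound \eqref{eq:critical-upper}, together with \(d_g(x,0)=|x|\), gives
\[
 0<w_r\le C_0 4^{(n-2)/2}\quad\text{on }A_{1/4,4}.
\]
Hence the zeroth-order coefficient \(c(y)=n(n-2)w_r^{p-1}-c(n)\operatorname{Scal}_{g_r}\) in the linear equation \(-\Delta_{g_r}w_r=c(y)w_r\) is bounded uniformly in \(r\).

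The Harnack inequality \eqref{eq:annular-harnack} then follows from the Harnack inequality for nonnegative solutions of linear equations with bounded coefficients (Gilbarg--Trudinger, Theorem~8.20). Cover the connected compact set \(\partial B_1\) by finitely many balls of radius \(1/8\) contained in \(A_{1/2,2}\); the number of balls depends only on \(n\). Chaining the local Harnack inequalities gives \(\sup_{\partial B_1}w_r\le C_{\rm H}\inf_{\partial B_1}w_r\), with \(C_{\rm H}\) depending only on \(n\), \(C_0\), and the metric bounds. Since \(w_r(\theta)=r^{(n-2)/2}u(r\theta)\), this is \eqref{eq:annular-harnack}. Applying the same argument on \(A_{1/2,2}\) also gives
\[
 \sup_{A_{1/2,2}}w_r\le C\,\inf_{\partial B_1}w_r\le C\,r^{(n-2)/2}\bar u(r),
\]
because \(\inf_{\partial B_1}w_r\le r^{(n-2)/2}\bar u(r)\).

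For the derivative bounds \eqref{eq:radial-derivatives}, note that \(w_r\) is bounded. Interior \(W^{2,s}\) and Schauder estimates on \(A_{1/2,2}\), bootstrapped through the equation \(-\Delta_{g_r}w_r=c(y)w_r\), give
\[
 \|w_r\|_{C^{2}(\partial B_1)}\le C\,\|w_r\|_{L^\infty(A_{1/2,2})}\le C\,r^{(n-2)/2}\bar u(r).
\]
The estimate is linear in \(w_r\) because \(c\) is bounded in \(C^{0,\omega}\): from the \(C^{1,\omega}\) bound on \(w_r\) we get \(\|c\|_{C^{0,\omega}}\le C\), and the Schauder estimate is then linear in \(\|w_r\|_{L^\infty}\). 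Finally, at \(|y|=1\) the operators \((r\partial_r)^\ell\nabla_\theta^m\) applied to \(r^{(n-2)/2}u(r\theta)\) are exactly the operators \((y\cdot\nabla_y)^\ell\nabla_\theta^m\) applied to \(w_r\). They are combinations of Euclidean derivatives of order at most \(\ell+m\le2\) with bounded coefficients on \(\partial B_1\), which yields \eqref{eq:radial-derivatives} with \(C_k\) depending only on \(k\), \(n\), \(C_0\), and the metric.

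The main point requiring care is uniformity in \(r\). It rests on two facts: the critical upper bound makes the potential \(w_r^{p-1}\) bounded independently of \(r\), and the rescaled metrics converge to the Euclidean one, so no positive lower bound on \(u\) is needed. The Schauder step needs the nonlinear coefficient in Hölder norm, and this is obtained from the preliminary \(C^{1,\omega}\) bound. Everything else is a direct application of standard elliptic theory.
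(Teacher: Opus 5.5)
Your proposal is correct and is essentially the paper's proof: rescale to $w_r$, use the critical upper bound to make the potential $n(n-2)w_r^{p-1}-c(n)\operatorname{Scal}_{g_r}$ uniformly bounded, chain the Harnack inequality across the annulus, and then apply interior $W^{2,s}$ and Schauder estimates (the paper divides by the spherical average, while you invoke linearity of the estimates; the two are equivalent). There are two harmless imprecisions: at $|y|=1$ the operator $r\partial_r$ acting on $r^{(n-2)/2}u(r\theta)$ corresponds to $\frac{n-2}{2}+y\cdot\nabla_y$ acting on $w_r$, not $y\cdot\nabla_y$; and the uniform Hölder bound on $w_r^{p-1}$ needs the Harnack lower bound (or the $(p-1)$-Hölder continuity of $t\mapsto t^{p-1}$), not the $C^{1,\omega}$ bound alone, since $w_r$ itself may be arbitrarily small.
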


\begin{proof}
For \(0<r<r_0/2\), define
\[
 u_r(y)=r^{\frac{n-2}{2}}u(ry),
 \qquad
 g_r(y)=g(ry),
 \qquad y\in A_{1/2,2}.
\]
The rescaled equation is
\[
 -\Delta_{g_r}u_r
 =
 n(n-2)u_r^p-c(n)r^2\operatorname{Scal}_g(ry)u_r.
\]
By \eqref{eq:critical-upper} and \eqref{eq:normal-orders},
\[
 0<u_r\le C,
 \qquad
 \left|
 n(n-2)u_r^{p-1}-c(n)r^2\operatorname{Scal}_g(ry)
 \right|
 \le C
 \quad\text{on }A_{1/2,2}.
\]
{A finite Harnack chain in \(A_{1/2,2}\) compares all values
of \(u_r\) on \(A_{3/5,5/3}\). Restriction to \(\partial B_1\) and scaling
back give \eqref{eq:annular-harnack}.}

Let 
\[
\widehat u_r(y)
=\frac{u_r(y)}
{\fint_{\Sph^{n-1}}u_r(\theta)\,\dd\theta}
\]
be \(u_r\) normalized to have spherical average one on \(\partial B_1\).
The Harnack inequality gives
\[
 c\le\widehat u_r\le C
 \quad\text{on }A_{3/5,5/3}.
\]
Fix \(\gamma\in(0,1)\) and choose \(s>n/(1-\gamma)\). Interior
\(W^{2,s}\) estimates, Sobolev embedding, and Schauder estimates yield
\begin{equation}\label{eq:normalized-annular-schauder}
 \norm{\widehat u_r}_{C^{k,\gamma}(A_{3/4,4/3})}
 \le C_{k,\gamma},
 \qquad 0\le k\le2.
\end{equation}
Since
\[
 \fint_{\Sph^{n-1}}u_r(\theta)\,\dd\theta
 =r^{\frac{n-2}{2}}\bar u(r),
\]
scaling \eqref{eq:normalized-annular-schauder} back to \(\partial B_r\)
gives \eqref{eq:radial-derivatives}.
\end{proof}

{The next lemma proves the corresponding ODE consequence of
the annular estimates.  Its strict convexity conclusion will be used to
control the low sublevel intervals and to select the peak and minimum radii.}

\begin{lemma}\label{lem:radial-average-convexity}
For \(0<r<r_{\rm ann}\), the spherical average satisfies
\begin{subequations}
\begin{equation}\label{eq:averaged-radial-identity}
\begin{aligned}
 &(r\pa_r)^2\bigl(r^{\frac{n-2}{2}}\bar u(r)\bigr)
 -\frac{(n-2)^2}{4}r^{\frac{n-2}{2}}\bar u(r)\\
{}={}&-n(n-2)r^{\frac{n+2}{2}}
 \fint_{\Sph^{n-1}}u(r\theta)^p\,\dd\theta
 +c(n)r^{\frac{n+2}{2}}
 \fint_{\Sph^{n-1}}\operatorname{Scal}_g(r\theta)u(r\theta)\,\dd\theta,
\end{aligned}
\end{equation}
\begin{equation}\label{eq:averaged-curvature-error}
 \left|
 c(n)r^{\frac{n+2}{2}}
 \fint_{\Sph^{n-1}}\operatorname{Scal}_g(r\theta)u(r\theta)\,\dd\theta
 \right|
 \le Cr^4r^{\frac{n-2}{2}}\bar u(r).
\end{equation}
\end{subequations}
Moreover, there exist \(\eps_{\rm cv}>0\) and
\(r_{\rm cv}\in(0,r_{\rm ann})\) such that, if \(0<r<r_{\rm cv}\) and
\(r^{\frac{n-2}{2}}\bar u(r)\le\eps_{\rm cv}\), then
\begin{equation}\label{eq:radial-convexity}
 (r\pa_r)^2\bigl(r^{\frac{n-2}{2}}\bar u(r)\bigr)
 \ge\frac{(n-2)^2}{8}r^{\frac{n-2}{2}}\bar u(r)>0.
\end{equation}
\end{lemma}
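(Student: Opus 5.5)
The plan is to average the equation over spheres and to use the radial structure of the normalized metric, as recorded in Lemma~\ref{lem:radial-cancellation}.

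\textbf{Averaged identity.} Write the equation as $\Delta u=(\Delta-\Delta_g)u+c(n)\operatorname{Scal}_g u-n(n-2)u^p$ and integrate it over $\partial B_r$. By the first identity of Lemma~\ref{lem:radial-cancellation}, the term $(\Delta-\Delta_g)u$ has zero spherical integral. This leaves
\[
\bar u''+\tfrac{n-1}{r}\bar u'=-n(n-2)\fint_{\Sph^{n-1}} u^p\,\dd\theta+c(n)\fint_{\Sph^{n-1}}\operatorname{Scal}_g u\,\dd\theta .
\]
Next set $f(t)=r^{(n-2)/2}\bar u(r)$ with $r\pa_r=\pa_t$, $t=\log r$. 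The standard Emden--Fowler computation gives
\[
(r\pa_r)^2 f-\tfrac{(n-2)^2}{4}f=r^{(n+2)/2}\Bigl(\bar u''+\tfrac{n-1}{r}\bar u'\Bigr),
\]
which is \eqref{eq:averaged-radial-identity} after multiplying through. The differentiation under the integral sign is justified because $u\in C^2$ away from the origin.

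\textbf{Curvature error.} By \eqref{eq:normal-orders}, $|\operatorname{Scal}_g(r\theta)|\le Cr^2$. Since $u>0$, the curvature term is bounded by
\[
Cr^{(n+2)/2}\,r^2\,\bar u(r)=Cr^4\,r^{(n-2)/2}\bar u(r),
\]
which is \eqref{eq:averaged-curvature-error}. The normal-coordinate bound actually yields an even stronger estimate, which is not needed here.

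\textbf{Convexity.} This is the only step requiring care: the nonlinear average must be controlled by the linear quantity $f$. By the Harnack inequality \eqref{eq:annular-harnack},
\[
\fint u^p\le (\sup_{\partial B_r}u)^{p-1}\,\bar u\le C_{\rm H}^{p-1}\bar u^{\,p}.
\]
Hence
\[
n(n-2)r^{(n+2)/2}\fint u^p\le C\bigl(r^{(n-2)/2}\bar u\bigr)^{p-1}\,r^{(n-2)/2}\bar u=Cf^{p-1}f,
\]
using $r^{(n+2)/2}=r^{(n-2)/2}\cdot r^{2}$ and $(r^{(n-2)/2}\bar u)^{p-1}=r^2\bar u^{p-1}$.

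Therefore
\[
(r\pa_r)^2f\ge\Bigl(\tfrac{(n-2)^2}{4}-Cf^{p-1}-Cr^4\Bigr)f .
\]
Choose $\eps_{\rm cv}$ with $C\eps_{\rm cv}^{p-1}\le (n-2)^2/16$, and choose $r_{\rm cv}$ with $Cr_{\rm cv}^4\le(n-2)^2/16$. This gives \eqref{eq:radial-convexity}. Strict positivity follows from $\bar u>0$.
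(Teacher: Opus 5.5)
Your proposal is correct and follows essentially the same route as the paper. The paper averages the equation using Lemma~\ref{lem:radial-cancellation}, applies the same Emden--Fowler identity for $(r\partial_r)^2\bigl(r^{\frac{n-2}{2}}\bar u\bigr)$, and uses $|\operatorname{Scal}_g|\le Cr^2$ together with the Harnack bound $\fint u^p\le C\bar u^{\,p}$; it then makes the same choices $C\eps_{\rm cv}^{p-1}\le (n-2)^2/16$ and $Cr_{\rm cv}^4\le (n-2)^2/16$.
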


\begin{proof}
By Lemma  \ref{lem:radial-cancellation},
\[
 \fint_{\Sph^{n-1}}\Delta_gu(r\theta)\,\dd\theta
 =
 \bar u''(r)+\frac{n-1}{r}\bar u'(r).
\]
{Averaging \eqref{eq:yamabe}, we find}
\[
 \bar u''+\frac{n-1}{r}\bar u'
 +n(n-2)\fint_{\Sph^{n-1}}u^p\,\dd\theta
 =
 c(n)\fint_{\Sph^{n-1}}\operatorname{Scal}_gu\,\dd\theta.
\]
{We use the identity}
\[
 (r\pa_r)^2(r^{\frac{n-2}{2}}\bar u)
 =
 r^{\frac{n+2}{2}}\left(\bar u''+\frac{n-1}{r}\bar u'\right)
 +\frac{(n-2)^2}{4}r^{\frac{n-2}{2}}\bar u
\]
and the estimate
\[
 r^{\frac{n+2}{2}}|\operatorname{Scal}_g(r\theta)|u(r\theta)
 \le Cr^4r^{\frac{n-2}{2}}u(r\theta)
\]
{to obtain \eqref{eq:averaged-radial-identity} and
\eqref{eq:averaged-curvature-error}.}

{
Finally, by \eqref{eq:annular-harnack},
\eqref{eq:averaged-radial-identity}, and
\eqref{eq:averaged-curvature-error}, we have
\[
 (r\pa_r)^2(r^{\frac{n-2}{2}}\bar u)
 \ge
 \frac{(n-2)^2}{4}r^{\frac{n-2}{2}}\bar u
 -C(r^{\frac{n-2}{2}}\bar u)^p
 -Cr^4r^{\frac{n-2}{2}}\bar u.
\]
Choose \(\eps_{\rm cv}\) first so that
\(C\eps_{\rm cv}^{p-1}\le(n-2)^2/16\). Then choose
\(r_{\rm cv}\in(0,r_{\rm ann})\) so that
\(Cr_{\rm cv}^4\le(n-2)^2/16\). This proves
\eqref{eq:radial-convexity}.
}
\end{proof}

\par
\subsection{The Pohozaev limit and the removable alternative}

{
We next identify the limiting Pohozaev quantity and characterize its zero
case. Under the nonremovable contradiction hypothesis, the weighted
spherical average satisfies the two limits in
\eqref{eq:radial-oscillation} below.
}

\begin{lemma}\label{lem:pohozaev-limit}
{
The limit
\(\cP_\infty:=\lim_{r\downarrow0}\cP(r,u)\) exists and satisfies
\(\cP_\infty\le0\). Moreover, \(\cP_\infty=0\) if and only if
\[
 \liminf_{r\downarrow0}r^{\frac{n-2}{2}}\bar u(r)=0.
\]
}
\end{lemma}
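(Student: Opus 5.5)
The plan is to take the existence of the limit from the exact Pohozaev identity, Lemma~\ref{lem:exact-pohozaev}, and to obtain the sign and the zero characterization by rescaling. For existence, I will show that the integrand on the right of \eqref{eq:exact-pohozaev} is absolutely integrable near the origin. Write \((\Delta-L_g)u=(\Delta-\Delta_g)u+c(n)\operatorname{Scal}_gu\). The scalar curvature term is handled by \eqref{eq:normal-orders}, the critical bound \eqref{eq:critical-upper}, and \eqref{eq:radial-derivatives}: the factor \(\frac{n-2}{2}u+x\cdot\nabla u\) is bounded by \(Cr^{-(n-2)/2}\), so the product is at most \(Cr^{2}r^{2-n}\), which is integrable against \(r^{n-1}\dd r\). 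For \((\Delta-\Delta_g)u=\partial_i((\delta^{ij}-g^{ij})\partial_ju)\) (using \(\det g=1\)), I use \(|g^{-1}-I|\le C|x|^2\) and \(|\partial g^{-1}|\le C|x|\), together with the derivative bounds \(|D^a u|\le Cr^{-(n-2)/2-a}\) for \(a\le2\), which follow from the rescaled estimates in Lemma~\ref{lem:radial-harnack}. This gives \(|(\Delta-\Delta_g)u|\le Cr^{-(n-2)/2}\). Consequently the integrand is \(O(r^{2-n})\), hence in \(L^1(B_{r_0})\), and \(\cP(r,u)\) is Cauchy as \(r\downarrow0\). Along the way I record the quantitative bound \(|\cP(r,u)-\cP_\infty|\le Cr^2\); this is sharper if \(\bar u\) is small, since the integrand is bounded by \(Cr^{2-n}(r^{(n-2)/2}\bar u)^2\).

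For the sign and the characterization, I rescale. Set \(u_r(y)=r^{(n-2)/2}u(ry)\). By Lemma~\ref{lem:radial-harnack} and elliptic estimates, along any sequence \(r_i\to0\) a subsequence converges in \(C^2_{\mathrm{loc}}(\R^n\setminus\{0\})\) to a nonnegative solution \(u_*\) of the flat equation \(-\Delta u_*=n(n-2)u_*^p\) in \(\R^n\setminus\{0\}\). By the Harnack inequality, either \(u_*\equiv0\) or \(u_*>0\). The functional \(\cP\) is scale invariant, \(\cP(r,u)=\cP(1,u_r)\), so \(\cP_\infty=\cP(1,u_*)\) for every such limit. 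If \(u_*>0\), Caffarelli--Gidas--Spruck gives that \(u_*\) is either a Fowler solution or a standard bubble centered at the origin (a removable singularity of the whole-space equation with the critical bound \(|y|^{(n-2)/2}u_*\le C\)). A Fowler solution has \(\cP<0\) by the classical first-integral computation for \eqref{eq:fowler-ode}: the Pohozaev quantity equals a negative multiple of the Hamiltonian \(\frac12v'^2-\frac{(n-2)^2}{8}v^2+\frac{(n-2)^2}{2}v^{p+1}\), which is negative for positive bounded solutions. A bubble has \(\cP=0\), and \(u_*\equiv0\) gives \(\cP=0\). Hence \(\cP_\infty\le0\).

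It remains to prove the equivalence. If \(\liminf r^{(n-2)/2}\bar u=0\), then choosing \(r_i\) realizing the liminf, the Harnack inequality forces \(u_{r_i}\to0\) on \(\partial B_1\). Then \(\cP(1,u_{r_i})\to0\) by \eqref{eq:radial-derivatives}, so \(\cP_\infty=0\). Conversely, suppose \(\liminf r^{(n-2)/2}\bar u(r)\ge c>0\). Then every blow-up limit is positive with \(|y|^{(n-2)/2}u_*\ge c'\) on all spheres, which excludes the bubble, whose weighted average tends to \(0\) at \(0\) and at \(\infty\). So \(u_*\) is Fowler and \(\cP_\infty<0\). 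The main obstacle I expect is this whole-space classification step. I must make sure the limit is a genuine solution across the relevant range, use the critical upper bound to rule out other singular behavior, and in particular argue that the bubble case, whose center might not be at the origin, is incompatible with a positive lower bound at all scales. That incompatibility follows directly from the decay of a bubble in the weighted average.
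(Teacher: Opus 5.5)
Your proposal is correct and follows the paper's proof essentially step by step: the $O(r^{2-n})$ bound on the Pohozaev integrand giving $|\cP(r,u)-\cP(s,u)|\le Cr^2$; the scale invariance $\cP_\infty=\cP(1,u_*)$ for every blow-up limit; the Caffarelli--Gidas--Spruck dichotomy between an entire bubble with $\cP=0$ and a radial singular solution with negative conserved energy; and the exclusion of the bubble because its weighted spherical average decays at $0$ and $\infty$. Two small slips should be fixed. First, the limiting bubble need not be centered at the origin, as you note yourself at the end. Second, $\cP(r,u_F)$ equals $+|\Sph^{n-1}|$ times your Hamiltonian, not a negative multiple, and that Hamiltonian is negative on the constant and periodic orbits (the homoclinic orbit, i.e.\ the centered bubble, has energy $0$); this is what gives $\cP<0$.
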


\par
\begin{proof}
\emph{Step 1. Existence of the limit.}
{
By \eqref{eq:normal-orders} and
\eqref{eq:radial-derivatives} with \(k=2\), for \(0<r<r_{\rm ann}\), we have
\[
 |u|+r|\nabla_xu|+r^2|\nabla_x^2u|
 \le Cr^{-\frac{n-2}{2}}
 \quad\text{on }A_{r/2,2r}.
\]
}
{
Since \(\det g=1\) in the fixed coordinates,
\[
 (\Delta-L_g)u
 =\sum_{i,j=1}^n(\delta^{ij}-g^{ij})\partial_i\partial_j u
  -\sum_{i,j=1}^n(\partial_i g^{ij})\partial_j u
  +c(n)\operatorname{Scal}_gu.
\]
By \eqref{eq:normal-orders},
}
\[
 |(\Delta-L_g)u|
 \le
 C\bigl(r^2|\nabla_x^2u|+r|\nabla_xu|+r^2u\bigr)
 \le Cr^{-\frac{n-2}{2}}.
\]
{By Lemma  \ref{lem:exact-pohozaev}, for
\(0<s<r<r_{\rm ann}\),}
\[
 |\cP(r,u)-\cP(s,u)|
 \le C\int_s^r \tau\,\dd\tau
 =\frac C2(r^2-s^2)
 \le Cr^2.
\]
Thus \(\cP_\infty\) exists.

\emph{Step 2. Compactness under rescaling.}
Let \(s_\nu\downarrow0\) as \(\nu\to\infty\), and define
\[
 u_\nu(y)=s_\nu^{\frac{n-2}{2}}u(s_\nu y),
 \qquad
 g_\nu(y)=g(s_\nu y).
\]
For every fixed \(R>1\) and \(\gamma\in(0,1)\),
{a finite covering of \(A_{R^{-1},R}\) by rescaled copies
of \(A_{3/4,4/3}\), together with \eqref{eq:critical-upper} and
\eqref{eq:normalized-annular-schauder}, gives, for all sufficiently
large \(\nu\),}
\[
 \begin{aligned}
 &(g_\nu)_{ij}
 \to\delta_{ij}
 \qquad \text{in }C^2(A_{R^{-1},R})
 \quad\text{as }\nu\to\infty,\\
 &\norm{u_\nu}_{C^{2,\gamma}(A_{R^{-1},R})}
 \le C_{R,\gamma}.
 \end{aligned}
\]
After passing to a diagonal subsequence,
\[
 u_\nu\to u_\infty
 \quad\text{in }C^2_{\rm loc}(\R^n\setminus\{0\})
 \quad\text{as }\nu\to\infty,
\]
where
\[
 -\Delta u_\infty=n(n-2)u_\infty^p,
 \qquad u_\infty\ge0.
\]
{Scaling \eqref{eq:pohozaev} and letting \(\nu\to\infty\) on
\(\pa B_1\), we obtain}
\[
 \cP_\infty=\cP(1,u_\infty).
\]
\emph{Step 3. Sign of the limiting Pohozaev quantity.}
If \(u_\infty\equiv0\), then
\(\cP_\infty=\cP(1,u_\infty)=0\).
Otherwise \(u_\infty>0\), and Caffarelli--Gidas--Spruck
\cite[Theorem  8.1 and Corollary  8.2]{CGS}
show that \(u_\infty\) is either an entire standard bubble or a positive
radial solution with a nonremovable singularity at the puncture.

{For an entire bubble, it follows from the Euclidean Pohozaev
identity and smoothness at the origin that}
\[
 \cP(r,u_\infty)=\lim_{s\downarrow0}\cP(s,u_\infty)=0,
 \qquad r>0.
\]
For a radial nonremovable solution,
\[
 \frac{\cP(r,u_\infty)}{|\Sph^{n-1}|}
 =\frac12\left[r\partial_r\bigl(r^{\frac{n-2}{2}}u_\infty(r)\bigr)\right]^2
 -\frac{(n-2)^2}{8}\bigl(r^{\frac{n-2}{2}}u_\infty(r)\bigr)^2
 +\frac{n(n-2)}{p+1}
 \bigl(r^{\frac{n-2}{2}}u_\infty(r)\bigr)^{p+1}.
\]
The profile \(r^{(n-2)/2}u_\infty(r)\) is constant or periodic in
\(-\log r\).
{By the radial ODE phase portrait in the cited
classification, the conserved energy displayed above is strictly negative.}
Hence \(\cP(r,u_\infty)<0\), \(r>0\).
We conclude in all three cases that
\(\cP_\infty=\cP(1,u_\infty)\le0\).

\emph{Step 4. Characterization of the zero case.}
Suppose first that
\(s_\nu^{\frac{n-2}{2}}\bar u(s_\nu)\to0\) as \(\nu\to\infty\) for some
sequence \(s_\nu\downarrow0\). By \eqref{eq:annular-harnack} and
\eqref{eq:radial-derivatives},
\[
 s_\nu^{\frac{n-2}{2}}u(s_\nu\,\cdot)\to0
 \quad\text{in }C^2(A_{1/2,2})
 \quad\text{as }\nu\to\infty.
\]
Hence \(\cP_\infty=0\).

Conversely, suppose that
\[
 \liminf_{r\downarrow0}r^{\frac{n-2}{2}}\bar u(r)>0.
\]
Every rescaling limit above is nonzero.  For each fixed \(s>0\),
\[
 s^{\frac{n-2}{2}}\fint_{\Sph^{n-1}}u_\infty(s\theta)\,\dd\theta
 =
 \lim_{\nu\to\infty}
 (s_\nu s)^{\frac{n-2}{2}}\bar u(s_\nu s)
 \ge
 \liminf_{r\downarrow0}r^{\frac{n-2}{2}}\bar u(r)>0.
\]
An entire standard bubble instead satisfies
\[
 \lim_{s\downarrow0}s^{(n-2)/2}
 \fint_{\Sph^{n-1}}u_\infty(s\theta)\,\dd\theta
 =\lim_{s\to\infty}s^{(n-2)/2}
 \fint_{\Sph^{n-1}}u_\infty(s\theta)\,\dd\theta=0.
\]
Thus every rescaling limit belongs to the singular radial class described
above, for which \(\cP_\infty<0\). This proves the equivalence.
\end{proof}

\begin{lemma}\label{lem:radial-decay-removable}
If
\[
 \lim_{r\downarrow0}r^{\frac{n-2}{2}}\bar u(r)=0,
\]
then the singularity at the origin is removable.
\end{lemma}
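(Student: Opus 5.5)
Under the hypothesis \(\lim_{r\downarrow0}r^{(n-2)/2}\bar u(r)=0\), I would first upgrade this to a power decay of \(\bar u\) faster than \(r^{-(n-2)/2}\), and then to a bound \(u\le Cr^{2-n+\tau}\) with \(\tau\) close to \(n-2\). Then I would conclude removability from Lemma~\ref{lem:no-dirac} and standard elliptic regularity.

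\emph{Step 1: exponential decay in \(t=-\log r\).} Set \(f(t)=r^{(n-2)/2}\bar u(r)\). For \(t\ge t_0\) large we have \(f\le\eps_{\rm cv}\), so Lemma~\ref{lem:radial-average-convexity} gives a linear ODE with a small perturbation:
\[
 f''(t)=\tfrac{(n-2)^2}{4}f-e(t),\qquad |e|\le C(f^{p}+e^{-4t}f)\le \eta(t)f,\qquad \eta(t)\to0 .
\]
Here Lemma~\ref{lem:radial-harnack} lets me replace \(\fint u^p\) by \(\bar u^p\) up to constants. The function \(f\) is positive, convex, and tends to \(0\), so it is eventually decreasing. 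Comparison with \(\cosh\)-type barriers \(Ae^{-\mu t}+Be^{\mu t}\), where \(\mu<(n-2)/2\) and \(t\in[T,T']\) with \(T'\to\infty\), kills the growing mode. This gives \(f(t)\le Ce^{-\mu t}\) for every \(\mu<(n-2)/2\), that is, \(\bar u(r)\le C r^{-(n-2)/2+\mu}\). By the Harnack inequality \eqref{eq:annular-harnack}, the same bound holds for \(u\) itself. Now \(u^{p-1}\le C r^{-2+(p-1)\mu}\), so the potential \(n(n-2)u^{p-1}\) is subcritical, of order \(|x|^{-2+\sigma}\) with \(\sigma>0\).

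\emph{Step 2: sharpen to near-harmonic behavior.} With the potential \(V=n(n-2)u^{p-1}-c(n)\operatorname{Scal}_g=O(|x|^{-2+\sigma})\), rerun the averaged ODE. The error is now \(e=O(e^{-\sigma t})f\). The exact linear equation \(f''=\frac{(n-2)^2}{4}f\) has solutions \(e^{\pm(n-2)t/2}\), and variation of parameters gives \(f(t)=ae^{-(n-2)t/2}+O(e^{-((n-2)/2+\sigma')t})\). Hence \(\bar u(r)\le C\) up to lower-order terms, and by Harnack \(u\) is bounded near \(0\); iterating once more gives boundedness outright.

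\emph{Step 3: removability.} Once \(u\in L^\infty(B_\rho)\), Lemma~\ref{lem:no-dirac} says that \(-L_gu=n(n-2)u^p\) holds distributionally across the origin. The right side is bounded, so \(W^{2,s}\) estimates followed by Schauder bootstrapping give \(u\in C^{2,\gamma}\), and then smoothness. Positivity at \(0\) follows from the strong maximum principle, since \(u>0\) nearby and \(-L_gu\ge -C u\).

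\emph{Main obstacle.} I expect the main difficulty to be Step 1's barrier argument with only the qualitative information \(f\to0\). The delicate point is ensuring the growing mode \(e^{(n-2)t/2}\) is excluded; boundedness of \(f\) together with the comparison on expanding intervals should suffice. A secondary check is that Harnack control of \(\fint u^p\) by \(\bar u^p\) is uniform in \(r\), which Lemma~\ref{lem:radial-harnack} provides.
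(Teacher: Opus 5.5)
Your proof is correct, and its skeleton matches the paper's. Both derive power decay of $f=r^{(n-2)/2}\bar u$ from the averaged radial equation \eqref{eq:averaged-radial-identity}, transfer it to $u$ via \eqref{eq:annular-harnack}, and then conclude with Lemma~\ref{lem:no-dirac} plus elliptic regularity. The differences are in technique.

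\textbf{The decay step.} The paper sets $a=\frac{n-2}{2}-\varepsilon_{\rm d}$ and notes that $r^{a}\bigl(r\partial_r f-af\bigr)$ is nondecreasing in $r$. This quantity tends to $0$ as $r\downarrow0$, because $|r\partial_r f|\le Cf$ by \eqref{eq:radial-derivatives} and $f\to0$. Hence $r\partial_r f\ge af$ and $f\le Cr^{a}$. Your barrier comparison on $[T,T']$ with $T'\to\infty$ reaches the same bound using only the boundedness of $f$.

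\textbf{Your Step~2.} Here the routes genuinely differ, since the paper has no analogue of this step. It chooses $\varepsilon_{\rm d}$ with $p\varepsilon_{\rm d}<2$. Then $u^p=O(|x|^{-p\varepsilon_{\rm d}})\in L^s_{\rm loc}$ for some $s\in\bigl(\frac n2,\frac{n}{p\varepsilon_{\rm d}}\bigr)$, and the $W^{2,s}$ estimate for the distributional equation gives $u\in C^{0,\gamma}$ at once.

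Your variation-of-parameters pass is legitimate. You have $|e|\le Ce^{-(\sigma+\mu)t}$ with $\sigma+\mu>\frac{n-2}{2}$ once $\mu$ is near $\frac{n-2}{2}$, so the growing mode is absent and $f=c_0e^{-(n-2)t/2}+O(e^{-((n-2)/2+\sigma')t})$. This buys the sharper fact that $\bar u$ has a finite limit and $u$ is bounded before any PDE regularity is used. The paper's route is shorter, because elliptic $L^s$ theory absorbs the mild singularity $|x|^{-\varepsilon_{\rm d}}$ directly. Your ``iterating once more'' remark is unnecessary, since a single pass already yields $\bar u\le C$.

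\textbf{Smoothness.} The normalized metric has only finite regularity. Full smoothness should therefore come, as in the paper, from undoing the fixed conformal change and bootstrapping with the smooth original metric $g_0$.
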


\par
\begin{proof}
\emph{Step 1. Radial decay.}
Fix \(\varepsilon_{\rm d}\in(0,\frac{n-2}{2})\) so small that
\(p\varepsilon_{\rm d}<2\). By
\eqref{eq:averaged-radial-identity} and
\eqref{eq:averaged-curvature-error}, there is
\(r_{\rm d}\in(0,r_{\rm cv})\) such that
\[
 (r\pa_r)^2\bigl(r^{\frac{n-2}{2}}\bar u(r)\bigr)
 \ge(\frac{n-2}{2}-\varepsilon_{\rm d})^2r^{\frac{n-2}{2}}\bar u(r)
 \quad\text{for }0<r<r_{\rm d}.
\]
Therefore
{
\[
 \begin{aligned}
 &r\pa_r\bigg[
 r^{\frac{n-2}{2}-\varepsilon_{\rm d}}\Bigl\{
 r\pa_r\bigl(r^{\frac{n-2}{2}}\bar u(r)\bigr)
 -(\frac{n-2}{2}-\varepsilon_{\rm d})r^{\frac{n-2}{2}}\bar u(r)
 \Bigr\}
 \bigg]\\
{}={}&
 r^{\frac{n-2}{2}-\varepsilon_{\rm d}}\bigg[
 (r\pa_r)^2\bigl(r^{\frac{n-2}{2}}\bar u(r)\bigr)
 -(\frac{n-2}{2}-\varepsilon_{\rm d})^2
 r^{\frac{n-2}{2}}\bar u(r)
 \bigg]\ge0.
 \end{aligned}
\]
}
By \eqref{eq:radial-derivatives}, the expression inside the square brackets
tends to zero as \(r\downarrow0\).  It follows that
\[
 r\pa_r\bigl(r^{\frac{n-2}{2}}\bar u(r)\bigr)
 \ge(\frac{n-2}{2}-\varepsilon_{\rm d})r^{\frac{n-2}{2}}\bar u(r)
 \quad\text{for }0<r<r_{\rm d}.
\]
After integration from \(r\) to \(r_{\rm d}\),
\[
 r^{\frac{n-2}{2}}\bar u(r)\le C_{\rm d}r^{\frac{n-2}{2}-\varepsilon_{\rm d}}.
\]
{By \eqref{eq:annular-harnack} and
\eqref{eq:radial-derivatives},
\[
 u(x)\le C_{\rm d}|x|^{-\varepsilon_{\rm d}},
 \qquad
 |\nabla_xu(x)|\le C_{\rm d}|x|^{-1-\varepsilon_{\rm d}}.
\]
}

{
\emph{Step 2. Regularity, positivity, and bootstrap.}
By Lemma  \ref{lem:no-dirac}, after decreasing \(r_{\rm d}\) if necessary,
we have \(u,u^p\in L^1_{\mathrm{loc}}(B_{r_{\rm d}})\), and
\eqref{eq:yamabe} holds in \(\mathcal D'(B_{r_{\rm d}})\). The preceding
pointwise bounds give
\[
 u^p=O(|x|^{-p\varepsilon_{\rm d}})
 \quad\text{as }x\to0.
\]
}
Choose
\[
 \frac n2<s<\frac{n}{p\varepsilon_{\rm d}}.
\]
{Applying local elliptic estimates and Sobolev embedding,
we obtain}
\[
 u,u^p\in L^s_{\rm loc}(B_{r_{\rm d}})
 \to
 u\in W^{2,s}_{\rm loc}(B_{r_{\rm d}})
 \hookrightarrow C^{0,\gamma}_{\rm loc}(B_{r_{\rm d}})
\]
for some \(\gamma\in(0,1)\).

The extension is nonnegative and nontrivial.
Using the Harnack inequality for
\(-L_gu=n(n-2)u^{p-1}u\), we have, on a smaller ball \(B_r\), \(0<c\le u\le C\).
Since \(z\mapsto z^p\) is smooth on \((0,\infty)\), we apply
Schauder estimates of finite order to obtain \(u\in C_{\rm loc}^{d+4,\omega}(B_r)\).
Undoing the fixed conformal and coordinate changes, we recover
a positive \(C^2\) solution of the original equation. Since the original
metric is smooth, a final Schauder bootstrap makes the extension smooth
across the origin.
\end{proof}

\par
To prove \eqref{eq:lower-bound}, it remains to exclude
\begin{equation}\label{eq:contradiction-case}
 \cP_\infty=0
 \quad\text{and}\quad
 0\text{ is nonremovable}.
\end{equation}
{Applying Lemmas  \ref{lem:pohozaev-limit} and
\ref{lem:radial-decay-removable} under
\eqref{eq:contradiction-case}, we obtain}
\begin{equation}\label{eq:radial-oscillation}
 \liminf_{r\downarrow0}r^{\frac{n-2}{2}}\bar u(r)=0,
 \qquad
 \limsup_{r\downarrow0}r^{\frac{n-2}{2}}\bar u(r)>0.
\end{equation}

\subsection{Low sublevel intervals}

{
Assume \eqref{eq:contradiction-case}.  We first compare the weighted
spherical average on every connected low sublevel interval with the
corresponding homogeneous radial equation.  The resulting
estimate from above and below will locate the minimum between two
successive peaks.
}

{Recall that \(\bar u\) is given by
\eqref{eq:spherical-average}.}

\par
\begin{lemma}\label{lem:multiplicative-interval}
There exist \(\widehat\eps\in(0,\eps_{\rm cv}]\) and
\(\widehat r\in(0,r_{\rm cv}]\) with the following property. Let
\(0<\eps\le\widehat\eps\), and let \(0<r_-<r_+<\widehat r\) be the endpoints
of a connected component of
\[
 \left\{0<r<\widehat r:
 r^{\frac{n-2}{2}}\bar u(r)<\eps\right\}.
\]
Then the endpoint values satisfy \(r_-^{\frac{n-2}{2}}\bar u(r_-)
 =r_+^{\frac{n-2}{2}}\bar u(r_+)=\eps\).
There is a unique \(r_*\in(r_-,r_+)\) such that \(r_*\pa_r\bigl(r^{\frac{n-2}{2}}\bar u(r)\bigr)\big|_{r=r_*}=0\).
For \(r\in[r_-,r_+]\),
\begin{equation}\label{eq:multiplicative-two-sided}
 c\eps\left[
 \left(\frac r{r_+}\right)^{\frac{n-2}{2}}
 +\left(\frac{r_-}{r}\right)^{\frac{n-2}{2}}
 \right]
 \le r^{\frac{n-2}{2}}\bar u(r)\le
 C\eps\left[
 \left(\frac r{r_+}\right)^{\frac{n-2}{2}}
 +\left(\frac{r_-}{r}\right)^{\frac{n-2}{2}}
 \right].
\end{equation}
The value at \(r_*\) satisfies both endpoint comparisons, and its radius
satisfies the geometric mean estimate:
\begin{equation}\label{eq:multiplicative-midpoint}
 \begin{aligned}
 c\eps\left(\frac{r_*}{r_+}\right)^{\frac{n-2}{2}}
 &\le r_*^{\frac{n-2}{2}}\bar u(r_*)
 \le C\eps\left(\frac{r_*}{r_+}\right)^{\frac{n-2}{2}},\\
 c\eps\left(\frac{r_-}{r_*}\right)^{\frac{n-2}{2}}
 &\le r_*^{\frac{n-2}{2}}\bar u(r_*)
 \le C\eps\left(\frac{r_-}{r_*}\right)^{\frac{n-2}{2}},\\
 c\,r_-r_+
 &\le r_*^2\le C\,r_-r_+.
 \end{aligned}
\end{equation}
All constants depend only on \(n\), the normalized metric, \(C_0\), and
\(r_0\).
\end{lemma}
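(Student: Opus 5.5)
Work in the logarithmic variable $t=-\log r$ and set $f(t)=r^{\frac{n-2}{2}}\bar u(r)$, so that $r\partial_r=-\partial_t$. Identity \eqref{eq:averaged-radial-identity} becomes
\[
 f''-\frac{(n-2)^2}{4}f=-F+G,
\]
where $F=n(n-2)r^{\frac{n+2}{2}}\fint u^p\ge0$. By \eqref{eq:annular-harnack}, $0\le F\le Cf^p$. By \eqref{eq:averaged-curvature-error}, $|G|\le Ce^{-4t}f$. Set $\beta=\frac{n-2}{2}$.

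\emph{Endpoints, uniqueness of the critical point, and choice of constants.} Choose $\widehat\eps\le\eps_{\rm cv}$ and $\widehat r\le r_{\rm cv}$. On the component, $f<\eps\le\eps_{\rm cv}$, so \eqref{eq:radial-convexity} gives $f''\ge\frac{\beta^2}{2}f>0$ there. Hence $f$ is strictly convex on $[t_+,t_-]$, where $t_\pm=-\log r_\pm$.

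The interval is bounded away from $0$ in $r$ because $r_->0$; this uses that the connected component has positive endpoints, which I take to be part of the hypotheses. The endpoint values equal $\eps$ by continuity and the definition of a component. The case where $r_+$ is the outer edge $\widehat r$ cannot arise, since $r_+<\widehat r$ is assumed.

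Strict convexity together with equal endpoint values $\eps$ and interior values $<\eps$ gives exactly one critical point $t_*$, which is the minimum. This yields the unique $r_*$.

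\emph{Two-sided comparison.} Write $f''=\beta^2(1+a(t))f$ with
\[
 a=\frac{G-F}{\beta^2f},\qquad |a|\le C(\eps^{p-1}+e^{-4t}).
\]
Since $F\le Cf^p$, the smallness of $\eps$ and of $r$ make $|a|\le\frac18$ say. That is not sufficient for a comparison with uniform constants, because the interval length $L=t_--t_+$ may be unbounded, and then $(1\pm\tfrac18)^{1/2}$ exponents produce errors of order $e^{cL}$.

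The key step is therefore to show $\int_{t_+}^{t_-}|a|\,\dd t\le C$.
\begin{itemize}
\item The curvature part contributes at most $\int e^{-4t}\,\dd t\le C$.
\item For the nonlinear part, use $f^{p-1}$ with $f\le C\eps\,(e^{-\beta(t-t_+)}+e^{-\beta(t_--t)})$. This bound is first obtained crudely by comparison with the subsolution and supersolution built from $\cosh$ with rates $\beta\sqrt{1\pm1/8}$, which shows exponential decay from each endpoint. Then $\int f^{p-1}\le C\eps^{p-1}$.
\end{itemize}

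With $\int|a|\le C$, a standard Liouville--Green (WKB) argument applies. Write $f=A e^{\beta t}+Be^{-\beta t}$ with variation of parameters, or compare $f$ with the solution $\phi$ of $\phi''=\beta^2\phi$ having the same boundary values $\eps$ at $t_\pm$. Through the Green function of $\partial_t^2-\beta^2$ on the interval, this gives $f\asymp\phi$ with constants $e^{\pm C\int|a|}$.

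Explicitly, $\phi\asymp\eps\bigl(e^{-\beta(t-t_+)}+e^{-\beta(t_--t)}\bigr)$. In terms of $r$ this is exactly the bracket $(r/r_+)^{\beta}+(r_-/r)^{\beta}$ of \eqref{eq:multiplicative-two-sided}.

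\emph{Midpoint.} The minimum of the bracket is attained where the two terms balance, at $r^2=r_-r_+$. I then need to show that $t_*$ lies within $O(1)$ of $(t_++t_-)/2$. Since $f'(t_*)=0$ and the comparison also controls $f'$ (apply the same variation-of-parameters representation to $f'$), one gets
\[
 f'=\beta\bigl(-\tilde A e^{-\beta(t-t_+)}+\tilde Be^{-\beta(t_--t)}\bigr)
\]
with $\tilde A,\tilde B\asymp\eps$. Setting $f'(t_*)=0$ gives $e^{\beta(2t_*-t_+-t_-)}\asymp1$, so $r_*^2\asymp r_-r_+$. At that point both terms of the bracket are comparable, which gives the first two lines of \eqref{eq:multiplicative-midpoint}.

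The main obstacle is the uniform derivative control $\tilde A,\tilde B\asymp\eps$, equivalently that the perturbation does not shift the exponential coefficients by more than bounded factors. I would handle it through the integrability $\int|a|<C$, using the Gronwall form of the variation of parameters for the first-order system in $(f,f')$ diagonalized as $f'\pm\beta f$.
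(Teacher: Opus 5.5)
Your proposal follows essentially the same route as the paper: convexity from \eqref{eq:radial-convexity} gives the unique minimum, a crude exponential comparison makes the perturbation integrable in $\log r$, a Green representation is taken against the homogeneous solution with endpoint values $\eps$, and that representation is differentiated at $r_*$ to obtain $r_*^2\asymp r_-r_+$. The one point to tighten is that your own bounds give $\int|a|\,\dd t\le C(\eps^{p-1}+r_+^4)$, and the paper makes this small by shrinking $\widehat\eps$ and $\widehat r$; it is this smallness, not mere boundedness, that makes the Green-function absorption and the derivative control at $r_*$ (your ``main obstacle'') go through directly, with no separate Gronwall or Liouville--Green argument needed.
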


\par
\begin{proof}
\emph{Step 1. Strict convexity and the unique minimum.}
By \eqref{eq:radial-convexity},
\[
 (r\pa_r)
 \left[
 r\pa_r\bigl(r^{\frac{n-2}{2}}\bar u(r)\bigr)
 \right]>0
 \quad\text{for }r_-<r<r_+.
\]
The function \(r^{\frac{n-2}{2}}\bar u(r)\) has equal endpoint values and is smaller in
the interior.  Hence its dilation derivative changes sign exactly once,
which proves the existence and uniqueness of \(r_*\).

{The comparison principle for
\((r\pa_r)^2w=(n-2)^2w/8\), with boundary value \(\eps\) at \(r_-\) and
\(r_+\), gives}
\[
 r^{\frac{n-2}{2}}\bar u(r)
 \le C\eps\left[
 \left(\frac r{r_+}\right)^{\frac{n-2}{2\sqrt2}}
 +\left(\frac{r_-}{r}\right)^{\frac{n-2}{2\sqrt2}}
 \right].
\]

{
\emph{Step 2. The coefficient in the radial equation.}
For \(r\in(r_-,r_+)\), the scalar coefficient in the radial equation is
\[
 \widehat V(r)
 =
 \frac{r^{\frac{n+2}{2}}}{r^{\frac{n-2}{2}}\bar u(r)}
 \fint_{\Sph^{n-1}}
 \bigl[n(n-2)u(r\theta)^p-c(n)\operatorname{Scal}_g(r\theta)u(r\theta)\bigr]
 \,\dd\theta.
\]
{Using \eqref{eq:annular-harnack},
\eqref{eq:averaged-radial-identity}, and
\eqref{eq:averaged-curvature-error}, we have}
\[
 \bigl[-(r\pa_r)^2+\frac{(n-2)^2}{4}\bigr]\bigl(r^{\frac{n-2}{2}}\bar u(r)\bigr)
 =
 \widehat V(r)r^{\frac{n-2}{2}}\bar u(r)
\]
and
\[
 |\widehat V(r)|
 \le C\bigl(r^{\frac{n-2}{2}}\bar u(r)\bigr)^{p-1}+Cr^4.
\]
By the preceding estimate,
\begin{equation}\label{eq:q-ode-L1}
 \int_{r_-}^{r_+}|\widehat V(s)|\,\frac{\dd s}{s}
 \le C\eps^{p-1}+Cr_+^4.
\end{equation}
}

\emph{Step 3. Green representation and absorption.}
For \(r,s\in(r_-,r_+)\), write
\[
 m_{r,s}:=\min\{r,s\},
 \qquad
 M_{r,s}:=\max\{r,s\}.
\]
The Dirichlet Green kernel of
\(-(r\pa_r)^2+\frac{(n-2)^2}{4}\), with integration measure \(\dd s/s\), is
\[
 G_{r_-,r_+}(r,s)
 =
 \frac{
 \left[
 \left(\dfrac{m_{r,s}}{r_-}\right)^{\frac{n-2}{2}}
 -\left(\dfrac{r_-}{m_{r,s}}\right)^{\frac{n-2}{2}}
 \right]
 \left[
 \left(\dfrac{r_+}{M_{r,s}}\right)^{\frac{n-2}{2}}
 -\left(\dfrac{M_{r,s}}{r_+}\right)^{\frac{n-2}{2}}
 \right]
 }{
 (n-2)\left[
 \left(\dfrac{r_+}{r_-}\right)^{\frac{n-2}{2}}
 -\left(\dfrac{r_-}{r_+}\right)^{\frac{n-2}{2}}
 \right]
 }.
\]
Direct differentiation on \(r<s\) and \(r>s\), together with the jump of
\(r\pa_rG_{r_-,r_+}\) at \(r=s\), verifies this formula.  Uniformly in the
ratio \(r_+/r_-\),
\begin{equation}\label{eq:multiplicative-green-bounds}
 \begin{aligned}
 0\le G_{r_-,r_+}(r,s)
 &\le C\left(\frac{m_{r,s}}{M_{r,s}}\right)^{\frac{n-2}{2}},\qquad 
 \left|r\pa_rG_{r_-,r_+}(r,s)\right|
 &\le C\left(\frac{m_{r,s}}{M_{r,s}}\right)^{\frac{n-2}{2}},\\
 \left(\frac{m_{r,s}}{M_{r,s}}\right)^{\frac{n-2}{2}}
 \left[
 \left(\frac s{r_+}\right)^{\frac{n-2}{2}}
 +\left(\frac{r_-}{s}\right)^{\frac{n-2}{2}}
 \right]
 &\le C\left[
 \left(\frac r{r_+}\right)^{\frac{n-2}{2}}
 +\left(\frac{r_-}{r}\right)^{\frac{n-2}{2}}
 \right].
 \end{aligned}
\end{equation}

The homogeneous solution with value \(\eps\) at both endpoints is
\[
 h_{r_-,r_+}(r)
 =
 \eps
 \frac{
 \left(\dfrac r{r_-}\right)^{\frac{n-2}{2}}-\left(\dfrac{r_-}r\right)^{\frac{n-2}{2}}
 +\left(\dfrac{r_+}r\right)^{\frac{n-2}{2}}-\left(\dfrac r{r_+}\right)^{\frac{n-2}{2}}
 }{
 \left(\dfrac{r_+}{r_-}\right)^{\frac{n-2}{2}}
 -\left(\dfrac{r_-}{r_+}\right)^{\frac{n-2}{2}}
 }.
\]
There are constants \(0<c\le C\), depending only on \(n\), such that
\begin{equation}\label{eq:multiplicative-homogeneous-bounds}
 c\eps\left[
 \left(\frac r{r_+}\right)^{\frac{n-2}{2}}
 +\left(\frac{r_-}{r}\right)^{\frac{n-2}{2}}
 \right]
 \le h_{r_-,r_+}(r)
 \le C\eps\left[
 \left(\frac r{r_+}\right)^{\frac{n-2}{2}}
 +\left(\frac{r_-}{r}\right)^{\frac{n-2}{2}}
 \right].
\end{equation}
Choose \(\widehat\eps\le\eps_{\rm cv}\), and then choose
\(\widehat r\le r_{\rm cv}\), so that
\eqref{eq:q-ode-L1} and \eqref{eq:multiplicative-green-bounds} imply
{
\[
 C\int_{r_-}^{r_+}|\widehat V(s)|\,\frac{\dd s}{s}
 \le
 \min\left\{\frac12,\frac{c}{4C},\frac{n-2}{16}\right\}.
\]
}
{The Green representation reads
\[
 r^{\frac{n-2}{2}}\bar u(r)
 =
 h_{r_-,r_+}(r)
 +
 \int_{r_-}^{r_+}
 G_{r_-,r_+}(r,s)\widehat V(s)s^{\frac{n-2}{2}}\bar u(s)
 \,\frac{\dd s}{s}.
\]
}
{Combining
\eqref{eq:multiplicative-green-bounds}--%
\eqref{eq:multiplicative-homogeneous-bounds} with the choice of
\(\widehat\eps,\widehat r\), we obtain}
 {
\[
 \sup_{r\in[r_-,r_+]}
 \frac{r^{\frac{n-2}{2}}\bar u(r)}
 {(r/r_+)^{\frac{n-2}{2}}+(r_-/r)^{\frac{n-2}{2}}}
 \le C\eps+\frac12
 \sup_{r\in[r_-,r_+]}
 \frac{r^{\frac{n-2}{2}}\bar u(r)}
 {(r/r_+)^{\frac{n-2}{2}}+(r_-/r)^{\frac{n-2}{2}}}.
\]
}
{After absorbing the last term, we obtain the upper bound in
\eqref{eq:multiplicative-two-sided}.}
{Substituting in the Green integral, we obtain}
\[
 \left|
 r^{\frac{n-2}{2}}\bar u(r)-h_{r_-,r_+}(r)
 \right|
 \le\frac{c}{2}\eps\left[
 \left(\frac r{r_+}\right)^{\frac{n-2}{2}}
 +\left(\frac{r_-}{r}\right)^{\frac{n-2}{2}}
 \right],
\]
which proves the lower bound.

\emph{Step 4. The minimum radius and minimum value.}
{Differentiating directly, we find the exact identity}
\[
 r\pa_rh_{r_-,r_+}(r)
 =
 \frac{n-2}{2}\eps
 \frac{(r_+/r_-)^{\frac{n-2}{2}}}{(r_+/r_-)^{\frac{n-2}{2}}+1}
 \left[
 \left(\frac r{r_+}\right)^{\frac{n-2}{2}}
 -\left(\frac{r_-}r\right)^{\frac{n-2}{2}}
 \right].
\]
{The prefactor
\(\bigl[1+(r_-/r_+)^{(n-2)/2}\bigr]^{-1}\) lies in
\([1/2,1)\).}
Differentiate the Green representation at \(r_*\).  Since
\(r\pa_r(r^{\frac{n-2}{2}}\bar u)=0\) there,
{we infer from the last two estimates in
\eqref{eq:multiplicative-green-bounds}, the upper bound already proved, and
\eqref{eq:q-ode-L1} that}
\[
 \left|
 \left(\frac{r_*}{r_+}\right)^{\frac{n-2}{2}}
 -\left(\frac{r_-}{r_*}\right)^{\frac{n-2}{2}}
 \right|
 \le
 \frac12\left[
 \left(\frac{r_*}{r_+}\right)^{\frac{n-2}{2}}
 +\left(\frac{r_-}{r_*}\right)^{\frac{n-2}{2}}
 \right].
\]
{Thus
\(\frac13\le\bigl(r_*^2/(r_-r_+)\bigr)^{(n-2)/2}\le3\), which is
equivalent to}
\[
 c\,r_-r_+\le r_*^2\le C\,r_-r_+.
\]
{Applying \eqref{eq:multiplicative-two-sided} at \(r_*\), we
obtain the first two lines of \eqref{eq:multiplicative-midpoint}.}
\end{proof}

\par
\subsection{Peak and minimum scales}

{
The oscillation in \eqref{eq:radial-oscillation} determines alternating
maximum and minimum radii. Rescaling at a maximum radius produces a standard
bubble. The radial average formula below identifies its critical radius and
confines the limiting parameters to a fixed compact set. For \(s>0\) and
\(\theta\in\Sph^{n-1}\), direct substitution gives
}
\begin{equation}\label{eq:bubble-radial-form}
 s^{\frac{n-2}{2}}U_{\lambda,b}(s\theta)
 =
 \left(
 \frac{\lambda}{2(\lambda^2+|b|^2)^{1/2}}
 \right)^{\frac{n-2}{2}} \cdot
 \left[
 \frac12\left(
 \frac{s}{(\lambda^2+|b|^2)^{1/2}}
 +\frac{(\lambda^2+|b|^2)^{1/2}}{s}
 \right)
 -\frac{b \cdot\theta }{(\lambda^2+|b|^2)^{1/2}}
 \right]^{-\frac{n-2}{2}}.
\end{equation}

The peak sequence is determined by a fixed level below both the convexity
threshold and the limiting upper oscillation. Fix
\begin{equation}\label{eq:epsilon-zero-choice}
 0<\eps_0<
 \min\left\{
 \widehat\eps,
 \frac14\limsup_{r\downarrow0}r^{\frac{n-2}{2}}\bar u(r)
 \right\}.
\end{equation}
The same value \(\eps_0\) is used throughout
Sections  \ref{sec:pohozaev-dichotomy}  through \ref{sec:recurrence}.

{In the next lemma, we use \(\rho_j\) for the decreasing
maximum radii and \(\bar\rho_j\) for the intervening minimum radii.}
For the ordered superlevel components, \(r_j^+\) denotes the lower endpoint
of the component indexed by \(j\) and \(r_j^-\) the upper endpoint of the
component indexed by \(j+1\). The upper endpoint of the first component is denoted by
\(r_0^-\). Set
\begin{equation}\label{eq:minimum-depth-definition}
 a_j=\bar\rho_j^{\frac{n-2}{2}}\bar u(\bar\rho_j).
\end{equation}
This weighted minimum value is compared with the Pohozaev quantity at
\(\bar\rho_j\) in Proposition  \ref{prop:minimum-scale-limit}.

\par
\begin{lemma}\label{lem:peak-minimum-radii}
After discarding finitely many connected components of
\[
 \left\{
 0<r<\widehat r:r^{\frac{n-2}{2}}\bar u(r)>\eps_0
 \right\},
\]
each remaining component contains exactly one critical radius.  These radii
can be enumerated so that
\[
 \rho_1>\rho_2>\cdots\downarrow0.
\]
Each \(\rho_j\) is a strict local maximum of \(r^{\frac{n-2}{2}}\bar u(r)\).  Between the
components containing \(\rho_j\) and \(\rho_{j+1}\), there is exactly one
critical radius
\[
 \rho_{j+1}<\bar\rho_j<\rho_j.
\]
It is a strict local minimum, and the quantities in
\eqref{eq:minimum-depth-definition} satisfy
\begin{equation}\label{eq:peak-minimum-separation}
 a_j\to0,
 \qquad
 \frac{\bar\rho_j}{\rho_j}\to0,
 \qquad
 \frac{\rho_{j+1}}{\bar\rho_j}\to0
 \quad\text{as }j\to\infty.
\end{equation}
\end{lemma}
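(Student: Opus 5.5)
The plan is to work with the function $f(r)=r^{\frac{n-2}{2}}\bar u(r)$ on $(0,\widehat r)$, or better with $F(t)=f(e^{-t})$ on $(-\log\widehat r,\infty)$. By \eqref{eq:radial-oscillation} and the choice \eqref{eq:epsilon-zero-choice}, $F$ exceeds $2\eps_0$ along a sequence $t\to\infty$, and $\liminf F=0$. Hence the superlevel set $\{F>\eps_0\}$ has infinitely many components accumulating at $t=\infty$. The sublevel set $\{F<\eps_0\}$ also has infinitely many bounded components, and the two kinds alternate. Lemma \ref{lem:multiplicative-interval} applies to each bounded sublevel component with $\eps=\eps_0\le\widehat\eps$. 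Each such component contains exactly one critical point $r_*$, which is a strict minimum because \eqref{eq:radial-convexity} gives $F''>0$ wherever $F\le\eps_0$. It remains to show that each superlevel component (after discarding finitely many) contains exactly one critical point, and that this point is a strict maximum.

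\textbf{Uniqueness of the maximum in superlevel components.} I would argue by contradiction via compactness, using Step 2 of the proof of Lemma \ref{lem:pohozaev-limit}. Suppose infinitely many superlevel components contain either two critical points or a degenerate one. Rescale at such a critical radius $s_\nu$. Then $s_\nu^{(n-2)/2}u(s_\nu\cdot)$ converges in $C^2_{\rm loc}(\R^n\setminus\{0\})$ to a solution $u_\infty$. Since $\cP_\infty=0$, Step 3 of that proof forces $u_\infty$ to be either $0$ or an entire bubble $U_{\lambda,b}$. At $s_\nu$ we have $F>\eps_0$, so $u_\infty\neq0$.

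The limit profile $F_\infty(t)=e^{-t(n-2)/2}\fint U_{\lambda,b}(e^{-t}\theta)\,\dd\theta$ can be read off from \eqref{eq:bubble-radial-form}. On the set where $F_\infty\ge\eps_0/2$ it has a single nondegenerate critical point. Indeed, the bracket there is a convex function of $\log s$ with a unique minimum at $s=(\lambda^2+|b|^2)^{1/2}$, and averaging over $\theta$ preserves the relevant log-concavity; this is the step I would verify by direct computation. The same bound $F_\infty\ge\eps_0/2$ confines $(\lambda,b)$ to a compact set. Because the convergence holds in $C^2$ in $t$ on compact $t$-intervals, $F$ has a unique, nondegenerate critical point on the corresponding window for large $\nu$.

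The components themselves have $t$-length bounded uniformly: on the superlevel set the limit stays above $\eps_0/2$ only on a window of bounded length, again by compactness of the parameters. So the whole component lies in the window, and the contradiction follows. I expect this to be the main obstacle, namely showing that the convergence window covers the entire component rather than only a neighbourhood of the critical point. The compactness argument must control the component's length uniformly, which I would do by rescaling at an endpoint of the component as well.

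\textbf{Enumeration and separation.} Enumerate the maxima $\rho_j\downarrow0$ and the intervening minima $\bar\rho_j$. For the limits \eqref{eq:peak-minimum-separation}, first note that $a_j\to0$. Otherwise, rescaling at $\bar\rho_j$ would produce a nonzero limit with a local minimum of $F_\infty$, whereas a bubble profile has no interior minimum; the alternative $u_\infty=0$ contradicts $a_j\not\to0$. Then \eqref{eq:multiplicative-midpoint} gives $a_j\asymp\eps_0(r_-/r_+)^{(n-2)/4}$, so $r_+/r_-\to\infty$ for the sublevel component $(r_-,r_+)\ni\bar\rho_j$. Since $\rho_j/r_+$ and $r_-/\rho_{j+1}$ are bounded by the compactness from the previous step, and $r_*^2\asymp r_-r_+$, both $\bar\rho_j/\rho_j\to0$ and $\rho_{j+1}/\bar\rho_j\to0$ follow.
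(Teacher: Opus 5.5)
Your argument is correct in substance, but you organize the central step differently from the paper.

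\emph{The paper's route.} It first proves a pointwise statement: after shrinking $\widehat r$, every critical radius of $f(r)=r^{\frac{n-2}{2}}\bar u(r)$ with $f\ge\eps_0$ is a nondegenerate maximum. The proof rescales at the critical radius, obtains a bubble because $\cP_\infty=0$, and passes $r\partial_rf=0$, $(r\partial_r)^2f\ge0$ to the limit, where they contradict the bubble profile's unique nondegenerate maximum. Uniqueness inside a component is then elementary: two maxima would enclose a critical minimum with value above $\eps_0$. Together with \eqref{eq:radial-convexity}, the same statement gives $r\partial_rf\ne0$ wherever $f=\eps_0$. This yields local finiteness of the crossings and the alternation of super- and sublevel components for free.

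\emph{Your route.} You rescale at a critical point of a bad component, trap the whole component in a fixed $t$-window, and read off a single nondegenerate critical point from $C^2$ convergence there. For the separation limits, the paper rescales at $\rho_j$ (resp.\ $\rho_{j+1}$) and uses positivity of the limit bubble where the average should equal $a_j\to0$. You instead feed $a_j\to0$ into Lemma~\ref{lem:multiplicative-interval}, which gives $a_j\asymp\eps_0(r_j^-/r_j^+)^{(n-2)/4}$ and $\bar\rho_j\asymp(r_j^-r_j^+)^{1/2}$. The ordering $\rho_{j+1}<r_j^-<\bar\rho_j<r_j^+<\rho_j$ alone then gives both limits; the boundedness of $\rho_j/r_j^+$ and $r_j^-/\rho_{j+1}$ is not needed.

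\emph{What each buys.} Your route gives rates and the uniformly bounded $\log$-length of components, which the paper only obtains later in Proposition~\ref{prop:radial-scales}. The paper's pointwise route is shorter and needs no window.

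Three repairs are needed, none serious.

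First, the step you call the main obstacle is immediate. For the fixed subsequential limit, $F_\infty(t)\to0$ as $|t|\to\infty$, so pick $T$ with $F_\infty(\pm T)<\eps_0$. Convergence at $t=\pm T$ then confines the component, an interval on which $F_\nu>\eps_0$, to $(-T,T)$. No endpoint rescaling and no uniform parameter bound is required.

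Second, unimodality of the bubble profile is not a log-concavity fact. Set $A=(\lambda^2+|b|^2)^{1/2}$. The individual terms $[\frac12(s/A+A/s)-b\cdot\theta/A]^{-(n-2)/2}$ are not log-concave in $\log s$: for $b\cdot\theta>0$ they become log-convex once $|\log(s/A)|$ is large. Averaging does not preserve log-concavity anyway. The correct reason, used in the paper, is that every term increases on $(0,A)$ and decreases on $(A,\infty)$ with the \emph{same} switch point $A$, and has strictly negative second $\log s$-derivative there. Hence the average has exactly one critical point on $(0,\infty)$, a nondegenerate maximum.

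Third, ``the two kinds alternate'' needs an argument. A radius with $f=\eps_0$ and $r\partial_rf=0$ would, by \eqref{eq:radial-convexity}, be a strict local minimum at which two superlevel components touch. It lies in no open component, so your contradiction hypothesis does not see it. Rescaling there gives a bubble profile with $F_\infty'(0)=0$ and $F_\infty''(0)\ge0$, which is impossible by the second point. Hence such radii are finitely many and can be discarded along with the bad components.
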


\par
\begin{proof}
We begin with the standard bubble in \eqref{eq:standard-bubble}.
The derivative of
\[
 \frac12\left(
 \frac{s}{(\lambda^2+|b|^2)^{1/2}}
 +\frac{(\lambda^2+|b|^2)^{1/2}}{s}
 \right)
\]
is negative for
\(0<s<(\lambda^2+|b|^2)^{1/2}\), zero at
\(s=(\lambda^2+|b|^2)^{1/2}\), and positive for larger \(s\).
After differentiating \eqref{eq:bubble-radial-form} under the spherical
integral, we conclude that
\[
 s^{\frac{n-2}{2}}\fint_{\Sph^{n-1}}U_{\lambda,b}(s\theta)\,\dd\theta
\]
is strictly increasing on
\((0,(\lambda^2+|b|^2)^{1/2})\), strictly decreasing on
\(((\lambda^2+|b|^2)^{1/2},\infty)\), and has a nondegenerate maximum at
\(s=(\lambda^2+|b|^2)^{1/2}\).

\emph{Step 1. Critical points with value at least \(\eps_0\).}
After decreasing \(\widehat r\), every critical point of
\(r\mapsto r^{\frac{n-2}{2}}\bar u(r)\) in \((0,\widehat r)\) with value at
least \(\eps_0\) is a nondegenerate maximum.
Otherwise, there are radii \(s_k\downarrow0\) such that
\[
 s_k^{\frac{n-2}{2}}\bar u(s_k)\ge\eps_0,
 \qquad
 r\pa_r\bigl(r^{\frac{n-2}{2}}\bar u(r)\bigr)\big|_{r=s_k}=0,
 \qquad
 (r\pa_r)^2\bigl(r^{\frac{n-2}{2}}\bar u(r)\bigr)\big|_{r=s_k}\ge0.
\]
By Lemma  \ref{lem:radial-harnack}, after taking a further subsequence,
\[
 s_k^{\frac{n-2}{2}}u(s_k\,\cdot)
 \to U
 \quad\text{in }C^2_{\rm loc}(\R^n\setminus\{0\})
 \quad\text{as }k\to\infty,
\]
where \(U\not\equiv0\) and \(\cP(1,U)=\cP_\infty=0\). By the
Caffarelli--Gidas--Spruck classification cited in
Lemma  \ref{lem:pohozaev-limit}, \(U\) is a standard bubble.
{Passing to the limit in the critical point condition, we obtain}
\[
 \left.s\partial_s\left(s^{(n-2)/2}
 \fint_{\Sph^{n-1}}U(s\theta)\,\dd\theta\right)\right|_{s=1}=0.
\]
{At this point, combining \eqref{eq:bubble-radial-form} with
the limiting second derivative condition, we obtain the contradiction}
\[
 0>\left.(s\partial_s)^2\left(s^{(n-2)/2}
 \fint_{\Sph^{n-1}}U(s\theta)\,\dd\theta\right)\right|_{s=1}\ge0.
\]
At a radius \(s\in(0,\widehat r)\) with
\(s^{\frac{n-2}{2}}\bar u(s)=\eps_0\),
{by \eqref{eq:radial-convexity},}
\[
 (r\pa_r)^2\bigl(r^{\frac{n-2}{2}}\bar u(r)\bigr)\big|_{r=s}
 \ge\frac{(n-2)^2}{8}\eps_0>0.
\]
By Step  1, the derivative
\(r\pa_r(r^{\frac{n-2}{2}}\bar u(r))\) is nonzero at \(r=s\).

\emph{Step 2. Choice of the maxima and minima.}
By \eqref{eq:radial-oscillation}, decrease \(\widehat r\) further such that \(\widehat r^{\frac{n-2}{2}}\bar u(\widehat r)<\eps_0\).
The nonzero derivatives at level \(\eps_0\) make the crossings locally
finite. {It follows from \eqref{eq:radial-oscillation} and
\eqref{eq:epsilon-zero-choice} that the ordered components are}
\[
 \{0<r<\widehat r:r^{\frac{n-2}{2}}\bar u(r)>\eps_0\}
 =\bigcup_{j\ge1}(r_j^+,r_{j-1}^-),
 \qquad r_j^-\le r_j^+.
\]
Choose \(\rho_j\in[r_j^+,r_{j-1}^-]\) such that \(r^{(n-2)/2}\bar u(r)\) attains its maximum on this interval at \(r=\rho_j\).
The endpoint values are \(\eps_0\), so
\[
 r_j^+<\rho_j<r_{j-1}^-,
 \qquad
 \left.r\pa_r(r^{\frac{n-2}{2}}\bar u(r))\right|_{r=\rho_j}=0.
\]
{By Step  1, there cannot be a second critical point: two
nondegenerate maxima would have a critical minimum between them.} Hence
\(\rho_1>\rho_2>\cdots\downarrow0\).

{At the adjacent endpoints, by Step  1 and the superlevel
inequalities, we have}
\[
 \left.r\pa_r(r^{\frac{n-2}{2}}\bar u(r))\right|_{r=r_j^-}
 <0<
 \left.r\pa_r(r^{\frac{n-2}{2}}\bar u(r))\right|_{r=r_j^+}.
\]
Thus \(r_j^-<r_j^+\). An interior point at level \(\eps_0\) would have
zero derivative, contrary to Step  1. Therefore
\[
 r^{\frac{n-2}{2}}\bar u(r)<\eps_0,\qquad
 (r\pa_r)^2(r^{\frac{n-2}{2}}\bar u(r))>0,
 \qquad r_j^-<r<r_j^+,
\]
where the second inequality is \eqref{eq:radial-convexity}.
{By the endpoint derivative signs and strict convexity in
\(\log r\), we have}
\[
 \left\{r\in(r_j^-,r_j^+):
 r\pa_r(r^{\frac{n-2}{2}}\bar u(r))=0\right\}
 =\{\bar\rho_j\}.
\]
This point is a strict minimum, and \(\rho_{j+1}<r_j^-<\bar\rho_j<r_j^+<\rho_j\).

\emph{Step 3. Decay of the minimum values.}
If \(a_j\not\to0\) as \(j\to\infty\), there are \(j_k\to\infty\) and
\(c_0>0\) such that \(a_{j_k}\ge c_0\). After taking a further subsequence,
\[
 \bar\rho_{j_k}^{\frac{n-2}{2}}
 u(\bar\rho_{j_k}\,\cdot)
 \to U
 \quad\text{in }C^2_{\rm loc}(\R^n\setminus\{0\})
 \quad\text{as }k\to\infty,
\]
where \(U\) is again a standard bubble with vanishing Pohozaev quantity.
{Passing to the limit in the critical point condition, we obtain}
\[
 \left.s\partial_s\left(s^{(n-2)/2}
 \fint_{\Sph^{n-1}}U(s\theta)\,\dd\theta\right)\right|_{s=1}=0.
\]
{On the other hand, by \eqref{eq:radial-convexity},}
\[
 (r\pa_r)^2\bigl(r^{\frac{n-2}{2}}\bar u(r)\bigr)
 \big|_{r=\bar\rho_{j_k}}
 \ge\frac{(n-2)^2c_0}{8}.
\]
{Passing to the limit and using
\eqref{eq:bubble-radial-form} at its critical point, we obtain}
\[
 0>\left.(s\partial_s)^2\left(s^{(n-2)/2}
 \fint_{\Sph^{n-1}}U(s\theta)\,\dd\theta\right)\right|_{s=1}
 \ge\frac{(n-2)^2c_0}{8}>0.
\]
This contradiction proves \(a_j \to 0\) as \(j\to\infty\).

\emph{Step 4. Separation of the three radii.}
If \(\bar\rho_j/\rho_j\not\to0\), choose \(j_k\to\infty\) such that
\[
 \frac{\bar\rho_{j_k}}{\rho_{j_k}}
 \to\tau\in(0,1]
 \quad\text{as }k\to\infty.
\]
After taking a further subsequence,
\[
 \rho_{j_k}^{\frac{n-2}{2}}u(\rho_{j_k}\,\cdot)
 \to U
 \quad\text{in }C^2_{\rm loc}(\R^n\setminus\{0\})
 \quad\text{as }k\to\infty,
\]
where \(U\) is a positive standard bubble. The weighted spherical averages
at \(\bar\rho_{j_k}/\rho_{j_k}\) equal
\(a_{j_k}\to0\) as \(k\to\infty\). This contradicts
positivity of the limiting bubble at \(s=\tau\). Hence \(\frac{\bar\rho_j}{\rho_j}\to0\) as \(j\to\infty\).
If the second ratio did not tend to zero, a further subsequence would satisfy
\[
 \frac{\rho_{j_k+1}}{\bar\rho_{j_k}}
 \to\tau'\in(0,1]
 \quad\text{as }k\to\infty.
\]
{Rescale now by \(\rho_{j_k+1}\).  After passing to a
subsequence, the rescaled solutions converge to a positive standard bubble,
whereas their weighted spherical averages at
\(\bar\rho_{j_k}/\rho_{j_k+1}\to1/\tau'\) equal \(a_{j_k}\to0\).
This is impossible.} Therefore \(\frac{\rho_{j+1}}{\bar\rho_j}\to0\) as \(j\to\infty\).
This proves
\eqref{eq:peak-minimum-separation}.
\end{proof}

Using the Harnack ratio, we confine the translation and
dilation parameters of every limiting bubble to a fixed compact set.
Let \(C_{\rm H}\) be the Harnack
constant in
\eqref{eq:annular-harnack}, and set
\begin{equation}\label{eq:bubble-parameter-set}
 \begin{gathered}
 b_*=
 \frac{C_{\rm H}^{\frac{2}{n-2}}-1}{C_{\rm H}^{\frac{2}{n-2}}+1},
 \qquad
 \lambda_*=(1-b_*^2)^{1/2},\\
 \mathcal K=\bigl\{
 (\lambda,b)\in(0,\infty)\times\R^n:
 \tfrac12\lambda_*\le\lambda\le2,\;
 |b|\le\tfrac12(1+b_*)
 \bigr\}.
 \end{gathered}
\end{equation}

\par
\begin{lemma}\label{lem:peak-bubble-compactness}
The sequence \(\rho_j^{\frac{n-2}{2}}u(\rho_j\,\cdot)\) is precompact in
\(C^2_{\rm loc}(\R^n\setminus\{0\})\), and every limit is
\(U_{\lambda,b}\) for parameters satisfying
\begin{equation}\label{eq:peak-limit-parameter-bounds}
 \lambda^2+|b|^2=1,\qquad
 |b|\le b_*,\qquad
 \lambda_*\le\lambda\le1.
\end{equation}
In particular, every limit parameter lies in the interior of
\(\mathcal K\) specified in \eqref{eq:bubble-parameter-set}.
\end{lemma}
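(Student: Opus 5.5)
Precompactness comes first. By Lemma~\ref{lem:radial-harnack} and the rescaling argument in Step~2 of the proof of Lemma~\ref{lem:pohozaev-limit}, the functions \(u_j(y)=\rho_j^{\frac{n-2}{2}}u(\rho_jy)\) are bounded in \(C^{2,\gamma}(A_{R^{-1},R})\) for every \(R>1\). The metrics \(g(\rho_j\cdot)\) converge to \(\delta\) in \(C^2_{\rm loc}\). A diagonal argument therefore gives precompactness in \(C^2_{\rm loc}(\R^n\setminus\{0\})\), and every limit \(U\) solves \(-\Delta U=n(n-2)U^p\), \(U\ge0\). Since \(\rho_j^{\frac{n-2}{2}}\bar u(\rho_j)>\eps_0\), the limit is nontrivial, hence positive. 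Under \eqref{eq:contradiction-case} we have \(\cP(1,U)=\cP_\infty=0\), so the classification used in Lemma~\ref{lem:pohozaev-limit} (Caffarelli--Gidas--Spruck) forces \(U=U_{\lambda,b}\) to be an entire standard bubble.

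Next I identify the critical radius. Each \(\rho_j\) is a critical point of \(r^{\frac{n-2}{2}}\bar u(r)\), and the convergence is \(C^1\) near \(\partial B_1\). Hence \(s\mapsto s^{\frac{n-2}{2}}\fint U_{\lambda,b}(s\theta)\,\dd\theta\) has a critical point at \(s=1\). By the monotonicity analysis of \eqref{eq:bubble-radial-form} at the start of the proof of Lemma~\ref{lem:peak-minimum-radii}, the unique critical point is \(s=(\lambda^2+|b|^2)^{1/2}\). This gives \(\lambda^2+|b|^2=1\).

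The remaining work is the bound on \(|b|\) from the Harnack ratio. On \(\partial B_1\), \eqref{eq:bubble-radial-form} with \(\lambda^2+|b|^2=1\) gives
\[
U_{\lambda,b}(\theta)=\Bigl(\frac{\lambda}{2}\Bigr)^{\frac{n-2}{2}}(1-b\cdot\theta)^{-\frac{n-2}{2}} .
\]
Its sup over inf on the sphere is therefore \(\bigl((1+|b|)/(1-|b|)\bigr)^{\frac{n-2}{2}}\). Passing \eqref{eq:annular-harnack} to the limit at \(r=\rho_j\) bounds this ratio by \(C_{\rm H}\). Solving gives \((1+|b|)/(1-|b|)\le C_{\rm H}^{2/(n-2)}\), that is, \(|b|\le b_*\). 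Then \(\lambda=(1-|b|^2)^{1/2}\) lies in \([\lambda_*,1]\).

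Finally, interiority in \(\mathcal K\) is immediate: \(\lambda_*/2<\lambda_*\le\lambda\le1<2\), and \(b_*<\tfrac12(1+b_*)\) because \(b_*<1\). The only delicate step is justifying \(\cP(1,U)=0\) and the \(C^1\) passage of the critical-point condition; both follow directly from the \(C^2_{\rm loc}\) convergence and the scaling invariance of \(\cP\).
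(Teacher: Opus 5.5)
Your proposal is correct and follows the paper's proof step by step. The steps match: uniform $C^{2,\gamma}$ bounds on annuli from Lemma~\ref{lem:radial-harnack}, then the Caffarelli--Gidas--Spruck classification with $\cP(1,U)=\cP_\infty=0$ and the level bound $\eps_0$ to identify a standard bubble, then the critical-point condition at $s=1$ with \eqref{eq:bubble-radial-form} to get $\lambda^2+|b|^2=1$, and finally the limiting Harnack ratio $\bigl((1+|b|)/(1-|b|)\bigr)^{(n-2)/2}\le C_{\rm H}$ on $\partial B_1$ to get $|b|\le b_*$ and $\lambda_*\le\lambda\le1$. Your explicit checks that the bubble's weighted average has a unique critical point and that the limit parameters lie in the interior of $\mathcal K$ fill in details the paper leaves implicit.
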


\par
\begin{proof}
At each peak radius,
\[
 \eps_0<\rho_j^{\frac{n-2}{2}}\bar u(\rho_j)\le C.
\]
For fixed \(0<a<A<\infty\) and \(\gamma\in(0,1)\),
{it follows from Lemma  \ref{lem:radial-harnack} that}
\[
 \|\rho_j^{(n-2)/2}u(\rho_j\,\cdot)\|_{C^{2,\gamma}(A_{a,A})}
 \le C_{a,A,\gamma}.
\]
Every subsequence therefore has a further subsequence, still indexed by
\(j\), such that
 {
\[
 \begin{gathered}
 \rho_j^{(n-2)/2}u(\rho_j\,\cdot)\to U
 \quad\text{in }C^2_{\rm loc}(\R^n\setminus\{0\}),\\
 \fint_{\Sph^{n-1}}U(\theta)\,\dd\theta\ge\eps_0>0,
 \qquad \cP(1,U)=\cP_\infty=0.
 \end{gathered}
\]
}
 {By the Caffarelli--Gidas--Spruck classification cited in
Lemma  \ref{lem:pohozaev-limit}, \(U=U_{\lambda,b}\).}

{Combining the maximum condition at \(\rho_j\) with
\eqref{eq:bubble-radial-form}, we obtain the first equality in}
\eqref{eq:peak-limit-parameter-bounds}. The Harnack inequality passes to the limit on
\(\pa B_1\), and
\[
 \frac{
 \sup_{\theta\in\Sph^{n-1}}U_{\lambda,b}(\theta)
 }{
 \inf_{\theta\in\Sph^{n-1}}U_{\lambda,b}(\theta)
 }
 =
 \left(\frac{1+|b|}{1-|b|}\right)^{\frac{n-2}{2}}
 \le C_{\rm H}.
\]
{The remaining bounds in
\eqref{eq:peak-limit-parameter-bounds} follow from the definitions in
\eqref{eq:bubble-parameter-set}.}
\end{proof}

\par
\subsection{Limits at minimum scales and endpoint capacities}

{
We first rescale at a minimum radius and identify the limiting harmonic
function and the corresponding Pohozaev asymptotic.  We then compare the
crossing radii with the adjacent peak radii and derive the capacity estimates
and summability needed in Section  \ref{sec:recurrence}.
}

{For the radii in Lemma  \ref{lem:peak-minimum-radii}, define
the negative Pohozaev value}
\begin{equation}\label{eq:mu-definition}
 \mu_j=-\cP(\bar\rho_j,u).
\end{equation}
Set \(e_1=(1,0,\ldots,0)\in\partial B_1\).

\begin{proposition}\label{prop:minimum-scale-limit}
{
For every \(\gamma\in(0,1)\),
\begin{equation}\label{eq:minimum-harmonic-limit}
 \frac{u(\bar\rho_jy)}{u(\bar\rho_je_1)}
 \to
 \frac12\bigl(1+|y|^{2-n}\bigr)
 \quad\text{in }C^{2,\gamma}(A_{1/2,2})
\quad\text{as }j\to\infty,
\end{equation}
and
\begin{equation}\label{eq:mu-minimum-depth}
 \frac{\mu_j}{a_j^2}
 \to
 \frac{(n-2)^2}{8}|\Sph^{n-1}|,
 \qquad
 \mu_j>0\quad\text{for all sufficiently large }j.
\end{equation}
}
\end{proposition}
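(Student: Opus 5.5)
We rescale at the minimum radius. Set
\[
 w_j(y)=\frac{u(\bar\rho_jy)}{u(\bar\rho_je_1)},\qquad g_j(y)=g(\bar\rho_jy).
\]
By the Harnack inequality \eqref{eq:annular-harnack} and the derivative bounds \eqref{eq:radial-derivatives}, used on a finite chain of annuli, \(w_j\) is bounded above and below on every fixed annulus \(A_{a,A}\) for large \(j\). This uses that the weighted average varies by bounded factors on a bounded range of \(\log r\): by \eqref{eq:radial-derivatives} with \(k=1\), \(|r\partial_r\log(r^{(n-2)/2}\bar u)|\le C\). The rescaled equation is
\[
 -\Delta_{g_j}w_j=n(n-2)\,u(\bar\rho_je_1)^{p-1}\bar\rho_j^2\,w_j^p-c(n)\bar\rho_j^2\operatorname{Scal}_g(\bar\rho_j\,\cdot)\,w_j,
\]
and \(u(\bar\rho_je_1)^{p-1}\bar\rho_j^2\le C a_j^{p-1}\to0\) by Lemma \ref{lem:peak-minimum-radii}. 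Schauder estimates give \(C^{2,\gamma'}\) bounds on compact subsets of \(\R^n\setminus\{0\}\). Along subsequences, \(w_j\to w\) in \(C^{2,\gamma}_{\rm loc}(\R^n\setminus\{0\})\), where \(w\) is positive and harmonic on \(\R^n\setminus\{0\}\), and \(w(e_1)=1\).

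By B\^ocher's theorem, \(w=\alpha+\beta|y|^{2-n}+h\), where \(\alpha,\beta\ge0\) and \(h\) is harmonic on \(\R^n\). Positivity of \(w\) near infinity and at the origin forces \(h\) to be constant, so \(w=\alpha+\beta|y|^{2-n}\) is radial. The spherical average of \(w_j\), weighted by \(s^{(n-2)/2}\), has a critical point at \(s=1\), and this condition passes to the limit, giving
\[
 \frac{d}{d\log s}\bigl(\alpha s^{(n-2)/2}+\beta s^{(2-n)/2}\bigr)\Big|_{s=1}=0,
\]
hence \(\alpha=\beta\). Combined with \(w(e_1)=1\), this gives \(\alpha=\beta=\tfrac12\). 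The limit is unique, so the whole sequence converges, which is \eqref{eq:minimum-harmonic-limit}. Radial symmetry is what makes \(w(e_1)\) coincide with the spherical average; this consistency is automatic.

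For \eqref{eq:mu-minimum-depth}, we scale \(\cP\): \(\cP(\bar\rho_j,u)=\bar\rho_j^{n-2}u(\bar\rho_je_1)^2\,\widetilde\cP_j\), where \(\widetilde\cP_j\) is the boundary integral on \(\partial B_1\) of the quadratic terms of \(w_j\), plus the nonlinear term multiplied by \((u(\bar\rho_je_1)\bar\rho_j^{(n-2)/2})^{p-1}\to0\). Passing to the limit and inserting \(w=\tfrac12(1+r^{2-n})\), we get \(w=1\), \(\partial_rw=-\tfrac{n-2}{2}\), and \(\nabla w=\partial_rw\) on \(\partial B_1\). Therefore
\[
 \widetilde\cP\to|\Sph^{n-1}|\Bigl(\tfrac{n-2}{2}\cdot\bigl(-\tfrac{n-2}{2}\bigr)+\tfrac12\cdot\tfrac{(n-2)^2}{4}\Bigr)=-\tfrac{(n-2)^2}{8}|\Sph^{n-1}|.
\]
Since \(a_j=\bar\rho_j^{(n-2)/2}\bar u(\bar\rho_j)\) and \(\bar u(\bar\rho_j)/u(\bar\rho_je_1)\to\fint w=1\), we have \(\bar\rho_j^{n-2}u(\bar\rho_je_1)^2/a_j^2\to1\). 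This gives the limit in \eqref{eq:mu-minimum-depth}, and \(\mu_j>0\) for large \(j\) follows.

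The main obstacle is the uniform two-sided control of \(w_j\) on large annuli, which is needed for compactness. I would obtain it from the Harnack chain applied to the rescaled equation, whose potential is uniformly bounded, together with the log-Lipschitz bound on the weighted average. The smallness \(a_j\to0\) is used only to kill the nonlinear term.
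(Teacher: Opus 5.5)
Your proof is correct and follows essentially the same route as the paper: rescale by $u(\bar\rho_j e_1)$ at the minimum radius, extract a positive harmonic limit on $\R^n\setminus\{0\}$, identify it as $\tfrac12(1+|y|^{2-n})$ from the normalization at $e_1$ and the critical-point condition, and then pass to the limit in the scale-invariant Pohozaev integral, where the nonlinear term disappears because $a_j\to0$. The differences are minor and immaterial: you use Liouville for a harmonic function bounded below instead of Kelvin inversion, you obtain $C^{2,\gamma}$ convergence directly by compactness rather than by interpolation, and you make the two-sided bound on fixed annuli explicit through the log-Lipschitz control of the weighted average.
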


\par
\begin{proof}
\emph{Step 1. Harmonic limit at the minimum scale.}
Fix \(\gamma\in(0,1)\).
{For this proof, define}
\[
 w_j(y)=\frac{u(\bar\rho_jy)}{u(\bar\rho_je_1)}.
\]
By \eqref{eq:annular-harnack},
\[
 C^{-1}a_j
 \le\bar\rho_j^{\frac{n-2}{2}}u(\bar\rho_je_1)
 \le Ca_j.
\]
The equation for \(w_j\) on every fixed annulus is
\[
 -\Delta_{g(\bar\rho_j\,\cdot)}w_j
 +c(n)\bar\rho_j^2\operatorname{Scal}_g(\bar\rho_jy)w_j
 =
 n(n-2)
 \bigl[\bar\rho_j^{\frac{n-2}{2}}u(\bar\rho_je_1)\bigr]^{p-1}w_j^p.
\]
{In view of the comparison in
\eqref{eq:annular-harnack}, the bounds in normal coordinates
\eqref{eq:normal-orders}, and \(a_j\to0\), we have the following
convergences on every fixed compact annulus as \(j\to\infty\):}
 {
\[
 \begin{aligned}
 g(\bar\rho_j\,\cdot)
 &\to(\delta_{ij})
 &&\text{in }C^{2,\gamma},\\
 \bar\rho_j^2\operatorname{Scal}_g(\bar\rho_j\,\cdot)
 &\to0
 &&\text{in }C^{0,\gamma},
 \end{aligned}
\]
}
Moreover, \(n(n-2)\bigl[\bar\rho_j^{\frac{n-2}{2}}u(\bar\rho_je_1)\bigr]^{p-1} \to 0\).

{Applying the Harnack inequality and interior estimates, we
see that every subsequence \(j_k\to\infty\) has a further subsequence
\(j_{k_\ell}\) such that}
\[
 w_{j_{k_\ell}}\to w_\infty
 \quad\text{in }C^2_{\rm loc}(\R^n\setminus\{0\})
 \quad\text{as }\ell\to\infty,
\]
where \(w_\infty\) is positive and harmonic.
{By B\^ocher's theorem at zero and after Kelvin inversion at
infinity,}
\[
 w_\infty(y)=c_1+c_2|y|^{2-n},\qquad c_1,c_2\ge0.
\]
{Passing to the limit in the normalization at \(e_1\) and the
minimum condition, we obtain}
\[
 c_1+c_2=w_\infty(e_1)=1, \qquad 
 0=\left.s\pa_s\left[
 s^{\frac{n-2}{2}}\fint_{\Sph^{n-1}}w_\infty(s\theta)\,\dd\theta
 \right]\right|_{s=1}
 =\frac{n-2}{2}(c_1-c_2).
\]
Hence \(c_1=c_2=1/2\), and \(w_\infty(y)=\frac12\bigl(1+|y|^{2-n}\bigr)\).
The limit is unique, so the entire sequence converges in
\(C^2_{\rm loc}(\R^n\setminus\{0\})\) as \(j\to\infty\).

{Fix \(\gamma'\in(\gamma,1)\). The Harnack inequality on
\(A_{1/3,3}\), interior \(W^{2,s}\) estimates with \(s>n\) on
\(A_{2/5,5/2}\), and Schauder estimates on \(A_{1/2,2}\) give}
\[
 \|w_j\|_{C^{2,\gamma'}(A_{1/2,2})}\le C_{\gamma'}.
\]
Interpolation and the \(C^2\) convergence imply
\[
 \|w_j-w_\infty\|_{C^{2,\gamma}(A_{1/2,2})}
 \le C\|w_j-w_\infty\|_{C^2(A_{1/2,2})}^{1-\gamma/\gamma'}
 \to0.
\]
This is \eqref{eq:minimum-harmonic-limit}.

\emph{Step 2. Pohozaev asymptotic.}
Since the limit equals \(1\) on \(\pa B_1\),
\[
 \frac{\bar\rho_j^{\frac{n-2}{2}}u(\bar\rho_je_1)}{a_j}
 =
 \left(
 \fint_{\Sph^{n-1}}w_j(\theta)\,\dd\theta
 \right)^{-1}
 \to1
 \quad\text{as }j\to\infty.
\]
The Pohozaev expression \eqref{eq:pohozaev} is invariant under the critical
rescaling \(u(x)\mapsto r^{\frac{n-2}{2}}u(rx)\).  Thus
\[
 \cP(\bar\rho_j,u)
 =
 \cP\bigl(1,\bar\rho_j^{\frac{n-2}{2}}u(\bar\rho_j\,\cdot)\bigr).
\]
{By \eqref{eq:minimum-harmonic-limit}, on \(\pa B_1\) we have}
\[
 w_j\to1,
 \qquad
 \pa_rw_j\to-\frac{n-2}{2},
 \qquad
 \nabla_\theta w_j\to0
 \quad\text{as }j\to\infty
\]
uniformly on \(\partial B_1\). In addition,
\[
 \bigl[\bar\rho_j^{\frac{n-2}{2}}u(\bar\rho_je_1)\bigr]^{p+1}
 =
 o\left(
 \bigl[\bar\rho_j^{\frac{n-2}{2}}u(\bar\rho_je_1)\bigr]^2
 \right)
 \quad\text{as }j\to\infty.
\]
{Substituting these limits in \eqref{eq:pohozaev}, we obtain
\[
 \cP(\bar\rho_j,u)
 =
 -\frac{(n-2)^2}{8}|\Sph^{n-1}|
 \bigl[\bar\rho_j^{\frac{n-2}{2}}u(\bar\rho_je_1)\bigr]^2
 \bigl(1+o(1)\bigr).
\]
Here \(o(1)\to0\) as \(j\to\infty\). Combining this asymptotic with
\(\bar\rho_j^{\frac{n-2}{2}}u(\bar\rho_je_1)/a_j\to1\) gives
\eqref{eq:mu-minimum-depth}.}
\end{proof}

{
After rescaling by the peak radius \(\rho_j\), the adjacent minimum radii
become the endpoints of an annulus.  For \(j\ge2\), define
}
\begin{equation}\label{eq:annular-radii}
 \sigma_j=\frac{\bar\rho_j}{\rho_j},
 \qquad
 R_j=\frac{\bar\rho_{j-1}}{\rho_j}.
\end{equation}
Since \(\rho_jR_j=\bar\rho_{j-1}\) is the largest original radius
represented in \(A_{\sigma_j,R_j}\), define
\begin{equation}\label{eq:metric-perturbation-scale}
 \eta_j=(\rho_jR_j)^2=\bar\rho_{j-1}^2.
\end{equation}
By \eqref{eq:peak-minimum-separation}, \(\sigma_j\to0, R_j\to\infty\) as \(j\to\infty\).

{The endpoint relations from
Lemma  \ref{lem:peak-minimum-radii} are}
\begin{equation}\label{eq:level-crossing-radii}
 \begin{gathered}
 \rho_{j+1}<r_j^-<\bar\rho_j<r_j^+<\rho_j,\\
 (r_j^-)^{\frac{n-2}{2}}\bar u(r_j^-)
 =(r_j^+)^{\frac{n-2}{2}}\bar u(r_j^+)=\eps_0,\\
 r^{\frac{n-2}{2}}\bar u(r)<\eps_0
 \qquad r_j^-<r<r_j^+.
 \end{gathered}
\end{equation}
The order of these radii is
\[
 \underset{\text{peak}}{\rho_j}
 >\underset{\text{crossing}}{r_j^+}
 >\underset{\text{minimum}}{\bar\rho_j}
 >\underset{\text{crossing}}{r_j^-}
 >\underset{\text{next peak}}{\rho_{j+1}}.
\]

\begin{proposition}\label{prop:radial-scales}
{
There exist \(c\in(0,1]\), \(C\ge1\), \(C_{\mathrm{sc}}>1\), and
\(j_0\ge2\). The constants \(c\), \(C\), and \(C_{\mathrm{sc}}\) depend only on
\(n\), \(C_0\), \(r_0\), \(\eps_0\), and the fixed normalized metric,
while \(j_0\) may also depend on the fixed solution. For every \(j\ge j_0\),
\begin{equation}\label{eq:crossing-peak-comparison}
 C_{\mathrm{sc}}^{-1}\rho_j\le r_j^+<\rho_j,
 \qquad
 \rho_{j+1}<r_j^-\le C_{\mathrm{sc}}\rho_{j+1}.
\end{equation}
\begin{equation}\label{eq:minimum-geometric-mean}
 c\rho_{j+1}\rho_j
 \le\bar\rho_j^2
 \le C\rho_{j+1}\rho_j,
\end{equation}
\begin{equation}\label{eq:minimum-depth-ratio}
 c\left(\frac{\rho_{j+1}}{\rho_j}\right)^{\frac{n-2}{2}}
 \le a_j^2
 \le C\left(\frac{\rho_{j+1}}{\rho_j}\right)^{\frac{n-2}{2}},
\end{equation}
and
\begin{equation}\label{eq:mu-peak-ratio}
 C^{-1}\left(\frac{\rho_{j+1}}{\rho_j}\right)^{\frac{n-2}{2}}
 \le\mu_j
 \le
 C\left(\frac{\rho_{j+1}}{\rho_j}\right)^{\frac{n-2}{2}}.
\end{equation}
Moreover,
\begin{equation}\label{eq:capacity-scales}
 C^{-1}\sigma_j^{n-2}
 \le\mu_j\le C\sigma_j^{n-2},
 \qquad
 C^{-1}R_j^{2-n}
 \le\mu_{j-1}\le CR_j^{2-n},
\end{equation}
\begin{equation}\label{eq:metric-scale-summability}
 \eta_j\le C\rho_j,
 \qquad
 \sum_{j\ge j_0}\eta_j<\infty,
\end{equation}
and
\begin{equation}\label{eq:peak-halving}
 \rho_{j+1}\le\frac12\rho_j.
\end{equation}
}
\end{proposition}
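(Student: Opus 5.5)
The plan is to derive everything from two ingredients: the bubble compactness at peak scales (Lemma~\ref{lem:peak-bubble-compactness}) and the two-sided profile on low sublevel intervals (Lemma~\ref{lem:multiplicative-interval}). Both will be applied to the interval \((r_j^-,r_j^+)\) with \(\eps=\eps_0\); this interval is a connected component of the \(\eps_0\)-sublevel set by \eqref{eq:level-crossing-radii}, and its unique critical radius is \(r_*=\bar\rho_j\).

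\emph{Step 1: crossing--peak comparison \eqref{eq:crossing-peak-comparison}.} I will argue by contradiction. Suppose \(r_j^+/\rho_j\to0\) along a subsequence. Rescale by \(\rho_j\). By Lemma~\ref{lem:peak-bubble-compactness} the limit is \(U_{\lambda,b}\) with \((\lambda,b)\in\mathcal K\). The weighted average \(s^{(n-2)/2}\fint U_{\lambda,b}(s\theta)\,\dd\theta\) is increasing on \((0,1)\), tends to \(0\) as \(s\to0\), and is \(>\eps_0\) at \(s=1\). Hence it reaches \(\eps_0\) at some \(s_0\in(0,1)\) with positive dilation derivative there, and \(s_0\) is bounded below uniformly over \(\mathcal K\). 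Using the \(C^2\) convergence on \(A_{s_0/2,1}\), the rescaled function has a crossing near \(s_0\) and stays above \(\eps_0\) on \([s_0+\delta,1]\). This contradicts \(r_j^+/\rho_j\to0\), because \(r_j^+\) is the largest crossing below \(\rho_j\). The symmetric argument, rescaling by \(\rho_{j+1}\) and using that the bubble profile decreases for \(s>1\), gives \(r_j^-\le C_{\rm sc}\rho_{j+1}\). For large \(j\) both are uniform compactness statements, which fixes \(j_0\).

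\emph{Step 2: geometric mean, depth, and \(\mu_j\).} Equation \eqref{eq:multiplicative-midpoint} gives \(\bar\rho_j^2\asymp r_j^-r_j^+\), and Step~1 turns this into \eqref{eq:minimum-geometric-mean}. It also gives \(a_j\asymp\eps_0(r_j^-/\bar\rho_j)^{(n-2)/2}\asymp(r_j^-/r_j^+)^{(n-2)/4}\), which yields \eqref{eq:minimum-depth-ratio} after squaring and applying Step~1 again. Proposition~\ref{prop:minimum-scale-limit} gives \(\mu_j\asymp a_j^2\) for large \(j\), and this is \eqref{eq:mu-peak-ratio}.

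\emph{Step 3: capacity scales.} From \eqref{eq:minimum-geometric-mean}, \(\sigma_j^2=\bar\rho_j^2/\rho_j^2\asymp\rho_{j+1}/\rho_j\), so \(\mu_j\asymp(\rho_{j+1}/\rho_j)^{(n-2)/2}\asymp\sigma_j^{n-2}\). Likewise \(R_j^2=\bar\rho_{j-1}^2/\rho_j^2\asymp\rho_{j-1}/\rho_j\), and \(\mu_{j-1}\asymp(\rho_j/\rho_{j-1})^{(n-2)/2}\asymp R_j^{2-n}\). This proves \eqref{eq:capacity-scales}.

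\emph{Step 4: halving and summability.} Since \(\rho_{j+1}/\bar\rho_j\to0\) and \(\bar\rho_j<\rho_j\) by \eqref{eq:peak-minimum-separation}, we get \(\rho_{j+1}\le\frac12\rho_j\) after enlarging \(j_0\), which is \eqref{eq:peak-halving}. Then \(\eta_j=\bar\rho_{j-1}^2\le C\rho_j\rho_{j-1}\le C\rho_j\), because \(\rho_{j-1}<1\). Halving makes \(\sum\rho_j\) geometric, so \(\sum\eta_j<\infty\), giving \eqref{eq:metric-scale-summability}.

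The main obstacle is Step~1. It needs the rescaled sublevel crossing to converge to the bubble crossing uniformly over the parameter set \(\mathcal K\); this requires transversality at \(s_0\) and excluding extra crossings on \([s_0,1]\), and both follow from the monotonicity of the bubble profile.
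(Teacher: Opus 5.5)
Your proposal is correct and follows essentially the same route as the paper. You get the crossing--peak comparison from bubble compactness at $\rho_j$ and $\rho_{j+1}$, then apply Lemma~\ref{lem:multiplicative-interval} on $(r_j^-,r_j^+)$ with $r_*=\bar\rho_j$, use Proposition~\ref{prop:minimum-scale-limit} for $\mu_j\asymp a_j^2$, and use \eqref{eq:peak-minimum-separation} for halving and summability.

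The one difference is in Step~1, where the transversality and the localization of the crossing near $s_0$ are unnecessary. Moreover, the limit only gives a value $\ge\eps_0$ at $s=1$, not $>\eps_0$. As the paper does, it suffices to evaluate at the single radius $C_{\mathrm{sc}}^{-1}\rho_j$, where the bound $C_1\min\{s^{(n-2)/2},s^{-(n-2)/2}\}$, uniform over $\mathcal K$, forces every limit bubble below $\eps_0/2$. This also makes the dependence of $C_{\mathrm{sc}}$ on $n$, $C_0$, $r_0$, $\eps_0$ and the fixed metric explicit. A contradiction hypothesis of the form $r_j^+/\rho_j\to0$ alone would not give that uniformity.
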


\begin{proof}
\emph{Step 1. Level crossing radii.}
By compactness of the parameter set in
\eqref{eq:bubble-parameter-set}, there is \(C_1\ge1\) such that, for every
limit in Lemma  \ref{lem:peak-bubble-compactness},
\[
 s^{\frac{n-2}{2}}
 \fint_{\Sph^{n-1}}U_{\lambda,b}(s\theta)\,\dd\theta
 \le C_1\min\{s^{\frac{n-2}{2}},s^{-\frac{n-2}{2}}\}.
\]
Choose \(C_{\mathrm{sc}}>1\) such that \(C_1C_{\mathrm{sc}}^{-\frac{n-2}{2}}
 <\frac{\eps_0}{2}\).
If \(r_{j_k}^+<C_{\mathrm{sc}}^{-1}\rho_{j_k}\) along a subsequence
\(j_k\to\infty\), then
\(C_{\mathrm{sc}}^{-1}\rho_{j_k}\) lies in the superlevel component containing
\(\rho_{j_k}\).
After rescaling by \(\rho_{j_k}\) and taking a bubble limit, we obtain
\[
 \begin{aligned}
 \eps_0
 &\le\lim_{k\to\infty}
 (C_{\mathrm{sc}}^{-1}\rho_{j_k})^{\frac{n-2}{2}}
 \bar u(C_{\mathrm{sc}}^{-1}\rho_{j_k})\\
 &=C_{\mathrm{sc}}^{-\frac{n-2}{2}}
 \fint_{\Sph^{n-1}}U_{\lambda,b}(C_{\mathrm{sc}}^{-1}\theta)\,\dd\theta
 \le C_1 C_{\mathrm{sc}}^{-\frac{n-2}{2}}<\frac{\eps_0}{2}.
 \end{aligned}
\]
{If \(r_{j_k}^->C_{\mathrm{sc}}\rho_{j_k+1}\) along a subsequence,
then \(C_{\mathrm{sc}}\rho_{j_k+1}\) lies in the superlevel component containing
\(\rho_{j_k+1}\).  Rescaling by \(\rho_{j_k+1}\) and passing to a bubble
limit gives}
\[
 \eps_0
 \le
 C_{\mathrm{sc}}^{\frac{n-2}{2}}
 \fint_{\Sph^{n-1}}U_{\lambda,b}(C_{\mathrm{sc}}\theta)\,\dd\theta
 \le C_1C_{\mathrm{sc}}^{-\frac{n-2}{2}}
 <\frac{\eps_0}{2},
\]
{which is impossible.}
This proves \eqref{eq:crossing-peak-comparison}.
Apply Lemma  \ref{lem:multiplicative-interval} to
\((r_j^-,r_j^+)\), with \(r_*=\bar\rho_j\).
{Using \eqref{eq:multiplicative-midpoint} and
\eqref{eq:crossing-peak-comparison}, we obtain
\eqref{eq:minimum-geometric-mean} and
\eqref{eq:minimum-depth-ratio}.}

\emph{Step 2. Endpoint comparisons and summability.}
{Combining \eqref{eq:mu-minimum-depth} with
\eqref{eq:minimum-depth-ratio}, we obtain
\eqref{eq:mu-peak-ratio}.}

By \eqref{eq:peak-minimum-separation} and
\eqref{eq:annular-radii},
\[
 \sigma_j\to0,
 \qquad
 R_j\to\infty
 \quad\text{as }j\to\infty.
\]
{Dividing \eqref{eq:minimum-geometric-mean} by \(\rho_j^2\),
we obtain}
\[
 c\frac{\rho_{j+1}}{\rho_j}
 \le\sigma_j^2
 \le C\frac{\rho_{j+1}}{\rho_j}.
\]
{Replacing \(j\) by \(j-1\) and dividing by \(\rho_j^2\), we
obtain}
\[
 c\frac{\rho_{j-1}}{\rho_j}
 \le R_j^2
 \le C\frac{\rho_{j-1}}{\rho_j}.
\]
{Substituting the preceding estimates from above and below for
\(\sigma_j^2\) and \(R_j^2\) into \eqref{eq:mu-peak-ratio}, we obtain
\eqref{eq:capacity-scales}.}

{Finally, by \eqref{eq:minimum-geometric-mean} with \(j-1\)
in place of \(j\), we have}
\[
 \eta_j=\bar\rho_{j-1}^2
 \le C\rho_j\rho_{j-1}
 \le C\rho_j.
\]
By \eqref{eq:peak-minimum-separation}, after increasing \(j_0\),
\eqref{eq:peak-halving} holds.
{Summing the resulting geometric series, we obtain
\eqref{eq:metric-scale-summability}.}
\end{proof}

{On the rescaled annulus, \eqref{eq:normal-orders} and
\eqref{eq:metric-perturbation-scale} give}
\[
 |h(\rho_jy)|
 \le C\rho_j^2|y|^2
 \le C(\rho_jR_j)^2
 =C\eta_j,
 \qquad y\in A_{\sigma_j,R_j}.
\]
{Since
\(\eta_j=(\rho_jR_j)^2\), it follows from
\eqref{eq:metric-scale-summability} that}
\begin{equation}\label{eq:outer-radius-rho-bound}
 R_j\le C\rho_j^{-1/2}.
\end{equation}

\section{Rescaled annuli and the exact Pohozaev increment}
\label{sec:euclidean-annuli}

{
Section  \ref{sec:pohozaev-dichotomy} fixed the adjacent minimum radii and
their capacity estimates.  We now rescale the region between them, express
the Pohozaev change as one symmetric bilinear form, and prove the bubble
compactness and comparison on the entire annulus needed for the later modulation.
}

For each \(j\), rescale
\(A_{\bar\rho_j,\bar\rho_{j-1}}\) by \(\rho_j\). Define the coordinates
\(y\), the scalar function \(v_j\), and the metric \(g_j\), for
\(0<|y|<r_0/\rho_j\), by
\begin{equation}\label{eq:annular-rescaling}
 y=\frac{x}{\rho_j},\qquad
 v_j(y)=\rho_j^{\frac{n-2}{2}}u(\rho_jy),\qquad
 (g_j)_{kl}(y)=g_{kl}(\rho_jy).
\end{equation}
Thus \(g_j=\rho_j^{-2}(y\mapsto\rho_jy)^*g\), expressed in the
\(y\) coordinates, and
\[
 \rho_j A_{\sigma_j,R_j}=A_{\bar\rho_j,\bar\rho_{j-1}}\subset B_1
\]
for all sufficiently large \(j\).  All derivatives, measures, and normals
in this section are Euclidean unless another metric is displayed.
The parameters \((\lambda_j,b_j)\) are selected later by
Proposition  \ref{prop:final-modulation}.

On \(A_{\sigma_j,R_j}\), with
\(L_{g_j}=\Delta_{g_j}-c(n)\operatorname{Scal}_{g_j}\), {the conformal normal conditions and a direct scaling give}
\begin{equation}\label{eq:scaled-gauge-equation}
 \det g_j=1,\qquad
 \sum_{l=1}^ng_j^{kl}(y)y^l=y^k,\qquad
 \operatorname{Scal}_{g_j}(y)=\rho_j^2\operatorname{Scal}_g(\rho_jy),\qquad
 -L_{g_j}v_j=n(n-2)v_j^p.
\end{equation}
Indeed, \(h(x)x=0\) implies \(g^{-1}(x)x=x\), and
\[
 L_{g_j}\bigl(\rho_j^{\frac{n-2}{2}}u(\rho_j\,\cdot)\bigr)
 =\rho_j^{\frac{n+2}{2}}(L_gu)(\rho_j\,\cdot)
\]
because \(\frac{n+2}{2}=\frac{n-2}{2}p\). All spheres in
\(A_{\sigma_j,R_j}\) are centered at \(y=0\).

Use the dilation derivative \(D_{\mathrm{di}}\) defined in
\eqref{eq:dilation-operator}.
Define the following symmetric bilinear form for the metric error operator
\(\Delta-L_{g_j}\).
For \(f_1,f_2\in C^2(\overline{A_{\sigma_j,R_j}})\), define
 {
\begin{equation}\label{eq:symmetric-dilation-form}
  B_j(f_1,f_2)=\frac12\int_{A_{\sigma_j,R_j}}\bigl[
 (D_{\mathrm{di}}f_1)(\Delta-L_{g_j})f_2
 {}+(D_{\mathrm{di}}f_2)(\Delta-L_{g_j})f_1\bigr]\,\dd y.
\end{equation}
}
Set
 {
\begin{equation}\label{eq:Qj-definition}
 Q_j= B_j(v_j,v_j).
\end{equation}
}

\par
\begin{proposition}\label{prop:annular-increment}
For every sufficiently large \(j\),
\begin{equation}\label{eq:exact-Qj-increment}
 Q_j=\mu_j-\mu_{j-1}.
\end{equation}
\end{proposition}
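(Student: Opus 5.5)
The identity is the exact Pohozaev identity of Lemma~\ref{lem:exact-pohozaev}, transported to the rescaled annulus. We apply \eqref{eq:exact-pohozaev} with \(s=\bar\rho_j\) and \(r=\bar\rho_{j-1}\). This is admissible for \(j\) large, since \(\bar\rho_{j-1}\to0\) puts both radii in \((0,r_0)\). By the definition \eqref{eq:mu-definition},
\[
 \mu_j-\mu_{j-1}=\cP(\bar\rho_{j-1},u)-\cP(\bar\rho_j,u)
 =\int_{A_{\bar\rho_j,\bar\rho_{j-1}}}\Bigl(\frac{n-2}{2}u+x\cdot\nabla_xu\Bigr)(\Delta-L_g)u\,\dd x .
\]

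It remains to check that the right-hand side equals \(Q_j=B_j(v_j,v_j)\). When \(f_1=f_2=v_j\), the symmetrization in \eqref{eq:symmetric-dilation-form} is trivial, so
\[
 B_j(v_j,v_j)=\int_{A_{\sigma_j,R_j}}(D_{\mathrm{di}}v_j)(\Delta-L_{g_j})v_j\,\dd y .
\]
We change variables by \(x=\rho_jy\), so that \(\dd x=\rho_j^n\dd y\), and track the homogeneity of each factor:
\[
 D_{\mathrm{di}}v_j(y)=\rho_j^{\frac{n-2}{2}}\Bigl(\frac{n-2}{2}u+x\cdot\nabla_xu\Bigr)(\rho_jy).
\]
For the operator factor, \(\Delta_y[u(\rho_j\cdot)]=\rho_j^2(\Delta u)(\rho_j\cdot)\) gives \(\Delta v_j=\rho_j^{\frac{n+2}{2}}(\Delta u)(\rho_j\cdot)\). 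The scaling relation displayed after \eqref{eq:scaled-gauge-equation} gives \(L_{g_j}v_j=\rho_j^{\frac{n+2}{2}}(L_gu)(\rho_j\cdot)\). This uses that \(g_j\) is the pullback of \(\rho_j^{-2}g\) and that \(\operatorname{Scal}_{g_j}(y)=\rho_j^2\operatorname{Scal}_g(\rho_jy)\). Multiplying the two factors gives the power \(\rho_j^{\frac{n-2}{2}+\frac{n+2}{2}}=\rho_j^{n}\), which exactly cancels \(\dd x=\rho_j^n\dd y\). The annulus \(A_{\sigma_j,R_j}\) maps onto \(A_{\bar\rho_j,\bar\rho_{j-1}}\) by \eqref{eq:annular-radii}. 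Hence the two integrals agree, and \eqref{eq:exact-Qj-increment} follows.

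The only point needing care is regularity and domain. We need \(u\in C^2\) on the closed annulus, which holds since it avoids the puncture. We also need \(\bar\rho_{j-1}<r_0\) and \(\bar\rho_{j-1}<1\), and both follow from \(\rho_j\downarrow0\) in Lemma~\ref{lem:peak-minimum-radii} once \(j\) is large. There is no real obstacle. The argument is a bookkeeping check that the Euclidean operator \(\Delta\), the dilation derivative, and \(L_{g_j}\) all scale with the same critical weights, so the identity is exact with no error terms.
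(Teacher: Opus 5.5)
Your proof is correct and is essentially the paper's argument: both combine the exact Pohozaev identity of Lemma~\ref{lem:exact-pohozaev} with the critical scaling $v_j=\rho_j^{(n-2)/2}u(\rho_j\,\cdot)$. The only cosmetic difference is where the scaling is applied. The paper applies the identity to $v_j$ on $A_{\sigma_j,R_j}$ and then uses the scale invariance $\cP(r,v_j)=\cP(\rho_jr,u)$ of the boundary functional. You instead apply it to $u$ on $A_{\bar\rho_j,\bar\rho_{j-1}}$ and transport the volume integral.
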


\par
\begin{proof}
{Under critical scaling,}
\[
 \cP(r,v_j)=\cP(\rho_jr,u)
 \qquad \sigma_j\le r\le R_j.
\]
{Applying Lemma  \ref{lem:exact-pohozaev} on
\(A_{\sigma_j,R_j}\), we obtain}
 {
\[
 Q_j
 =\cP(R_j,v_j)-\cP(\sigma_j,v_j)
 =\cP(\bar\rho_{j-1},u)-\cP(\bar\rho_j,u)
 =\mu_j-\mu_{j-1}.
\]
}
The Pohozaev increment is the outer boundary contribution minus the inner boundary
contribution: the outward
Euclidean normal of \(A_{\sigma_j,R_j}\) is \(y/R_j\) on
\(\partial B_{R_j}\) and \(-y/\sigma_j\) on
\(\partial B_{\sigma_j}\).
\end{proof}

\par
Since \(\det g_j=1\),
\begin{equation}\label{eq:scaled-metric-error}
 (\Delta-L_{g_j})v_j
 =-\sum_{k,l=1}^n
 \partial_k\bigl((g_j^{kl}-\delta^{kl})\partial_lv_j\bigr)
 +c(n)\operatorname{Scal}_{g_j}v_j.
\end{equation}
This is the coordinate formula following from \(\det g_j=1\).
The subsequent Pohozaev calculations use
\eqref{eq:scaled-metric-error} directly.

 We next compare \(v_j\) with the compact family of bubbles. The first lemma gives convergence on each fixed compact annulus.

\par
\begin{lemma}\label{lem:distance-to-bubble-family}
For every compact \(D\subset\R^n\setminus\{0\}\),
\[
 \inf_{(\lambda,b)\in\mathcal K}
 \norm{v_j-U_{\lambda,b}}_{C^2(D)}\to0
 \quad\text{as }j\to\infty.
\]
\end{lemma}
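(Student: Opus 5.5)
We argue by contradiction combined with subsequence compactness, reducing everything to Lemma~\ref{lem:peak-bubble-compactness}. Suppose the conclusion fails for some compact \(D\subset\R^n\setminus\{0\}\). Then there are \(\delta>0\) and a subsequence \(j_k\to\infty\) with
\[
 \inf_{(\lambda,b)\in\mathcal K}\norm{v_{j_k}-U_{\lambda,b}}_{C^2(D)}\ge\delta
\]
for every \(k\).

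By the definition \eqref{eq:annular-rescaling}, \(v_j(y)=\rho_j^{(n-2)/2}u(\rho_jy)\) is exactly the sequence considered in Lemma~\ref{lem:peak-bubble-compactness}. That lemma gives precompactness in \(C^2_{\rm loc}(\R^n\setminus\{0\})\). So a further subsequence converges in \(C^2(D)\) to some \(U_{\lambda,b}\) whose parameters satisfy \eqref{eq:peak-limit-parameter-bounds}. The lemma also places these parameters in the interior of \(\mathcal K\), so in particular \((\lambda,b)\in\mathcal K\). Then
\[
 \norm{v_{j_k}-U_{\lambda,b}}_{C^2(D)}\to0
\]
along this subsequence, which contradicts the lower bound \(\delta\).

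One step needs a check: \(D\) must lie where \(v_j\) is defined and the convergence applies. Since \(D\) is compact in \(\R^n\setminus\{0\}\), we have \(D\subset A_{a,A}\) for some \(0<a<A\). For large \(j\), \(\rho_jA_{a,A}\subset B_{r_{\rm ann}}\), so the uniform \(C^{2,\gamma}(A_{a,A})\) bounds from Lemma~\ref{lem:radial-harnack} used in the proof of Lemma~\ref{lem:peak-bubble-compactness} apply on all of \(D\).

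The only substantive ingredient is the classification of the limit. It needs three facts: the limit is nonzero, because the weighted average at \(\rho_j\) exceeds \(\eps_0\); it has \(\cP(1,U)=\cP_\infty=0\); and the Caffarelli--Gidas--Spruck classification then applies. All three are already contained in Lemma~\ref{lem:peak-bubble-compactness}, so no step here is a real obstacle.
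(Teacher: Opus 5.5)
Your proof is correct and is the paper's argument: assume the infimum stays above some $\delta>0$ along a subsequence, then use Lemma~\ref{lem:peak-bubble-compactness} to extract a further subsequence converging in $C^2_{\rm loc}(\R^n\setminus\{0\})$ to some $U_{\lambda,b}$ with $(\lambda,b)\in\mathcal K$, a contradiction. Your extra check that $D$ lies in the region where the uniform annular bounds apply is harmless and is already implicit in that lemma.
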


\par
\begin{proof}
If the assertion failed, there would be a compact
\(D\subset\R^n\setminus\{0\}\), a constant \(c_0>0\), and
\(j_k\to\infty\) such that
\[
 \inf_{(\lambda,b)\in\mathcal K}
 \norm{v_{j_k}-U_{\lambda,b}}_{C^2(D)}\ge c_0.
\]
By Lemma  \ref{lem:peak-bubble-compactness}, after taking a further
subsequence,
\[
 v_{j_k}\to U_{\lambda,b}
 \quad\text{in }C^2_{\rm loc}(\R^n\setminus\{0\})
 \quad\text{as }k\to\infty
\]
for some \((\lambda,b)\in\mathcal K\), a contradiction.
\end{proof}

\par
To choose the parameters by integrals over \(A_{\sigma_j,R_j}\), we first
compare \(v_j\) with the entire compact family of bubbles.
\begin{lemma}\label{lem:annular-bubble-comparison}
There is \(C\ge1\) such that, for all sufficiently large \(j\) and every
\((\lambda,b)\in\mathcal K\),
\[
 C^{-1}U_{\lambda,b}(y)\le v_j(y)
 \le CU_{\lambda,b}(y),\qquad y\in A_{\sigma_j,R_j}.
\]
\end{lemma}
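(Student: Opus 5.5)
We need to show that \(v_j\asymp U_{\lambda,b}\) on \(A_{\sigma_j,R_j}\), uniformly in \((\lambda,b)\in\mathcal K\). First, every \(U_{\lambda,b}\) with parameters in the compact set \(\mathcal K\) satisfies \(|b|\le\frac12(1+b_*)<1\). Hence
\[
U_{\lambda,b}(y)\asymp\min\{1,|y|^{2-n}\}
\]
uniformly on \(\R^n\setminus\{0\}\), in particular near the origin, since \(|y-b|\) stays bounded away from zero only for \(|y|\) small relative to \(1-|b|\). Strictly, \(U_{\lambda,b}\) is smooth and positive everywhere, so \(U_{\lambda,b}\asymp(1+|y|)^{2-n}\) on all of \(\R^n\). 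It therefore suffices to prove
\[
v_j(y)\asymp(1+|y|)^{2-n},\qquad y\in A_{\sigma_j,R_j}.
\]
By the annular Harnack inequality \eqref{eq:annular-harnack}, after scaling back, \(v_j(y)\) is comparable to its spherical average \(\bar v_j(|y|)=\rho_j^{(n-2)/2}\bar u(\rho_j|y|)\). So the claim reduces to a radial statement about
\[
f_j(s):=s^{(n-2)/2}\bar v_j(s)=(\rho_js)^{(n-2)/2}\bar u(\rho_js)
\]
on \([\sigma_j,R_j]\): we need \(f_j(s)\asymp\min\{s,s^{-1}\}^{(n-2)/2}\).

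Split \([\sigma_j,R_j]\) into three pieces using the crossing radii from \eqref{eq:level-crossing-radii}, namely \(I_1=[\sigma_j,r_j^+/\rho_j]\), \(I_2=[r_j^+/\rho_j,r_{j-1}^-/\rho_j]\) and \(I_3=[r_{j-1}^-/\rho_j,R_j]\). By \eqref{eq:crossing-peak-comparison}, \(I_2\subset[C_{\mathrm{sc}}^{-1},C_{\mathrm{sc}}]\), a fixed compact range. There Lemma~\ref{lem:distance-to-bubble-family} gives \(v_j\to U_{\lambda,b}\) in \(C^0\) on \(\overline{A_{C_{\mathrm{sc}}^{-1},C_{\mathrm{sc}}}}\), and the limits are bounded above and below there uniformly over \(\mathcal K\). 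Alternatively, \(f_j\ge\eps_0\) on \(I_2\) by definition and \(f_j\le C\) by \eqref{eq:critical-upper}. Either way \(f_j\asymp1\) on \(I_2\).

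On \(I_1\), the original interval \((r_j^-,r_j^+)\) is a low sublevel component, so Lemma~\ref{lem:multiplicative-interval} applies with \(\eps=\eps_0\). Since \(\bar\rho_j\in(r_j^-,r_j^+)\), the portion of \(I_1\) in original radii is \([\bar\rho_j,r_j^+]\). The two-sided bound \eqref{eq:multiplicative-two-sided} gives
\[
f_j(s)\asymp(\rho_js/r_j^+)^{(n-2)/2}+(r_j^-/(\rho_js))^{(n-2)/2}.
\]
The first term is comparable to \(s^{(n-2)/2}\) by \eqref{eq:crossing-peak-comparison}. The second term is at most comparable to the first when \(\rho_js\ge\bar\rho_j\), because \(\bar\rho_j^2\asymp r_j^-r_j^+\) by \eqref{eq:multiplicative-midpoint}. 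Hence \(f_j\asymp s^{(n-2)/2}\) on \(I_1\), which is exactly \(v_j\asymp1\asymp U_{\lambda,b}\) there.

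The interval \(I_3\) is handled symmetrically, using the component \((r_{j-1}^-,r_{j-1}^+)\) with minimum \(\bar\rho_{j-1}\) and the crossing bound \(r_{j-1}^-\le C_{\mathrm{sc}}\rho_j\). There the term \((r_{j-1}^-/(\rho_js))^{(n-2)/2}\asymp s^{-(n-2)/2}\) dominates for \(\rho_js\le\bar\rho_{j-1}\). This gives \(f_j\asymp s^{-(n-2)/2}\), i.e.\ \(v_j\asymp|y|^{2-n}\).

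The main care is bookkeeping: all constants must be independent of \(j\) and of \((\lambda,b)\). This holds because the constants in Lemma~\ref{lem:multiplicative-interval}, the Harnack constant \(C_{\rm H}\), and \(C_{\mathrm{sc}}\) are fixed, and \(j\ge j_0\) ensures \(\rho_jR_j<\widehat r\), so that lemma applies. Combining the three pieces with the Harnack reduction proves the lemma.
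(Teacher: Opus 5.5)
Your proof is correct and follows essentially the same route as the paper. You reduce to spherical averages via the annular Harnack inequality and split $[\sigma_j,R_j]$ at the crossing radii $r_j^+/\rho_j$ and $r_{j-1}^-/\rho_j$. You then apply the two-sided estimate of Lemma~\ref{lem:multiplicative-interval} on the two low sublevel pieces, use the superlevel bound together with the critical upper bound on the middle piece, and finish with $U_{\lambda,b}\asymp(1+|y|)^{2-n}$ uniformly on $\mathcal K$.

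The only cosmetic difference is in discarding the subdominant term. You do this directly via $\bar\rho_j^2\asymp r_j^-r_j^+$, whereas the paper first rewrites the bounds as $1+\sigma_j^{n-2}r^{2-n}$ and $r^{2-n}+R_j^{2-n}$ and then compares them with $(1+r)^{2-n}$.
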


\par
\begin{proof}
Write
\[
 \overline v_j(r)=\fint_{\Sph^{n-1}}v_j(r\theta)\,\dd\theta
 =\rho_j^{\frac{n-2}{2}}\bar u(\rho_jr).
\]
The radii \(r_j^\pm\) satisfy
\eqref{eq:level-crossing-radii}.
{Using \eqref{eq:crossing-peak-comparison} and
\eqref{eq:minimum-geometric-mean}, we obtain}
\[
 C^{-1}\rho_j\le r_j^+\le\rho_j,
 \qquad
 C^{-1}\sigma_j^2\le\frac{r_j^-}{\rho_j}\le C\sigma_j^2.
\]
{Applying the estimate from above and below in
\eqref{eq:multiplicative-two-sided} between
\(\bar\rho_j\) and \(r_j^+\), and then setting \(s=\rho_jr\), we obtain}
\begin{equation}\label{eq:inner-annular-average}
 C^{-1}\bigl(1+\sigma_j^{n-2}r^{2-n}\bigr)
 \le\overline v_j(r)\le
 C\bigl(1+\sigma_j^{n-2}r^{2-n}\bigr),
 \qquad \sigma_j\le r\le\frac{r_j^+}{\rho_j}.
\end{equation}

For the adjacent gap, use the already defined radii
\(r_{j-1}^-<\bar\rho_{j-1}<r_{j-1}^+\).
{It follows from \eqref{eq:crossing-peak-comparison} and
\eqref{eq:minimum-geometric-mean}, with \(j-1\) in place of \(j\), that}
\[
 \rho_j\le r_{j-1}^-\le C\rho_j,
 \qquad
 C^{-1}R_j^2\le\frac{r_{j-1}^+}{\rho_j}\le CR_j^2.
\]
{Applying the estimate from above and below in
\eqref{eq:multiplicative-two-sided} between
\(r_{j-1}^-\) and \(\bar\rho_{j-1}\), we find}
\begin{equation}\label{eq:outer-annular-average}
 C^{-1}\bigl(r^{2-n}+R_j^{2-n}\bigr)
 \le\overline v_j(r)\le
 C\bigl(r^{2-n}+R_j^{2-n}\bigr),
 \qquad \frac{r_{j-1}^-}{\rho_j}\le r\le R_j.
\end{equation}
On the remaining interval,
\[
 \frac{r_j^+}{\rho_j}\le r\le\frac{r_{j-1}^-}{\rho_j},
 \qquad C^{-1}\le r\le C.
\]
{From the superlevel condition and
\eqref{eq:critical-upper}, we have}
\[
 \eps_0r^{-(n-2)/2}\le\overline v_j(r)
 \le Cr^{-(n-2)/2},\qquad c\le\overline v_j(r)\le C.
\]
{Combining these bounds with
\eqref{eq:inner-annular-average} and
\eqref{eq:outer-annular-average}, we obtain}
 {
\begin{equation}\label{eq:annular-average-comparison}
 C^{-1}\bigl[(1+r)^{2-n}
 +\sigma_j^{n-2}r^{2-n}+R_j^{2-n}\bigr]
 \le\overline v_j(r)
 \le C\bigl[(1+r)^{2-n}
 +\sigma_j^{n-2}r^{2-n}+R_j^{2-n}\bigr].
\end{equation}
}
By \eqref{eq:capacity-scales}, the bracket is comparable with
\((1+r)^{2-n}\).
{Applying Lemma  \ref{lem:radial-harnack}, we convert this
average estimate into a pointwise estimate.  By compactness of
\(\mathcal K\),}
\[
 C^{-1}(1+|y|)^{2-n}
 \le U_{\lambda,b}(y)
 \le C(1+|y|)^{2-n}.
\]
Together with \eqref{eq:annular-average-comparison}, this proves the lemma.
\end{proof}

Proposition  \ref{prop:annular-increment} gives the exact Pohozaev
increment. Lemmas  \ref{lem:distance-to-bubble-family} and
\ref{lem:annular-bubble-comparison} give the compactness and uniform
comparison used to construct \(\Phi_j\) and choose its parameters in
Section  \ref{sec:reference-lower-bound}.

\section{Round sphere analysis and conformal transfer}
\label{sec:euclidean-response}

{
For every integer \(m\) with \(2\le m<(n-2)/2\), every
\(H\in\mathcal V_m\), and every \((\lambda,b)\in\mathcal K\),
this section constructs the unique solution
\(Z_{\lambda,b}[H]\in\dot H^1(\R^n)\) of
\eqref{eq:response-equation} satisfying
\eqref{eq:response-orthogonality}, and proves the uniform positivity
of \(\mathcal Q_{\lambda,b}\) on \(\mathcal V_m\). The construction
compactifies each polynomial metric jet on the round sphere and transfers
the normalized spherical solution to \(\R^n\). The second variation gives
the positive quadratic form, which a cutoff argument identifies with the
Euclidean coefficient in the annular estimate.
}

\subsection{Metric jets and compactification}

Recall that \(h=\log g\) satisfies \eqref{eq:normal-gauge} and
\eqref{eq:normal-orders}. The bubble \(U_{\lambda,b}\) and the compact
parameter set \(\mathcal K\) are specified in \eqref{eq:standard-bubble} and
\eqref{eq:bubble-parameter-set}. Lemma
\ref{lem:distance-to-bubble-family} explains why uniform estimates on this
parameter set are sufficient for the rescaled solutions \(v_j\).

\par

The positivity estimate in Proposition
\ref{prop:euclidean-strict-positivity} is applied separately to each
homogeneous coefficient of \(h=\log g\). We extract these coefficients
at the fixed singular point. For
\(2\le m\le d\), define the polynomial matrix
\(H^{(m)}\) on \(\R^n\) by
\begin{equation}\label{eq:metric-jets}
 H^{(m)}_{ij}(y)
 =\sum_{|\alpha|=m}\frac{\partial^\alpha h_{ij}(0)}{\alpha!}y^\alpha.
\end{equation}
These coefficients are fixed independently of \(j\).
The trace and radial identities in \eqref{eq:normal-gauge} imply that
\(H^{(m)}\) is symmetric and homogeneous of degree \(m\) and satisfies
\begin{equation}\label{eq:metric-jet-conditions}
 \sum_{i=1}^nH^{(m)}_{ii}=0,\qquad
 \sum_{j=1}^nH^{(m)}_{ij}(y)y^j=0
 \qquad 1\le i\le n.
\end{equation}
Thus \(H^{(m)}\in\mathcal V_m\), where the space and its coefficient
norm were defined in \eqref{eq:jet-gauge} and
\eqref{eq:coefficient-norm}, respectively.

To analyze \(H\in\mathcal V_m\), we identify the conformal metric determined by
\(U_{\lambda,b}\) with the standard round metric and represent \(H\) as a
covariant tensor on that sphere. For \((\lambda,b)\in\mathcal K\), the
bubble metric is
\begin{equation}\label{eq:bubble-metric}
 \gamma_{\lambda,b}
 =4U_{\lambda,b}^{4/(n-2)}
 \sum_{i=1}^n\dd y^i\otimes\dd y^i
 =\left(\frac{2\lambda}
 {\lambda^2+|y-b|^2}\right)^2
 \sum_{i=1}^n\dd y^i\otimes\dd y^i.
\end{equation}
This is the round metric on \(\Sph^n\) in the stereographic coordinates
determined by \((\lambda,b)\).
It satisfies
\[
 \operatorname{Ric}_{\gamma_{\lambda,b}}
 =(n-1)\gamma_{\lambda,b},\qquad
 \operatorname{Scal}_{\gamma_{\lambda,b}}=n(n-1).
\]
For \((\lambda,b)\in\mathcal K\) and a symmetric \(C^2\) matrix \(H\) with zero trace
on \(\R^n\), define the
covariant symmetric tensor \(k_{\lambda,b}[H]\) of rank \(2\) on \(\R^n\) by
\[
 k_{\lambda,b}[H]
 =\left(\frac{2\lambda}
 {\lambda^2+|y-b|^2}\right)^2
 \sum_{i,j=1}^nH_{ij}(y)\,\dd y^i\otimes\dd y^j.
\]

The next two lemmas establish the functional setting for
\(k_{\lambda,b}[H]\) and determine when it belongs to
\(H^1(\Sph^n)\). The inversion chart describes its behavior at
the point corresponding to \(y=\infty\), and Lemma
\ref{lem:euclidean-point-capacity} permits weak identities to be extended
across that point.

For fixed
\((\lambda,b)\in\mathcal K\), set
\begin{equation}\label{eq:inversion-chart}
 z=\frac{y-b}{|y-b|^2},\qquad
 y=b+\frac{z}{|z|^2}.
\end{equation}
In the \(z\) coordinates,
\begin{equation}\label{eq:inverted-bubble-metric}
 \gamma_{\lambda,b}
 =\frac{4\lambda^2}{(1+\lambda^2|z|^2)^2}
 \sum_{i=1}^n\dd z^i\otimes\dd z^i.
\end{equation}
The metric \(\gamma_{\lambda,b}\) extends smoothly from \(\R^n\) to
\(\R^n\cup\{\infty\}\simeq\Sph^n\). The \(y\) coordinates cover
\(\Sph^n\setminus\{\infty\}\), while \(z=0\) represents \(y=\infty\).
The ball \(B_{32}\) is centered at \(y=0\), and \(B_{1/8}\) in the
\(z\) coordinates is centered at \(z=0\). These two coordinate domains cover
\(\Sph^n\) uniformly for \((\lambda,b)\in\mathcal K\).

For a function, vector field, or covariant tensor \(T\), define
\[
 \begin{aligned}
 \norm{T}_{H^1_{\gamma_{\lambda,b}}}^2
 &=\int_{\R^n}\bigl(
 |T|_{\gamma_{\lambda,b}}^2+
 |\nabla^{\gamma_{\lambda,b}}T|_{\gamma_{\lambda,b}}^2
 \bigr)\,\dd V_{\gamma_{\lambda,b}},\\
 \norm{T}_{H^2_{\gamma_{\lambda,b}}}^2
 &=\int_{\R^n}\bigl(
 |T|_{\gamma_{\lambda,b}}^2+
 |\nabla^{\gamma_{\lambda,b}}T|_{\gamma_{\lambda,b}}^2+
 |(\nabla^{\gamma_{\lambda,b}})^2T|_{\gamma_{\lambda,b}}^2
 \bigr)\,\dd V_{\gamma_{\lambda,b}}.
 \end{aligned}
\]
Choose a fixed partition of unity \(\zeta_y+\zeta_z=1\) subordinate to these
coordinate domains.  If \(T_y\) and \(T_z\) are the coordinate components of
the same section, then, for \(s=1,2\),
\[
 \norm{T}_{H^s_{\gamma_{\lambda,b}}}^2
 \asymp
 \norm{\zeta_yT_y}_{H^s(B_{32})}^2
 +\norm{\zeta_zT_z}_{H^s(B_{1/8})}^2,
\]
with constants uniform for \((\lambda,b)\in\mathcal K\).
The space \(H^{-1}_{\gamma_{\lambda,b}}\) is the dual of
\(H^1_{\gamma_{\lambda,b}}\).

\begin{samepage}
\begin{lemma}\label{lem:euclidean-point-capacity}
Smooth functions, vector fields, and symmetric covariant tensors of rank \(2\)
compactly supported in the \(y\) coordinates are dense in the corresponding
\(H^s_{\gamma_{\lambda,b}}\) spaces, \(s=1,2\).
\end{lemma}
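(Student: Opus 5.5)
The idea is to reduce the statement to the classical fact that a single point has zero \(H^s\) capacity in dimension \(n\ge3\) when \(s=1,2\) and \(2s<n\). Since \(n\ge7\) here, this holds for both \(s=1\) and \(s=2\). By the norm equivalence displayed just before the lemma, \(H^s_{\gamma_{\lambda,b}}\) is the Sobolev space \(H^s(\Sph^n)\) of sections, described in the two coordinate charts with constants uniform in \(\mathcal K\). Smooth sections on \(\Sph^n\) are dense in \(H^s(\Sph^n)\); we use standard mollification in the two charts together with the partition of unity \(\zeta_y+\zeta_z=1\). It therefore suffices to approximate a smooth section \(T\) on \(\Sph^n\) by smooth sections vanishing near the point \(z=0\), which corresponds to \(y=\infty\). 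Such sections are exactly those compactly supported in the \(y\) coordinates.

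Fix \(\chi\in C^\infty(\R^n)\) with \(\chi=0\) on \(B_1\) and \(\chi=1\) outside \(B_2\), and set \(\chi_\epsilon(z)=\chi(z/\epsilon)\) in the \(z\) chart. Extend \(\chi_\epsilon\) by \(1\) on the rest of the sphere. Then \(\chi_\epsilon T\) is smooth and vanishes near \(z=0\), so it is compactly supported in the \(y\) coordinates. Write the error as \(T-\chi_\epsilon T=(1-\chi_\epsilon)T\), which is supported in \(B_{2\epsilon}\) of the \(z\) chart. In that chart \(\gamma_{\lambda,b}\) is uniformly equivalent to the Euclidean metric by \eqref{eq:inverted-bubble-metric}, and its Christoffel symbols are uniformly bounded on \(B_{1/8}\). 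Hence the \(H^s\) norm of the error is bounded by Euclidean norms of the components.

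These are controlled by the Leibniz rule. Using \(|D^a(1-\chi_\epsilon)|\le C\epsilon^{-a}\) and the boundedness of \(T\) and its derivatives, we get
\[
\norm{(1-\chi_\epsilon)T}_{H^s(B_{1/8})}^2\le C\sum_{a\le s}\epsilon^{-2a}|B_{2\epsilon}|\le C\epsilon^{n-2s}\to0 .
\]
This tends to zero because \(n>2s\) for \(s\le2\) and \(n\ge7\). The constants depend only on \(T\) and on the uniform chart bounds, so they are uniform in \((\lambda,b)\in\mathcal K\).

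The only point needing care is that the argument applies unchanged to functions, vector fields and symmetric \(2\)-tensors. Covariant derivatives in the \(z\) chart differ from coordinate derivatives only by bounded Christoffel terms of lower order, so the same estimate holds in each case. Also, multiplying by the scalar cutoff preserves symmetry and, if relevant, trace-freeness, so the approximants stay in the same class of tensors. I expect no genuine obstacle: the dimension condition \(n>2s\) is what makes a single point removable.
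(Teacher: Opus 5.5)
Your proposal is correct and matches the paper's proof: approximate by smooth sections on \(\Sph^n\), then multiply by a rescaled cutoff \(\chi(z/\epsilon)\) that vanishes near \(z=0\). The \(H^s_{\gamma_{\lambda,b}}\) error is \(O(\epsilon^{n/2-s})\to0\) because \(n>2s\). Your remarks on bounded Christoffel symbols in the \(z\) chart and on treating functions, vector fields and symmetric tensors alike supply details the paper leaves implicit.
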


\begin{proof}
After smooth approximation on \(\Sph^n\), multiply by a rescaled radial
cutoff that is zero for \(|z|\le\varepsilon\) and one for
\(|z|\ge2\varepsilon\).
For each fixed smooth field, the \(H^s_{\gamma_{\lambda,b}}\) error is
\(O(\varepsilon^{n/2-s})\to0\) as \(\varepsilon\downarrow0\), since
\(n>2s\) for \(s=1,2\).
\end{proof}
\end{samepage}

The next lemma determines when \(k_{\lambda,b}[H]\) belongs to
\(H^1_{\gamma_{\lambda,b}}\) and estimates its tail.
For the fixed metric, we will apply these estimates with \(H=H^{(m)}\)
from \eqref{eq:metric-jets}.
For \(L\ge2\), fix a radial function
\(\chi_L\in C_c^\infty(\R^n)\), with \(0\le\chi_L\le1\), satisfying
\begin{equation}\label{eq:euclidean-cutoff}
 \chi_L=1\ \text{on }B_L,\qquad
 \chi_L=0\ \text{on }\R^n\setminus B_{2L},\qquad
 |\nabla\chi_L|\le CL^{-1},
\qquad
|\nabla^2\chi_L|\le CL^{-2}.
\end{equation}
Parameter derivatives \(\partial_{(\lambda,b)}^\beta\) are taken at fixed
\(y\), with \(H\) held fixed.

\begin{lemma}\label{lem:metric-jet-threshold}
If \(m\ge2\), \(H\ne0\) is a symmetric homogeneous polynomial matrix of
degree \(m\) satisfying \eqref{eq:metric-jet-conditions}, and
\((\lambda,b)\in\mathcal K\), then
\(k_{\lambda,b}[H]\in H^1_{\gamma_{\lambda,b}}\) if and only if
\(m<(n-2)/2\).
If \(m<(n-2)/2\), then, uniformly for \(|\beta|\le2\),
\[
 \norm{\partial_{(\lambda,b)}^\beta k_{\lambda,b}[H]}_{H^1_{\gamma_{\lambda,b}}}
 \le C|H|,
\]
and
\begin{equation}\label{eq:metric-jet-tail}
 \norm{\partial_{(\lambda,b)}^\beta
 k_{\lambda,b}[(1-\chi_L)H]}_{H^1_{\gamma_{\lambda,b}}}
 \le CL^{m-\frac{n-2}{2}}|H|.
\end{equation}
The constant depends only on \(n,m,\mathcal K\), and the derivative bounds
in \eqref{eq:euclidean-cutoff}.
\end{lemma}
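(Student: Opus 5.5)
The plan is to compute the $H^1_{\gamma_{\lambda,b}}$ norm of $k_{\lambda,b}[H]$ separately in the two charts of the fixed partition of unity $\zeta_y+\zeta_z=1$. In the $y$ chart, on $B_{32}$, the conformal factor $(2\lambda/(\lambda^2+|y-b|^2))^2$ is smooth and bounded above and below uniformly for $(\lambda,b)\in\mathcal K$. Hence $\norm{\zeta_yk_{\lambda,b}[H]}_{H^1(B_{32})}\le C|H|$ trivially, and the same holds after applying $\partial^\beta_{(\lambda,b)}$, which only differentiates the smooth conformal factor. All the content sits at $z=0$, that is, $y=\infty$.

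In the $z$ chart, the plan is to pull back $\sum_{ij}H_{ij}(y)\,\dd y^i\otimes\dd y^j$ under $y=b+z/|z|^2$. One has $\dd y=|z|^{-2}(I-2\hat z\hat z^T)\dd z$ with $\hat z=z/|z|$. The prefactor becomes $(2\lambda)^2|y-b|^{-4}(1+\lambda^2|z|^2)^{-2}\cdot|y-b|^4/|y-b|^4$, which in $z$ equals $4\lambda^2|z|^4(1+\lambda^2|z|^2)^{-2}$. The two factors $|z|^{-2}$ from $\dd y$ cancel this $|z|^4$, so the components are
\[
 \tilde k_{ab}(z)=\frac{4\lambda^2}{(1+\lambda^2|z|^2)^2}\bigl[(I-2\hat z\hat z^T)H(b+z/|z|^2)(I-2\hat z\hat z^T)\bigr]_{ab}.
\]
By the radial condition $H(y)y=0$, I expect $\hat z^T$ to annihilate the leading part. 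Writing $y=b+z/|z|^2$, the leading term of $H(y)$ is $|z|^{-2m}H(\hat z)$ (after $H(z)=|z|^mH(\hat z)$). Expanding in $b$ gives $H(y)=|z|^{-m}H(\hat z)+O(|b||z|^{-m+1})$. So $|\tilde k|\asymp |z|^{-m}|H(\hat z)|$ near $0$ to leading order, and $|\nabla_z\tilde k|\lesssim |z|^{-m-1}|H|$, again with the leading term nondegenerate. The sufficiency direction then follows: $\int_{B_{1/8}}|z|^{-2m-2}\,\dd z<\infty$ iff $2m+2<n$, i.e.\ $m<(n-2)/2$, with norm $\le C|H|$ uniformly in $\mathcal K$. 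Parameter derivatives $\partial_b$ hit $H(b+z/|z|^2)$ and produce $DH$ of degree $m-1$, which is less singular. Derivatives $\partial_\lambda$ hit only the smooth factor. So the same bound holds for $|\beta|\le2$.

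For the converse ($m\ge(n-2)/2$ implies $k\notin H^1$), the plan is to show that the leading term genuinely has $|\nabla_z\tilde k|\gtrsim|z|^{-m-1}$ on a set of directions of positive measure. Here $\tilde k$ is homogeneous of degree $-m$ plus lower order. A nonzero function homogeneous of degree $-m$ cannot have vanishing gradient, since Euler's identity gives $z\cdot\nabla\tilde k_0=-m\tilde k_0$. Thus $\int|\nabla\tilde k_0|^2\gtrsim\int|z|^{-2m-2}\|H(\hat z)\|^2$, which diverges when $2m+2\ge n$ because $H\ne0$ forces $H(\hat z)\not\equiv0$ on the sphere. The $b$-dependent corrections are of relative order $|z|$ and cannot cancel this divergence. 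I expect this nondegeneracy check, together with controlling the $b$-expansion uniformly, to be the main technical step. The rest is bookkeeping.

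For the tail estimate \eqref{eq:metric-jet-tail}, $(1-\chi_L)H$ is supported in $|y|\ge L$. In the $z$ chart this corresponds to $|z|\lesssim L^{-1}$, uniformly over $\mathcal K$. Derivatives of $\chi_L$ obey \eqref{eq:euclidean-cutoff}, so $|\nabla_z[\chi_L(y(z))]|\lesssim |z|^{-2}L^{-1}\lesssim|z|^{-1}$ on the transition region. The same pointwise bounds $|\tilde k|\lesssim|z|^{-m}|H|$ and $|\nabla\tilde k|\lesssim|z|^{-m-1}|H|$ therefore persist. Integrating over $|z|\le CL^{-1}$ gives $\int_0^{C/L}r^{-2m-2+n-1}\,\dd r\lesssim L^{2m+2-n}$, and the $y$-chart part vanishes for $L\ge 64$. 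Taking square roots gives $CL^{m-(n-2)/2}|H|$. Smaller $L\ge2$ is absorbed into the constant via the first bound.
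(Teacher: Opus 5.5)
Your argument is correct. The estimates are the same as the paper's, but you carry them out in a different chart and get the lower bound by a different mechanism.

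\textbf{The paper's route.} The paper never uses the inversion chart. It works in the $y$ coordinates on $\{|y|\ge4\}$, where $|\partial_r|_\gamma\asymp r^{-2}$ and $\dd V_\gamma\asymp r^{-n-1}\dd r\,\dd\theta$. It uses the exact identity $|k_{\lambda,b}[H]|_{\gamma_{\lambda,b}}^2=\sum_{i,j}H_{ij}(y)^2$, valid for every $b$ because $k$ and $\gamma$ carry the same conformal factor. The right side is exactly homogeneous of degree $2m$ in $y$, so metric compatibility and Cauchy--Schwarz give $\frac mr|k|_\gamma^2\le Cr^{-2}|\nabla^\gamma k|_\gamma|k|_\gamma$. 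Hence $|\nabla^\gamma k|_\gamma\ge c\,r^{m+1}|H(\theta)|$, and $\int^\infty r^{2m+1-n}\dd r$ diverges when $2m\ge n-2$. The upper bounds come from component bounds $O(|H|r^{m-4-|\alpha|})$, Christoffel symbols $O(r^{-1})$ and inverse metric $O(r^4)$.

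\textbf{Your route.} Your Euler identity for the Kelvin-transformed leading part $4\lambda^2H(z)/|z|^{2m}$ is the same radial-derivative mechanism, seen from the point at infinity. Because you expand about $b=0$, you must also dominate the $b$-dependent remainder pointwise on a cone of directions where $|H(\hat z)|\ge c|H|$. An $L^2$ comparison would not suffice there, since the remainder's gradient is only $O(|H||z|^{-m})$, which is not square integrable once $2m\ge n$. The paper's exact identity makes this step unnecessary. Your version has the advantage of working directly in the chart that defines the norm near the added point.

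\textbf{Points to tidy.}
\begin{itemize}
\item $H(z/|z|^2)=|z|^{-2m}H(z)=|z|^{-m}H(\hat z)$, not $|z|^{-2m}H(\hat z)$; you use the correct form in the next sentence.
\item The paper takes parameter derivatives at fixed $y$. Then $\partial_{b^l}$ falls only on the factor $4\lambda^2(\lambda^2+|y-b|^2)^{-2}$ and multiplies it by $4(y-b)^l/(\lambda^2+|y-b|^2)=O(|z|)$. Your fixed-$z$ derivative, which hits $H(b+z/|z|^2)$, differs from this by the tensor with $y$-components $\partial_{y^l}\bigl(4\lambda^2(\lambda^2+|y-b|^2)^{-2}H_{ij}\bigr)$. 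That tensor is also of lower order, so your bound is unaffected.
\item Finite integrals on $B_{1/8}\setminus\{0\}$ give membership in $H^1_{\gamma_{\lambda,b}}$ only because a point has zero capacity for $n\ge3$. The paper gets this by noting that the cutoff tensors $k_{\lambda,b}[\chi_LH]$ form a Cauchy sequence, which your tail estimate also supplies.
\end{itemize}
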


\begin{proof}
Write \(\gamma=\gamma_{\lambda,b}\), \(k=k_{\lambda,b}[H]\), and
\(y=r\theta\), with \(r>0\) and \(\theta\in\Sph^{n-1}\).
By \eqref{eq:bubble-metric} and the definition of \(k\),
\[
 |k(r\theta)|_\gamma^2
 =r^{2m}\sum_{i,j=1}^nH_{ij}(\theta)^2,\qquad
 |\partial_r|_\gamma\asymp r^{-2},\qquad
 \dd V_\gamma\asymp r^{-n-1}\,\dd r\,\dd\theta,
 \quad r\ge4.
\]
{By metric compatibility and the Cauchy--Schwarz inequality,}
\[
 \frac{m}{r}|k|_\gamma^2
 =\frac12\partial_r|k|_\gamma^2
 =\langle\nabla^\gamma_{\partial_r}k,k\rangle_\gamma
 \le Cr^{-2}|\nabla^\gamma k|_\gamma|k|_\gamma.
\]
The spherical \(L^2\) norm of \(H\) is equivalent to the coefficient norm
\eqref{eq:coefficient-norm}. Consequently,
\[
 \int_{\{|y|>4\}}|\nabla^\gamma k|_\gamma^2\,\dd V_\gamma
 \ge c|H|^2\int_4^\infty r^{2m+1-n}\,\dd r.
\]
This integral diverges when \(2m\ge n-2\), proving necessity.

For the upper estimates, assume \(r\ge4\) and differentiate the formula for \(k\) at fixed
\(y,H\). {By polynomial growth, its components in the \(y\) coordinates satisfy}
\[
 |\partial_y^\alpha\partial_{(\lambda,b)}^\beta k_{ij}|
 \le C|H|r^{m-4-|\alpha|},
 \qquad |\alpha|\le1,\quad |\beta|\le2.
\]
The connection coefficients are \(O(r^{-1})\), and the inverse metric
has components \(O(r^4)\). Thus
\[
 |\partial_{(\lambda,b)}^\beta k|_\gamma
 +r^{-1}|\nabla^\gamma(\partial_{(\lambda,b)}^\beta k)|_\gamma
 \le C|H|r^m,\qquad r\ge4,\quad|\beta|\le2.
\]
On \(B_4\), the component bounds are uniform. Hence
\[
 \|\partial_{(\lambda,b)}^\beta k\|_{H^1_\gamma}^2
 \le C|H|^2
 \left(1+\int_4^\infty r^{2m+1-n}\,\dd r\right),
\]
which is finite for \(2m<n-2\).

Assume \(2m<n-2\). {For \(L\ge4\), it follows from
\eqref{eq:euclidean-cutoff} that}
\(|\nabla^\gamma\chi_L|_\gamma\le CL\) on \(A_{L,2L}\).
{Since \(\chi_L\) is independent of \(\lambda,b\), applying the
product rule, we obtain}
\[
 \begin{aligned}
 \|\partial_{(\lambda,b)}^\beta
 k_{\lambda,b}[(1-\chi_L)H]\|_{H^1_\gamma}^2
 &\le C|H|^2
 \left(\int_L^\infty r^{2m+1-n}\,\dd r+L^{2m+2-n}\right)\\
 &\le C|H|^2L^{2m+2-n},\qquad |\beta|\le2.
 \end{aligned}
\]
The smooth cutoff tensors are therefore Cauchy in \(H^1_\gamma\),
with limit \(\partial_{(\lambda,b)}^\beta k\).
This proves sufficiency and the first estimate in the lemma.
Taking square roots yields \eqref{eq:metric-jet-tail};
enlarging \(C\) covers \(2\le L<4\).
\end{proof}

{The range \(2\le m<(n-2)/2\) used in
Proposition  \ref{prop:euclidean-response} is precisely the range obtained
in Lemma  \ref{lem:metric-jet-threshold}.}

\subsection{The spherical linearized equation}

To solve the correction equation \eqref{eq:response-equation}, we first identify the kernel of \(-\Delta_{\gamma_{\lambda,b}}-n\). This kernel is spanned by the dilation and translation derivatives of \(U_{\lambda,b}\), each divided by \(U_{\lambda,b}\).  For
\((\lambda,b)\in\mathcal K\), we define
\[
 \begin{aligned}
 J_{\lambda,b,0}
 &=\lambda\partial_\lambda U_{\lambda,b}
 =\frac{n-2}{2}U_{\lambda,b}
   \frac{|y-b|^2-\lambda^2}{|y-b|^2+\lambda^2},\\
 J_{\lambda,b,l}
 &=\partial_{b^l}U_{\lambda,b}
 =(n-2)U_{\lambda,b}
   \frac{y^l-b^l}{|y-b|^2+\lambda^2},
 \qquad 1\le l\le n.
 \end{aligned}
\]
The corresponding spherical functions and their span are defined by
\[
 Y_{\lambda,b,l}=\frac{J_{\lambda,b,l}}{U_{\lambda,b}},
 \qquad
 \mathcal J_{\lambda,b}
 =\operatorname{span}\{
 Y_{\lambda,b,0},\ldots,Y_{\lambda,b,n}\}.
\]
{Calculating directly from \eqref{eq:bubble-metric}, we find}
\begin{equation}\label{eq:jacobi-hessian}
 -\Delta_{\gamma_{\lambda,b}}Y_{\lambda,b,l}
 =nY_{\lambda,b,l}\qquad \text{and} \qquad
 \nabla_{\gamma_{\lambda,b}}^2Y_{\lambda,b,l}
 =-Y_{\lambda,b,l}\gamma_{\lambda,b}.
\end{equation}

The parameter dependence can be placed in the tensor while the round
metric remains fixed. Let \(g_{S}\) be the unit round metric,
let \(N=(0,\ldots,0,1)\in\Sph^n\), and define stereographic projection by
\[
 \pi(\xi)=\frac{(\xi_1,\ldots,\xi_n)}{1-\xi_{n+1}},
 \qquad \xi\in\Sph^n\setminus\{N\}.
\]
Then
\[
 \pi^*\!\left(\frac4{(1+|x|^2)^2}\sum_{i=1}^n\dd x^i\otimes\dd x^i\right)
 =g_{S}.
\]
Define the isometry
\begin{equation}\label{eq:fixed-round-identification}
 \Xi_{\lambda,b}(\xi)=b+\lambda\pi(\xi),\qquad
 \Xi_{\lambda,b}^*\gamma_{\lambda,b}=g_{S}.
\end{equation}
Its extension sends \(N\) to \(y=\infty\).
The inversion chart and its metric are given by
\eqref{eq:inversion-chart}  and \eqref{eq:inverted-bubble-metric}.
{The first spherical harmonics are spanned by
\(\xi_1,\ldots,\xi_{n+1}\), where
\(\xi=(\xi_1,\ldots,\xi_{n+1})\in\Sph^n\).}
{For the explicit Jacobi fields, we have}
\begin{equation}\label{eq:fixed-round-jacobi}
 \Xi_{\lambda,b}^*Y_{\lambda,b,0}
 =\frac{n-2}{2}\xi_{n+1},\qquad
 \Xi_{\lambda,b}^*Y_{\lambda,b,l}
 =\frac{n-2}{2\lambda}\xi_l,\qquad 1\le l\le n.
\end{equation}

The isometry \eqref{eq:fixed-round-identification} reduces the kernel
and inverse estimates to the spherical harmonic decomposition on the
unit round sphere.

\begin{lemma}\label{lem:euclidean-scalar-fredholm}
For every \((\lambda,b)\in\mathcal K\), the kernel of
\[
 -\Delta_{\gamma_{\lambda,b}}-n:
 H^1_{\gamma_{\lambda,b}}\to
 H^{-1}_{\gamma_{\lambda,b}}
\]
is \(\mathcal J_{\lambda,b}\).  If
\(\phi\perp\mathcal J_{\lambda,b}\) in \(L^2_{\gamma_{\lambda,b}}\), then
\begin{equation}\label{eq:projected-scalar-inverse}
 \norm{\phi}_{H^1_{\gamma_{\lambda,b}}}
 \le C\norm{(-\Delta_{\gamma_{\lambda,b}}-n)\phi}
 _{H^{-1}_{\gamma_{\lambda,b}}},
\end{equation}
with \(C\) uniform on \(\mathcal K\). If
\(F\in H^{-1}_{\gamma_{\lambda,b}}\) satisfies
\[
 \langle F,Y_{\lambda,b,l}\rangle=0,
 \qquad 0\le l\le n,
\]
then there is a unique
\(\phi\perp\mathcal J_{\lambda,b}\) satisfying
\((-\Delta_{\gamma_{\lambda,b}}-n)\phi=F\), and
\eqref{eq:projected-scalar-inverse} holds.
\end{lemma}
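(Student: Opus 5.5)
The plan is to pull everything back to the unit round sphere with the isometry \(\Xi_{\lambda,b}\) of \eqref{eq:fixed-round-identification}, and then use the spherical harmonic decomposition there. Since \(\Xi_{\lambda,b}^*\gamma_{\lambda,b}=g_S\), the map \(\phi\mapsto\phi\circ\Xi_{\lambda,b}\) is an isometry of \(H^1_{\gamma_{\lambda,b}}\) onto \(H^1(\Sph^n,g_S)\), and hence also of the dual spaces \(H^{-1}\). Here Lemma~\ref{lem:euclidean-point-capacity} lets us identify the \(y\)-chart completion with the full Sobolev space on \(\Sph^n\): the point \(N\mapsto\infty\) has zero capacity. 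The operator \(-\Delta_{\gamma_{\lambda,b}}-n\) conjugates to \(-\Delta_{g_S}-n\). This removes all \((\lambda,b)\)-dependence from the operator, so any constant obtained on the fixed sphere is automatically uniform on \(\mathcal K\).

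On \((\Sph^n,g_S)\) the eigenvalues of \(-\Delta_{g_S}\) are \(\ell(\ell+n-1)\), \(\ell\ge0\). The value \(n\) occurs only for \(\ell=1\), and its eigenspace is spanned by \(\xi_1,\dots,\xi_{n+1}\).

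\textbf{Kernel.} If \(\phi\in H^1\) satisfies \((-\Delta_{g_S}-n)\phi=0\) weakly, then by elliptic regularity and expansion in spherical harmonics \(\phi\) lies in the \(\ell=1\) eigenspace. By \eqref{eq:fixed-round-jacobi}, and since \(\lambda\ge\lambda_*/2>0\), that eigenspace equals \(\Xi_{\lambda,b}^*\mathcal J_{\lambda,b}\). The kernel is therefore \(\mathcal J_{\lambda,b}\). The inclusion \(\mathcal J_{\lambda,b}\subset\ker\) is also directly visible from \eqref{eq:jacobi-hessian}.

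\textbf{Inverse estimate.} For \(\phi\perp\mathcal J_{\lambda,b}\), write \(\phi=\sum_{\ell\ne1}\phi_\ell\) in spherical harmonics. Set \(\mu_\ell=\ell(\ell+n-1)\). Then
\[
\langle(-\Delta-n)\phi,\psi\rangle=\sum_{\ell\ne1}(\mu_\ell-n)\langle\phi_\ell,\psi_\ell\rangle_{L^2},
\]
and the multiplier satisfies \(|\mu_\ell-n|\ge c_n(1+\mu_\ell)\) for \(\ell\ne1\): for \(\ell=0\) it equals \(n\), and for \(\ell\ge2\) it is at least \(n+2\) and grows like \(\mu_\ell\). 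The norms in spectral form are \(\|\phi\|_{H^1}^2=\sum(1+\mu_\ell)\|\phi_\ell\|^2\) and \(\|F\|_{H^{-1}}^2=\sum(1+\mu_\ell)^{-1}\|F_\ell\|^2\). Choosing the test function \(\psi=\sum_{\ell\ne1}\operatorname{sgn}(\mu_\ell-n)\phi_\ell\), which has the same \(H^1\) norm as \(\phi\), gives
\[
c_n\|\phi\|_{H^1}^2\le\|(-\Delta-n)\phi\|_{H^{-1}}\|\phi\|_{H^1},
\]
which is \eqref{eq:projected-scalar-inverse} with a constant depending only on \(n\). Transporting back by \(\Xi_{\lambda,b}\), the constant is uniform on \(\mathcal K\), because the \(H^{\pm1}_{\gamma_{\lambda,b}}\) norms are exactly the pulled-back round norms.

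\textbf{Existence and uniqueness.} Suppose \(F\) annihilates every \(Y_{\lambda,b,l}\). Pulled back, \(F\) has no \(\ell=1\) component. Define
\[
\phi=\sum_{\ell\ne1}(\mu_\ell-n)^{-1}F_\ell .
\]
The multiplier bound shows this series converges in \(H^1\), that \(\phi\) is orthogonal to the \(\ell=1\) eigenspace, and that \((-\Delta-n)\phi=F\). Equivalently, one can apply Lax--Milgram on the \(L^2\)-orthogonal complement of \(\mathcal J_{\lambda,b}\), where the form is coercive by the estimate just proved. Uniqueness follows because the difference of two such solutions lies in the kernel \(\mathcal J_{\lambda,b}\) and is also orthogonal to it, so it vanishes.

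\textbf{Main obstacle.} The step needing care is the identification of \(H^1_{\gamma_{\lambda,b}}\) and its dual, defined via the \(y\)-coordinates and the partition of unity, with the intrinsic Sobolev spaces on the round sphere, uniformly in \((\lambda,b)\). I would handle this with the chart-equivalence stated just before Lemma~\ref{lem:euclidean-point-capacity}, the density result of that lemma, and the fact that \(\Xi_{\lambda,b}\) is a genuine isometry. The orthogonality to \(\mathcal J_{\lambda,b}\) in \(L^2_{\gamma_{\lambda,b}}\) must also correspond exactly to \(L^2(g_S)\)-orthogonality to \(\xi_1,\dots,\xi_{n+1}\). This holds because \(\dd V_{\gamma_{\lambda,b}}\) pulls back to \(\dd V_{g_S}\), and \eqref{eq:fixed-round-jacobi} shows the two spans coincide.
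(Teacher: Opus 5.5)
Your proposal is correct and follows the paper's proof: you pull back by the isometry \(\Xi_{\lambda,b}\), decompose in spherical harmonics, use \(|(\ell-1)(\ell+n)|\ge c(n)(1+\ell(\ell+n-1))\) for \(\ell\ne1\), and build the solution as the series \(\sum_{\ell\ne1}F_\ell/((\ell-1)(\ell+n))\), and your sign-flipped test function merely rephrases the paper's direct spectral computation of the \(H^1\) norm. The one misstatement is the Lax--Milgram aside: the form \(\int(|\nabla\phi|^2-n\phi^2)\) is negative on constants, which lie in \(\mathcal J_{\lambda,b}^\perp\), so it is not coercive there; what you proved is an inf--sup bound, and existence should rest on the explicit series (or on the Banach--Ne\v{c}as--Babu\v{s}ka theorem) rather than on Lax--Milgram.
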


\begin{proof}
{For smooth functions, pullback by
\eqref{eq:fixed-round-identification} preserves the scalar Sobolev norms and
satisfies}
\[
 (-\Delta_{g_{S}}-n)(\phi\circ\Xi_{\lambda,b})
 =\bigl[(-\Delta_{\gamma_{\lambda,b}}-n)\phi\bigr]
   \circ\Xi_{\lambda,b}.
\]
The weak identity follows by duality.
Using spherical harmonics, we decompose
\(L^2(\Sph^n)\) orthogonally, with shifted eigenvalues \(\ell(\ell+n-1)-n=(\ell-1)(\ell+n)\), \(\ell=0,1,2,\ldots\).
The eigenvalue in degree zero is \(-n\), and the only zero occurs at
\(\ell=1\). {Thus, from \eqref{eq:fixed-round-jacobi}, we obtain}
 {
\[
 \Xi_{\lambda,b}^*\ker(-\Delta_{\gamma_{\lambda,b}}-n)
 =\ker(-\Delta_{g_{S}}-n)
 =\operatorname{span}\{\xi_1,\ldots,\xi_{n+1}\}
 =\Xi_{\lambda,b}^*\mathcal J_{\lambda,b}.
\]
}

For \(\ell\ne1\),
\[
 |\ell(\ell+n-1)-n|
 \ge c(n)\bigl(1+\ell(\ell+n-1)\bigr).
\]
On the fixed sphere, retain \(F,\phi\) for the pullback objects and
denote their homogeneous components of degree \(\ell\) by
\(F_\ell,\phi_\ell\).
For \(F\in H^{-1}\), these components are defined by duality.
The equation and orthogonality conditions require
\[
 F_1=\phi_1=0,\qquad
 \phi_\ell=\frac{F_\ell}{(\ell-1)(\ell+n)},\qquad \ell\ne1.
\]
{By the spectral bound,}
\[
 \begin{aligned}
 \sum_{\ell\ne1}\bigl(1+\ell(\ell+n-1)\bigr)\|\phi_\ell\|_2^2
 &\le C(n)\sum_{\ell\ne1}
 \frac{\|F_\ell\|_2^2}{1+\ell(\ell+n-1)}\\
 &=C(n)\|F\|_{H^{-1}(g_{S})}^2.
 \end{aligned}
\]
Hence \(\sum_{\ell\ne1}\phi_\ell\) converges in \(H^1\) to the unique
orthogonal solution. {By the isometry, we obtain
\eqref{eq:projected-scalar-inverse} and solvability for}
\(\gamma_{\lambda,b}\), with the same constant.
\end{proof}

For each fixed \((\lambda,b)\in\mathcal K\), write
\(\gamma=\gamma_{\lambda,b}\). We apply the preceding inverse to the
scalar curvature variation produced by a symmetric covariant
tensor \(k\) of rank \(2\). For smooth \(k\) on \(\Sph^n\), our divergence
convention is
\[
 (\diver_\gamma k)_j
 =\sum_{i,a=1}^n\gamma^{ia}\nabla^\gamma_i k_{aj}.
\]
The scalar curvature linearization is
\[
 D\!\operatorname{Scal}_\gamma[k]
 =-\Delta_\gamma\tr_\gamma k+\diver_\gamma^2k
 -(n-1)\tr_\gamma k,\qquad
 \diver_\gamma^2k
 =\sum_{i,j=1}^n\gamma^{ij}
   \nabla^\gamma_i(\diver_\gamma k)_j.
\]
{For a smooth scalar test function \(\varphi\), integrating the
second derivative terms once, we obtain}
\[
 |\langle D\!\operatorname{Scal}_\gamma[k],\varphi\rangle|
 \le C\|k\|_{H^1_\gamma}\|\varphi\|_{H^1_\gamma}.
\]
Thus \(D\!\operatorname{Scal}_\gamma\) extends to a bounded map from \(H^1_\gamma\)
symmetric tensors of rank \(2\) to \(H^{-1}_\gamma\).
{For \(Y\in\mathcal J_{\lambda,b}\), integrating by parts and
using \eqref{eq:jacobi-hessian}, we obtain}
 {
\begin{equation}\label{eq:spherical-curvature-compatibility}
 \langle D\!\operatorname{Scal}_\gamma[k],Y\rangle
 =\int_{\Sph^n}\!
 \left\langle k,\nabla_\gamma^2Y
       -(\Delta_\gamma Y)\gamma-(n-1)Y\gamma\right\rangle_\gamma
 \,\dd V_\gamma
 =0.
\end{equation}
}
Smooth approximation extends this identity to every \(H^1_\gamma\)
symmetric tensor of rank \(2\).

{By Lemma  \ref{lem:euclidean-scalar-fredholm}, every such
\(k\) determines a unique function \(\psi[k]\in H^1_\gamma\) satisfying}
\begin{equation}\label{eq:spherical-scalar-equation}
 (-\Delta_\gamma-n)\psi[k]=-c(n)D\!\operatorname{Scal}_\gamma[k],
 \qquad \psi[k]\perp\mathcal J_{\lambda,b}
 \quad\text{in }L^2_\gamma.
\end{equation}
This is a weak equation on the whole sphere, and
\begin{equation}\label{eq:spherical-scalar-estimate}
 \|\psi[k]\|_{H^1_\gamma}\le C\|k\|_{H^1_\gamma}.
\end{equation}
The map \(k\mapsto\psi[k]\) is linear. The constants are uniform on
\(\mathcal K\), since the metrics are isometric to \(g_{S}\).

\subsection{The Euclidean correction}

We now write the correction in Euclidean coordinates.
For a symmetric \(C^2\) matrix \(H\) with zero trace on \(\R^n\) and
\((\lambda,b)\in\mathcal K\), the metric variation produces the source
\(F_{\lambda,b}[H]\) defined by
\begin{equation}\label{eq:metric-source}
 F_{\lambda,b}[H]
 =\sum_{i,j=1}^n
 \partial_i\bigl(H_{ij}\partial_jU_{\lambda,b}\bigr)
 +c(n)\left(\sum_{i,j=1}^n\partial_i\partial_jH_{ij}\right)
 U_{\lambda,b}.
\end{equation}

{Set \(g_\eps=\exp(\eps H)\).}
For a smooth scalar function \(w\), set
\(u_\eps=U_{\lambda,b}+\eps w\). On each fixed compact subset of
\(\R^n\), this function is positive for small \(|\eps|\).
Differentiation at zero gives
\[
 \left.\partial_\eps\right|_{\eps=0}
 \left[L_{g_\eps}u_\eps+n(n-2)u_\eps^p\right]
 =-F_{\lambda,b}[H]
 +\left[\Delta+n(n+2)U_{\lambda,b}^{p-1}\right]w.
\]
{For \(H\in\mathcal V_m\) and \(b=0\), we have
\(H\nabla U_{\lambda,0}=0\) because \(H(y)y=0\); hence the first summand in}
\eqref{eq:metric-source} vanishes. For \(m\ge4\), {this is the
polynomial correction equation used by Khuri--Marques--Schoen
\cite[Proposition  4.1, Lemma  4.2, and (4.3)--(4.5)]{KMS}.} For \(b\ne0\), both summands in
\eqref{eq:metric-source} are present.

{By \eqref{eq:bubble-metric},
\(\gamma_{\lambda,b}
=(2^{(n-2)/2}U_{\lambda,b})^{4/(n-2)}\delta\).}
Since \(\tr H=0\), \(\det g_\eps=1\).  The covariant metric direction is
\(H\), and
\[
 \left.\partial_\eps\right|_{\eps=0}
 \bigl((2^{(n-2)/2}U_{\lambda,b})^{4/(n-2)}g_\eps\bigr)
 =k_{\lambda,b}[H].
\]

At the Euclidean metric,
\[
 \left.\partial_\eps\right|_{\eps=0}
 \Delta_{g_\eps}(2^{(n-2)/2}U_{\lambda,b})
 =-\sum_{i,j=1}^n\partial_i
 \bigl(H_{ij}\partial_j(2^{(n-2)/2}U_{\lambda,b})\bigr),
 \qquad
 \left.\partial_\eps\right|_{\eps=0}\operatorname{Scal}_{g_\eps}
 =\sum_{i,j=1}^n\partial_i\partial_jH_{ij}.
\]
{By conformal covariance,}
\[
 L_{(2^{(n-2)/2}U_{\lambda,b})^{4/(n-2)}g_\eps}1
 =(2^{(n-2)/2}U_{\lambda,b})^{-p}
 L_{g_\eps}(2^{(n-2)/2}U_{\lambda,b}).
\]
Differentiation at \(\eps=0\), together with
\((2^{(n-2)/2})^{1-p}=1/4\), yields

\begin{equation}\label{eq:curvature-source-relation}
 c(n)D\!\operatorname{Scal}_{\gamma_{\lambda,b}}[k_{\lambda,b}[H]]
 =\frac14U_{\lambda,b}^{-p}F_{\lambda,b}[H].
\end{equation}

For a compactly supported symmetric \(C^2\) matrix \(T\) with zero trace
on \(\R^n\), the tensor \(k_{\lambda,b}[T]\) vanishes near \(y=\infty\).
{Since
\(\dd V_{\gamma_{\lambda,b}}=2^nU_{\lambda,b}^{p+1}\dd y\), combining
\eqref{eq:spherical-curvature-compatibility} with
\eqref{eq:curvature-source-relation}, we obtain}
\[
 0=c(n)\langle D\!\operatorname{Scal}_{\gamma_{\lambda,b}}[k_{\lambda,b}[T]],
 Y_{\lambda,b,l}\rangle
 =2^{n-2}\int_{\R^n}F_{\lambda,b}[T]J_{\lambda,b,l}\,\dd y.
\]
Consequently,
\begin{equation}\label{eq:cutoff-response-compatibility}
 \int_{\R^n}F_{\lambda,b}[T]J_{\lambda,b,l}\,\dd y=0,
 \qquad 0\le l\le n.
\end{equation}
This applies to \(T=\chi_LH\) for every \(H\in\mathcal V_m\), \(m\ge2\).
{For \(2\le m<(n-2)/2\), it also follows from
Lemma  \ref{lem:metric-jet-threshold} that
\(k_{\lambda,b}[H]\in H^1_{\gamma_{\lambda,b}}\).}
{Using the cutoff bounds and \eqref{eq:metric-source}, we have}
\[
 |F_{\lambda,b}[\chi_LH](y)J_{\lambda,b,l}(y)|
 \le C|H|(1+|y|)^{m+2-2n},
 \qquad 0\le l\le n.
\]
The right side is integrable because \(m<n-2\).
{Applying dominated convergence in
\eqref{eq:cutoff-response-compatibility}, we obtain}
\[
 \int_{\R^n}F_{\lambda,b}[H]J_{\lambda,b,l}\,\dd y=0,
 \qquad 0\le l\le n,\quad 2\le m<(n-2)/2.
\]

We now transfer the spherical solution to the homogeneous Sobolev
space \(\dot H^1(\R^n)\) defined in
Section  \ref{sec:common-preliminaries}; its dual is denoted by
\(\dot H^{-1}(\R^n)\).

For \(\phi\in H^1_{\gamma_{\lambda,b}}\), {set
\(w=2^{(n-2)/2}U_{\lambda,b}\phi\).} Then
\begin{equation}\label{eq:conformal-energy}
 \int_{\R^n}\left(
 |\nabla^{\gamma_{\lambda,b}}\phi|^2
 +\frac{n(n-2)}4\phi^2
 \right)\dd V_{\gamma_{\lambda,b}}
 =\int_{\R^n}|\nabla w|^2\,\dd y,
\end{equation}

\begin{equation}\label{eq:scalar-conjugacy}
 (-\Delta_{\gamma_{\lambda,b}}-n)\phi
 =(2^{\frac{n-2}{2}}U_{\lambda,b})^{-p}
 \left[-\Delta w
 -n(n+2)U_{\lambda,b}^{4/(n-2)}w\right].
\end{equation}

{By conformal covariance, these identities first hold for
smooth functions compactly supported in the \(y\) coordinates. Applying
Lemma  \ref{lem:euclidean-point-capacity} and density, we obtain the
isomorphism between \(H^1_{\gamma_{\lambda,b}}\) and \(\dot H^1(\R^n)\);}
\eqref{eq:scalar-conjugacy} holds in the corresponding weak sense.
The integral with \(\dd V_{\gamma_{\lambda,b}}\) is the spherical integral
written in the \(y\) chart.

Fix \(2\le m<(n-2)/2\), \(H\in\mathcal V_m\), and
\((\lambda,b)\in\mathcal K\). Define
\begin{equation}\label{eq:response-from-sphere}
 Z_{\lambda,b}[H]
 =U_{\lambda,b}\psi[k_{\lambda,b}[H]].
\end{equation}
{Changing variables by
\(Y_{\lambda,b,l}=J_{\lambda,b,l}/U_{\lambda,b}\) and
\(\dd V_{\gamma_{\lambda,b}}=2^nU_{\lambda,b}^{p+1}\dd y\), we obtain}
 {
\begin{equation}\label{eq:response-moment-transfer}
 \int_{\Sph^n}\psi[k_{\lambda,b}[H]]Y_{\lambda,b,l}
 \,\dd V_{\gamma_{\lambda,b}}
 =2^n\int_{\R^n}U_{\lambda,b}^{p-1}
 Z_{\lambda,b}[H]J_{\lambda,b,l}\,\dd y.
\end{equation}
}
{Taking
\(\phi=2^{-(n-2)/2}\psi[k_{\lambda,b}[H]]\) in
\eqref{eq:scalar-conjugacy}, we find \(w=Z_{\lambda,b}[H]\).
Combining these identities with \eqref{eq:spherical-scalar-equation} and
\eqref{eq:curvature-source-relation}, we obtain the}
Euclidean equation and orthogonality conditions
\begin{equation}\label{eq:response-equation}
 \Delta Z_{\lambda,b}[H]
 +n(n+2)U_{\lambda,b}^{4/(n-2)}Z_{\lambda,b}[H]
 =F_{\lambda,b}[H],
\end{equation}
\begin{equation}\label{eq:response-orthogonality}
 \int_{\R^n}U_{\lambda,b}^{4/(n-2)}
 Z_{\lambda,b}[H]J_{\lambda,b,l}\,\dd y=0,
 \qquad 0\le l\le n.
\end{equation}

\par
\begin{proposition}\label{prop:euclidean-response}
The problem \eqref{eq:response-equation}  and  \eqref{eq:response-orthogonality}
has a unique solution in \(\dot H^1(\R^n)\).
For \(|\alpha|\le2\) and \(|\beta|\le2\),
\begin{equation}\label{eq:response-decay}
 \left|
 \partial_{(\lambda,b)}^\beta\partial_y^\alpha
 Z_{\lambda,b}[H](y)\right|
 \le C_{\alpha,\beta}|H|
 (1+|y|)^{m+2-n-|\alpha|}.
\end{equation}
Here \(C_{\alpha,\beta}\) depends only on
\(n,m,\alpha,\beta\), and \(\mathcal K\).
\end{proposition}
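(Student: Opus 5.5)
Existence has essentially been done just before the statement: \(Z_{\lambda,b}[H]=U_{\lambda,b}\psi[k_{\lambda,b}[H]]\) from \eqref{eq:response-from-sphere} lies in \(\dot H^1(\R^n)\) by the isomorphism \eqref{eq:conformal-energy}. It satisfies \eqref{eq:response-equation} and \eqref{eq:response-orthogonality} by \eqref{eq:scalar-conjugacy}, \eqref{eq:curvature-source-relation} and \eqref{eq:response-moment-transfer}. The plan therefore has three parts: uniqueness, the energy bound, and the weighted pointwise decay \eqref{eq:response-decay} with parameter derivatives.

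\emph{Uniqueness.} Take the difference \(w\) of two solutions in \(\dot H^1\). It solves the homogeneous linearized equation, so by Bianchi--Egnell it lies in \(\operatorname{span}\{J_{\lambda,b,0},\dots,J_{\lambda,b,n}\}\). The conditions \eqref{eq:response-orthogonality} say that \(w/U_{\lambda,b}\) is \(L^2_\gamma\)-orthogonal to \(\mathcal J_{\lambda,b}\), via the transfer \eqref{eq:response-moment-transfer}. Since \(w\) is itself in the span, this gives \(w=0\).

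\emph{Energy bound and parameter derivatives.} From \eqref{eq:spherical-scalar-estimate} and Lemma~\ref{lem:metric-jet-threshold} we get \(\|\psi\|_{H^1_\gamma}\le C|H|\), hence \(\|Z\|_{\dot H^1}\le C|H|\). For \(\beta\neq0\), I would pull everything back to the fixed sphere by \(\Xi_{\lambda,b}\). There the operator \(-\Delta_{g_S}-n\) and the orthogonal complement of the first harmonics are independent of the parameters, and only the tensor \(\Xi^*k_{\lambda,b}[H]\) and the map \(\Xi\) vary. Differentiating the equation in \((\lambda,b)\), Lemma~\ref{lem:metric-jet-threshold} bounds \(\partial^\beta k\) in \(H^1_\gamma\) by \(C|H|\). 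The inverse in Lemma~\ref{lem:euclidean-scalar-fredholm} then gives \(H^1\) bounds for \(\partial^\beta\psi\) on the fixed sphere, after projecting off the kernel component created by differentiating the orthogonality constraint; that component is controlled explicitly. Composing back with \(\Xi_{\lambda,b}\) and multiplying by \(U_{\lambda,b}\) gives \(\dot H^1\) bounds for \(\partial^\beta Z\). Alternatively, one can differentiate \eqref{eq:response-equation} in \(y\)-coordinates: \(\partial^\beta Z\) solves the same linear equation with sources \(\partial^\beta F\) plus lower-order terms \(\partial^{\beta'}(U^{p-1})\,\partial^{\beta''}Z\). These sources obey the same decay as \(F\), and the orthogonality conditions differentiate into explicit moment conditions.

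\emph{Pointwise decay.} Local elliptic regularity on \(B_{32}\) gives \(|\partial^\alpha Z|\le C|H|\) for \(|\alpha|\le2\), using the \(L^{2n/(n-2)}\) bound and the smooth source \(F\), which is bounded by \(C|H|\) there. Near infinity I would not work on the sphere, since \(k\) is only \(H^1\). Instead, on \(|y|\ge4\), I would use the Green representation of \(-\Delta\) for \(-\Delta Z=n(n+2)U^{p-1}Z-F\). Here \(|F|\le C|H|(1+|y|)^{m-n}\), because \(H\) is homogeneous of degree \(m\) and \(U\sim|y|^{2-n}\), and \(U^{p-1}\le C(1+|y|)^{-4}\). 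Starting from the a priori bound \(Z\in L^{2n/(n-2)}\), I would bootstrap with the convolution estimate \(\int|y-z|^{2-n}(1+|z|)^{-a}\,dz\lesssim(1+|y|)^{2-a}\) for \(n>a>2\). Each pass improves the decay by 2 (the potential term) until it saturates at \((1+|y|)^{m+2-n}\), which is forced by the source because \(m+2-n<0\).

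The representation also contains a harmonic term. It is of the form \(c|y|^{2-n}+\cdots\), decaying at the faster rate \(|y|^{2-n}\le|y|^{m+2-n}\), so it is harmless. I would fix it with Kelvin inversion, or with the Green representation \(Z=\int\Gamma(y-z)(\cdots)\,dz\), which is valid for \(\dot H^1\) solutions whose source decays this way. Derivative bounds with \(|\alpha|\le2\) then follow by scaled interior Schauder estimates on balls \(B_{|y|/2}(y)\), which gain \(|y|^{-|\alpha|}\). The same scheme applied to the differentiated equations handles \(\beta\).

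The main obstacle I expect is the bootstrap near infinity uniformly in the parameters. The initial \(L^{2n/(n-2)}\) decay is weak. Moreover, when \(m+2-n\) is close to the threshold the convolution exponents approach the borderline cases, although \(m<(n-2)/2\) keeps us strictly away from \(a=n\). I would first try a direct barrier comparison on the exterior of \(B_{R}\) with supersolution \(A|H||y|^{m+2-n}\). This works because the potential \(U^{p-1}\lesssim|y|^{-4}\) is small there, and \(\Delta|y|^{m+2-n}=(m+2-n)m|y|^{m-n}\) has the right sign to dominate \(|F|\) for \(0<m<n-2\). The maximum principle on the exterior, together with the vanishing of \(Z\) at infinity in the \(\dot H^1\) sense, then closes the argument.
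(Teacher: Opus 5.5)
Your proposal is correct and is essentially the paper's proof. Uniqueness comes from the kernel being spanned by the Jacobi fields: the paper gets this from Lemma~\ref{lem:euclidean-scalar-fredholm} through \eqref{eq:scalar-conjugacy} rather than from Bianchi--Egnell, but it is the same fact. Decay comes from the exterior barrier $A|H||y|^{m+2-n}$, Kato's inequality, and a maximum principle made available by the smallness of $U_{\lambda,b}^{p-1}$ at infinity. Parameter derivatives come from passing to the fixed frame $x=(y-b)/\lambda$ (your pullback by $\Xi_{\lambda,b}$), where each $\partial_{(\lambda,b)}^\beta W_{\lambda,b}$ solves the same fixed problem with source $F_{1,0}[\partial_{(\lambda,b)}^\beta T_{\lambda,b}]$ of the same decay.

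The paper makes your ``vanishing at infinity'' step precise by adding $\varepsilon|y|^{-(n-2)/4}$ to the barrier, which dominates $|Z|$ on large spheres thanks to the $\dot H^1$-derived decay. It then returns to fixed $y$ by the chain rule, using up to four spatial derivatives of $W_{\lambda,b}$. Note also that in the fixed frame the orthogonality conditions do not depend on $(\lambda,b)$, so no kernel component arises when you differentiate.
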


\par
\begin{proof}
\emph{Step 1. Existence, uniqueness, and energy.}
Write \(w=Z_{\lambda,b}[H]\). Existence follows from
\eqref{eq:response-from-sphere}.
If \(w_1,w_2\in\dot H^1(\R^n)\) solve
\eqref{eq:response-equation}  and  \eqref{eq:response-orthogonality}, then
\((w_1-w_2)/U_{\lambda,b}\in H^1_{\gamma_{\lambda,b}}\) by
\eqref{eq:conformal-energy}.
{Combining \eqref{eq:scalar-conjugacy} and
\eqref{eq:response-moment-transfer}, and then applying
Lemma  \ref{lem:euclidean-scalar-fredholm}, we obtain}
 {
\[
 \frac{w_1-w_2}{U_{\lambda,b}}
 \in\ker(-\Delta_{\gamma_{\lambda,b}}-n)
       \cap\mathcal J_{\lambda,b}^{\perp}
 =\mathcal J_{\lambda,b}\cap\mathcal J_{\lambda,b}^{\perp}
 =\{0\}.
\]
}
By \eqref{eq:spherical-scalar-estimate} and
Lemma  \ref{lem:metric-jet-threshold},
\[
 \|\psi[k_{\lambda,b}[H]]\|_{H^1_{\gamma_{\lambda,b}}}
 \le C\|k_{\lambda,b}[H]\|_{H^1_{\gamma_{\lambda,b}}}
 \le C|H|.
\]
Taking
\(\phi=2^{-(n-2)/2}\psi[k_{\lambda,b}[H]]\) in
\eqref{eq:conformal-energy}, we obtain \(\|\nabla w\|_{L^2(\R^n)}\le C|H|\).

\emph{Step 2. Spatial decay.}
Direct differentiation of
\eqref{eq:metric-source} gives
\begin{equation}\label{eq:response-source-pointwise}
 |\partial_y^\alpha F_{\lambda,b}[H](y)|
 \le C_\alpha|H|(1+|y|)^{m-n-|\alpha|},
 \qquad |\alpha|\le2.
\end{equation}
Set \(L_{\lambda,b}=\Delta+n(n+2)U_{\lambda,b}^{p-1}\).
Choose \(R_0=R_0(n,m,\mathcal K)\ge8\) large enough that, for
\(r=|y|\ge R_0\),
\begin{equation}\label{eq:response-barrier}
 \begin{aligned}
 -L_{\lambda,b}r^{m+2-n}
 &=m(n-2-m)r^{m-n}
 -n(n+2)U_{\lambda,b}^{p-1}r^{m+2-n}\\
 &=m(n-2-m)r^{m-n}+O(r^{m-n-2})\\
 &\ge\frac12m(n-2-m)r^{m-n}.
 \end{aligned}
\end{equation}
After increasing \(R_0\),
\[
 -L_{\lambda,b}r^{-\frac{n-2}{4}}
 =\frac{3(n-2)^2}{16}r^{-\frac{n+6}{4}}
 +O(r^{-\frac{n+14}{4}})
 \ge c r^{-\frac{n+6}{4}}.
\]
{Using the bound in \(\dot H^1\),
\eqref{eq:response-source-pointwise}, and a scaled local estimate, we
obtain}
 {
\[
 R^{\frac{n-2}{4}}\sup_{A_{R,2R}}|w|
 \le{}C R^{-\frac{n-2}{4}}
 \norm{w}_{L^{2n/(n-2)}(A_{R/2,4R})}
 {}+C|H|R^{m+2-n+\frac{n-2}{4}}
 \to0.
\]
}
{By Kato's inequality,}
\[
 -L_{\lambda,b}|w|\le|F_{\lambda,b}[H]|
 \quad\text{in }\mathcal D'(\R^n\setminus B_{R_0}).
\]
For \(\varepsilon>0\), set
\[
 \Psi_\varepsilon(r)
 =A|H|r^{m+2-n}+\varepsilon r^{-\frac{n-2}{4}}.
\]
By \eqref{eq:response-barrier} and
\eqref{eq:response-source-pointwise}, choose \(A\), independently of
\((\lambda,b)\in\mathcal K\) and \(H\), so that
\[
 A|H|R_0^{m+2-n}\ge\sup_{\partial B_{R_0}}|w|,
 \qquad
 -A|H|L_{\lambda,b}(r^{m+2-n})\ge|F_{\lambda,b}[H]|
 \quad\text{for }r\ge R_0.
\]
Then, for every fixed \(\varepsilon>0\) and all sufficiently large \(R\),
\[
 \Psi_\varepsilon\ge|w|
 \quad\text{on }\partial B_{R_0}\cup\partial B_R,
 \qquad
 -L_{\lambda,b}\Psi_\varepsilon\ge|F_{\lambda,b}[H]|
 \quad\text{on }A_{R_0,R}.
\]
{Using the positive supersolution \(r^{-(n-2)/4}\), we apply
the maximum principle on \(A_{R_0,R}\) and obtain
\(|w|\le\Psi_\varepsilon\).}
Letting
\(R\to\infty\) and then \(\varepsilon\downarrow0\) yields
\begin{equation}\label{eq:response-spatial-decay}
 |w(y)|\le C|H||y|^{m+2-n},
 \qquad |y|\ge R_0.
\end{equation}
{The spatial derivative bounds in \eqref{eq:response-decay}
follow from scaled interior estimates on \(A_{r/2,2r}\).}

\emph{Step 3. Parameter derivatives.}
For this part of the proof, use the fixed coordinates and polynomial
matrix
\[
 x=\frac{y-b}{\lambda},\qquad
 T_{\lambda,b}(x)=H(b+\lambda x),
\]
and set
\[
 W_{\lambda,b}(x)
 =\lambda^{\frac{n-2}{2}}Z_{\lambda,b}[H](b+\lambda x).
\]
The matrix \(T_{\lambda,b}\) is trace free and has degree at most \(m\).
Its coefficients and their first two parameter derivatives are bounded
by \(C|H|\) on a fixed compact neighborhood of \(\mathcal K\).
The upper bound calculation in Lemma  \ref{lem:metric-jet-threshold}
uses only polynomial growth. {Repeating that calculation, we
obtain}
\[
 \bigl\|\pi^*k_{1,0}[\partial_{(\lambda,b)}^\beta T_{\lambda,b}]
 \bigr\|_{H^1(g_{S})}
 \le C_\beta|H|,\qquad |\beta|\le2.
\]
Indeed, in the unit Euclidean chart this tensor has components
\[
 \frac4{(1+|x|^2)^2}
 \partial_{(\lambda,b)}^\beta T_{\lambda,b,ij}(x),
\]
and its gradient energy at infinity is bounded by
\(C|H|^2\int_1^\infty r^{2m+1-n}\,\dd r\).

Changing variables in the equation and the moments yields
 {
\[
 \begin{aligned}
 \bigl(\Delta_x+n(n+2)U_{1,0}^{p-1}\bigr)W_{\lambda,b}
 &=F_{1,0}[T_{\lambda,b}],\\
 \int_{\R^n}U_{1,0}^{p-1}W_{\lambda,b}J_{1,0,l}\,\dd x&=0,
 \qquad 0\le l\le n.
 \end{aligned}
\]
}
Here \(F_{1,0}[T]\) is the expression in \eqref{eq:metric-source},
with derivatives taken in \(x\).
The compatibility identity
\eqref{eq:spherical-curvature-compatibility} applies to every polynomial with zero trace of degree at most \(m<(n-2)/2\), because its
compactified tensor lies in \(H^1\). The operator, its kernel, and
these moments are now independent of \((\lambda,b)\).

The construction \eqref{eq:spherical-scalar-equation} defines a fixed
linear solution operator on this polynomial family. Its boundedness
follows from \eqref{eq:spherical-scalar-estimate} and
\eqref{eq:conformal-energy}  through  \eqref{eq:scalar-conjugacy}.
Since \(T_{\lambda,b}\) depends \(C^2\) on the parameters, so does
\(W_{\lambda,b}\) in \(\dot H^1\), with
\[
 \|\partial_{(\lambda,b)}^\beta W_{\lambda,b}\|_{\dot H^1}
 \le C_\beta|H|,\qquad |\beta|\le2.
\]
Differentiating the fixed equation and its moments gives, for
\(|\beta|\le2\),
 {
\[
 \begin{aligned}
 \bigl(\Delta_x+n(n+2)U_{1,0}^{p-1}\bigr)
 \partial_{(\lambda,b)}^\beta W_{\lambda,b}
 &=F_{1,0}[\partial_{(\lambda,b)}^\beta T_{\lambda,b}],\\
 \int_{\R^n}U_{1,0}^{p-1}
 \partial_{(\lambda,b)}^\beta W_{\lambda,b}J_{1,0,l}\,\dd x&=0,
 \qquad 0\le l\le n.
 \end{aligned}
\]
}
Differentiating \eqref{eq:metric-source} gives
\[
 \left|\partial_x^\alpha
 F_{1,0}[\partial_{(\lambda,b)}^\beta T_{\lambda,b}](x)\right|
 \le C_{\alpha,\beta}|H|(1+|x|)^{m-n-|\alpha|},
 \qquad |\alpha|\le3,\quad |\beta|\le2.
\]
{Together with the preceding energy estimate, this source estimate}
allows the comparison argument for \eqref{eq:response-spatial-decay}
to be applied to each parameter derivative.
{Applying scaled interior estimates, we obtain}
\[
 |\partial_x^\alpha\partial_{(\lambda,b)}^\beta W_{\lambda,b}(x)|
 \le C_{\alpha,\beta}|H|(1+|x|)^{m+2-n-|\alpha|},
 \qquad |\alpha|\le4,\quad |\beta|\le2.
\]

Finally,
\(Z_{\lambda,b}[H](y)=\lambda^{-(n-2)/2}W_{\lambda,b}(x)\).
At fixed \(y\), the chain rule reads
\[
 \begin{aligned}
 \partial_\lambda Z_{\lambda,b}[H](y)
 &=\lambda^{-\frac{n-2}{2}}
 \left[\partial_\lambda W_{\lambda,b}
 -\lambda^{-1}\left(\frac{n-2}{2}W_{\lambda,b}
                   +x\cdot\nabla_xW_{\lambda,b}\right)\right](x),\\
 \partial_{b^i}Z_{\lambda,b}[H](y)
 &=\lambda^{-\frac{n-2}{2}}
 \left[\partial_{b^i}W_{\lambda,b}
       -\lambda^{-1}\partial_{x_i}W_{\lambda,b}\right](x).
 \end{aligned}
\]
For \(|\alpha|,|\beta|\le2\), the chain rule uses at most four spatial
derivatives of \(W_{\lambda,b}\). Each factor \(x\) accompanies one
spatial derivative, so
\[
 (1+|x|)^\nu
(1+|x|)^{m+2-n-|\alpha|-\nu}
=(1+|x|)^{m+2-n-|\alpha|},
\qquad 0\le\nu\le|\beta|\le2.
\]
Since \(1+|x|\asymp1+|y|\) uniformly on \(\mathcal K\), this proves
\eqref{eq:response-decay}.
\end{proof}

The function \(Z_{\lambda,b}[H]\) is defined on \(\R^n\);
no boundary condition is imposed on \(A_{\sigma_j,R_j}\).
For a centered bubble, the correction \(Z_{\lambda,0}[H]\) agrees with the specified
polynomial correction in the overlapping range:
 {
\[
 \psi_{m,\lambda}(H)=Z_{\lambda,0}[H],
 \qquad H\in\mathcal V_m,\quad 4\le m<(n-2)/2.
\]
}
Indeed, \(H(y)y=0\) reduces \eqref{eq:response-equation} to
the scaled form of \eqref{eq:upper-polynomial-correction}. By the decay in \eqref{eq:corrector-pointwise-estimate},
{
\[
 \int_{\{|y|>1\}}|\nabla\psi_{m,\lambda}(H)|^2\,\dd y
 \le C|H|^2\int_1^\infty r^{2m+1-n}\,\dd r<\infty.
\]
}
The spherical harmonics of degrees zero and one are absent in
 \(\psi_{m,\lambda}(H)\); since the centered Jacobi fields have these
angular degrees, their weighted moments vanish.
{The identity follows from uniqueness in
Proposition  \ref{prop:euclidean-response}.} The
higher degree polynomial corrections used in the upper estimate remain
separate from the finite energy construction above.

\subsection{The second variation and positivity}

{The spherical solution \(\psi[k]\) removes the scalar curvature
variation before we evaluate the second variation.}
Fix \((\lambda,b)\in\mathcal K\), write \(\gamma=\gamma_{\lambda,b}\),
and, for a symmetric covariant tensor \(k\) of rank \(2\) in
\(H^1_\gamma\), set
\begin{equation}\label{eq:scalar-corrected-metric}
 \bar k=k+\frac4{n-2}\psi[k]\gamma.
\end{equation}
{For a scalar function \(f\), the scalar curvature linearization
in the direction \(f\gamma\) is}
\[
 D\!\operatorname{Scal}_\gamma[f\gamma]=(n-1)(-\Delta_\gamma-n)f.
\]
Thus \eqref{eq:spherical-scalar-equation} and
\(4(n-1)c(n)/(n-2)=1\) imply
\[
 D\!\operatorname{Scal}_\gamma[\bar k]
 =D\!\operatorname{Scal}_\gamma[k]
 +\frac{4(n-1)}{n-2}(-\Delta_\gamma-n)\psi[k]=0.
\]

We remove the divergence of \(\bar k\) by solving an equation for a vector
field \(X\). Write \(X_j=\sum_{k=1}^n\gamma_{jk}X^k\). The Lie derivative is
\[
 (\mathcal L_X\gamma)_{ij}=\nabla^\gamma_iX_j+\nabla^\gamma_jX_i.
\]
Killing vector fields are those for which \(\mathcal L_X\gamma=0\).
Consider the equation
\begin{equation}\label{eq:divergence-correction-equation}
 \sum_{i=1}^n(\nabla^\gamma)^i
   (\nabla^\gamma_iX_j+\nabla^\gamma_jX_i)
 =\sum_{i=1}^n(\nabla^\gamma)^i\bar k_{ij},
 \qquad 1\le j\le n,
\end{equation}
and, for a solution \(X\), define
\begin{equation}\label{eq:corrected-divergence-free-tensor}
\widetilde k_{ij}
=\bar k_{ij}-\nabla^\gamma_iX_j-\nabla^\gamma_jX_i.
\end{equation}

\begin{lemma}\label{lem:divergence-correction}
Equation  \eqref{eq:divergence-correction-equation} has a unique
\(H^2_\gamma\) solution orthogonal in \(L^2_\gamma\) to the Killing vector
fields. The tensor \(\widetilde k\) satisfies
\[
 \operatorname{div}_\gamma \widetilde k=0,\qquad \operatorname{tr}_\gamma \widetilde k=0,
\]
and
\begin{equation}\label{eq:divergence-correction-bound}
 \norm{X}_{H^2_\gamma}+\norm{\widetilde k}_{H^1_\gamma}
 \le C\norm{k}_{H^1_\gamma},
\end{equation}
with \(C\) uniform for \((\lambda,b)\in\mathcal K\).
\end{lemma}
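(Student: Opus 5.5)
As in Lemma~\ref{lem:euclidean-scalar-fredholm}, we first pull back by the isometry \(\Xi_{\lambda,b}\) in \eqref{eq:fixed-round-identification}. This reduces everything to the unit round sphere \((\Sph^n,g_S)\). All Sobolev norms are preserved, so uniformity on \(\mathcal K\) will be automatic. On \(g_S\), the operator in \eqref{eq:divergence-correction-equation} is \(P=\diver_\gamma\mathcal L\), i.e.
\[
 (PX)_j=\Delta X_j+\nabla^i\nabla_jX_i
 =\Delta X_j+\nabla_j\diver X+\operatorname{Ric}_{ij}X^i
 =\Delta X_j+\nabla_j\diver X+(n-1)X_j .
\]
Integration by parts gives
\[
 \langle -PX,X\rangle_{L^2}=\tfrac12\|\mathcal L_X\gamma\|_{L^2}^2 .
\]
Hence \(-P\) is a nonnegative, formally self-adjoint, elliptic second-order system, and its \(L^2\) kernel is exactly the Killing fields. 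The kernel is finite dimensional and consists of smooth fields.

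\textbf{Step 1: Fredholm theory and coercivity.} Standard elliptic theory for \(P\) on the closed manifold gives \(P:H^2\to L^2\) and \(P:H^1\to H^{-1}\) Fredholm of index zero. The range of the \(H^1\to H^{-1}\) map is the annihilator of the Killing fields. We will prove the coercivity estimate
\[
 \|X\|_{H^1}\le C\|\mathcal L_X\gamma\|_{L^2}\quad\text{for } X\perp\text{Killing}.
\]
One route is a compactness argument: a normalized sequence with \(\mathcal L_X\gamma\to0\) converges strongly (Korn-type inequality \(\|\nabla X\|^2\le C(\|\mathcal L_X\gamma\|^2+\|X\|^2)\), obtained from the Bochner identity above) to a Killing field that is orthogonal to all Killing fields, which is impossible. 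Alternatively, one can decompose into vector spherical harmonics.

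\textbf{Step 2: Solvability.} The right side \(f=\diver_\gamma\bar k\) lies in \(H^{-1}\) with \(\|f\|_{H^{-1}}\le\|\bar k\|_{L^2}\). By \eqref{eq:scalar-corrected-metric} and \eqref{eq:spherical-scalar-estimate}, \(\|\bar k\|_{H^1}\le C\|k\|_{H^1}\). For a Killing field \(Y\),
\[
 \langle \diver\bar k,Y\rangle=-\tfrac12\langle\bar k,\mathcal L_Y\gamma\rangle=0 ,
\]
so \(f\) satisfies the compatibility condition. Lax--Milgram on the orthogonal complement of the Killing fields gives a unique weak solution \(X\in H^1\). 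Since \(\bar k\in H^1\), we have \(f\in L^2\) with \(\|f\|_{L^2}\le C\|\bar k\|_{H^1}\), and elliptic \(L^2\) regularity for \(P\) upgrades this to \(\|X\|_{H^2}\le C(\|f\|_{L^2}+\|X\|_{L^2})\le C\|k\|_{H^1}\). Both weak identities are justified by smooth approximation (Lemma~\ref{lem:euclidean-point-capacity}). Uniqueness holds because any two such solutions differ by a Killing field that is orthogonal to the Killing fields.

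\textbf{Step 3: Properties of \(\widetilde k\).} The divergence condition \(\diver\widetilde k=0\) is exactly \eqref{eq:divergence-correction-equation}, and \(\|\widetilde k\|_{H^1}\le\|\bar k\|_{H^1}+C\|X\|_{H^2}\le C\|k\|_{H^1}\).

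\textbf{Step 4: Trace, the expected main obstacle.} The claim \(\tr\widetilde k=0\) is where the earlier normalizations must enter. First,
\[
 \tr_\gamma\bar k=\tr_\gamma k+\tfrac{4n}{n-2}\psi[k] ,
\]
and \(\tr_\gamma k_{\lambda,b}[H]=\tr H=0\) for the tensors at hand. Second, \(\tr\mathcal L_X\gamma=2\diver X\). So we need \(\tfrac{2n}{n-2}\psi[k]=\diver X\). Set \(\tau=\tr\widetilde k\). The plan is to show that the traceless divergence-free condition is forced by the scalar curvature constraint:
\[
 D\!\operatorname{Scal}_\gamma[\widetilde k]=D\!\operatorname{Scal}_\gamma[\bar k]-D\!\operatorname{Scal}_\gamma[\mathcal L_X\gamma]=0 ,
\]
since \(\operatorname{Scal}\) is constant and therefore diffeomorphism invariant. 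Combined with \(\diver\widetilde k=0\), the formula for \(D\!\operatorname{Scal}\) gives \((-\Delta-(n-1))\tau=0\) weakly in \(H^1\). Because \(n-1\) is not an eigenvalue \(\ell(\ell+n-1)\) of \(-\Delta_{g_S}\) (it falls strictly between \(\ell=0\) and \(\ell=1\)), we get \(\tau=0\). We therefore take \(\widetilde k\) traceless by this argument, rather than by directly matching \(\diver X\) with \(\psi\). We expect this identification, and the weak justification of the diffeomorphism invariance of \(D\!\operatorname{Scal}\) for \(X\in H^2\), to be the delicate point; it is handled by smooth approximation, since both sides are continuous from \(H^2\) vector fields to \(H^{-1}\).
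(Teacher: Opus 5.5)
Your proposal is correct and follows the paper's proof. Both arguments show that $\diver_\gamma\bar k$ annihilates the Killing fields, solve on their orthogonal complement with the $H^2$ estimate, read off $\diver_\gamma\widetilde k=0$ from the equation, and get $\tr_\gamma\widetilde k=0$ from $D\!\operatorname{Scal}_\gamma[\widetilde k]=0$ together with the fact that $n-1$ is not an eigenvalue of $-\Delta$ on the round sphere, with uniformity on $\mathcal K$ coming from the isometry $\Xi_{\lambda,b}$. The only difference is cosmetic: you obtain the solution by Lax--Milgram plus a Korn-type coercivity estimate, where the paper invokes strong ellipticity and the Fredholm alternative.
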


\begin{proof}
Write \(\gamma=\gamma_{\lambda,b}\) and let \(\bar k\) be given by
\eqref{eq:scalar-corrected-metric}. For every Killing vector field \(K\),
integration by parts gives
\[
 \int_{\Sph^n}\sum_{j=1}^n
 \left(\sum_{i=1}^n(\nabla^\gamma)^i\bar k_{ij}\right)K^j\,\dd V_\gamma
 =-\frac12\int_{\Sph^n}\sum_{i,j=1}^n
 \bar k^{ij}(\nabla^\gamma_iK_j+\nabla^\gamma_jK_i)\,\dd V_\gamma=0.
\]
The negative of the operator on the left of
\eqref{eq:divergence-correction-equation} has principal matrix \(|\xi|^2\delta_{ij}+\xi_i\xi_j\)
in a frame orthonormal with respect to \(\gamma\). The operator is strongly elliptic, and its kernel is
the space of Killing vector fields, because
\[
 -\int_{\Sph^n}\sum_{j=1}^nX^j
 \sum_{i=1}^n(\nabla^\gamma)^i
 (\nabla^\gamma_iX_j+\nabla^\gamma_jX_i)\,\dd V_\gamma
 =\frac12\int_{\Sph^n}|\mathcal L_X\gamma|_\gamma^2\,\dd V_\gamma.
\]
{By orthogonality of the source to the Killing fields and the
Fredholm alternative, there is a unique solution orthogonal to this
kernel.  By the elliptic estimate on its orthogonal complement,}
\[
 \norm{X}_{H^2_\gamma}
 \le C\norm{\operatorname{div}_\gamma\bar k}_{L^2_\gamma}
 \le C\norm{\bar k}_{H^1_\gamma}
 \le C\norm{k}_{H^1_\gamma},
\]
where the last inequality follows from
\eqref{eq:spherical-scalar-estimate}. The metrics \(\gamma_{\lambda,b}\)
are isometric to the same round metric, so the constants are uniform.

{It follows from \eqref{eq:divergence-correction-equation}
that \(\operatorname{div}_\gamma \widetilde k=0\).} Also
\(D\!\operatorname{Scal}_\gamma[\mathcal L_X\gamma]=0\), since \(\operatorname{Scal}_\gamma\) is constant.
Thus \(D\!\operatorname{Scal}_\gamma[\widetilde k]=0\), and {from the scalar curvature
linearization we obtain}
\[
 (-\Delta_\gamma-(n-1))\operatorname{tr}_\gamma \widetilde k=0.
\]
{By the spherical spectrum,}
\[
 n-1\notin\{\ell(\ell+n-1):\ell=0,1,2,\ldots\}
 \quad\to\quad \operatorname{tr}_\gamma \widetilde k=0.
\]
The estimate for \(\widetilde k\) follows from
\eqref{eq:corrected-divergence-free-tensor}.
All identities extend from smooth tensors to \(H^1_\gamma\) tensors
by density.
\end{proof}
In particular, \(\widetilde k\) depends linearly and continuously on \(k\).
With the scalar function fixed by \eqref{eq:spherical-scalar-equation},
we compute the metric second variation along \(\bar k\).
For a smooth metric
\(\widehat\gamma\) on \(\Sph^n\), define
\[
 \mathcal E(\widehat\gamma)
 =\int_{\Sph^n}\operatorname{Scal}_{\widehat\gamma}\,\dd V_{\widehat\gamma}
 -(n-1)(n-2)\operatorname{Vol}_{\widehat\gamma}(\Sph^n).
\]
The round metric is a critical point: \(D\mathcal E_\gamma=0\).
For a smooth symmetric tensor \(k\) of rank \(2\), define
 {
\begin{equation}\label{eq:metric-second-variation-form}
  Q_{\lambda,b}[k]
 :=-\frac{c(n)}4D^2\mathcal E_\gamma[\bar k,\bar k].
\end{equation}
}
The following identity uses Berger's second variation
formula \cite[formula  (4.2)]{BergerVariation}; we use the detailed form
recorded by Schoen \cite[Section  1]{SchoenVariational}.

\begin{proposition}\label{prop:euclidean-quadratic-form-kernel}
For every smooth symmetric tensor \(k\) of rank \(2\) on \(\Sph^n\),
 {
\begin{equation}\label{eq:general-euclidean-metric-form}
  Q_{\lambda,b}[k]
 =\frac{c(n)}8\int_{\Sph^n}
 \left(
 |\nabla^{\gamma_{\lambda,b}}\widetilde k|_{\gamma_{\lambda,b}}^2
 +2|\widetilde k|_{\gamma_{\lambda,b}}^2
 \right)\,\dd V_{\gamma_{\lambda,b}}.
\end{equation}
}
{The right side defines a continuous extension of
\(Q_{\lambda,b}\) to \(H^1_{\gamma_{\lambda,b}}\) symmetric
tensors of rank \(2\).} The form is
nonnegative and vanishes exactly when
\[
 k=\mathcal L_X\gamma_{\lambda,b}+\varphi\gamma_{\lambda,b}
\]
for some \(X\in H^2_{\gamma_{\lambda,b}}\) and
\(\varphi\in H^1_{\gamma_{\lambda,b}}\).
\end{proposition}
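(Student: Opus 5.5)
The plan is to reduce \(D^2\mathcal E_\gamma[\bar k,\bar k]\) to \(D^2\mathcal E_\gamma[\widetilde k,\widetilde k]\) using gauge and conformal invariance. Then evaluate the latter with Berger's formula on transverse-traceless tensors over the round sphere.

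\emph{Step 1: removing the gauge term.} Since \(\bar k=\widetilde k+\mathcal L_X\gamma\) by \eqref{eq:corrected-divergence-free-tensor}, we use diffeomorphism invariance of \(\mathcal E\). Because \(\gamma\) is a critical point, \(D^2\mathcal E_\gamma[\mathcal L_X\gamma,\ell]\) equals the derivative along the flow of \(X\) of \(D\mathcal E_{\phi_t^*\gamma}[\ell]\). That derivative reduces to \(D\mathcal E_\gamma[\mathcal L_X\ell]=0\) plus terms involving \(D\mathcal E\), all of which vanish at \(\gamma\). Hence \(D^2\mathcal E_\gamma[\mathcal L_X\gamma,\cdot]=0\).

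\emph{Step 2: the second variation on transverse-traceless tensors.} We are left with \(D^2\mathcal E_\gamma[\widetilde k,\widetilde k]\), where \(\widetilde k\) is divergence free and trace free by Lemma \ref{lem:divergence-correction}. Berger's formula, in the form recorded by Schoen, evaluated on an Einstein metric with \(\operatorname{Ric}=(n-1)\gamma\), gives for transverse-traceless \(h\)
\[
 D^2\mathcal E_\gamma[h,h]=-\frac12\int_{\Sph^n}\bigl(|\nabla h|^2-2\langle \mathring R h,h\rangle\bigr)\,\dd V_\gamma .
\]
The volume term contributes nothing because \(\operatorname{tr}h=0\), and the constant \((n-1)(n-2)\) is what makes \(\gamma\) critical. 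On the unit sphere the curvature gives \(\langle\mathring R h,h\rangle=|\operatorname{tr}h|^2-|h|^2=-|h|^2\). This yields \(-\tfrac12\int(|\nabla h|^2+2|h|^2)\), and multiplying by \(-c(n)/4\) gives \eqref{eq:general-euclidean-metric-form}.

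\emph{Main obstacle: sign and normalization bookkeeping.} The step needing the most care is tracking signs and normalizations in Berger's formula. In particular, the Lichnerowicz term on the sphere must produce exactly \(+2|\widetilde k|^2\), and the Ricci-dependent terms must cancel against the volume term. I would verify this by checking the formula on a trace-free TT example, or by comparing with the conformal Killing kernel. An alternative is to derive the \(\widetilde k\) formula directly from the standard expansion of \(\operatorname{Scal}\) and \(\dd V\) to second order, integrating by parts with \(\operatorname{div}\widetilde k=0\).

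\emph{Continuity and extension.} Continuity on \(H^1\) follows from \eqref{eq:divergence-correction-bound}, since \(k\mapsto\widetilde k\) is bounded linear from \(H^1\) to \(H^1\). This justifies extending the identity by density of smooth tensors.

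\emph{Nonnegativity and the kernel.} Nonnegativity is immediate from the formula. For the kernel, \(Q=0\) forces \(\widetilde k=0\), so \(\bar k=\mathcal L_X\gamma\) and therefore \(k=\mathcal L_X\gamma-\tfrac{4}{n-2}\psi[k]\gamma\), which is of the stated form with \(\varphi=-\tfrac4{n-2}\psi[k]\in H^1\).

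Conversely, suppose \(k=\mathcal L_X\gamma+\varphi\gamma\). Then \(D\operatorname{Scal}_\gamma[k]=(n-1)(-\Delta_\gamma-n)\varphi\). Using \(4(n-1)c(n)/(n-2)=1\), this gives \(\psi[k]=-\tfrac{n-2}{4}\varphi\) up to its \(\mathcal J_{\lambda,b}\) component. That component is itself a conformal Killing potential, since \(\mathcal L_{\nabla Y}\gamma=-2Y\gamma\) by \eqref{eq:jacobi-hessian}. Hence \(\bar k\) is a Lie derivative \(\mathcal L_{X'}\gamma\).

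By uniqueness in Lemma \ref{lem:divergence-correction}, the corrector equation \eqref{eq:divergence-correction-equation} then returns \(X'\) modulo Killing fields, so \(\widetilde k=0\) and \(Q=0\).
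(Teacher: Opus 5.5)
Your proposal is correct and follows the paper's proof essentially step for step. The matching steps are: removal of $\mathcal L_X\gamma$ by diffeomorphism invariance and $D\mathcal E_\gamma=0$; Berger's formula on the transverse-traceless tensor $\widetilde k$, with curvature term $-|\widetilde k|^2$; continuity from \eqref{eq:divergence-correction-bound}; and the kernel, via $\widetilde k=0$ in one direction and, in the other, first spherical harmonics as conformal Killing potentials plus uniqueness in Lemma~\ref{lem:divergence-correction}. One bookkeeping caveat concerns your remark that the volume term ``contributes nothing'': this holds only if you compute the Hessian along a volume-preserving path such as $\gamma\exp(t\gamma^{-1}h)$, which is legitimate since $\gamma$ is critical. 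Along the linear path $\gamma+th$, the second variation of volume is $-\tfrac12\int|h|^2\,\dd V_\gamma$, and after multiplication by $-(n-1)(n-2)$ it cancels the extra $-\tfrac{(n-1)(n-2)}{2}\int|h|^2\,\dd V_\gamma$ produced by $\int\operatorname{Scal}\,\dd V$. Either way, your displayed value of $D^2\mathcal E_\gamma[h,h]$ is right.
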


\begin{proof}
Write \(\gamma=\gamma_{\lambda,b}\). Suppose first that \(k\) is smooth.
{By elliptic regularity, \(\psi[k]\), \(X\), and \(\widetilde k\) are
smooth.}
 For every smooth symmetric tensor \(\ell\) of rank \(2\), diffeomorphism invariance
of \(\mathcal E\) and \(D\mathcal E_\gamma=0\) imply
 {
\[
 D^2\mathcal E_\gamma[\mathcal L_X\gamma,\ell]=0.
\]
}
Since \(\bar k=\widetilde k+\mathcal L_X\gamma\),
\[
 D^2\mathcal E_\gamma[\bar k,\bar k]
 =D^2\mathcal E_\gamma[\widetilde k,\widetilde k].
\]
{For a transverse and traceless tensor \(h\) on the unit round sphere,
that is, \(\operatorname{div}_\gamma h=0\) and
\(\operatorname{tr}_\gamma h=0\), the cited second variation formula
yields}
\[
 D^2\mathcal E_\gamma[h,h]
 =-\frac12\int_{\Sph^n}
 \left(|\nabla^\gamma h|_\gamma^2+2|h|_\gamma^2\right)\,\dd V_\gamma.
\]
{In a frame orthonormal with respect to \(\gamma\), the curvature term is
determined by}
\[
 \sum_{k,l=1}^n
 (\delta_{ij}\delta_{kl}-\delta_{il}\delta_{kj})h_{kl}=-h_{ij}.
\]
Applying these identities with \(h=\widetilde k\) and substituting in
\eqref{eq:metric-second-variation-form} proves
\eqref{eq:general-euclidean-metric-form}.

The integral vanishes exactly when \(\widetilde k=0\). In that case
\[
 k=\mathcal L_X\gamma-\frac4{n-2}\psi[k]\gamma.
\]
Conversely, suppose \(k=\mathcal L_Y\gamma+\varphi\gamma\).
Then \(\bar k=\mathcal L_Y\gamma+f\gamma\), where
\(f=\varphi+4\psi[k]/(n-2)\). Since \(D\!\operatorname{Scal}_\gamma[\bar k]=0\),
\[
 (-\Delta_\gamma-n)f=0,\qquad
 \nabla_\gamma^2 f=-f\gamma,\qquad
 f\gamma=-\frac12\mathcal L_{\nabla^\gamma f}\gamma.
\]
Thus
\[
 \bar k=\mathcal L_{Y-\frac12\nabla^\gamma f}\gamma.
\]
{By uniqueness in Lemma  \ref{lem:divergence-correction},
\(\widetilde k=0\). If \(k_\nu\to k\) in \(H^1_\gamma\), then
\eqref{eq:divergence-correction-bound} gives}
\[
 \|\widetilde k_\nu-\widetilde k\|_{H^1_\gamma}
 \le C\|k_\nu-k\|_{H^1_\gamma}\to0.
\]
The energy formula therefore extends continuously to \(H^1_\gamma\).
The kernel characterization extends as well, since
\((-\Delta_\gamma-n)f=0\) makes \(f\) smooth by elliptic regularity.
\end{proof}

For \(2\le m<(n-2)/2\) and \(H\in\mathcal V_m\), let
\(\widetilde k_{\lambda,b}[H]\) be the tensor obtained from
\(k_{\lambda,b}[H]\) by the construction above with background metric
\(\gamma_{\lambda,b}\).
By \eqref{eq:response-from-sphere} and
Lemma  \ref{lem:divergence-correction},
\[
 \begin{gathered}
 k_{\lambda,b}[H]
 +\frac4{n-2}\frac{Z_{\lambda,b}[H]}{U_{\lambda,b}}\gamma_{\lambda,b}
 =\widetilde k_{\lambda,b}[H]+\mathcal L_X\gamma_{\lambda,b},\\
 \tr_{\gamma_{\lambda,b}}\widetilde k_{\lambda,b}[H]=0,\qquad
 \diver_{\gamma_{\lambda,b}}\widetilde k_{\lambda,b}[H]=0.
 \end{gathered}
\]
The quadratic form on the polynomial coefficients is
\begin{equation}\label{eq:metric-quadratic-form}
 \mathcal Q_{\lambda,b}(H,H)
 =\frac{c(n)}8\int_{\Sph^n}
 \left(
 |\nabla^{\gamma_{\lambda,b}}\widetilde k_{\lambda,b}[H]|_{\gamma_{\lambda,b}}^2
 +2|\widetilde k_{\lambda,b}[H]|_{\gamma_{\lambda,b}}^2
 \right)\dd V_{\gamma_{\lambda,b}}.
\end{equation}
Thus
 \(\mathcal Q_{\lambda,b}(H,H)
= Q_{\lambda,b}[k_{\lambda,b}[H]]\), where square brackets
denote the general tensor form in
Proposition  \ref{prop:euclidean-quadratic-form-kernel}.

To prove strict positivity on \(\mathcal V_m\), we must show
\[
 \mathcal Q_{\lambda,b}(H,H)=0\quad\to\quad H=0.
\]
\begin{proposition}\label{prop:euclidean-strict-positivity}
For every integer \(m\) with \(2\le m<\frac{n-2}{2}\), there is
\(c_m=c_m(n,m,\mathcal K)>0\) such that
\begin{equation}\label{eq:metric-quadratic-lower-bound}
 \mathcal Q_{\lambda,b}(H,H)
 \ge c_m|H|^2
\end{equation}
for every \(H\in\mathcal V_m\) and every
\((\lambda,b)\in\mathcal K\).
\end{proposition}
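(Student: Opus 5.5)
Since \(\mathcal V_m\) is finite dimensional and \(\mathcal K\) is compact, the plan is to reduce \eqref{eq:metric-quadratic-lower-bound} to pointwise strict positivity plus continuity. The map \(H\mapsto k_{\lambda,b}[H]\) is linear, and by Lemma~\ref{lem:metric-jet-threshold} (with \(|\beta|\le1\)) it is bounded and depends continuously on \((\lambda,b)\) in \(H^1_{\gamma_{\lambda,b}}\). Pulling back by the isometry \(\Xi_{\lambda,b}\) of \eqref{eq:fixed-round-identification}, every quantity becomes a tensor on the fixed round sphere \((\Sph^n,g_S)\). The construction \(k\mapsto\widetilde k\) (Lemma~\ref{lem:divergence-correction}) is then a bounded linear operator on \(H^1(g_S)\) that does not depend on \((\lambda,b)\). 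Hence \((\lambda,b,H)\mapsto\mathcal Q_{\lambda,b}(H,H)\) is continuous on \(\mathcal K\times\{|H|=1\}\), a compact set. Once we show \(\mathcal Q_{\lambda,b}(H,H)>0\) for each \(H\ne0\) and each \((\lambda,b)\), the minimum \(c_m\) is positive and homogeneity finishes the proof.

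For strict positivity, Proposition~\ref{prop:euclidean-quadratic-form-kernel} shows that vanishing forces \(k_{\lambda,b}[H]=\mathcal L_X\gamma+\varphi\gamma\) with \(X\in H^2_\gamma\) and \(\varphi\in H^1_\gamma\). Since \(\gamma=4U_{\lambda,b}^{4/(n-2)}\delta\), we can rewrite this in Euclidean terms. With \(\Omega=4U_{\lambda,b}^{4/(n-2)}\) and \(X\) now a Euclidean vector field, we have \(\mathcal L_X\gamma=\Omega\,\mathcal L_X\delta+X(\Omega)\delta\). So on \(\R^n\) the relation reads
\[
 H_{ij}=\partial_iX^j+\partial_jX^i+f\,\delta_{ij}
\]
for some function \(f\). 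Taking the trace, which vanishes by \eqref{eq:metric-jet-conditions}, gives \(f=-\tfrac2n\diver X\). Thus \(H\) equals the conformal Killing operator applied to \(X\), \(H=\mathcal D X\).

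The next step is to exploit homogeneity. Both sides are analytic, since \(X\) solves an elliptic system with polynomial data away from infinity. We project onto the homogeneous degree-\((m+1)\) component of \(X\), or equivalently use the scaling \(y\mapsto ty\) together with the invariance of the relation, to arrange \(H=\mathcal DX\) with \(X\) a homogeneous polynomial vector field of degree \(m+1\). Now apply the radial gauge condition \(H(y)y=0\) from \eqref{eq:metric-jet-conditions}. This gives
\[
 \sum_j y^j(\partial_iX^j+\partial_jX^i)-\tfrac2n(\diver X)y^i=0,
\]
that is, \(\partial_i(X\cdot y)-X^i+mX^i-\tfrac2n(\diver X)y^i=0\). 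This is a first-order system in the homogeneous \(X\). Decompose \(X=a(y)y+X^T\), where \(X^T\) is tangent to spheres. Then this system together with \(\mathcal DX\) vanishing in the needed directions should force \(X\) into the conformal Killing algebra. For \(m\ge2\) that algebra contains no homogeneous fields of degree \(m+1\ge3\), so \(X=0\) and hence \(H=0\).

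\textbf{Main obstacle.} The delicate point is justifying the reduction to a polynomial \(X\). The vector field \(X\) only lies in \(H^2_\gamma\), a space that allows growth at infinity compatible with degree \(m<(n-2)/2\). If the homogeneity projection proves awkward, the fallback is an integral argument. Contract the relation \(k=\mathcal L_X\gamma+\varphi\gamma\) against the transverse–traceless part and observe that \(\widetilde k=0\) forces \(D\!\operatorname{Scal}\) and the divergence constraints. More concretely, apply the operator \(\diver\diver\) (Euclidean) to \(H=\mathcal DX\). For a trace-free \(\mathcal DX\) this yields a harmonic-type identity, and the moment identities used earlier then show that \(H\) vanishes on spheres. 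This is the second route I would try.
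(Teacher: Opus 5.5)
Your compactness reduction agrees with the paper. So does your passage from \(\mathcal Q_{\lambda,b}(H,H)=0\) to \(H_{ij}=\partial_iX^j+\partial_jX^i-\frac2n(\diver X)\delta_{ij}\), and then to the homogeneous Taylor component \(X_{m+1}\) at \(y=0\). The step you flag as the main obstacle is actually routine. The Euclidean divergence of this identity is the strongly elliptic system \(\Delta X^j+(1-\frac2n)\partial_j\diver X=\sum_i\partial_iH_{ij}\), so \(X\) is smooth near the origin. The degree-\(m\) Taylor part then gives \(H=\mathcal DX_{m+1}\), and growth at infinity never enters.

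The genuine gap is the algebraic step that follows. You only know \((\mathcal DX_{m+1})y=0\), not \(\mathcal DX_{m+1}=0\). Your assertion that this ``should force'' \(X_{m+1}\) into the conformal Killing algebra is equivalent to \(\mathcal DX_{m+1}=H=0\), which is the statement being proved, and you supply no mechanism for it. Your fallback does not work either. One has \(\sum_{i,j}\partial_i\partial_j(\mathcal DX)_{ij}=\frac{2(n-1)}{n}\Delta\diver X\), and the moment identities only control the degree-zero and degree-one spherical modes of this quantity. Since \(\mathcal D\) has many nonzero trace-free images, the radial gauge must be used in an essential, quantitative way.

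The missing idea is to reduce to the scalar \(S=y\cdot X_{m+1}\). By Euler's relation, \(\sum_jy^j\partial_jX^i_{m+1}=(m+1)X^i_{m+1}\); your display effectively uses \(m\) in place of \(m+1\). The gauge condition therefore becomes
\[
\nabla S+mX_{m+1}-\tfrac2n(\diver X_{m+1})y=0 .
\]
Contracting with \(y\) gives \(\diver X_{m+1}=n(m+1)S/|y|^2\), and taking the divergence then yields
\[
 |y|^2\Delta S+(m+1)\bigl[m(n-2)-2n\bigr]S=0.
\]
Write \(S=\sum_a|y|^{2a}P_{m+2-2a}\) with each \(P_k\) harmonic. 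Then \(|y|^2\Delta\) acts on the \(a\)-th summand by \(2a(2m+2+n-2a)\), and the total coefficient never vanishes:
\begin{itemize}
\item if \(m\ge3\), it is positive, because \(n>2m+2\);
\item if \(m=2\), it equals \(-12\) at \(a=0\) and \(2a(6+n-2a)-12>0\) for \(a\ge1\).
\end{itemize}
Hence \(S=0\), so \(\diver X_{m+1}=0\), then \(mX_{m+1}=0\), and \(H=0\). With this lemma in place, your compactness argument closes exactly as in the paper.
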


\begin{proof}
{
We first record the polynomial consequence that excludes the kernel in
Proposition  \ref{prop:euclidean-quadratic-form-kernel}.

\emph{Claim.} Suppose that \(m\ge2\), \(H\in\mathcal V_m\), and that
\(X_{m+1}:\R^n\to\R^n\) and \(\varphi:\R^n\to\R\) are homogeneous
polynomials of degrees \(m+1\) and \(m\), respectively. If
\[
 H_{ij}=\partial_iX_{m+1}^j+\partial_jX_{m+1}^i
 +\varphi\delta_{ij},
\]
then \(H=0\).

To prove the claim, set \(S=y\cdot X_{m+1}\). From
\eqref{eq:metric-jet-conditions}, we obtain
\[
 2\diver X_{m+1}+n\varphi=0,\qquad
 \nabla S+mX_{m+1}-\frac2n(\diver X_{m+1})y=0.
\]
Taking the scalar product with \(y\) and then taking the divergence, we find
\[
 \diver X_{m+1}=\frac{n(m+1)}{|y|^2}S,\qquad
 |y|^2\Delta S+(m+1)\bigl[m(n-2)-2n\bigr]S=0.
\]
Decompose the polynomial \(S\) of degree \(m+2\) into homogeneous harmonic
polynomials:
\[
 S=\sum_{a=0}^{\lfloor(m+2)/2\rfloor}
 |y|^{2a}P_{m+2-2a},\qquad
 \Delta P_{m+2-2a}=0.
\]
On the summand indexed by \(a\), \(|y|^2\Delta\) has eigenvalue \(2a\bigl(2(m+1)+n-2a\bigr)\).
If \(m\ge3\), the coefficient of \(S\) in its equation is positive. If
\(m=2\), that coefficient is \(-12\); the total coefficient is nonzero for
\(a=0\), while for \(a\ge1\) it is
\[
 2a(6+n-2a)-12>0,
 \qquad n\ge7.
\]
Thus \(S=0\). {The formulas for \(\diver X_{m+1}\) and the
system of first order then imply}
\(\diver X_{m+1}=0\), \(X_{m+1}=0\), \(\varphi=0\), and \(H=0\). This
proves the claim.
}

By \eqref{eq:metric-quadratic-form}, we have\(\mathcal Q_{\lambda,b}(H,H)\ge0\) . {Suppose that
\(\mathcal Q_{\lambda,b}(H,H)=0\). By
Proposition  \ref{prop:euclidean-quadratic-form-kernel},}
\[
 k_{\lambda,b}[H]
 =\mathcal L_X\gamma_{\lambda,b}+\varphi\gamma_{\lambda,b}.
\]
In the Euclidean coordinates, divide this identity by
\((2\lambda/(\lambda^2+|y-b|^2))^2\) and take its part with zero trace.
{The components \(X^i\) then satisfy}
{
\begin{equation}\label{eq:euclidean-trace-free-equation}
 H_{ij}=\partial_iX^j+\partial_jX^i
 -\frac2n\left(\sum_{a=1}^n\partial_aX^a\right)\delta_{ij}
 \quad\text{near }y=0.
\end{equation}
}
Taking the Euclidean divergence gives
\[
 \Delta X^j+\left(1-\frac2n\right)
 \partial_j\left(\sum_{i=1}^n\partial_iX^i\right)
 =\sum_{i=1}^n\partial_iH_{ij},\qquad 1\le j\le n.
\]
The negative of the operator on the left is strongly elliptic, since
its principal matrix satisfies, for \(\xi,v\in\R^n\),
\[
 |\xi|^2|v|^2+\left(1-\frac2n\right)(\xi\cdot v)^2
 \ge |\xi|^2|v|^2.
\]
{By interior elliptic regularity, \(X\) is smooth near the
origin. Set}
\[
 X_{m+1}^j(y)=\sum_{|\alpha|=m+1}
 \frac{\partial^\alpha X^j(0)}{\alpha!}y^\alpha,
 \qquad 1\le j\le n.
\]
{Taking degree \(m\) in
\eqref{eq:euclidean-trace-free-equation}, we obtain}
{
\[
 H_{ij}=\partial_iX_{m+1}^j+\partial_jX_{m+1}^i
 -\frac2n\left(\sum_{a=1}^n\partial_aX_{m+1}^a\right)\delta_{ij}.
\]
}
{The claim implies that \(H=0\).}

Thus \(H\mapsto \widetilde k_{\lambda,b}[H]\) is injective on \(\mathcal V_m\).
We compare parameters using the fixed isometry
\eqref{eq:fixed-round-identification}.
{Using the integrable upper bound from
Lemma  \ref{lem:metric-jet-threshold} for the polynomial
\(H(b+\lambda x)\), we obtain}
\[
 \Xi_{\lambda_\nu,b_\nu}^*k_{\lambda_\nu,b_\nu}[H_\nu]
 \to
 \Xi_{\lambda,b}^*k_{\lambda,b}[H]
 \quad\text{in }H^1(g_{S})
\]
whenever \((\lambda_\nu,b_\nu,H_\nu)\to(\lambda,b,H)\) in
\(\mathcal K\times\mathcal V_m\).
Pullback preserves the scalar and vector equations and their
orthogonality conditions. By linearity and
\eqref{eq:divergence-correction-bound},
\[
 \begin{aligned}
 &\|\Xi_{\lambda_\nu,b_\nu}^*\widetilde k_{\lambda_\nu,b_\nu}[H_\nu]
       -\Xi_{\lambda,b}^*\widetilde k_{\lambda,b}[H]\|_{H^1(g_{S})}\\
 {}\le{}&C\|\Xi_{\lambda_\nu,b_\nu}^*k_{\lambda_\nu,b_\nu}[H_\nu]
       -\Xi_{\lambda,b}^*k_{\lambda,b}[H]\|_{H^1(g_{S})}
 \to0.
 \end{aligned}
\]
It then follows from
\eqref{eq:metric-quadratic-form} that \(\mathcal Q_{\lambda_\nu,b_\nu}(H_\nu,H_\nu)
 \to \mathcal Q_{\lambda,b}(H,H)\).
By compactness and the kernel calculation, let
\[
 c_m:=
 \min_{\substack{(\lambda,b)\in\mathcal K\\
 H\in\mathcal V_m,\ |H|=1}}
 \mathcal Q_{\lambda,b}(H,H)>0.
\]
{By quadratic homogeneity, we obtain
\eqref{eq:metric-quadratic-lower-bound}.}
\end{proof}

\subsection{Identification of the Euclidean coefficient}

We approximate the homogeneous direction \(H\) by \(\chi_LH\). Fix \(2\le m<(n-2)/2\) and
\(H\in\mathcal V_m\). For \(L\ge2\) and \((\lambda,b)\in\mathcal K\),
define
\begin{equation}\label{eq:cutoff-response-from-sphere}
 Z_{\lambda,b,L}[H]
 =U_{\lambda,b}\psi[k_{\lambda,b}[\chi_LH]].
\end{equation}
{By the conformal and moment identities,
\(Z_{\lambda,b,L}[H]\) satisfies}
{
\[
 \begin{gathered}
 \Delta Z_{\lambda,b,L}[H]
 +n(n+2)U_{\lambda,b}^{p-1}Z_{\lambda,b,L}[H]
 =F_{\lambda,b}[\chi_LH],\\
 \int_{\R^n}U_{\lambda,b}^{p-1}
 Z_{\lambda,b,L}[H]J_{\lambda,b,l}\,\dd y=0,
 \qquad0\le l\le n.
 \end{gathered}
\]
}

\begin{lemma}\label{lem:cutoff-response}
{This problem has a unique solution in \(\dot H^1(\R^n)\).
The cutoff corrections satisfy}
\[
 \norm{Z_{\lambda,b,L}[H]-Z_{\lambda,b}[H]}_{\dot H^1}
 \le CL^{m-\frac{n-2}{2}}|H|.
\]
For every fixed compact \(D\subset\R^n\),
\[
 Z_{\lambda,b,L}[H]\to Z_{\lambda,b}[H]
 \quad\text{in }C^2(D)
 \quad\text{as }L\to\infty,
\]
uniformly for \((\lambda,b)\in\mathcal K\) and for \(H\) satisfying
\eqref{eq:metric-jet-conditions} with \(|H|\le1\). {For each fixed
\(L\), the quotient \(Z_{\lambda,b,L}[H]/U_{\lambda,b}\) extends smoothly
to \(\Sph^n\) at \(z=0\).}
Moreover,
{
\[
 |Z_{\lambda,b,L}[H](y)|
 +|y||\nabla Z_{\lambda,b,L}[H](y)|
 \le C|H|
 \begin{cases}
  1,&|y|\le1,\\
  |y|^{m+2-n},&|y|\ge1.
 \end{cases}
\]
}
The constant depends only on \(n,m,\mathcal K\), and the derivative bounds
in \eqref{eq:euclidean-cutoff}; in particular, it is independent of
\(L,\lambda,b\), and \(H\).
\end{lemma}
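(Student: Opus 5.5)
The whole proof will go through the spherical picture. The cutoff tensor \(k_{\lambda,b}[\chi_LH]\) has compact support in the \(y\) chart, so it lies in \(H^1_{\gamma_{\lambda,b}}\). The construction \eqref{eq:spherical-scalar-equation} therefore produces \(\psi[k_{\lambda,b}[\chi_LH]]\), and the conformal and moment identities \eqref{eq:conformal-energy}, \eqref{eq:scalar-conjugacy}, \eqref{eq:response-moment-transfer} show that \(Z_{\lambda,b,L}[H]\) solves the displayed problem. Uniqueness is argued exactly as in Step 1 of Proposition \ref{prop:euclidean-response}. If two \(\dot H^1\) solutions exist, their difference divided by \(U_{\lambda,b}\) lies in \(\ker(-\Delta_\gamma-n)\cap\mathcal J_{\lambda,b}^\perp=\{0\}\) by Lemma \ref{lem:euclidean-scalar-fredholm}.

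For the \(\dot H^1\) estimate I will use linearity, \(\psi[k[\chi_LH]]-\psi[k[H]]=\psi[k[(\chi_L-1)H]]\). Combined with \eqref{eq:spherical-scalar-estimate}, the tail bound \eqref{eq:metric-jet-tail} with \(\beta=0\), and the isometry \eqref{eq:conformal-energy}, this gives \(\|Z_{\lambda,b,L}[H]-Z_{\lambda,b}[H]\|_{\dot H^1}\le CL^{m-(n-2)/2}|H|\).

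Local \(C^2\) convergence will follow from elliptic estimates. The difference \(D_L=Z_{\lambda,b,L}[H]-Z_{\lambda,b}[H]\) satisfies \((\Delta+n(n+2)U^{p-1}_{\lambda,b})D_L=F_{\lambda,b}[(\chi_L-1)H]\), and the right side vanishes on \(B_L\). On a fixed compact \(D\subset B_{L/2}\), Sobolev embedding gives \(\|D_L\|_{L^{2n/(n-2)}(B_{L/2})}\le C\|D_L\|_{\dot H^1}\). Interior \(W^{2,s}\) and Schauder estimates for a homogeneous equation with smooth bounded potential then yield \(\|D_L\|_{C^2(D)}\le C_D L^{m-(n-2)/2}\to0\), uniformly in \((\lambda,b)\in\mathcal K\) and \(|H|\le1\).

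Smoothness of the quotient at \(z=0\) is a local regularity statement near the point at infinity. For fixed \(L\), the tensor \(k_{\lambda,b}[\chi_LH]\) vanishes identically near \(z=0\), and it is smooth because \(\chi_L\) and \(H\) are smooth. Thus the right side of \eqref{eq:spherical-scalar-equation} is smooth on \(\Sph^n\) and vanishes near \(z=0\). Elliptic regularity for \(-\Delta_\gamma-n\) then makes \(\psi=Z_{\lambda,b,L}[H]/U_{\lambda,b}\) smooth near \(z=0\).

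The pointwise bound uniform in \(L\) is the main point, and I will get it by repeating the barrier argument of Step 2 of Proposition \ref{prop:euclidean-response} with \(F_{\lambda,b}[\chi_LH]\) in place of \(F_{\lambda,b}[H]\). The cutoff bounds \eqref{eq:euclidean-cutoff} give \(|F_{\lambda,b}[\chi_LH](y)|\le C|H|(1+|y|)^{m-n}\) with \(C\) independent of \(L\): each derivative falling on \(\chi_L\) costs a factor \(L^{-1}\lesssim|y|^{-1}\) on its support. The \(\dot H^1\) norm is bounded by \(C|H|\), using the previous step together with the bound for \(Z_{\lambda,b}[H]\). Hence the scaled local estimate gives boundedness on \(B_{R_0}\), and the sup on \(A_{R,2R}\) still decays to zero as \(R\to\infty\). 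The Kato inequality and the supersolution \(\Psi_\varepsilon=A|H|r^{m+2-n}+\varepsilon r^{-(n-2)/4}\) then give \(|Z_{\lambda,b,L}[H]|\le C|H||y|^{m+2-n}\) for \(|y|\ge R_0\), with \(A\) independent of \(L\). The gradient bound follows from scaled interior estimates on \(A_{r/2,2r}\), since the source derivative is bounded by \(C|H|r^{m-n-1}\) uniformly in \(L\).

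I expect the only delicate part to be tracking \(L\)-independence of the constants, particularly in the source bounds on the transition annulus \(A_{L,2L}\). The rest is routine.
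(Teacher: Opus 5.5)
Your proposal is correct and follows the paper's proof essentially step by step. The steps match: existence and uniqueness come from the spherical construction and Step~1 of Proposition~\ref{prop:euclidean-response}, and the \(\dot H^1\) bound from linearity of \(k\mapsto\psi[k]\) together with \eqref{eq:spherical-scalar-estimate} and \eqref{eq:metric-jet-tail}. Local \(C^2\) convergence follows from interior estimates because the two sources agree near \(D\), smoothness at \(z=0\) from elliptic regularity for the whole-sphere weak equation, and the pointwise bound from the \(L\)-uniform barrier argument.

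The only small difference is how you get the decay at infinity needed for the comparison on \(A_{R_0,R}\). The paper uses that, for fixed \(L\), the source vanishes on \(A_{R/2,2R}\) when \(R>4L\). You instead reuse the Step~2 weighted decay \(R^{(n-2)/4}\sup_{A_{R,2R}}|Z_{\lambda,b,L}[H]|\to0\) with the \(L\)-uniform source bound. Note that this weighted form, not merely decay of the supremum, is what comparison with \(\varepsilon r^{-(n-2)/4}\) requires; either route works.
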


\begin{proof}
{The construction \eqref{eq:cutoff-response-from-sphere} and
\eqref{eq:curvature-source-relation}  through  \eqref{eq:scalar-conjugacy} give the
equation and membership in \(\dot H^1\). The moments follow from
\eqref{eq:response-moment-transfer}; uniqueness follows from Step  1 of
Proposition  \ref{prop:euclidean-response}.}
By linearity of \(k\mapsto\psi[k]\),
\eqref{eq:spherical-scalar-estimate},
and \eqref{eq:metric-jet-tail},
 {
\[
 \|Z_{\lambda,b,L}[H]-Z_{\lambda,b}[H]\|_{\dot H^1}
 \le C\|k_{\lambda,b}[(1-\chi_L)H]\|_{H^1_{\gamma_{\lambda,b}}}
 \le CL^{m-\frac{n-2}{2}}|H|.
\]
}
The source in the spherical equation for
\(\psi[k_{\lambda,b}[\chi_LH]]\) is smooth and vanishes near \(z=0\).
This equation already holds weakly on the whole sphere by
\eqref{eq:spherical-scalar-equation}. {By elliptic regularity,}
\[
 \frac{Z_{\lambda,b,L}[H]}{U_{\lambda,b}}
 =\psi[k_{\lambda,b}[\chi_LH]]\in C^\infty(\Sph^n).
\]
For a fixed compact \(D\subset\R^n\), the two Euclidean sources agree
on a neighborhood of \(D\) for large \(L\).
{Using interior estimates and the preceding \(\dot H^1\)
estimate, we obtain}
 {
\[
 \|Z_{\lambda,b,L}[H]-Z_{\lambda,b}[H]\|_{C^2(D)}
 \le C_D\|Z_{\lambda,b,L}[H]-Z_{\lambda,b}[H]\|_{\dot H^1}
 \le C_D L^{m-\frac{n-2}{2}}|H|\to0.
\]
}
The constants are uniform for \((\lambda,b)\in\mathcal K\).

{For the pointwise estimate, using
\eqref{eq:metric-source} and the cutoff bounds, we have, uniformly in
\(L\),}
\[
 |F_{\lambda,b}[\chi_LH](y)|
 \le C|H|(1+|y|)^{m-n}.
\]
{By the uniform \(\dot H^1\) bound and local interior estimates,
we also have}
\[
 \sup_{\partial B_{R_0}}|Z_{\lambda,b,L}[H]|
 \le C|H|
\]
for a fixed sufficiently large \(R_0=R_0(n,m,\mathcal K)\).
The barrier calculation in Proposition  \ref{prop:euclidean-response}
therefore allows \(A\) and \(R_0\), independent of \(L\), to be chosen such that
\[
 \begin{gathered}
 A|H|R_0^{m+2-n}\ge
 \sup_{\partial B_{R_0}}|Z_{\lambda,b,L}[H]|,\\
 \bigl[-\Delta-n(n+2)U_{\lambda,b}^{p-1}\bigr]
 \bigl(A|H||y|^{m+2-n}\bigr)
 \ge |F_{\lambda,b}[\chi_LH](y)|,\qquad |y|\ge R_0.
 \end{gathered}
\]
The function \(|y|^{-(n-2)/4}\) is a positive supersolution there.
For each fixed \(L\), the source vanishes on \(A_{R/2,2R}\) when \(R>4L\).
{By the scaled local estimate and the Sobolev inequality,}
\[
 R^{\frac{n-2}{2}}\sup_{\partial B_R}|Z_{\lambda,b,L}[H]|
 \le C\|Z_{\lambda,b,L}[H]\|_{L^{2n/(n-2)}(A_{R/2,2R})}
 \to0.
\]
Thus, for fixed \(L,\varepsilon>0\), the function
\(A|H||y|^{m+2-n}+\varepsilon|y|^{-(n-2)/4}\) dominates
\(|Z_{\lambda,b,L}[H]|\) on both boundary spheres for large \(R\).
{Using Kato's inequality and the comparison principle on \(A_{R_0,R}\), we
obtain}
\[
 |Z_{\lambda,b,L}[H](y)|
 \le A|H||y|^{m+2-n}
 +\varepsilon|y|^{-(n-2)/4}
 \quad\text{on }A_{R_0,R}.
\]
Letting \(R\to\infty\) and then \(\varepsilon\downarrow0\) proves the
zeroth order bound with a constant independent of \(L\).
{The stated gradient bound follows from scaled interior
estimates.}
\end{proof}

{To identify the Euclidean coefficient with
\(\mathcal Q_{\lambda,b}(H,H)\), we apply the second variation to one
smooth cutoff path. Fix \(2\le m<(n-2)/2\), \(H\in\mathcal V_m\), and
\((\lambda,b)\in\mathcal K\). For \(L\ge2\), set}
{
\[
 g_{s,L}=\exp(s\chi_LH),\qquad
 V_{s,L}=U_{\lambda,b}+sZ_{\lambda,b,L}[H],
\]
and
\[
 s_L=\left[2\left(1+
 \left\|\frac{Z_{\lambda,b,L}[H]}{U_{\lambda,b}}\right\|_{L^\infty(\R^n)}
 \right)\right]^{-1}.
\]
Then \(V_{s,L}\ge U_{\lambda,b}/2>0\) whenever \(|s|<s_L\).
For \(0<\rho\le1\), set \(s=\eps\rho^m\) and define
\begin{equation}\label{eq:whole-space-scaling-family}
 \begin{aligned}
 g_{\eps,\rho,L}&:=g_{\eps\rho^m,L}
 =\exp(\eps\rho^m\chi_LH),\\
 V_{\eps,\rho,L}&:=V_{\eps\rho^m,L}
 =U_{\lambda,b}+\eps\rho^mZ_{\lambda,b,L}[H].
 \end{aligned}
\end{equation}
The derivatives in \(\rho\) keep \(L,H,\lambda,b,\eps\) fixed. Therefore,
\[
 \rho\partial_\rho=ms\partial_s,
 \qquad
 \rho\partial_\rho V_{\eps,\rho,L}=msZ_{\lambda,b,L}[H].
\]
}

\begin{proposition}\label{prop:whole-space-coefficient}
For \(|s|<s_L\), the metric \(4V_{s,L}^{4/(n-2)}g_{s,L}\) extends
smoothly to \(\Sph^n\). Moreover,
\begin{equation}\label{eq:whole-space-coefficient}
 \begin{aligned}
 &-\frac14\lim_{L\to\infty}
 \left.\partial_\eps^2\right|_{\eps=0}
 \int_{\R^n}
 \left[
 \sum_{k,l=1}^n(\rho\partial_\rho g_{\eps,\rho,L}^{kl})
 \partial_kV_{\eps,\rho,L}\partial_lV_{\eps,\rho,L}
 +\rho\partial_\rho(c(n)\operatorname{Scal}_{g_{\eps,\rho,L}})
 V_{\eps,\rho,L}^2
 \right]\,\dd y\\
 {}={}&2^{3-n}m\rho^{2m}\mathcal Q_{\lambda,b}(H,H).
 \end{aligned}
\end{equation}
The variation is taken at fixed \(L\), followed by \(L\to\infty\).
The limit is uniform for \((\lambda,b)\in\mathcal K\), with \(m,H,\rho\)
fixed.
\end{proposition}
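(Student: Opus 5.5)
The plan is to treat the integrand as the $\rho$-derivative of the Einstein--Hilbert type energy of the conformal metric $\widehat\gamma_s:=4V_{s,L}^{4/(n-2)}g_{s,L}$. Then use the second variation at the round metric, Proposition~\ref{prop:euclidean-quadratic-form-kernel}, together with the cutoff limit of Lemma~\ref{lem:cutoff-response}.

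\emph{Smooth extension.} For $|s|<s_L$ the metric $g_{s,L}$ equals $\delta$ outside $B_{2L}$. There $\widehat\gamma_s=(V_{s,L}/U_{\lambda,b})^{4/(n-2)}\gamma_{\lambda,b}$ with $V_{s,L}/U_{\lambda,b}=1+s\,Z_{\lambda,b,L}[H]/U_{\lambda,b}$. By Lemma~\ref{lem:cutoff-response} this quotient extends smoothly across $z=0$, and it is bounded below by $1/2$. Hence $\widehat\gamma_s$ extends smoothly to $\Sph^n$.

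\emph{Energy identity.} Since $\det g_{s,L}=1$, conformal covariance gives, with $c(n)^{-1}$ normalizations,
\[
\frac{c(n)}{4}\,2^{2-n}\mathcal E(\widehat\gamma_s)
=\int_{\R^n}\Bigl(g_{s,L}^{kl}\partial_kV_{s,L}\partial_lV_{s,L}+c(n)\operatorname{Scal}_{g_{s,L}}V_{s,L}^2\Bigr)\dd y-\text{(volume term in }V^{p+1}).
\]
I would record the exact constant by checking it at $s=0$. Applying $\rho\partial_\rho=ms\partial_s$, the integrand in \eqref{eq:whole-space-coefficient} is the part of $\rho\partial_\rho$ of this energy in which only the metric is differentiated. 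The remaining terms are the variation of the $V$-slot. Namely, $2\int(\nabla_gV\cdot\nabla_g\dot V+c(n)\operatorname{Scal}\,V\dot V)-\text{vol}'$, with $\dot V=msZ_{\lambda,b,L}[H]$.

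So I would write the metric-only derivative as $\rho\partial_\rho\mathcal F(s)$ minus the $V$-derivative term. After integrating by parts, that term equals $2ms\int Z_{\lambda,b,L}\,\bigl(-L_{g_{s,L}}V_{s,L}-n(n-2)V_{s,L}^p\bigr)$. Its $\eps$-Taylor expansion starts at order $\eps^2$ with coefficient $2m\rho^{2m}\int Z\,(-\Delta-n(n+2)U^{p-1})Z+\text{(first-order metric terms)}$. By the correction equation for $Z_{\lambda,b,L}$, this combines with the quadratic part of $\mathcal F$. Bookkeeping is cleanest if I note that $\mathcal F(s)$ at order $s^2$ is exactly $\tfrac12 s^2 D^2\mathcal E_\gamma[\bar k_L,\bar k_L]$ up to the constant $\tfrac{c(n)}4 2^{2-n}$. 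Here $\bar k_L$ is the corrected tensor for $k_{\lambda,b}[\chi_LH]$, because $\frac{d}{ds}\widehat\gamma_s|_{0}=k_{\lambda,b}[\chi_LH]+\frac4{n-2}\frac{Z_L}{U}\gamma=\bar k_L$. The $\widehat\gamma_s''$ contribution drops out, since $D\mathcal E_\gamma=0$.

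\emph{Combining.} Thus $\partial_\eps^2|_0$ of the full $\rho\partial_\rho\mathcal F$ is $m\rho^{2m}\cdot 2\cdot\tfrac12 D^2\mathcal E_\gamma[\bar k_L,\bar k_L]$ times constants. The $V$-slot correction at order $\eps^2$ is $2m\rho^{2m}\cdot\int Z_L\,\partial_s(\cdots)|_{0}=0$. It vanishes because $\partial_s(-L_{g_s}V_s-n(n-2)V_s^p)|_{s=0}=-F_{\lambda,b}[\chi_LH]-(\Delta+n(n+2)U^{p-1})Z_L=0$ by the equation in Lemma~\ref{lem:cutoff-response}. Hence
\[
-\tfrac14\,\partial_\eps^2|_0(\cdots)=-2^{2-n}\frac{c(n)}{4}\,m\rho^{2m}D^2\mathcal E_\gamma[\bar k_L,\bar k_L]=2^{3-n}m\rho^{2m}\,Q_{\lambda,b}[k_{\lambda,b}[\chi_LH]]
\]
(constants to be matched against \eqref{eq:metric-second-variation-form}). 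Finally, let $L\to\infty$. By \eqref{eq:metric-jet-tail}, $k_{\lambda,b}[\chi_LH]\to k_{\lambda,b}[H]$ in $H^1_\gamma$, uniformly on $\mathcal K$. The continuity in Proposition~\ref{prop:euclidean-quadratic-form-kernel} then gives convergence to $\mathcal Q_{\lambda,b}(H,H)$.

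The main obstacle is rigor and constants in the energy identity. One must justify differentiating under the integral and integrating by parts on $\R^n$. This works at fixed $L$ because $\widehat\gamma_s$ is smooth on the compact sphere, so everything can be computed on $\Sph^n$. One must also track the exact normalization constants $2^{2-n}$ and $c(n)/4$ and the factor $m$. I would first verify the constant by the trivial check that $\mathcal F$ equals the Yamabe energy of $V$ on $(\R^n,g)$ via $\dd V_{\widehat\gamma}=2^nV^{p+1}\dd y$.
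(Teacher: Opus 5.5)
Your proposal is correct and follows essentially the same route as the paper. The shared steps are: smooth extension from Lemma~\ref{lem:cutoff-response} and the choice of $s_L$; the conformal energy identity; $D\mathcal E_{\gamma_{\lambda,b}}=0$ to reduce the $s^2$ coefficient to $D^2\mathcal E_{\gamma_{\lambda,b}}[\bar k_L,\bar k_L]$; elimination of the $V$-slot derivative, because the correction equation gives $-L_{g_{s,L}}V_{s,L}-n(n-2)V_{s,L}^p=O_L(s^2)$ and so its pairing with $msZ_{\lambda,b,L}[H]$ is $O_L(s^3)$; and removal of the cutoff via \eqref{eq:metric-jet-tail} and \eqref{eq:divergence-correction-bound}.

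Only the bookkeeping needs repair. The constant in your energy identity should be $c(n)2^{2-n}$, not $\tfrac{c(n)}{4}2^{2-n}$; this constant, together with $Q_{\lambda,b}[k]=-\tfrac{c(n)}{4}D^2\mathcal E_{\gamma_{\lambda,b}}[\bar k,\bar k]$, is what produces $2^{3-n}$, and your displayed chain is not consistent as written. Also, the linearized residual is $F_{\lambda,b}[\chi_LH]-(\Delta+n(n+2)U_{\lambda,b}^{p-1})Z_{\lambda,b,L}[H]$, with a plus sign on $F_{\lambda,b}[\chi_LH]$.
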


\begin{proof}
\emph{Step 1. The smooth metric on the sphere.}
{For the path \((g_{s,L},V_{s,L})\), set} \(\widehat\gamma_{s,L}=4V_{s,L}^{4/(n-2)}g_{s,L}\).
Outside \(B_{2L}\), its expression is
\[
 \widehat\gamma_{s,L}
 =\left(1+s\frac{Z_{\lambda,b,L}[H]}{U_{\lambda,b}}\right)^{4/(n-2)}
 \gamma_{\lambda,b}.
\]
{By Lemma  \ref{lem:cutoff-response}, the quotient extends smoothly
across \(z=0\). From the definition of \(s_L\), the conformal factor is
positive for \(|s|<s_L\); hence the metric extends smoothly to the sphere.}

\emph{Step 2. The energy coefficient.}
{By conformal covariance and \(\det g_{s,L}=1\),}
 {
\begin{equation}\label{eq:whole-space-energy-expansion}
 \begin{aligned}
 &-\frac12\int_{\R^n}
 \bigl(|\nabla V_{s,L}|_{g_{s,L}}^2+c(n)\operatorname{Scal}_{g_{s,L}}V_{s,L}^2\bigr)\,\dd y
 +\frac{n(n-2)}{p+1}\int_{\R^n}V_{s,L}^{p+1}\,\dd y\\
 {}={}&-\frac{c(n)}{2^{n-1}}\mathcal E(\widehat\gamma_{s,L})=-\frac{c(n)}{2^{n-1}}\mathcal E(\gamma_{\lambda,b})
 +2^{2-n}s^2 Q_{\lambda,b}[k_{\lambda,b}[\chi_LH]]
 +O_L(s^3).
 \end{aligned}
\end{equation}
}
Indeed, the tangent of \(\widehat\gamma_{s,L}\) at zero is
\[
 k_{\lambda,b}[\chi_LH]
 +\frac4{n-2}\frac{Z_{\lambda,b,L}[H]}{U_{\lambda,b}}\gamma_{\lambda,b}.
\]
{The expansion therefore follows from
\eqref{eq:metric-second-variation-form} and
\(D\mathcal E_{\gamma_{\lambda,b}}=0\).}
Here \(O_L(s^3)\) is taken as \(s\to0\), with \(L\) fixed.

\emph{Step 3. Differentiation of the metric coefficients.}
{From the correction equation, we have}
\[
 -L_{g_{s,L}}V_{s,L}-n(n-2)V_{s,L}^p=O_L(s^2)
 \quad\text{in }\dot H^{-1}(\R^n).
\]
By the preceding differentiation identities, the contribution from
differentiating the scalar function in
\eqref{eq:whole-space-energy-expansion} is the negative of the pairing.
It satisfies
\[
 \begin{aligned}
 &\left|\left\langle
 -L_{g_{s,L}}V_{s,L}-n(n-2)V_{s,L}^p,\,
 msZ_{\lambda,b,L}[H]\right\rangle\right|\\
 {}\le{}&
 \|-L_{g_{s,L}}V_{s,L}-n(n-2)V_{s,L}^p\|_{\dot H^{-1}}
 |ms|\,\|Z_{\lambda,b,L}[H]\|_{\dot H^1}
 \le C_L|s|^3.
 \end{aligned}
\]
{Fix \(L\) and \(s\). By the smooth extension in
Lemma  \ref{lem:cutoff-response},}
\[
 \int_{\partial B_R}
 |\partial_rV_{s,L}|\,|Z_{\lambda,b,L}[H]|\,\dd S
 \le C_LR^{2-n}\to0
 \qquad\text{as }R\to\infty.
\]
Differentiating the energy therefore yields
 {
\[
 \begin{aligned}
 &-\frac12\int_{\R^n}
 \left[
 \sum_{k,l=1}^n(\rho\partial_\rho g_{\eps,\rho,L}^{kl})
 \partial_kV_{\eps,\rho,L}\partial_lV_{\eps,\rho,L}
 +\rho\partial_\rho(c(n)\operatorname{Scal}_{g_{\eps,\rho,L}})V_{\eps,\rho,L}^2
 \right]\,\dd y\\
 {}={}&2^{3-n}m s^2
  Q_{\lambda,b}[k_{\lambda,b}[\chi_LH]]+O_L(s^3).
 \end{aligned}
\]
}
Taking \(\frac{1}{2}\partial_\eps^2|_{\eps=0}\) identifies the coefficient at
fixed \(L\).

\emph{Step 4. Removal of the cutoff.}
Equations  \eqref{eq:metric-jet-tail} and
\eqref{eq:divergence-correction-bound} imply
 {
\[
  Q_{\lambda,b}[k_{\lambda,b}[\chi_LH]]
 \to Q_{\lambda,b}[k_{\lambda,b}[H]]
 =\mathcal Q_{\lambda,b}(H,H).
\]
}
The convergence is uniform on \(\mathcal K\). This proves
\eqref{eq:whole-space-coefficient}.
\end{proof}

{The correction in \eqref{eq:corrected-profile} is furnished by
Proposition  \ref{prop:euclidean-response}. The sign and value of the
Euclidean coefficient used in
Proposition  \ref{prop:finite-annulus-self-pairing} follow from Propositions
\ref{prop:euclidean-strict-positivity} and
\ref{prop:whole-space-coefficient}.}

\section{Pohozaev estimate for the corrected profile}
\label{sec:reference-lower-bound}

{
The purpose of this section is to prove a lower bound for the Pohozaev quantity
\(\widetilde Q_j\). Using the correction constructed in
Section  \ref{sec:euclidean-response}, we form the profile \(\Phi_j\) and select
the final bubble parameters by annular orthogonality. We expand
\(\widetilde Q_j=B_j(\Phi_j,\Phi_j)\) to second order, estimate the
integral remainder, and isolate the positive leading term when the first
nonzero degree is below \((n-2)/2\). The resulting lower bound is combined
with Proposition  \ref{prop:exact-reference-comparison} in
Section  \ref{sec:recurrence}.
}
{Throughout this section, \(C\) denotes a positive constant that
may depend on \(n\), the compact parameter set \(\mathcal K\), and the fixed
metric jets, but not on \(j\).}

\subsection{The corrected profile and final modulation}

The annulus \(A_{\sigma_j,R_j}\), the scale \(\eta_j\), and the function
\(v_j\) are given by \eqref{eq:annular-radii},
\eqref{eq:metric-perturbation-scale}, and \eqref{eq:annular-rescaling}.
We use the coefficients \(H^{(m)}\) from \eqref{eq:metric-jets} and the
parameter set \(\mathcal K\) from \eqref{eq:bubble-parameter-set}.

Only the degrees \(m<(n-2)/2\) admit the corrections used below. Accordingly,
for \((\lambda,b)\in\mathcal K\), define
\(\Phi_{j,\lambda,b}\) on \(\R^n\) by
\begin{equation}\label{eq:corrected-profile}
 \Phi_{j,\lambda,b}
 =U_{\lambda,b}
 +\sum_{\substack{m\in\mathbb N\\2\le m<\frac{n-2}{2}}}
  \rho_j^mZ_{\lambda,b}[H^{(m)}].
\end{equation}
{
All estimates below are uniform for
\((\lambda,b)\in\mathcal K\).}
\begin{lemma}\label{lem:corrected-profile-estimate}
There are \(C>0\) and \(j_1\) such that, if \(j\ge j_1\),
\((\lambda,b)\in\mathcal K\), \(y\in A_{\sigma_j,R_j}\), and
\(|\gamma|,|\beta|\le2\), then
\begin{equation}\label{eq:corrected-profile-estimate}
 \left|
 \partial_y^\gamma
 (\Phi_{j,\lambda,b}-U_{\lambda,b})(y)
 \right|
 \le C\eta_jU_{\lambda,b}(y)(1+|y|)^{-|\gamma|},
\end{equation}
and
\begin{equation}\label{eq:corrected-profile-parameter-derivatives}
 \left|
 \partial_{(\lambda,b)}^\beta
 (\Phi_{j,\lambda,b}-U_{\lambda,b})(y)
 \right|
 \le C\eta_jU_{\lambda,b}(y).
\end{equation}
Moreover,
\[
 \frac12U_{\lambda,b}
 \le\Phi_{j,\lambda,b}
 \le\frac32U_{\lambda,b}
 \quad\text{on }A_{\sigma_j,R_j}.
\]
\end{lemma}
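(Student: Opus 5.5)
The plan is to estimate each correction term separately, using the pointwise decay in Proposition~\ref{prop:euclidean-response}. Fix \(2\le m<(n-2)/2\). By \eqref{eq:response-decay}, for \(|\gamma|,|\beta|\le2\),
\[
 \bigl|\partial^\beta_{(\lambda,b)}\partial_y^\gamma Z_{\lambda,b}[H^{(m)}](y)\bigr|
 \le C|H^{(m)}|(1+|y|)^{m+2-n-|\gamma|}.
\]
The jets \(H^{(m)}\) are fixed, so \(|H^{(m)}|\le C\). Since \(\mathcal K\) is compact, we also have \(U_{\lambda,b}(y)\asymp(1+|y|)^{2-n}\) uniformly. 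Hence each term obeys
\[
 \rho_j^m\bigl|\partial^\beta_{(\lambda,b)}\partial_y^\gamma Z_{\lambda,b}[H^{(m)}](y)\bigr|
 \le C\rho_j^m(1+|y|)^m\,U_{\lambda,b}(y)(1+|y|)^{-|\gamma|}.
\]
The whole lemma therefore reduces to the scalar bound \(\rho_j^m(1+|y|)^m\le C\eta_j\) on \(A_{\sigma_j,R_j}\) for every \(m\ge2\).

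To prove the scalar bound, I use \(|y|\le R_j\) and \(R_j\to\infty\), which give \(1+|y|\le 2R_j\) for large \(j\). Then
\[
 \rho_j^m(1+|y|)^m\le 2^m(\rho_jR_j)^m=2^m\eta_j^{m/2}.
\]
Because \(\eta_j=\bar\rho_{j-1}^2\to0\), for \(j\) large we have \(\eta_j\le1\), and so \(\eta_j^{m/2}\le\eta_j\) whenever \(m\ge2\). Only finitely many degrees occur in the sum. Summing over them yields \eqref{eq:corrected-profile-estimate} when \(\beta=0\), and yields \eqref{eq:corrected-profile-parameter-derivatives} when \(\gamma=0\).

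For the two-sided comparison, I take \(\gamma=0\) and get \(|\Phi_{j,\lambda,b}-U_{\lambda,b}|\le C\eta_jU_{\lambda,b}\). I then choose \(j_1\) so large that \(C\eta_j\le1/2\) for all \(j\ge j_1\), which is possible by \eqref{eq:metric-scale-summability}. The inequality \(\frac12U_{\lambda,b}\le\Phi_{j,\lambda,b}\le\frac32U_{\lambda,b}\) then follows immediately.

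I do not expect a real obstacle. The only point to check is that the constants in \eqref{eq:response-decay} are uniform over \(\mathcal K\) and remain valid near \(y=0\). On the inner part \(|y|\le1\) of the annulus we have \((1+|y|)\asymp1\), so the bound is just \(C\rho_j^m\le C\eta_j\), using \(\rho_j\le\rho_jR_j\). Note that \(\sigma_j\to0\) causes no trouble, since the corrections \(Z_{\lambda,b}[H^{(m)}]\) are smooth on all of \(\R^n\).
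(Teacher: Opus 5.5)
Your proposal is correct and is essentially the paper's own argument. It combines the decay bound \eqref{eq:response-decay}, the uniform comparability \(U_{\lambda,b}\asymp(1+|y|)^{2-n}\) on \(\mathcal K\), and the scalar estimate \(\rho_j(1+|y|)\le 2\rho_jR_j=2\bar\rho_{j-1}\), which gives \(\rho_j^m(1+|y|)^m\le C\eta_j\) for \(m\ge2\); the two-sided comparison then follows from \(\eta_j\to0\), exactly as you describe.
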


\begin{proof}
Uniformly for \((\lambda,b)\in\mathcal K\),
\[
 C^{-1}(1+|y|)^{2-n}
 \le U_{\lambda,b}(y)
 \le C(1+|y|)^{2-n}.
\]
By Proposition  \ref{prop:euclidean-response},
\[
 \left|
 \partial_y^\gamma
 (\Phi_{j,\lambda,b}-U_{\lambda,b})(y)
 \right|
 \le C\sum_{2\le m<\frac{n-2}{2}}
 \rho_j^m|H^{(m)}|
 (1+|y|)^{m+2-n-|\gamma|}.
\]
For \(y\in A_{\sigma_j,R_j}\),
\[
 \rho_j(1+|y|)
 \le\rho_j+\rho_jR_j
 =\rho_j+\bar\rho_{j-1}
 \le2\bar\rho_{j-1}.
\]
Since \(m\ge2\) and
\(\eta_j=\bar\rho_{j-1}^2\), the last two displays imply
\eqref{eq:corrected-profile-estimate}. {The parameter estimates in
Proposition  \ref{prop:euclidean-response} give the same sum without the factor
\((1+|y|)^{-|\gamma|}\); hence the same argument proves
\eqref{eq:corrected-profile-parameter-derivatives}. Since \(\eta_j\to0\),
the positivity conclusion follows from \eqref{eq:corrected-profile-estimate}
with \(\gamma=0\), after increasing \(j_1\).}
\end{proof}

Define the modulation map by
\begin{equation}\label{eq:modulation-map}
 \mathcal M_j(\lambda,b)
 =\int_{A_{\sigma_j,R_j}}
 U_{\lambda,b}^{p-1}(v_j-\Phi_{j,\lambda,b})
 \nabla_{(\lambda,b)}U_{\lambda,b}\,\dd y.
\end{equation}
Here
\(\nabla_{(\lambda,b)}U=(\partial_\lambda U,\partial_{b^1}U,\ldots,
\partial_{b^n}U)^{\mathsf T}\).
The derivative of the modulation map at the approximate parameters is
compared with the weighted Gram matrix
\[
 \mathbf G_{\lambda,b}
 =\int_{\R^n}U_{\lambda,b}^{p-1}
 \nabla_{(\lambda,b)}U_{\lambda,b}
 \otimes\nabla_{(\lambda,b)}U_{\lambda,b}\,\dd y.
\]
The bubble derivatives are linearly independent. Their weighted Gram
matrix is therefore positive definite, and {by continuity on
\(\mathcal K\),}
\begin{equation}\label{eq:modulation-gram-positivity}
 \inf_{(\lambda,b)\in\mathcal K}
 \lambda_{\min}(\mathbf G_{\lambda,b})>0.
\end{equation}

The integrands defining \(\mathcal M_j\) and its first two parameter
derivatives are bounded by \(CU_{\lambda,b}^{p+1}\). {Indeed, by
Lemma  \ref{lem:annular-bubble-comparison},
\eqref{eq:corrected-profile-parameter-derivatives}, and the explicit bubble
formula, each parameter derivative of a bubble factor is bounded by a
constant times that factor.} Uniformly on \(\mathcal K\),
\begin{equation}\label{eq:modulation-tail-bound}
 \int_{B_r}U_{\lambda,b}^{p+1}\,\dd y+
 \int_{\R^n\setminus B_R}U_{\lambda,b}^{p+1}\,\dd y
 \le C(r^n+R^{-n}),\qquad 0<r<1<R.
\end{equation}
This estimate controls differentiation under the integral and passage
from fixed annuli to \(\R^n\).

\begin{proposition}\label{prop:final-modulation}
There is \(j_2\ge j_1\) such that, for every \(j\ge j_2\), there are
\((\lambda_j,b_j)\in\mathcal K\) satisfying
\begin{equation}\label{eq:final-orthogonality}
 \int_{A_{\sigma_j,R_j}}U_{\lambda_j,b_j}^{p-1}
 (v_j-\Phi_{j,\lambda_j,b_j})J_{\lambda_j,b_j,l}\,\dd y=0,
 \qquad 0\le l\le n.
\end{equation}
For every fixed \(0<r<R<\infty\), these parameters satisfy
\begin{equation}\label{eq:final-parameter-approximation}
 \norm{v_j-U_{\lambda_j,b_j}}_{C^2(\overline{A_{r,R}})}
 \to0
 \quad\text{as }j\to\infty.
\end{equation}
\end{proposition}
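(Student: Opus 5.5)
The approach is a finite-dimensional degree (or implicit function) argument for the map \(\mathcal M_j\) of \eqref{eq:modulation-map}, carried out near an approximate parameter supplied by compactness. Suppose first that \eqref{eq:final-parameter-approximation} were only needed for some choice of parameters. Lemma~\ref{lem:distance-to-bubble-family} gives, for each \(j\), parameters \((\lambda_j^0,b_j^0)\in\mathcal K\) with \(\norm{v_j-U_{\lambda_j^0,b_j^0}}_{C^2(\overline{A_{r,R}})}\to0\) for every fixed \(0<r<R\). By Lemma~\ref{lem:peak-bubble-compactness}, every limit point of \((\lambda_j^0,b_j^0)\) lies in the interior of \(\mathcal K\), at distance bounded below from \(\partial\mathcal K\). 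I would look for the zero of \(\mathcal M_j\) in a small ball \(\overline{B_\tau(\lambda_j^0,b_j^0)}\subset\mathcal K\), with \(\tau>0\) fixed and independent of \(j\).

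\textbf{Step 1 (smallness of \(\mathcal M_j\)).} Show that \(\mathcal M_j(\lambda_j^0,b_j^0)\to0\). Split \(A_{\sigma_j,R_j}\) into \(A_{r,R}\) and its complement. On \(A_{r,R}\), use the local \(C^0\) convergence together with \(|\Phi_{j,\lambda,b}-U_{\lambda,b}|\le C\eta_jU_{\lambda,b}\) from Lemma~\ref{lem:corrected-profile-estimate}. On the complement, Lemma~\ref{lem:annular-bubble-comparison} bounds the integrand by \(CU^{p+1}\), and \eqref{eq:modulation-tail-bound} makes that contribution \(\le C(r^n+R^{-n})\). Letting \(j\to\infty\) and then \(r\to0\), \(R\to\infty\) gives the claim.

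\textbf{Step 2 (nondegenerate derivative).} Compute \(D_{(\lambda,b)}\mathcal M_j\). Differentiating under the integral, which is justified by the \(CU^{p+1}\) bounds, gives
\[
 D\mathcal M_j(\lambda,b)=-\int_{A_{\sigma_j,R_j}}U_{\lambda,b}^{p-1}\nabla U_{\lambda,b}\otimes\nabla U_{\lambda,b}\,\dd y+\mathcal R_j(\lambda,b).
\]
Here \(\mathcal R_j\) collects three kinds of terms: those with parameter derivatives of \(U^{p-1}\nabla U\) multiplied by \(v_j-\Phi_{j,\lambda,b}\); terms of order \(\eta_j\) from \eqref{eq:corrected-profile-parameter-derivatives}; and the tails. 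For \(|(\lambda,b)-(\lambda_j^0,b_j^0)|\le\tau\), the same splitting as in Step 1 gives \(|v_j-U_{\lambda,b}|\le o(1)U+C\tau U\) on \(A_{r,R}\). Hence \(|\mathcal R_j|\le C\tau+o(1)\). The first term equals \(-\mathbf G_{\lambda,b}\) up to tails that vanish as \(\sigma_j\to0\) and \(R_j\to\infty\). By \eqref{eq:modulation-gram-positivity}, choosing \(\tau\) small makes \(-D\mathcal M_j\) uniformly positive definite on the ball. The same bounds give a uniform \(C^1\) modulus for \(D\mathcal M_j\), since the second parameter derivatives are also dominated by \(CU^{p+1}\).

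\textbf{Step 3 (existence of the zero).} A quantitative inverse function theorem, or equivalently Brouwer degree applied to \(\mathcal M_j\) restricted to the ball, yields a zero \((\lambda_j,b_j)\) with \(|(\lambda_j,b_j)-(\lambda_j^0,b_j^0)|\le C|\mathcal M_j(\lambda_j^0,b_j^0)|\to0\). This zero is exactly \eqref{eq:final-orthogonality}, because \(\nabla_{(\lambda,b)}U\) and \(J_{\lambda,b,l}\) span the same space. The relation is \(J_0=\lambda\partial_\lambda U\) with \(\lambda\) bounded away from \(0\) on \(\mathcal K\), and \(J_l=\partial_{b^l}U\). Finally, \eqref{eq:final-parameter-approximation} follows from the convergence for \((\lambda_j^0,b_j^0)\) and the smooth dependence of \(U_{\lambda,b}\) on its parameters in \(C^2(\overline{A_{r,R}})\).

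The main obstacle is Step 2, specifically controlling the remainder \(\mathcal R_j\) uniformly near the small annular endpoints \(\sigma_j\) and \(R_j\). This is where Lemma~\ref{lem:annular-bubble-comparison} is essential: only a two-sided comparison on the whole annulus, not just on compact sets, makes the integrand dominated by \(U^{p+1}\) there.
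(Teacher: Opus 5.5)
Your proposal is correct and follows essentially the paper's argument: approximate parameters from bubble compactness, then $\mathcal M_j(\lambda_j^0,b_j^0)\to0$ and $D\mathcal M_j=-\mathbf G+o(1)$ via the $CU^{p+1}$ domination and \eqref{eq:modulation-tail-bound}, then a quantitative inverse-function step (the paper uses a contraction with frozen derivative on a ball of radius $2C|\mathcal M_j(\lambda_j^0,b_j^0)|$ instead of degree, an inessential difference). One small point: Lemma~\ref{lem:distance-to-bubble-family} controls the infimum on each fixed compact set separately, so to get a single sequence $(\lambda_j^0,b_j^0)$ that works on every $A_{r,R}$ you should, as the paper does, minimize on one fixed annulus such as $A_{1/3,3}$ and then use the subsequence argument together with the fact that a bubble is determined by its values on an annulus.
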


\begin{proof}
\emph{Step 1. Approximate bubble parameters.}
Choose \((\lambda_j^0,b_j^0)\in\mathcal K\) minimizing
\[
 \norm{v_j-U_{\lambda,b}}_{C^2(\overline{A_{1/3,3}})}
 \quad\text{over }(\lambda,b)\in\mathcal K.
\]
{By Lemma  \ref{lem:distance-to-bubble-family}, this minimum tends
to zero.}
Every subsequence has a further subsequence such that
\[
 (\lambda_{j_k}^0,b_{j_k}^0)\to(\lambda,b)\in\mathcal K,
 \qquad
 v_{j_k}\to U
 \quad\text{in }C^2_{\rm loc}(\R^n\setminus\{0\}),
\]
{By Lemma  \ref{lem:peak-bubble-compactness}, \(U\) is a bubble.}
{By the minimizing property,}
\[
 \|U-U_{\lambda,b}\|_{C^2(\overline{A_{1/3,3}})}
 =\lim_{k\to\infty}
 \|v_{j_k}-U_{\lambda_{j_k}^0,b_{j_k}^0}\|_{C^2(\overline{A_{1/3,3}})}
 =0.
\]
{By the explicit bubble formula
\eqref{eq:standard-bubble}, \(U=U_{\lambda,b}\) on \(\R^n\).}
Thus every parameter limit satisfies \eqref{eq:peak-limit-parameter-bounds}.
{After increasing \(j_1\),}
\[
 \frac34\lambda_*\le\lambda_j^0\le\frac32,\qquad
 |b_j^0|\le\frac{1+3b_*}{4}.
\]
{Repeating the subsequence argument, we obtain, for every fixed
\(0<r<R<\infty\),}
\[
 \|v_j-U_{\lambda_j^0,b_j^0}\|_{C^2(\overline{A_{r,R}})}
 \to0.
\]

{
These inequalities and \eqref{eq:bubble-parameter-set} show that the closed
parameter ball centered at \((\lambda_j^0,b_j^0)\) of radius
\[
 \frac18\min\{\lambda_*,1-b_*,1\}
\]
is contained in \(\mathcal K\).
}

\emph{Step 2. The modulation derivative.}
{For \(\tau\in\{\lambda,b^1,\ldots,b^n\}\), differentiating
\eqref{eq:modulation-map}, we obtain}
\[
 \begin{aligned}
 \partial_\tau\mathcal M_j(\lambda,b)
 &=-\int_{A_{\sigma_j,R_j}}
 U_{\lambda,b}^{p-1}(\partial_\tau\Phi_{j,\lambda,b})
 \nabla_{(\lambda,b)}U_{\lambda,b}\,\dd y\\
 &\quad+\int_{A_{\sigma_j,R_j}}
 (v_j-\Phi_{j,\lambda,b})
 \partial_\tau\bigl(U_{\lambda,b}^{p-1}
 \nabla_{(\lambda,b)}U_{\lambda,b}\bigr)\,\dd y.
 \end{aligned}
\]
{At \((\lambda,b)=(\lambda_j^0,b_j^0)\), it follows from
\eqref{eq:corrected-profile-parameter-derivatives} that}
 {
\[
 \left|\int_{A_{\sigma_j,R_j}}U_{\lambda,b}^{p-1}
 (\partial_\tau\Phi_{j,\lambda,b}-\partial_\tau U_{\lambda,b})
 \nabla_{(\lambda,b)}U_{\lambda,b}\,\dd y\right|
 \le C\eta_j\int_{\R^n}U_{\lambda,b}^{p+1}\,\dd y
 \le C\eta_j.
\]
}
Step  1 controls the second integral on fixed annuli.
{Using \eqref{eq:modulation-tail-bound} on the complementary
regions, we obtain}
 {
\[
 \limsup_{j\to\infty}\bigl(
 |\mathcal M_j(\lambda_j^0,b_j^0)|
 +\|D\mathcal M_j(\lambda_j^0,b_j^0)
       +\mathbf G_{\lambda_j^0,b_j^0}\|\bigr)
 \le C(r^n+R^{-n}),\qquad 0<r<1<R.
\]
}
Letting \(r\downarrow0\) and \(R\to\infty\) yields
\[
 |\mathcal M_j(\lambda_j^0,b_j^0)|\to0,
\]
and
\[
 D\mathcal M_j(\lambda_j^0,b_j^0)
 =-\mathbf G_{\lambda_j^0,b_j^0}+o(1).
\]
{By \eqref{eq:modulation-gram-positivity} and the integrable
bounds for the second derivatives, in the Euclidean operator norms,}
\[
 \begin{gathered}
 \left\|[D\mathcal M_j(\lambda_j^0,b_j^0)]^{-1}\right\|\le C,\quad 
 \sup_{\substack{(\lambda,b)\in\mathcal K\\
 |(\lambda,b)-(\lambda_j^0,b_j^0)|
 \le\frac18\min\{\lambda_*,1-b_*,1\}}}
 \|D^2\mathcal M_j(\lambda,b)\|\le C.
 \end{gathered}
\]

\emph{Step 3. Selection of the parameters.}
The map
\[
 (\lambda,b)\longmapsto
 (\lambda,b)-[D\mathcal M_j(\lambda_j^0,b_j^0)]^{-1}
 \mathcal M_j(\lambda,b)
\]
is a contraction on the ball of radius
\(2C|\mathcal M_j(\lambda_j^0,b_j^0)|\) about \((\lambda_j^0,b_j^0)\),
for all sufficiently large \(j\). Its fixed point satisfies
\[
 \mathcal M_j(\lambda_j,b_j)=0,\qquad
 |(\lambda_j,b_j)-(\lambda_j^0,b_j^0)|
 \le2C|\mathcal M_j(\lambda_j^0,b_j^0)|\to0.
\]
Since \(J_{\lambda,b,0}=\lambda\partial_\lambda U_{\lambda,b}\) and
\(J_{\lambda,b,l}=\partial_{b^l}U_{\lambda,b}\), this is
\eqref{eq:final-orthogonality}.
{Combining the parameter convergence with Step  1, we obtain
\eqref{eq:final-parameter-approximation}.}
\end{proof}

For the rest of the manuscript, \((\lambda_j,b_j)\) denotes this fixed
choice.  Define the abbreviations
\[
 U_j:=U_{\lambda_j,b_j},
 \qquad
 \Phi_j:=\Phi_{j,\lambda_j,b_j}.
\]
These abbreviations suppress the selected parameters. Using
 \( B_j\) from \eqref{eq:symmetric-dilation-form}, define
 {
\begin{equation}\label{eq:reference-pohozaev-quantity}
 \widetilde Q_j
 = B_j(\Phi_j,\Phi_j).
\end{equation}
}

\subsection{Expansion of the Pohozaev integral for the corrected profile}

We now estimate \(\widetilde Q_j\).
Unless another domain is displayed, integrals in the following estimates
are over \(A_{\sigma_j,R_j}\) in the Euclidean \(y\) coordinates.
We write \(r=|y|\) in the radial estimates. Every little \(o\) term is
taken as \(j\to\infty\), with the background metric fixed, uniformly for
\((\lambda_j,b_j)\in\mathcal K\).

To separate \(\widetilde Q_j\) into its linear, quadratic, and cubic Taylor terms, for \(0\le\tau\le1\) define a metric \(g_{j,\tau}\) and a scalar function
\(V_{j,\tau}\) on \(A_{\sigma_j,R_j}\) by
\begin{equation}\label{eq:reference-path}
 g_{j,\tau}(y)=\exp\{\tau h(\rho_jy)\},\qquad
 V_{j,\tau}=U_j
 +\tau\bigl(\Phi_j-U_j\bigr).
\end{equation}
{By the conformal normal conditions,}
\[
 \det g_{j,\tau}=1,\qquad
 \sum_{l=1}^ng_{j,\tau}^{kl}(y)y_l=y^k
 \qquad 1\le k\le n.
\]
In particular, \(g_{j,0}^{kl}=\delta^{kl}\), \(g_{j,1}=g_j\), and
\(V_{j,1}=\Phi_j\). {Define
\(\mathfrak q_j:[0,1]\to\R\) by}
\begin{equation}\label{eq:reference-pohozaev-path}
 \mathfrak q_j(\tau)=\int_{A_{\sigma_j,R_j}}
 \bigl(D_{\mathrm{di}}V_{j,\tau}\bigr)
 (\Delta-L_{g_{j,\tau}})V_{j,\tau}\,\dd y.
\end{equation}
{From \eqref{eq:reference-path} and
\eqref{eq:reference-pohozaev-quantity}, we have}
\[
 \mathfrak q_j(1)=\widetilde Q_j.
\]
Since \(g_{j,0}\) is Euclidean and \(V_{j,0}=U_j\),
\(\mathfrak q_j(0)=0\). {Applying Taylor's formula, we obtain}
\begin{equation}\label{eq:reference-pohozaev-taylor-formula}
 \widetilde Q_j=\mathfrak q_j'(0)+\frac12\mathfrak q_j''(0)
 +\frac12\int_0^1(1-\tau)^2\mathfrak q_j^{(3)}(\tau)\,\dd\tau.
\end{equation}
{We estimate the quadratic term first, using
Lemma  \ref{lem:annular-bilinear-density} for the common radial bounds and
Proposition  \ref{prop:finite-annulus-self-pairing} for the positive diagonal
coefficient. The linear and cubic terms are estimated in
Lemmas  \ref{lem:reference-linear-term} and
\ref{lem:reference-third-variation}.}

The next lemma writes the Pohozaev integral in terms of the scale
derivatives of the metric coefficients. Fix a \(C^1\) family \(g_\rho\)
of \(C^3\) metrics on \(A_{\sigma,R}\), with
\[
 \det g_\rho=1,\qquad
 \sum_{l=1}^ng_\rho^{kl}(y)y_l=y^k.
\]
For \(v\in C^2(\overline{A_{\sigma,R}})\) and \(\sigma\le r\le R\),
denote the contribution from a sphere by
\begin{equation}\label{eq:scaling-boundary-term}
 \mathcal B_{g_\rho}(r,v)
 =\frac r2\int_{\partial B_r}
 \left[
 \sum_{k,l=1}^n(g_\rho^{kl}-\delta^{kl})\partial_kv\partial_lv
 +c(n)\operatorname{Scal}_{g_\rho}v^2
 \right]\,\dd S.
\end{equation}

\begin{lemma}\label{lem:euclidean-scaling-identity}
Assume that
\[
 \rho\partial_\rho g_\rho^{kl}=y\cdot\nabla_y g_\rho^{kl},
 \qquad
 \rho\partial_\rho \operatorname{Scal}_{g_\rho}
 =2\operatorname{Scal}_{g_\rho}+y\cdot\nabla_y\operatorname{Scal}_{g_\rho}.
\]
At each fixed \(\rho\), every \(v\in C^2(\overline{A_{\sigma,R}})\)
satisfies
\begin{equation}\label{eq:euclidean-scaling-identity}
 \begin{aligned}
 &\int_{A_{\sigma,R}}
 \left(\frac{n-2}{2}v+y\cdot\nabla_yv\right)
 (\Delta-L_{g_\rho})v\,\dd y\\
 {}={}&-\frac12\int_{A_{\sigma,R}}
 \left[
 \sum_{k,l=1}^n(\rho\partial_\rho g_\rho^{kl})\partial_kv\partial_lv
 +\rho\partial_\rho(c(n)\operatorname{Scal}_{g_\rho})v^2
 \right]\,\dd y\\
 &\quad+\frac12\int_{\partial A_{\sigma,R}}(y\cdot\nu)
 \left[
 \sum_{k,l=1}^n(g_\rho^{kl}-\delta^{kl})\partial_kv\partial_lv
 +c(n)\operatorname{Scal}_{g_\rho}v^2
 \right]\,\dd S.
 \end{aligned}
\end{equation}
Here \(\nu\) is the outward Euclidean unit normal. The last integral equals
\(\mathcal B_{g_\rho}(R,v)-\mathcal B_{g_\rho}(\sigma,v)\).
\end{lemma}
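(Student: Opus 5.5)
The identity is a Pohozaev-type integration by parts. I would first rewrite the left side as an integral over the annulus of a total divergence plus a bulk term. Everything rests on the divergence form that \(\det g_\rho=1\) gives for the metric error operator, as in \eqref{eq:scaled-metric-error}:
\[
 (\Delta-L_{g_\rho})v=-\sum_{k,l=1}^n\partial_k\bigl(a^{kl}\partial_lv\bigr)+sv,
 \qquad a^{kl}=g_\rho^{kl}-\delta^{kl},\quad s=c(n)\operatorname{Scal}_{g_\rho}.
\]
I would treat the second-order part and the zeroth-order part separately, with \(D=\frac{n-2}{2}+y\cdot\nabla_y\).

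\emph{Second-order part.} Integrating \(-(Dv)\partial_k(a^{kl}\partial_lv)\) by parts once gives two pieces. The first is the boundary term \(-\int_{\partial A}(Dv)a^{kl}\nu_k\partial_lv\), which vanishes because the gauge condition gives \(a^{kl}y_k=0\), hence \(a^{kl}\nu_k=0\) on spheres centered at \(0\). The second is the bulk term \(\int a^{kl}\partial_k(Dv)\partial_lv\). Next I expand \(\partial_k(Dv)=\frac n2\partial_kv+y^i\partial_i\partial_kv\), using \(\partial_k(y^i\partial_iv)=\partial_kv+y^i\partial_{ik}v\). Because \(a\) is symmetric,
\[
 a^{kl}y^i\partial_{ik}v\,\partial_lv=\tfrac12 y\cdot\nabla\bigl(a^{kl}\partial_kv\partial_lv\bigr)-\tfrac12(y\cdot\nabla a^{kl})\partial_kv\partial_lv .
\]
Integrating \(\frac12 y\cdot\nabla(\cdot)\) by parts produces \(-\frac n2\int a^{kl}\partial_kv\partial_lv\), which cancels the \(\frac n2\) term, together with the boundary term \(\frac12\int_{\partial A}(y\cdot\nu)a^{kl}\partial_kv\partial_lv\). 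What remains is \(-\frac12\int(y\cdot\nabla a^{kl})\partial_kv\partial_lv\), and the hypothesis \(\rho\partial_\rho g^{kl}_\rho=y\cdot\nabla g^{kl}_\rho\) turns it into the gradient term on the right of \eqref{eq:euclidean-scaling-identity}.

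\emph{Zeroth-order part.} Here
\[
 sv\,Dv=\tfrac{n-2}{2}sv^2+\tfrac12 s\,y\cdot\nabla(v^2),
\]
and integrating by parts yields \(\frac12\int_{\partial A}(y\cdot\nu)sv^2-\frac12\int\bigl(n s+y\cdot\nabla s\bigr)v^2\). Adding \(\frac{n-2}{2}\int sv^2\) leaves the bulk term \(-\frac12\int\bigl(2s+y\cdot\nabla s\bigr)v^2\). The second hypothesis, with the constant \(c(n)\), identifies this with \(-\frac12\int\rho\partial_\rho(c(n)\operatorname{Scal}_{g_\rho})v^2\).

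\emph{Boundary terms.} On \(\partial B_R\) we have \(y\cdot\nu=R\), and on \(\partial B_\sigma\) we have \(y\cdot\nu=-\sigma\), so the collected boundary integral equals \(\mathcal B_{g_\rho}(R,v)-\mathcal B_{g_\rho}(\sigma,v)\) by \eqref{eq:scaling-boundary-term}.

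All integrations by parts are legitimate since \(v\in C^2\) and \(g_\rho\in C^3\) up to the boundary. The step most likely to go wrong is the bookkeeping of the dimensional constants \(\frac n2\), \(\frac{n-2}{2}\) and \(n\), which cancel to leave exactly the weights \(0\) and \(2\). These weights match the scaling behaviour of \(g^{kl}\) and \(\operatorname{Scal}\) assumed in the hypotheses, and the other delicate point is the vanishing flux term, which rests entirely on the radial gauge.
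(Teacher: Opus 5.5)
Your proposal is correct and follows essentially the same route as the paper: you write \((\Delta-L_{g_\rho})v\) in divergence form using \(\det g_\rho=1\), integrate the principal and scalar parts against \(\frac{n-2}{2}v+y\cdot\nabla_yv\), use the radial gauge to make the conormal flux vanish, and convert \(y\cdot\nabla_y\) of the coefficients into \(\rho\partial_\rho\) through the two hypotheses. Your bookkeeping of the constants \(\frac n2\), \(\frac{n-2}{2}\) and \(n\) checks out, and it makes explicit the steps the paper only summarizes.
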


\begin{proof}
Since \(\det g_\rho=1\),
\[
 (\Delta-L_{g_\rho})v
 =-\sum_{k,l=1}^n
 \partial_k\bigl((g_\rho^{kl}-\delta^{kl})\partial_lv\bigr)
 +c(n)\operatorname{Scal}_{g_\rho}v.
\]
{Integrating the principal part against
\((n-2)v/2+y\cdot\nabla_yv\) and using symmetry, we obtain the volume
term}
\[
 -\frac12\int_{A_{\sigma,R}}\sum_{k,l=1}^n
 y\cdot\nabla_y(g_\rho^{kl}-\delta^{kl})\partial_kv\partial_lv\,\dd y.
\]
The scalar term contributes
\[
 -c(n)\int_{A_{\sigma,R}}
 \left(\operatorname{Scal}_{g_\rho}+\frac12y\cdot\nabla_y\operatorname{Scal}_{g_\rho}\right)v^2\,\dd y.
\]
{Using the two scale identities, we recover the volume
integral in \eqref{eq:euclidean-scaling-identity}.} On either boundary sphere,
\(\sum_k(g_\rho^{kl}-\delta^{kl})\nu_k=0\). The remaining boundary
contribution is the last integral in that formula.
\end{proof}

To identify the quadratic term in
\eqref{eq:reference-pohozaev-taylor-formula}, we retain the homogeneous metric
directions and their scalar corrections in the following variation. The
annulus \(A_{\sigma_j,R_j}\), the scale \(\rho_j\),
and the bubble parameters \((\lambda_j,b_j)\) are fixed. For
\(H=\sum_{m=2}^{d}H^{(m)}\), where every
\(H^{(m)}\) is a symmetric homogeneous polynomial matrix of degree \(m\)
satisfying \eqref{eq:metric-jet-conditions}, set
\begin{equation}\label{eq:finite-annulus-quadratic-path}
 \begin{aligned}
 g_{\eps,H}
 &=\exp\left\{\eps
 \sum_{m=2}^{d}\rho_j^mH^{(m)}\right\},\\
 v_{\eps,H}
 &=U_j
 +\eps\sum_{2\le m<\frac{n-2}{2}}\rho_j^m
 Z_{\lambda_j,b_j}[H^{(m)}].
 \end{aligned}
\end{equation}
The quadratic Taylor coefficient defines the quadratic form
\begin{equation}\label{eq:finite-annulus-quadratic-form}
 I_j^{\mathrm{ann}}(H,H)=\frac12
 \left.\partial_\eps^2\right|_{\eps=0}
 \int_{A_{\sigma_j,R_j}}
 \left(\frac{n-2}{2}v_{\eps,H}
 +y\cdot\nabla_yv_{\eps,H}\right)
 (\Delta-L_{g_{\eps,H}})v_{\eps,H}\,\dd y.
\end{equation}
For
\(\widetilde H=\sum_{m=2}^{d}\widetilde H^{(m)}\), with every
\(\widetilde H^{(m)}\) a symmetric homogeneous polynomial matrix of degree \(m\)
satisfying \eqref{eq:metric-jet-conditions}, define
\begin{equation}\label{eq:finite-annulus-polarization}
 I_j^{\mathrm{ann}}(H,\widetilde H)=\frac12\bigl[
 I_j^{\mathrm{ann}}(H+\widetilde H,H+\widetilde H)
 -I_j^{\mathrm{ann}}(H,H)-I_j^{\mathrm{ann}}(\widetilde H,\widetilde H)
 \bigr].
\end{equation}

For integers
\(2\le\alpha,\beta\le d\), the
\(H^{(\alpha)}\) direction in the polarization
\(I_j^{\mathrm{ann}}(H^{(\alpha)},H^{(\beta)})\) from
\eqref{eq:finite-annulus-polarization} includes
\(\rho_j^\alpha Z_{\lambda_j,b_j}[H^{(\alpha)}]\) exactly when
\(\alpha<(n-2)/2\); the \(H^{(\beta)}\) direction is interpreted in the
same way. For the estimate below, we expand this polarization to second
order, integrate the principal divergence terms once by parts, and then
integrate over \(\Sph^{n-1}\).

{We also apply this expansion to a symmetric tensor \(T\) and a
scalar function \(z\) on \(A_{\sigma_j,R_j}\). For a fixed
\(\alpha\ge2\), suppose that}
\begin{equation}\label{eq:extended-bilinear-direction-bounds}
 \begin{gathered}
 T=T^{\mathsf T},\qquad
 \sum_{k=1}^nT_{kk}=0,\qquad
 \sum_{l=1}^nT_{kl}(y)y_l=0,\\
 \sum_{\ell=0}^2r^\ell|\nabla_y^\ell T(y)|
 \le C\rho_j^\alpha r^\alpha,\\
 \sum_{\ell=0}^2r^\ell|\nabla_y^\ell z(y)|
 \le C\rho_j^\alpha
 \begin{cases}
  1,&0<r\le1,\\
  r^{\alpha+2-n},&1\le r\le R_j,
 \end{cases}
 \qquad r=|y|.
 \end{gathered}
\end{equation}
{The constant in these bounds is independent of \(j\), and \(z\)
may vanish.}

For two such pairs \((T_1,z_1)\) and \((T_2,z_2)\), use the mixed
coefficient of the family
\[
 g_{s,\tau}=\exp(sT_1+\tau T_2),\qquad
 v_{s,\tau}=U_j+sz_1+\tau z_2.
\]
{The corresponding mixed coefficient is one half of
\(\partial_s\partial_\tau|_{s=\tau=0}\) of the integral in
\eqref{eq:finite-annulus-quadratic-form}.}

\begin{lemma}\label{lem:annular-bilinear-density}
{Fix integers \(\alpha,\beta\ge2\). For \(i=1,2\), let
\((T_i,z_i)\) satisfy \eqref{eq:extended-bilinear-direction-bounds} with
exponent \(\alpha\) when \(i=1\) and exponent \(\beta\) when \(i=2\).
There is \(C>0\), independent of \(j\), such that, for all sufficiently
large \(j\), the mixed coefficient defined above can be written as a radial integral over
\([\sigma_j,R_j]\) and two boundary terms. After the divergence terms are
integrated once by parts, the absolute value of the radial density is bounded
by}
\begin{equation}\label{eq:annular-bilinear-density}
 C\rho_j^{\alpha+\beta}
 \begin{cases}
  r^{n-3}\,\dd r,&0<r\le1,\\
  r^{\alpha+\beta+1-n}\,\dd r,&1\le r\le R_j,
 \end{cases}
\end{equation}
{and the boundary terms over \(\partial B_{\sigma_j}\) and
\(\partial B_{R_j}\) are bounded, respectively, by}
\begin{equation}\label{eq:annular-bilinear-boundary-density}
 C\rho_j^{\alpha+\beta}\sigma_j^{n-2},\qquad
 C\rho_j^{\alpha+\beta}R_j^{\alpha+\beta+2-n},
\end{equation}
{The constant may depend on \(n\), \(\alpha\), \(\beta\), the
compact parameter set \(\mathcal K\), and the constants in
\eqref{eq:extended-bilinear-direction-bounds}.}
\end{lemma}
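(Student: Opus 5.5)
We compute the mixed coefficient directly from the expansion of the integrand in \eqref{eq:finite-annulus-quadratic-form} and check, term by term, the pointwise size of each contribution.

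\emph{Step 1: the second-order terms.} Write
\[
 g^{-1}_{s,\tau}=\exp(-sT_1-\tau T_2).
\]
Then \(\partial_s\partial_\tau g^{kl}_{s,\tau}|_0=\frac12(T_1T_2+T_2T_1)_{kl}\), and \(\partial_s\partial_\tau\operatorname{Scal}_{g_{s,\tau}}|_0\) is the polarized quadratic part \(\mathcal S_2(T_1,T_2)\) of the scalar curvature expansion. The latter is a sum of terms of type \(T_1\partial^2T_2\), \(\partial T_1\,\partial T_2\) and \(\partial^2T_1\,T_2\); the linear part is \(\partial_k\partial_lT_{kl}\).

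Since \(\det g=1\), we may write
\[
 (\Delta-L_g)v=-\partial_k\bigl((g^{kl}-\delta^{kl})\partial_lv\bigr)+c(n)\operatorname{Scal}_gv.
\]
Taking \(\partial_s\partial_\tau\) at \(0\) of \((D_{\rm di}v)(\Delta-L_g)v\), with \(v=U_j+sz_1+\tau z_2\), gives three groups:
\begin{enumerate}[label=(\alph*)]
\item \((D_{\rm di}U_j)\bigl[-\partial_k(\tfrac12\{T_1,T_2\}_{kl}\partial_lU_j)+c(n)\mathcal S_2(T_1,T_2)U_j\bigr]\);
\item cross terms in which one metric direction \(T_a\) meets one scalar direction \(z_b\) with \(a\ne b\), namely \((D_{\rm di}z_b)[\partial_k(T_{a,kl}\partial_lU_j)+c(n)\partial_k\partial_lT_{a,kl}U_j]\) and \((D_{\rm di}U_j)[\partial_k(T_{a,kl}\partial_lz_b)+c(n)\partial_k\partial_lT_{a,kl}z_b]\);
\item no \(z_1z_2\) term, since \(\Delta-L_{g_{0,0}}=0\).
\end{enumerate}

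\emph{Step 2: integration by parts.} In each term carrying a divergence \(\partial_k(\cdot)\), we integrate once by parts on \(A_{\sigma_j,R_j}\). The boundary contributions then have the form
\[
 \int_{\partial B_r}(D_{\rm di}f)\,\nu_kM_{kl}\partial_l g\,\dd S.
\]
These vanish identically: \(\nu_kT_{a,kl}=0\) by the radial gauge in \eqref{eq:extended-bilinear-direction-bounds}. The same holds for \(\{T_1,T_2\}\), because \(T_1T_2y=0\) and \(T_2T_1y=0\).

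For the scalar curvature pieces, we integrate by parts once so that at most first derivatives of \(T\) are paired with first derivatives of \(D_{\rm di}\) of the scalar factor. This produces genuine boundary terms of the form
\[
 r\int_{\partial B_r}(\partial T)\,(\text{factor})\,v\,w\,\dd S
\]
at \(r=\sigma_j,R_j\), together with the \(\partial_r\) pieces of \(\mathcal S_2\). Alternatively, one can use the Lemma \ref{lem:euclidean-scaling-identity}-type rearrangement; the bounds are the same either way. Integrating over \(\Sph^{n-1}\) then yields the radial density and the two boundary terms.

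\emph{Step 3: pointwise bounds.} On \(A_{\sigma_j,R_j}\), by Lemma \ref{lem:annular-bubble-comparison}-type bounds uniform on \(\mathcal K\), we have \(|\nabla^\ell U_j|+|\nabla^\ell D_{\rm di}U_j|\le C(1+r)^{2-n-\ell}\). By hypothesis, \(|\nabla^\ell T_a|\le C\rho_j^{\alpha_a}r^{\alpha_a-\ell}\), and \(z_b\) satisfies the stated bounds.

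\emph{Group (a).} After one integration, the integrand is bounded by
\[
 \rho_j^{\alpha+\beta}r^{\alpha+\beta-2}(1+r)^{2(1-n)}.
\]
For \(r\le1\), multiplying by \(r^{n-1}\) gives at most \(r^{\alpha+\beta+n-3}\le r^{n-3}\). For \(r\ge1\) the density is \(r^{\alpha+\beta+1-n}\).

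\emph{Group (b).} The product \(|T_a|\,|\nabla z_b|\,|\nabla U_j|\), or its scalar-curvature analogue \(|\partial^2T_a|\,|z_b|\,|U_j|\), is bounded by
\[
 \rho_j^{\alpha+\beta}r^{\alpha_a-2}(1+r)^{2-n}\cdot\begin{cases}1,&r\le1,\\ r^{\alpha_b+2-n},&r\ge1.\end{cases}
\]
Times \(r^{n-1}\), this gives \(r^{n-3}\) for small \(r\) (using \(\alpha_a\ge2\)) and \(r^{\alpha+\beta+1-n}\) for large \(r\). The derivative falling on \(D_{\rm di}z_b\) is handled the same way.

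\emph{Boundary terms.} These involve at most one derivative of \(T\) and the values of \(U_j\), \(z\) and their first derivatives, all multiplied by \(r\cdot r^{n-1}\). At \(r=\sigma_j\) this gives \(\rho_j^{\alpha+\beta}\sigma_j^{\alpha+\beta+n-2}\le C\rho_j^{\alpha+\beta}\sigma_j^{n-2}\). At \(r=R_j\) it gives \(\rho_j^{\alpha+\beta}R_j^{\alpha+\beta+2-n}\).

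\emph{Main obstacle.} The main difficulty is the bookkeeping of \(\mathcal S_2\). Its second-derivative terms must be integrated by parts once, so that the bound uses only \(\ell\le2\) derivatives in \eqref{eq:extended-bilinear-direction-bounds} and no third derivative of \(U_j\) or \(T\). This must be done while keeping the resulting boundary terms of the claimed order, and while keeping the factor \(r^{\alpha_a-2}\) from spoiling integrability near \(0\). The point that makes this work is \(\alpha_a\ge2\).
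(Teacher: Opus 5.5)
Your proposal is correct and follows essentially the paper's route: the same exact expansion of the mixed coefficient (the paper's \eqref{eq:exact-mixed-coefficient}), one integration by parts of the divergence terms, and radial integration of the pointwise bounds from \eqref{eq:extended-bilinear-direction-bounds} and the bubble decay. There are two small corrections. In group (a) the factor should be $(1+r)^{2(2-n)}$, not $(1+r)^{2(1-n)}$; only the former gives the density $r^{\alpha+\beta+1-n}$ you state. The extra integration by parts of $\mathcal S_2$ is harmless but unnecessary, because $\mathcal S_2$ involves at most second derivatives of $T_a$, which the hypothesis with $\ell\le2$ already bounds by $C\rho_j^{\alpha+\beta}r^{\alpha+\beta-2}$; so the ``main obstacle'' you describe does not actually arise.
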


\begin{proof}
Use the family \(g_{s,\tau}=\exp(sT_1+\tau T_2)\) specified above.
At \(s=\tau=0\),
\[
 \partial_sg_{s,\tau}^{-1}=-T_1,\qquad
 \partial_\tau g_{s,\tau}^{-1}=-T_2,\qquad
 \partial_s\partial_\tau g_{s,\tau}^{-1}
 =\frac12(T_1T_2+T_2T_1).
\]
The linear curvature coefficients are
\(\sum_{k,l=1}^n\partial_k\partial_l(T_i)_{kl}\), for \(i=1,2\).
The mixed curvature coefficient satisfies
\[
 \left|\left.\partial_s\partial_\tau \operatorname{Scal}_{g_{s,\tau}}
 \right|_{s=\tau=0}\right|
 \le C\bigl(
 |T_1||\nabla_y^2T_2|+|T_2||\nabla_y^2T_1|
 +|\nabla_yT_1||\nabla_yT_2|\bigr).
\]
These formulas and \eqref{eq:extended-bilinear-direction-bounds}
determine all terms in the mixed coefficient.
{The complete expansion is recorded in
\eqref{eq:exact-mixed-coefficient}.}

Integrate each divergence term once. For \(r\ge1\), the terms with two
metric factors have radial bound
\[
 C\rho_j^{\alpha+\beta}
 \bigl(r^{\alpha+\beta}r^{1-n}r^{1-n}
 +r^{\alpha+\beta-2}r^{2-n}r^{2-n}\bigr)r^{n-1}\,\dd r
 =C\rho_j^{\alpha+\beta}r^{\alpha+\beta+1-n}\,\dd r.
\]
The terms with \(z_1\) and \(T_2\) have radial bound
\[
 C\rho_j^{\alpha+\beta}
 \bigl(r^\beta r^{\alpha+1-n}r^{1-n}
 +r^{\beta-2}r^{\alpha+2-n}r^{2-n}\bigr)r^{n-1}\,\dd r
 =C\rho_j^{\alpha+\beta}r^{\alpha+\beta+1-n}\,\dd r.
\]
Interchanging the two pairs covers the terms with \(T_1\) and \(z_2\).
{For \(r\le1\), by the bounds in
\eqref{eq:extended-bilinear-direction-bounds}, the common integrand is
bounded by \(C\rho_j^{\alpha+\beta}r^{-2}\), and hence the density is}
\(C\rho_j^{\alpha+\beta}r^{n-3}\,\dd r\).

On a sphere of radius \(r\ge1\), the corresponding boundary products
are bounded by
\[
 C\rho_j^{\alpha+\beta}
 r^{\alpha+\beta}r^{1-n}r^{2-n}r^{n-1}
 =C\rho_j^{\alpha+\beta}r^{\alpha+\beta+2-n}.
\]
For \(r\le1\), they are bounded by
\(C\rho_j^{\alpha+\beta}r^{n-2}\).
This proves both the volume and boundary estimates.
\end{proof}

{Integrating these densities and including the two boundary
estimates, we obtain}
\begin{equation}\label{eq:integrated-bilinear-density}
 |I_j^{\mathrm{ann}}(H^{(\alpha)},H^{(\beta)})|
 \le C\rho_j^{\alpha+\beta}
 \begin{cases}
  1,&\alpha+\beta<n-2,\\
  1+\log R_j,&\alpha+\beta=n-2,\\
  1+R_j^{\alpha+\beta+2-n},&\alpha+\beta>n-2.
 \end{cases}
\end{equation}
The same bound applies to the mixed coefficient of any two pairs
satisfying \eqref{eq:extended-bilinear-direction-bounds}.
The comparison with the coefficient on Euclidean space will determine
the sign of the diagonal term \(I_j^{\mathrm{ann}}(H^{(m)},H^{(m)})\) for \(2\le m<(n-2)/2\).

Fix an integer \(m\) with \(2\le m<(n-2)/2\), a symmetric homogeneous
polynomial matrix \(H\) of degree \(m\) satisfying
\eqref{eq:metric-jet-conditions}, an index \(j\), and
\(L>2R_j\). In \eqref{eq:whole-space-scaling-family}, take
\((\lambda,b)=(\lambda_j,b_j)\). Since \(\chi_L=1\) on
\(A_{\sigma_j,R_j}\), {we apply
Lemma  \ref{lem:euclidean-scaling-identity} to the decomposition}
\[
 \R^n=\overline{B_{\sigma_j}}\cup A_{\sigma_j,R_j}
 \cup(\R^n\setminus B_{R_j})
\]
{and obtain the following four terms.} For fixed \(j,L,H\), take
\(|\eps|\rho_j^m<s_L\); the derivatives with respect to \(\rho\) are evaluated in a
neighborhood of \(\rho_j\) where \(V_{\eps,\rho,L}>0\).
\begin{subequations}\label{eq:finite-to-whole-space-terms}
\begin{align}
I_{j,L}^{(1)}(\eps)
&:=-\frac12\int_{\R^n}\Biggl[
\sum_{k,l=1}^n
\left.(\rho\partial_\rho g_{\eps,\rho,L}^{kl})\right|_{\rho=\rho_j}
\partial_kV_{\eps,\rho_j,L}\,
\partial_lV_{\eps,\rho_j,L}
\notag\\
&\qquad\qquad
+\left.\rho\partial_\rho
\bigl(c(n)\operatorname{Scal}_{g_{\eps,\rho,L}}\bigr)
\right|_{\rho=\rho_j}
V_{\eps,\rho_j,L}^2
\Biggr]\,\dd y,
\\
I_{j,L}^{(2)}(\eps)
&:=\frac12
\int_{B_{\sigma_j}\cup(\R^n\setminus B_{R_j})}\Biggl[
\sum_{k,l=1}^n
\left.(\rho\partial_\rho g_{\eps,\rho,L}^{kl})
\right|_{\rho=\rho_j}
\partial_kV_{\eps,\rho_j,L}\,
\partial_lV_{\eps,\rho_j,L}
\notag\\
&\qquad\qquad
+\left.\rho\partial_\rho
\bigl(c(n)\operatorname{Scal}_{g_{\eps,\rho,L}}\bigr)
\right|_{\rho=\rho_j}
V_{\eps,\rho_j,L}^2
\Biggr]\,\dd y,
\label{eq:complementary-volume-term}
\\
I_{j,L}^{(3)}(\eps)
&:=\mathcal B_{g_{\eps,\rho_j,L}}
\bigl(R_j,V_{\eps,\rho_j,L}\bigr),
\qquad
I_{j,L}^{(4)}(\eps)
:=\mathcal B_{g_{\eps,\rho_j,L}}
\bigl(\sigma_j,V_{\eps,\rho_j,L}\bigr).
\label{eq:boundary-terms}
\end{align}
\end{subequations}
The notation \(I_{j,L}^{(a)}\) is local to this decomposition and is distinct
from the quadratic form \(I_j^{\mathrm{ann}}\) in
\eqref{eq:finite-annulus-quadratic-form}.
{Here \(I_{j,L}^{(1)}\) is the volume term over the whole space,
\(I_{j,L}^{(2)}\) removes its inner and outer complements, and
\(I_{j,L}^{(3)}\) and \(I_{j,L}^{(4)}\) in \eqref{eq:boundary-terms} are the outer and inner boundary
terms. Thus \(I_{j,L}^{(1)}+I_{j,L}^{(2)}\) is the volume term over
\(A_{\sigma_j,R_j}\) in \eqref{eq:euclidean-scaling-identity}.}
{Applying Lemma  \ref{lem:euclidean-scaling-identity}, we
therefore obtain}
 {
\[
 \int_{A_{\sigma_j,R_j}}
 (D_{\mathrm{di}}V_{\eps,\rho_j,L})
 (\Delta-L_{g_{\eps,\rho_j,L}})V_{\eps,\rho_j,L}\,\dd y
 =I_{j,L}^{(1)}(\eps)+I_{j,L}^{(2)}(\eps)
 +I_{j,L}^{(3)}(\eps)-I_{j,L}^{(4)}(\eps).
\]
}
The annulus \(A_{L,2L}\) is contained in the integration region of
\(I_{j,L}^{(2)}(\eps)\).

\subsection{A homogeneous term and the inner and outer regions}

The next lemma uses an integer \(m\) with \(2\le m<(n-2)/2\), a symmetric
homogeneous polynomial matrix \(H\) of degree \(m\) satisfying
\eqref{eq:metric-jet-conditions}, the choices
\((\lambda,b)=(\lambda_j,b_j)\) and \(\rho=\rho_j\) in
\eqref{eq:whole-space-scaling-family}, and \(L>2R_j\).

Proposition  \ref{prop:whole-space-coefficient} identifies the quadratic
coefficient of \(I_{j,L}^{(1)}(\eps)\). The next lemma bounds those of
\(I_{j,L}^{(2)}(\eps)\), \(I_{j,L}^{(3)}(\eps)\), and
\(I_{j,L}^{(4)}(\eps)\), including the integral over \(A_{L,2L}\).

In the following estimates, the coefficient of \(\eps^2\) means
\(\frac{1}{2}\partial_\eps^2|_{\eps=0}\).
\begin{lemma}\label{lem:finite-annulus-coefficient-bounds}
There is \(C=C(n,m,\mathcal K)>0\) such that, for all sufficiently large
\(j\), the absolute value of the coefficient of \(\eps^2\) in the
integrand defining \(I_{j,L}^{(2)}(\eps)\) in
\eqref{eq:complementary-volume-term} is bounded by
\begin{equation}\label{eq:complement-density}
 C\rho_j^{2m}|H|^2
 \begin{cases}
  r^{-2},&0<r\le1,\\
  r^{2m+2-2n},&r\ge1.
 \end{cases}
\end{equation}
The absolute value of the coefficient of \(\eps^2\) in
\(\mathcal B_{g_{\eps,\rho,L}}(r,V_{\eps,\rho,L})\) is bounded by
\begin{equation}\label{eq:boundary-density}
 C\rho_j^{2m}|H|^2
 \begin{cases}
  r^{n-2},&0<r\le1,\\
  r^{2m+2-n},&r\ge1.
 \end{cases}
\end{equation}
At \(r=R_j\) and \(r=\sigma_j\), respectively,
\eqref{eq:boundary-density} bounds the coefficients of
\(I_{j,L}^{(3)}(\eps)\) and \(I_{j,L}^{(4)}(\eps)\).
The coefficient of \(\eps^2\) in the part of \(I_{j,L}^{(2)}(\eps)\) over
\(A_{L,2L}\), where the cutoff varies, is
\(O(\rho_j^{2m}L^{2m+2-n}|H|^2)\). The constants are
uniform for \((\lambda_j,b_j)\in\mathcal K\) and \(L>2R_j\).
\end{lemma}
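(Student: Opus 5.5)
The plan is to expand each integrand to second order in \(\eps\) and bound every term pointwise, using only the explicit growth of \(\chi_LH\), the decay of \(Z_{\lambda_j,b_j,L}[H]\) from Lemma \ref{lem:cutoff-response}, and the bubble bounds \(U_j\asymp(1+|y|)^{2-n}\), \(|\nabla U_j|\le C(1+|y|)^{1-n}\), which are uniform on \(\mathcal K\). Write \(s=\eps\rho^m\) and \(g_{s}=\exp(s\chi_LH)\). Since \(\rho\partial_\rho=ms\partial_s\), we have
\[
\rho\partial_\rho g^{kl}=ms\partial_s\exp(-s\chi_LH)^{kl},\qquad
\rho\partial_\rho\operatorname{Scal}_{g}=ms\partial_s\operatorname{Scal}_{g_s}.
\]
Both quantities are therefore \(O(s)\). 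Their \(s\)-linear parts are \(-m\chi_LH\) and \(m\sum\partial_k\partial_l(\chi_LH)_{kl}\). Their \(s^2\) parts are quadratic in \(\chi_LH\) and its first two derivatives, as in the mixed-curvature bound used in Lemma \ref{lem:annular-bilinear-density}. Also \(V=U_j+sZ_L\), where \(Z_L=Z_{\lambda_j,b_j,L}[H]\).

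\textbf{Volume term.} The \(\eps^2\) coefficient of the integrand of \(I^{(2)}_{j,L}\) is \(\rho_j^{2m}\) times a sum of two kinds of terms. The first kind is (linear metric coefficient)\(\times\)(cross term \(\nabla U_j\cdot\nabla Z_L\), or \(U_jZ_L\) for the curvature part). The second kind is (quadratic metric coefficient)\(\times\)(\(|\nabla U_j|^2\) or \(U_j^2\)). For \(r\ge1\) I would use \(|\chi_LH|\le C|H|r^m\), \(|\nabla(\chi_LH)|\le C|H|r^{m-1}\) and \(|\nabla^2(\chi_LH)|\le C|H|r^{m-2}\). These hold uniformly in \(L\) because of \eqref{eq:euclidean-cutoff} and because \(r\asymp L\) on \(A_{L,2L}\). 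I would also use \(|Z_L|+r|\nabla Z_L|\le C|H|r^{m+2-n}\).

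The terms of the first kind give \(r^m\cdot r^{1-n}\cdot r^{m+1-n}\) and \(r^{m-2}\cdot r^{2-n}\cdot r^{m+2-n}\). The terms of the second kind give \(r^{2m}r^{2-2n}\) and \(r^{2m-2}r^{4-2n}\). All of these are \(r^{2m+2-2n}\). The curvature quadratic coefficient needs the expansion of \(\operatorname{Scal}\) to second order in \(s\chi_LH\). I would cite the standard formula that it is bounded by \(|K||\nabla^2K|+|\nabla K|^2\), exactly as in Lemma \ref{lem:annular-bilinear-density}.

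For \(r\le1\) the cutoff is \(1\) and \(L\) plays no role. Here \(|H|\le C|H|r^m\) and \(U_j,Z_L\) together with their gradients are bounded. The worst factor is the \(r^{m-2}\) coming from two derivatives of \(H\), so the density is \(\lesssim r^{m-2}\le r^{-2}\). This proves \eqref{eq:complement-density}. Integrating this density over \(A_{L,2L}\) against \(r^{n-1}\dd r\) gives \(O(\rho_j^{2m}L^{2m+2-n}|H|^2)\), which is the final claim.

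\textbf{Boundary terms.} For \(\mathcal B\), the factor \(g^{kl}-\delta^{kl}\) and \(\operatorname{Scal}\) are themselves \(O(s)\). Their linear parts are \(-\chi_LH\) and \(\sum\partial_k\partial_l(\chi_LH)_{kl}\), so the \(\eps^2\) coefficient has the same structure as above. The difference is that it carries the prefactor \(r/2\) and a sphere measure \(r^{n-1}\), with no \(\rho\partial_\rho\). The same powers therefore give \(r\cdot r^{2m+2-2n}\cdot r^{n-1}=r^{2m+2-n}\) for \(r\ge1\), and \(r\cdot r^{-2}\cdot r^{n-1}=r^{n-2}\) for \(r\le1\). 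This is \eqref{eq:boundary-density}. Taking \(r=R_j\) and \(r=\sigma_j\), where \(\chi_L\equiv1\) because \(L>2R_j\), bounds \(I^{(3)}_{j,L}\) and \(I^{(4)}_{j,L}\).

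\textbf{Main obstacle.} Justifying differentiation under the integral at fixed \(L\) is routine: the integrand is smooth in \(\eps\), and its \(\eps\)-derivatives are dominated by the same integrable powers. The only delicate point is uniformity in \(L\), on both the pointwise bounds for \(Z_L\) and the cutoff derivatives. The first is supplied by Lemma \ref{lem:cutoff-response} and the second by \eqref{eq:euclidean-cutoff}.
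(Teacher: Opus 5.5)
Your proposal is correct and follows the paper's proof essentially step by step. The paper likewise bounds the first two $\eps$-coefficients of $\rho\partial_\rho g^{kl}$ and $\rho\partial_\rho\operatorname{Scal}$ by $C\rho^{\ell m}|H|^\ell r^{\ell m}$ and $C\rho^{\ell m}|H|^\ell r^{\ell m-2}$, with $r\asymp L$ on $A_{L,2L}$. It then combines these with the bubble bounds and the $L$-uniform bounds on $Z_{\lambda_j,b_j,L}[H]$ from Lemma~\ref{lem:cutoff-response}, and multiplies by $r\cdot r^{n-1}$ for the sphere terms.

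The only cosmetic difference is for $r\le1$. There you assume bounded gradients of $U_j$ and $Z_L$, whereas the paper uses only the weaker estimate $r|\nabla U_j|+r|\nabla Z_L|\le C(1+|H|)$ that Lemma~\ref{lem:cutoff-response} actually states. That weaker estimate already gives a density of order $r^{m-2}\le r^{-2}$, so nothing is lost.
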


\begin{proof}
Fix \(j\), take \((\lambda,b)=(\lambda_j,b_j)\), and keep \(\rho\) variable
while differentiating; all resulting estimates are evaluated at
\(\rho=\rho_j\). {For \(\ell=1,2\), differentiating the
matrix exponential and using the coordinate formula for scalar curvature,
we obtain}
\begin{equation}\label{eq:metric-coefficient-bounds}
 \begin{aligned}
 \frac1{\ell!}\left|
 \left.\partial_\eps^\ell\right|_{\eps=0}
 \rho\partial_\rho g_{\eps,\rho,L}^{kl}\right|
 &\le C\rho^{\ell m}|H|^{\ell}r^{\ell m},\\
 \frac1{\ell!}\left|
 \left.\partial_\eps^\ell\right|_{\eps=0}
 \rho\partial_\rho\bigl(c(n)\operatorname{Scal}_{g_{\eps,\rho,L}}\bigr)\right|
 &\le C\rho^{\ell m}|H|^{\ell}r^{\ell m-2}.
 \end{aligned}
\end{equation}
On \(A_{L,2L}\), the bounds in \eqref{eq:metric-coefficient-bounds} remain
valid with \(r\) replaced by a quantity comparable to \(L\), because
\[
|\nabla_y^a(\chi_LH)|
\le C|H|L^{m-a},\qquad a=0,1,2.
\]

Write \(U=U_j\) and
\(Z_L=Z_{\lambda_j,b_j,L}[H]\). Uniformly in \(L\),
\[
 |U|+r|\nabla_yU|
 \le C\min\{1,r^{2-n}\},
\]
{while, by Lemma  \ref{lem:cutoff-response}, we have}
\[
 |Z_L|+r|\nabla_yZ_L|
 \le C|H|
 \begin{cases}
  1,&r\le1,\\
  r^{m+2-n},&r\ge1.
 \end{cases}
\]
{Since both \(\rho\partial_\rho g_{\eps,\rho,L}^{kl}\) and
\(\rho\partial_\rho \operatorname{Scal}_{g_{\eps,\rho,L}}\) vanish at \(\eps=0\), the
coefficient of \(\eps^2\) in the volume integrand is}
\[
 \begin{aligned}
 &\sum_{k,l=1}^n\bigg[
 \frac14\left.\partial_\eps^2\right|_{\eps=0}
 \bigl(\rho\partial_\rho g_{\eps,\rho,L}^{kl}\bigr)
 \partial_kU\partial_lU
 +\left.\partial_\eps\right|_{\eps=0}
 \bigl(\rho\partial_\rho g_{\eps,\rho,L}^{kl}\bigr)
 \partial_kU\,\partial_l(\rho^mZ_L)
 \bigg]\\
 &\quad+
 \frac14\left.\partial_\eps^2\right|_{\eps=0}
 \bigl[\rho\partial_\rho(c(n)\operatorname{Scal}_{g_{\eps,\rho,L}})\bigr]U^2
 +\left.\partial_\eps\right|_{\eps=0}
 \bigl[\rho\partial_\rho(c(n)\operatorname{Scal}_{g_{\eps,\rho,L}})\bigr]
 U(\rho^mZ_L).
 \end{aligned}
\]
{Substituting \eqref{eq:metric-coefficient-bounds} and the two
displayed bounds for \(U\) and \(Z_L\), and then evaluating at
\(\rho=\rho_j\), we obtain \eqref{eq:complement-density}.}

{Applying the same coefficient bounds to
\eqref{eq:scaling-boundary-term}, and then multiplying by \(r\) and the
area of \(\partial B_r\), we obtain
\eqref{eq:boundary-density}.  Finally, integrating the outer line of
\eqref{eq:complement-density} over \(A_{L,2L}\), we obtain}
\[
 C\rho_j^{2m}L^{2m+2-n}|H|^2.
\]
This proves all three assertions.
\end{proof}

\begin{proposition}\label{prop:finite-annulus-self-pairing}
For each integer \(m\) with \(2\le m<\frac{n-2}{2}\), there is
\(C=C(n,m,\mathcal K)>0\) such that, for all sufficiently large \(j\),
 {
\begin{equation}\label{eq:finite-annulus-self-pairing}
 \bigl|I_j^{\mathrm{ann}}(H^{(m)},H^{(m)})
 -2^{3-n}m\rho_j^{2m}
 \mathcal Q_{\lambda_j,b_j}(H^{(m)},H^{(m)})\bigr|
 \le C\rho_j^{2m}
 \bigl(\sigma_j^{n-2}+R_j^{2m+2-n}\bigr)
 |H^{(m)}|^2.
\end{equation}
}
\end{proposition}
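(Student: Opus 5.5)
The estimate will follow from the four-term decomposition at the end of the preceding subsection, applied with \(H=H^{(m)}\), \((\lambda,b)=(\lambda_j,b_j)\), \(\rho=\rho_j\) and a cutoff radius \(L>2R_j\). The first task is to identify \(I_j^{\mathrm{ann}}(H^{(m)},H^{(m)})\) with the \(\eps^2\) coefficient of the left side of that decomposition. Since \(\chi_L=1\) on \(A_{\sigma_j,R_j}\), we have \(g_{\eps,\rho_j,L}=\exp(\eps\rho_j^mH^{(m)})\) there, and \(V_{\eps,\rho_j,L}=U_j+\eps\rho_j^mZ_{\lambda_j,b_j,L}[H^{(m)}]\). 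The only difference from the path \eqref{eq:finite-annulus-quadratic-path} is therefore \(Z_{\lambda_j,b_j,L}\) in place of \(Z_{\lambda_j,b_j}\). This difference affects the \(\eps^2\) coefficient only through terms bilinear in \(\rho_j^m(Z_L-Z)\) and one metric factor \(\rho_j^mH^{(m)}\); no term is quadratic in the scalar corrections alone, because at \(\eps=0\) the operator \(\Delta-L_{g}\) vanishes.

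These terms tend to zero as \(L\to\infty\) for fixed \(j\). They live on the fixed compact annulus \(\overline{A_{\sigma_j,R_j}}\), and Lemma~\ref{lem:cutoff-response} gives \(Z_L\to Z\) in \(C^2\) on compacts, so no boundary integration is even needed. Hence
\[
I_j^{\mathrm{ann}}(H^{(m)},H^{(m)})=\lim_{L\to\infty}\operatorname{coef}_{\eps^2}\bigl(I^{(1)}_{j,L}+I^{(2)}_{j,L}+I^{(3)}_{j,L}-I^{(4)}_{j,L}\bigr).
\]

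Next, each piece is evaluated in the limit \(L\to\infty\).

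\emph{Whole-space term.} By Proposition~\ref{prop:whole-space-coefficient}, with \(\tfrac12\partial_\eps^2\) matching the normalization there (the \(-\tfrac14\partial_\eps^2\) of \(\int[\cdots]\) equals \(\tfrac12\partial_\eps^2 I^{(1)}\)), the \(\eps^2\) coefficient of \(I^{(1)}_{j,L}\) converges to \(2^{3-n}m\rho_j^{2m}\mathcal Q_{\lambda_j,b_j}(H^{(m)},H^{(m)})\).

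\emph{Complementary volume term.} For \(I^{(2)}_{j,L}\), integrate the density \eqref{eq:complement-density} against \(r^{n-1}\,\dd r\).
\begin{itemize}
\item Over \(B_{\sigma_j}\) this gives \(C\rho_j^{2m}|H|^2\int_0^{\sigma_j}r^{n-3}\,\dd r\le C\rho_j^{2m}\sigma_j^{n-2}|H|^2\).
\item Over \(\R^n\setminus B_{R_j}\) it gives \(C\rho_j^{2m}|H|^2\int_{R_j}^\infty r^{2m+1-n}\,\dd r\le C\rho_j^{2m}R_j^{2m+2-n}|H|^2\), which converges precisely because \(2m<n-2\).
\item The part over \(A_{L,2L}\), where the cutoff varies, is \(O(\rho_j^{2m}L^{2m+2-n})\to0\) by Lemma~\ref{lem:finite-annulus-coefficient-bounds}.
\end{itemize}

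\emph{Boundary terms.} The coefficients of \(I^{(3)}_{j,L}\) and \(I^{(4)}_{j,L}\) are bounded by \eqref{eq:boundary-density} at \(r=R_j\) and \(r=\sigma_j\). These bounds are \(C\rho_j^{2m}R_j^{2m+2-n}|H|^2\) and \(C\rho_j^{2m}\sigma_j^{n-2}|H|^2\), uniformly in \(L\). Since \(\sigma_j<1<R_j\) for large \(j\), the correct branches of those bounds apply.

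Summing the four pieces and letting \(L\to\infty\) yields \eqref{eq:finite-annulus-self-pairing}, with constants uniform on \(\mathcal K\).

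The main obstacle is justifying that the limits in \(L\) and the \(\eps\)-derivatives commute. Proposition~\ref{prop:whole-space-coefficient} is stated with the variation taken at fixed \(L\) and then \(L\to\infty\), and the decomposition identity holds for each fixed \(L\) and small \(\eps\). So I would take \(\tfrac12\partial_\eps^2|_{\eps=0}\) of the identity at fixed \(L\) first, since all four pieces are smooth in \(\eps\) near \(0\) once \(|\eps|\rho_j^m<s_L\). Only then would I let \(L\to\infty\), using the \(L\)-uniform bounds of Lemma~\ref{lem:finite-annulus-coefficient-bounds} and Lemma~\ref{lem:cutoff-response}. A secondary point is that Lemma~\ref{lem:finite-annulus-coefficient-bounds} bounds densities involving \(Z_L\) and not \(Z\), which is exactly what this ordering requires.
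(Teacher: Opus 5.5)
Your proposal is correct and follows the paper's proof essentially step for step. Like the paper, you use the four-term decomposition with \(L>2R_j\), Proposition~\ref{prop:whole-space-coefficient} for the whole-space term, and the \(L\)-uniform density bounds of Lemma~\ref{lem:finite-annulus-coefficient-bounds} for the complementary and boundary terms. You also identify the limit as \(L\to\infty\) with \(I_j^{\mathrm{ann}}(H^{(m)},H^{(m)})\) through the \(C^2\) convergence of Lemma~\ref{lem:cutoff-response} on \(\overline{A_{\sigma_j,R_j}}\), taking the \(\eps\)-derivative at fixed \(L\) before letting \(L\to\infty\), exactly as the paper does.
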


\begin{proof}
Take \(H=H^{(m)}\) in \eqref{eq:finite-to-whole-space-terms}.
For \(H^{(m)}=0\), both sides vanish. By quadratic homogeneity,
assume first that \(|H^{(m)}|=1\). By Lemma
\ref{lem:finite-annulus-coefficient-bounds}, the coefficient of \(\eps^2\)
of the integrand in \(I_{j,L}^{(2)}(\eps)\) is bounded by
\[
 C\rho_j^{2m}
 \begin{cases}
  r^{-2},&0<r\le1,\\
  r^{2m+2-2n},&r\ge1,
 \end{cases}
\]
and the already integrated coefficient on \(\partial B_r\) is bounded by
\[
 C\rho_j^{2m}
 \begin{cases}
  r^{n-2},&0<r\le1,\\
  r^{2m+2-n},&r\ge1.
 \end{cases}
\]
{Since \(m<(n-2)/2\), the contribution of \(A_{L,2L}\) to the
quadratic coefficient of \(I_{j,L}^{(2)}\) satisfies}
\[
 C\rho_j^{2m}L^{2m+2-n}\to0
 \quad\text{as }L\to\infty.
\]
{By Proposition  \ref{prop:whole-space-coefficient},}
\[
 \lim_{L\to\infty}\frac12\partial_\eps^2 I_{j,L}^{(1)}(0)
 =2^{3-n}m\rho_j^{2m}
   \mathcal Q_{\lambda_j,b_j}(H^{(m)},H^{(m)}).
\]
{For the remaining terms, using the radial bounds, we obtain}
\[
 \begin{aligned}
 &\limsup_{L\to\infty}\left|\frac12\partial_\eps^2
  \bigl(I_{j,L}^{(2)}+I_{j,L}^{(3)}-I_{j,L}^{(4)}\bigr)(0)\right|\\
 {}\le{}&C\rho_j^{2m}\left(
 \int_0^{\sigma_j}r^{n-3}\,\dd r
 +\int_{R_j}^{\infty}r^{2m+1-n}\,\dd r
 +\sigma_j^{n-2}+R_j^{2m+2-n}\right)\\
 {}\le{}&C\rho_j^{2m}
 \bigl(\sigma_j^{n-2}+R_j^{2m+2-n}\bigr).
 \end{aligned}
\]
By Lemma  \ref{lem:cutoff-response}, the scalar corrections converge in
\(C^2\) on \(\overline{A_{\sigma_j,R_j}}\).
{It follows from the explicit quadratic coefficient in
\eqref{eq:finite-annulus-quadratic-form} that}
\[
 \lim_{L\to\infty}\frac12\partial_\eps^2
 \bigl(I_{j,L}^{(1)}+I_{j,L}^{(2)}+I_{j,L}^{(3)}-I_{j,L}^{(4)}\bigr)(0)
 =I_j^{\mathrm{ann}}(H^{(m)},H^{(m)}).
\]
Combining these limits and restoring \(|H^{(m)}|^2\) by homogeneity
proves \eqref{eq:finite-annulus-self-pairing}.
\end{proof}

By Propositions  \ref{prop:finite-annulus-self-pairing} and
\ref{prop:euclidean-strict-positivity}, for every fixed \(2\le m<\frac{n-2}{2}\),
\begin{equation}\label{eq:finite-annulus-positive-term}
 I_j^{\mathrm{ann}}(H^{(m)},H^{(m)})
 \ge c_m\rho_j^{2m}|H^{(m)}|^2
\end{equation}
{for all sufficiently large \(j\), where \(c_m>0\) is independent
of \(j\).}

\subsection{\texorpdfstring{Interactions between terms of different degrees, the case \(m=(n-2)/2\), and the Taylor remainder}{Interactions between terms of different degrees, the case m=(n-2)/2, and the Taylor remainder}}

\begin{lemma}\label{lem:critical-self-pairing}
If \(n\) is even, there is \(C>0\) such that, for all sufficiently large
\(j\),
\begin{equation}\label{eq:critical-self-pairing}
 |I_j^{\mathrm{ann}}(H^{(\frac{n-2}{2})},H^{(\frac{n-2}{2})})|
 \le C\rho_j^{n-2}(1+\log R_j).
\end{equation}
\end{lemma}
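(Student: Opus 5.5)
The plan is to read \eqref{eq:critical-self-pairing} directly off the radial bilinear bound in Lemma \ref{lem:annular-bilinear-density}, specialized to \(\alpha=\beta=m:=(n-2)/2\).

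\emph{Step 1: identify the path.} For this degree no scalar correction is present, because the definition \eqref{eq:finite-annulus-quadratic-path} includes \(Z_{\lambda_j,b_j}[H^{(m)}]\) only when \(m<(n-2)/2\). Hence \(I_j^{\mathrm{ann}}(H^{(m)},H^{(m)})\) is the diagonal mixed coefficient of the pair \((T,z)=(\rho_j^mH^{(m)},0)\) with itself. Since the diagonal coefficient of the mixed family \(\exp(sT+\tau T)\) is one half of \(\partial_s\partial_\tau\), which agrees with the diagonal quadratic coefficient, the same symmetric form is being computed.

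\emph{Step 2: check the hypotheses of Lemma \ref{lem:annular-bilinear-density}.} I will verify \eqref{eq:extended-bilinear-direction-bounds} with exponent \(\alpha=m\):
\begin{itemize}
\item symmetry, tracelessness and \(T(y)y=0\) are exactly \eqref{eq:metric-jet-conditions};
\item homogeneity gives \(r^\ell|\nabla^\ell_yT|\le C\rho_j^m|H^{(m)}|r^m\) for \(\ell\le2\);
\item the scalar bound holds trivially since \(z=0\).
\end{itemize}
Lemma \ref{lem:annular-bilinear-density} then writes the coefficient as a radial integral over \([\sigma_j,R_j]\) plus two boundary terms.

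\emph{Step 3: integrate the densities.} With \(\alpha+\beta=n-2\), the radial density is bounded by \(C\rho_j^{n-2}r^{n-3}\) on \((0,1]\), which integrates to at most \(C\rho_j^{n-2}\). On \([1,R_j]\) it is bounded by \(C\rho_j^{n-2}r^{-1}\), which integrates to \(C\rho_j^{n-2}\log R_j\).

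By \eqref{eq:annular-bilinear-boundary-density}, the two boundary terms are bounded by \(C\rho_j^{n-2}\sigma_j^{n-2}\) and \(C\rho_j^{n-2}R_j^{0}\). Both are \(O(\rho_j^{n-2})\), using \(\sigma_j\to0\). Summing gives \eqref{eq:critical-self-pairing}; this is exactly the middle case of \eqref{eq:integrated-bilinear-density}.

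\emph{Main obstacle.} The only real point is the large-\(r\) density estimate. One must confirm that, without a scalar correction, the terms in \eqref{eq:exact-mixed-coefficient} with two metric factors and two bubble factors \(\nabla U_j\) or \(U_j\) behave as claimed. After the divergence terms are integrated once, they give \(r^{2m}\cdot r^{2-2n}\cdot r^{n-1}=r^{2m+1-n}=r^{-1}\) with no worse term.

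In particular, I would check that the linear term \(\partial_k\partial_lH_{kl}\,U_j\), paired against \(D_{\mathrm{di}}U_j\), enters only linearly in \(\eps\). It therefore does not contribute to the quadratic coefficient, since the \(\eps\)-linear part of \(v_{\eps,H}\) vanishes. This is already built into Lemma \ref{lem:annular-bilinear-density} with \(z=0\).
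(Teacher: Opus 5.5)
Your proposal is correct and matches the paper's proof. The paper also notes that at degree \((n-2)/2\) the path \eqref{eq:finite-annulus-quadratic-path} has no scalar correction. It then applies Lemma~\ref{lem:annular-bilinear-density}, via its integrated form \eqref{eq:integrated-bilinear-density}, with \(\alpha=\beta=(n-2)/2\), and integrates \(r^{n-3}\) over \([\sigma_j,1]\) and \(r^{-1}\) over \([1,R_j]\) to obtain \(C\rho_j^{n-2}(1+\log R_j)\). Your additional checks agree with the paper and only make explicit what it leaves implicit: that the diagonal mixed coefficient coincides with the quadratic coefficient, and that both boundary terms are \(O(\rho_j^{n-2})\).
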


\begin{proof}
For \(m=(n-2)/2\), the path in
\eqref{eq:finite-annulus-quadratic-path} has only the metric direction
\(\rho_j^{(n-2)/2}H^{((n-2)/2)}\); its scalar direction is zero.
{Using \eqref{eq:integrated-bilinear-density} with
\(\alpha=\beta=m\), we obtain}
 {
\[
 |I_j^{\mathrm{ann}}(H^{(m)},H^{(m)})|
 \le C\rho_j^{n-2}
 \left(\int_{\sigma_j}^1r^{n-3}\,\dd r
 +\int_1^{R_j}r^{-1}\,\dd r\right)
 \le C\rho_j^{n-2}(1+\log R_j).
\]
}
This proves \eqref{eq:critical-self-pairing}.
\end{proof}

The remainder estimate depends on the first nonzero homogeneous coefficient
of the metric expansion. Define
\begin{equation}\label{eq:first-nonzero-degree}
 m_*=\min\left\{
 m\in\left\{2,\ldots,d\right\}:
 H^{(m)}\ne0\right\}.
\end{equation}
If \(H^{(2)}=\cdots=H^{(d)}=0\), set \(m_*=\infty\). In this case,
Taylor's theorem gives \(h(x)=O(|x|^{d+1})\). More generally,
\[
 \left|
 \partial^\beta\left(
 h(x)-\sum_{m=2}^{d}H^{(m)}(x)
 \right)\right|
 \le C|x|^{d+1-|\beta|}
 \qquad |\beta|\le3.
\]
{Scaling this estimate, we obtain, for every
multiindex \(\beta\) with \(|\beta|\le3\), every
\(y\in A_{\sigma_j,R_j}\), and all sufficiently large \(j\),}
\begin{equation}\label{eq:scaled-metric-taylor-remainder}
\left|\partial_y^\beta\left(
 h(\rho_jy)
 -\sum_{m=2}^{d}
 \rho_j^mH^{(m)}(y)\right)\right|+\left|\partial_y^\beta
 \left.\rho\partial_\rho\left(
 h(\rho y)
 -\sum_{m=2}^{d}
 \rho^mH^{(m)}(y)\right)\right|_{\rho=\rho_j}\right|\le
 C\rho_j^{d+1}
 |y|^{d+1-|\beta|}.
\end{equation}
The next lemma combines the integrated density bound with the Taylor
remainder estimate.

\begin{lemma}\label{lem:mixed-and-taylor-remainder}
There is \(C>0\) such that, for all sufficiently large \(j\), if
\(m_1\ne m_2\) and
\(2\le m_1,m_2\le d\), then
\begin{equation}\label{eq:mixed-degree-bound}
 |I_j^{\mathrm{ann}}(H^{(m_1)},H^{(m_2)})|
 \le C\rho_j^{m_1+m_2}.
\end{equation}
Moreover,
\begin{equation}\label{eq:quadratic-taylor-remainder}
 \frac12\mathfrak q_j''(0)
 -\sum_{m_1,m_2=2}^{d}
 I_j^{\mathrm{ann}}(H^{(m_1)},H^{(m_2)})
 =\begin{cases}
   o(\rho_j^{2m_*}),&m_*<\frac{n-2}{2},\\
   o(\rho_j^{n-2}),&
   n\text{ is even and }m_*=\frac{n-2}{2},
   \text{ or }m_*=\infty.
  \end{cases}
\end{equation}
The little \(o\) terms in \eqref{eq:quadratic-taylor-remainder} are
taken as \(j\to\infty\), uniformly for
\((\lambda_j,b_j)\in\mathcal K\).
\end{lemma}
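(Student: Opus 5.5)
Both assertions will be read off the integrated density bound \eqref{eq:integrated-bilinear-density} together with a careful bookkeeping of what $\frac12\mathfrak q_j''(0)$ contains beyond the polynomial jets.

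\emph{The mixed bound \eqref{eq:mixed-degree-bound}.} Apply Lemma~\ref{lem:annular-bilinear-density} with $(T_1,z_1)=(\rho_j^{m_1}H^{(m_1)},\rho_j^{m_1}Z_{\lambda_j,b_j}[H^{(m_1)}])$, where the scalar part is dropped when $m_1\ge(n-2)/2$, and similarly for $m_2$. The hypotheses \eqref{eq:extended-bilinear-direction-bounds} follow from homogeneity of $H^{(m)}$ and from Proposition~\ref{prop:euclidean-response}. Then \eqref{eq:integrated-bilinear-density} gives $C\rho_j^{m_1+m_2}$ in the subcritical case $m_1+m_2<n-2$. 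Since $m_1,m_2\le d$ and $m_1\ne m_2$, we have $m_1+m_2\le 2d-1\le n-3$, so this subcritical case is the only one that occurs. This proves \eqref{eq:mixed-degree-bound}.

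\emph{The remainder \eqref{eq:quadratic-taylor-remainder}.} Differentiate \eqref{eq:reference-pohozaev-path} twice at $\tau=0$. The metric direction is $\partial_\tau g_{j,\tau}|_0=h(\rho_j\cdot)$, and the second derivative of the exponential is $\frac12 h^2$. The scalar direction is $\Phi_j-U_j=\sum_{m<(n-2)/2}\rho_j^mZ_{\lambda_j,b_j}[H^{(m)}]$, which is exactly the scalar direction of the path \eqref{eq:finite-annulus-quadratic-path}.

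Write $h(\rho_j y)=\sum_{m=2}^d\rho_j^mH^{(m)}(y)+T_j(y)$. By \eqref{eq:scaled-metric-taylor-remainder}, $T_j$ satisfies \eqref{eq:extended-bilinear-direction-bounds} with exponent $\alpha=d+1$ and $z=0$. The trace and radial conditions hold for $T_j$ because they hold for $h$ and for each jet. Both $\frac12\mathfrak q_j''(0)$ and $\sum I^{\mathrm{ann}}_j$ are quadratic forms evaluated on the same pair (metric, scalar) directions, with the Pohozaev integrand in each. Hence their difference consists of mixed coefficients involving $T_j$: the cross terms $(T_j,0)$ with $(\rho_j^mH^{(m)},\rho_j^mZ[H^{(m)}])$, and the self term $(T_j,0)$ with itself.

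To confirm this bookkeeping I need to check that the $\tau$-path and the $\eps$-path have identical second-order structure. Both are built from the exponential of the metric direction, so the $\frac12h^2$ terms match, and the scalar direction coincides. With that verified, \eqref{eq:integrated-bilinear-density} with $\alpha=m$, $\beta=d+1$ bounds each cross term by
\[
 C\rho_j^{m+d+1}\bigl(1+\log R_j+R_j^{m+d+3-n}\bigr),
\]
and the self term by $C\rho_j^{2d+2}(1+R_j^{2d+4-n})$.

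\emph{Comparison with the claimed orders; this is the main obstacle.} The difficulty is controlling the powers of $R_j$ against $\rho_j$. For this I use \eqref{eq:outer-radius-rho-bound}, $R_j\le C\rho_j^{-1/2}$, equivalently $\rho_jR_j^2\le C\eta_j/\rho_j$ with $\rho_jR_j=\bar\rho_{j-1}\to0$.

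The key observation is that $\rho_j^{\alpha+\beta}R_j^{\alpha+\beta+2-n}=(\rho_jR_j)^{\alpha+\beta}R_j^{2-n}$. Since $\rho_jR_j\to0$ and $R_j\ge1$, this is $o(\rho_j^{n-2})$ whenever $\alpha+\beta\ge n-2$, because $(\rho_jR_j)^{\alpha+\beta}R_j^{2-n}\le(\rho_jR_j)^{n-2}R_j^{2-n}\cdot(\rho_jR_j)^{\alpha+\beta-n+2}=\rho_j^{n-2}o(1)$. The logarithmic term is also harmless, since $\rho_j^{\,d+1+m}\log R_j=o(\rho_j^{n-2})$ because $d+1+m\ge n-2+\tfrac12$ up to parity, together with $\log R_j\le C|\log\rho_j|$.

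Every remainder therefore carries total degree at least $m+d+1\ge n-1$ (when $n$ is odd, $2+d+1=n-2+\tfrac32$, at worst comparable). Each remainder is thus $o(\rho_j^{n-2})$. In the case $m_*<(n-2)/2$ this is $o(\rho_j^{2m_*})$, since $2m_*\le n-3$. In the remaining cases it is exactly the claimed $o(\rho_j^{n-2})$.

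The cross terms only involve $H^{(m)}$ with $m\ge m_*$, which gives extra room. The delicate point is the odd-$n$ logarithmic case $\alpha+\beta=n-2$, and I would check it carefully against $d=(n-3)/2$.
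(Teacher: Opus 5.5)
You have the same reduction as the paper, and it is sound. The mixed bound follows from \eqref{eq:integrated-bilinear-density} because $m_1+m_2\le 2d-1<n-2$. The difference in \eqref{eq:quadratic-taylor-remainder} is, by polarization of the second variation, exactly the mixed coefficients of $(T_j,0)$ with the retained jets, plus that of $(T_j,0)$ with itself.

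The gap is in your final order count. You claim every remainder has total degree $m+d+1\ge n-1$ and is therefore $o(\rho_j^{n-2})$, but this is false. For $m=2$ and odd $n$ the total degree is $d+3=(n+3)/2$, not $n-2+\tfrac32$. This lies far below $n-2$ once $n\ge7$; for instance $n=29$, $d=13$ gives $16<27$.

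\begin{itemize}
\item When $m+d+1<n-2$, the pairing is only $O(\rho_j^{m+d+1})$.
\item When $m+d+1=n-2$ it is $O\bigl(\rho_j^{n-2}(1+\log R_j)\bigr)$. This happens for $m=d$ when $n$ is odd and for $m=d-1$ when $n$ is even.
\end{itemize}

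Neither bound is $o(\rho_j^{n-2})$. Your identity $\rho_j^{\alpha+\beta}R_j^{\alpha+\beta+2-n}=\rho_j^{n-2}(\rho_jR_j)^{\alpha+\beta-n+2}$ is correct, but it only disposes of the supercritical range $\alpha+\beta>n-2$. So your argument does not establish the case $m_*<(n-2)/2$.

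The repair is the point you mention only in passing. When $m_*<(n-2)/2$, compare with $\rho_j^{2m_*}$ rather than $\rho_j^{n-2}$, and use $H^{(m)}=0$ for $m<m_*$. Then every surviving cross term has $m\ge m_*$ and total degree $m+d+1\ge m_*+d+1>2m_*$.

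\begin{itemize}
\item The subcritical pairings are $O(\rho_j^{m+d+1})=o(\rho_j^{2m_*})$.
\item The critical pairing is at most $C\rho_j^{n-2}(1+|\log\rho_j|)=o(\rho_j^{2m_*})$, because $2m_*<n-2$ and $R_j\le\rho_j^{-1}$.
\item The supercritical pairings, and the self term of total degree $2d+2>n-2$, are $o(\rho_j^{n-2})$ by your identity.
\end{itemize}

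In the two remaining cases ($n$ even with $m_*=(n-2)/2$, or $m_*=\infty$), the only cross term, if any, has $m=d$ and total degree $n-1$. There your $o(\rho_j^{n-2})$ argument does go through. This is the same case split the paper makes. The paper uses \eqref{eq:outer-radius-rho-bound} for the supercritical terms; your identity does that job using only $\rho_jR_j=\bar\rho_{j-1}\to0$.
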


\begin{proof}
For distinct \(m_1,m_2\le d\),
\[
 m_1+m_2\le2d-1<n-2.
\]
{The bound \eqref{eq:mixed-degree-bound} follows directly
from \eqref{eq:integrated-bilinear-density}.}

{Set \(\ell=d+1\).}
The two directions containing the Taylor remainder are
\[
 \begin{aligned}
 T_j(y)
 &=
 h(\rho_jy)-\sum_{m=2}^{\ell-1}\rho_j^mH^{(m)}(y),\\
 \widehat T_j(y)
 &=
 \left.\rho\partial_\rho\left(
 h(\rho y)-\sum_{m=2}^{\ell-1}\rho^mH^{(m)}(y)
 \right)\right|_{\rho=\rho_j}.
 \end{aligned}
\]
They are symmetric, trace free, and satisfy the radial gauge condition.
{By \eqref{eq:scaled-metric-taylor-remainder},}
\[
 \sum_{a=0}^2r^a
\left(
 |\nabla_y^aT_j(y)|
 +|\nabla_y^a\widehat T_j(y)|
\right)
 \le C\rho_j^{\ell}r^{\ell}.
\]
Thus \((T_j,0)\) and \((\widehat T_j,0)\) satisfy
\eqref{eq:extended-bilinear-direction-bounds} with order \(\ell\).

{Applying the extended conclusion of Lemma
\ref{lem:annular-bilinear-density} to two remainder directions, we obtain}
\[
 C\rho_j^{2\ell}
 \left(1+R_j^{2\ell-(n-2)}\right)
 \le
 C\rho_j^{\ell+\frac{n-2}{2}}
 =o(\rho_j^{n-2})
 \quad\text{as }j\to\infty,
\]
where \eqref{eq:outer-radius-rho-bound} was used.  {Pair
\(\rho_j^mH^{(m)}\), together with its correction
\(Z_{\lambda_j,b_j}[H^{(m)}]\) when
\(m<(n-2)/2\), with either remainder direction. We obtain}
\begin{equation}\label{eq:mixed-taylor-remainder-bound}
 C\rho_j^{m+\ell}
 \left(1+\int_1^{R_j}r^{m+\ell+1-n}\,\dd r\right).
\end{equation}

Suppose \(m_*<(n-2)/2\). If \(m+\ell\le n-2\), the integral in
\eqref{eq:mixed-taylor-remainder-bound} is bounded or logarithmic.
Since \(m+\ell>2m_*\), its contribution is \(o(\rho_j^{2m_*})\).
{If \(m+\ell>n-2\), by
\eqref{eq:outer-radius-rho-bound},}
\[
 \rho_j^{m+\ell}R_j^{m+\ell+2-n}
 \le
 C\rho_j^{\frac{m+\ell+n-2}{2}}.
\]
The smallest possible exponent occurs at \(m=m_*\), and
\[
 \frac{m_*+\ell+n-2}{2}-2m_*
 =
 \frac{\ell+n-2-3m_*}{2}>0
\]
because \(m_*\) is an integer strictly smaller than \((n-2)/2\) and
\(\ell=d+1\).  Hence this case is also
\(o(\rho_j^{2m_*})\).

If \(n\) is even and \(m_*=(n-2)/2\), the only mixed remainder term has
\(m=(n-2)/2\), and \eqref{eq:mixed-taylor-remainder-bound} is bounded by
\[
 C\rho_j^{n-1}R_j
 =O(\rho_j^{n-\frac32})
 =o(\rho_j^{n-2}).
\]
If \(m_*=\infty\), every Taylor tensor through degree \(\ell-1\) vanishes, so
only the pairing of two remainders occurs.
{Decomposing \(\frac{1}{2}\mathfrak q_j''(0)\) into pairings
of two homogeneous terms, of a homogeneous term with a remainder, and of
two remainders, we obtain
\eqref{eq:quadratic-taylor-remainder}.}
\end{proof}

Since the coefficients \(H^{(m)}\) of \(h=\log g\) are fixed,
{using \eqref{eq:mixed-degree-bound},
Lemma  \ref{lem:critical-self-pairing}, and
Lemma  \ref{lem:mixed-and-taylor-remainder}, we obtain, when
\(m_*<(n-2)/2\),}
\begin{equation}\label{eq:leading-quadratic-coefficient}
 \frac12\mathfrak q_j''(0)
 =I_j^{\mathrm{ann}}(H^{(m_*)},H^{(m_*)})+o(\rho_j^{2m_*})
 \quad\text{as }j\to\infty.
\end{equation}

\subsection{The linear term and the Taylor remainder}

\begin{lemma}\label{lem:reference-linear-term}
For all sufficiently large \(j\),
\[
 |\mathfrak q_j'(0)|
 \le C\eta_j(\mu_j+\mu_{j-1}).
\]
\end{lemma}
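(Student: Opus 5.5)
At $\tau=0$ the metric $g_{j,0}$ is Euclidean, so $(\Delta-L_{g_{j,0}})=0$. Hence in $\mathfrak q_j'(0)$ only the term in which $\partial_\tau$ falls on the operator survives:
\[
 \mathfrak q_j'(0)=\int_{A_{\sigma_j,R_j}}(D_{\mathrm{di}}U_j)\,\bigl[\partial_\tau(\Delta-L_{g_{j,\tau}})\bigr]_{\tau=0}U_j\,\dd y .
\]
Set $T_j(y)=h(\rho_jy)$. By \eqref{eq:scaled-metric-error}, the linearization is $\sum_{k,l}\partial_k\bigl(T_{j,kl}\partial_lU_j\bigr)+c(n)\bigl(\sum_{k,l}\partial_k\partial_lT_{j,kl}\bigr)U_j=F_{\lambda_j,b_j}[T_j]$, the source in \eqref{eq:metric-source}. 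Two facts are then used.
\begin{itemize}
\item From the explicit bubble formula, $D_{\mathrm{di}}U_{\lambda,b}=-J_{\lambda,b,0}-\sum_lb^lJ_{\lambda,b,l}$. So $D_{\mathrm{di}}U_j$ is a fixed combination of Jacobi fields, with coefficients bounded on $\mathcal K$.
\item The linear term is therefore $-\int_{A_{\sigma_j,R_j}}F_{\lambda_j,b_j}[T_j]\,J\,\dd y$ for $J=J_0+\sum_l b_j^lJ_l$. This is a truncated version of the compatibility identity \eqref{eq:cutoff-response-compatibility}.
\end{itemize}

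\textbf{Step 1 (pointwise adjoint identity).} Integrate \eqref{eq:cutoff-response-compatibility} by parts against an arbitrary compactly supported symmetric trace-free $T$. This gives $\int T_{kl}\,\mathcal A_{kl}[J]\,\dd y=0$, where
\[
 \mathcal A_{kl}[J]=-\partial_kJ\,\partial_lU+c(n)\,\partial_k\partial_l(UJ).
\]
Since $T$ is arbitrary trace-free, the trace-free part of $\mathcal A[J]$ vanishes identically on $\R^n$. (Alternatively, this is the spherical statement \eqref{eq:spherical-curvature-compatibility}, i.e. $\nabla^2Y+Y\gamma=0$, transferred by \eqref{eq:curvature-source-relation}.) Because $T_j$ is trace-free, the same integration by parts on $A_{\sigma_j,R_j}$ leaves only boundary terms:
\[
 \mathfrak q_j'(0)=\sum_{r\in\{\sigma_j,R_j\}}\pm\int_{\partial B_r}\Bigl[T_{j,kl}\nu_k\,(\cdots)+c(n)\bigl(\partial_lT_{j,kl}\,\nu_k\,UJ-T_{j,kl}\nu_k\,\partial_l(UJ)\bigr)\Bigr]\dd S .
\]
Here $(\cdots)$ denotes the terms bilinear in $U$ and $J$ with one derivative each. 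The radial gauge $T_j(y)y=0$ kills every term containing $T_{j,kl}\nu_k$. What remains are boundary integrals involving $\partial T_j$ (with $\partial_l T_{j,kl}y_k=-T_{j,ll}=0$ also helping) times products of $U$, $J$ and their first derivatives.

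\textbf{Step 2 (boundary sizes).}
\begin{itemize}
\item On $\partial B_{R_j}$: \eqref{eq:normal-orders} gives $|T_j|\le C\rho_j^2R_j^2=C\eta_j$ and $|\nabla T_j|\le C\eta_jR_j^{-1}$. Also $U,J\lesssim R_j^{2-n}$ and their gradients are $\lesssim R_j^{1-n}$. So the outer term is at most $C\eta_jR_j^{-1}R_j^{4-2n}R_j^{n-1}=C\eta_jR_j^{2-n}\le C\eta_j\mu_{j-1}$, by \eqref{eq:capacity-scales}.
\item On $\partial B_{\sigma_j}$: $U,J$ and their gradients are bounded, $|\nabla T_j|\le C\rho_j^2\sigma_j$, and the area is $\sigma_j^{n-1}$. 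This gives at most $C\rho_j^2\sigma_j^n\le C\eta_j\sigma_j^{n-2}\le C\eta_j\mu_j$, again by \eqref{eq:capacity-scales} and $\rho_j\le\bar\rho_{j-1}$.
\end{itemize}
Adding the two bounds gives the lemma.

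\textbf{Main obstacle.} The delicate point is Step 1: verifying that the trace-free part of $\mathcal A[J]$ vanishes pointwise, rather than only in integrated form. If an exact pointwise cancellation is not available directly, I would instead apply Lemma~\ref{lem:euclidean-scaling-identity} together with the whole-space identity \eqref{eq:cutoff-response-compatibility} to the cutoff tensor $\chi T_j$. Here $\chi$ is equal to one on the annulus, and I would estimate the complementary regions $B_{\sigma_j}$ and $\R^n\setminus B_{R_j}$ with the radial densities used in Lemma~\ref{lem:finite-annulus-coefficient-bounds}. These complementary contributions give the same orders, $\eta_j\sigma_j^{n-2}$ and $\eta_jR_j^{2-n}$.
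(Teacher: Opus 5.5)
Your proof is correct, but it takes a different route from the paper's.

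\emph{The paper's route.} The paper never localizes the compatibility identity. It extends $h(\rho_j\,\cdot)$ by a radial cutoff $\chi_j$, equal to one on $B_{R_j}$ and zero outside $B_{2R_j}$. It applies \eqref{eq:cutoff-response-compatibility} on all of $\R^n$ to the resulting compactly supported tensor $T$. This writes $\mathfrak q_j'(0)$ as minus the integrals of $(D_{\mathrm{di}}U_j)F_{\lambda_j,b_j}[T]$ over $B_{\sigma_j}$ and $A_{R_j,2R_j}$. These two pieces are then bounded by crude pointwise sizes, $C\rho_j^2\sigma_j^n$ and $C\eta_jR_j^{2-n}$. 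Only the integrated identity is used. Your fallback is essentially this argument, and it needs neither Lemma~\ref{lem:euclidean-scaling-identity} nor the radial densities.

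\emph{Your route.} You turn the identity into a pointwise one and integrate by parts on the annulus itself. The step you call the main obstacle is already settled by your own reasoning. Identity \eqref{eq:cutoff-response-compatibility} holds for every compactly supported symmetric trace-free $C^2$ matrix, with no gauge condition. The fundamental lemma therefore shows that the symmetric trace-free part of $\mathcal A[J]$ vanishes identically. Your $\mathcal A$ should be symmetrized, since it need not be symmetric when $b_j\ne0$, but only its pairing with the symmetric $T_j$ matters.

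\emph{What it buys.} Your route gives more than the lemma. Every boundary term contains either $T_{j,kl}\nu_k$ or $\nu_k\partial_lT_{j,kl}$. The first vanishes because $T_j(y)y=0$. The second vanishes because $y_k\partial_lT_{j,kl}=-\tr T_j=0$, as you note yourself. So nothing remains, and in fact $\mathfrak q_j'(0)=0$ exactly. Your Step~2 is a valid but superfluous estimate. The same reasoning shows that each of the two complementary integrals in the paper's proof also vanishes, since $\chi_jh(\rho_j\,\cdot)$ is still trace free and radially gauged.
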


\begin{proof}
Since \(g_{j,0}^{kl}=\delta^{kl}\) and \(\operatorname{Scal}_{g_{j,0}}=0\), differentiation of
\(V_{j,\tau}\) makes no contribution at \(\tau=0\).
{It follows from \eqref{eq:metric-source} that}
\[
 \mathfrak q_j'(0)
 =\int_{A_{\sigma_j,R_j}}
 \left(D_{\mathrm{di}}U_j\right)
 F_{\lambda_j,b_j}[h(\rho_j\,\cdot)]\,\dd y.
\]
Choose a smooth radial cutoff \(\chi_j\) equal to one on \(B_{R_j}\), zero
outside \(B_{2R_j}\), and satisfying
\(|\nabla_y^a\chi_j|\le CR_j^{-a}\) for \(a=1,2\).
For all sufficiently large \(j\), \(\sigma_j<1<R_j\) and
\(2\rho_jR_j<r_0\). Set
\[
 T(y)=\chi_j(y)h(\rho_jy)\quad\text{on }B_{2R_j},
\]
and extend \(T\) by zero outside \(B_{2R_j}\).
Then \(T\) is a compactly supported \(C^2\) symmetric matrix, and
\(\tr T=0\) by \eqref{eq:normal-gauge}.
{By the definitions of the Jacobi fields,}
\[
 D_{\mathrm{di}}U_j
 =-J_{\lambda_j,b_j,0}
  -\sum_{\ell=1}^n b_j^\ell J_{\lambda_j,b_j,\ell}.
\]
{Thus, from \eqref{eq:cutoff-response-compatibility},}
\[
 \int_{\R^n}
 \left(D_{\mathrm{di}}U_j\right)
 F_{\lambda_j,b_j}[T]\,\dd y=0.
\]
{Since \(T=h(\rho_j\,\cdot)\) on \(B_{R_j}\), splitting
the preceding identity at \(\sigma_j\) and \(R_j\), we obtain}
 {
\[
 \mathfrak q_j'(0)
 ={}-\int_{B_{\sigma_j}}
 \left(D_{\mathrm{di}}U_j\right)
 F_{\lambda_j,b_j}[T]\,\dd y
 -\int_{A_{R_j,2R_j}}
 \left(D_{\mathrm{di}}U_j\right)
 F_{\lambda_j,b_j}[T]\,\dd y.
\]
}
{On \(B_{\sigma_j}\), by \eqref{eq:normal-orders},}
\(|\nabla_y^a[h(\rho_j\,\cdot)](y)|
\le C\rho_j^2|y|^{2-a}\) for \(a=0,1,2\).
The local bounds for \(U_j\) and \eqref{eq:metric-source} imply
\[
 |F_{\lambda_j,b_j}[T]|\le C\rho_j^2,
 \qquad
 \left|D_{\mathrm{di}}U_j\right|\le C.
\]
{On \(A_{R_j,2R_j}\), using the cutoff bounds and
\(\eta_j=\rho_j^2R_j^2\), we have}
\[
 |T|+R_j|\nabla_yT|+R_j^2|\nabla_y^2T|\le C\eta_j.
\]
Combining this with
\(|\nabla_y^aU_j|\le CR_j^{2-n-a}\) for \(a=0,1,2\), we obtain
\[
 |F_{\lambda_j,b_j}[T]|\le C\eta_jR_j^{-n},
 \qquad
 \left|D_{\mathrm{di}}U_j\right|
 \le CR_j^{2-n}.
\]
All constants are uniform for \((\lambda_j,b_j)\in\mathcal K\).
Since \(\sigma_j<R_j\), we have
\(\rho_j^2\sigma_j^n\le\eta_j\sigma_j^{n-2}\).
Applying these bounds to the preceding decomposition and using
\eqref{eq:capacity-scales}, we obtain
\[
 |\mathfrak q_j'(0)|
 \le C\rho_j^2\sigma_j^n+C\eta_jR_j^{2-n}
 \le C\eta_j(\mu_j+\mu_{j-1}).
\]
\end{proof}

It remains to estimate the integral remainder in
\eqref{eq:reference-pohozaev-taylor-formula}. The next lemma bounds
\(\mathfrak q_j^{(3)}(\tau)\) uniformly for \(0\le\tau\le1\).

\begin{lemma}\label{lem:reference-third-variation}
For all sufficiently large \(j\), uniformly for \(0\le\tau\le1\),
\begin{equation}\label{eq:reference-third-variation}
 |\mathfrak q_j^{(3)}(\tau)|
 \le C\eta_j(\mu_j+\mu_{j-1})
 +\begin{cases}
  C\eta_j\rho_j^{2m_*},&m_*<\frac{n-2}{2},\\
  C\eta_j\rho_j^{n-2}(1+\log R_j),
  &n\text{ is even and }m_*=\frac{n-2}{2},\\
  C\eta_j\rho_j^{n-2},&m_*=\infty.
\end{cases}
\end{equation}
\end{lemma}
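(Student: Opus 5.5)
The plan is to differentiate \eqref{eq:reference-pohozaev-path} three times in \(\tau\) and bound every resulting term with the radial density method of Lemma~\ref{lem:annular-bilinear-density}. The only change is that one of the three factors now carries an extra metric or correction factor of size \(O(\eta_j)\). Write \(h_j=h(\rho_j\,\cdot)\) and \(z_j=\Phi_j-U_j\). Since \(\det g_{j,\tau}=1\), we have
\[
 (\Delta-L_{g_{j,\tau}})V=-\sum_{k,l}\partial_k\bigl((g_{j,\tau}^{kl}-\delta^{kl})\partial_lV\bigr)+c(n)\operatorname{Scal}_{g_{j,\tau}}V .
\]
The integrand \((D_{\mathrm{di}}V_{j,\tau})(\Delta-L_{g_{j,\tau}})V_{j,\tau}\) is quadratic in \(V_{j,\tau}\), which is affine in \(\tau\). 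Hence in \(\mathfrak q_j^{(3)}\) the scalar function is differentiated at most twice, and at least one \(\tau\)-derivative falls on the metric coefficients \(g_{j,\tau}^{-1}=e^{-\tau h_j}\) or on \(\operatorname{Scal}_{g_{j,\tau}}\). Each \(\tau\)-derivative of these coefficients produces a factor \(h_j\) or its derivatives, multiplied by smooth functions of \(\tau h_j\) that are uniformly bounded because \(|h_j|\le C\eta_j\) on \(A_{\sigma_j,R_j}\). The terms of \(\mathfrak q_j^{(3)}\) therefore come in three types:
\begin{enumerate}
\item three metric factors and two factors \(V_{j,\tau}\);
\item two metric factors, one factor \(z_j\) and one factor \(V_{j,\tau}\);
\item one metric factor and two factors \(z_j\).
\end{enumerate}
Here \(V_{j,\tau}\) is comparable to \(U_j\) by Lemma~\ref{lem:corrected-profile-estimate}.

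\emph{Step 1 (reduction to bilinear densities).} In each term we pick out one factor of pure size \(\eta_j\): a metric factor \(h_j\), or the correction \(z_j\) controlled by \eqref{eq:corrected-profile-estimate}. We bound it pointwise (with its derivatives) by \(C\eta_j\) times the corresponding Euclidean weight. What remains is a bilinear expression of the type treated in Lemma~\ref{lem:annular-bilinear-density}. We split \(h_j\) into its jets \(\rho_j^mH^{(m)}\), \(2\le m\le d\), plus the Taylor remainder controlled by \eqref{eq:scaled-metric-taylor-remainder}, and \(z_j\) into \(\rho_j^mZ_{\lambda_j,b_j}[H^{(m)}]\), which Proposition~\ref{prop:euclidean-response} shows satisfies \eqref{eq:extended-bilinear-direction-bounds}. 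We then integrate the divergence term once by parts, as in that lemma. The boundary flux terms on \(\partial B_{\sigma_j}\) and \(\partial B_{R_j}\) vanish for the radial part because \(\sum_k(g^{kl}-\delta^{kl})y_k=0\), but the Pohozaev vector field leaves boundary terms analogous to \eqref{eq:annular-bilinear-boundary-density}.

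\emph{Step 2 (summation).} Summing the radial bounds \eqref{eq:integrated-bilinear-density} over pairs of degrees \((\alpha,\beta)\) gives \(C\eta_j\rho_j^{\alpha+\beta}\) times \(1\), \(1+\log R_j\), or \(1+R_j^{\alpha+\beta+2-n}\). Since every jet of degree below \(m_*\) vanishes, the lowest pair gives \(\rho_j^{2m_*}\), or \(\rho_j^{n-2}(1+\log R_j)\) in the critical even case. Pairs containing a remainder direction are handled exactly as in the proof of Lemma~\ref{lem:mixed-and-taylor-remainder} using \eqref{eq:outer-radius-rho-bound}, and are absorbed. In the case \(m_*=\infty\), only remainder pairs occur, which gives \(C\eta_j\rho_j^{n-2}\).

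\emph{Step 3 (boundary terms, the main obstacle).} The boundary contributions, and the inner region \(r\le1\) near \(\sigma_j\), do not decay in \(\rho_j\) at the needed rate through the jets. They must instead be compared with \(\mu_j+\mu_{j-1}\). As in Lemma~\ref{lem:reference-linear-term}, near \(\partial B_{\sigma_j}\) we use \(|\nabla^a h_j|\le C\rho_j^2|y|^{2-a}\) together with the bounded bubble, which gives a bound \(C\eta_j\sigma_j^{n-2}\). Near \(\partial B_{R_j}\) we use \(|U_j|\le CR_j^{2-n}\) and \(|h_j|\le C\eta_j\), which gives \(C\eta_jR_j^{2-n}\). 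By \eqref{eq:capacity-scales} these are \(C\eta_j(\mu_j+\mu_{j-1})\).

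The delicate point is that the outer boundary products \(\rho_j^{\alpha+\beta}R_j^{\alpha+\beta+2-n}\) from Step 1 must not be double counted. I would bound the terms that are quadratic in the metric near \(R_j\) directly by \(\eta_j^2R_j^{2-n}\) rather than through the jet expansion. I would also check that the cubic structure really leaves a spare factor \(\eta_j\) uniformly in \(\tau\in[0,1]\).
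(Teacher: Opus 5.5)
Your argument is correct, but it takes a heavier route than the paper. The paper never splits $h(\rho_j\,\cdot)$ or $\Phi_j-U_j$ into jets and never integrates by parts, so no boundary terms arise. It expands $\mathfrak q_j^{(3)}$ into the four Leibniz terms
$(D_{\mathrm{di}}V)(\partial_\tau^3E_\tau)V$, $3(D_{\mathrm{di}}\dot V)(\partial_\tau^2E_\tau)V$, $3(D_{\mathrm{di}}V)(\partial_\tau^2E_\tau)\dot V$, $6(D_{\mathrm{di}}\dot V)(\partial_\tau E_\tau)\dot V$, where $E_\tau=\Delta-L_{g_{j,\tau}}$ and $\dot V=\Phi_j-U_j$. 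It then uses only the lowest-order envelope. With $m_0=\min\{m_*,d+1\}$, uniformly in $\tau$,
$\sum_{k\le2}r^k|\nabla_y^k\dot V|\le C\rho_j^{m_0}(1+r)^{m_0}U_j$ and
$|(\partial_\tau^\nu E_\tau)f|\le Cr^{-2}\rho_j^{\nu m_0}(1+r)^{\nu m_0}\sum_{k\le2}r^k|\nabla_y^kf|$.
Every integrand is therefore at most $Cr^{-2}\rho_j^{3m_0}(1+r)^{3m_0}U_j^2$. Spending one factor $\rho_j^{m_0}(1+r)^{m_0}\le C\eta_j$ and integrating radially gives $C\eta_j\rho_j^{2m_0}\bigl(1+\int_1^{R_j}r^{2m_0+1-n}\,\dd r\bigr)$. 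The three cases of the lemma are then just $2m_0<n-2$, $2m_0=n-2$, and $2m_0>n-2$, the last handled by \eqref{eq:outer-radius-rho-bound}. In particular, the term $C\eta_j(\mu_j+\mu_{j-1})$ in the statement is never used.

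Your route extracts one $O(\eta_j)$ factor (a metric factor, or $\Phi_j-U_j$ via Lemma~\ref{lem:corrected-profile-estimate}). It then passes the rest through the jet decomposition, Lemma~\ref{lem:annular-bilinear-density}, and the remainder pairings of Lemma~\ref{lem:mixed-and-taylor-remainder}. This uses the same spare-$\eta_j$ mechanism and reaches the same bound. The jet bookkeeping buys nothing here, since all cross-degree terms are dominated by the single envelope.

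Two points to tidy:
\begin{itemize}
\item If you integrate by parts, do it on the exact cubic expression before taking absolute values. It is in fact unnecessary, because second derivatives of $V_{j,\tau}$ and $\Phi_j-U_j$ are controlled pointwise.
\item The obstacle in your Step~3 is illusory. The inner-region density $C\eta_j\rho_j^{\alpha+\beta}r^{n-3}$ is integrable. Any boundary terms are $O\bigl(\eta_j\rho_j^{\alpha+\beta}(\sigma_j^{n-2}+R_j^{\alpha+\beta+2-n})\bigr)$, which already lie inside the jet bounds, so there is no double counting. Your fallback bound $C\eta_j(\sigma_j^{n-2}+R_j^{2-n})\le C\eta_j(\mu_j+\mu_{j-1})$ is also valid, just not needed.
\end{itemize}
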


The proof is given in Appendix  \ref{app:third-variation}.

\begin{proposition}\label{prop:reference-pohozaev-lower-bound}
There are \(c,C>0\) such that, for all sufficiently large \(j\),
\begin{equation}\label{eq:reference-pohozaev-lower-bound}
 \widetilde Q_j\ge
 \begin{cases}
  c\rho_j^{2m_*}-o(\rho_j^{2m_*})
  -C\eta_j(\mu_j+\mu_{j-1}),&m_*<\frac{n-2}{2},\\
  -C\rho_j^{n-2}(1+|\log\rho_j|)
  -C\eta_j(\mu_j+\mu_{j-1}),
  &n\text{ is even and }m_*=\frac{n-2}{2},\text{ or }m_*=\infty.
 \end{cases}
\end{equation}
The little \(o\) term in \eqref{eq:reference-pohozaev-lower-bound} is taken
as \(j\to\infty\), uniformly for
\((\lambda_j,b_j)\in\mathcal K\).
{The constants may depend on the fixed background metric. In the
first case, \(c\) also depends on the nonzero coefficient
\(H^{(m_*)}\).}
\end{proposition}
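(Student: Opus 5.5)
The plan is to combine the Taylor formula \eqref{eq:reference-pohozaev-taylor-formula},
\[
 \widetilde Q_j=\mathfrak q_j'(0)+\tfrac12\mathfrak q_j''(0)
 +\tfrac12\int_0^1(1-\tau)^2\mathfrak q_j^{(3)}(\tau)\,\dd\tau,
\]
with the three estimates already available. The linear term is handled directly by Lemma~\ref{lem:reference-linear-term}, which gives \(|\mathfrak q_j'(0)|\le C\eta_j(\mu_j+\mu_{j-1})\). The cubic remainder is bounded by integrating \eqref{eq:reference-third-variation} against \((1-\tau)^2/2\). This yields \(C\eta_j(\mu_j+\mu_{j-1})\) plus an error of size \(\eta_j\) times the leading scale. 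Since \(\eta_j\to0\) by \eqref{eq:metric-scale-summability}, that error is \(o(\rho_j^{2m_*})\) when \(m_*<(n-2)/2\). In the other cases it is at most \(C\rho_j^{n-2}(1+\log R_j)\).

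For the quadratic term in the case \(m_*<(n-2)/2\), I would use \eqref{eq:leading-quadratic-coefficient}:
\[
 \tfrac12\mathfrak q_j''(0)=I_j^{\mathrm{ann}}(H^{(m_*)},H^{(m_*)})+o(\rho_j^{2m_*}).
\]
I would then apply \eqref{eq:finite-annulus-positive-term} with \(m=m_*\). This gives \(I_j^{\mathrm{ann}}\ge c_{m_*}\rho_j^{2m_*}|H^{(m_*)}|^2\). Because \(H^{(m_*)}\ne0\) is fixed, \(c=c_{m_*}|H^{(m_*)}|^2>0\) depends only on that coefficient and on \(\mathcal K\). The error term in Proposition~\ref{prop:finite-annulus-self-pairing} is \(\rho_j^{2m}(\sigma_j^{n-2}+R_j^{2m+2-n})\), and it is \(o(\rho_j^{2m})\) because \(\sigma_j\to0\), \(R_j\to\infty\), and \(2m+2-n<0\). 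This error is already absorbed into \eqref{eq:finite-annulus-positive-term} for large \(j\).

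For the remaining cases (\(n\) even with \(m_*=(n-2)/2\), or \(m_*=\infty\)), I would use \eqref{eq:quadratic-taylor-remainder}. It reduces \(\tfrac12\mathfrak q_j''(0)\) to \(\sum_{m_1,m_2}I_j^{\mathrm{ann}}(H^{(m_1)},H^{(m_2)})+o(\rho_j^{n-2})\). In this sum all \(H^{(m)}\) with \(m<(n-2)/2\) vanish. So only the critical self-pairing survives when \(n\) is even, and nothing survives when \(m_*=\infty\). Lemma~\ref{lem:critical-self-pairing} bounds the critical self-pairing by \(C\rho_j^{n-2}(1+\log R_j)\).

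The step needing care is converting \(\log R_j\) into \(|\log\rho_j|\). For this I would use \eqref{eq:outer-radius-rho-bound}, namely \(R_j\le C\rho_j^{-1/2}\). It gives \(1+\log R_j\le C(1+|\log\rho_j|)\), and the same conversion handles the logarithm in the third-variation bound. Summing the three contributions then gives \eqref{eq:reference-pohozaev-lower-bound} in both cases, with uniformity in \((\lambda_j,b_j)\in\mathcal K\) inherited from the cited lemmas.
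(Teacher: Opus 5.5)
Your proposal is correct and matches the paper's proof. Both split \(\widetilde Q_j\) via \eqref{eq:reference-pohozaev-taylor-formula}, bound the linear and cubic terms by Lemmas~\ref{lem:reference-linear-term} and \ref{lem:reference-third-variation}, obtain the positive quadratic term from \eqref{eq:leading-quadratic-coefficient} and \eqref{eq:finite-annulus-positive-term} when \(m_*<(n-2)/2\) (and otherwise from \eqref{eq:quadratic-taylor-remainder} with Lemma~\ref{lem:critical-self-pairing}), and convert \(\log R_j\) to \(|\log\rho_j|\) via \eqref{eq:outer-radius-rho-bound}. In the critical case you note that the mixed pairings vanish outright, so the additional citation of \eqref{eq:mixed-degree-bound} in the paper is not needed there; this is a harmless simplification.
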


\begin{proof}
We apply the decomposition \eqref{eq:reference-pohozaev-taylor-formula}.
{If \(m_*<\frac{n-2}{2}\), combining
\eqref{eq:finite-annulus-positive-term} with
\eqref{eq:leading-quadratic-coefficient} gives}
\[
 \frac12\mathfrak q_j''(0)
 \ge c\rho_j^{2m_*}-o(\rho_j^{2m_*})
 \quad\text{as }j\to\infty.
\]
{If \(n\) is even and \(m_*=(n-2)/2\), then
\eqref{eq:critical-self-pairing}, \eqref{eq:mixed-degree-bound},
\eqref{eq:quadratic-taylor-remainder}, and
\eqref{eq:outer-radius-rho-bound} imply}
\[
 \left|\frac12\mathfrak q_j''(0)\right|
 \le C\rho_j^{n-2}(1+\log R_j).
\]
{If \(m_*=\infty\), the same bound follows directly from
\eqref{eq:quadratic-taylor-remainder}.}
{By Lemmas  \ref{lem:reference-linear-term} and
\ref{lem:reference-third-variation}, the linear and cubic terms satisfy}
\[
 \begin{aligned}
 |\mathfrak q_j'(0)|
 &\le C\eta_j(\mu_j+\mu_{j-1}),\\
 \left|\frac12\int_0^1(1-\tau)^2
 \mathfrak q_j^{(3)}(\tau)\,\dd\tau\right|
 &\le C\eta_j(\mu_j+\mu_{j-1})\\
 &\quad+
 \begin{cases}
  o(\rho_j^{2m_*}),
  &m_*<\frac{n-2}{2},\\
  C\rho_j^{n-2}(1+\log R_j),
  &n\text{ is even and }m_*=\frac{n-2}{2},\text{ or }m_*=\infty.
  \end{cases}
 \end{aligned}
\]
These bounds hold as \(j\to\infty\), uniformly for
\((\lambda_j,b_j)\in\mathcal K\).
{Finally, \eqref{eq:outer-radius-rho-bound} gives
\(1+\log R_j\le C(1+|\log\rho_j|)\).}
{Substituting in
\eqref{eq:reference-pohozaev-taylor-formula}, we obtain
\eqref{eq:reference-pohozaev-lower-bound}.}
\end{proof}

\section{Sharp comparison on rescaled annuli}
\label{sec:exact-reference-comparison}

With the parameters \((\lambda_j,b_j)\) fixed by
Proposition  \ref{prop:final-modulation}, let
\(\Phi_j=\Phi_{j,\lambda_j,b_j}\) and
\(\widetilde Q_j=B_j(\Phi_j,\Phi_j)\). We compare \(\widetilde Q_j\) with
\(Q_j=B_j(v_j,v_j)\) and set \(e_j=v_j-\Phi_j\). Its boundary values need
not vanish.
We construct solutions \(e_j^-\) and \(e_j^+\) carrying its inner and outer
boundary values, respectively, and then set
\(e_j^0=e_j-e_j^--e_j^+\in H_0^1(A_{\sigma_j,R_j})\). 
The distribution \(\mathcal R_j^\Phi\) and the norm
\(\|\cdot\|_{\dot H^{-1}(A_{\sigma_j,R_j})}\) are defined in
\eqref{eq:reference-equation-error} and
\eqref{eq:homogeneous-dual-norm}, respectively.
 The principal
estimates proved below are
\[
 \begin{aligned}
 \|\nabla_y e_j^0\|_{L^2(A_{\sigma_j,R_j})}
 &\le C\bigl(
 \|\mathcal R_j^\Phi\|_{\dot H^{-1}(A_{\sigma_j,R_j})}
 +\mu_j+\mu_{j-1}\bigr),\\
 |B_j(\Phi_j,e_j^-)|+|B_j(\Phi_j,e_j^+)|
 &\le C\eta_j(\mu_j+\mu_{j-1}).
 \end{aligned}
\]
Together with Lemma  \ref{lem:reference-equation-error-estimate}
and the sharper bound for \(B_j(\Phi_j,e_j^0)\), these estimates give
Proposition  \ref{prop:exact-reference-comparison}.
 All spheres remain
centered at the puncture, and the parameters are the fixed choice from
Proposition  \ref{prop:final-modulation}.

\subsection{\texorpdfstring{The distribution
\(\mathcal R_j^\Phi\) and its dual norm}{The distribution
R_j^Φ and its dual norm}}

By  \eqref{eq:Qj-definition} and
\eqref{eq:reference-pohozaev-quantity}, we have
 
\begin{equation}\label{eq:exact-dilation-polarization}
 Q_j-\widetilde Q_j
 =2 B_j(\Phi_j,e_j)+ B_j(e_j,e_j).
\end{equation}

The two terms in \eqref{eq:exact-dilation-polarization} are estimated
separately below.

All integrals and norms below are over \(A_{\sigma_j,R_j}\) unless
another domain is displayed. Constants are uniform for the parameters
in \(\mathcal K\). Every little \(o\) term is taken as \(j\to\infty\),
with the metric and solution fixed.

For a bounded annulus \(E\subset\R^n\), let \(H_0^1(E)\) be the closure
of \(C_c^\infty(E)\) in \(H^1(E)\). The Poincar\'e inequality makes
\(\|\nabla_y(\cdot)\|_{L^2(E)}\) an equivalent norm on this space for
each fixed \(E\). We equip \(H_0^1(E)\) with the Dirichlet norm and
denote the resulting dual space by \(\dot H^{-1}(E)\); its norm is
\begin{equation}\label{eq:homogeneous-dual-norm}
 \norm{f}_{\dot H^{-1}(E)}
 =\sup_{\substack{\varphi\in H_0^1(E)\\
 \norm{\nabla_y\varphi}_{L^2(E)}=1}}
 |\langle f,\varphi\rangle|.
\end{equation}
Subtracting the equations for \(v_j\) and \(\Phi_j\) gives a linear equation
for their difference. To express its nonlinear term, define
\(\vartheta_j:A_{\sigma_j,R_j}\to(0,\infty)\) by
\[
 \vartheta_j(y)=p\int_0^1
 \bigl[\Phi_j(y)
 +s(v_j(y)-\Phi_j(y))\bigr]^{p-1}\,\dd s.
\]
By construction,
\(v_j^p-\Phi_j^p=\vartheta_j(v_j-\Phi_j)\).
Define the differential expression
 
\[
  L_j=-L_{g_j}-n(n-2)\vartheta_j.
\]

For \(f\in H^1(A_{\sigma_j,R_j})\), this expression defines a distribution
by integration by parts. On \(H_0^1(A_{\sigma_j,R_j})\), it is understood
with the Dirichlet norm and the dual norm in
\eqref{eq:homogeneous-dual-norm}.
By Lemmas  \ref{lem:corrected-profile-estimate} and
\ref{lem:annular-bubble-comparison},
\begin{equation}\label{eq:secant-coefficient-bounds}
 cU_j^{p-1}\le\vartheta_j
 \le CU_j^{p-1}\qquad\text{on }A_{\sigma_j,R_j}.
\end{equation}
Define the scalar distribution
\(\mathcal R_j^\Phi\in\dot H^{-1}(A_{\sigma_j,R_j})\) by
\begin{equation}\label{eq:reference-equation-error}
 \mathcal R_j^\Phi=-L_{g_j}\Phi_j
 -n(n-2)\Phi_j^p.
\end{equation}
The equations for \(v_j\) and \(\Phi_j\) give
 
\[
  L_je_j=-\mathcal R_j^\Phi
 \quad\text{in }A_{\sigma_j,R_j}.
\]

\par
We first record the dual norm estimate used for \(\mathcal R_j^\Phi\)
and for the functional
 \(\varphi\mapsto B_j(\Phi_j,\varphi)\).
For a fixed \(s\ge2\), consider a vector field \(A\) and a scalar function
\(f\) on \(A_{\sigma_j,R_j}\) satisfying
\begin{equation}\label{eq:source-field-bounds}
 |A(y)|+|y||f(y)|
 \le C_*\rho_j^s
 \begin{cases}
  1,&0<|y|\le1,\\
  |y|^{s+1-n},&1\le|y|\le R_j.
 \end{cases}
\end{equation}
Here \(C_*\) is independent of \(j\). The exponent \(s\) records the power
of \(\rho_j\) in this estimate.

\begin{lemma}\label{lem:degree-source-bound}
Under \eqref{eq:source-field-bounds}, there is \(C=C(n,s,C_*)\) such that,
for all sufficiently large \(j\),
\begin{equation}\label{eq:degree-source-bound}
 \norm{\operatorname{div}_y A+f}_{\dot H^{-1}(A_{\sigma_j,R_j})}
 \le C\rho_j^s
 \begin{cases}
  1,&s<(n-2)/2,\\
  (1+\log R_j)^{1/2},&s=(n-2)/2,\\
  R_j^{s-(n-2)/2},&s>(n-2)/2.
 \end{cases}
\end{equation}
The constant is independent of \(\sigma_j\), \(R_j\), and \(j\).
\end{lemma}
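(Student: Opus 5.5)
By duality, for \(\varphi\in H_0^1(A_{\sigma_j,R_j})\) with \(\|\nabla_y\varphi\|_{L^2}=1\), integration by parts gives no boundary term, so
\[
 \langle \operatorname{div}_yA+f,\varphi\rangle=-\int A\cdot\nabla_y\varphi\,\dd y+\int f\varphi\,\dd y .
\]
The plan is to bound each term by Cauchy--Schwarz in weighted \(L^2\) spaces, with the weights chosen so that the constants do not depend on \(\sigma_j\) or \(R_j\).

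\emph{Vector field term.} Cauchy--Schwarz gives \(|\int A\cdot\nabla\varphi|\le\|A\|_{L^2(A_{\sigma_j,R_j})}\). By \eqref{eq:source-field-bounds},
\[
 \|A\|_{L^2}^2\le C\rho_j^{2s}\Bigl(\int_0^1r^{n-1}\,\dd r+\int_1^{R_j}r^{2s+2-2n}r^{n-1}\,\dd r\Bigr)=C\rho_j^{2s}\Bigl(1+\int_1^{R_j}r^{2s+1-n}\,\dd r\Bigr).
\]
The last integral is bounded if \(2s<n-2\), equals \(\log R_j\) if \(2s=n-2\), and is at most \(CR_j^{2s+2-n}\) if \(2s>n-2\). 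Taking square roots gives exactly the three cases of \eqref{eq:degree-source-bound}.

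\emph{Scalar term.} Write \(f\varphi=(|y|f)(\varphi/|y|)\) and apply the Hardy inequality on \(\R^n\), after extending \(\varphi\) by zero:
\[
 \int|y|^{-2}\varphi^2\,\dd y\le \frac{4}{(n-2)^2}\int|\nabla\varphi|^2\,\dd y .
\]
The Hardy constant is dimensional, so it is independent of the annulus; this uniformity in \(\sigma_j\) and \(R_j\) is the one point needing care. Then \(|\int f\varphi|\le C\,\||y|f\|_{L^2}\), and \(|y||f|\) obeys the same pointwise bound as \(|A|\) in \eqref{eq:source-field-bounds}, so the computation above applies verbatim.

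The inner region \(|y|\le1\) contributes only \(O(\rho_j^{s})\), since the bound there is a constant and \(r^{n-1}\) is integrable. The inner radius \(\sigma_j\) therefore never enters the estimate. Adding the two terms and taking the supremum over \(\varphi\) proves the lemma with \(C=C(n,s,C_*)\). I do not expect a genuine obstacle: the only point to check is that neither Cauchy--Schwarz nor Hardy introduces a dependence on the annulus.
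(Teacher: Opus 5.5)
Your proposal is correct and matches the paper's proof essentially step for step. The paper likewise extends \(\varphi\) by zero, applies Cauchy--Schwarz to the \(A\) term and Hardy's inequality (constant \(2/(n-2)\)) to the \(f\) term, and reduces everything to the single radial integral \(\int_1^{R_j}r^{2s+1-n}\,\dd r\), whose three cases give the stated bounds.
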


\begin{proof}
Extension of \(\varphi\in H_0^1(A_{\sigma_j,R_j})\) by zero and the Hardy
inequality give
\[
 \begin{aligned}
 |\langle\operatorname{div}_y A+f,\varphi\rangle|
 &\le
 \left(\norm{A}_{L^2}
 +\frac2{n-2}\norm{|y|f}_{L^2}\right)
 \norm{\nabla_y\varphi}_{L^2},\\
 \norm{A}_{L^2}^2+\norm{|y|f}_{L^2}^2
 &\le C\rho_j^{2s}
 \left(1+\int_1^{R_j}r^{2s+1-n}\,\dd r\right).
 \end{aligned}
\]
All norms in this display are over \(A_{\sigma_j,R_j}\).
The radial integral is
\[
 \int_1^{R_j}r^{2s+1-n}\,\dd r
 =\begin{cases}
 \dfrac{R_j^{2s+2-n}-1}{2s+2-n},&2s\ne n-2,\\[4pt]
 \log R_j,&2s=n-2.
 \end{cases}
\]
Taking square roots and the supremum over
\(\norm{\nabla_y\varphi}_{L^2}=1\) proves
\eqref{eq:degree-source-bound}.
\end{proof}

The correction equation cancels each term in \(\mathcal R_j^\Phi\) that is linear in \(H^{(m)}\), \(2\le m<(n-2)/2\).  We estimate the uncancelled linear terms and the
nonlinear remainder separately. For these estimates, set
\[
 m_0=\min\{m_*,d+1\},\qquad \min\{\infty,d+1\}=d+1.
\]
Thus \(h(x)=O(|x|^{m_0})\), including when all coefficients through
degree \(d\) vanish.

\begin{lemma}\label{lem:reference-equation-error-estimate}
For all sufficiently large \(j\),
\begin{equation}\label{eq:reference-equation-error-estimate}
 \norm{\mathcal R_j^\Phi}_{\dot H^{-1}(A_{\sigma_j,R_j})}
 \le\begin{cases}
  o(\rho_j^{m_*}),&m_*<\frac{n-2}{2},\\
  C\rho_j^{\frac{n-2}{2}}(1+\log R_j)^{1/2},&n\text{ is even and }m_*=\frac{n-2}{2},\\
  o(\rho_j^{\frac{n-2}{2}}),&m_*=\infty.
 \end{cases}
\end{equation}
The little \(o\) terms in \eqref{eq:reference-equation-error-estimate} are
taken as \(j\to\infty\), uniformly for
\((\lambda_j,b_j)\in\mathcal K\).
\end{lemma}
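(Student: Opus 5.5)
The plan is to write \(\mathcal R_j^\Phi\) explicitly as a divergence plus a lower-order term and then apply Lemma~\ref{lem:degree-source-bound} to each piece.

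\textbf{Expansion.} Since \(\det g_j=1\), we have \(-L_{g_j}\Phi=-\Delta\Phi+\sum_{k,l}\partial_k((g_j^{kl}-\delta^{kl})\partial_l\Phi)+c(n)\operatorname{Scal}_{g_j}\Phi\). Insert \(\Phi_j=U_j+\sum_{m<(n-2)/2}\rho_j^mZ_j^{(m)}\), where \(Z_j^{(m)}=Z_{\lambda_j,b_j}[H^{(m)}]\), and use \(-\Delta U_j=n(n-2)U_j^p\).

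\textbf{Cancellation and decomposition.} Split \(g_j^{-1}-I=e^{-h(\rho_j y)}-I\) into its linear part \(-h(\rho_jy)\) and a quadratic remainder. Split \(\operatorname{Scal}_{g_j}\) into the linear part \(\rho_j^2\sum\partial\partial h(\rho_j\cdot)\) and a quadratic remainder. Then split \(h(\rho_jy)=\sum_{m\le d}\rho_j^mH^{(m)}+T_j\), with \(T_j\) the Taylor remainder bounded by \eqref{eq:scaled-metric-taylor-remainder}. By the correction equation \eqref{eq:response-equation} (definition \eqref{eq:metric-source}), the terms that are linear in \(H^{(m)}\) and paired with \(U_j\), for \(2\le m<(n-2)/2\), cancel exactly against \((\Delta+n(n+2)U_j^{p-1})\rho_j^mZ_j^{(m)}\). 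What remains is:
\begin{enumerate}
\item[(a)] the uncancelled linear terms \(\partial_k(\rho_j^mH^{(m)}_{kl}\partial_lU_j)\) and \(c(n)\rho_j^m\partial\partial H^{(m)}U_j\) for \(m=(n-2)/2\) (when \(n\) is even), together with the same expressions with \(T_j\) in place of \(H^{(m)}\);
\item[(b)] the quadratic metric terms and the metric\(\times Z\) cross terms, of size \(\rho_j^{m_1+m_2}\);
\item[(c)] the nonlinear remainder \(n(n-2)\bigl[\Phi_j^p-U_j^p-pU_j^{p-1}(\Phi_j-U_j)\bigr]\), which is bounded by \(CU_j^{p-2}(\Phi_j-U_j)^2\) by Lemma~\ref{lem:corrected-profile-estimate}.
\end{enumerate}

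\textbf{Bounding each piece.} Write each term in the form \(\operatorname{div}A+f\) satisfying \eqref{eq:source-field-bounds}, keeping the outer derivative in divergence form and putting the scalar-curvature and potential terms into \(f\).
\begin{itemize}
\item Terms in (a) of degree \(s=m=(n-2)/2\) give \(C\rho_j^{(n-2)/2}(1+\log R_j)^{1/2}\). These occur only if \(H^{((n-2)/2)}\neq0\), that is, in the case \(m_*=(n-2)/2\), or in the case \(m_*<(n-2)/2\), where this bound is \(o(\rho_j^{m_*})\).
\item The \(T_j\) terms have degree \(s=d+1>(n-2)/2\) and give \(\rho_j^{d+1}R_j^{d+1-(n-2)/2}\le C\rho_j^{(d+1)/2+(n-2)/4}\) by \eqref{eq:outer-radius-rho-bound}. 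Since \(d+1>(n-2)/2\), this is \(o(\rho_j^{(n-2)/2})\), and hence also \(o(\rho_j^{m_*})\).
\item The quadratic terms in (b) have degree \(s=m_1+m_2\ge 2m_0\), with bounds as in the proof of Lemma~\ref{lem:annular-bilinear-density}: the metric factor is \(\rho^{s}r^{s}\) times \(U\)-type decay. For \(s<(n-2)/2\) they contribute \(C\rho_j^{s}=o(\rho_j^{m_*})\), since \(s\ge 2m_*>m_*\). For \(s\ge(n-2)/2\) they contribute \(\rho_j^sR_j^{s-(n-2)/2}\log^{1/2}\), which after \eqref{eq:outer-radius-rho-bound} is at most \(\rho_j^{s/2+(n-2)/4}\) up to a log. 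This is \(o(\rho_j^{m_*})\) because \(s\ge 2m_*\), and it is \(o(\rho_j^{(n-2)/2})\) when \(s>(n-2)/2\). In the case \(m_*=\infty\), only Taylor-remainder quadratics appear, of degree \(\ge 2(d+1)\).
\item Term (c) is bounded pointwise by \(C\sum\rho_j^{m_1+m_2}(1+|y|)^{m_1+m_2+2-n-2}\), which is of the \(f\) type with \(s=m_1+m_2\), so the same estimate applies.
\end{itemize}
For \(m_*=\infty\), the metric jets through degree \(d\) vanish, so \(\Phi_j=U_j\) and only the \(T_j\) terms remain, giving \(o(\rho_j^{(n-2)/2})\).

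\textbf{Main obstacle.} The delicate point is the exact cancellation when \(b_j\neq0\). Here the first term of \(F_{\lambda,b}\), namely \(\partial_i(H_{ij}\partial_jU)\), does not vanish, and one must check that the expansion of \(-L_{g_j}U_j\) reproduces exactly \(F_{\lambda_j,b_j}[\rho_j^mH^{(m)}]\). It does, because \(-\partial(h\partial U)+c(n)\partial\partial h\,U\) is precisely the first variation used to derive \eqref{eq:metric-source}. The other point needing care is bookkeeping near \(r\le1\) and at \(\sigma_j\), where the bounds \(|A|\le C\rho_j^s\) hold since \(|y|^s\le1\). The quadratic scalar-curvature terms need \(|\nabla^2h|\,|h|\) bounds, which the \(C^{d+4}\) regularity provides.
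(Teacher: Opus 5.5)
Your proposal is correct and follows essentially the same route as the paper. The paper splits $\mathcal R_j^\Phi$ into four pieces: the uncancelled linear part $-\dot L[h(\rho_j\cdot)-\sum_{m<(n-2)/2}\rho_j^mH^{(m)}]U_j$, the quadratic metric remainder acting on $U_j$, the full metric error acting on $\Phi_j-U_j$, and the nonlinear remainder. It then applies Lemma~\ref{lem:degree-source-bound}, treating all three quadratic pieces at once with $s=2m_0$, $m_0=\min\{m_*,d+1\}$; this avoids your degree-by-degree bookkeeping of the exponential series.

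One sign needs fixing. Since $\det g_j=1$,
\[
 -L_{g_j}\Phi=-\Delta\Phi-\sum_{k,l}\partial_k\bigl((g_j^{kl}-\delta^{kl})\partial_l\Phi\bigr)+c(n)\operatorname{Scal}_{g_j}\Phi,
\]
so the linear metric term is $+\sum_{k,l}\partial_k\bigl(h_{kl}(\rho_j\cdot)\partial_lU_j\bigr)$, not $-\partial(h\partial U)$ as in your ``main obstacle'' paragraph. With this sign it matches the first summand of $F_{\lambda_j,b_j}$, which is what the exact cancellation needs when $b_j\ne0$.
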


\begin{proof}
We first decompose \(\mathcal R_j^\Phi\).
For a symmetric \(C^2\) matrix \(T\) with zero trace and a scalar function \(f\),
write, within this proof,
\[
 \dot L[T]f
 =-\sum_{k,l=1}^n\partial_k(T_{kl}\partial_lf)
 -c(n)\left(\sum_{k,l=1}^n\partial_k\partial_lT_{kl}\right)f.
\]
This is \(\left.\partial_tL_{\exp(tT)}f\right|_{t=0}\).
The correction equation gives
 
\[
 -\Delta(\Phi_j-U_j)
 -n(n+2)U_j^{p-1}(\Phi_j-U_j)
 =\dot L\left[
 \sum_{\substack{m\in\mathbb N\\2\le m<(n-2)/2}}
 \rho_j^mH^{(m)}\right]U_j.
\]

The first term below contains the linear metric terms that are not removed
by the corrections in \eqref{eq:corrected-profile}. The remaining three
terms are at least quadratic in the metric expansion and its correction.
Define
\[
 \begin{aligned}
 \mathcal R_j^{[1]}
 &:=-\dot L\left[h(\rho_j\,\cdot)
 -\sum_{\substack{m\in\mathbb N\\2\le m<(n-2)/2}}
 \rho_j^mH^{(m)}\right]U_j,\\
 \mathcal R_j^{[2]}
 &:=\{\Delta-L_{g_j}+\dot L[h(\rho_j\,\cdot)]\}U_j,\\
 \mathcal R_j^{[3]}
 &:=(\Delta-L_{g_j})(\Phi_j-U_j),\\
 \mathcal R_j^{[4]}
 &:=-n(n-2)\bigl[\Phi_j^p-U_j^p-pU_j^{p-1}(\Phi_j-U_j)\bigr].
 \end{aligned}
\]
Substitution in \eqref{eq:reference-equation-error} yields
\[
 \mathcal R_j^\Phi
 =\mathcal R_j^{[1]}+\mathcal R_j^{[2]}
 +\mathcal R_j^{[3]}+\mathcal R_j^{[4]}.
\]

We estimate the first term degree by degree.
For a symmetric matrix \(T\) with zero trace and
\(\sum_{a=0}^2r^a|\nabla_y^aT|
\le C\rho_j^s r^s\), \(s\ge2\), write
\[
 \begin{gathered}
 A^k=-\sum_{l=1}^nT_{kl}\partial_lU_j,\qquad
 f=-c(n)\left(\sum_{k,l=1}^n\partial_k\partial_lT_{kl}\right)U_j,\qquad 
 \dot L[T]U_j=\operatorname{div}_y A+f.
 \end{gathered}
\]
The bubble bounds imply
\[
 \begin{aligned}
 |A|+r|f|&\le C\rho_j^s(r^s+r^{s-1})\le C\rho_j^s,
 &&0<r\le1,\\
 |A|+r|f|&\le C\rho_j^s r^{s+1-n},&&r\ge1.
 \end{aligned}
\]
These are \eqref{eq:source-field-bounds}, so
Lemma  \ref{lem:degree-source-bound} applies.

Set \(\ell=d+1\).
The Taylor remainder in \eqref{eq:scaled-metric-taylor-remainder}
contributes at most
\[
 C\rho_j^\ell R_j^{\ell-\frac{n-2}{2}}
 \le C\rho_j^{\frac{\ell+(n-2)/2}{2}}
 =o(\rho_j^{\frac{n-2}{2}}).
\]
The second inequality follows from
\eqref{eq:outer-radius-rho-bound}.
When \(n\) is even, the omitted degree \((n-2)/2\) contributes at most
\(C\rho_j^{(n-2)/2}(1+\log R_j)^{1/2}\).
The tensor \(H^{((n-2)/2)}\) vanishes when \(m_*=\infty\). If
\(m_*<(n-2)/2\), then
\[
 \rho_j^{\frac{n-2}{2}-m_*}(1+\log R_j)^{1/2}
 \le C\rho_j^{\frac{n-2}{2}-m_*}(1+|\log\rho_j|)^{1/2}
 \to0.
\]
Thus \(\mathcal R_j^{[1]}\) satisfies the three bounds in
\eqref{eq:reference-equation-error-estimate}.

It remains to estimate the three nonlinear terms together.
Put \(w=\Phi_j-U_j\), and use \(m_0=\min\{m_*,d+1\}\) as above.
Equations  \eqref{eq:scaled-metric-taylor-remainder},
\eqref{eq:response-decay}, and \eqref{eq:corrected-profile} give
\[
 \sum_{a=0}^2r^a
|\nabla_y^a[h(\rho_j\,\cdot)]|
 \le C\rho_j^{m_0} r^{m_0},\qquad
 |w|+(1+r)|\nabla_yw|
 \le C\rho_j^{m_0}(1+r)^{m_0+2-n}.
\]
For \(m_*=(n-2)/2\) or \(m_*=\infty\), \(w=0\).
The matrix exponential and the scalar curvature formula give
\[
 \begin{aligned}
 |g_j^{-1}-I+h(\rho_jy)|&\le C\rho_j^{2m_0}r^{2m_0},\\
 \left|\operatorname{Scal}_{g_j}-\sum_{k,l=1}^n
 \partial_k\partial_l[h_{kl}(\rho_j\,\cdot)]\right|
 &\le C\rho_j^{2m_0}r^{2m_0-2},\\
 |g_j^{-1}-I|+r^2|\operatorname{Scal}_{g_j}|&\le C\rho_j^{m_0} r^{m_0}.
 \end{aligned}
\]
Here \(I\) is the identity matrix. In particular,
 
\[
 \mathcal R_j^{[2]}
 ={}-\operatorname{div}_y\bigl((g_j^{-1}-I+h(\rho_jy))\nabla_yU_j\bigr)
 +c(n)\left(\operatorname{Scal}_{g_j}-\sum_{k,l=1}^n
 \partial_k\partial_l[h_{kl}(\rho_j\,\cdot)]\right)U_j.
\]

The curvature remainder is bounded by
\[
 C\rho_j^2\bigl(|h||\nabla_x^2h|+|\nabla_xh|^2\bigr)(\rho_jy),
\]
and \(\partial_y^\alpha[h(\rho_j\,\cdot)]
=\rho_j^{|\alpha|}(\partial_x^\alpha h)(\rho_jy)\).

For \(1\le k\le n\), define
\[
 \begin{aligned}
 A_2^k&=-\sum_{l=1}^n
 (g_j^{kl}-\delta^{kl}+h_{kl}(\rho_jy))\partial_lU_j,\\
 f_2&=c(n)\left(\operatorname{Scal}_{g_j}-\sum_{k,l=1}^n
 \partial_k\partial_l[h_{kl}(\rho_j\,\cdot)]\right)U_j,\\
 A_3^k&=-\sum_{l=1}^n(g_j^{kl}-\delta^{kl})\partial_lw,
 \qquad f_3=c(n)\operatorname{Scal}_{g_j}w.
 \end{aligned}
\]
Then \(\mathcal R_j^{[a]}=\operatorname{div}_yA_a+f_a\), \(a=2,3\), and
\[
 |A_a|+r|f_a|\le C\rho_j^{2m_0}
 \begin{cases}
  1,&r\le1,\\
  r^{2m_0+1-n},&1\le r\le R_j.
 \end{cases}
\]
The radial gauge gives \(A_a\cdot\nu=0\) on every centered sphere.
Taylor's formula on the interval between \(U_j\) and \(\Phi_j\), where
\(\frac{1}{2}U_j\le\Phi_j\le\frac{3}{2}U_j\), gives
\[
 |\mathcal R_j^{[4]}|
 \le CU_j^{p-2}|w|^2
 \le C\rho_j^{2m_0}(1+r)^{2m_0-n-2}.
\]
Thus all three terms satisfy \eqref{eq:source-field-bounds} with \(s=2m_0\).
Lemma  \ref{lem:degree-source-bound} and the bound
\(R_j\le C\rho_j^{-1/2}\) from
\eqref{eq:outer-radius-rho-bound} imply
\[
 \begin{aligned}
 \sum_{a=2}^4\|\mathcal R_j^{[a]}\|_{\dot H^{-1}}
 &\le C\rho_j^{2m_0}(1+|\log\rho_j|)^{1/2}
       +C\rho_j^{m_0+\frac{n-2}{4}}\\
 &=o\!\left(\rho_j^{\min\{m_0,(n-2)/2\}}\right).
 \end{aligned}
\]
Both exponents in the upper bound exceed \(\min\{m_0,(n-2)/2\}\).
Combining this estimate with the bound for
\(\mathcal R_j^{[1]}\) proves the lemma.
\end{proof}

\subsection{A uniform estimate modulo the Jacobi fields}

For the fixed solution \(v_j\), the coefficient \(\vartheta_j\) is
specified by the integral formula above. The following a priori estimate
controls a function in \(H_0^1(A_{\sigma_j,R_j})\) by \(L_jf\) and its
weighted moments against the Jacobi fields.

The weak form of \(L_j\) is also used below to construct the two boundary
corrections. For an open set \(E\subset A_{\sigma_j,R_j}\) and
\(\phi,\psi\in H^1(E)\), define
 
\[
  S_{j,E}(\phi,\psi)=\int_E\left[
 \sum_{k,l=1}^ng_j^{kl}\partial_k\phi\partial_l\psi
 +\bigl(c(n)\operatorname{Scal}_{g_j}-n(n-2)\vartheta_j\bigr)\phi\psi
 \right]\,\dd y.
\]
This is the symmetric weak form of  \( L_j\).

By \eqref{eq:normal-orders} and \eqref{eq:outer-radius-rho-bound},
\begin{equation}\label{eq:scaled-metric-coefficient-bounds}
 \max_{1\le k,l\le n}|(g_j)_{kl}-\delta_{kl}|\le C\eta_j,
 \qquad |\operatorname{Scal}_{g_j}(y)|\le C\rho_j^4|y|^2,
 \qquad y\in A_{\sigma_j,R_j}.
\end{equation}
Fix \(C_1>0\) so that, for all sufficiently large \(j\),
\begin{equation}\label{eq:secant-potential-regions}
 \begin{aligned}
 |c(n)\operatorname{Scal}_{g_j}-n(n-2)\vartheta_j|&\le C_1,
 &&\sigma_j\le|y|\le1,\\
 |c(n)\operatorname{Scal}_{g_j}-n(n-2)\vartheta_j|&\le C_1|y|^{-4},
 &&1\le|y|\le R_j.
 \end{aligned}
\end{equation}
On \(1\le|y|\le R_j\), use \(U_j^{p-1}\le C|y|^{-4}\) and
\(\rho_j^4|y|^6\le\rho_j^4R_j^6\le C\rho_j\).

For every compact \(D\subset\R^n\setminus\{0\}\),
\[
 \norm{v_j-U_j}_{C^2(D)}\to0
 \quad\text{as }j\to\infty.
\]
This follows from \eqref{eq:final-parameter-approximation}.
Let \(j_k\to\infty\) be any subsequence such that
\[
 (\lambda_{j_k},b_{j_k})
 \to(\lambda_\infty,b_\infty)\in\mathcal K
 \quad\text{as }k\to\infty.
\]
Then
\begin{equation}\label{eq:secant-local-limit}
 \vartheta_{j_k}\to pU_{\lambda_\infty,b_\infty}^{p-1}
 \quad\text{locally uniformly on }\R^n\setminus\{0\}
 \quad\text{as }k\to\infty.
\end{equation}
For a measurable set \(E\subset\R^n\), \(\mathbf1_E\) denotes its
characteristic function. Moreover, after extending \(\vartheta_{j_k}\) by zero,
\eqref{eq:secant-coefficient-bounds} gives, for every
\(0<r<1<R\),
\[
 \begin{aligned}
 \limsup_{k\to\infty}\;&\phantom{{}+{}}
 \left\|
 \mathbf1_{A_{\sigma_{j_k},R_{j_k}}\cap
 (B_r\cup(\R^n\setminus B_R))}\vartheta_{j_k}
 \right\|_{L^{n/2}(\R^n)}+
 \left\|
 pU_{\lambda_\infty,b_\infty}^{p-1}
 \right\|_{L^{n/2}(B_r\cup(\R^n\setminus B_R))}
 \le C(r^2+R^{-2}).
 \end{aligned}
\]
Together with \eqref{eq:secant-local-limit}, this proves
\[
 \norm{\mathbf1_{A_{\sigma_{j_k},R_{j_k}}}\vartheta_{j_k}
 -pU_{\lambda_\infty,b_\infty}^{p-1}}
 _{L^{n/2}(\R^n)}\to0
 \quad\text{as }k\to\infty.
\]
Similarly,
\begin{equation}\label{eq:scalar-curvature-Ln2-small}
 \norm{\mathbf1_{A_{\sigma_j,R_j}}\operatorname{Scal}_{g_j}}_{L^{n/2}(\R^n)}
 \le C\rho_j^4R_j^4=C\eta_j^2.
\end{equation}

\begin{proposition}\label{prop:constrained-inverse}
There is \(C>0\) such that, for all sufficiently large \(j\), every
\(f\in H_0^1(A_{\sigma_j,R_j})\) satisfies
 
\[
 \norm{\nabla_yf}_{L^2(A_{\sigma_j,R_j})}
 \le C\left[
 \norm{ L_jf}_{\dot H^{-1}(A_{\sigma_j,R_j})}
 +\sum_{l=0}^n
 \left|\int_{A_{\sigma_j,R_j}}
 U_j^{p-1}fJ_{\lambda_j,b_j,l}\,\dd y\right|
 \right].
\]

\end{proposition}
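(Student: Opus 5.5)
We argue by contradiction and compactness, in the spirit of the proof of Lemma \ref{lem:fixed-core}. Suppose the estimate fails along a subsequence. Then there are \(f_k\in H_0^1(A_{\sigma_{j_k},R_{j_k}})\) with \(\|\nabla_yf_k\|_{L^2}=1\) and
\[
 \|L_{j_k}f_k\|_{\dot H^{-1}(A_{\sigma_{j_k},R_{j_k}})}
 +\sum_{l=0}^n\Bigl|\int U_{j_k}^{p-1}f_kJ_{\lambda_{j_k},b_{j_k},l}\,\dd y\Bigr|\to0 .
\]
Passing to a further subsequence, we may assume \((\lambda_{j_k},b_{j_k})\to(\lambda_\infty,b_\infty)\in\mathcal K\). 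We extend \(f_k\) by zero to \(\R^n\). The sequence is then bounded in \(\dot H^1(\R^n)\), and after extraction \(f_k\rightharpoonup f_\infty\) weakly in \(\dot H^1\) and strongly in \(L^2_{\rm loc}\).

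\emph{Step 1 (limit equation).} Fix \(\varphi\in C_c^\infty(\R^n\setminus\{0\})\). For large \(k\) its support lies in \(A_{\sigma_{j_k},R_{j_k}}\), since \(\sigma_j\to0\) and \(R_j\to\infty\). We pass to the limit in \(S_{j_k}(f_k,\varphi)=\langle L_{j_k}f_k,\varphi\rangle\to0\). For this we use the bound \(|g_j-I|\le C\eta_j\to0\) from \eqref{eq:scaled-metric-coefficient-bounds}, the estimate \eqref{eq:scalar-curvature-Ln2-small}, and the \(L^{n/2}\) convergence of \(\mathbf 1_A\vartheta_{j_k}\) to \(pU_\infty^{p-1}\). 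The result is \(-\Delta f_\infty=n(n+2)U_\infty^{p-1}f_\infty\) on \(\R^n\setminus\{0\}\). A point has zero capacity for \(n\ge3\), so the equation holds on all of \(\R^n\). The \(L^{n/2}\) convergence of the potentials, together with the Sobolev bound on \(f_k\), also gives convergence of the moment integrals. Hence \(\int U_\infty^{p-1}f_\infty J_{\lambda_\infty,b_\infty,l}=0\) for \(0\le l\le n\). By Bianchi--Egnell, \(f_\infty\) lies in the span of the \(J_{\lambda_\infty,b_\infty,l}\). Since the weighted Gram matrix is positive definite \eqref{eq:modulation-gram-positivity}, the orthogonality forces \(f_\infty=0\).

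\emph{Step 2 (no loss of energy).} Now test the weak form with \(f_k\) itself. This gives
\[
 \int g^{kl}_{j_k}\partial_kf_k\partial_lf_k
 =\langle L_{j_k}f_k,f_k\rangle
 +\int\bigl(n(n-2)\vartheta_{j_k}-c(n)\operatorname{Scal}_{g_{j_k}}\bigr)f_k^2 .
\]
The first term on the right is \(o(1)\), because \(\|\nabla f_k\|_{L^2}=1\). For the potential term we write \(\mathbf 1_A\vartheta_{j_k}=pU_\infty^{p-1}+o_{L^{n/2}}(1)\) and use Hölder and Sobolev, so the error contributes \(o(1)\). The scalar curvature part is \(O(\eta_{j_k}^2)\) by \eqref{eq:scalar-curvature-Ln2-small}. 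The remaining term \(\int U_\infty^{p-1}f_k^2\) tends to \(\int U_\infty^{p-1}f_\infty^2=0\). To see this, split \(\R^n\) into \(B_r\), \(A_{r,R}\) and \(\R^n\setminus B_R\). On \(A_{r,R}\) we use strong \(L^2\) convergence. On the two tail regions the \(L^{n/2}\) norm of \(U_\infty^{p-1}\) is \(O(r^2+R^{-2})\), as in the tail bound preceding \eqref{eq:scalar-curvature-Ln2-small}. Consequently the left side tends to \(0\). But \(|g_j-I|\le C\eta_j\) makes it at least \((1-C\eta_{j_k})\|\nabla f_k\|^2_{L^2}=1-o(1)\), which is a contradiction.

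\emph{Main obstacle.} The delicate point is uniformity in the degenerating annuli, where \(\sigma_j\to0\) and \(R_j\to\infty\). Compactness must be replaced by the \(L^{n/2}\) smallness of the potential near \(0\) and near \(\infty\); this is exactly what the preceding tail estimates and \eqref{eq:scalar-curvature-Ln2-small} supply. A secondary care is that the moment functionals pass to the limit. This follows because \(U_j^{p-1}J_{\lambda_j,b_j,l}\) converges to \(U_\infty^{p-1}J_{\lambda_\infty,b_\infty,l}\) in \(L^{2n/(n+2)}\), which is dual to the \(L^{2^*}\) bound on \(f_k\). Moreover, the weak limit of \(\mathbf 1_A f_k\) in \(L^{2^*}\) is \(f_\infty\).
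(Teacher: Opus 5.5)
Your proposal is correct and follows essentially the same route as the paper. Both arguments normalize by contradiction, take a weak $\dot H^1$ limit that solves $-\Delta f_\infty=n(n+2)U_\infty^{p-1}f_\infty$ across the puncture, and use the limiting moment conditions to force $f_\infty=0$. Both then test the equation with $f_k$, using the $L^{n/2}$ smallness of the potentials near $0$ and $\infty$, to reach $1+o(1)=o(1)$.

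The differences are only cosmetic. You pass the moments to the limit by $L^{2n/(n+2)}$–$L^{2^*}$ duality and conclude via the Gram matrix. The paper instead uses Sobolev tail bounds and the identity $\int U_\infty^{p-1}f_\infty^2=0$.
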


\begin{proof}
If the estimate fails, there are \(j_k\to\infty\) and
\(f_{j_k}\in H_0^1(A_{\sigma_{j_k},R_{j_k}})\) such that
 
\[
 \begin{gathered}
 \norm{\nabla_yf_{j_k}}_2=1,\qquad
 \norm{ L_{j_k}f_{j_k}}_{\dot H^{-1}}\to0,\\
 \int_{A_{\sigma_{j_k},R_{j_k}}}
 U_{j_k}^{p-1}f_{j_k}J_{\lambda_{j_k},b_{j_k},l}\,\dd y
 \to0,\qquad 0\le l\le n.
 \end{gathered}
\]

Extend \(f_{j_k}\) by zero to \(\R^n\). After taking a subsequence,
\begin{equation}\label{eq:inverse-weak-compactness}
 \begin{gathered}
 (\lambda_{j_k},b_{j_k})\to
 (\lambda_\infty,b_\infty)\in\mathcal K,\\
 f_{j_k}\rightharpoonup f\quad\text{in }\dot H^1(\R^n),\qquad
 f_{j_k}\to f\quad\text{in }L^2_{\rm loc}(\R^n).
 \end{gathered}
\end{equation}
The coefficient limits imply
\begin{equation}\label{eq:inverse-limit-equation}
 -\Delta f-n(n+2)U_{\lambda_\infty,b_\infty}^{p-1}f=0
 \quad\text{in }\R^n\setminus\{0\}.
\end{equation}
For \(\varphi\in C_c^\infty(\R^n)\), choose a smooth cutoff \(\chi_r\)
which is zero on \(B_r\), one outside \(B_{2r}\), and satisfies
\(|\nabla\chi_r|\le C/r\). Then
\[
 \|\nabla((1-\chi_r)\varphi)\|_2^2
 \le C_\varphi(r^n+r^{n-2})\to0.
\]
Since \(f\in\dot H^1\) and
\(U_{\lambda_\infty,b_\infty}^{p-1}\in L^{n/2}(\R^n)\),
\[
 \begin{aligned}
 &\left\langle
 (-\Delta-n(n+2)U_{\lambda_\infty,b_\infty}^{p-1})f,\varphi
 \right\rangle\\
 &\quad=\lim_{r\downarrow0}\left\langle
 (-\Delta-n(n+2)U_{\lambda_\infty,b_\infty}^{p-1})f,\chi_r\varphi
 \right\rangle=0.
 \end{aligned}
\]
Thus \eqref{eq:inverse-limit-equation} holds on \(\R^n\).
Lemma  \ref{lem:euclidean-scalar-fredholm} with
\eqref{eq:scalar-conjugacy} gives
\[
 f=\sum_{l=0}^na_lJ_{\lambda_\infty,b_\infty,l}.
\]

We next pass the moment conditions to the limit.
The Sobolev inequality and \(|J_{\lambda,b,l}|\le CU_{\lambda,b}\)
give, uniformly for \((\lambda,b)\in\mathcal K\),
\[
 \left|\int_{B_r\cup(\R^n\setminus B_R)}
 U_{\lambda,b}^{p-1}wJ_{\lambda,b,l}\,\dd y\right|
 \le C\|\nabla w\|_2
 \left(r^{(n+2)/2}+R^{-(n+2)/2}\right)
\]
for \(w\in\dot H^1(\R^n)\) and \(0<r<1<R\).
Use this estimate with the strong local convergence in
\eqref{eq:inverse-weak-compactness} to obtain
\[
 \int_{\R^n}U_{\lambda_\infty,b_\infty}^{p-1}
 fJ_{\lambda_\infty,b_\infty,l}\,\dd y=0,
 \qquad 0\le l\le n.
\]
Multiplying these identities by the coefficients of \(f\) gives
 
\[
 \int_{\R^n}U_{\lambda_\infty,b_\infty}^{p-1}f^2\,\dd y
 =\sum_{l=0}^na_l
\int_{\R^n}U_{\lambda_\infty,b_\infty}^{p-1}
fJ_{\lambda_\infty,b_\infty,l}\,\dd y
 =0.
\]
Hence \(f=0\).

It remains to recover the Dirichlet energy and exclude loss at the two
ends of the annulus.
For fixed \(0<r<1<R\), local convergence gives
\[
 \int_{A_{r,R}}\vartheta_{j_k}f_{j_k}^2\,\dd y\to0.
\]
The pointwise bound \eqref{eq:secant-coefficient-bounds} and the Sobolev
inequality control the complementary regions:
\[
 0\le\limsup_{k\to\infty}
 \int_{A_{\sigma_{j_k},R_{j_k}}}\vartheta_{j_k}f_{j_k}^2\,\dd y
 \le C(r^2+R^{-2}).
\]
Letting \(r\downarrow0\) and \(R\to\infty\) makes the potential integral
tend to zero. The scalar curvature pairing tends to zero by
\eqref{eq:scalar-curvature-Ln2-small}. Testing
 \( L_{j_k}f_{j_k}\) against \(f_{j_k}\), and using
\eqref{eq:scaled-metric-coefficient-bounds}, now gives
 
\[
 o(1)= S_{j_k,A_{\sigma_{j_k},R_{j_k}}}(f_{j_k},f_{j_k})
 =1+o(1).
\]

This contradiction proves the estimate.
\end{proof}

\par
\subsection{Boundary corrections and the remainder with zero boundary values}

Lemma  \ref{lem:annular-boundary-traces} bounds the traces prescribed in the
two Dirichlet problems.

\par

\par

\par
\begin{lemma}\label{lem:annular-boundary-traces}
For every \(\alpha\in(0,1)\), there is \(C_\alpha>0\) such that, for all
sufficiently large \(j\),
\begin{equation}\label{eq:annular-boundary-traces}
 \begin{aligned}
 \norm{(v_j-\Phi_j)(\sigma_j\,\cdot)}_{C^{1,\alpha}(\Sph^{n-1})}
 &\le C_\alpha,\\
 \norm{R_j^{n-2}(v_j-\Phi_j)(R_j\,\cdot)}_{C^{1,\alpha}(\Sph^{n-1})}
 &\le C_\alpha.
 \end{aligned}
\end{equation}
\end{lemma}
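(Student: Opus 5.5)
The plan is to estimate $v_j$ and $\Phi_j$ separately on each boundary sphere and use the triangle inequality. In each case the aim is a bound of order one after the normalization indicated in \eqref{eq:annular-boundary-traces}.

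\emph{The profile $\Phi_j$.} By Lemma~\ref{lem:corrected-profile-estimate}, together with the uniform bubble bounds $|\partial_y^\gamma U_{\lambda,b}(y)|\le C(1+|y|)^{2-n-|\gamma|}$ on $\mathcal K$, we have $|\partial_y^\gamma\Phi_j(y)|\le C(1+|y|)^{2-n-|\gamma|}$ for $|\gamma|\le2$.
\begin{itemize}
\item On $\partial B_{\sigma_j}$ with $\sigma_j\to0$: the rescaled function $\theta\mapsto\Phi_j(\sigma_j\theta)$ has its first two $\theta$-derivatives bounded by $C\sigma_j^{|\gamma|}\le C$. This gives a uniform $C^2$ bound, hence a $C^{1,\alpha}$ bound.
\item On $\partial B_{R_j}$: $\theta\mapsto R_j^{n-2}\Phi_j(R_j\theta)$ has derivatives bounded by $R_j^{n-2}R_j^{|\gamma|}R_j^{2-n-|\gamma|}=O(1)$.
\end{itemize}

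\emph{The solution $v_j$: pointwise bounds.} Lemma~\ref{lem:annular-bubble-comparison} gives $C^{-1}U_j\le v_j\le CU_j$ on the whole annulus. This bound is lossy near the inner sphere, since $v_j(\sigma_j\theta)=O(1)$ while the capacity profile $\sigma_j^{n-2}|y|^{2-n}$ is of comparable size there. That is harmless: on $\partial B_{\sigma_j}$ we only need $v_j=O(1)$, and on $\partial B_{R_j}$ we need $v_j=O(R_j^{2-n})$, and both follow.

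\emph{The solution $v_j$: derivatives.} Here I would use Lemma~\ref{lem:radial-harnack}, estimate \eqref{eq:radial-derivatives}, scaled back to $v_j$. It controls $(r\partial_r)^\ell\nabla_\theta^m\bigl(r^{(n-2)/2}v_j(r\theta)\bigr)$ for $\ell+m\le2$ by $Cr^{(n-2)/2}\overline v_j(r)$. Since $(r\partial_r)$ and $\nabla_\theta$ are exactly the derivatives that appear after rescaling by $r$, this yields $\|v_j(r\,\cdot)\|_{C^2(\Sph^{n-1})}\le C\overline v_j(r)$ at $r=\sigma_j$ and at $r=R_j$. Two checks are needed:
\begin{itemize}
\item The original radii $\rho_j\sigma_j=\bar\rho_j$ and $\rho_jR_j=\bar\rho_{j-1}$ must lie below $r_{\rm ann}$. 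This holds for large $j$ because both tend to zero.
\item Averages: \eqref{eq:annular-average-comparison} gives $\overline v_j(\sigma_j)\le C$, using \eqref{eq:inner-annular-average} at $r=\sigma_j$, which yields $C(1+1)$. It also gives $\overline v_j(R_j)\le CR_j^{2-n}$, using \eqref{eq:outer-annular-average}.
\end{itemize}
Multiplying by $R_j^{n-2}$ at the outer sphere, both traces of $v_j$ are bounded in $C^2$, hence in $C^{1,\alpha}$.

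\emph{Conclusion and main obstacle.} Combining the two parts gives \eqref{eq:annular-boundary-traces}. The main point to verify is that the annular Harnack and Schauder estimates \eqref{eq:radial-derivatives} apply uniformly at the extreme radii $\bar\rho_j,\bar\rho_{j-1}$, where the solution is near its minimum scale. They do, because the constants there depend only on the critical upper bound \eqref{eq:critical-upper}, not on any lower bound. The rest of the argument is bookkeeping of the scaling factors.
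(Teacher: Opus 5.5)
Your proof is correct. It shares the paper's overall structure: you estimate $v_j$ and $\Phi_j$ separately on each sphere, and you control $\Phi_j$ through Lemma~\ref{lem:corrected-profile-estimate} and the bubble formula. Where you depart from the paper is in how you bound the trace of $v_j$.

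The paper writes
$v_j(\sigma_j\xi)=\sigma_j^{-(n-2)/2}\bigl[\bar\rho_j^{(n-2)/2}u(\bar\rho_je_1)\bigr]\,u(\bar\rho_j\xi)/u(\bar\rho_je_1)$.
It bounds the quotient in $C^{2,\alpha'}(A_{1/2,2})$ using the convergence \eqref{eq:minimum-harmonic-limit} to $\frac12(1+|y|^{2-n})$. It then shows the prefactor is of order one using \eqref{eq:mu-minimum-depth} and \eqref{eq:capacity-scales}, and repeats the argument with $j-1$ at the outer sphere.

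You instead use the uniform annular estimate \eqref{eq:radial-derivatives} after rescaling. This bounds the $C^2(\Sph^{n-1})$ norm of $v_j(r\,\cdot)$ by $C\overline v_j(r)$ whenever $\rho_jr<r_{\rm ann}$. You then read off $\overline v_j(\sigma_j)\le C$ and $\overline v_j(R_j)\le CR_j^{2-n}$ from \eqref{eq:inner-annular-average} and \eqref{eq:outer-annular-average}; the bound $v_j\le CU_j$ from Lemma~\ref{lem:annular-bubble-comparison} would give the same directly.

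Your route needs only uniform bounds and never identifies the limit profile, so it is slightly more elementary. The paper's route gives convergence to an explicit harmonic profile on a fixed annulus. That is more than this lemma needs, but it was already available from Proposition~\ref{prop:minimum-scale-limit}. Both routes rest on the same interior Harnack and Schauder estimates under the critical upper bound. Your check that no lower bound enters those constants is the right one.
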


\par
\begin{proof}
Fix \(\alpha'\in(\alpha,1)\). At the inner boundary,
\[
 v_j(\sigma_j\xi)=\sigma_j^{-\frac{n-2}{2}}
 \bigl[\bar\rho_j^{\frac{n-2}{2}}u(\bar\rho_je_1)\bigr]
 \frac{u(\bar\rho_j\xi)}{u(\bar\rho_je_1)},
 \qquad \xi\in A_{1/2,2}.
\]
By \eqref{eq:minimum-harmonic-limit}, the quotient is uniformly bounded in
\(C^{2,\alpha'}(A_{1/2,2})\). Equations
\eqref{eq:mu-minimum-depth} and \eqref{eq:capacity-scales} give
\(C^{-1}\sigma_j^{\frac{n-2}{2}}\le\bar\rho_j^{\frac{n-2}{2}}u(\bar\rho_je_1)\le C\sigma_j^{\frac{n-2}{2}}\).
Thus
\[
 \norm{v_j(\sigma_j\,\cdot)}_{C^{2,\alpha'}(A_{1/2,2})}\le C.
\]
Applying \eqref{eq:minimum-harmonic-limit},
\eqref{eq:mu-minimum-depth}, and \eqref{eq:capacity-scales} with \(j-1\)
in place of \(j\) gives
\[
 \norm{R_j^{n-2}v_j(R_j\,\cdot)}_{C^{2,\alpha'}(A_{1/2,2})}\le C.
\]
The formula \eqref{eq:standard-bubble} and compactness of \(\mathcal K\)
give
\[
 \norm{U_j(\sigma_j\,\cdot)}_{C^{2,\alpha'}(A_{1/2,2})}
 +\norm{R_j^{n-2}U_j(R_j\,\cdot)}_{C^{2,\alpha'}(A_{1/2,2})}
 \le C.
\]
By Lemma  \ref{lem:corrected-profile-estimate}, for
\(|\beta|\le2\) and \(\xi\in\Sph^{n-1}\),
 
\[
 \begin{aligned}
 \sigma_j^{|\beta|}
 |\partial_y^\beta(\Phi_j-U_j)(\sigma_j\xi)|
 &\le C\eta_j,\\
 R_j^{n-2+|\beta|}
 |\partial_y^\beta(\Phi_j-U_j)(R_j\xi)|
 &\le C\eta_j.
 \end{aligned}
\]

 Restricting the resulting \(C^{1,\alpha}\) bounds to \(\Sph^{n-1}\)
proves \eqref{eq:annular-boundary-traces}.
\end{proof}

\par

Choose fixed auxiliary radii \(r_{\rm c}\in(0,1/8)\) and
\(R_{\rm c}>8\). They determine inner and outer annuli on which the
term of order zero is small relative to the Dirichlet energy. The first
and third inequalities below give coercivity on these annuli; the second
and fourth are used in the radial barriers. Let \(C_{\rm P}\) be the
dimensional constant in the Poincar\'e inequality on a ball, and choose
the radii so that
\begin{equation}\label{eq:boundary-radius-choice}
 \begin{gathered}
 C_{\rm P}C_1r_{\rm c}^2\le\frac14,
 \qquad
 \frac{2(C_1+1)}{n-4}r_{\rm c}^2\le\frac{n-4}{2},\\
 \frac{4C_1}{(n-2)^2R_{\rm c}^2}\le\frac14,
 \qquad
 C_1\left(1+\frac{C_1}{n-4}\right)R_{\rm c}^{-2}
 \le\frac{2(n-4)(1+C_1/(n-4))-C_1}{2}.
 \end{gathered}
\end{equation}

\par
\par
After increasing the initial index, assume
\begin{equation}\label{eq:boundary-region-separation}
 2\sigma_j<r_{\rm c}<\frac18,
 \qquad 8<R_{\rm c}<\frac12R_j.
\end{equation}

\par
We will write \(e_j=e_j^-+e_j^0+e_j^+\), with \(e_j^-\) and \(e_j^+\)
carrying the inner and outer boundary values and \(e_j^0\) having zero
trace. For every sufficiently large \(j\), the boundary functions are
specified by the problems
 
\begin{equation}\label{eq:boundary-extension-problems}
 \begin{cases}
  L_je_j^-=0,\\
 e_j^-=v_j-\Phi_j\quad\text{on }\partial B_{\sigma_j},\\
 e_j^-=0\quad\text{on }\partial B_{r_{\rm c}},
 \end{cases}
 \qquad
 \begin{cases}
  L_je_j^+=0,\\
 e_j^+=0\quad\text{on }\partial B_{R_{\rm c}},\\
 e_j^+=v_j-\Phi_j\quad\text{on }\partial B_{R_j},
 \end{cases}
\end{equation}

with unknowns \(e_j^-\) on \(A_{\sigma_j,r_{\rm c}}\) and \(e_j^+\) on
\(A_{R_{\rm c},R_j}\).

\begin{lemma}\label{lem:boundary-extensions}
The two problems in \eqref{eq:boundary-extension-problems} have unique weak
solutions.
For each fixed \(j\),
\[
 e_j^-\in C^2(\overline{A_{\sigma_j,r_{\rm c}}}),
 \qquad
 e_j^+\in C^2(\overline{A_{R_{\rm c},R_j}}).
\]
After extension by zero to \(A_{\sigma_j,R_j}\),
\begin{equation}\label{eq:boundary-extension-energy}
 \begin{aligned}
 \int_{A_{\sigma_j,R_j}}\bigl(|\nabla_ye_j^-|^2
 +U_j^{p-1}(e_j^-)^2\bigr)\,\dd y
 &\le C\sigma_j^{n-2},\\
 \int_{A_{\sigma_j,R_j}}\bigl(|\nabla_ye_j^+|^2
 +U_j^{p-1}(e_j^+)^2\bigr)\,\dd y
 &\le CR_j^{2-n}.
 \end{aligned}
\end{equation}
\end{lemma}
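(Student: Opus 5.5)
The plan is to treat the two problems symmetrically: establish coercivity of the weak form \(S_{j,E}\) on each collar, solve by Lax--Milgram after subtracting a harmonic extension of the trace, and then obtain the sharp energy bounds by comparison with explicit radial barriers rather than by the crude trace estimate. Consider the inner problem on \(E^-=A_{\sigma_j,r_{\rm c}}\). By \eqref{eq:secant-potential-regions}, the zeroth-order coefficient of \(S_{j,E^-}\) is bounded by \(C_1\). Extend \(\phi\in H_0^1(E^-)\) by zero to \(B_{r_{\rm c}}\) and apply the Poincar\'e inequality with the first condition in \eqref{eq:boundary-radius-choice}. Together with \eqref{eq:scaled-metric-coefficient-bounds} this gives \(S_{j,E^-}(\phi,\phi)\ge\frac12\|\nabla\phi\|_2^2\) for large \(j\).

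On \(E^+=A_{R_{\rm c},R_j}\), the coefficient is bounded by \(C_1|y|^{-4}\le C_1R_{\rm c}^{-2}|y|^{-2}\). Hardy's inequality with the third condition in \eqref{eq:boundary-radius-choice} then gives the same coercivity. Uniqueness follows immediately. For existence, write \(e_j^\mp=\zeta+\phi\), where \(\zeta\) is a fixed smooth extension of the boundary data, and solve for \(\phi\in H_0^1\) by Lax--Milgram. Interior and boundary Schauder regularity then give \(C^2\) up to the boundary for fixed \(j\), because the traces are \(C^{1,\alpha}\) by Lemma \ref{lem:annular-boundary-traces}. The coefficient \(\vartheta_j\) is Hölder on the closed annulus, so a \(W^{2,s}\) estimate followed by a Schauder bootstrap on the smooth data suffices.

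For the sharp energy bounds I would compare with barriers. In the inner case set \(\Psi^-(y)=A\sigma_j^{n-2}|y|^{2-n}\bigl(1+K|y|^2\bigr)\), or a variant of the form \(\sigma_j^{n-2}(|y|^{2-n}-r_{\rm c}^{2-n}\cdot\text{const})+\dots\). The second condition in \eqref{eq:boundary-radius-choice} is tailored so that \(-L_j\Psi^-\ge0\), since \(\Delta(|y|^{4-n})=-2(n-4)|y|^{2-n}\) absorbs the bounded potential. The coercivity established above makes the maximum principle valid for \(L_j\) on \(E^-\). Lemma \ref{lem:annular-boundary-traces} gives \(|v_j-\Phi_j|\le C\) on \(\partial B_{\sigma_j}\), so comparison yields \(|e_j^-|\le C\sigma_j^{n-2}|y|^{2-n}\).

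In the outer case the boundary value on \(\partial B_{R_j}\) is \(O(R_j^{2-n})\) by Lemma \ref{lem:annular-boundary-traces}. I would take \(\Psi^+=AR_j^{2-n}\,\omega(|y|)\) with \(\omega\) a perturbed constant of the form \(1+\frac{C_1}{n-4}-C|y|^{-2}\), positive on \(E^+\). The fourth condition in \eqref{eq:boundary-radius-choice} is exactly what makes \(-L_j\Psi^+\ge0\) against the \(C_1|y|^{-4}\) potential. Comparison then gives \(|e_j^+|\le CR_j^{2-n}\).

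Energy is then obtained by testing the equation against \(e_j^\mp\) minus a cutoff extension of the boundary data, or equivalently by Caccioppoli estimates combined with the boundary \(C^{1,\alpha}\) control. In the inner case, rescaling by \(\sigma_j\) gives a gradient bound \(|\nabla e_j^-|\le C\sigma_j^{n-2}|y|^{1-n}\) via scaled boundary and interior estimates on dyadic shells. Integrating, \(\int|\nabla e_j^-|^2\le C\sigma_j^{2n-4}\int_{\sigma_j}r^{1-n}\,dr\le C\sigma_j^{n-2}\), and the potential term \(U_j^{p-1}(e_j^-)^2\lesssim(e_j^-)^2\) integrates to something smaller. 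In the outer case the gradient is \(O(R_j^{2-n}|y|^{-1})\), with a boundary layer near \(R_j\) controlled by the rescaled \(C^{1,\alpha}\) trace. This gives \(\int|\nabla e_j^+|^2\le CR_j^{4-2n}R_j^{n-2}=CR_j^{2-n}\), and \(\int U_j^{p-1}(e_j^+)^2\le CR_j^{4-2n}\int^{R_j}r^{n-5}\,dr\) is bounded by the same quantity.

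The main obstacle is verifying that the two barriers are genuine supersolutions of \(L_j\) with the constants fixed in \eqref{eq:boundary-radius-choice}, including the \(\operatorname{Scal}_{g_j}\) and \(g_j-\delta\) errors, which are small by \eqref{eq:scaled-metric-coefficient-bounds}. The radial form of \(\Delta_{g_j}\) in the radial gauge helps here, since \(\Delta_{g_j}\) acts on radial functions exactly as \(\Delta\). The remaining concern is getting the gradient bounds up to the boundary uniformly in \(j\).
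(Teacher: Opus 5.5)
Your route is sound in outline and differs from the paper's, but one step fails as written. With the paper's convention \(L_j=-\Delta_{g_j}+c(n)\operatorname{Scal}_{g_j}-n(n-2)\vartheta_j\), an upper barrier must satisfy \(L_j\Psi\ge0\), not \(-L_j\Psi\ge0\).

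For \(\Psi^-\), your computation (the term \(\Delta|y|^{4-n}=-2(n-4)|y|^{2-n}\) absorbing the bounded potential) already is the correct condition \(L_j\Psi^-\ge0\), so only the label is wrong. Note that the second condition in \eqref{eq:boundary-radius-choice} is calibrated for the paper's exponential barrier. For \(|y|^{2-n}(1+K|y|^2)\) you should impose your own smallness of \(r_{\rm c}\), e.g.\ \(C_1r_{\rm c}^2\le n-4\) with \(K=2C_1/(n-4)\).

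For \(\Psi^+\) the sign error is substantive. The function \(\omega=1+\frac{C_1}{n-4}-C|y|^{-2}\) is subharmonic, since \(\Delta(-|y|^{-2})=2(n-4)|y|^{-4}>0\). On \(A_{R_{\rm c},R_j}\) the potential \(n(n-2)\vartheta_j\ge c|y|^{-4}\) dominates \(|c(n)\operatorname{Scal}_{g_j}|\le C\rho_j^4|y|^2\). Hence \(L_j(AR_j^{2-n}\omega)<0\): your \(\Psi^+\) is a strict subsolution and gives no upper bound for \(|e_j^+|\).

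The \(|y|^{-2}\) term must enter with a plus sign. Take the radial function \(\omega=1+\bigl(1+\frac{C_1}{n-4}\bigr)|y|^{-2}\). Then \(\Delta_{g_j}\omega=\Delta\omega\), and \(L_j\omega\ge\bigl[2(n-4)+C_1-C_1\bigl(1+\frac{C_1}{n-4}\bigr)|y|^{-2}\bigr]|y|^{-4}\). This is positive for \(|y|\ge R_{\rm c}\) precisely by the last condition in \eqref{eq:boundary-radius-choice}. Comparison with the trace bound on \(\partial B_{R_j}\) then gives \(|e_j^+|\le CR_j^{2-n}\), and the rest of your outer argument goes through.

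With that repaired, your argument is a genuinely different proof of the energy bound. The paper uses neither the maximum principle nor barriers in this lemma. After coercivity it writes \(e_j^\pm=\widetilde e_j^\pm+w_j^\pm\), where \(\widetilde e_j^-(r\theta)=\chi_{\rm c}(r/\sigma_j)(v_j-\Phi_j)(\sigma_j\theta)\) and \(\widetilde e_j^+(r\theta)=\chi_{\rm c}(R_j/r)(v_j-\Phi_j)(R_j\theta)\) live in collars of width comparable to \(\sigma_j\) and \(R_j\). Lemma \ref{lem:annular-boundary-traces} bounds their energies by \(C\sigma_j^{n-2}\) and \(CR_j^{2-n}\), and Lax--Milgram with coercivity transfers these bounds to \(w_j^\pm\).

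So the direct trace argument is not crude once the cutoff sits at the natural scale. Your aside about testing against \(e_j^\mp\) minus a cutoff extension is exactly this argument, and it alone finishes the lemma.

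Your barrier-plus-rescaled-gradient argument is essentially the paper's next result, Lemma \ref{lem:boundary-extension-decay}. That lemma uses the barriers \(\sigma_j^{n-2}r^{2-n}\exp\bigl(\frac{C_1+1}{n-4}r^2\bigr)\) and \(R_j^{2-n}\bigl[1+\bigl(1+\frac{C_1}{n-4}\bigr)r^{-2}\bigr]\). It also uses the uniform boundary gradient estimates at scales \(\sigma_j\) and \(R_j\) that you flagged as a remaining concern. Doing this first is heavier for the present lemma. In exchange, it delivers the pointwise decay, needed later for the conormal and Jacobi-moment bounds, at the same time as the energy bound.
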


\begin{proof}
We first prove, for a fixed \(c_*>0\),
 
\begin{equation}\label{eq:boundary-region-coercivity}
 \begin{aligned}
  S_{j,A_{\sigma_j,r_{\rm c}}}(\phi,\phi)
 &\ge c_*\int_{A_{\sigma_j,r_{\rm c}}}|\nabla_y\phi|^2\,\dd y,
 &&\phi\in H_0^1(A_{\sigma_j,r_{\rm c}}),\\
  S_{j,A_{R_{\rm c},R_j}}(\phi,\phi)
 &\ge c_*\int_{A_{R_{\rm c},R_j}}|\nabla_y\phi|^2\,\dd y,
 &&\phi\in H_0^1(A_{R_{\rm c},R_j}).
 \end{aligned}
\end{equation}

\emph{Inner region.}
For the inner region, extend \(\phi\) by zero across
\(\partial B_{\sigma_j}\) and apply the Poincar\'e inequality on
\(B_{r_{\rm c}}\). The first inequality in
\eqref{eq:boundary-radius-choice} gives
\[
 \left|\int_{A_{\sigma_j,r_{\rm c}}}
 \bigl(c(n)\operatorname{Scal}_{g_j}-n(n-2)\vartheta_j\bigr)\phi^2\,\dd y\right|
 \le\frac14
 \int_{A_{\sigma_j,r_{\rm c}}}|\nabla_y\phi|^2\,\dd y.
\]

\emph{Outer region.}
Extend \(\phi\) by zero to \(\R^n\), apply the Hardy inequality, and use
\(|y|^{-4}\le R_{\rm c}^{-2}|y|^{-2}\). The third inequality in
\eqref{eq:boundary-radius-choice} gives
\[
 \left|\int_{A_{R_{\rm c},R_j}}
 \bigl(c(n)\operatorname{Scal}_{g_j}-n(n-2)\vartheta_j\bigr)\phi^2\,\dd y\right|
 \le\frac14
 \int_{A_{R_{\rm c},R_j}}|\nabla_y\phi|^2\,\dd y.
\]

\emph{Principal part.}
For all sufficiently large \(j\),
\eqref{eq:scaled-metric-coefficient-bounds} gives
\[
 \begin{aligned}
 \left|\int_{A_{\sigma_j,r_{\rm c}}}\sum_{k,l=1}^n
 (g_j^{kl}-\delta^{kl})\partial_k\phi\partial_l\phi\,\dd y\right|
 &\le\frac14\int_{A_{\sigma_j,r_{\rm c}}}|\nabla_y\phi|^2\,\dd y,\\
 \left|\int_{A_{R_{\rm c},R_j}}\sum_{k,l=1}^n
 (g_j^{kl}-\delta^{kl})\partial_k\phi\partial_l\phi\,\dd y\right|
 &\le\frac14\int_{A_{R_{\rm c},R_j}}|\nabla_y\phi|^2\,\dd y.
 \end{aligned}
\]
The three estimates imply \eqref{eq:boundary-region-coercivity}.
The same estimates give the maximum principle on each annulus by testing
the negative part of a weak supersolution with nonnegative boundary trace.

Choose \(\chi_{\rm c}\in C^\infty([1,\infty))\) such that
\[
 0\le\chi_{\rm c}\le1,\qquad \chi_{\rm c}(1)=1,\qquad
 \chi_{\rm c}(s)=0,\qquad s\ge2.
\]
On the two collars in \eqref{eq:boundary-region-separation}, define
\[
 \widetilde e_j^-(r\theta)=
 \chi_{\rm c}\left(\frac r{\sigma_j}\right)
 (v_j-\Phi_j)(\sigma_j\theta),
 \qquad \sigma_j\le r\le2\sigma_j
\]
and
\[
 \widetilde e_j^+(r\theta)=
 \chi_{\rm c}\left(\frac{R_j}{r}\right)
 (v_j-\Phi_j)(R_j\theta),
 \qquad R_j/2\le r\le R_j.
\]
Extend \(\widetilde e_j^-\) by zero to \(A_{2\sigma_j,r_{\rm c}}\) and
\(\widetilde e_j^+\)
by zero to \(A_{R_{\rm c},R_j/2}\).  The trace estimates
\eqref{eq:annular-boundary-traces} imply
\[
 |\widetilde e_j^-|\le C,\qquad
 |\nabla_y\widetilde e_j^-|\le C\sigma_j^{-1}
 \quad\text{on }A_{\sigma_j,2\sigma_j},
\]
and
\[
 |\widetilde e_j^+|\le CR_j^{2-n},\qquad
 |\nabla_y\widetilde e_j^+|\le CR_j^{1-n}
 \quad\text{on }A_{R_j/2,R_j}.
\]
Since \(U_j^{p-1}\le C\) on the inner collar and
\(U_j^{p-1}\le Cr^{-4}\) on the outer collar,
\begin{equation}\label{eq:explicit-boundary-collar-energy}
 \begin{aligned}
 \int_{A_{\sigma_j,r_{\rm c}}}
 \bigl(|\nabla_y\widetilde e_j^-|^2
      +U_j^{p-1}(\widetilde e_j^-)^2\bigr)\,\dd y
 &\le C\sigma_j^{n-2},\\
 \int_{A_{R_{\rm c},R_j}}
 \bigl(|\nabla_y\widetilde e_j^+|^2
      +U_j^{p-1}(\widetilde e_j^+)^2\bigr)\,\dd y
 &\le CR_j^{2-n}.
 \end{aligned}
\end{equation}

The coercivity estimate  \eqref{eq:boundary-region-coercivity} and the Lax--Milgram
theorem give unique functions with zero trace,
\(w_j^-\in H_0^1(A_{\sigma_j,r_{\rm c}})\) and
\(w_j^+\in H_0^1(A_{R_{\rm c},R_j})\) such that
 
\[
  S_{j,A_{\sigma_j,r_{\rm c}}}(w_j^-,\varphi)
 =- S_{j,A_{\sigma_j,r_{\rm c}}}(\widetilde e_j^-,\varphi),
 \qquad \varphi\in H_0^1(A_{\sigma_j,r_{\rm c}}),
\]
and
\[
  S_{j,A_{R_{\rm c},R_j}}(w_j^+,\varphi)
 =- S_{j,A_{R_{\rm c},R_j}}(\widetilde e_j^+,\varphi),
 \qquad \varphi\in H_0^1(A_{R_{\rm c},R_j}).
\]

Test with \(w_j^-\) and \(w_j^+\), respectively.
Coercivity and the Poincar\'e and Hardy estimates give
\[
 \begin{aligned}
 \|\nabla_yw_j^-\|_2^2
 &\le C\left(
 \int_{A_{\sigma_j,r_{\rm c}}}
 \bigl(|\nabla_y\widetilde e_j^-|^2
      +U_j^{p-1}(\widetilde e_j^-)^2\bigr)\,\dd y
 \right)^{1/2}\|\nabla_yw_j^-\|_2,\\
 \|\nabla_yw_j^+\|_2^2
 &\le C\left(
 \int_{A_{R_{\rm c},R_j}}
 \bigl(|\nabla_y\widetilde e_j^+|^2
      +U_j^{p-1}(\widetilde e_j^+)^2\bigr)\,\dd y
 \right)^{1/2}\|\nabla_yw_j^+\|_2.
 \end{aligned}
\]
The \(L^2\) norms are over the corresponding annuli.
Equation  \eqref{eq:explicit-boundary-collar-energy} therefore yields
\[
 \int_{A_{\sigma_j,r_{\rm c}}}|\nabla_yw_j^-|^2\,\dd y
 \le C\sigma_j^{n-2},\qquad
 \int_{A_{R_{\rm c},R_j}}|\nabla_yw_j^+|^2\,\dd y
 \le CR_j^{2-n}.
\]
Set \(e_j^\pm=\widetilde e_j^\pm+w_j^\pm\).
These functions solve \eqref{eq:boundary-extension-problems} weakly.
For fixed \(j\), elliptic boundary regularity gives
\[
 e_j^-\in C^2(\overline{A_{\sigma_j,r_{\rm c}}}),\qquad
 e_j^+\in C^2(\overline{A_{R_{\rm c},R_j}}).
\]
The gradient estimates, with the Poincar\'e and Hardy inequalities,
bound the weighted \(L^2\) terms for \(w_j^\pm\).
Combining these bounds with \eqref{eq:explicit-boundary-collar-energy}
proves \eqref{eq:boundary-extension-energy}. Coercivity gives uniqueness.
\end{proof}

\par
The energy bounds in \eqref{eq:boundary-extension-energy} alone would
produce square roots of the endpoint quantities in a direct estimate of
the linear Pohozaev term. The following pointwise estimates give the
sharper order \(\mu_j+\mu_{j-1}\) for the conormals on the fixed auxiliary
spheres and for the Jacobi moments of the boundary corrections.

On the auxiliary spheres, use the normals
\begin{equation}\label{eq:boundary-auxiliary-normals}
 \nu=\frac{y}{r_{\rm c}}\quad\text{on }\partial B_{r_{\rm c}},
 \qquad
 \nu=-\frac{y}{R_{\rm c}}\quad\text{on }\partial B_{R_{\rm c}}.
\end{equation}
For \(r\in\{r_{\rm c},R_{\rm c}\}\), let
\(H^{1/2}(\partial B_r)\) be the Sobolev trace space and let
\(H^{-1/2}(\partial B_r)\) be its dual. Their duality pairing extends the
Euclidean surface integral.

\begin{lemma}\label{lem:boundary-extension-decay}
The functions in Lemma  \ref{lem:boundary-extensions} satisfy
\begin{equation}\label{eq:boundary-extension-pointwise}
 \begin{aligned}
 |e_j^-(y)|+|y||\nabla_ye_j^-(y)|
 &\le C\sigma_j^{n-2}|y|^{2-n},
 &&\sigma_j\le|y|\le r_{\rm c},\\
 |e_j^+(y)|+|y||\nabla_ye_j^+(y)|
 &\le CR_j^{2-n},
 &&R_{\rm c}\le|y|\le R_j.
 \end{aligned}
\end{equation}
For the normals in \eqref{eq:boundary-auxiliary-normals},
\begin{equation}\label{eq:boundary-extension-conormal}
 \begin{aligned}
 \norm{\sum_{k,l=1}^ng_j^{kl}\nu_k\partial_le_j^-}_{H^{-1/2}(\partial B_{r_{\rm c}})}
 &\le C\sigma_j^{n-2},\\
 \norm{\sum_{k,l=1}^ng_j^{kl}\nu_k\partial_le_j^+}_{H^{-1/2}(\partial B_{R_{\rm c}})}
 &\le CR_j^{2-n}.
 \end{aligned}
\end{equation}
\end{lemma}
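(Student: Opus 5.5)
The plan is to prove the pointwise bounds \eqref{eq:boundary-extension-pointwise} by comparison with explicit radial barriers. The conormal bounds \eqref{eq:boundary-extension-conormal} will then follow from elliptic boundary regularity on fixed collars around the auxiliary spheres. The maximum principle on each annulus is already available from the coercivity estimate \eqref{eq:boundary-region-coercivity} in the proof of Lemma~\ref{lem:boundary-extensions}. Because of the radial gauge, the operator acts on radial functions as the flat Laplacian: since \(\det g_j=1\) and \(g_j^{-1}y=y\), we have \(\Delta_{g_j}f(|y|)=f''+\frac{n-1}{r}f'\). Hence for a radial barrier \(\Psi\) we get \(L_j\Psi=-\Psi''-\frac{n-1}{r}\Psi'+(c(n)\operatorname{Scal}_{g_j}-n(n-2)\vartheta_j)\Psi\). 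The potential is controlled by \eqref{eq:secant-potential-regions}.

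\emph{Inner region.} Take \(\Psi^-(r)=A\sigma_j^{n-2}\bigl(r^{2-n}+r^{4-n}\bigr)\), or a similar function with a small positive multiple of \(r^{4-n}\). We have \(-\Delta r^{4-n}=2(n-4)r^{2-n}\), and this term absorbs \(C_1\) times the barrier on \(r\le r_{\rm c}\). The second inequality in \eqref{eq:boundary-radius-choice} is designed exactly to make this absorption work, giving \(L_j\Psi^-\ge0\). On \(\partial B_{\sigma_j}\), Lemma~\ref{lem:annular-boundary-traces} gives \(|e_j^-|\le C\), while \(\Psi^-\ge A\). On \(\partial B_{r_{\rm c}}\), \(e_j^-=0\). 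The comparison principle then yields \(|e_j^-|\le C\sigma_j^{n-2}|y|^{2-n}\).

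\emph{Outer region.} Take \(\Psi^+=AR_j^{2-n}\bigl(1+\kappa(1-(R_{\rm c}/r)^{n-4})\bigr)\), or more simply \(AR_j^{2-n}(2-(R_{\rm c}/r)^{2})\). For this choice, \(-\Delta\) produces a positive term \(\gtrsim R_j^{2-n}r^{-4}\) that dominates \(C_1r^{-4}\Psi^+\). The fourth inequality in \eqref{eq:boundary-radius-choice} is tailored to the specific form \(1+\frac{C_1}{n-4}(1-\cdots)\). The boundary data are \(|e_j^+|\le CR_j^{2-n}\) on \(\partial B_{R_j}\), by Lemma~\ref{lem:annular-boundary-traces}, and \(0\) on \(\partial B_{R_{\rm c}}\). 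This gives \(|e_j^+|\le CR_j^{2-n}\).

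For the gradient bounds I will rescale. For \(|y|=s\), consider \(A_{s/2,2s}\) intersected with the domain, and apply interior \(W^{2,q}\) and Schauder estimates to \(e_j^\pm(s\,\cdot)\). Near the outer Dirichlet boundaries \(\partial B_{\sigma_j}\) and \(\partial B_{R_j}\), I will use boundary estimates with the \(C^{1,\alpha}\) trace bounds \eqref{eq:annular-boundary-traces}. The rescaled coefficients are uniformly bounded, since \(s^2|\vartheta_j|\le C\) on both regions and the metric is \(C^1\)-close to flat. This gives \(|y||\nabla e_j^\pm|\) with the same bound as \(|e_j^\pm|\).

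For the conormals, note that \(\partial B_{r_{\rm c}}\) and \(\partial B_{R_{\rm c}}\) are fixed spheres with zero Dirichlet data. I will apply boundary \(C^{1,\alpha}\) estimates on fixed collars, using the sup bounds just proved, which are \(C\sigma_j^{n-2}\) and \(CR_j^{2-n}\) there. This bounds the conormal in \(C^0\), and hence in \(H^{-1/2}\). The main obstacle is verifying the barrier inequalities uniformly in \(j\), in particular choosing the barrier so that it is compatible with \eqref{eq:boundary-radius-choice}. If the exact barriers do not close, I will fall back on \(\Psi^-=A\sigma_j^{n-2}r^{2-n-\delta}\)-type perturbations, which cost only harmless constants on the fixed region.
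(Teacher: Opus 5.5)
Your overall scheme is the paper's: radial barriers justified by the gauge identity $\Delta_{g_j}f(|y|)=\Delta f(|y|)$, the comparison principle from \eqref{eq:boundary-region-coercivity}, scaled interior and boundary estimates for the gradients, and $C^0$ conormal bounds on the fixed spheres. The gap is at the step you flagged yourself: neither of your explicit barriers is a supersolution of $L_j$, and the fallback does not repair this.

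\emph{Inner region.} For $\Psi=\sigma_j^{n-2}(r^{2-n}+\kappa r^{4-n})$ you get
\[
 L_j\Psi\ge\sigma_j^{n-2}r^{2-n}\bigl[2\kappa(n-4)-C_1-C_1\kappa r^2\bigr].
\]
The harmonic part $r^{2-n}$ contributes nothing to $-\Delta$, but it costs $C_1r^{2-n}$ through the potential. So near $r=\sigma_j$ you need $2\kappa(n-4)>C_1$. With $\kappa=1$ this fails. Indeed, $C_1$ must dominate $n(n-2)\vartheta_j\approx n(n+2)U_j^{4/(n-2)}$ on $\{\sigma_j\le|y|\le1\}$, and the supremum of that quantity is at least about $n(n+2)/4>2(n-4)$. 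A \emph{small} $\kappa$ only makes it worse.

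A polynomial barrier can be rescued with a \emph{large} coefficient, $\kappa\ge C_1/(n-4)$. But then $C_1\kappa r^2$ must also be absorbed, which is an extra smallness condition on $r_{\rm c}$. That condition is acceptable, since $r_{\rm c}$ is auxiliary, but it is not what the second inequality in \eqref{eq:boundary-radius-choice} provides. That inequality is tuned to the paper's barrier $r^{2-n}\exp\bigl(\frac{C_1+1}{n-4}r^2\bigr)$, for which
\[
-\Delta\Psi=\frac{2(C_1+1)}{n-4}\Bigl(n-4-\frac{2(C_1+1)}{n-4}r^2\Bigr)\Psi\ge(C_1+1)\Psi \quad\text{on } r\le r_{\rm c}.
\]

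Your fallback goes the wrong way. Since $\Delta r^{2-n-\delta}=\delta(n-2+\delta)r^{-n-\delta}>0$, the function $r^{2-n-\delta}$ is a subsolution. The sign-corrected $r^{2-n+\delta}$ is a supersolution, but it only gives $(\sigma_j/r)^{n-2-\delta}$. This exceeds the asserted bound by the factor $(r/\sigma_j)^\delta$. That factor is unbounded on $[\sigma_j,r_{\rm c}]$, because this annulus is not fixed ($\sigma_j\to0$). So it is not a harmless constant, and the sharp rate $\sigma_j^{n-2}\asymp\mu_j$ in \eqref{eq:boundary-extension-pointwise} and \eqref{eq:boundary-extension-conormal} is lost.

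\emph{Outer region.} Here the error is a sign. For large $j$ the potential $c(n)\operatorname{Scal}_{g_j}-n(n-2)\vartheta_j$ is negative on $A_{R_{\rm c},R_j}$. A positive barrier therefore needs $-\Delta\Psi>0$, that is, strict superharmonicity. Both of your candidates instead subtract a superharmonic function:
\[
 -\Delta\bigl[-(R_{\rm c}/r)^2\bigr]=-2(n-4)R_{\rm c}^2r^{-4}<0,\qquad
 -\Delta\bigl[-(R_{\rm c}/r)^{n-4}\bigr]=-2(n-4)R_{\rm c}^{n-4}r^{2-n}<0.
\]
So $-\Delta$ produces a negative term, not the positive term $\gtrsim R_j^{2-n}r^{-4}$ you claim, and both functions are strict subsolutions. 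The correction has to be added. The paper uses
\[
R_j^{2-n}\Bigl[1+\Bigl(1+\frac{C_1}{n-4}\Bigr)r^{-2}\Bigr],
\]
where $-\Delta r^{-2}=2(n-4)r^{-4}$ beats $C_1r^{-4}$ times the barrier once $R_{\rm c}$ satisfies the last inequality in \eqref{eq:boundary-radius-choice}.

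With these two barriers in place, the rest of your argument goes through as you describe: the comparison, the scaled gradient estimates, and the conormal bounds on $\partial B_{r_{\rm c}}$ and $\partial B_{R_{\rm c}}$.
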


\begin{proof}
If \(f=f(|y|)\), the radial gauge in \eqref{eq:scaled-gauge-equation} and
\(\det g_j=1\) imply
\begin{equation}\label{eq:radial-laplacian-identity}
 \Delta_{g_j}f=\Delta f.
\end{equation}
A direct calculation gives
\[
 -\Delta_{g_j}\left(
 r^{2-n}\exp\left(\frac{C_1+1}{n-4}r^2\right)\right)
 =\frac{2(C_1+1)}{n-4}
 \left(n-4-\frac{2(C_1+1)}{n-4}r^2\right)
 r^{2-n}\exp\left(\frac{C_1+1}{n-4}r^2\right).
\]
The inequality
\(2(C_1+1)r_{\rm c}^2/(n-4)\le(n-4)/2\) in
\eqref{eq:boundary-radius-choice}, together with
\eqref{eq:secant-potential-regions}, gives
 
\[
  L_j\left[
 \sigma_j^{n-2}r^{2-n}
 \exp\left(\frac{C_1+1}{n-4}r^2\right)\right]\ge0
 \quad\text{on }A_{\sigma_j,r_{\rm c}}.
\]
The boundary values and \eqref{eq:annular-boundary-traces} give
\[
 |e_j^-(y)|\le
 C\sigma_j^{n-2}r^{2-n}
 \exp\left(\frac{C_1+1}{n-4}r^2\right),
 \qquad r\in\{\sigma_j,r_{\rm c}\}.
\]
Apply the maximum principle proved with
\eqref{eq:boundary-region-coercivity} to both signs of \(e_j^-\).
Since the exponential factor is uniformly
bounded for \(r\le r_{\rm c}\), we obtain
\begin{equation}\label{eq:inner-extension-C0}
 |e_j^-(y)|\le C\sigma_j^{n-2}|y|^{2-n}.
\end{equation}

Equation \eqref{eq:radial-laplacian-identity} gives
\(-\Delta_{g_j}r^{-2}=2(n-4)r^{-4}\).  The last inequality in
\eqref{eq:boundary-radius-choice} gives
 
\[
  L_j\left\{R_j^{2-n}
 \left[1+\left(1+\frac{C_1}{n-4}\right)r^{-2}\right]\right\}
 \ge0\quad\text{on }A_{R_{\rm c},R_j}.
\]
At the two endpoints,
\[
 |e_j^+(y)|\le CR_j^{2-n}
 \left[1+\left(1+\frac{C_1}{n-4}\right)r^{-2}\right],
 \qquad r\in\{R_{\rm c},R_j\}.
\]
The maximum principle and \(r\ge R_{\rm c}\) yield
\begin{equation}\label{eq:outer-extension-C0}
 |e_j^+(y)|\le CR_j^{2-n}.
\end{equation}

If \(A_{r/2,4r}\) lies in the region where \(e_j^\pm\) solves its
equation, scaling and \eqref{eq:secant-potential-regions} give the
interior estimate
\[
 r\|\nabla_ye_j^\pm\|_{L^\infty(A_{r,2r})}
 \le C\|e_j^\pm\|_{L^\infty(A_{r/2,4r})}.
\]
Substituting \eqref{eq:inner-extension-C0} and
\eqref{eq:outer-extension-C0} yields
\[
 |y||\nabla_ye_j^-(y)|
 \le C\sigma_j^{n-2}|y|^{2-n},
 \qquad
 |y||\nabla_ye_j^+(y)|
 \le CR_j^{2-n}.
\]
For sufficiently large \(j\), the scaled boundary gradient estimates and
\eqref{eq:annular-boundary-traces} with \(\alpha=1/2\) give
\[
 \begin{aligned}
 \|\nabla_ye_j^-\|_{L^\infty(A_{\sigma_j,2\sigma_j})}
 &\le C\sigma_j^{-1}\bigl(
 \|e_j^-\|_{L^\infty(A_{\sigma_j,4\sigma_j})}
 +\|e_j^-(\sigma_j\,\cdot)\|_{C^{1,1/2}(\Sph^{n-1})}\bigr)\\
 &\le C\sigma_j^{-1},\\
 \|\nabla_ye_j^+\|_{L^\infty(A_{R_j/2,R_j})}
 &\le CR_j^{-1}\bigl(
 \|e_j^+\|_{L^\infty(A_{R_j/4,R_j})}
 +\|e_j^+(R_j\,\cdot)\|_{C^{1,1/2}(\Sph^{n-1})}\bigr)\\
 &\le CR_j^{1-n}.
 \end{aligned}
\]
The zero boundary traces on the fixed auxiliary spheres allow the same
boundary estimate there. Together these estimates prove
\eqref{eq:boundary-extension-pointwise}.

On the two fixed auxiliary spheres, the pointwise gradient bounds and
uniform ellipticity give
\[
 \left|\sum_{k,l=1}^ng_j^{kl}\nu_k\partial_le_j^-\right|
 \le C\sigma_j^{n-2},\qquad
 \left|\sum_{k,l=1}^ng_j^{kl}\nu_k\partial_le_j^+\right|
 \le CR_j^{2-n}.
\]
Boundary regularity identifies the two displayed conormal expressions with
the weak conormals. Their \(L^\infty\) bounds on the fixed spheres imply
\eqref{eq:boundary-extension-conormal}.
\end{proof}

\par
\begin{proposition}\label{prop:zero-boundary-remainder}
Extend \(e_j^-\) by zero across \(\partial B_{r_{\rm c}}\), extend
\(e_j^+\) by zero across \(\partial B_{R_{\rm c}}\), and set
\begin{equation}\label{eq:zero-boundary-remainder}
 e_j^0=e_j-e_j^--e_j^+,\qquad
 e_j=e_j^-+e_j^0+e_j^+.
\end{equation}
Then \(e_j^0\in H_0^1(A_{\sigma_j,R_j})\) and
\begin{equation}\label{eq:zero-boundary-final-size}
 \|\nabla_ye_j^0\|_{L^2(A_{\sigma_j,R_j})}
 \le C\left(
 \|\mathcal R_j^\Phi\|_{\dot H^{-1}(A_{\sigma_j,R_j})}
 +\mu_j+\mu_{j-1}\right).
\end{equation}
Moreover,
\begin{equation}\label{eq:total-error-energy}
 \int_{A_{\sigma_j,R_j}}
 \left(|\nabla_ye_j|^2+|y|^{-2}e_j^2\right)\,\dd y
 \le C\left[
 \norm{\nabla_ye_j^0}_{L^2(A_{\sigma_j,R_j})}^2
 +\mu_j+\mu_{j-1}\right].
\end{equation}
\end{proposition}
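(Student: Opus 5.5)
The plan is to apply Proposition~\ref{prop:constrained-inverse} to \(f=e_j^0\); the task is to control both \(L_je_j^0\) in \(\dot H^{-1}\) and the Jacobi moments of \(e_j^0\).

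First, \(e_j^0\in H_0^1(A_{\sigma_j,R_j})\). Near \(\partial B_{\sigma_j}\), the boundary correction \(e_j^-\) has trace \(v_j-\Phi_j\) and \(e_j^+=0\), so \(e_j^0\) has zero trace there; the outer sphere is symmetric. The zero extensions of \(e_j^\pm\) lie in \(H^1\) because their traces vanish on \(\partial B_{r_{\rm c}}\) and \(\partial B_{R_{\rm c}}\).

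Next I compute \(L_je_j^0\) as a distribution. We have \(L_je_j=-\mathcal R_j^\Phi\). Since \(L_je_j^-=0\) in \(A_{\sigma_j,r_{\rm c}}\), its zero extension satisfies
\[
\langle L_je_j^-,\varphi\rangle=-\int_{\partial B_{r_{\rm c}}}\Bigl(\sum_{k,l}g_j^{kl}\nu_k\partial_le_j^-\Bigr)\varphi\,\dd S
\]
for \(\varphi\in H_0^1(A_{\sigma_j,R_j})\), and the analogous identity holds for \(e_j^+\) on \(\partial B_{R_{\rm c}}\). By the conormal bounds \eqref{eq:boundary-extension-conormal} and the trace inequality on the fixed spheres, \(\|\varphi\|_{H^{1/2}(\partial B_{r_{\rm c}})}\le C\|\nabla\varphi\|_{L^2}\). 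For the inner sphere this uses Poincaré on \(B_{r_{\rm c}}\) after extending \(\varphi\) by zero; for the outer sphere it uses Hardy plus a local trace. This gives
\[
\|L_je_j^0\|_{\dot H^{-1}}\le\|\mathcal R_j^\Phi\|_{\dot H^{-1}}+C(\sigma_j^{n-2}+R_j^{2-n}),
\]
and \eqref{eq:capacity-scales} converts the last term to \(C(\mu_j+\mu_{j-1})\).

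For the moments, I use \eqref{eq:final-orthogonality}: \(\int U_j^{p-1}e_jJ_{\lambda_j,b_j,l}=0\). Hence the moment of \(e_j^0\) equals minus the moments of \(e_j^\pm\). I bound these with the pointwise estimate \eqref{eq:boundary-extension-pointwise}, using \(|J|\le CU_j\) and \(U_j\le C\) near the origin:
\[
\int_{A_{\sigma_j,r_{\rm c}}}\sigma_j^{n-2}|y|^{2-n}\,\dd y\le C\sigma_j^{n-2},
\qquad
\int_{A_{R_{\rm c},R_j}}R_j^{2-n}(1+|y|)^{-n-2}\,\dd y\le CR_j^{2-n}.
\]
These are linear in the capacities, which is why the pointwise bounds, rather than the energy bounds, are needed here. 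Proposition~\ref{prop:constrained-inverse} then yields \eqref{eq:zero-boundary-final-size}.

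Finally, for \eqref{eq:total-error-energy} I write \(e_j=e_j^-+e_j^0+e_j^+\) and apply the triangle inequality. For \(e_j^0\), Hardy's inequality (extension by zero) bounds \(\int|y|^{-2}(e_j^0)^2\) by \(C\|\nabla e_j^0\|_2^2\). For \(e_j^\pm\), the Dirichlet energy is given by \eqref{eq:boundary-extension-energy}. The weighted \(L^2\) term I take from the pointwise bounds:
\[
\int\sigma_j^{2n-4}|y|^{2-2n}\,\dd y\le C\sigma_j^{n-2},
\qquad
\int_{|y|\le R_j}R_j^{4-2n}|y|^{-2}\,\dd y\le CR_j^{2-n}.
\]
Combining with \eqref{eq:capacity-scales} gives the claim.

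I expect the main obstacle to be the moment estimate, together with checking that the conormal distributional term is handled correctly given the zero extensions. The weighted Hardy bound near the inner sphere (\(|y|^{-2}\) against \(e_j^-\)) is routine by the pointwise bound.
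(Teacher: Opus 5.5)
Your proposal is correct and follows essentially the paper's proof. Like the paper, you transfer the Jacobi moments to \(e_j^\pm\) via \eqref{eq:final-orthogonality} and bound them with the pointwise estimates \eqref{eq:boundary-extension-pointwise}, and you control the conormal terms on the fixed spheres by \eqref{eq:boundary-extension-conormal} and a uniform trace inequality. You then apply Proposition~\ref{prop:constrained-inverse} and finish with the triangle inequality and Hardy for \(e_j^0\).

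One small fix is needed. For the trace on \(\partial B_{r_{\rm c}}\), a Poincar\'e inequality on \(B_{r_{\rm c}}\) is not uniform in \(j\), because \(\varphi\) need not vanish on \(\partial B_{r_{\rm c}}\) and the zero set \(B_{\sigma_j}\) shrinks. Control the local \(L^2\) norm instead by Hardy or Sobolev for the zero extension of \(\varphi\), as you already do at \(\partial B_{R_{\rm c}}\) and as the paper does. (The sign of your conormal pairing is also reversed for the outward normal \(y/r_{\rm c}\), but this does not affect the estimate.)
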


\begin{proof}
The zero traces of \(e_j^-\) and \(e_j^+\) on their auxiliary spheres
make their extensions \(H^1(A_{\sigma_j,R_j})\) functions. They need
not belong to \(H_0^1(A_{\sigma_j,R_j})\), since their traces on the
physical boundaries are generally nonzero. The boundary conditions in
\eqref{eq:boundary-extension-problems} give
\(e_j^0\in H_0^1(A_{\sigma_j,R_j})\).

The modulation condition applies to \(e_j\), not to \(e_j^0\). Thus
\[
 \int_{A_{\sigma_j,R_j}}U_j^{p-1}e_j^0J_{\lambda_j,b_j,l}\,\dd y
 =-\int_{A_{\sigma_j,R_j}}
 U_j^{p-1}(e_j^-+e_j^+)J_{\lambda_j,b_j,l}\,\dd y,
 \qquad 0\le l\le n.
\]
By \eqref{eq:boundary-extension-pointwise} and
\(|J_{\lambda_j,b_j,l}|\le CU_j\), the inner and outer integrals on the
right are bounded by
\[
 C\sigma_j^{n-2}\int_{\sigma_j}^{r_{\rm c}}r\,\dd r,
 \qquad
 CR_j^{2-n}\int_{R_{\rm c}}^{R_j}r^{-3}\,\dd r,
\]
respectively. Hence
\begin{equation}\label{eq:zero-boundary-moment-bound}
 \sum_{l=0}^n
 \left|\int_{A_{\sigma_j,R_j}}
 U_j^{p-1}e_j^0J_{\lambda_j,b_j,l}\,\dd y\right|
 \le C(\mu_j+\mu_{j-1}).
\end{equation}

For \(r\in\{r_{\rm c},R_{\rm c}\}\), choose a fixed annular
neighborhood of \(\partial B_r\) contained in
\(A_{\sigma_j,R_j}\) for all sufficiently large \(j\). The local trace
theorem there, followed by the Sobolev inequality for the zero extension
of \(\varphi\), gives
\begin{equation}\label{eq:uniform-fixed-sphere-trace}
 \|\operatorname{Tr}_{\partial B_r}\varphi\|_{H^{1/2}(\partial B_r)}
 \le C\|\nabla_y\varphi\|_{L^2(A_{\sigma_j,R_j})},
 \qquad \varphi\in H_0^1(A_{\sigma_j,R_j}),
\end{equation}
where \(C\) is independent of \(j\).

In the following weak identity,
\(\langle\mathcal R_j^\Phi,\varphi\rangle\) is the
\(\dot H^{-1}\)--\(H_0^1\) duality pairing associated with
\eqref{eq:homogeneous-dual-norm}. The surface terms use the
\(H^{-1/2}(\partial B_r)\)--\(H^{1/2}(\partial B_r)\) pairing and the
normals in \eqref{eq:boundary-auxiliary-normals}. Splitting the two
boundary regions gives, for every
\(\varphi\in H_0^1(A_{\sigma_j,R_j})\),
\[
 \begin{aligned}
  S_{j,A_{\sigma_j,R_j}}(e_j^0,\varphi)
 ={}&-\langle\mathcal R_j^\Phi,\varphi\rangle
 -\left\langle\sum_{k,l=1}^ng_j^{kl}\nu_k\partial_le_j^-,
  \operatorname{Tr}_{\partial B_{r_{\rm c}}}\varphi\right\rangle\\
 &-\left\langle\sum_{k,l=1}^ng_j^{kl}\nu_k\partial_le_j^+,
  \operatorname{Tr}_{\partial B_{R_{\rm c}}}\varphi\right\rangle.
 \end{aligned}
\]
Equations  \eqref{eq:boundary-extension-conormal},
\eqref{eq:uniform-fixed-sphere-trace}, and
\eqref{eq:zero-boundary-moment-bound}, together with
Proposition  \ref{prop:constrained-inverse}, imply
\[
 \norm{\nabla_ye_j^0}_{L^2(A_{\sigma_j,R_j})}
 \le C\bigl[
 \norm{\mathcal R_j^\Phi}_{\dot H^{-1}(A_{\sigma_j,R_j})}
 +\sigma_j^{n-2}+R_j^{2-n}\bigr].
\]
Equation  \eqref{eq:capacity-scales} now gives
\eqref{eq:zero-boundary-final-size}.

Lemmas  \ref{lem:boundary-extensions} and
\ref{lem:boundary-extension-decay} give
\[
 \begin{aligned}
 \int_{A_{\sigma_j,R_j}}\bigl(|\nabla_ye_j^-|^2+|y|^{-2}(e_j^-)^2\bigr)\,\dd y
 &\le C\sigma_j^{n-2},\\
 \int_{A_{\sigma_j,R_j}}\bigl(|\nabla_ye_j^+|^2+|y|^{-2}(e_j^+)^2\bigr)\,\dd y
 &\le CR_j^{2-n}.
 \end{aligned}
\]
By \eqref{eq:zero-boundary-remainder},
\[
 \|\nabla_ye_j\|_2^2+\|e_j/|y|\|_2^2
 \le3\sum_{\nu\in\{-,0,+\}}
 \bigl(\|\nabla_ye_j^\nu\|_2^2+\|e_j^\nu/|y|\|_2^2\bigr),
\]
where all norms are over \(A_{\sigma_j,R_j}\). The Hardy inequality
for the zero extension of \(e_j^0\), followed by
\eqref{eq:capacity-scales}, proves \eqref{eq:total-error-energy}.
\end{proof}

\subsection{Estimates for the linear Pohozaev term}

For the Pohozaev form we use the metric error operator
\(\Delta-L_{g_j}\). Its principal coefficient satisfies, on every
centered sphere,
\begin{equation}\label{eq:metric-error-radial-conormal}
 \sum_{k=1}^n(g_j^{kl}-\delta^{kl})\nu_k=0,
 \qquad 1\le l\le n.
\end{equation}
Consequently, the zero extensions of \(e_j^-\) and \(e_j^+\) produce no
surface distribution for this operator.
The weak gradient of each extension \(e_j^\pm\) is its piecewise gradient,
because its auxiliary trace is zero. For the expression of second order
\[
 (\Delta-L_{g_j})f
 =-\sum_{k,l=1}^n\partial_k
 ((g_j^{kl}-\delta^{kl})\partial_lf)+c(n)\operatorname{Scal}_{g_j}f,
\]
the possible surface distribution has coefficient
\(\sum_{k,l}(g_j^{kl}-\delta^{kl})\nu_k\partial_le_j^\pm=0\).
Thus \((\Delta-L_{g_j})e_j^\pm\) is the extension by zero of the
expression on its original boundary annulus.

For smooth functions on \(\overline{A_{\sigma_j,R_j}}\), define the boundary form
 
\begin{equation}\label{eq:dilation-boundary-form}
 \begin{aligned}
  B_j^\partial(f_1,f_2)
 ={}&\frac{R_j}{2}\int_{\partial B_{R_j}}
 \left[
 \sum_{k,l=1}^n(g_j^{kl}-\delta^{kl})\partial_kf_1\partial_lf_2
 +c(n)\operatorname{Scal}_{g_j}f_1f_2\right]\,\dd S\\
 &-\frac{\sigma_j}{2}\int_{\partial B_{\sigma_j}}
 \left[
 \sum_{k,l=1}^n(g_j^{kl}-\delta^{kl})\partial_kf_1\partial_lf_2
 +c(n)\operatorname{Scal}_{g_j}f_1f_2\right]\,\dd S.
 \end{aligned}
\end{equation}

\par
\begin{lemma}\label{lem:weak-dilation-identity}
For \(f_1,f_2\in C^2(\overline{A_{\sigma_j,R_j}})\),
 
\begin{equation}\label{eq:weak-dilation-identity}
 \begin{aligned}
  B_j(f_1,f_2)
 ={}&-\frac12\int_{A_{\sigma_j,R_j}}\sum_{k,l=1}^n
 y\cdot\nabla_y(g_j^{kl}-\delta^{kl})
 \partial_kf_1\partial_lf_2\,\dd y\\
 &-c(n)\int_{A_{\sigma_j,R_j}}
 \left(\operatorname{Scal}_{g_j}+\frac12y\cdot\nabla_y\operatorname{Scal}_{g_j}\right)f_1f_2\,\dd y
 + B_j^\partial(f_1,f_2).
 \end{aligned}
\end{equation}

\end{lemma}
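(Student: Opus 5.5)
The identity is the bilinear, symmetrized version of the computation in the proof of Lemma~\ref{lem:euclidean-scaling-identity}, carried out at a single fixed metric \(g_j\) rather than along a family. I would start from the divergence form \eqref{eq:scaled-metric-error}, valid because \(\det g_j=1\). Abbreviate \(A^{kl}=g_j^{kl}-\delta^{kl}\), which is symmetric, and \(S=c(n)\operatorname{Scal}_{g_j}\). Then
\[
 (\Delta-L_{g_j})f=-\sum_{k,l}\partial_k(A^{kl}\partial_lf)+Sf .
\]
Substituting this into each of the two halves of \eqref{eq:symmetric-dilation-form}, the integrand splits into a principal part and a zeroth-order part, and I would treat the two separately.

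\emph{Principal part.} In \(\int(D_{\mathrm{di}}f_1)\,[-\partial_k(A^{kl}\partial_lf_2)]\), integrate by parts once. The boundary term on each centered sphere contains \(\sum_k A^{kl}\nu_k\), which vanishes by \eqref{eq:metric-error-radial-conormal}, so it drops out. What remains is \(\int A^{kl}\,\partial_k(D_{\mathrm{di}}f_1)\,\partial_lf_2\). Use
\[
 \partial_k(D_{\mathrm{di}}f_1)=\frac n2\,\partial_kf_1+y\cdot\nabla\partial_kf_1 .
\]
Adding the term with \(f_1\) and \(f_2\) interchanged and using the symmetry of \(A\), the sum becomes
\[
 \int A^{kl}\Bigl[n\,\partial_kf_1\partial_lf_2+y\cdot\nabla\bigl(\partial_kf_1\partial_lf_2\bigr)\Bigr].
\]
Integrating the \(y\cdot\nabla\) term by parts, with \(\operatorname{div}y=n\), cancels the \(n\)-term. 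It leaves \(-\int (y\cdot\nabla A^{kl})\,\partial_kf_1\partial_lf_2\) together with a boundary term \(\int_{\partial A}(y\cdot\nu)\,A^{kl}\partial_kf_1\partial_lf_2\). Since \(y\cdot\nu\) equals \(R_j\) on the outer sphere and \(-\sigma_j\) on the inner one, this boundary term is exactly the metric part of \(B_j^\partial\). The factor \(\tfrac12\) in \(B_j\) then produces the stated coefficients.

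\emph{Zeroth-order part.} The scalar contribution is \(\tfrac12\int S\bigl[(D_{\mathrm{di}}f_1)f_2+(D_{\mathrm{di}}f_2)f_1\bigr]\). Its integrand equals
\[
 S\Bigl[\tfrac{n-2}{2}f_1f_2+\tfrac12\,y\cdot\nabla(f_1f_2)\Bigr]\cdot 2\cdot\tfrac12 .
\]
Integrating \(y\cdot\nabla(f_1f_2)\) by parts gives the volume term \(-\int(nS+y\cdot\nabla S)f_1f_2\) plus the boundary term \(\int(y\cdot\nu)Sf_1f_2\). Collecting coefficients, \(\tfrac{n-2}{2}-\tfrac n2=-1\), so the volume part is
\[
 -\int\Bigl(S+\tfrac12\,y\cdot\nabla S\Bigr)f_1f_2 ,
\]
and the boundary part gives the scalar-curvature portion of \(B_j^\partial\).

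All integrations by parts are legitimate for \(C^2\) functions on the closed annulus, since the metric is smooth there. The only point requiring care is bookkeeping of the factors \(\tfrac12\) and of the boundary sign convention, together with the vanishing conormal; I expect no genuine obstacle.
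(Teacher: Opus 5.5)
Your proof is correct and follows the paper's argument essentially step by step: the radial gauge $\sum_k(g_j^{kl}-\delta^{kl})\nu_k=0$ removes the first boundary term, the identity $\partial_k(D_{\mathrm{di}}f)=\tfrac n2\partial_kf+y\cdot\nabla\partial_kf$ together with the symmetry of $g_j^{-1}-I$ and a second integration by parts of the $y\cdot\nabla$ term produces the principal volume and boundary terms, and the divergence of $y\operatorname{Scal}_{g_j}f_1f_2$ handles the scalar part. Your bookkeeping of the factors $\tfrac12$, the coefficient $\tfrac{n-2}{2}-\tfrac n2=-1$, and the values $y\cdot\nu=R_j$ and $-\sigma_j$ on the two spheres all check out.
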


\par
\begin{proof}
For \(i\in\{1,2\}\),
\begin{equation}\label{eq:dilation-derivative-identity}
\partial_k(D_{\mathrm{di}}f_i)
=\frac n2\partial_kf_i+\sum_{l=1}^ny^l\partial_{lk}f_i.
\end{equation}
The principal conormal term vanishes by
\eqref{eq:metric-error-radial-conormal}.
After one integration by parts, symmetry and
\eqref{eq:dilation-derivative-identity} give
\[
 \begin{aligned}
 &\frac12\int_{A_{\sigma_j,R_j}}
 \sum_{k,l=1}^n(g_j^{kl}-\delta^{kl})
 \left[
 \begin{aligned}
&\partial_k\left(D_{\mathrm{di}}f_1\right)\partial_lf_2+\partial_k\left(D_{\mathrm{di}}f_2\right)\partial_lf_1
 \end{aligned}
 \right]\,\dd y\\
 \quad=&\frac n2\int_{A_{\sigma_j,R_j}}
 \sum_{k,l=1}^n(g_j^{kl}-\delta^{kl})\partial_kf_1\partial_lf_2\,\dd y
+\frac12\int_{A_{\sigma_j,R_j}}
 y\cdot\nabla_y\left[
 \sum_{k,l=1}^n(g_j^{kl}-\delta^{kl})\partial_kf_1\partial_lf_2
 \right]\,\dd y\\
 \qquad&-\frac12\int_{A_{\sigma_j,R_j}}
 \sum_{k,l=1}^n y\cdot\nabla_y(g_j^{kl}-\delta^{kl})
 \partial_kf_1\partial_lf_2\,\dd y\\
 \quad=&-\frac12\int_{A_{\sigma_j,R_j}}
 \sum_{k,l=1}^n y\cdot\nabla_y(g_j^{kl}-\delta^{kl})
 \partial_kf_1\partial_lf_2\,\dd y+\frac12\int_{\partial A_{\sigma_j,R_j}}
 (y\cdot\nu)\sum_{k,l=1}^n
 (g_j^{kl}-\delta^{kl})\partial_kf_1\partial_lf_2\,\dd S.
 \end{aligned}
\]
For the scalar part, the product rule is
\[
 \operatorname{div}_y(y\operatorname{Scal}_{g_j}f_1f_2)
 =(n\operatorname{Scal}_{g_j}+y\cdot\nabla_y\operatorname{Scal}_{g_j})f_1f_2
 +\operatorname{Scal}_{g_j}\,y\cdot\nabla_y(f_1f_2).
\]
Consequently,
\[
 \begin{aligned}
 &\frac{c(n)}2\int_{A_{\sigma_j,R_j}}\operatorname{Scal}_{g_j}
 \bigl[(n-2)f_1f_2+y\cdot\nabla_y(f_1f_2)\bigr]\,\dd y\\
 \quad=&-c(n)\int_{A_{\sigma_j,R_j}}
\left(\operatorname{Scal}_{g_j}+\frac12y\cdot\nabla_y\operatorname{Scal}_{g_j}\right)f_1f_2\,\dd y+\frac{c(n)}2\int_{\partial A_{\sigma_j,R_j}}
 (y\cdot\nu)\operatorname{Scal}_{g_j}f_1f_2\,\dd S.
 \end{aligned}
\]
On \(\partial B_{R_j}\), \(y\cdot\nu=R_j\); on
\(\partial B_{\sigma_j}\), \(y\cdot\nu=-\sigma_j\).
The two boundary integrals sum to
 \( B_j^\partial(f_1,f_2)\) by \eqref{eq:dilation-boundary-form},
which proves \eqref{eq:weak-dilation-identity}.
\end{proof}

  On \(A_{\sigma_j,R_j}\), Taylor's formula and \eqref{eq:normal-orders} imply
\begin{equation}\label{eq:dilation-coefficient-bounds}
 \begin{aligned}
 |y\cdot\nabla_y(g_j^{kl}-\delta^{kl})|
 &\le C\rho_j^2|y|^2\le C\eta_j,\\
 \left|c(n)\operatorname{Scal}_{g_j}+\frac{c(n)}2y\cdot\nabla_y\operatorname{Scal}_{g_j}\right|
 &\le C\rho_j^4|y|^2\le C\eta_j^2|y|^{-2}.
 \end{aligned}
\end{equation}
Consequently,
\begin{equation}\label{eq:weak-dilation-volume-bound}
 \begin{aligned}
 | B_j(f_1,f_2)- B_j^\partial(f_1,f_2)|
 \le& C\eta_j\bigl(
 \norm{\nabla_yf_1}_{L^2(A_{\sigma_j,R_j})}
 \norm{\nabla_yf_2}_{L^2(A_{\sigma_j,R_j})}\\
 &+\norm{|y|^{-1}f_1}_{L^2(A_{\sigma_j,R_j})}
 \norm{|y|^{-1}f_2}_{L^2(A_{\sigma_j,R_j})}\bigr).
 \end{aligned}
\end{equation}
For smooth \(f_1\) with zero boundary trace,
\(\nabla_yf_1=(\partial_\nu f_1)\nu\) on each boundary component.
By equation  \eqref{eq:metric-error-radial-conormal},
\[
 \sum_{k,l=1}^n(g_j^{kl}-\delta^{kl})\partial_kf_1\partial_lf_2
 =(\partial_\nu f_1)\sum_{k,l=1}^n
 (g_j^{kl}-\delta^{kl})\nu_k\partial_lf_2=0.
\]
The scalar boundary term also vanishes because \(f_1=0\).
Extension by zero, the Hardy inequality, and density therefore give, for
\(f_1\in H_0^1(A_{\sigma_j,R_j})\) and \(f_2\in\dot H^1(\R^n)\),
 
\begin{equation}\label{eq:weak-dilation-zero-trace-bound}
 | B_j(f_1,f_2)|
 \le C\eta_j\norm{\nabla_yf_1}_{L^2(A_{\sigma_j,R_j})}
 \norm{\nabla_yf_2}_{L^2(\R^n)}.
\end{equation}
Equations  \eqref{eq:weak-dilation-volume-bound} and
\eqref{eq:weak-dilation-zero-trace-bound} extend
 \( B_j\) continuously to \(H_0^1(A_{\sigma_j,R_j})\times\dot H^1(\R^n)\).

When one argument is a boundary correction, we apply
\eqref{eq:weak-dilation-identity} on its original annulus and extend the
volume integrands by zero. There is no contribution from the auxiliary
sphere: the trace and tangential derivatives of the correction vanish
there, while the radial conormal coefficient in
\eqref{eq:metric-error-radial-conormal} is zero.

By Proposition  \ref{prop:euclidean-response},
\[
 \norm{\nabla_y\Phi_j}_{L^2(\R^n)}\le C.
\]
We estimate the linear term using
\(e_j=e_j^-+e_j^0+e_j^+\), and the quadratic term using
\eqref{eq:total-error-energy}. In the linear term with \(e_j^0\),
the physical boundary integral vanishes by
\eqref{eq:metric-error-radial-conormal}.
We write \(I\) for the \(n\times n\) identity matrix in the coefficient
estimates used to prove the next lemma.
\begin{lemma}\label{lem:reference-dilation-functional}
For all sufficiently large \(j\) and every
\(\varphi\in H_0^1(A_{\sigma_j,R_j})\),
 
\begin{equation}\label{eq:reference-dilation-functional}
 | B_j(\Phi_j,\varphi)|
 \le\norm{\nabla_y\varphi}_{L^2(A_{\sigma_j,R_j})}
 \begin{cases}
  C\rho_j^{m_*},&m_*<\frac{n-2}{2},\\
  C\rho_j^{\frac{n-2}{2}}(1+\log R_j)^{1/2},&n\text{ is even and }m_*=\frac{n-2}{2},\\
  o(\rho_j^{\frac{n-2}{2}}),&m_*=\infty.
 \end{cases}
\end{equation}

The little \(o\) term in \eqref{eq:reference-dilation-functional} is
taken as \(j\to\infty\), uniformly for
\((\lambda_j,b_j)\in\mathcal K\).
\end{lemma}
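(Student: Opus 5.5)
We exploit the zero trace of \(\varphi\). By \eqref{eq:weak-dilation-identity} with \(f_2=\varphi\) (after density from smooth compactly supported functions), the boundary form \(B_j^\partial(\Phi_j,\varphi)\) vanishes: \(\varphi=0\) on both spheres, and the principal boundary integrand vanishes by \eqref{eq:metric-error-radial-conormal}. Hence \(B_j(\Phi_j,\varphi)\) equals the pure volume expression
\[
 -\frac12\int\sum_{k,l}y\cdot\nabla_y(g_j^{kl}-\delta^{kl})\partial_k\Phi_j\partial_l\varphi\,\dd y
 -c(n)\int\Bigl(\operatorname{Scal}_{g_j}+\tfrac12y\cdot\nabla_y\operatorname{Scal}_{g_j}\Bigr)\Phi_j\varphi\,\dd y .
\]
This is the pairing of \(\varphi\) with a distribution \(\operatorname{div}_yA+f\), where
\[
 A^l=\tfrac12\sum_k y\cdot\nabla_y(g_j^{kl}-\delta^{kl})\partial_k\Phi_j,\qquad
 f=-c(n)\Bigl(\operatorname{Scal}_{g_j}+\tfrac12y\cdot\nabla_y\operatorname{Scal}_{g_j}\Bigr)\Phi_j .
\]
So the estimate reduces to checking \eqref{eq:source-field-bounds} and applying Lemma \ref{lem:degree-source-bound}.

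For the size of the coefficients we use the scaling identities of Lemma \ref{lem:euclidean-scaling-identity}: \(y\cdot\nabla_y(g_j^{-1})=\rho\partial_\rho g^{-1}(\rho y)|_{\rho_j}\), and similarly for the scalar curvature. We split \(h(\rho_jy)=\sum_{m=m_*}^{d}\rho_j^mH^{(m)}(y)+T_j\), where the remainder satisfies \eqref{eq:scaled-metric-taylor-remainder} with order \(\ell=d+1\). The matrix exponential then gives
\[
 |y\cdot\nabla_y(g_j^{-1}-I)|\le C\sum_{m_*\le m\le d}\rho_j^m r^m+C\rho_j^{2m_0}r^{2m_0}+C\rho_j^{\ell}r^{\ell}.
\]
The scalar coefficient satisfies the same bound with an extra factor \(r^{-2}\). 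By Lemma \ref{lem:corrected-profile-estimate}, \(|\Phi_j|+r|\nabla_y\Phi_j|\le C\min\{1,r^{2-n}\}\). Each homogeneous piece of order \(s\) therefore produces fields satisfying \eqref{eq:source-field-bounds} with exponent \(s\). For \(r\le1\), the bound \(\rho_j^s r^{s-1}\le\rho_j^s\) needs \(s\ge1\), which holds since \(s\ge2\).

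Summing Lemma \ref{lem:degree-source-bound} over the pieces gives the three cases.
\begin{itemize}
 \item If \(m_*<(n-2)/2\), the degree-\(m_*\) term contributes \(C\rho_j^{m_*}\). Every higher degree \(m\le d\) contributes \(\rho_j^m\), or \(\rho_j^{(n-2)/2}(1+\log R_j)^{1/2}\) at the critical degree, and both are \(O(\rho_j^{m_*})\).
 \item If \(n\) is even and \(m_*=(n-2)/2\), the critical degree contributes exactly \(C\rho_j^{(n-2)/2}(1+\log R_j)^{1/2}\).
 \item If \(m_*=\infty\), only the Taylor remainder is present.
\end{itemize}
In every case the remainder and quadratic terms are lower order. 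They contribute \(C\rho_j^\ell R_j^{\ell-(n-2)/2}\le C\rho_j^{(\ell+(n-2)/2)/2}=o(\rho_j^{(n-2)/2})\) by \eqref{eq:outer-radius-rho-bound}, exactly as in the proof of Lemma \ref{lem:reference-equation-error-estimate}. The quadratic pieces of order \(2m_0\) are handled the same way, with \(2m_0>\min\{m_0,(n-2)/2\}\).

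The step needing most care is verifying that \(A\) and \(f\) obey the precise radial profile \eqref{eq:source-field-bounds} uniformly in \(j\). This requires showing that the nonpolynomial remainder of \(\rho\partial_\rho g^{-1}\) really carries the stated decay \(\rho_j^{s}r^{s}\) on the whole annulus up to \(R_j\). We handle it with \eqref{eq:scaled-metric-taylor-remainder}, the fact that \(\rho_jR_j\to0\), and Hardy's inequality in Lemma \ref{lem:degree-source-bound}, which also absorbs the scalar term \(f\) via \(|y||f|\).
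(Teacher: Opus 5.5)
Your proposal is correct and follows essentially the paper's proof: the same fields \(A,f\), the vanishing of \(B_j^\partial(\Phi_j,\varphi)\) by the radial conormal condition, and an application of Lemma~\ref{lem:degree-source-bound}. The only difference is that the paper skips your degree-by-degree splitting: it bounds all coefficients at once by \(C\rho_j^{m_0}r^{m_0}\) with \(m_0=\min\{m_*,d+1\}\), since higher degrees are absorbed because \(\rho_jr\le\rho_jR_j\to0\), and then applies the lemma a single time with \(s=m_0\).
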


\begin{proof}
Since \(\varphi\in H_0^1(A_{\sigma_j,R_j})\), the boundary term in
\eqref{eq:weak-dilation-identity} vanishes by
\eqref{eq:metric-error-radial-conormal}. Define
\[
 \begin{aligned}
 A^l&=\frac12\sum_{k=1}^n
 y\cdot\nabla_y(g_j^{kl}-\delta^{kl})\,\partial_k\Phi_j,
 \qquad 1\le l\le n,\\
 f&=-c(n)\left(\operatorname{Scal}_{g_j}+\frac12y\cdot\nabla_y\operatorname{Scal}_{g_j}\right)\Phi_j.
 \end{aligned}
\]
Then
\[
  B_j(\Phi_j,\varphi)
 =\langle\operatorname{div}_y A+f,\varphi\rangle.
\]

Use \(r=|y|\) and the degree \(m_0=\min\{m_*,d+1\}\) fixed above.
The Taylor bounds through three spatial derivatives in
\eqref{eq:scaled-metric-taylor-remainder} give
\[
 \sum_{a=0}^3r^a|\nabla_y^a[h(\rho_j\,\cdot)](y)|
 \le C\rho_j^{m_0}r^{m_0}.
\]
Differentiating the matrix exponential once and the scalar curvature
formula once therefore gives
\[
 |y\cdot\nabla_y(g_j^{-1}-I)|
 +r^2\bigl(|\operatorname{Scal}_{g_j}|+|y\cdot\nabla_y\operatorname{Scal}_{g_j}|\bigr)
 \le C\rho_j^{m_0}r^{m_0}.
\]
Lemma  \ref{lem:corrected-profile-estimate} and the bubble formula imply
\[
 |\Phi_j|+(1+r)|\nabla_y\Phi_j|\le C(1+r)^{2-n}.
\]
Consequently \(A,f\) satisfy \eqref{eq:source-field-bounds} with \(s=m_0\).
Lemma  \ref{lem:degree-source-bound} gives the first two cases of
\eqref{eq:reference-dilation-functional}. If \(m_*=\infty\), then
\(m_0>(n-2)/2\), and \eqref{eq:outer-radius-rho-bound} gives
\[
 \rho_j^{m_0}R_j^{m_0-(n-2)/2}
 \le C\rho_j^{(m_0+(n-2)/2)/2}
 =o(\rho_j^{(n-2)/2}).
\]
This proves the remaining case.
\end{proof}

\par
It remains to estimate the terms containing the two boundary extensions
\(e_j^-\) and \(e_j^+\).

\begin{lemma}\label{lem:boundary-dilation-terms}
For all sufficiently large \(j\),
 
\begin{equation}\label{eq:boundary-dilation-bound}
 | B_j(\Phi_j,e_j^-)|
 +| B_j(\Phi_j,e_j^+)|
 \le C\eta_j(\mu_j+\mu_{j-1}).
\end{equation}

\end{lemma}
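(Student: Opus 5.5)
We treat the two terms separately; the outer one is symmetric to the inner one. For \(e_j^-\), supported in \(A_{\sigma_j,r_{\rm c}}\), we apply the weak dilation identity \eqref{eq:weak-dilation-identity} on its original annulus \(A_{\sigma_j,r_{\rm c}}\). By the discussion preceding Lemma \ref{lem:reference-dilation-functional}, no contribution arises from the auxiliary sphere \(\partial B_{r_{\rm c}}\). Thus \(B_j(\Phi_j,e_j^-)\) equals a volume term over \(A_{\sigma_j,r_{\rm c}}\) plus the inner physical boundary term
\[
 -\frac{\sigma_j}{2}\int_{\partial B_{\sigma_j}}\Bigl[\sum_{k,l}(g_j^{kl}-\delta^{kl})\partial_k\Phi_j\partial_le_j^-+c(n)\operatorname{Scal}_{g_j}\Phi_je_j^-\Bigr]\,\dd S .
\]

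Both terms will be estimated pointwise rather than through energies. The energy bounds \eqref{eq:boundary-extension-energy} would only give \(\eta_j\sigma_j^{(n-2)/2}\), so this choice is essential.

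For the volume term on \(A_{\sigma_j,r_{\rm c}}\) we use \eqref{eq:dilation-coefficient-bounds}, but with the sharper local form \(|y\cdot\nabla(g_j^{-1}-I)|\le C\rho_j^2|y|^2\) and \(|\operatorname{Scal}_{g_j}+\tfrac12 y\cdot\nabla\operatorname{Scal}_{g_j}|\le C\rho_j^4|y|^2\). Together with \(|\Phi_j|+|\nabla\Phi_j|\le C\) for \(|y|\le1\) (Lemma \ref{lem:corrected-profile-estimate}) and the pointwise decay \eqref{eq:boundary-extension-pointwise}, this bounds the integrand by
\[
 C\rho_j^2|y|^2\,\sigma_j^{n-2}|y|^{1-n}+C\rho_j^4|y|^2\sigma_j^{n-2}|y|^{2-n}.
\]
Integrating over \(\sigma_j\le|y|\le r_{\rm c}\) gives at most \(C\rho_j^2\sigma_j^{n-2}\). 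On the inner sphere the integrand is \(O(\rho_j^2\sigma_j^2\cdot\sigma_j^{-1})+O(\rho_j^4\sigma_j^2)\), and with the factor \(\sigma_j\cdot\sigma_j^{n-1}\) from the surface measure this is \(O(\rho_j^2\sigma_j^{n})\). Since \(\rho_j^2\le\eta_j\) and \(\sigma_j^{n-2}\le C\mu_j\) by \eqref{eq:capacity-scales}, the inner term is \(\le C\eta_j\mu_j\).

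For \(e_j^+\) on \(A_{R_{\rm c},R_j}\) we use \(|y\cdot\nabla(g_j^{-1}-I)|\le C\rho_j^2|y|^2\le C\eta_j\), together with \(|\nabla\Phi_j|\le C|y|^{1-n}\), \(|\Phi_j|\le C|y|^{2-n}\), and \eqref{eq:boundary-extension-pointwise}, which gives \(|\nabla e_j^+|\le CR_j^{2-n}|y|^{-1}\) and \(|e_j^+|\le CR_j^{2-n}\). The principal volume integrand is then bounded by \(C\rho_j^2|y|^2|y|^{1-n}R_j^{2-n}|y|^{-1}\). Its integral over \(R_{\rm c}\le|y|\le R_j\) is \(C\rho_j^2R_j^{2-n}\int^{R_j} r\,\dd r\le C\rho_j^2R_j^{4-n}=C\eta_jR_j^{2-n}\).

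This is the delicate step, because the weight \(|y|^2\) must be balanced exactly against the outer radius. It works because \(\eta_j=\rho_j^2R_j^2\), so the growth \(\int^{R_j}r\,\dd r\sim R_j^2\) is absorbed precisely by the definition of \(\eta_j\).

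The scalar-curvature volume term is bounded by \(C\rho_j^4|y|^2|y|^{2-n}R_j^{2-n}\), whose integral is \(\le C\rho_j^4R_j^{6-n}\le C\eta_j^2R_j^{2-n}\). The outer boundary term at \(\partial B_{R_j}\) is bounded by
\[
 R_j\cdot R_j^{n-1}\bigl[\eta_jR_j^{1-n}R_j^{1-n}+\rho_j^4R_j^2R_j^{2-n}R_j^{2-n}\bigr]\le C\eta_jR_j^{2-n}.
\]
By \eqref{eq:capacity-scales}, \(R_j^{2-n}\le C\mu_{j-1}\). Adding the inner and outer estimates gives \eqref{eq:boundary-dilation-bound}.
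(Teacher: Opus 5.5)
Your proposal is correct and follows essentially the paper's route. You apply \eqref{eq:weak-dilation-identity} on the original collar annuli with no auxiliary-sphere contribution, bound the volume terms pointwise via \eqref{eq:dilation-coefficient-bounds}, Lemma~\ref{lem:corrected-profile-estimate} and \eqref{eq:boundary-extension-pointwise}, and conclude with $\rho_j^2\le\eta_j$, $\rho_j^2R_j^2=\eta_j$ and \eqref{eq:capacity-scales}.

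The only difference is on the physical spheres. There you bound the boundary form crudely with the full pointwise gradient of $e_j^\pm$. The paper instead observes that only tangential derivatives enter, by \eqref{eq:metric-error-radial-conormal}, replaces $e_j^\pm$ by $e_j$, and uses the trace bounds of Lemma~\ref{lem:annular-boundary-traces}. Both routes give the same orders.
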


\begin{proof}
We first estimate the volume integrals. With \(r=|y|\),
Lemma  \ref{lem:corrected-profile-estimate} gives
\[
 \begin{aligned}
 |\Phi_j|+r|\nabla_y\Phi_j|&\le C,
 &&\sigma_j\le r\le r_{\rm c},\\
 |\Phi_j|+r|\nabla_y\Phi_j|&\le Cr^{2-n},
 &&R_{\rm c}\le r\le R_j.
 \end{aligned}
\]
Insert these estimates, \eqref{eq:dilation-coefficient-bounds}, and
\eqref{eq:boundary-extension-pointwise} in
\eqref{eq:weak-dilation-identity}. On the inner annulus, the principal
and scalar volume terms are bounded by
\[
 C\rho_j^2\sigma_j^{n-2}\int_{\sigma_j}^{r_{\rm c}}r\,\dd r,
 \qquad
 C\rho_j^4\sigma_j^{n-2}\int_{\sigma_j}^{r_{\rm c}}r^3\,\dd r.
\]
On the outer annulus their sum is bounded by
\[
 CR_j^{2-n}\int_{R_{\rm c}}^{R_j}
 (\rho_j^2r+\rho_j^4r^3)\,\dd r
 \le C(\eta_j+\eta_j^2)R_j^{2-n}.
\]
Consequently,
 
\[
 \begin{aligned}
 | B_j(\Phi_j,e_j^-)
 - B_j^\partial(\Phi_j,e_j^-)|
 &\le C\rho_j^2\sigma_j^{n-2},\\
 | B_j(\Phi_j,e_j^+)
 - B_j^\partial(\Phi_j,e_j^+)|
 &\le C\eta_jR_j^{2-n}.
 \end{aligned}
\]

We next estimate the physical boundary integrals. Only tangential
derivatives occur in the principal boundary term,
by \eqref{eq:metric-error-radial-conormal}. The common boundary values
of \(e_j\) and \(e_j^\pm\) therefore give
 
\[
 \begin{aligned}
  B_j^\partial(\Phi_j,e_j^-)
 &=-\frac{\sigma_j}{2}\int_{\partial B_{\sigma_j}}
 \left[
 \sum_{k,l=1}^n(g_j^{kl}-\delta^{kl})
 \partial_k\Phi_j\partial_le_j+c(n)\operatorname{Scal}_{g_j}\Phi_je_j
 \right]\,\dd S,\\
  B_j^\partial(\Phi_j,e_j^+)
 &=\frac{R_j}{2}\int_{\partial B_{R_j}}
 \left[
 \sum_{k,l=1}^n(g_j^{kl}-\delta^{kl})
 \partial_k\Phi_j\partial_le_j+c(n)\operatorname{Scal}_{g_j}\Phi_je_j
 \right]\,\dd S.
 \end{aligned}
\]

The trace bounds in Lemma  \ref{lem:annular-boundary-traces} imply
\begin{equation}\label{eq:linear-boundary-extension-surface}
 \begin{aligned}
 | B_j^\partial(\Phi_j,e_j^-)|
 &\le C(\rho_j^2\sigma_j^n+\rho_j^4\sigma_j^{n+2}),\\
 | B_j^\partial(\Phi_j,e_j^+)|
 &\le C(\rho_j^2R_j^{4-n}+\rho_j^4R_j^{6-n}).
 \end{aligned}
\end{equation}
There is no integral on an auxiliary sphere, where \(e_j^\pm=0\):
its tangential derivatives vanish, and the radial conormal coefficient
is zero.
Since \(\rho_j^2\le\eta_j\), \(\rho_j^2R_j^2=\eta_j\), and
\eqref{eq:capacity-scales} holds, the volume and boundary estimates give
\eqref{eq:boundary-dilation-bound}.
\end{proof}

\par
\subsection{The full comparison}

\begin{proposition}\label{prop:exact-reference-comparison}
For all sufficiently large \(j\),
\begin{equation}\label{eq:exact-reference-comparison}
 Q_j\ge\widetilde Q_j-C\eta_j(\mu_j+\mu_{j-1})
 -\begin{cases}
  o(\rho_j^{2m_*}),&m_*<\frac{n-2}{2},\\
  C\rho_j^{n-2}(1+\log R_j),&n\text{ is even and }m_*=\frac{n-2}{2},\text{ or }m_*=\infty.
 \end{cases}
\end{equation}
The little \(o\) term in \eqref{eq:exact-reference-comparison} is taken as
\(j\to\infty\), uniformly for
\((\lambda_j,b_j)\in\mathcal K\).
\end{proposition}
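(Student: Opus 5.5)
We start from the polarization identity \eqref{eq:exact-dilation-polarization},
\[
 Q_j-\widetilde Q_j=2B_j(\Phi_j,e_j)+B_j(e_j,e_j),
\]
and bound each term below by the right side of \eqref{eq:exact-reference-comparison}. For the linear term we use the splitting $e_j=e_j^-+e_j^0+e_j^+$ of Proposition~\ref{prop:zero-boundary-remainder}. Bilinearity gives
\[
 2B_j(\Phi_j,e_j)=2B_j(\Phi_j,e_j^-)+2B_j(\Phi_j,e_j^+)+2B_j(\Phi_j,e_j^0).
\]
Lemma~\ref{lem:boundary-dilation-terms} bounds the two boundary pieces by $C\eta_j(\mu_j+\mu_{j-1})$. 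For the zero-trace piece we apply Lemma~\ref{lem:reference-dilation-functional} with $\varphi=e_j^0$, and then \eqref{eq:zero-boundary-final-size} together with Lemma~\ref{lem:reference-equation-error-estimate}. Write $\beta_j$ for the size factor in \eqref{eq:reference-dilation-functional}: $\rho_j^{m_*}$, $\rho_j^{(n-2)/2}(1+\log R_j)^{1/2}$, or $o(\rho_j^{(n-2)/2})$ in the three cases. Then
\[
 |B_j(\Phi_j,e_j^0)|\le C\beta_j\bigl(\|\mathcal R_j^\Phi\|_{\dot H^{-1}}+\mu_j+\mu_{j-1}\bigr).
\]

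We treat the two resulting products separately. First, $\beta_j\|\mathcal R_j^\Phi\|_{\dot H^{-1}}$ is $\rho_j^{m_*}\cdot o(\rho_j^{m_*})=o(\rho_j^{2m_*})$ when $m_*<(n-2)/2$. It is $C\rho_j^{n-2}(1+\log R_j)$ in the even critical case, and $o(\rho_j^{n-2})$ when $m_*=\infty$. Second, the cross product $\beta_j(\mu_j+\mu_{j-1})$ has to be absorbed into $C\eta_j(\mu_j+\mu_{j-1})$. Since $m_*\ge2$, we have $\beta_j\le C\rho_j^2(1+|\log\rho_j|)^{1/2}$. If this comparison fails, we split with Young's inequality: $\beta_j\mu\le \beta_j^2+\mu^2$. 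The $\beta_j^2$ part is of the admissible orders $\rho_j^{2m_*}$ or $\rho_j^{n-2}(1+\log R_j)$; in the $m_*<(n-2)/2$ case it must in fact be $o(\rho_j^{2m_*})$. The $\mu^2$ part is at most $C\mu\,\sup\mu$, and we would try to dominate it by $\eta_j\mu$ using \eqref{eq:capacity-scales}, i.e.\ $\mu_j\asymp\sigma_j^{n-2}$ and $\mu_{j-1}\asymp R_j^{2-n}$.

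\textbf{Main obstacle.} This absorption is the delicate point. The cleanest route avoids Young's inequality: since $\rho_j^2\le\eta_j$, we have $\beta_j\le C\eta_j$ whenever $m_*\ge2$ and no logarithm appears. So the log case $m_*=(n-2)/2$ (with $n\ge8$, hence $m_*\ge3$) is the only one needing care. There $\rho_j^{3}(\log)^{1/2}\le C\rho_j^2\le C\eta_j$ holds easily, so $\beta_j\le C\eta_j$ in every case. If the Young route is needed in the subcritical case, one has to check that the $\beta_j^2$ part is genuinely $o(\rho_j^{2m_*})$ rather than merely $O(\rho_j^{2m_*})$.

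For the quadratic term we use \eqref{eq:weak-dilation-volume-bound} together with the boundary form $B_j^\partial(e_j,e_j)$. The volume part is at most
\[
 C\eta_j\int\bigl(|\nabla e_j|^2+|y|^{-2}e_j^2\bigr),
\]
and by \eqref{eq:total-error-energy} and \eqref{eq:zero-boundary-final-size} this is
\[
 \le C\eta_j\bigl(\|\mathcal R_j^\Phi\|^2+(\mu_j+\mu_{j-1})^2+\mu_j+\mu_{j-1}\bigr).
\]
Because $\eta_j\to0$, the term $\eta_j\|\mathcal R_j^\Phi\|^2$ is $o$ of the squared error orders. The $\mu$ terms fall into $C\eta_j(\mu_j+\mu_{j-1})$. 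The boundary form involves only tangential derivatives and scalar curvature traces on $\partial B_{\sigma_j}$ and $\partial B_{R_j}$, so Lemma~\ref{lem:annular-boundary-traces} bounds it as in \eqref{eq:linear-boundary-extension-surface}:
\[
 |B_j^\partial(e_j,e_j)|\le C\bigl(\rho_j^2\sigma_j^n+\rho_j^2R_j^{2}R_j^{2-n}\cdot R_j^{2-n}R_j^{n-2}\bigr)\le C\eta_j(\mu_j+\mu_{j-1}).
\]
Summing the linear and quadratic estimates gives \eqref{eq:exact-reference-comparison}.
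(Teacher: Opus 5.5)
Your proposal is correct and follows essentially the paper's own argument. You polarize and split $e_j=e_j^-+e_j^0+e_j^+$, and use Lemma~\ref{lem:boundary-dilation-terms} for the boundary pieces. You bound $B_j(\Phi_j,e_j^0)$ by noting that the factor in \eqref{eq:reference-dilation-functional} is at most $C\eta_j$ in every case, and combining this with \eqref{eq:zero-boundary-final-size} and Lemma~\ref{lem:reference-equation-error-estimate}. You then control $B_j(e_j,e_j)$ through \eqref{eq:weak-dilation-volume-bound}, \eqref{eq:total-error-energy}, and the tangential trace bounds.

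Two minor points do not affect the proof. First, the Young's-inequality fallback is unnecessary: its $\mu^2$ step would need $\mu_j+\mu_{j-1}\le C\eta_j$, which the available estimates do not give. Second, your boundary bound should also list the harmless scalar-curvature terms $\rho_j^4\sigma_j^{n+2}$ and $\rho_j^4R_j^{6-n}$.
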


\par
\begin{proof}
We first estimate the quadratic term in the full error. Applying
\eqref{eq:weak-dilation-identity} with \(f_1=f_2=e_j\) and using the
coefficient estimates gives
\[
 | B_j(e_j,e_j)- B_j^\partial(e_j,e_j)|
 \le C\eta_j\int_{A_{\sigma_j,R_j}}
 \left(|\nabla_ye_j|^2+|y|^{-2}e_j^2\right)\,\dd y.
\]

The boundary calculation uses the same tangential trace estimates as in
\eqref{eq:linear-boundary-extension-surface}:
\[
 | B_j^\partial(e_j,e_j)|
 \le C\left(
 \rho_j^2\sigma_j^n+\rho_j^4\sigma_j^{n+2}
 +\rho_j^2R_j^{4-n}+\rho_j^4R_j^{6-n}\right)
 \le C\eta_j(\mu_j+\mu_{j-1}).
\]
Together with \eqref{eq:total-error-energy}, this yields
\[
 | B_j(e_j,e_j)|
 \le C\eta_j\left[
 \norm{\nabla_ye_j^0}_{L^2(A_{\sigma_j,R_j})}^2
 +\mu_j+\mu_{j-1}\right].
\]

For the linear term, the decomposition of \(e_j\), the polarization
identity, and Lemma  \ref{lem:boundary-dilation-terms} give
\begin{equation}\label{eq:complete-comparison-bound}
 |Q_j-\widetilde Q_j|
 \le 2| B_j(\Phi_j,e_j^0)|
 +C\eta_j\left[
 \norm{\nabla_ye_j^0}_{L^2(A_{\sigma_j,R_j})}^2
 +\mu_j+\mu_{j-1}\right].
\end{equation}

Set
\[
 \begin{gathered}
 D_j=\|\mathcal R_j^\Phi\|_{\dot H^{-1}(A_{\sigma_j,R_j})},
 \qquad M_j=\mu_j+\mu_{j-1},\\
 \Lambda_j=\sup_{\substack{\varphi\in H_0^1(A_{\sigma_j,R_j})\\
 \|\nabla_y\varphi\|_{L^2(A_{\sigma_j,R_j})}=1}}
 |B_j(\Phi_j,\varphi)|.
 \end{gathered}
\]
Proposition  \ref{prop:zero-boundary-remainder} gives
\[
 \|\nabla_ye_j^0\|_{L^2(A_{\sigma_j,R_j})}
 \le C(D_j+M_j).
\]
Lemma  \ref{lem:reference-dilation-functional}
gives the sharper estimate for \(\Lambda_j\) in each case, as well as
\(\Lambda_j\le C\eta_j\) in all three cases. If \(n\) is even and \(m_*=(n-2)/2\), the bound \(\Lambda_j\le C\eta_j\) follows from
\[
 \rho_j^{(n-2)/2}(1+\log R_j)^{1/2}
 \le C\rho_j^2\le C\eta_j,
\]
using \(n\ge7\) and \eqref{eq:outer-radius-rho-bound}.
Since \(M_j\to0\), \eqref{eq:complete-comparison-bound} gives the
common estimate
\begin{equation}\label{eq:uniform-comparison-substitution}
 \begin{aligned}
 |Q_j-\widetilde Q_j|
 &\le2\Lambda_j
 \|\nabla_ye_j^0\|_{L^2(A_{\sigma_j,R_j})}+C\eta_j\left[
 \|\nabla_ye_j^0\|_{L^2(A_{\sigma_j,R_j})}^2+M_j
 \right]\\
 &\le C\bigl(\Lambda_jD_j+\eta_jD_j^2+\eta_jM_j\bigr).
 \end{aligned}
\end{equation}

For the terms containing \(M_j\), we use
\(\Lambda_j\le C\eta_j\). For \(\Lambda_jD_j\), we use the bounds in
Lemmas  \ref{lem:reference-equation-error-estimate} and
\ref{lem:reference-dilation-functional} for each value of \(m_*\).

If \(m_*<(n-2)/2\), then
\(D_j=o(\rho_j^{m_*})\) and \(\Lambda_j=O(\rho_j^{m_*})\).
By  \eqref{eq:uniform-comparison-substitution},
\[
 |Q_j-\widetilde Q_j|
 \le C\eta_jM_j+o(\rho_j^{2m_*}).
\]
If \(n\) is even and \(m_*=(n-2)/2\), both \(D_j\) and \(\Lambda_j\)
are bounded by
\(C\rho_j^{(n-2)/2}(1+\log R_j)^{1/2}\). Hence
\[
 |Q_j-\widetilde Q_j|
 \le C\eta_jM_j+C\rho_j^{n-2}(1+\log R_j).
\]
Finally, if \(m_*=\infty\), then
\(D_j=o(\rho_j^{(n-2)/2})\) and
\(\Lambda_j=o(\rho_j^{(n-2)/2})\), hence
\[
 |Q_j-\widetilde Q_j|
 \le C\eta_jM_j+o(\rho_j^{n-2}).
\]
All little \(o\) terms are uniform on \(\mathcal K\).
These bounds for the absolute values imply the lower estimate
\eqref{eq:exact-reference-comparison} required in Section  \ref{sec:recurrence}.
\end{proof}

\par
\section{Discrete scale iteration and Fowler asymptotics}\label{sec:recurrence}
\label{sec:fowler}

The exact increment from Section  \ref{sec:euclidean-annuli}, the lower
bound for \(\widetilde Q_j\) from Section  \ref{sec:reference-lower-bound},
and the comparison in Section  \ref{sec:exact-reference-comparison} yield
a backward recurrence for \(\mu_j\). Its tail estimate excludes a
nonremovable singularity with \(\cP_\infty=0\). We then return to the
general hypotheses of
Theorem  \ref{thm:lower-bound}, prove the lower bound, and derive the Fowler
asymptotics.

\subsection{The recurrence and the matching lower bound}
Recall that the radii \(\rho_j\), the numbers \(\mu_j\), the scales
\(\eta_j\), and the first nonzero metric degree \(m_*\) are specified in
Lemma  \ref{lem:peak-minimum-radii}, \eqref{eq:mu-definition},
\eqref{eq:metric-perturbation-scale}, and \eqref{eq:first-nonzero-degree}. The exact
increment identity is \eqref{eq:exact-Qj-increment}. Every little \(o\) term
below is taken as \(j\to\infty\), with the background metric fixed,
uniformly for \((\lambda_j,b_j)\in\mathcal K\).

For the recurrence and its tail estimate \eqref{eq:mu-tail-bound}, assume that
\eqref{eq:contradiction-case} holds. If
\(m_*<\frac{n-2}{2}\), Propositions
\ref{prop:reference-pohozaev-lower-bound} and
\ref{prop:exact-reference-comparison} give
\[
 Q_j\ge c\rho_j^{2m_*}-o(\rho_j^{2m_*})
 -C\eta_j(\mu_j+\mu_{j-1}).
\]
For all sufficiently large \(j\),
\[
 c\rho_j^{2m_*}-o(\rho_j^{2m_*})
 \ge\frac c2\rho_j^{2m_*}\ge0.
\]
If \(n\) is even and \(m_*=\frac{n-2}{2}\), or if \(m_*=\infty\),
\eqref{eq:reference-pohozaev-lower-bound},
\eqref{eq:exact-reference-comparison}, and the bound
\[
 1+\log R_j\le C(1+|\log\rho_j|)
\]
from \eqref{eq:outer-radius-rho-bound} bound the last error term of \eqref{eq:exact-reference-comparison} by $C\rho_j^{n-2}(1+|\log\rho_j|)$.
Thus all three cases yield
\begin{equation}\label{eq:final-annular-increment}
 Q_j\ge-C\eta_j(\mu_j+\mu_{j-1})
 -C\rho_j^{n-2}(1+|\log\rho_j|)
\end{equation}
for all sufficiently large \(j\).

\par
\begin{lemma}\label{lem:backward-recurrence}
There is \(C\ge1\) such that, for all sufficiently large \(j\),
\begin{equation}\label{eq:one-step-backward-recurrence}
 \mu_{j-1}\le(1+C\eta_j)\mu_j
 +C\rho_j^{n-2}(1+|\log\rho_j|),
\end{equation}
and
\begin{equation}\label{eq:mu-tail-bound}
 \mu_j\le C\rho_{j+1}^{n-2}
 (1+|\log\rho_{j+1}|).
\end{equation}
\end{lemma}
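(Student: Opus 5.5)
The recurrence \eqref{eq:one-step-backward-recurrence} is obtained by inserting the exact increment identity \eqref{eq:exact-Qj-increment} into \eqref{eq:final-annular-increment}. Since $Q_j=\mu_j-\mu_{j-1}$, we get
\[
 \mu_j-\mu_{j-1}\ge -C\eta_j(\mu_j+\mu_{j-1})-C\rho_j^{n-2}(1+|\log\rho_j|),
\]
and hence $(1-C\eta_j)\mu_{j-1}\le(1+C\eta_j)\mu_j+C\rho_j^{n-2}(1+|\log\rho_j|)$. By \eqref{eq:metric-scale-summability} we have $\eta_j\to0$, so for large $j$ we may assume $C\eta_j\le1/2$. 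Dividing by $1-C\eta_j$ and using $(1+C\eta_j)/(1-C\eta_j)\le1+4C\eta_j$ gives \eqref{eq:one-step-backward-recurrence} after renaming the constant. Here $\mu_j>0$ for large $j$ by \eqref{eq:mu-minimum-depth}, so no sign issue arises.

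For the tail bound \eqref{eq:mu-tail-bound} I would iterate the recurrence backward from index $j+1$ up to a large index $N$ and then let $N\to\infty$. Fix $j$ and set $\Pi_{k}^{N}=\prod_{i=k}^{N}(1+C\eta_i)$. Summability of $\eta_i$ from \eqref{eq:metric-scale-summability} gives $\Pi\le P:=\exp(C\sum_{i\ge j_0}\eta_i)<\infty$, uniformly in all indices. Applying \eqref{eq:one-step-backward-recurrence} with index $i=j+1,\dots,N$ and composing yields
\[
 \mu_j\le P\,\mu_N+P\,C\sum_{i=j+1}^{N}\rho_i^{n-2}(1+|\log\rho_i|).
\]
By \eqref{eq:capacity-scales} and \eqref{eq:mu-peak-ratio} (or simply $a_N\to0$ with \eqref{eq:mu-minimum-depth}) we have $\mu_N\to0$. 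Letting $N\to\infty$ therefore bounds $\mu_j$ by $PC\sum_{i\ge j+1}\rho_i^{n-2}(1+|\log\rho_i|)$.

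It remains to sum this geometric-type series. By \eqref{eq:peak-halving}, $\rho_{i}\le2^{-(i-j-1)}\rho_{j+1}$ for $i\ge j+1$. The function $t\mapsto t^{n-2}(1+|\log t|)$ is increasing for small $t$ (all $\rho_i<1$ are small), and $1+|\log\rho_i|\le 1+|\log\rho_{j+1}|+(i-j-1)\log2$. Hence
\[
 \sum_{i\ge j+1}\rho_i^{n-2}(1+|\log\rho_i|)\le\rho_{j+1}^{n-2}\sum_{k\ge0}2^{-k(n-2)}\bigl(1+|\log\rho_{j+1}|+k\log2\bigr)\le C\rho_{j+1}^{n-2}(1+|\log\rho_{j+1}|),
\]
which is \eqref{eq:mu-tail-bound}.

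There is no real obstacle here. The only points needing care are the absorption of the $\mu_{j-1}$ term on the right, which needs $\eta_j$ small, and justifying $\mu_N\to0$ so that the boundary term of the backward iteration disappears. Both follow directly from results already established.
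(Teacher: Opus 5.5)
Your proof is correct and follows the paper's argument essentially step for step. You substitute $Q_j=\mu_j-\mu_{j-1}$ into \eqref{eq:final-annular-increment}, absorb using $C\eta_j\le 1/2$, iterate backward with the product controlled by $\sum\eta_j<\infty$ and $\mu_N\to0$, and sum the tail via \eqref{eq:peak-halving} and the monotonicity of $s\mapsto s^{n-2}(1+|\log s|)$ on $(0,1)$; your remark that $\mu_j>0$ is needed to replace $(1+C\eta_j)/(1-C\eta_j)$ by $1+4C\eta_j$ makes explicit a detail the paper leaves implicit.
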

Since \(\bar\rho_j\) decreases as \(j\) increases,
\eqref{eq:one-step-backward-recurrence} propagates control from the smaller
radius \(\bar\rho_j\) to the larger scale \(\bar\rho_{j-1}\).  Its
iteration therefore runs backward along the radial chain.

\par
\begin{proof}
By Proposition  \ref{prop:annular-increment} and
\eqref{eq:final-annular-increment},
\[
 (1-C\eta_j)\mu_{j-1}
 \le(1+C\eta_j)\mu_j
 +C\rho_j^{n-2}(1+|\log\rho_j|).
\]
Increase the initial index so that \(C\eta_j\le1/2\).
Since \(\eta_j\to0\),
\eqref{eq:one-step-backward-recurrence} follows
after division by \(1-C\eta_j\) and an enlargement of \(C\).

Applying \eqref{eq:one-step-backward-recurrence} successively with indices
\(j+1,\ldots,J\), we have
\begin{equation}\label{eq:iterated-backward-recurrence}
 \mu_j\le{}\mu_J\prod_{k=j+1}^J(1+C\eta_k)
 +C\sum_{i=j+1}^J
 \rho_i^{n-2}(1+|\log\rho_i|)
 \prod_{k=j+1}^{i-1}(1+C\eta_k).
\end{equation}
By \eqref{eq:metric-scale-summability},
\[
 \prod_{k=j+1}^J(1+C\eta_k)
 \le\exp\left(C\sum_{k=j+1}^J\eta_k\right)\le C.
\]
Moreover, by \eqref{eq:capacity-scales},
\[
 \mu_J\to0,\qquad \sigma_J\to0
 \quad\text{as }J\to\infty.
\]
Fix \(j\), and then let \(J\to\infty\) in
\eqref{eq:iterated-backward-recurrence}. This yields
\begin{equation}\label{eq:mu-tail-series}
 \mu_j\le C\sum_{i\ge j+1}
 \rho_i^{n-2}(1+|\log\rho_i|).
\end{equation}

Equation  \eqref{eq:peak-halving} gives
\(\rho_{k+1}\le\rho_k/2\) for all \(k\ge j_0\). The function
\[
 f(s)=s^{n-2}(1+|\log s|)
\]
is increasing on \((0,1)\), since \(f'(s)=s^{n-3}\bigl[(n-2)(1-\log s)-1\bigr]>0\).
For \(i=j+1+\ell\), we therefore have
\[
 \rho_i^{n-2}(1+|\log\rho_i|)
 \le f(2^{-\ell}\rho_{j+1})
 =2^{-\ell(n-2)}\rho_{j+1}^{n-2}
 \bigl(1+|\log\rho_{j+1}|+\ell\log2\bigr).
\]
Both series in \(2^{-\ell(n-2)}\) and
\(\ell2^{-\ell(n-2)}\) converge.  Substitution in
\eqref{eq:mu-tail-series} proves
\eqref{eq:mu-tail-bound}.
\end{proof}

\par
\begin{proposition}\label{prop:pohozaev-removability}
Let \(n\ge7\), and let \(g,u\) be the normalized metric and positive
solution of \eqref{eq:yamabe} fixed in
Section  \ref{sec:pohozaev-dichotomy}. Assume the critical upper bound
\eqref{eq:critical-upper}. Then \(\cP_\infty=0\) if and only if the
origin is removable.
\end{proposition}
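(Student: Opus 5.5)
Removability implies \(\cP_\infty=0\) directly. If \(u\) extends smoothly across the origin, then \(u\) and \(\nabla u\) are bounded near \(0\). Every term of the Pohozaev boundary integral \(\cP(r,u)\) in \eqref{eq:pohozaev} is then \(O(r^{n-1})\) after integration over \(\partial B_r\), so it tends to zero. Alternatively, \(r^{(n-2)/2}\bar u(r)\to0\), and Lemma~\ref{lem:pohozaev-limit} gives \(\cP_\infty=0\).

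For the converse, argue by contradiction: assume \(\cP_\infty=0\) and the origin is nonremovable. This is exactly \eqref{eq:contradiction-case}, so all of Sections~\ref{sec:pohozaev-dichotomy}–\ref{sec:exact-reference-comparison} and Lemma~\ref{lem:backward-recurrence} apply. The plan is to compare two bounds on \(\mu_j\).

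The tail estimate \eqref{eq:mu-tail-bound} gives the upper bound \(\mu_j\le C\rho_{j+1}^{n-2}(1+|\log\rho_{j+1}|)\). The lower bound comes from \eqref{eq:mu-peak-ratio}: \(\mu_j\ge C^{-1}(\rho_{j+1}/\rho_j)^{(n-2)/2}\). Combining the two gives
\[
 (\rho_{j+1}/\rho_j)^{\frac{n-2}{2}}\le C\rho_{j+1}^{n-2}(1+|\log\rho_{j+1}|),
\]
which is equivalent to
\[
 1\le C\rho_j^{\frac{n-2}{2}}\rho_{j+1}^{\frac{n-2}{2}}(1+|\log\rho_{j+1}|).
\]
The right side tends to zero as \(j\to\infty\), since \(\rho_j\le\rho_1<1\) and \(s^{(n-2)/2}(1+|\log s|)\to0\) as \(s\downarrow0\). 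This is a contradiction. Hence the origin is removable, or equivalently \eqref{eq:radial-oscillation} fails, and Lemma~\ref{lem:radial-decay-removable} already covers that alternative.

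The main obstacle is bookkeeping rather than a new estimate: both \eqref{eq:mu-tail-bound} and \eqref{eq:mu-peak-ratio} must hold for the same large indices \(j\ge j_0\), and the normalization of Section~\ref{sec:pohozaev-dichotomy} must preserve both removability and \(\cP_\infty\). The first point is handled by taking \(j\) large enough for both estimates. For the second, the conformal change \(\widehat u=\kappa_0^{-1}u_0\) with \(\kappa_0(0)=1\) and \(d\kappa_0(0)=0\) is smooth and positive, so it preserves removability. The statement is formulated for the normalized pair \((g,u)\), so \(\cP_\infty\) is understood in those coordinates and nothing further needs to be checked.
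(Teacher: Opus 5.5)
Your proposal is correct and is essentially the paper's proof: the removable direction holds because every term of \eqref{eq:pohozaev} tends to zero as \(r\downarrow0\), and the converse combines \eqref{eq:mu-peak-ratio} with the tail bound \eqref{eq:mu-tail-bound} to obtain \(1\le C(\rho_j\rho_{j+1})^{(n-2)/2}(1+|\log\rho_{j+1}|)\to0\), exactly as the paper does. Your additional remarks on the normalization and on the equivalence with the failure of \eqref{eq:radial-oscillation} are consistent with the paper's setup but are not needed for the argument.
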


\par
\begin{proof}
If \(u\) extends smoothly across the origin, every term in
\eqref{eq:pohozaev} tends to zero as \(r\downarrow0\), and hence
\(\cP_\infty=0\).

Conversely, suppose that \(\cP_\infty=0\) and the origin is nonremovable.
Then \eqref{eq:contradiction-case} holds. We use the peak and minimum
sequences constructed under that assumption in
Lemma  \ref{lem:peak-minimum-radii}. By
\eqref{eq:mu-peak-ratio} and
Lemma  \ref{lem:backward-recurrence},
\[
 C^{-1}\left(\frac{\rho_{j+1}}{\rho_j}\right)^{\frac{n-2}{2}}
 \le\mu_j
 \le C\rho_{j+1}^{n-2}(1+|\log\rho_{j+1}|).
\]
Dividing by
\(\bigl(\rho_{j+1}/\rho_j\bigr)^{(n-2)/2}\), we obtain
\[
 1\le C(\rho_j\rho_{j+1})^{\frac{n-2}{2}}
 (1+|\log\rho_{j+1}|).
\]
For large \(j\), \(\rho_j<1\), so the right side is at most
\[
 C\rho_{j+1}^{\frac{n-2}{2}}(1+|\log\rho_{j+1}|),
\]
which tends to zero as \(j\to\infty\). This contradiction proves the
proposition.
\end{proof}

\par
\begin{proof}[Proof of Theorem  \ref{thm:lower-bound}]
 Suppose that \(0\) is a nonremovable singularity. 
 We first work with a solution $u$ in a normalized metric \eqref{eq:normal-gauge}. By Proposition~\ref{prop:pohozaev-removability}, \(\cP_\infty<0\), and Lemma~\ref{lem:pohozaev-limit} then yields
\[
\liminf_{r\downarrow0}r^{(n-2)/2}\bar u(r)>0.
\]
After decreasing \(r_1\), there is \(c>0\) such that
\(r^{\frac{n-2}{2}}\bar u(r)\ge c\) whenever \(0<r<r_1\).  By
\eqref{eq:annular-harnack},
\[
 u(r\theta)\ge C_{\rm H}^{-1}\bar u(r)
 \ge cr^{-\frac{n-2}{2}},
 \qquad 0<r<r_1,\ \theta\in\Sph^{n-1}.
\]
Undoing the fixed conformal change, coordinate change, and rescaling from
Section  \ref{sec:pohozaev-dichotomy} changes only \(c\) and \(r_1\). This proves
\eqref{eq:lower-bound} in the original metric.
\end{proof}

\par
\subsection{Fowler asymptotics}

We now complete the proof of Theorem~\ref{thm:main}.

With \(t=-\log r\), write a Fowler solution as
\begin{equation}\label{eq:fowler-transformation}
 t=-\log r,\qquad u_F(r)=r^{-\frac{n-2}{2}}v_F(t).
\end{equation}
Here \(v_F\) is either a positive constant solution or a positive
nonconstant periodic solution of \eqref{eq:fowler-ode}.

\begin{proposition}\label{prop:fowler-asymptotics}
If a solution $u$ of \eqref{eq:yamabe} satisfies both \eqref{eq:critical-upper} and \eqref{eq:lower-bound}, then in any geodesic normal coordinate $x$ of $g$, there exist a Fowler
solution \(u_F\) and a constant \(\alpha>0\) such that, 
\begin{equation}\label{eq:fowler-asymptotic}
 u(x)=u_F(|x|)\bigl(1+O(|x|^\alpha)\bigr)
 \quad\text{as }x\to0.
\end{equation}
Furthermore, there exist $a\in \mathbb{R}^n$ and
some $\beta>1$ such that
\begin{equation}\label{eq:fowler-asymptotic-order1}
 u(x)=u_F(|x|)
+
(a\cdot x)\Bigl[|x|\,u_F'(|x|)+(n-2)u_F(|x|)\Bigr]
+
O(|x|^\beta)u_F(|x|)
 \quad\text{as }x\to0.
\end{equation}
\end{proposition}

\par
\begin{proof}
As commented earlier, \eqref{eq:fowler-asymptotic} has been established in Marques~\cite[Theorem 8]{f-mar}, or Taliaferro--Zhang~\cite[Theorem 1]{TZ} supplemented by \mbox{\cite[Theorem  3]{HanLiTeixeira}}. \eqref{eq:fowler-asymptotic-order1} is an equivalent formulation of 
 Marques~\cite[Theorem 10]{f-mar}.
Marques~\cite[Theorem 8]{f-mar} adapts the proof of \cite[Proposition 5]{KMPS}, while
Marques~\cite[Theorem 10]{f-mar} is proved by a reduction to Korevaar-Mazzeo-Pacard-Schoen~\cite[Corollary 1]{KMPS}. 

For the reader's convenience, we sketch below a proof of \eqref{eq:fowler-asymptotic} following \cite{TZ}. In any geodesic normal coordinate $x$ of $g$, 
\eqref{eq:critical-upper} and \eqref{eq:lower-bound} imply that there are
\(c,C>0\) and \(r_1\in(0,r_0)\) such that
\begin{equation}\label{eq:fowler-two-sided-bound}
 c|x|^{-\frac{n-2}{2}}\le u(x)\le C|x|^{-\frac{n-2}{2}},
 \quad 0<|x|<r_1.
\end{equation}
Choose \(r_2\in(0,\frac12\min\{r_0,r_1\})\) sufficiently small. Applying Lemma~\ref{lem:radial-harnack} with \(k=2\), we obtain
\begin{equation}\label{eq:fowler-derivative-bounds}
 |\nabla_x^ku(x)|\le C_k|x|^{-\frac{n-2}{2}-k},
 \qquad 1\le k\le2,\quad 0<|x|<r_2.
\end{equation}
Since \(\det g=1\),
\[
 (\Delta-L_g)u
 =-\sum_{i,j=1}^n\partial_i
 \bigl((g^{ij}-\delta^{ij})\partial_ju\bigr)+c(n)\operatorname{Scal}_gu.
\]
Taylor's theorem gives
\(|g^{ij}-\delta^{ij}|\le C|x|^2\) and
\(|\nabla_xg^{ij}|\le C|x|\). Together with
\eqref{eq:normal-orders} and \eqref{eq:fowler-derivative-bounds}, these imply
 
\begin{equation}\label{eq:fowler-metric-error}
 |(\Delta-L_g)u(x)|
 \le C\bigl(|x|^2|\nabla_x^2u|
             +|x||\nabla_xu|+|x|^2u\bigr)
 \le C|x|^{-\frac{n-2}{2}}.
\end{equation}

Set 
\[
 \widetilde u
 :=\bigl(n(n-2)\bigr)^{1/(p-1)}u.
\]
It follows from equations  \eqref{eq:fowler-two-sided-bound} and
\eqref{eq:fowler-metric-error} that
\begin{equation}\label{eq:normalized-euclidean-ratio}
 \frac{-\Delta\widetilde u(x)}{\widetilde u(x)^p}
 =1-\frac{(\Delta-L_g)u(x)}{n(n-2)u(x)^p}
 =1+O(|x|^2).
\end{equation}
Fix \(\alpha\in(0,1)\) and set
\(w(x)=r_2^{(n-2)/2}\widetilde u(r_2 x)\). Then \(w\in C^2(B_1\setminus\{0\})\) satisfying
\[c|x|^{-(n-2)/2}\le w(x)\le C|x|^{-(n-2)/2}\]
and 
\[
\frac{-\Delta w(x)}{w(x)^p}
 =\left(\frac{-\Delta\widetilde u}{\widetilde u^p}\right)(r_2x)
 =1+O(|x|^\alpha).
\]

By \eqref{eq:normalized-euclidean-ratio}, the theorem of Taliaferro--Zhang
\cite[Theorem  1]{TZ} applies and gives a fixed positive radial profile $w_F(x)$ on \(\R^n\setminus\{0\}\) of \(-\Delta w_F = w_F^p\)
with relative error \(O(|x|^\alpha)\). For the convergence step in
their equation  (2.25), see the discussion in
Han--Xiong--Zhang  \cite[proof of Theorem  1.4]{HanXiongZhang} and its
reference to Han--Li--Teixeira  \cite[Theorem  3]{HanLiTeixeira}.
Undoing the dilation gives a positive radial solution
\(\widetilde u_F(x) = r_2^{-\frac{n-2}{2}}w_F(r_2^{-1}x)\) of
\(
 -\Delta\widetilde u_F=\widetilde u_F^p
\) on \(\R^n\setminus\{0\}\)
such that
\[
 \widetilde u(x)=\widetilde u_F(|x|)
 \bigl(1+O(|x|^\alpha)\bigr)
 \quad\text{as }x\to0.
\]
Set
\[
 u_F=\bigl(n(n-2)\bigr)^{-1/(p-1)}\widetilde u_F.
\]
Then \(-\Delta u_F=n(n-2)u_F^p\), and
\eqref{eq:fowler-asymptotic} follows. By
\eqref{eq:fowler-two-sided-bound},
\[
 0<c\le v_F(t)\le C
 \quad\text{for all sufficiently large }t.
\]
Thus the singularity of \(u_F\) is nonremovable. The radial
classification recalled in the Introduction shows that \(u_F\) is a
Fowler solution.
\end{proof}

\par


\begin{proof}[Proof of Theorem  \ref{thm:main}]
 As discussed in the proof of Theorem  \ref{thm:lower-bound}, it suffices to work in a geodesic normal coordinates of $g$ now. 
By Theorem  \ref{thm:upper-bound}, the critical upper bound \eqref{eq:critical-upper} holds. If the
singularity is nonremovable, Theorem  \ref{thm:lower-bound} and
Proposition  \ref{prop:fowler-asymptotics} give the lower bound \eqref{eq:lower-bound}, 
\eqref{eq:main-fowler-asymptotic} and
\eqref{eq:main-fowler-asymptotic-order1}.
If the singularity is removable, elliptic regularity for the original
equation with the smooth metric \(g_0\) gives a smooth extension.
\end{proof}


\appendix

\refstepcounter{section}\label{app:quadratic-estimate}
\section*{Appendix A. The Pohozaev quadratic form}
\addcontentsline{toc}{section}{Appendix A. The Pohozaev quadratic form}

{
The purpose of this appendix is to prove the signed estimate for \(\int_{B_{R_k}}Z_kE_k\,\dd y\)
used in Section  3.  We first recall the correctors of
Khuri--Marques--Schoen and their quadratic form on Euclidean space, and then
establish the required positivity.  Two
comparison arguments pass from the Euclidean form to a finite ball and then
to the actual metric.  The final subsection combines these results to prove
Lemma  \ref{lem:signed-residual}.
}

\subsection*{Correctors and the form on Euclidean space}

For $H^{(m)}\in\mathcal V_m$ and every
$a\in\{1,\ldots,n\}$, integration by parts on the sphere gives
{
\begin{equation}\label{eq:appendix-moments}
 \int_{\Sph^{n-1}}\sum_{i,j=1}^n
 \partial_i\partial_jH_{ij}^{(m)}\dd\theta=0,
 \qquad
 \int_{\Sph^{n-1}}y^a\sum_{i,j=1}^n
 \partial_i\partial_jH_{ij}^{(m)}\dd\theta=0.
\end{equation}
}
For a homogeneous polynomial \(P_\ell\) of degree \(\ell\ge0\), define
{
\begin{equation*}
 \mathcal F(P_\ell)
 =\operatorname{span}\Bigl\{
 |y|^{2j}\Delta^kP_\ell:
 0\le k\le\bigl\lfloor\ell/2\bigr\rfloor,
 \ 0\le j\le k+2\Bigr\}.
\end{equation*}
}
The following result is due to
Khuri--Marques--Schoen \cite[Proposition  4.1]{KMS}.

 \begin{proposition}\label{prop:kms-corrector}
Let \(P_\ell\) be a homogeneous polynomial of degree \(\ell<n-4\),
orthogonal on \(\Sph^{n-1}\) to \(1,y^1,\ldots,y^n\). There is a unique
\(\Gamma(P_\ell)\in\mathcal F(P_\ell)\) such that
{
\begin{equation*}
 \psi(P_\ell)=\Gamma(P_\ell)(1+|y|^2)^{-n/2}
\end{equation*}
}
 satisfies
{
\begin{equation*}
 \Delta\psi(P_\ell)+n(n+2)U^{4/(n-2)}\psi(P_\ell)=P_\ell U
 \qquad\text{in }\R^n.
\end{equation*}
}
Moreover, \(\deg\Gamma(P_\ell)\le\ell+4\),
\(\Gamma(P_\ell)(0)=0\), and \(\nabla\Gamma(P_\ell)(0)=0\).
\end{proposition}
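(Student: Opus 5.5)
The plan is to reduce to the radial variable by decomposing into spherical harmonics and then solve an ODE for each component. First write $P_\ell$ in its harmonic decomposition
\[
 P_\ell=\sum_{k=0}^{\lfloor\ell/2\rfloor}|y|^{2k}h_{\ell-2k},
\]
with each $h_{\ell-2k}$ a homogeneous harmonic polynomial of degree $\ell-2k$. The orthogonality of $P_\ell$ to $1,y^1,\dots,y^n$ on $\Sph^{n-1}$ means the components of degree $0$ and $1$ vanish, so every surviving $h_q$ has $q\ge2$. The span of $\{|y|^{2j}\Delta^kP_\ell\}$ contains exactly the products $|y|^{2i}h_q$ that appear, with the factor $|y|^{2i}$ limited by the range of $j$. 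By linearity it therefore suffices to treat a single source $|y|^{2a}h_q$. For this I make the ansatz $\psi=f(|y|^2)\,h_q(y)\,(1+|y|^2)^{-n/2}$, where $f$ is a polynomial.

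Substituting the ansatz, the operator $\Delta+n(n+2)U^{4/(n-2)}$ acting on $f(r^2)h_q(1+r^2)^{-n/2}$ becomes, after factoring out $h_q(1+r^2)^{-n/2-2}$, a second-order linear operator $\mathcal A_q$ on $f$ with polynomial coefficients in $s=r^2$. The right-hand side $s^ah_qU$ becomes $s^a(1+s)^{2}$ times the same factor. Next compute how $\mathcal A_q$ acts on the monomials $s^i$. It raises the degree by at most one, and its top coefficient has the form $c_{i,q}s^{i+1}$ with $c_{i,q}$ proportional to $(2i+q+2-n)(2i+q+4-n)$ or a similar product. Because $\ell<n-4$, these coefficients do not vanish for the degrees that occur, $2i+q\le\ell+2<n-2$. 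Triangular back-substitution from the top degree down then yields a polynomial $f$ of degree at most $a+2$. Consequently $\Gamma=f(|y|^2)h_q$ has degree at most $q+2a+4=\ell+4$, and it lies in $\mathcal F(P_\ell)$ after rewriting $h_q$ through $\Delta^kP_\ell$.

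The normalisation comes from the degree condition. Since $q\ge2$, $\Gamma$ vanishes to second order at the origin, so $\Gamma(0)=0$ and $\nabla\Gamma(0)=0$ hold automatically. For uniqueness, note that the difference of two solutions is a kernel element of the form $\Gamma'(1+|y|^2)^{-n/2}$ with $\deg\Gamma'\le\ell+4<n$. Such a function decays like $|y|^{\ell+4-n}$ and lies in $\dot H^1$ when $\ell+4<n-2$, or else can be handled by the ODE directly. By the kernel classification of Bianchi--Egnell, it is then a combination of $Z_0$ and the $\partial_iU$, which have angular degrees $0$ and $1$. Components of angular degree $q\ge2$ must therefore vanish, and this gives uniqueness.

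The main obstacle I expect is the nondegeneracy of the triangular recursion: I need to confirm that the coefficients $c_{i,q}$ never vanish in the range $2i+q\le\ell+2$, $\ell<n-4$. This is also the source of the hypothesis $\ell<n-4$. My first attempt would be to check this by computing $\mathcal A_q(s^i)$ explicitly.
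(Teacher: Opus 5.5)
The paper gives no proof of this statement; it quotes it from Khuri--Marques--Schoen. Your route is the natural one: split $P_\ell=\sum_a|y|^{2a}h_{\ell-2a}$, observe that $\mathcal F(P_\ell)$ is the span of the $|y|^{2i}h_{\ell-2a}$ with $0\le i\le a+2$, and solve a triangular system mode by mode. However, the step you flag as the crux is misdescribed. As written, your existence argument never uses the orthogonality hypothesis, and that hypothesis is exactly what makes the recursion solvable.

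The clean reduction is
\[
 \bigl(\Delta+n(n+2)U^{4/(n-2)}\bigr)\bigl(\Gamma(1+|y|^2)^{-n/2}\bigr)
 =(1+|y|^2)^{-\frac n2-1}\,T\Gamma,
 \qquad
 T\Gamma:=(1+|y|^2)\Delta\Gamma-2n\,y\cdot\nabla\Gamma+2n\Gamma,
\]
with $P_\ell U=(1+|y|^2)^{-\frac n2-1}(1+|y|^2)^2P_\ell$. With your factor $(1+s)^{-n/2-2}$, the right side is $s^a(1+s)^3$, not $s^a(1+s)^2$, and your degree-raising operator is $(1+s)T$. Unless you divide out the common factor $1+s$, top-down back-substitution leaves the constant-term equation as an unchecked compatibility condition. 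On one mode,
\[
 T\bigl(|y|^{2i}h_q\bigr)=2(2i-n)(i+q-1)\,|y|^{2i}h_q+2i(2i+2q+n-2)\,|y|^{2i-2}h_q .
\]
So $T$ is degree-nonincreasing and triangular, with diagonal entries $2(2i-n)(i+q-1)$, not products like $(2i+q+2-n)(2i+q+4-n)$.

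These entries vanish exactly when $2i=n$ or $(i,q)\in\{(0,1),(1,0)\}$. The last two cases are the Jacobi fields $\partial_lU$ and $Z_0$, and for $q\in\{0,1\}$ the recursion is in general obstructed (the Fredholm condition against those fields). Solvability therefore comes from $q\ge2$ together with $2i\le 2a+4\le\ell+2<n$. The hypothesis $\ell<n-4$ is more than this needs; its main role is to make $\psi=O(|y|^{\ell+4-n})$ decay.

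Your identification of $\mathcal F(P_\ell)$ also needs its short argument. In each homogeneous degree, the $|y|^{2j}\Delta^kP_\ell$ are a triangular combination of the $|y|^{2i}h_{\ell-2a}$, with nonzero diagonal coming from $\Delta^k(|y|^{2k}h_{\ell-2k})$.

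Your uniqueness argument rests on a false claim. Since $|\nabla\psi'|\sim|y|^{\ell+3-n}$, it is square integrable at infinity only when $2\ell+6<n$. For most of the admissible range (e.g.\ $\ell=n-5$), $\psi'\notin\dot H^1(\R^n)$ and Bianchi--Egnell does not apply.

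You do not need it. A difference $\Gamma'\in\mathcal F(P_\ell)$ has the form $\sum_a f_a(|y|^2)h_{\ell-2a}$ with $\deg f_a\le a+2$, and $T$ preserves each mode. Spherical harmonics of distinct degrees are independent over radial functions, so $T\Gamma'=0$ forces $T(f_ah_{\ell-2a})=0$, and the nonzero diagonal gives $f_a=0$.

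A function-theoretic alternative is to lift $\psi'/U=O(|y|^{\ell+2})$ to $\Sph^n$. The singularity at the pole is removable for $-\Delta_{\Sph^n}-n$ as soon as $\ell+2<n-2$, which is where $\ell<n-4$ would genuinely enter.
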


For a symmetric polynomial matrix \(H\), write
{
\[
 (\delta H)_j=\sum_{i=1}^n\partial_iH_{ij},\qquad
 \delta^2H=\sum_{i,j=1}^n\partial_i\partial_jH_{ij}.
\]
}
For \(H^{(m)}\in\mathcal V_m\), \(2\le m\le n-4\),
\eqref{eq:appendix-moments} shows that
\(c(n)\delta^2H^{(m)}\) satisfies the orthogonality conditions in
Proposition  \ref{prop:kms-corrector}. Define
{
\[
 Z(H^{(m)})
 =\Gamma\bigl(c(n)\delta^2H^{(m)}\bigr)(1+|y|^2)^{-n/2}.
\]
}
Then
{
\[
 \bigl(\Delta+n(n+2)U^{4/(n-2)}\bigr) Z(H^{(m)})
 =c(n)\delta^2H^{(m)}U.
\]
}
For \(m\ge4\), this is \(\psi_{m,1}(H^{(m)})\) from
\eqref{eq:upper-polynomial-correction}. For \(m=2,3\),
\eqref{eq:appendix-moments} gives \(\delta^2H^{(m)}=0\); we extend the
notation by setting
\[
 \psi_{m,\lambda}(H^{(m)})=Z(H^{(m)})=0.
\]
For a polynomial \(\Gamma(P_\ell)\), let
\(\Gamma^{(j)}(P_\ell)\) denote its homogeneous part of degree \(j\).
Set
{
\begin{equation*}
 B(H^{(s)},H^{(t)})
 =\int_{\Sph^{n-1}}
 \Biggl\{-\frac12\sum_{i=1}^n
                   (\delta H^{(s)})_i(\delta H^{(t)})_i
        +\frac14\sum_{i,j,\ell=1}^n
                   \partial_\ell H_{ij}^{(s)}
                   \partial_\ell H_{ij}^{(t)}\Biggr\}\dd\theta
\end{equation*}
}
 and, for finite $R$,
{
\begin{equation}\label{eq:truncated-radial-coefficient}
 c_r(R)=-\int_0^R
 \frac{(1-\rho^2)\rho^{r+n-3}}
      {(1+\rho^2)^{n-1}}\dd\rho.
\end{equation}
}
 When $r<n-2$, we also write $c_r=c_r(\infty)$.
Khuri--Marques--Schoen define the Pohozaev quadratic form
$I_\varepsilon^{(n)}$ on $\mathcal V_{\le d}$ in
\cite[Appendix  A]{KMS} by
{
\begin{equation}\label{eq:kms-pohozaev-form}
 I_\varepsilon^{(n)}
 =I_{1,\varepsilon}^{(n)}+I_{2,\varepsilon}^{(n)},
\end{equation}
}
{
If \(n\) is odd, then
\[
\begin{aligned}
 I_{1,\varepsilon}^{(n)}(H,H)
 &=\frac{n-2}{2}\sum_{s,t=2}^d
 \varepsilon^{s+t}c_{s+t}B(H^{(s)},H^{(t)}),\\
 I_{2,\varepsilon}^{(n)}(H,H)
 &=-\sum_{s,t=4}^d s\varepsilon^{s+t}
 \int_{\R^n}
 \delta^2H^{(s)}Z(H^{(t)})U\,\dd y.
\end{aligned}
\]
If \(n\) is even, then
\[
\begin{aligned}
 I_{1,\varepsilon}^{(n)}(H,H)
 &=\frac{n-2}{2}\sum_{s,t=2}^{d-1}
 \varepsilon^{s+t}c_{s+t}B(H^{(s)},H^{(t)})
 +\frac{n-2}{2}\varepsilon^{n-2}|\log\varepsilon|
 B(H^{(d)},H^{(d)}),\\
 I_{2,\varepsilon}^{(n)}(H,H)
 &=-\sum_{s,t=4}^{d-1}s\varepsilon^{s+t}
 \int_{\R^n}
 \delta^2H^{(s)}Z(H^{(t)})U\,\dd y\\
 &\quad-d\varepsilon^{n-2}|\log\varepsilon|
 \int_{\Sph^{n-1}}
 \delta^2H^{(d)}
 \Gamma^{(d+2)}\Bigl(
 c(n)\delta^2H^{(d)}\Bigr)\,\dd\theta.
\end{aligned}
\]
}

When $d=3$,  {by the moment identities,}
\(\delta^2H^{(d)}=0\), and the last term
in $I_{2,\varepsilon}^{(n)}$ is zero.
For \(0<\varepsilon<e^{-1}\), define the weighted norm
{
\begin{equation*}
 \|H\|_{\varepsilon,*}^2
 =\sum_{m=2}^d\varepsilon^{2m}|\log\varepsilon|^{\theta_m}
 |H^{(m)}|^2.
\end{equation*}
}
\subsection*{Positivity of the quadratic form}

 \begin{proposition}\label{prop:kms-algebraic-positivity}
If $7\le n\le24$, there is $b_n>0$ such that
{
\[
 I_\varepsilon^{(n)}(H,H)
 \ge b_n\|H\|_{\varepsilon,*}^2
\]
}
for every $H\in\mathcal V_{\le d}$ and every
$0<\varepsilon<e^{-1}$.
If $25\le n\le29$, the Pohozaev quadratic form is
indefinite on $\mathcal V_{\le d}$, but its restriction to
$\mathcal V_{\le6}$ is positive: there is $b_n^{(6)}>0$ such that
{
\[
 I_\varepsilon^{(n)}(H,H)
 \ge b_n^{(6)}\sum_{m=2}^6\varepsilon^{2m}|H^{(m)}|^2
\]
}
 for every $H\in\mathcal V_{\le 6}$ and every $\varepsilon>0$.
\end{proposition}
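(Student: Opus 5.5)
The plan is to reduce both claims to finite-dimensional linear algebra, one degree block at a time, using the structure of \(I_\varepsilon^{(n)}\) in \eqref{eq:kms-pohozaev-form}.

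First, decouple the degrees. By \eqref{eq:appendix-moments}, \(\delta^2H^{(m)}=0\) for \(m=2,3\), so \(I_{2,\varepsilon}^{(n)}\) involves only \(m\ge4\). I would decompose each \(H^{(m)}\) into the \(O(n)\)-irreducible pieces of \(\mathcal V_m\), labeled by the spherical harmonic degree of \(\delta^2H^{(m)}\) and of the tensor components. For \(s\ne t\), the cross terms \(B(H^{(s)},H^{(t)})\) and \(\int\delta^2H^{(s)}Z(H^{(t)})U\) vanish unless the harmonic types match and \(s\equiv t \pmod 2\). Even then they need not vanish, so I would not claim exact diagonality. Instead I plan to follow Khuri--Marques--Schoen \cite[Appendix A]{KMS}: write the form as \(\sum\varepsilon^{s+t}(\cdots)\), apply Cauchy--Schwarz with the weights \(\varepsilon^{2m}\), and reduce positivity to the statement that the diagonal coefficients dominate.

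Concretely, for each degree \(m\) and each irreducible type, the diagonal coefficient is
\[
\frac{n-2}{2}c_{2m}B(H^{(m)},H^{(m)})-m\int\delta^2H^{(m)}Z(H^{(m)})U.
\]
Using the explicit corrector \(\Gamma\) of Proposition~\ref{prop:kms-corrector}, this coefficient is a rational function of \(n\) times a norm. The off-diagonal coefficients carry extra powers of \(\varepsilon\), namely \(\varepsilon^{s+t}\) with \(s+t>2\min(s,t)\). However, these powers are not small relative to the geometric mean \(\varepsilon^{s}\varepsilon^{t}\), so they cannot be absorbed by smallness in \(\varepsilon\); they must be absorbed through a matrix positivity condition, uniform in \(\varepsilon\). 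For \(7\le n\le24\), the first claim is exactly the Khuri--Marques--Schoen positivity theorem. I would cite it, including the even-dimensional logarithmic term at \(m=d\) weighted by \(|\log\varepsilon|^{\theta_d}\), which matches \(\|\cdot\|_{\varepsilon,*}\).

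For \(25\le n\le29\), the steps are as follows.

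(i) \textbf{Indefiniteness.} Cite Brendle and Brendle--Marques \cite{Bre,Bre-Mar}, or equivalently the failure of the KMS diagonal coefficient in some degree \(m\ge7\) for these \(n\). I would exhibit an \(H^{(m)}\) with a negative diagonal coefficient and test with \(\varepsilon\)-concentration on that degree.

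(ii) \textbf{Positivity on \(\mathcal V_{\le6}\).} Restrict to degrees \(2\le m\le6\). Here every \(s+t\le12<n-2\), so all \(c_{s+t}\) are finite and there is no logarithmic term. I would compute the finite Gram matrices explicitly, degree by degree and type by type, using KMS's formulas for \(\Gamma\) and \(c_r\) (the latter are Beta-function values). Then I would verify positivity of the block matrix \(\bigl(c_{s+t}B_{st}-\text{(corrector terms)}\bigr)_{s,t\le6}\) for each of the five dimensions.

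(iii) \textbf{Homogeneity.} Since every entry scales as \(\varepsilon^{s+t}\), replacing \(H^{(m)}\) by \(\varepsilon^mH^{(m)}\) removes \(\varepsilon\) entirely. Uniformity in \(\varepsilon>0\) then follows from positive definiteness of a fixed finite quadratic form, and \(b_n^{(6)}\) is its smallest eigenvalue relative to the coefficient norm.

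The main obstacle is step (ii): the explicit, error-free evaluation of the corrector pairings \(\int\delta^2H^{(s)}Z(H^{(t)})U\) for \(m\le6\) and the resulting finite determinant checks. I would organize these by harmonic type, so that each block is at most \(3\times3\) (degrees of one parity), and check its leading minors as rational functions of \(n\) on \(\{25,\dots,29\}\).
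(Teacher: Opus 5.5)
Your overall route matches the paper's. You cite Khuri--Marques--Schoen (Prop.~A.4) for $7\le n\le 24$. For $25\le n\le 29$ you split $\mathcal V_{\le 6}$ into orthogonal blocks by harmonic type, check leading minors as rational functions of $n$, and remove $\varepsilon$ via $H^{(m)}\mapsto\varepsilon^mH^{(m)}$. Each type occurs once per degree, so a block couples degrees of one parity and has order at most three. The paper carries out step (ii) by importing the block entries from Khuri--Marques--Schoen (Lemma~A.6 and (A.11)). The blocks are $W_{\mathrm{odd}}$, $W_{\mathrm{even}}$, $D_a$ for $2\le a\le 6$, and $H_a$ for $2\le a\le 4$. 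The paper writes every pivot in closed form, so the signs reduce to polynomial inequalities on $25\le n\le 29$, for example $n^2-54n+152<0$, $n^2-34n+144<0$ and $n^2-86n+408<0$. You identify this computation as the main obstacle but do not perform it, so your (ii) is an outline of the paper's proof rather than a proof. In particular, nothing in it locates the threshold $29$. That threshold comes from the factor $n^2-34n+144$ in the third pivot $p_3(D_2)$, whose larger root is $17+\sqrt{145}\approx 29.04$.

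The one step that would fail is your alternative in (i). A single-degree test $H=H^{(m)}$ gives $\varepsilon^{2m}$ times a diagonal coefficient. However, the diagonal entries $s^2\widehat c_{2s}$ of the $W$-blocks are positive in every degree below $(n-2)/2$. So are the diagonal entries $\widehat c_{2m}\bigl(r^2+\tfrac a4(n+4r+2a-2)\bigr)$ of the $D_a$-blocks, because $\widehat c_{2r}=\frac{4r}{n-2r-2}\widehat b_{2r}>0$ when $2r<n-2$. Loss of positivity is instead a cross-degree effect inside one block. At $n=30$, for instance, every entry of $D_2$ and its first two pivots are positive, and only $p_3(D_2)$ is negative. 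Indefiniteness must therefore be witnessed by a combination of several degrees of one type. ``$\varepsilon$-concentration'' cannot help, since by your own step (iii) the form is unchanged under $H^{(m)}\mapsto\varepsilon^mH^{(m)}$, apart from the logarithmic terms in degree $d$ for even $n$. Also, Brendle and Brendle--Marques prove non-compactness, which is not literally indefiniteness of $I_\varepsilon^{(n)}$. The statement you need is Khuri--Marques--Schoen Prop.~A.8, which is what the paper cites.
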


\begin{proof}
For $7\le n\le24$, Khuri--Marques--Schoen
\cite[Proposition  A.4]{KMS} prove
 {
\begin{equation*}
 I_\varepsilon^{(n)}(H,H)
 \ge \beta\sum_{m=2}^d
 \varepsilon^{2m}|\log\varepsilon|^{\theta_m}
 \int_{\Sph^{n-1}}\sum_{i,j=1}^n
 \bigl(H_{ij}^{(m)}\bigr)^2\dd\theta \ge b_n\|H\|_{\varepsilon,*}^2.
\end{equation*}
}
For $n\ge25$, the full form
$I_\varepsilon^{(n)}$ is indefinite by Khuri--Marques--Schoen
\cite[Proposition  A.8]{KMS}.

 Define
{
\[
 b_{2r}=\int_0^\infty
 \frac{s^{2r+n-3}}{(1+s^2)^{n-1}}\,\dd s,
 \qquad 0\le2r<n-2,
\]
}
Then $b_0>0$.

For $25\le n\le29$, we prove positivity on
$\mathcal V_{\le6}$ by examining the blocks in the orthogonal decomposition
used by Khuri--Marques--Schoen.  Normalize each block by $b_0$ and by the squared norm
of its underlying harmonic tensor or polynomial.  In Case  1, also divide
by $(n-2)/8$; in Case  2, also divide by $(n-2)/2$.  Put
{
\begin{equation}\label{eq:bc-definition}
 \widehat b_0=1,\qquad
 \widehat b_{2r}=\prod_{\ell=1}^r\frac{n+2\ell-4}{n-2\ell},
 \qquad
 \widehat c_{2r}=\frac{4r}{n-2r-2}\widehat b_{2r}.
\end{equation}
}
These constants satisfy
\[
 \widehat b_{2r}=\frac{b_{2r}}{b_0},\qquad
 \widehat c_{2r}=\frac{c_{2r}}{b_0}.
\]
The hats distinguish these normalized constants from the radial integrals
$c_r$ in \eqref{eq:truncated-radial-coefficient}.  Since only indices at
most $14$ occur, all
denominators in \eqref{eq:bc-definition} are positive in this range.

By Khuri--Marques--Schoen
\cite[Lemma  A.6 and the proof of Proposition  A.4]{KMS}, the space
decomposes orthogonally into three families of blocks.  The Case  1 blocks are
{
\[
 M^{W,\mathrm{odd}}=(st\widehat c_{s+t})_{s,t\in\{3,5\}},
 \qquad
 M^{W,\mathrm{even}}=(st\widehat c_{s+t})_{s,t\in\{4,6\}}.
\]
}
The Case  2 blocks are
{
\[
 M^{D,a}_{rt}=\widehat c_{2a+2r+2t}
 \bigg(rt+\frac a4(n+2r+2t+2a-2)\bigg),
\]
}
where $2\le a\le6$ and
$0\le r,t\le\lfloor(6-a)/2\rfloor$.
For Case  3, let $2\le a\le4$ and
$1\le r,t\le\lfloor(6-a)/2\rfloor$, and set
 {
\[
 \alpha_a=\frac{n-2}{n-1}a(a-1)(n+a-1)(n+a-2),
\]
}
 {
\[
 A_{a,r+1}
 =(a-1)\bigg\{1-\frac{n-2}{2(n-1)}(n+a-1)\bigg\}
 -\frac{r+1}{2}(n+2r+2a-4).
\]
}
For  $k=a+2r$, define
{
\begin{align*}
 \Gamma(k,r,r+1)
 &=-\frac1{(2k-2r)(n-2r-2)},\\
 \Gamma(k,r,r)
 &=-\frac{2-2(r+1)(n+2k-2r)\Gamma(k,r,r+1)}
 {(2k-2r-2)(n-2r)},\\
 \Gamma(k,r,r-1)
 &=-\frac{1-2r(n+2k-2r-2)\Gamma(k,r,r)}
 {(2k-2r-4)(n-2r+2)},
\end{align*}
}
 and, for $0\le\ell\le r-2$,
{
\[
 \Gamma(k,r,\ell)
 =\frac{2(\ell+1)(n+2\ell+2k-4r)}
 {(2k+2\ell-4r-2)(n-2\ell)}\Gamma(k,r,\ell+1).
\]
}
Then
{
\begin{equation}\label{eq:case-three-appendix}
\begin{aligned}
 M^{H,a}_{rt}
 &=\frac{n-2}{2}\widehat c_{2a+2r+2t}\alpha_a
 \bigg\{A_{a,r+1}
 +\frac{(n+2a+2r+2t-2)(a+2r)}4\bigg\}\\
 &\quad-\frac{n-2}{4(n-1)}\alpha_a^2(a+r+t)
 \sum_{\ell=0}^{r+1}\Gamma(a+2r,r,\ell)
 \widehat b_{2a+2t+2\ell}.
\end{aligned}
\end{equation}
}
The  symmetry in $r,t$ follows from
Khuri--Marques--Schoen \cite[(A.11)]{KMS}.
The degree two $W$ component vanishes.  The block sizes are $2,2$ in
Case  1, $3,2,2,1,1$ in Case  2 for $a=2,\ldots,6$, and $2,1,1$ in
Case  3 for $a=2,3,4$.  By Khuri--Marques--Schoen
\cite[Lemma  A.6]{KMS}, this list exhausts the orthogonal decomposition
of $\mathcal V_{\le6}$, up to multiplicity.

For a symmetric block $M$, let $M^{[j]}$ be its leading $j\times j$
principal submatrix and put
{
\begin{equation*}
 p_j(M)=\frac{\det M^{[j]}}{\det M^{[j-1]}},
 \qquad \det M^{[0]}=1.
\end{equation*}
}
 Every block has order at most three,
so it suffices to record the following formulas obtained by completing squares.
For a symmetric matrix $M=(m_{ij})$ of order two, with $p_1=m_{11}\ne0$,
{
\[
 \sum_{i,j=1}^2m_{ij}x_ix_j
 =p_1\bigl(x_1+m_{12}x_2/p_1\bigr)^2+p_2x_2^2,
 \qquad p_2=m_{22}-m_{12}^2/p_1.
\]
}
 For a symmetric matrix of order three, define $p_1,p_2$ in the same way.
If $p_1p_2\ne0$, then
{
\[
\begin{aligned}
 \sum_{i,j=1}^3m_{ij}x_ix_j
 &=p_1\bigl(x_1+m_{12}x_2/p_1+m_{13}x_3/p_1\bigr)^2\\
 &\quad+p_2\bigl(x_2+(m_{23}-m_{12}m_{13}/p_1)x_3/p_2\bigr)^2
       +p_3x_3^2,
\end{aligned}
\]
}
 where
{
\[
 p_3=m_{33}-\frac{m_{13}^2}{p_1}
       -\frac{(m_{23}-m_{12}m_{13}/p_1)^2}{p_2}
     =\frac{\det M}{p_1p_2}.
\]
}
Thus the pivots $p_j(M)$ are the coefficients of the successive squares.

For brevity, write $W_{\mathrm{odd}}=M^{W,\mathrm{odd}}$,
$W_{\mathrm{even}}=M^{W,\mathrm{even}}$, $D_a=M^{D,a}$, and
$H_a=M^{H,a}$.  Direct elimination in the two Case  1 blocks gives
{
\begin{align*}
 p_1(W_{\mathrm{odd}})
 &=\frac{108n(n+2)}{(n-8)(n-6)(n-4)},\\
 p_2(W_{\mathrm{odd}})
 &=\frac{100n(66-n)(n-2)(n+2)(n+4)}
 {3(n-12)(n-10)^2(n-8)(n-6)(n-4)},\\
 p_1(W_{\mathrm{even}})
 &=\frac{256n(n+2)(n+4)}
 {(n-10)(n-8)(n-6)(n-4)},\\
 p_2(W_{\mathrm{even}})
 &=\frac{36n(102-n)(n-2)(n+2)(n+4)(n+6)}
 {(n-14)(n-12)^2(n-10)(n-8)(n-6)(n-4)}.
\end{align*}
}
 All four quantities are positive when $25\le n\le29$.

 In Case  2, the first pivot has the common expression
{
\begin{equation*}
 p_1(D_a)=\frac{a^2(n+2a-2)}{n-2a-2}
 \prod_{\ell=1}^a\frac{n+2\ell-4}{n-2\ell}>0,
 \qquad 2\le a\le6.
\end{equation*}
}
 The remaining pivots are
{
\begin{align*}
 p_2(D_2)
 &=-\frac{n(n+2)(n+4)(n^2-54n+152)}
 {(n-10)(n-8)^2(n-6)(n-4)},\\
 p_3(D_2)
 &=\frac{1536n(n-3)(n-2)(n+2)(n+4)(n+6)(n^2-34n+144)}
 {(n-14)(n-12)^2(n-10)^2(n-8)(n-6)(n-4)(n^2-54n+152)},\\
 p_2(D_3)
 &=-\frac{n(n+2)(n+4)(n+6)(n^2-86n+248)}
 {(n-12)(n-10)^2(n-8)(n-6)(n-4)},\\
 p_2(D_4)
 &=-\frac{n(n+2)(n+4)(n+6)(n+8)(n^2-126n+368)}
 {(n-14)(n-12)^2(n-10)(n-8)(n-6)(n-4)}.
\end{align*}
}
 On the interval $25\le n\le29$, the four quadratic factors in these
pivots satisfy
{
\begin{equation*}
\begin{gathered}
 n^2-54n+152\le-573,
 \qquad n^2-34n+144\le-1,\\
 n^2-86n+248\le-1277,
 \qquad n^2-126n+368\le-2157.
\end{gathered}
\end{equation*}
}
Consequently, all Case  2 pivots are positive.

Applying the same elimination to
\eqref{eq:case-three-appendix} gives, in Case  3,
{
\begin{align*}
 p_1(H_2)
 &=\frac{16n^2(n-3)(n-2)^2(n+1)(n+2)(7n-8)^2}
 {3(n-10)(n-8)(n-6)(n-4)^2(n-1)^3},\\
 p_2(H_2)
 &=-\frac{n^2(n-3)(n-2)^3(n+1)(n+2)(n+4)(17n-18)^2}
 {3(n-14)(n-12)^2(n-10)(n-8)(n-6)^2(n-4)^2(n-1)^3}\times(n^2-86n+408),\\
 p_1(H_3)
 &=\frac{15n(n-3)(n-2)^2(n+1)(n+2)^2(n+4)(11n-14)^2}
 {(n-12)(n-10)(n-8)(n-6)(n-4)^2(n-1)^3},\\
 p_1(H_4)
 &=\frac{576n(n-3)(n-2)^2(n+2)^2(n+3)(n+4)(n+6)(8n-11)^2}
 {5(n-14)(n-12)(n-10)(n-8)(n-6)(n-4)^2(n-1)^3}.
\end{align*}
}
Here
 {
\begin{equation*}
 n^2-86n+408\le-1117,
 \qquad 25\le n\le29,
\end{equation*}
}
so every Case  3 pivot is positive as well.  Sylvester's criterion and
the  exhaustive orthogonal decomposition {establish positivity on}
$\mathcal V_{\le6}$.  Conjugating by
$H^{(m)}\mapsto\varepsilon^mH^{(m)}$ and using the equivalence of the
spherical $L^2$ and coefficient norms proves the asserted estimate.
\end{proof}

\begin{remark}\label{rem:degree-six-exact-algebra}
The two assertions in Proposition  \ref{prop:kms-algebraic-positivity}
are established differently. For \(7\le n\le24\), we invoke
Proposition  A.4 of Khuri--Marques--Schoen, whose proof of the full
positivity statement for \(6\le n\le24\) uses a Maple calculation \cite{KMS}. The argument above establishes
the additional assertion for \(25\le n\le29\) on
\(\mathcal V_{\le6}\). On this restricted space, every block in the
orthogonal decomposition has order at most three. The displayed
completion of squares reduces positivity to the signs of finitely many
pivots, while the factorized formulas and polynomial inequalities above
establish every required sign. Thus the restricted assertion follows
from exact algebraic identities; no numerical approximation or computer
algebra calculation is needed to determine their signs. Computer algebra
can independently check these identities, but the proof does not depend
on such a check.
\end{remark}

\begin{remark}
The positivity range on $\mathcal V_{\le6}$ is sharp. When $n=30$,
{
\begin{equation*}
 n^2-34n+144=24,
 \qquad
 n^2-54n+152=-568.
\end{equation*}
}
 {From the formula for $p_3(D_2)$, we obtain}
{
\[
 p_3(D_2)=-\frac{102816}{50765}<0.
\]
}
Thus $I_\varepsilon^{(30)}$ is indefinite on $\mathcal V_{\le6}$.
\end{remark}

\subsection*{Comparison at finite radius}

Return to the family in \eqref{eq:truncated-pohozaev-form}. Recall that
{
\[
 Z_0=\frac{n-2}{2}\frac{1-|y|^2}{1+|y|^2}U.
\]
}
Every $H^{(m)}$ has  zero trace and satisfies
$\sum_{j=1}^nH_{ij}^{(m)}y^j=0$.  Hence
$\det g_{\varepsilon,t}=1$ and, for every radial function $f$,
{
\[
 \Delta_{g_{\varepsilon,t}}f
 =\sum_{i,j=1}^n\partial_i
 \bigl(g_{\varepsilon,t}^{ij}\partial_jf\bigr)
 =\sum_{i=1}^n\partial_i\biggl(\frac{f'(|y|)}{|y|}y^i\biggr)
 =\Delta f.
\]
}
The double divergence has degree $m-2$.  For $m=2,3$, the  moment
identities therefore imply
{
\[
 \delta^2H^{(m)}=0.
\]
}
Together with the corrector equations for the
remaining degrees, these identities imply
{
\[
 E_{\varepsilon,0}=0,
 \qquad
 \partial_tE_{\varepsilon,t}\big|_{t=0}=0.
\]
}
Consequently,
{
\begin{equation*}
 I_{\varepsilon,R}^{(n)}(H,H)
 =\frac{1}{2c(n)}\int_{B_R}
 Z_0\,\partial_t^2E_{\varepsilon,t}\big|_{t=0}\dd y.
\end{equation*}
}

The quadratic expansion in \cite[Lemma  A.2]{KMS} gives the second scalar
curvature coefficient. With
\(h_{\varepsilon,1}=\sum_{m=2}^d\varepsilon^mH^{(m)}\), it is
{
\begin{equation*}
\begin{aligned}
 \frac12\partial_t^2\operatorname{Scal}_{g_{\varepsilon,t}}\big|_{t=0}
 &=-\sum_{i,j,\ell=1}^n\partial_j
 \bigl(h_{\varepsilon,1,ij}\partial_\ell
 h_{\varepsilon,1,i\ell}\bigr)
 +\frac12\sum_{i,j,\ell=1}^n
 \partial_jh_{\varepsilon,1,ij}\,
 \partial_\ell h_{\varepsilon,1,i\ell}\\
 &\quad-\frac14\sum_{i,j,\ell=1}^n
 \partial_\ell h_{\varepsilon,1,ij}\,
 \partial_\ell h_{\varepsilon,1,ij}.
\end{aligned}
\end{equation*}
}
The first term has zero pairing with the radial function $Z_0U$;
both its boundary term and its interior first order term vanish because
$\sum_{j=1}^nh_{\varepsilon,1,ij}y^j=0$.  Polar coordinates then give
the first sum in \eqref{eq:truncated-pohozaev-expansion}.

 Set
 {
\[
 \Psi_\varepsilon=\sum_{m=4}^d
 \varepsilon^m Z(H^{(m)}).
\]
}
The mixed term involving the metric and corrector
variations in the Laplacian also vanishes:
{
\begin{equation*}
 \int_{B_R}Z_0\sum_{i,j=1}^n\partial_i
 \bigl(h_{\varepsilon,1,ij}\partial_j\Psi_\varepsilon\bigr)\dd y=0.
\end{equation*}
}
 Indeed, after integration by parts,
the boundary contribution contains
$\sum_{i=1}^nh_{\varepsilon,1,ij}y^i$, while the interior contribution
contains $\sum_{i=1}^nh_{\varepsilon,1,ij}\partial_iZ_0$; both vanish by
the gauge condition and the radiality of $Z_0$.  After separating the
scalar curvature contribution, the remaining part of the second
variation is
{
\begin{equation*}
\begin{aligned}
 \sum_{s,t=4}^d\varepsilon^{s+t}\biggl\{
 \int_{B_R}Z_0\,\delta^2H^{(s)}Z(H^{(t)})\,\dd y
 -\frac{2n(n+2)}{c(n)(n-2)}\int_{B_R}
       U^{4/(n-2)}\frac{Z_0}{U}Z(H^{(s)})Z(H^{(t)})\,\dd y\biggr\}.
\end{aligned}
\end{equation*}
}
To identify this expression with the corresponding
term in $I_{\varepsilon,R}^{(n)}$, differentiate the corrector equation under
dilations.  For each
$4\le s\le d$, this gives
{
\begin{equation*}
\begin{aligned}
 &\bigl(\Delta+n(n+2)U^{4/(n-2)}\bigr)
 D_{\mathrm{di}}Z(H^{(s)})\\
 {}={}&c(n)Z_0\,\delta^2H^{(s)}
 +s c(n)U\,\delta^2H^{(s)}
 -\frac{4n(n+2)}{n-2}U^{4/(n-2)}
       \frac{Z_0}{U}Z(H^{(s)}).
\end{aligned}
\end{equation*}
}
 Apply Green's second identity on $B_R$ to
$Z(H^{(t)})$ and
$\frac{n-2}{2}Z(H^{(s)})+y\mathbin{\cdot}\nabla Z(H^{(s)})$:
{
\begin{equation*}
\begin{aligned}
 &\int_{B_R}\biggl\{
 Z(H^{(t)})\Delta\bigl(D_{\mathrm{di}}Z(H^{(s)})\bigr)
 -D_{\mathrm{di}}Z(H^{(s)})
       \Delta Z(H^{(t)})\biggr\}\dd y\\
 {}={}&\int_{\partial B_R}\biggl\{
 Z(H^{(t)})\partial_r\bigl(D_{\mathrm{di}}Z(H^{(s)})\bigr)
 -\partial_r Z(H^{(t)})
       D_{\mathrm{di}}Z(H^{(s)})\biggr\}\dd S.
\end{aligned}
\end{equation*}
}
 A second integration by parts gives the bilinear Pohozaev identity
{
\begin{equation*}
\begin{aligned}
 &\int_{B_R}\biggl\{
 D_{\mathrm{di}}Z(H^{(s)})
 \bigl(\Delta+n(n+2)U^{4/(n-2)}\bigr) Z(H^{(t)})\\
 &\quad+D_{\mathrm{di}}Z(H^{(t)})
 \bigl(\Delta+n(n+2)U^{4/(n-2)}\bigr) Z(H^{(s)})\biggr\}\dd y\\
 {}={}&\int_{\partial B_R}\biggl\{
 D_{\mathrm{di}}Z(H^{(s)})\partial_r Z(H^{(t)})
 +D_{\mathrm{di}}Z(H^{(t)})\partial_r Z(H^{(s)})\\
 &\quad-R\bigl(\nabla Z(H^{(s)})\mathbin{\cdot}\nabla Z(H^{(t)})
       -n(n+2)U^{4/(n-2)}Z(H^{(s)})Z(H^{(t)})\bigr)\biggr\}\dd S\\
 &\quad-\frac{4n(n+2)}{n-2}\int_{B_R}
 U^{4/(n-2)}\frac{Z_0}{U}Z(H^{(s)})Z(H^{(t)})\dd y.
\end{aligned}
\end{equation*}
}
 After multiplication by the
coefficients in the second variation, the
$U^{4/(n-2)}(Z_0/U)Z(H^{(s)})Z(H^{(t)})$ terms cancel.  Summing in
$s,t$, symmetrizing, and using the corrector equation on $\partial B_R$
to replace the radial second derivative in
$\partial_r(D_{\mathrm{di}}Z(H^{(s)}))$, we obtain the second interior sum
below.  The boundary contributions form
$\mathcal R_{\varepsilon,R}(H,H)$ in
\eqref{eq:boundary-remainder-definition}.  Hence the second variation
on a finite ball is

{
\begin{equation}\label{eq:truncated-pohozaev-expansion}
\begin{aligned}
 I_{\varepsilon,R}^{(n)}(H,H)
 &=\frac{n-2}{2}\sum_{s,t=2}^d
 \varepsilon^{s+t}c_{s+t}(R)B(H^{(s)},H^{(t)})\\
 &\quad-\sum_{s,t=4}^d s\varepsilon^{s+t}
 \int_{B_R}\delta^2H^{(s)}Z(H^{(t)})U\,\dd y
 +\mathcal R_{\varepsilon,R}(H,H),
\end{aligned}
\end{equation}
}
 where
{
\begin{equation}\label{eq:boundary-remainder-definition}
\begin{aligned}
 \mathcal R_{\varepsilon,R}(H,H)
 &:=R\sum_{s,t=4}^d\varepsilon^{s+t}
 \int_{\partial B_R}\delta^2H^{(s)}U Z(H^{(t)})\,\dd S\\
 &\quad-c(n)^{-1}\sum_{s,t=4}^d\varepsilon^{s+t}
 \int_{\partial B_R}\partial_r Z(H^{(s)})
 D_{\mathrm{di}}Z(H^{(t)})\dd S\\
 &\quad+\frac{R}{2c(n)}\sum_{s,t=4}^d\varepsilon^{s+t}
 \int_{\partial B_R}
 \nabla Z(H^{(s)})\mathbin{\cdot}\nabla Z(H^{(t)})
 \,\dd S\\
 &\quad-\frac{Rn(n+2)}{2c(n)}\sum_{s,t=4}^d\varepsilon^{s+t}
 \int_{\partial B_R}U^{4/(n-2)}Z(H^{(s)})Z(H^{(t)})\,\dd S.
\end{aligned}
\end{equation}
}
We also use $I_\varepsilon^{(n)}(H,K)$ for the symmetric bilinear form
associated with $I_\varepsilon^{(n)}$.

{
\begin{lemma}\label{lem:finite-kms-bridge}
There exists \(C=C(n)>0\) such that, for every \(0<\varepsilon<1\),
\(R\ge2\), and \(H,K\in\mathcal V_{\le d}\), the following estimates
hold.  If \(n\) is odd, then
\[
 \big|I_{\varepsilon,R}^{(n)}(H,K)-I_\varepsilon^{(n)}(H,K)\big|
 \le CR^{-1}\|H\|_{\varepsilon,R}\|K\|_{\varepsilon,R}.
\]
If \(n\) is even, then
\[
\begin{aligned}
 &\big|I_{\varepsilon,R}^{(n)}(H,K)-I_\varepsilon^{(n)}(H,K)\big|\\
 {}\le{}&C\left(
 R^{-2}+(1+\log R)^{-1/2}
 +\frac{|\log R-|\log\varepsilon||}{1+\log R}\right)
 \|H\|_{\varepsilon,R}\|K\|_{\varepsilon,R}.
\end{aligned}
\]
Moreover, for every \(\delta>0\), there exists
\(\varepsilon_0=\varepsilon_0(n,\delta)>0\) such that, if
\(0<\varepsilon\le\varepsilon_0\) and \(R=\delta/\varepsilon\), then
\[
 \big|I_{\varepsilon,R}^{(n)}(H,K)-I_\varepsilon^{(n)}(H,K)\big|
 \le C(n,\delta)(1+\log R)^{-1/2}
 \|H\|_{\varepsilon,R}\|K\|_{\varepsilon,R}.
\]
\end{lemma}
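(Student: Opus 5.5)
The plan is to compare the finite-ball expansion \eqref{eq:truncated-pohozaev-expansion} with the definition \eqref{eq:kms-pohozaev-form} term by term. By bilinearity and polarization it suffices to treat homogeneous pairs \(H^{(s)},K^{(t)}\) with \(2\le s,t\le d\) and sum, using Cauchy--Schwarz in the weighted norm \eqref{eq:weighted-jet-norm}. Three kinds of differences occur.
\begin{enumerate}
\item The radial coefficients: \(c_{s+t}(R)\) against \(c_{s+t}\), or against the logarithmic coefficient when \(s+t=n-2\).
\item The tails of the corrector integrals \(\int_{\R^n\setminus B_R}\delta^2H^{(s)}Z(K^{(t)})U\,\dd y\).
\item The boundary remainder \(\mathcal R_{\varepsilon,R}\) in \eqref{eq:boundary-remainder-definition}.
\end{enumerate}
Each will be bounded by \(\varepsilon^{s+t}|H^{(s)}||K^{(t)}|\) times a power of \(R\). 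The result is then rewritten as \(\|H\|_{\varepsilon,R}\|K\|_{\varepsilon,R}\) times a small factor.

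\emph{Off-critical coefficients.} For \(s+t<n-2\), the integrand of \eqref{eq:truncated-radial-coefficient} is \(O(\rho^{s+t+1-n})\) at infinity. Hence \(|c_{s+t}-c_{s+t}(R)|\le CR^{s+t+2-n}\le CR^{-1}\), since \(s+t\le n-3\) for distinct degrees or for odd \(n\).

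\emph{Corrector tails.} Proposition~\ref{prop:kms-corrector} gives \(|Z(K^{(t)})|\le C|K^{(t)}|(1+r)^{t+2-n}\) together with the analogous derivative bounds. The tail integrand is therefore \(O(r^{s+t-2n+2})\), so the tail is \(O(R^{s+t+2-n})\).

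\emph{Boundary remainder.} Each boundary integral in \(\mathcal R_{\varepsilon,R}\) is of size \(R\cdot R^{n-1}\cdot R^{s-2}\cdot R^{2-n}\cdot R^{t+2-n}\) or comparable. This again gives \(R^{s+t+2-n}\). For odd \(n\) every pair satisfies \(s+t\le n-3\), so all three differences are \(O(R^{-1})\). Since \(\theta_m=0\) for every \(m\) when \(n\) is odd, this proves the odd estimate.

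For even \(n\), pairs with \(s+t\le n-4\) contribute \(O(R^{-2})\). The critical pair \(s=t=d\) is the delicate one. Expanding \(c_{n-2}(R)=-\log R+O(1)\), the first terms differ by \(\frac{n-2}{2}\varepsilon^{n-2}\bigl(|\log\varepsilon|-\log R+O(1)\bigr)B(H^{(d)},K^{(d)})\). Dividing by the weight \(\varepsilon^{2d}(1+\log R)\) in \(\|\cdot\|_{\varepsilon,R}\) produces the factor \(|\log R-|\log\varepsilon||/(1+\log R)+(1+\log R)^{-1}\).

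The critical corrector term \(-d\int_{B_R}\delta^2H^{(d)}Z(K^{(d)})U\) needs its own treatment. Here I expect the main obstacle: the integrand decays exactly like \(r^{-n}\), so this term grows logarithmically. It has to be decomposed using the homogeneous pieces \(\Gamma^{(j)}\) of \(\Gamma(c(n)\delta^2K^{(d)})\). The top piece \(\Gamma^{(d+2)}\) produces \(\log R\int_{\Sph^{n-1}}\delta^2H^{(d)}\Gamma^{(d+2)}\,\dd\theta+O(1)\), matching the KMS term up to \(\log R\) versus \(|\log\varepsilon|\). The lower pieces are integrable and yield \(O(1)\).

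The boundary remainder at the critical pair is \(O(1)\). After normalization by \(\varepsilon^{n-2}(1+\log R)\), these \(O(1)\) terms give \((1+\log R)^{-1}\). The mixed pairs \((s,d)\) with \(s<d\) produce \(R^{s+d+2-n}\varepsilon^{s+d}\). Against \(\varepsilon^{s}\cdot\varepsilon^{d}(1+\log R)^{1/2}\) in the norm, these are bounded with the factor \((1+\log R)^{-1/2}\); this is the source of the square-root term in the statement.

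Finally, set \(R=\delta/\varepsilon\). Then \(|\log R-|\log\varepsilon||=|\log\delta|\), so that term becomes \(C(\delta)(1+\log R)^{-1}\le C(\delta)(1+\log R)^{-1/2}\). Taking \(\varepsilon_0\) small makes \(R\ge2\), and \(R^{-2}\) and \(R^{-1}\) are both dominated by \((1+\log R)^{-1/2}\). This gives the last assertion.
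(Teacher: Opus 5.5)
Your route is essentially the paper's. You split the difference into three pieces: the radial-coefficient gaps $c_{s+t}(R)-c_{s+t}=O(R^{s+t+2-n})$, the corrector tails, and the boundary remainder $\mathcal R_{\varepsilon,R}$. You then extract $\log R$ from the $\Gamma^{(d+2)}$ piece for the critical pair $(d,d)$. Working bilinearly, instead of proving the diagonal bound and polarizing as the paper does, changes nothing. The odd case and the critical pair are handled correctly, apart from one sign slip. For large $\rho$ the integrand in \eqref{eq:truncated-radial-coefficient} behaves like $-\rho^{-1}$, so $c_{n-2}(R)=+\log R+O(1)$. With your sign the two logarithms would add rather than cancel, although the difference you go on to write is the correct one.

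There is, however, a misreading of \eqref{eq:kms-pohozaev-form} for even $n$ that makes one step false as written. In that case $I_\varepsilon^{(n)}$ has no cross terms between $H^{(d)}$ and $H^{(s)}$ with $s<d$. Both sums run over $s,t\le d-1$, and degree $d$ enters only through the two diagonal $|\log\varepsilon|$ terms.

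So for the mixed pairs $(s,d)$ the difference is not a tail. It is the whole finite-ball contribution: $(n-2)\varepsilon^{s+d}c_{s+d}(R)B(H^{(s)},H^{(d)})$ together with the two corrector integrals over $B_R$. This is the paper's $\mathcal J^{(2)}_{\varepsilon,R}$. Since $s+d<n-2$, it is $O(\varepsilon^{s+d}|H^{(s)}||H^{(d)}|)$ uniformly in $R$. But it does not decay in $R$: $c_{s+d}(R)\to c_{s+d}$, and the pairing is in general nonzero.

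Consequently two of your claims fail:
\begin{itemize}
\item that these pairs produce $R^{s+d+2-n}\varepsilon^{s+d}$;
\item that every pair with $s+t\le n-4$ contributes $O(R^{-2})$, since this class includes pairs such as $(d-2,d)$.
\end{itemize}

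The repair is the mechanism you invoke afterwards. By the $(1+\log R)$ weight on the degree-$d$ component in \eqref{eq:weighted-jet-norm}, $\varepsilon^{s}|H^{(s)}|\,\varepsilon^{d}|K^{(d)}|\le(1+\log R)^{-1/2}\|H\|_{\varepsilon,R}\|K\|_{\varepsilon,R}$. This, not a tail estimate, is the true source of the $(1+\log R)^{-1/2}$ term. Under your bookkeeping that term would not have been needed at all. With this correction your argument yields the stated estimates, including the final specialization $R=\delta/\varepsilon$.
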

}

 \begin{proof}
\begingroup
\setlength{\abovedisplayskip}{5pt plus 1pt minus 1pt}
\setlength{\belowdisplayskip}{5pt plus 1pt minus 1pt}
\setlength{\abovedisplayshortskip}{3pt plus 1pt}
\setlength{\belowdisplayshortskip}{3pt plus 1pt}
\setlength{\jot}{2pt}
We first prove the diagonal estimate.  Set
{
\begin{equation*}
 \mathcal J_{\varepsilon,R}(H)
 :=I_{\varepsilon,R}^{(n)}(H,H)-I_\varepsilon^{(n)}(H,H).
\end{equation*}
}
 We denote its associated symmetric bilinear form by
$\mathcal J_{\varepsilon,R}(H,K)$.
Suppose that $n$ is even.  Subtracting \eqref{eq:kms-pohozaev-form}
from \eqref{eq:truncated-pohozaev-expansion}, we obtain
{
\begin{equation*}
 \mathcal J_{\varepsilon,R}(H)
 =\mathcal J_{\varepsilon,R}^{(1)}(H)
 +\mathcal J_{\varepsilon,R}^{(2)}(H)
 +\mathcal J_{\varepsilon,R}^{(3)}(H)
 +\mathcal R_{\varepsilon,R}(H,H),
\end{equation*}
}
 where
{
\begin{align*}
 \mathcal J_{\varepsilon,R}^{(1)}(H)
 &:=\frac{n-2}{2}\sum_{s,t=2}^{d-1}\varepsilon^{s+t}
 \bigl\{c_{s+t}(R)-c_{s+t}\bigr\}B(H^{(s)},H^{(t)})+\sum_{s,t=4}^{d-1}s\varepsilon^{s+t}
 \int_{\mathbb R^n\setminus B_R}
 \delta^2H^{(s)}Z(H^{(t)})U\,\dd y,\\[2pt]
 \mathcal J_{\varepsilon,R}^{(2)}(H)
 &:=(n-2)\sum_{t=2}^{d-1}\varepsilon^{d+t}c_{d+t}(R)
 B(H^{(d)},H^{(t)})\\
 &\qquad-\sum_{t=4}^{d-1}\varepsilon^{d+t}\int_{B_R}\bigg(
 d\,\delta^2H^{(d)}Z(H^{(t)})
 +t\,\delta^2H^{(t)}Z(H^{(d)})
 \bigg)U\dd y,\\[2pt]
 \mathcal J_{\varepsilon,R}^{(3)}(H)
 &:=\frac{n-2}{2}\varepsilon^{n-2}
 (c_{n-2}(R)-|\log\varepsilon|)B(H^{(d)},H^{(d)})\\
 &\qquad-d\varepsilon^{n-2}\Big(
 \int_{B_R}\delta^2H^{(d)}Z(H^{(d)})U\,\dd y -|\log\varepsilon|\int_{\Sph^{n-1}}
 \delta^2H^{(d)}
 \Gamma^{(d+2)}\bigl(c(n)\delta^2H^{(d)}\bigr)\dd\theta\Big).
\end{align*}
}
 Write $r=|y|$.  By the homogeneity of $H^{(s)}$ and
Khuri--Marques--Schoen \cite[(4.4)]{KMS}, we have
{
\begin{equation}\label{eq:corrector-pointwise-estimate}
 |\delta^2H^{(s)}(y)|
 \le C|H^{(s)}|r^{s-2},\quad
 |Z(H^{(t)})(y)|+(1+r)|\nabla Z(H^{(t)})(y)|
 \le C|H^{(t)}|(1+r)^{t+2-n}.
\end{equation}
}
 On the finite dimensional spaces
$\mathcal V_s\times\mathcal V_t$,
{
\[
 |B(H^{(s)},H^{(t)})|\le C|H^{(s)}|\,|H^{(t)}|.
\]
}
 By \eqref{eq:truncated-radial-coefficient},
\eqref{eq:corrector-pointwise-estimate}, and the  Cauchy--Schwarz
inequality, we have
{
\begin{equation*}
\begin{aligned}
 \bigl|\mathcal J_{\varepsilon,R}^{(1)}(H)\bigr|
 &\le C\sum_{s,t=2}^{d-1}\varepsilon^{s+t}|H^{(s)}|\,|H^{(t)}|
 \bigg\{|c_{s+t}(R)-c_{s+t}|
 +\int_R^\infty r^{s+t+1-n}\dd r\bigg\}\\
 &\le C\sum_{s,t=2}^{d-1}\varepsilon^{s+t}|H^{(s)}|\,|H^{(t)}|
 \int_R^\infty r^{s+t+1-n}\dd r\\
 &\le C\sum_{s,t=2}^{d-1}
 \varepsilon^{s+t}R^{s+t+2-n}|H^{(s)}|\,|H^{(t)}|
 \le CR^{-2}\bigg(\sum_{m=2}^{d-1}
 \varepsilon^m|H^{(m)}|\bigg)^2
 \le CR^{-2}\|H\|_{\varepsilon,R}^2.
\end{aligned}
\end{equation*}
}
 For
$\mathcal J_{\varepsilon,R}^{(2)}$, estimate
\eqref{eq:corrector-pointwise-estimate} and $d+t<n-2$ give
{
\begin{equation*}\begin{aligned}
 \bigl|\mathcal J_{\varepsilon,R}^{(2)}(H)\bigr|
 &\le C\sum_{t=2}^{d-1}\varepsilon^{d+t}|H^{(d)}|\,|H^{(t)}|
 \bigg(1+\int_1^R r^{d+t-n+1}\dd r\bigg)\\
 &\le C\sum_{t=2}^{d-1}\varepsilon^{d+t}|H^{(d)}|\,|H^{(t)}|
 \le C(1+\log R)^{-1/2}\|H\|_{\varepsilon,R}^2,
\end{aligned}\end{equation*}
}
where the last inequality uses the factor \(1+\log R\) in the \(m=d=(n-2)/2\) component of the norm  $\|H\|_{\varepsilon,R}$
 in \eqref{eq:weighted-jet-norm}.

We finally estimate $\mathcal J_{\varepsilon,R}^{(3)}(H)$.  By
Proposition  \ref{prop:kms-corrector}, the corrector polynomial has
degree at most $d+2$ and the  same parity as its homogeneous source.
Hence
{
\begin{equation*}
 \Gamma\bigg(c(n)\delta^2H^{(d)}\bigg)
 =
 \Gamma^{(d+2)}\bigg(c(n)\delta^2H^{(d)}\bigg)
 +\sum_{j=0}^{d}\Gamma^{(j)}\bigg(c(n)\delta^2H^{(d)}\bigg).
\end{equation*}
}
Since $2d=n-2$, polar coordinates and finite dimensionality yield
{
\begin{equation*}
\begin{aligned}
 &\int_{B_R}\bigg(\delta^2H^{(d)}\bigg)Z(H^{(d)})U\dd y\\
 {}={}&\int_{\Sph^{n-1}}\bigg(\delta^2H^{(d)}\bigg)
 \Gamma^{(d+2)}\bigg(c(n)\delta^2H^{(d)}\bigg)\dd\theta
 \int_0^R\frac{r^{2n-3}}{(1+r^2)^{n-1}}\dd r\\
 &\quad+O\bigg(|H^{(d)}|^2\sum_{j=0}^{d}
 \int_0^\infty\frac{r^{d+j+n-3}}{(1+r^2)^{n-1}}\dd r\bigg)\\
 {}={}&\log R\int_{\Sph^{n-1}}\bigg(\delta^2H^{(d)}\bigg)
 \Gamma^{(d+2)}\bigg(c(n)\delta^2H^{(d)}\bigg)\dd\theta
 +O\bigl(|H^{(d)}|^2\bigr),
\end{aligned}
\end{equation*}
}
where we have used
{
\begin{equation*}
 \int_0^R\frac{r^{2n-3}}{(1+r^2)^{n-1}}\dd r
 =\int_0^1\frac{r^{2n-3}}{(1+r^2)^{n-1}}\dd r
 +\int_1^R\bigl\{r^{-1}+O(r^{-3})\bigr\}\dd r
 =\log R+O(1).
\end{equation*}
}
By \eqref{eq:truncated-radial-coefficient},
$c_{n-2}(R)=\log R+O(1)$.  Consequently,
{
\begin{equation*}
\begin{aligned}
 \bigl|\mathcal J_{\varepsilon,R}^{(3)}(H)\bigr|
 &\le C\varepsilon^{n-2}\Bigl\{
 |c_{n-2}(R)-|\log\varepsilon||
 +|\log R-|\log\varepsilon||+1\Bigr\}|H^{(d)}|^2\\
 &\le C\varepsilon^{n-2}
 \Bigl\{1+\bigl|\log R-|\log\varepsilon|\bigr|\Bigr\}|H^{(d)}|^2
 \le C\frac{1+|\log R-|\log\varepsilon||}{1+\log R}
 \|H\|_{\varepsilon,R}^2.
\end{aligned}
\end{equation*}
}
Using \eqref{eq:corrector-pointwise-estimate} in
\eqref{eq:boundary-remainder-definition}, we obtain
{
\begin{equation}\label{eq:boundary-remainder-estimate}
\begin{aligned}
 |\mathcal R_{\varepsilon,R}(H,H)|
 &\le C\sum_{s,t=4}^{d}
 \varepsilon^{s+t}R^{s+t+2-n}|H^{(s)}|\,|H^{(t)}|\\
 &\le CR^{-2}\bigg(\sum_{m=4}^{d-1}\varepsilon^m|H^{(m)}|\bigg)^2
   +CR^{-1}\varepsilon^d|H^{(d)}|
      \sum_{m=4}^{d-1}\varepsilon^m|H^{(m)}|
   +C\varepsilon^{n-2}|H^{(d)}|^2\\
 &\le C\bigl\{R^{-2}+(1+\log R)^{-1/2}\bigr\}
 \|H\|_{\varepsilon,R}^2.
\end{aligned}
\end{equation}
}
When $n$ is odd, direct subtraction of
\eqref{eq:kms-pohozaev-form} from
\eqref{eq:truncated-pohozaev-expansion} leaves only the terms satisfying $s+t<n-2$.  In this case, the upper index in
$\mathcal J_{\varepsilon,R}^{(1)}$ is $d=(n-3)/2$, and
$s+t+2-n\le-1$.  For even $n$, combine the estimates for
$\mathcal J_{\varepsilon,R}^{(1)}$,
$\mathcal J_{\varepsilon,R}^{(2)}$,
$\mathcal J_{\varepsilon,R}^{(3)}$, and
 $\mathcal R_{\varepsilon,R}$.  For odd $n$, combine the estimate for
$\mathcal J_{\varepsilon,R}^{(1)}$ with
\eqref{eq:boundary-remainder-estimate}.  In both cases,
{
\begin{equation*}
 |\mathcal J_{\varepsilon,R}(H)|
 \le C\|H\|_{\varepsilon,R}^2
 \begin{cases}
  R^{-1},&n\text{ is odd},\\
  R^{-2}+(1+\log R)^{-1/2}
  +\displaystyle\frac{|\log R-|\log\varepsilon||}{1+\log R},
  &n\text{ is even}.
 \end{cases}
\end{equation*}
}
The corresponding bilinear estimate follows from polarization.  Indeed,
if $|\mathcal J_{\varepsilon,R}(X)|\le A\|X\|_{\varepsilon,R}^2$, then
for every $a>0$,
{
\[
\begin{aligned}
 4|\mathcal J_{\varepsilon,R}(H,K)|
 &\le A\|aH+a^{-1}K\|_{\varepsilon,R}^2
      +A\|aH-a^{-1}K\|_{\varepsilon,R}^2\\
 &=2A\bigl(a^2\|H\|_{\varepsilon,R}^2
          +a^{-2}\|K\|_{\varepsilon,R}^2\bigr).
\end{aligned}
\]
}
 Optimization over $a>0$ gives
$|\mathcal J_{\varepsilon,R}(H,K)|
\le A\|H\|_{\varepsilon,R}\|K\|_{\varepsilon,R}$.
If $R=\delta/\varepsilon$, then
$|\log R-|\log\varepsilon||=|\log\delta|$.  Since $R\to\infty$ as
$\varepsilon\to0$, the stated simplified estimate follows.
\endgroup
\end{proof}

\subsection*{\texorpdfstring{Comparison with the integral
involving \(Z_kE_k\)}{Comparison with the integral involving Z_kE_k}}

 \begin{lemma}\label{lem:actual-finite-comparison}
For every $\vartheta>0$ there is
$\delta_0=\delta_0(n,C_{\mathrm g},\vartheta)>0$ such that,
if $R_k=\delta/\varepsilon_k$ with
$0<\delta\le\delta_0$, then the following estimate holds for all
sufficiently large $k$:
\[
 \Bigg|c(n)^{-1}\int_{B_{R_k}}Z_kE_k\dd y
 -I_{\varepsilon_k,R_k}^{(n)}(H_k,H_k)\Bigg|
 \le\vartheta\|H_k\|_{\varepsilon_k,R_k}^2
 +C_\vartheta\varepsilon_k^{\min\{n-2,14\}}
 (1+|\log\varepsilon_k|).
\]
Here $C_\vartheta=C(n,C_{\mathrm g},\vartheta)>0$.
\end{lemma}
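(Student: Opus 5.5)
We compare the actual metric $g_k$ with the truncated jet metric $g_{\varepsilon_k,1}=\exp(\sum_{m=2}^d\varepsilon_k^mH_k^{(m)})$ by Taylor expansion in an interpolation parameter. Set $h_k(\varepsilon_k y)=\sum_{m=2}^d\varepsilon_k^mH_k^{(m)}(y)+T_k(y)$, where $T_k$ collects the degrees $d+1,\dots,n-4$ (used in $z_{k,\lambda}$ but not in $H_k$), the degree $n-3$ term, and the Taylor remainder. Consider the two-parameter family $g_{t,\tau}=\exp(t\sum_{m\le d}\varepsilon_k^mH_k^{(m)}+\tau T_k)$ with profile $U_\lambda+t\sum_{4\le m\le d}\varepsilon_k^m\psi_{m,\lambda}(H_k^{(m)})+\tau\sum_{d<m\le n-4}\varepsilon_k^m\psi_{m,\lambda}(H_k^{(m)})$. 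Let $F(t,\tau)=c(n)^{-1}\int_{B_{R_k}}Z_{t,\tau}E_{t,\tau}\,\dd y$, with $Z$ the negative scale derivative as before. Then $c(n)^{-1}\int Z_kE_k=F(1,1)$ and $I^{(n)}_{\varepsilon_k,R_k}(H_k,H_k)=\tfrac12\partial_t^2F(0,0)$. As in the appendix computation, $F(0,0)=0$ and $\partial_tF(0,0)=0$, because $E_{\varepsilon,0}=0$ and the first variation of $E$ vanishes by the corrector equations and $\delta^2H^{(2)}=\delta^2H^{(3)}=0$.

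\emph{Step 1 (pure $t$-direction).} Write $F(1,0)=\tfrac12\partial_t^2F(0,0)+\int_0^1\tfrac{(1-t)^2}{2}\partial_t^3F(t,0)\,\dd t$. Each third-order term is a trilinear integrand in $\varepsilon_k^{s}H^{(s)}$ with spatial weight $(1+r)^{s_1+s_2+s_3-n}$. Integrating over $B_{R_k}$ gives a factor $\varepsilon_k^{s_1+s_2+s_3}R_k^{s_1+s_2+s_3-n+2}$, up to a logarithm. Since $\varepsilon_kR_k=\delta$, this is at most $C\delta^{\min s_i}\|H_k\|_{\varepsilon_k,R_k}^2$ times a logarithmic factor, which is absorbed using the $(1+\log R)$ weight in the norm. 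Here we use the bound $|E|\le\sum\varepsilon^{2m}|H|^2(1+r)^{2m-n}$ from Lemma~\ref{lem:profile-error-estimate}, upgraded to cubic order. Choosing $\delta_0$ small then gives a term bounded by $\tfrac{\vartheta}{2}\|H_k\|^2_{\varepsilon_k,R_k}$.

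\emph{Step 2 ($\tau$-direction).} Estimate $F(1,1)-F(1,0)$. By Lemma~\ref{lem:profile-error-estimate}, the contribution of $T_k$ to $E$ and $\mathcal L Z$ is bounded by $C\varepsilon_k^{n-3}(1+r)^{-3}+C\varepsilon_k^{n-2}(1+r)^{-2}$, plus mixed terms $\varepsilon_k^{s+t}|H^{(s)}||H^{(t)}|(1+r)^{s+t-n}$ with $t>d$. Pairing these with $|Z|\le C(1+r)^{2-n}$ over $B_{R_k}$ gives $O(\varepsilon_k^{n-2}(1+|\log\varepsilon_k|))$. For the mixed terms, Young's inequality splits off $\tfrac{\vartheta}{2}\|H_k\|^2$ plus $C_\vartheta\varepsilon_k^{2t}R_k^{2t+2-n}$ for $t\ge d+1$; since $R_k=\delta/\varepsilon_k$, this is $O(\varepsilon_k^{n-2})$ up to a logarithm. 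The quadratic terms involving the degree-$\ge7$ part of $H_k$ are handled by the same Taylor-coefficient argument as in Lemma~\ref{lem:adjoint-replacement-interface}: for $n$ large they produce the $\varepsilon_k^{14}(1+|\log\varepsilon_k|)$ term, which is where the cap $\min\{n-2,14\}$ enters.

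\emph{Main obstacle.} The hardest part is Step 1: proving that the cubic remainder is genuinely small relative to $\|H_k\|^2_{\varepsilon_k,R_k}$, uniformly up to the boundary sphere $\partial B_{R_k}$. Near $r\sim R_k$ the decay of $Z$ and the growth of the jets balance exactly, so the smallness must come from $\varepsilon_kR_k=\delta$ rather than from decay. I would first check the borderline degree $s+t=n-2$ carefully, where logarithms appear, and confirm that the $(1+\log R)^{\theta_m}$ weights absorb them.
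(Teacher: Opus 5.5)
\paragraph{Where your proposal stands.} Your two-parameter splitting is a legitimate reorganization of the paper's argument. The paper instead runs a single path through all jets of degree $\le n-4$, bounds $\tfrac12F_R''(0)-I^{(n)}_{\varepsilon,R}(H_k,H_k)$ by the mixed-degree estimate and Young's inequality, and compares only the degree-$(n-3)$ jet and the Taylor remainder directly. Your Step~1 is sound once one factor $\varepsilon_k^{s}(1+r)^{s}\le C\delta^{s}$ is pulled out pointwise. The gap is in Step~2.

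\paragraph{The gap in Step~2.} The part of $E$ that is linear in $H_k^{(n-3)}$ has no corrector. Its leading piece is $c(n)\varepsilon_k^{n-3}(\delta^2H_k^{(n-3)})U$, of size $\varepsilon_k^{n-3}(1+r)^{-3}$. Pairing its absolute value with $|Z|\le C(1+r)^{2-n}$ gives
\[
 C\varepsilon_k^{n-3}|H_k^{(n-3)}|\int_0^{R_k}r^{n-1}(1+r)^{-n-1}\,\dd r\le C\varepsilon_k^{n-3}.
\]
This is a convergent integral with no further gain, so the result is not $O(\varepsilon_k^{n-2}(1+|\log\varepsilon_k|))$ as you assert. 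For $7\le n\le16$, $\varepsilon_k^{n-3}$ exceeds the error $C_\vartheta\varepsilon_k^{\min\{n-2,14\}}(1+|\log\varepsilon_k|)$ allowed in the statement. So size estimates alone cannot close Step~2.

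\paragraph{The missing cancellation.} Two facts remove the leading pairing:
\begin{itemize}
\item $D\Delta_\delta[H_k^{(n-3)}]U=0$, because $U$ is radial and $H_k^{(n-3)}$ is trace free with $H_k^{(n-3)}(y)y=0$.
\item $\int_{B_{R_k}}Z_0U\,\delta^2H_k^{(n-3)}\,\dd y=0$, because $Z_0U$ is radial and $\delta^2H_k^{(n-3)}$ has zero mean on every sphere by \eqref{eq:appendix-moments}.
\end{itemize}
After this, every remaining term carries either the Taylor remainder or a second jet. With $R_k=\delta/\varepsilon_k$ those terms are $O(\varepsilon_k^{n-2}(1+|\log\varepsilon_k|))$. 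For the later use in Lemma~\ref{lem:signed-residual}, the weaker bound $\varepsilon_k^{n-3}=o(\varepsilon_k^{(n-2)/2})$ would suffice, but it does not prove the lemma as stated.

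\paragraph{Two smaller points.} First, the cap $\min\{n-2,14\}$ does not arise here from degree-$\ge7$ jets. Those jets sit inside both $I^{(n)}_{\varepsilon_k,R_k}(H_k,H_k)$ and $\|H_k\|_{\varepsilon_k,R_k}$, and the cap is merely a weakening of $\varepsilon_k^{n-2}$. Second, the delicate point of the proof is this linear degree-$(n-3)$ term, not the cubic remainder you flagged as the main obstacle.
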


\begin{proof}
Throughout the proof, write $\varepsilon=\varepsilon_k$ and $R=R_k$;
retain the index $k$ on the Taylor jets.  For $0\le t\le1$, set
{
\begin{equation*}
 h_t=\sum_{m=2}^{n-4}t\varepsilon^mH_k^{(m)},
 \qquad g_t=\exp h_t,
 \qquad
 \varphi_{\lambda,t}
 =U_\lambda+\sum_{m=4}^{n-4}
 t\varepsilon^m\psi_{m,\lambda}(H_k^{(m)}).
\end{equation*}
}
Put
{
\begin{equation*}
 Z_t=-\lambda\partial_\lambda
       \varphi_{\lambda,t}\big|_{\lambda=1},
 \qquad
 E_t=-L_{g_t}\varphi_{1,t}-n(n-2)\varphi_{1,t}^{p},
\end{equation*}
}
and set
{
\begin{equation*}
 F_R(t)=c(n)^{-1}\int_{B_R}Z_tE_t\dd y.
\end{equation*}
}
By the corrector equations and \eqref{eq:appendix-moments},
{
\begin{equation*}
 F_R(0)=F_R'(0)=0.
\end{equation*}
}
By \eqref{eq:truncated-pohozaev-form}, the terms
in $F_R''(0)/2$ involving only Taylor jets of degree at most
$d$ equal
$I_{\varepsilon,R}^{(n)}(H_k,H_k)$.  Taylor's formula therefore gives
the exact decomposition
{
\begin{equation}\label{eq:actual-formal-exact-decomposition}
\begin{aligned}
 c(n)^{-1}\int_{B_R}Z_kE_k\dd y-I_{\varepsilon,R}^{(n)}(H_k,H_k)
 &=\Biggl\{c(n)^{-1}\int_{B_R}Z_kE_k\dd y-F_R(1)\Biggr\}\\
 &\quad+\Biggl\{\frac12F_R''(0)
             -I_{\varepsilon,R}^{(n)}(H_k,H_k)\Biggr\}
 +\frac12\int_0^1(1-t)^2F_R'''(t)\dd t.
\end{aligned}
\end{equation}
}

We estimate each of the three terms on the
right side.  The rescaled logarithm of the
actual metric has the expansion
{
\begin{equation*}
 \log g_k(y)=\sum_{m=2}^{n-4}\varepsilon^mH_k^{(m)}(y)
 +\varepsilon^{n-3}H_k^{(n-3)}(y)+T_{k,\varepsilon}(y),
\end{equation*}
}
 where, for $0\le a\le2$,
{
\begin{equation*}
 |D^aT_{k,\varepsilon}(y)|
 \le C\varepsilon^{n-2}(1+|y|)^{n-2-a}.
\end{equation*}
}
 Since $\varphi_k=\varphi_{1,1}$ and $Z_k=Z_1$, the nonlinear terms in
the two errors agree.  The exact difference is therefore
{
\begin{equation*}
\begin{aligned}
 &c(n)^{-1}\int_{B_R}Z_kE_k\dd y-F_R(1)\\
 {}={}&\int_{B_R}Z_1\biggl\{
 -c(n)^{-1}\sum_{i,j=1}^n\partial_i
 \bigl((g_k^{ij}-g_1^{ij})\partial_j\varphi_{1,1}\bigr)
 +(\operatorname{Scal}_{g_k}-\operatorname{Scal}_{g_1})\varphi_{1,1}\biggr\}\dd y.
\end{aligned}
\end{equation*}
}
 We first isolate the term that is linear in $H_k^{(n-3)}$.  At the
Euclidean metric,
{
\begin{equation*}
\begin{aligned}
 D\Delta_\delta[H_k^{(n-3)}]U
 &=-\sum_{i,j=1}^nH_{k,ij}^{(n-3)}\partial_i\partial_jU
   -\sum_{i,j=1}^n\partial_iH_{k,ij}^{(n-3)}\partial_jU=0,\\
 D\!\operatorname{Scal}_\delta[H_k^{(n-3)}]
 &=\delta^2H_k^{(n-3)}.
\end{aligned}
\end{equation*}
}
The first equality follows from radiality, the zero trace condition, and
$\sum_{j=1}^nH_{k,ij}^{(n-3)}y^j=0$.  Since $Z_0U$ is radial,
{by \eqref{eq:appendix-moments},}
{
\begin{equation*}
 \int_{B_R}Z_0U\delta^2H_k^{(n-3)}\dd y=0.
\end{equation*}
}
Every remaining term either contains $T_{k,\varepsilon}$ or contains
at least two Taylor jets.  These terms come from the expansion of the
inverse metric in the Laplacian, the  $D^2h$ and $Dh\,Dh$ terms in scalar
curvature, and their pairings with the affine corrector.  More precisely,
if $h=\log g_1$ and $\tau=\log g_k-\log g_1$, then
the Taylor bounds give \(|h|+|\tau|\le C\delta^2\) on \(B_R\), and the
coordinate expansions yield
{
\begin{equation*}
\begin{aligned}
 (\Delta_{\exp(h+\tau)}-\Delta_{\exp h})f
 &=-\sum_{i,j=1}^n\partial_i(\tau_{ij}\partial_jf)
 +\sum_{i,j=1}^n\partial_i
 \bigl\{O(|h||\tau|+|\tau|^2)\partial_jf\bigr\},\\
 &\left|\operatorname{Scal}_{\exp(h+\tau)}
 -\operatorname{Scal}_{\exp h}
 -\sum_{i,j=1}^n\partial_i\partial_j\tau_{ij}\right|\\
 &\quad\le C\bigl((|h|+|\tau|)|D^2\tau|
          +|\tau||D^2h|+|Dh||D\tau|
          +|D\tau|^2+|\tau||Dh|^2\bigr).
\end{aligned}
\end{equation*}
}
With \(r=|y|\), we have
\[
 \begin{aligned}
 |D\tau|
 &\le C\varepsilon^{n-3}(1+r)^{n-4}
      +C\varepsilon^{n-2}(1+r)^{n-3},\\
 |Dh|&\le C\varepsilon^2(1+r),\qquad
 |Z_1\varphi_{1,1}|\le C(1+r)^{4-2n}.
 \end{aligned}
\]
Consequently,
\[
 \begin{aligned}
 &\int_{B_R}|Z_1\varphi_{1,1}|
 \bigl(|D\tau|^2+|\tau||Dh|^2\bigr)\,\dd y\\
 &\quad\le
 C\varepsilon^{2n-6}
 \left(1+\int_1^Rr^{n-5}\,\dd r\right)
 {}+C\varepsilon^{2n-4}
 \left(1+\int_1^Rr^{n-3}\,\dd r\right)\\
 &\qquad
 +C\varepsilon^{n+1}\left(1+\int_1^Rr^2\,\dd r\right)
 +C\varepsilon^{n+2}\left(1+\int_1^Rr^3\,\dd r\right)
 \le C\varepsilon^{n-2},
 \end{aligned}
\]
where \(R=\delta/\varepsilon\) and \(0<\delta<1\).
Combining this estimate with the corrector bounds gives
{
\begin{equation*}
\begin{aligned}
 C\biggl\{&\varepsilon^{n-2}
       \biggl(1+\int_1^Rr^{-1}\dd r\biggr)
 +\varepsilon^{2n-6}\biggl(1+\int_1^Rr^{n-5}\dd r\biggr)\\
 &+\sum_{m=2}^{n-4}\varepsilon^{m+n-3}|H_k^{(m)}|
       \biggl(1+\int_1^Rr^{m-2}\dd r\biggr)
 +\sum_{m=2}^{n-4}\varepsilon^{m+n-2}|H_k^{(m)}|
       \biggl(1+\int_1^Rr^{m-1}\dd r\biggr)\biggr\}.
\end{aligned}
\end{equation*}
}
The first term bounds the linear Taylor remainder, the terms containing
\(|DT_{k,\varepsilon}|^2\), and the terms containing
\(|\tau||Dh|^2\). The second term bounds the square of
\(H_k^{(n-3)}\), including its contribution to \(|D\tau|^2\).
The last two terms bound the pairings of \(H_k^{(n-3)}\) and
\(T_{k,\varepsilon}\) with the retained Taylor jets. Since
\(R=\delta/\varepsilon\)
and all Taylor coefficients are uniformly bounded,  the preceding
quantity is at most
$C\varepsilon^{n-2}(1+|\log\varepsilon|)$.  Consequently,
{
\begin{equation}\label{eq:actual-metric-tail}
 \big|c(n)^{-1}\int_{B_R}Z_kE_k\dd y-F_R(1)\big|
 \le C\varepsilon^{n-2}(1+|\log\varepsilon|).
\end{equation}
}

Extend the quadratic form in
\eqref{eq:truncated-pohozaev-form} to $\mathcal V_{\le n-4}$, and denote
the extended form and its polarization by
$\mathcal Q_{\varepsilon,R}$.
Set
{
\begin{equation*}
 \widehat H_{k,>d}
 =\widehat H_k-H_k
 =\sum_{m=d+1}^{n-4}H_k^{(m)}.
\end{equation*}
}
The restriction of $\mathcal Q_{\varepsilon,R}$ to
$\mathcal V_{\le d}$ is $I_{\varepsilon,R}^{(n)}$.  Hence
 {
\begin{equation*}
 \frac12F_R''(0)-I_{\varepsilon,R}^{(n)}(H_k,H_k)
 =2\mathcal Q_{\varepsilon,R}(H_k,\widehat H_{k,>d})
  +\mathcal Q_{\varepsilon,R}
  (\widehat H_{k,>d},\widehat H_{k,>d}).
\end{equation*}
}
We estimate the mixed coefficients of $\mathcal Q_{\varepsilon,R}$.
Fix $2\le s,t\le n-4$ and, for $a,b$ near zero, define
{
\begin{equation*}
 \begin{aligned}
 g_{a,b}
 &=\exp\bigl(a\varepsilon^sH_k^{(s)}
             +b\varepsilon^tH_k^{(t)}\bigr),\\
 \varphi_{\lambda,a,b}
 &=U_\lambda
   +a\varepsilon^s\psi_{s,\lambda}(H_k^{(s)})
   +b\varepsilon^t\psi_{t,\lambda}(H_k^{(t)}),\\
 E_{a,b}
 &=-L_{g_{a,b}}\varphi_{1,a,b}
   -n(n-2)\varphi_{1,a,b}^{p},\\
 Z_{a,b}
 &=-\lambda\partial_\lambda\varphi_{\lambda,a,b}
      \big|_{\lambda=1}.
 \end{aligned}
\end{equation*}
}
The corrector equations give
\[
 E_{0,0}=0,\qquad
 \left.\partial_aE_{a,b}\right|_{a=b=0}
 =\left.\partial_bE_{a,b}\right|_{a=b=0}=0.
\]
Consequently,
{
\begin{equation*}
 \begin{aligned}
 \mathcal Q_{\varepsilon,R}(H_k^{(s)},H_k^{(t)})
 &=\frac1{2c(n)}
   \left.\partial_a\partial_b
   \int_{B_R}Z_{a,b}E_{a,b}\,\dd y\right|_{a=b=0}\\
 &=\frac1{2c(n)}\int_{B_R}Z_0
   \left.\partial_a\partial_bE_{a,b}\right|_{a=b=0}\,\dd y.
 \end{aligned}
\end{equation*}
}
The mixed derivative consists of contractions of
\begin{equation*}
\begin{gathered}
 DH_k^{(s)}DH_k^{(t)}U,\qquad
 H_k^{(s)}D^2H_k^{(t)}U,\qquad
 \sum_{i,j=1}^n\partial_i
 \bigl(H_{k,ij}^{(s)}\partial_j Z(H_k^{(t)})\bigr),\\
 \delta^2H_k^{(s)}
 Z(H_k^{(t)}),\qquad
 U^{p-2}Z(H_k^{(s)})Z(H_k^{(t)}),
\end{gathered}
\end{equation*}
 and the terms obtained by interchanging $s$ and $t$.  {Using the coordinate
formulas for $L_{\exp h}$ and
\eqref{eq:corrector-pointwise-estimate}, we obtain}
{
\begin{equation*}
 \bigg|\frac{\partial^2}{\partial a\,\partial b}
 E_{a,b}(y)\bigg|_{a=b=0}
 \le C\varepsilon^{s+t}|H_k^{(s)}|\,|H_k^{(t)}|
       (1+|y|)^{s+t-n}.
\end{equation*}
}
Since
$|Z_0(y)|\le C(1+|y|)^{2-n}$, polar coordinates give
{
\begin{equation*}
\begin{aligned}
 |\mathcal Q_{\varepsilon,R}(H_k^{(s)},H_k^{(t)})|
 &\le C\varepsilon^{s+t}|H_k^{(s)}|\,|H_k^{(t)}|
 \int_0^Rr^{n-1}(1+r)^{s+t+2-2n}\dd r\\
 &\le C\varepsilon^{s+t}|H_k^{(s)}|\,|H_k^{(t)}|
 \Biggl\{1+\int_1^Rr^{s+t-n+1}\dd r\Biggr\}.
\end{aligned}
\end{equation*}
}
Substituting this bound into the decomposition
of $F_R''(0)/2$ yields
{
\begin{equation*}
\begin{aligned}
 \big|\tfrac12F_R''(0)-I_{\varepsilon,R}^{(n)}(H_k,H_k)\big|
 &\le C\sum_{\substack{2\le s,t\le n-4\\\max\{s,t\}>d}}
 \varepsilon^{s+t}|H_k^{(s)}|\,|H_k^{(t)}|
 \Biggl\{1+\int_1^Rr^{s+t-n+1}\dd r\Biggr\}.
\end{aligned}
\end{equation*}
}
Since $R=\delta/\varepsilon$,
{
\begin{equation}\label{eq:actual-radial-trichotomy}
 \varepsilon^{s+t}
 \Biggl\{1+\int_1^Rr^{s+t-n+1}\dd r\Biggr\}
 \le C
 \begin{cases}
  \varepsilon^{s+t},&s+t<n-2,\\
  \varepsilon^{n-2}(1+|\log\varepsilon|),&s+t=n-2,\\
  \varepsilon^{n-2}\delta^{s+t-n+2},&s+t>n-2.
 \end{cases}
\end{equation}
}
Young's inequality and the uniform bounds for the Taylor coefficients therefore
imply
{
\begin{equation*}
 \big|\tfrac12F_R''(0)-I_{\varepsilon,R}^{(n)}(H_k,H_k)\big|
 \le\frac{\vartheta}{2}
 \|H_k\|_{\varepsilon,R}^2
 +C_\vartheta\varepsilon^{\min\{n-2,14\}}
 (1+|\log\varepsilon|).
\end{equation*}
}

 It remains to estimate the integral remainder.  Put
{
\begin{equation*}
 \mathcal H_\varepsilon(r)
 =\sum_{m=2}^{n-4}\varepsilon^m|H_k^{(m)}|(1+r)^m
 +\{\varepsilon(1+r)\}^{n-3}.
\end{equation*}
}
On $B_R$, $\mathcal H_\varepsilon\le C\delta^2$.
After decreasing $\delta$, we apply
Khuri--Marques--Schoen \cite[(4.4)]{KMS} to obtain
\begin{equation*}
 \frac12U\le\varphi_{1,t}\le2U,
 \qquad
 |Z_t|\le C(1+r)^{2-n},
 \qquad
 |\partial_tZ_t|\le
 C\mathcal H_\varepsilon(r)(1+r)^{2-n},
\end{equation*}
uniformly for $0\le t\le1$.
 For $a=2,3$, Taylor's formula
for the matrix exponential and the coordinate formula for scalar
curvature give
{
\begin{equation*}
 |\partial_t^ag_t^{-1}|+(1+r)|D\partial_t^ag_t^{-1}|
 +(1+r)^2|\partial_t^a\operatorname{Scal}_{g_t}|
 \le C\mathcal H_\varepsilon(r)^a.
\end{equation*}
}
 The same corrector estimate gives
{
\begin{equation*}
 \sum_{m=4}^{n-4}\varepsilon^m
 \bigl\{|\psi_{m,\lambda}|+(1+r)|D\psi_{m,\lambda}|\bigr\}
 \le C\mathcal H_\varepsilon(r)(1+r)^{2-n}.
\end{equation*}
}
The same bound holds after one logarithmic scale derivative.  In
particular,
$|\partial_t\varphi_{1,t}|\le C\mathcal H_\varepsilon U$.
Together with
$\frac{1}{2}U\le\varphi_{1,t}\le2U$, this estimate bounds uniformly in
$t$ all derivatives produced from $\varphi_{1,t}^p$.  Substitution in
the coordinate formula for $E_t$ gives
{
\begin{equation*}
 |\partial_t^2E_t(y)|
 \le C\mathcal H_\varepsilon(r)^2(1+r)^{-n},
 \qquad
 |\partial_t^3E_t(y)|
 \le C\mathcal H_\varepsilon(r)^3(1+r)^{-n}.
\end{equation*}
}
The functions $\varphi_{\lambda,t}$ and $Z_t$ are affine in $t$.
Therefore $\partial_t^2Z_t=0$, and direct differentiation gives the
exact identity
{
\begin{equation*}
 F_R'''(t)=c(n)^{-1}\int_{B_R}
 \bigl\{3\partial_tZ_t\,\partial_t^2E_t
       +Z_t\partial_t^3E_t\bigr\}\dd y.
\end{equation*}
}
{By the preceding pointwise estimates,}
{
\begin{equation*}
 |F_R'''(t)|
 \le C\int_0^Rr^{n-1}(1+r)^{2-2n}
 \mathcal H_\varepsilon(r)^3\dd r.
\end{equation*}
}
Moreover,
{
\begin{equation*}
\begin{aligned}
 \mathcal H_\varepsilon(r)^2
 &\le C\sum_{s,t=2}^d\varepsilon^{s+t}
 |H_k^{(s)}|\,|H_k^{(t)}|(1+r)^{s+t}\\
 &\quad+C\sum_{\substack{2\le s,t\le n-4\\\max\{s,t\}>d}}
 \varepsilon^{s+t}|H_k^{(s)}|\,|H_k^{(t)}|(1+r)^{s+t}
 +C\varepsilon^{2n-6}(1+r)^{2n-6}.
\end{aligned}
\end{equation*}
}
Applying \eqref{eq:actual-radial-trichotomy} to these three sums and using
the uniform bounds for the Taylor coefficients, we obtain
{
\begin{align*}
 \int_0^Rr^{n-1}(1+r)^{2-2n}
 \mathcal H_\varepsilon(r)^2\dd r
 &\le C\sum_{s,t=2}^d\varepsilon^{s+t}
 |H_k^{(s)}|\,|H_k^{(t)}|
 \bigg(1+\int_1^Rr^{s+t-n+1}\dd r\bigg)\\
 &\quad+C\varepsilon^{\min\{n-2,14\}}
 (1+|\log\varepsilon|)\\
 &\le C\|H_k\|_{\varepsilon,R}^2
 +C\varepsilon^{\min\{n-2,14\}}
 (1+|\log\varepsilon|).
\end{align*}
}
Therefore
{
\begin{equation*}
 \frac12\Big|\int_0^1(1-t)^2F_R'''(t)\dd t\Big|
 \le C\delta^2\|H_k\|_{\varepsilon,R}^2
 +C\delta^2\varepsilon^{\min\{n-2,14\}}
 (1+|\log\varepsilon|).
\end{equation*}
}
Choose $\delta_0$ so that $C\delta_0^2\le\vartheta/2$.  From
\eqref{eq:actual-formal-exact-decomposition},
\eqref{eq:actual-metric-tail}, and the  preceding estimates, we obtain
{
\begin{equation*}
 \Bigg|c(n)^{-1}\int_{B_R}Z_kE_k\dd y
 -I_{\varepsilon,R}^{(n)}(H_k,H_k)\Bigg|
 \le\vartheta\|H_k\|_{\varepsilon,R}^2
 +C_\vartheta\varepsilon^{\min\{n-2,14\}}
 (1+|\log\varepsilon|).
\end{equation*}
}
\end{proof}

\subsection*{Deduction of the signed estimate}

 \begin{proof}[Proof of Lemma  \ref{lem:signed-residual}]
Since $R_k=\delta/\varepsilon_k$, for every
$H\in\mathcal V_{\le d}$ and all sufficiently large $k$,
{
\begin{equation}\label{eq:appendix-norm-comparison}
 \frac12\|H\|_{\varepsilon_k,R_k}^2
 \le \|H\|_{\varepsilon_k,*}^2
 \le 2\|H\|_{\varepsilon_k,R_k}^2.
\end{equation}
}

\emph{Case  1: $7\le n\le24$.}  
By
Proposition  \ref{prop:kms-algebraic-positivity},
Lemma  \ref{lem:finite-kms-bridge}, and
 \eqref{eq:appendix-norm-comparison}, we have
{
\begin{align*}
 I_{\varepsilon_k,R_k}^{(n)}(H_k,H_k)
 &\ge I_{\varepsilon_k}^{(n)}(H_k,H_k)
 -C(1+\log R_k)^{-1/2}
  \|H_k\|_{\varepsilon_k,R_k}^2\\
 &\ge b_n\|H_k\|_{\varepsilon_k,*}^2
 -C(1+\log R_k)^{-1/2}
  \|H_k\|_{\varepsilon_k,R_k}^2\\
 &\ge \Bigl(\frac{b_n}{2}
       -C(1+\log R_k)^{-1/2}\Bigr)
       \|H_k\|_{\varepsilon_k,R_k}^2\ge \frac{b_n}{4}
       \|H_k\|_{\varepsilon_k,R_k}^2.
\end{align*}
}
 By Lemma  \ref{lem:actual-finite-comparison} with
$\vartheta=b_n/8$,
{
\begin{align*}
 c(n)^{-1}\int_{B_{R_k}}Z_kE_k\dd y
 &\ge I_{\varepsilon_k,R_k}^{(n)}(H_k,H_k)
 -\frac{b_n}{8}\|H_k\|_{\varepsilon_k,R_k}^2
 -C\varepsilon_k^{\min\{n-2,14\}}
       (1+|\log\varepsilon_k|)\\
 &\ge \frac{b_n}{8}\|H_k\|_{\varepsilon_k,R_k}^2
 -o(\varepsilon_k^{(n-2)/2}).
\end{align*}
}
 Multiplying by $c(n)>0$, we obtain the desired estimate.

 \emph{Case  2: $25\le n\le29$.}  
Write $H_k=H_{k,\le6}+H_{k,>6}$.  By
Proposition  \ref{prop:kms-algebraic-positivity},
\eqref{eq:appendix-norm-comparison},
Lemma  \ref{lem:finite-kms-bridge}, and Young's inequality, we have
{
\begin{align*}
 I_{\varepsilon_k,R_k}^{(n)}(H_k,H_k)
 &=I_{\varepsilon_k,R_k}^{(n)}(H_{k,\le6},H_{k,\le6})
 +2I_{\varepsilon_k,R_k}^{(n)}(H_{k,\le6},H_{k,>6})
 +I_{\varepsilon_k,R_k}^{(n)}(H_{k,>6},H_{k,>6})\\
 &\ge I_{\varepsilon_k}^{(n)}(H_{k,\le6},H_{k,\le6})
 -C(1+\log R_k)^{-1/2}
       \|H_{k,\le6}\|_{\varepsilon_k,R_k}^2\\
 &\quad-2C_n\|H_{k,\le6}\|_{\varepsilon_k,R_k}
          \|H_{k,>6}\|_{\varepsilon_k,R_k}
 -C_n\|H_{k,>6}\|_{\varepsilon_k,R_k}^2\\
 &\ge \bigl(\tfrac12b_n^{(6)}-C(1+\log R_k)^{-1/2}\bigr)
       \|H_{k,\le6}\|_{\varepsilon_k,R_k}^2\\
 &\quad-2C_n\|H_{k,\le6}\|_{\varepsilon_k,R_k}
          \|H_{k,>6}\|_{\varepsilon_k,R_k}
 -C_n\|H_{k,>6}\|_{\varepsilon_k,R_k}^2\\
 &\ge \frac{b_n^{(6)}}4
       \|H_{k,\le6}\|_{\varepsilon_k,R_k}^2
 -C_n\|H_{k,>6}\|_{\varepsilon_k,R_k}^2.
\end{align*}
}
 {By the uniform bounds for the Taylor coefficients,}
{
\[
\begin{aligned}
 \|H_{k,>6}\|_{\varepsilon_k,R_k}^2
 =\sum_{m=7}^d
       \varepsilon_k^{2m}(1+\log R_k)^{\theta_m}
       |H_k^{(m)}|^2
 \le C\varepsilon_k^{14}(1+|\log\varepsilon_k|).
\end{aligned}
\]
}
 By these two bounds and Lemma  \ref{lem:actual-finite-comparison} with
$\vartheta=b_n^{(6)}/8$,
{
\begin{align*}
 c(n)^{-1}\int_{B_{R_k}}Z_kE_k\dd y
 &\ge I_{\varepsilon_k,R_k}^{(n)}(H_k,H_k)
 -\frac{b_n^{(6)}}8
       \|H_{k,\le6}\|_{\varepsilon_k,R_k}^2\\
 &\quad-\frac{b_n^{(6)}}8
       \|H_{k,>6}\|_{\varepsilon_k,R_k}^2
 -C\varepsilon_k^{14}(1+|\log\varepsilon_k|)\\
 &\ge \frac{b_n^{(6)}}8
       \|H_{k,\le6}\|_{\varepsilon_k,R_k}^2
 -C\varepsilon_k^{14}(1+|\log\varepsilon_k|)\\
 &\ge \frac{b_n^{(6)}}8
       \|H_{k,\le6}\|_{\varepsilon_k,R_k}^2
 -o(\varepsilon_k^{(n-2)/2}).
\end{align*}
}
Multiplying by $c(n)>0$, we obtain the desired estimate.
\end{proof}

\refstepcounter{section}\label{app:proofs}
\section*{Appendix B. Coefficient calculations for the annular estimates}
\addcontentsline{toc}{section}{Appendix B. Coefficient calculations for the annular estimates}

{
This appendix supplies the two calculations deferred from Section  7: the
mixed radial density estimate in Lemma  \ref{lem:annular-bilinear-density}
and the third variation estimate in
Lemma  \ref{lem:reference-third-variation}. All calculations use the
Euclidean \(y\) coordinates, with \(r=|y|\). The parameters
\((\lambda_j,b_j)\) are fixed as in
Proposition  \ref{prop:final-modulation}. Constants are uniform for
\((\lambda_j,b_j)\in\mathcal K\), and every little \(o\) term is taken as
\(j\to\infty\), with the background metric fixed.
}

 \subsection*{Mixed radial densities}

\begin{proof}[Proof of the mixed coefficient estimate in
Lemma  \ref{lem:annular-bilinear-density}]
\emph{Step 1. The exact mixed coefficient.}
{For the homogeneous directions, fix
\(2\le\alpha,\beta\le d\) and write}
\[
 z_\alpha=\begin{cases}
  \rho_j^\alpha Z_{\lambda_j,b_j}[H^{(\alpha)}],
  &\alpha<\frac{n-2}{2},\\
  0,&\alpha\ge \frac{n-2}{2},
 \end{cases}
 \qquad
 z_\beta=\begin{cases}
  \rho_j^\beta Z_{\lambda_j,b_j}[H^{(\beta)}],
  &\beta<\frac{n-2}{2},\\
  0,&\beta\ge \frac{n-2}{2}.
 \end{cases}
\]
Consider the family with two parameters
\[
 g_{s,\tau}^{\alpha,\beta}
 =\exp\{s\rho_j^\alpha H^{(\alpha)}
       +\tau\rho_j^\beta H^{(\beta)}\},\qquad
 v_{s,\tau}^{\alpha,\beta}=U_j+sz_\alpha+\tau z_\beta.
\]
{For the general assertion in the lemma, make the same
definitions with the two pairs of tensors and scalar functions satisfying
\eqref{eq:extended-bilinear-direction-bounds}. The differentiation formulas
below depend only on the bounds stated there.}
Define the operator coefficients by
\[
 \begin{aligned}
 E_\alpha
 &=\left.\partial_s(\Delta-L_{g_{s,\tau}^{\alpha,\beta}})
   \right|_{s=\tau=0},\\
 E_\beta
 &=\left.\partial_\tau(\Delta-L_{g_{s,\tau}^{\alpha,\beta}})
   \right|_{s=\tau=0},\\
 E_{\alpha\beta}
 &=\left.\partial_s\partial_\tau
   (\Delta-L_{g_{s,\tau}^{\alpha,\beta}})\right|_{s=\tau=0}.
 \end{aligned}
\]
Thus \(E_{\alpha\beta}\) is the coefficient of \(s\tau\), without a factor
of \(1/2\). Since the inverse metric is
\(\exp\{-s\rho_j^\alpha H^{(\alpha)}
-\tau\rho_j^\beta H^{(\beta)}\}\), the first coefficient is
\[
 E_\alpha f=\rho_j^\alpha\left[
 \sum_{k,l=1}^n\partial_k(H^{(\alpha)}_{kl}\partial_lf)
 +c(n)\left(\sum_{k,l=1}^n
 \partial_k\partial_lH^{(\alpha)}_{kl}\right)f\right],
\]
and \(E_\beta\) has the same formula with \(\alpha\) replaced by \(\beta\).
With \(U=U_j\), direct differentiation of
\eqref{eq:finite-annulus-quadratic-form} gives
 {
\begin{equation}\label{eq:exact-mixed-coefficient}
 \begin{aligned}
 2I_j^{\mathrm{ann}}(H^{(\alpha)},H^{(\beta)})
 =&\int_{A_{\sigma_j,R_j}}(D_{\mathrm{di}}U)
 E_{\alpha\beta}U\,\dd y+\int_{A_{\sigma_j,R_j}}(D_{\mathrm{di}}z_\alpha)
 E_\beta U\,\dd y\\
 &+\int_{A_{\sigma_j,R_j}}(D_{\mathrm{di}}z_\beta)
 E_\alpha U\,\dd y+\int_{A_{\sigma_j,R_j}}(D_{\mathrm{di}}U)
 (E_\alpha z_\beta+E_\beta z_\alpha)\,\dd y.
 \end{aligned}
\end{equation}
}
{For two general pairs of tensors and scalar functions, the
left side of
\eqref{eq:exact-mixed-coefficient} means twice the mixed coefficient defined
before Lemma  \ref{lem:annular-bilinear-density}.}
In divergence form,
\[
\begin{aligned}
 E_\alpha f
 &=-\sum_{k,l=1}^n\partial_k\left(
 \left.\partial_s(g_{s,\tau}^{\alpha,\beta})^{kl}\right|_{s=\tau=0}
 \partial_lf\right)
 +c(n)\left.\partial_s\operatorname{Scal}_{g_{s,\tau}^{\alpha,\beta}}\right|_{s=\tau=0}f,\\
 E_\beta f
 &=-\sum_{k,l=1}^n\partial_k\left(
 \left.\partial_\tau(g_{s,\tau}^{\alpha,\beta})^{kl}\right|_{s=\tau=0}
 \partial_lf\right)
 +c(n)\left.\partial_\tau \operatorname{Scal}_{g_{s,\tau}^{\alpha,\beta}}\right|_{s=\tau=0}f,\\
 E_{\alpha\beta}f
 &=-\sum_{k,l=1}^n\partial_k\left(
 \left.\partial_s\partial_\tau
 (g_{s,\tau}^{\alpha,\beta})^{kl}\right|_{s=\tau=0}
 \partial_lf\right)
 +c(n)\left.\partial_s\partial_\tau
 \operatorname{Scal}_{g_{s,\tau}^{\alpha,\beta}}\right|_{s=\tau=0}f.
\end{aligned}
\]
\emph{Step 2. Uniform coefficient and response bounds.}
{Applying the matrix exponential and the coordinate formula for
scalar curvature, we obtain
\[
\begin{aligned}
 \left|\left.\partial_s(g_{s,\tau}^{\alpha,\beta})^{-1}
 \right|_{s=\tau=0}\right|
 +r\left|\nabla_y\left.\partial_s(g_{s,\tau}^{\alpha,\beta})^{-1}
 \right|_{s=\tau=0}\right|
 &\le C\rho_j^\alpha r^\alpha,\\
 \left|\left.\partial_\tau(g_{s,\tau}^{\alpha,\beta})^{-1}
 \right|_{s=\tau=0}\right|
 +r\left|\nabla_y\left.\partial_\tau(g_{s,\tau}^{\alpha,\beta})^{-1}
 \right|_{s=\tau=0}\right|
 &\le C\rho_j^\beta r^\beta,\\
 \left|\left.\partial_s\partial_\tau(g_{s,\tau}^{\alpha,\beta})^{-1}
 \right|_{s=\tau=0}\right|
 +r\left|\nabla_y\left.
 \partial_s\partial_\tau(g_{s,\tau}^{\alpha,\beta})^{-1}
 \right|_{s=\tau=0}\right|
 &\le C\rho_j^{\alpha+\beta}r^{\alpha+\beta},\\
 \left|\left.\partial_s\operatorname{Scal}_{g_{s,\tau}^{\alpha,\beta}}
 \right|_{s=\tau=0}\right|
 &\le C\rho_j^\alpha r^{\alpha-2},\\
 \left|\left.\partial_\tau \operatorname{Scal}_{g_{s,\tau}^{\alpha,\beta}}
 \right|_{s=\tau=0}\right|
 &\le C\rho_j^\beta r^{\beta-2},\\
 \left|\left.\partial_s\partial_\tau \operatorname{Scal}_{g_{s,\tau}^{\alpha,\beta}}
 \right|_{s=\tau=0}\right|
 &\le C\rho_j^{\alpha+\beta}r^{\alpha+\beta-2}.
\end{aligned}
\]
}
{By Proposition  \ref{prop:euclidean-response}, for
\(\ell\in\{\alpha,\beta\}\) with \(\ell<\frac{n-2}{2}\),
\[
 |z_\ell|+|D_{\mathrm{di}}z_\ell|
 +r\bigl(|\nabla_yz_\ell|
 +|\nabla_y(D_{\mathrm{di}}z_\ell)|\bigr)
 \le C\rho_j^\ell
 \begin{cases}
  1,&r\le1,\\
  r^{\ell+2-n},&r\ge1
 \end{cases}.
\]
}
{The bubble \(U=U_j\) satisfies
\[
 |U|+|D_{\mathrm{di}}U|
 +r\bigl(|\nabla_yU|
 +|\nabla_y(D_{\mathrm{di}}U)|\bigr)
 \le C
 \begin{cases}
  1,&r\le1,\\
  r^{2-n},&r\ge1
\end{cases}.
\]
}

\emph{Step 3. Radial and boundary densities.}
Integrate every divergence term in
\eqref{eq:exact-mixed-coefficient} once. On \(r\ge1\), the first
integral has radial bound
\[
 C\rho_j^{\alpha+\beta}
 \bigl(r^{\alpha+\beta}r^{1-n}r^{1-n}
      +r^{\alpha+\beta-2}r^{2-n}r^{2-n}\bigr)
 r^{n-1}\,\dd r
 =C\rho_j^{\alpha+\beta}r^{\alpha+\beta+1-n}\,\dd r.
\]
{The preceding coefficient and response bounds give the same
outer power for the other four integrals.} After integration over
\(\Sph^{n-1}\), the sum of the absolute
values of the resulting radial densities is bounded by
\[
 C\rho_j^{\alpha+\beta}
 \begin{cases}
  r^{n-3}\,\dd r,&0<r\le1,\\
  r^{\alpha+\beta+1-n}\,\dd r,&1\le r\le R_j
 \end{cases}.
\]
{The boundary terms produced by this integration by parts are
bounded on \(\partial B_{\sigma_j}\) and \(\partial B_{R_j}\), respectively,
by}
\[
 C\rho_j^{\alpha+\beta}\sigma_j^{n-2},
 \qquad
 C\rho_j^{\alpha+\beta}R_j^{\alpha+\beta+2-n}.
\]
These are \eqref{eq:annular-bilinear-density} and
\eqref{eq:annular-bilinear-boundary-density}. {Because the
calculation uses only \eqref{eq:extended-bilinear-direction-bounds}, it also
proves the assertion for the two general pairs of tensors
and scalar functions.}
\end{proof}

 \subsection*{Third variation of the Pohozaev integral}\phantomsection\label{app:third-variation}

\begin{proof}[Proof of Lemma  \ref{lem:reference-third-variation}]
\emph{Step 1. Profile and metric bounds.}
In this proof, write
\[
 U=U_j,\qquad
 V=V_{j,\tau},\qquad
 \dot V=\partial_\tau V_{j,\tau}.
\]
Write \(E_\tau=\Delta-L_{g_{j,\tau}}\), use
\(D_{\mathrm{di}}\) from \eqref{eq:dilation-operator}, and set
\(m_0=\min\{m_*,d+1\}\).
If \(m_*=(n-2)/2\) or \(m_*=\infty\), then
\(\dot V=0\) by \eqref{eq:corrected-profile}.  {By
Proposition  \ref{prop:euclidean-response},
Lemma  \ref{lem:corrected-profile-estimate}, and
\eqref{eq:scaled-metric-taylor-remainder}, the following bounds hold uniformly for
\(0\le\tau\le1\), with \(r=|y|\):}
{
\begin{equation}\label{eq:third-order-profile-bounds}
 \begin{aligned}
 \sum_{k=0}^2r^k|\nabla_y^kV|
 &\le CU,\\
 \sum_{k=0}^2r^k|\nabla_y^k\dot V|
 &\le C\rho_j^{m_0}(1+r)^{m_0}U,\\
  \sum_{k=0}^2r^k|\nabla_y^k[h(\rho_j\,\cdot)](y)|
 &\le C\rho_j^{m_0}(1+r)^{m_0}.
 \end{aligned}
\end{equation}
}
{By \eqref{eq:annular-radii}, for \(r\le R_j\) and all
sufficiently large \(j\),}
\[
 \rho_j(1+r)\le \rho_j+\bar\rho_{j-1}
 \le2\bar\rho_{j-1}.
\]
{Since \(m_0\ge2\), it follows from
\eqref{eq:capacity-scales} that}
\[
 \rho_j^{m_0}(1+r)^{m_0}\le C\eta_j.
\]

{For \(\nu=1,2,3\), differentiating the matrix exponential and
using the coordinate formula for scalar curvature, we obtain}
\begin{equation}\label{eq:third-order-metric-bounds}
 \max_{1\le k,l\le n}
 \left(
 |\partial_\tau^\nu(g_{j,\tau}^{kl}-\delta^{kl})|
 +r|\nabla_y\partial_\tau^\nu g_{j,\tau}^{kl}|
 \right)
 +r^2|\partial_\tau^\nu \operatorname{Scal}_{g_{j,\tau}}|
 \le C\rho_j^{\nu m_0}(1+r)^{\nu m_0}.
\end{equation}
In this calculation every spatial derivative of \(h(\rho_jy)\) is a
derivative of the composite function:
\[
 \partial_y^\alpha[h(\rho_j\,\cdot)](y)
 =\rho_j^{|\alpha|}(\partial_x^\alpha h)(\rho_jy).
\]
{More explicitly, the scalar curvature formula gives, for
\(\nu=1,2,3\),}
\[
 |\partial_\tau^\nu \operatorname{Scal}_{g_{j,\tau}}(y)|
 \le C\rho_j^2
 \left(
 |h|^{\nu-1}|\nabla_x^2h|
 +|h|^{\max\{\nu-2,0\}}|\nabla_xh|^2
 \right)(\rho_jy).
\]
{The third estimate in
\eqref{eq:third-order-profile-bounds} and the preceding smallness bound imply
\eqref{eq:third-order-metric-bounds}. When \(\nu=1\), the
\(|\nabla_xh|^2\) term contains an additional factor bounded by
\(C\eta_j\). Since \(\det g_{j,\tau}=1\), applying
\eqref{eq:third-order-metric-bounds} to a \(C^2\) function \(f\) gives}
\begin{equation}\label{eq:third-order-operator-bound}
 |(\partial_\tau^\nu E_\tau)f|
 \le
 Cr^{-2}\rho_j^{\nu m_0}(1+r)^{\nu m_0}
 \sum_{k=0}^2r^k|\nabla_y^kf|,
 \qquad 1\le\nu\le3.
\end{equation}

\emph{Step 2. Differentiation of the Pohozaev integral.}
Differentiate \eqref{eq:reference-pohozaev-path} directly. The annulus is
independent of \(\tau\), and \(V\) is affine in \(\tau\). The complete Leibniz
expansion is therefore
\begin{equation}\label{eq:third-order-leibniz}
 \begin{aligned}
 \mathfrak q_j^{(3)}(\tau)
 =\int_{A_{\sigma_j,R_j}}\bigl\{
 &(D_{\mathrm{di}}V)(\partial_\tau^3E_\tau)V
 +3(D_{\mathrm{di}}\dot V)(\partial_\tau^2E_\tau)V\\
 &+3(D_{\mathrm{di}}V)(\partial_\tau^2E_\tau)\dot V
 +6(D_{\mathrm{di}}\dot V)(\partial_\tau E_\tau)\dot V
 \bigr\}\,\dd y.
\end{aligned}
\end{equation}
\emph{Step 3. Radial integration.}
Equations \eqref{eq:third-order-profile-bounds}--%
\eqref{eq:third-order-operator-bound} bound the absolute value of every
integrand in \eqref{eq:third-order-leibniz} by
\[
 Cr^{-2}\rho_j^{3m_0}(1+r)^{3m_0}U^2
 \le
 C\eta_jr^{-2}\rho_j^{2m_0}(1+r)^{2m_0}U^2.
\]
Using \(U\le C(1+r)^{2-n}\) and integrating in \(r\) gives
\begin{equation}\label{eq:third-order-radial-integral}
 |\mathfrak q_j^{(3)}(\tau)|
 \le
 C\eta_j\rho_j^{2m_0}
 \left(
 1+\int_1^{R_j}r^{2m_0+1-n}\,\dd r
 \right).
\end{equation}
{If \(m_0=m_*<(n-2)/2\), the integral in
\eqref{eq:third-order-radial-integral} is uniformly bounded. If
\(m_0=m_*=(n-2)/2\), it equals \(\log R_j\). If \(m_*=\infty\), then
\(m_0=d+1>(n-2)/2\), and \eqref{eq:outer-radius-rho-bound} gives}
\[
 \rho_j^{2m_0}\left(1+R_j^{2m_0+2-n}\right)
 \le C\rho_j^{m_0+\frac{n-2}{2}}
 =o(\rho_j^{n-2})
 \quad\text{as }j\to\infty.
\]
The little \(o\) term is uniform for
\((\lambda_j,b_j)\in\mathcal K\). {These three estimates prove
\eqref{eq:reference-third-variation}.}
\end{proof}

\subsection*{AI assistance and interactions}

This project began in mid July 2026 with an investigation of whether the main conclusion of \cite{HanXiongZhang} could fail without the assumption that the metric has flatness order at least \((n-2)/2\). We made progress gradually, first in dimension seven, then in dimensions up to 10,  then  in dimensions up to 24; and found by early August that the threshold dimension is 30:  the upper bound holds in dimensions \(7\le n \le 29\) and there are counter-examples to the upper bound in dimension \(n\ge 30\). By August 9, we produced an initial draft proof of Theorems  \ref{thm:main} and  \ref{thm:upper-bound}  and an exploratory draft of a possible counterexample in dimension \(30\). We then began an extensive verification, correction, addition, and revision process.

ln this process, the coauthors used GPT 5.6 Sol and Codex as interactive research-assistance tools for literature reading, exploratory proof searches, calculations, checking arguments, and manuscript preparation. The systems responded to specific questions and directions from the coauthors; they did not independently formulate or complete the theorems and did not reliably include complete or accurate citations and attributions. The coauthors determined the research questions and proof strategies, supplied and verified the citations and attributions, and selected, checked, and substantially revised every adopted argument. This process involved repeated exchanges among the coauthors and the systems, approximately sixty manuscript versions, and extensive human verification and rewriting. The coauthors are responsible for all mathematical claims, citations, and final content. 

\subsection*{Conflict of interest} 

The authors declare that they have no conflicts of interests. 


\bigskip
\begingroup
\small
\noindent\textsc{Zheng-Chao Han}\\
Department of Mathematics, Rutgers University,\\
Hill Center--Busch Campus, 110 Frelinghuysen Road,\\
Piscataway, NJ 08854, USA\\
\textit{Email address:}
\href{mailto:zchan@math.rutgers.edu}{\nolinkurl{zchan@math.rutgers.edu}}

\medskip
\noindent\textsc{Qinfeng Jiang}\\
School of Mathematical Sciences, Beijing Normal University,\\
Beijing 100875, China\\
\textit{Email address:}
\href{mailto:202531130031@mail.bnu.edu.cn}%
{\nolinkurl{202531130031@mail.bnu.edu.cn}}

\medskip
\noindent\textsc{Hua-Yang Wang}\\
School of Mathematical Sciences, Laboratory of Mathematics and Complex
Systems, MOE,\\
Beijing Normal University, Beijing 100875, China\\
\textit{Email address:}
\href{mailto:wanghuayang@amss.ac.cn}{\nolinkurl{wanghuayang@amss.ac.cn}}

\medskip
\noindent\textsc{Jingang Xiong}\\
School of Mathematical Sciences, Laboratory of Mathematics and Complex
Systems, MOE,\\
Beijing Normal University, Beijing 100875, China;\\
Center for Basic Mathematics, Institute for Advanced Study,\\
Beijing Normal University, Beijing 100875, China\\
\textit{Email address:}
\href{mailto:jx@bnu.edu.cn}{\nolinkurl{jx@bnu.edu.cn}}

\medskip
\noindent\textsc{Lei Zhang}\\
Department of Mathematics, University of Florida,\\
358 Little Hall, P.O. Box 118105,\\
Gainesville, FL 32611-8105, USA\\
\textit{Email address:}
\href{mailto:leizhang@ufl.edu}{\nolinkurl{leizhang@ufl.edu}}
\endgroup
\end{document}